\documentclass[a4paper,openright,twoside,11pt]{report}
\usepackage{geometry}
\usepackage[latin1]{inputenc}
\usepackage{fancyhdr}
\usepackage{indentfirst}
\usepackage{graphicx}
\usepackage{newlfont}
\usepackage{amssymb}
\usepackage{amsmath}
\usepackage{latexsym}
\usepackage{amsthm}
\usepackage{mathrsfs}
\usepackage{color}
\usepackage{mathtools}
\usepackage[numbered]{bookmark}
\usepackage{hyperref}
\usepackage{tikz}
\usepackage{tikz-cd}
\usepackage{tkz-euclide}
\usetkzobj{all}
\usepackage{array}

\setcounter{secnumdepth}{3}
\setcounter{tocdepth}{3}

\binoppenalty=10000
\relpenalty=10000

\hyphenation{}
\theoremstyle{plain}
\newtheorem{theo}{Theorem}[section]
\newtheorem{prop}[theo]{Proposition}
\newtheorem{cor}[theo]{Corollary}
\newtheorem{lem}[theo]{Lemma}
\newtheorem{conj}[theo]{Conjecture}
\theoremstyle{definition}
\newtheorem{defn}[theo]{Definition}
\newtheorem{notat}[theo]{Notation}
\newtheorem{ex}[theo]{Example}
\theoremstyle{remark}
\newtheorem{rem}[theo]{Remark}

\theoremstyle{plain}
\newtheorem{theoC}{Theorem}[chapter]
\newtheorem{propC}[theoC]{Proposition}
\newtheorem{corC}[theoC]{Corollary}
\newtheorem{lemC}[theoC]{Lemma}
\newtheorem{conjC}[theoC]{Conjecture}
\theoremstyle{definition}
\newtheorem{defnC}[theoC]{Definition}
\newtheorem{notatC}[theoC]{Notation}
\newtheorem{exC}[theoC]{Example}
\theoremstyle{remark}
\newtheorem{remC}[theoC]{Remark}

\theoremstyle{plain}
\newtheorem{theoI}{Theorem}[]
\newtheorem{propI}[theoI]{Proposition}

\newtheorem{conjI}[theoI]{Conjecture}
\theoremstyle{definition}

\theoremstyle{remark}

\theoremstyle{plain}

\providecommand{\customgenericname}{}
\newcommand{\newcustomtheorem}[2]{%
	\newenvironment{#1}[1]
	{%
		\renewcommand\customgenericname{#2}%
		\renewcommand\theinnercustomgeneric{##1}%
		\innercustomgeneric
	}
	{\endinnercustomgeneric}
}
\newcustomtheorem{lconj}{Conjecture}

\cfoot{}

\newcommand {\n}[1] {\mathbb{#1}}
\newcommand{\C}[1]{\mathcal#1}
\newcommand{\inn}{\!\in\!}
\newcommand{\sub}{\!\subset\!}
\newcommand{\imp}{\Rightarrow}
\DeclareMathOperator{\id}{id}
\DeclareMathOperator{\Ker}{Ker}
\DeclareMathOperator{\Imm}{Im}
\renewcommand{\Im}{\text{\upshape{Im}}}
\renewcommand{\Re}{\text{\upshape{Re}}}
\newcommand{\ds}{\displaystyle}

\DeclareMathOperator{\ord}{ord}
\DeclareMathOperator{\Div}{Div}
\renewcommand{\div}{\text{\upshape{div}}}
\newcommand{\ch}[1]{\overline{\n #1}}
\newcommand{\ov}{\overline}
\newcommand{\floor}[1]{\left\lfloor #1 \right\rfloor}
\newcommand{\mm}[4]{\ds\left(\!\!\!\begin{array}{cc}#1\!&\!#2\\#3\!&\!#4\end{array}\!\!\!\right)}
\renewcommand{\char}{\text{char }}
\newcommand{\be}{\begin{equation}}
\newcommand{\ee}{\end{equation}}
\DeclareMathOperator{\Pic}{Pic}
\renewcommand{\epsilon}{\varepsilon}
\newcommand{\pphi}{\varphi}
\newcommand{\cv}[1]{\frac{p_{#1}}{q_{#1}}}
\renewcommand{\S}[1]{\mathcal S_{\n #1}}
\newcommand{\PSL}[1]{\text{\upshape{PSL}}_2(\n#1)}
\newcommand{\vv}[2]{\ds\left(\!\!\!\begin{array}{c}#1\\#2\end{array}\!\!\!\right)}

\DeclareMathOperator{\Tr}{Tr}
\DeclareMathOperator{\tr}{tr}
\DeclareMathOperator{\SL}{SL}
\DeclareMathOperator{\GL}{GL}
\newcommand{\M}{\mathfrak m}

\author{Francesca Malagoli}
\title{Continued fractions in function fields:\newline	 Polynomial analogues of McMullen's and Zaremba's conjectures}
\date{\today}

\begin{document}\let\oldref=\ref \renewcommand{\ref}[1]{\textup{\oldref{#1}}}
\thispagestyle{empty}
\begin{center}
\vspace*{-32mm}{
\includegraphics[scale=0.32]{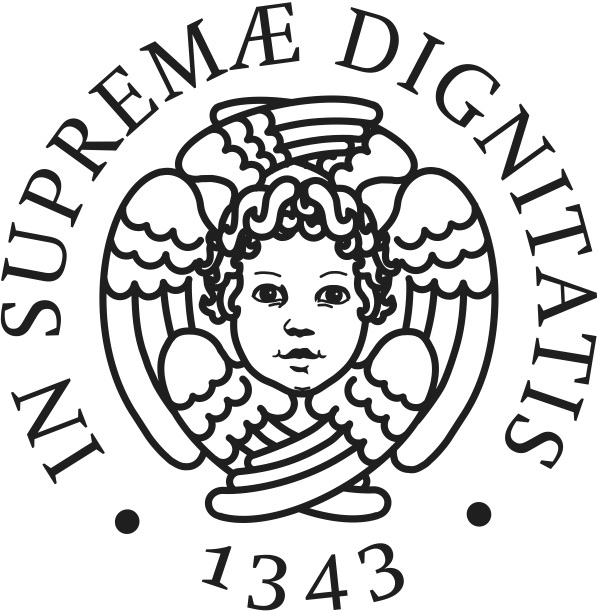}}
\end{center}

\vspace*{-0.18cm} \centerline{{\LARGE UNIVERSIT\`{A} DI PISA}}

{\centerline{DOTTORATO IN MATEMATICA} \centerline{XXVIII CICLO}

\centerline{\rule{16cm}{0.5mm}}\vspace*{2cm}

\centerline{\LARGE{Francesca Malagoli}} 
\vspace*{2.5cm}
\centerline{\Huge{\textbf{Continued fractions in function fields:}}} 
\vspace{0,3cm}
\centerline{ \Huge{ \textbf{polynomial analogues of}}}
\vspace{0,3cm}
\centerline{ \Huge{ \textbf{McMullen's and Zaremba's conjectures}}}
\vspace*{2cm}
\centerline{\rule{6cm}{0.5mm}}\medskip
\centerline{\large{TESI DI DOTTORATO}}
\centerline{\rule{6cm}{0.5mm}}
\vspace*{2.4cm}

\begin{flushright}
\makebox[7cm][l]
{\large{Relatore:}}\par
\makebox[7cm][l]
{\large{Umberto Zannier}}\par
\end{flushright}

\vspace*{1.8cm}

\centerline{\rule{16.0cm}{0.5mm}}\medskip
\centerline{\large{ANNO ACCADEMICO 2015/2016}}

\clearpage{\pagestyle{empty}\cleardoublepage}
\chapter*{Acknowledgements}
\thispagestyle{empty}
I would like to express my sincere gratitude to my advisor, Professor Umberto Zannier, for providing me with the opportunity to work on such an interesting subject and for the help offered by means of worthy remarks and insights.\par\medskip

I also would like to thank Professors Marmi and Panti for their help and attention, for their precious suggestions and for the time that they kindly devoted to it. I am grateful to the referees for their precise and valuable remarks. \par\medskip
 
I finally thank Professors Mirella Manaresi and Rita Pardini for their constant help and support.

\clearpage{\pagestyle{empty}\cleardoublepage}
\tableofcontents
\clearpage{\pagestyle{empty}\cleardoublepage}
\chapter*{Introduction}\addcontentsline{toc}{chapter}{Introduction}
The classical theory of real continued fractions, whose origins probably date back to the antiquity, was already laid down by Euler in the 18th century and then further developed, among others, by Lagrange and Galois.

As it is well known, there are close connections between the arithmetic behaviour of algebraic number fields and that of the algebraic function fields in one variable, this analogy being stronger in the case of function fields over finite fields. For instance, in the 19th century Abel \cite{Abel1826} and Chebyshev \cite{chebyshev1857integration} started to developed a theory of polynomial continued fractions. More precisely, the ring of polynomials $\n K[T]$ over a given field $\n K$ can play the role of the ring of integers $\n Z$ and thus its field of fractions $\n K(T)$ will correspond to the field of rational numbers $\n Q$. Then, the role of $\n R$ will be played by the completion of $\n K(T)$ with respect to the valuation $\ord$ associated to the degree, that is, by $\n L=\n K((T^{-1}))$, the field of formal Laurent series in $T^{-1}$.  Every formal Laurent series $\alpha=\sum\limits_{i\leq n}c_iT^i$ can be written (uniquely) as a regular simple continued fraction, that is, as $$\alpha=a_0+\cfrac{1}{a_1+\cfrac{1}{a_2+\cfrac{1}{\ddots}}}\ ,$$ where $a_0\inn\n K[T]$ and where the $a_i$ are polynomials of positive degree. As in the real case, the $a_i$ are called the \textit{partial quotients} of $\alpha$ and we will write $$\alpha=[a_0,a_1,\dots].$$ 

Almost all the classical notions and results can be translated in the function fields setting; in particular in this thesis we will consider the polynomial analogues of Zaremba's and McMullen's Conjectures on continued fractions with bounded partial quotients. We will focus especially on the second one, proving it when $\n K$ is an infinite algebraic extension of a finite field, when $\n K$ is uncountable, when $\n K=\ch Q$ and when $\n K$ is a number field; we will also see that, if $\n K$ is a finite field, then the polynomial analogue of Zaremba's Conjecture over $\n K$ would imply the polynomial analogue of McMullen's. Of course, according to the base field $\n K$, we will have to use different methods. \par\medskip

Real numbers with bounded partial quotients appear in many fields of mathematics and computer science, for instance in Diophantine approximation, fractal geometry, transcendental number theory, ergodic theory, numerical analysis, pseudo-random number generation, dynamical systems and formal language theory (for a survey, see \cite{shallit1992real}). 

While studying numerical integration, pseudo-random number generation and quasi-Monte Carlo methods, Zaremba in \cite{zaremba1972methode} (page 76) conjectured that for every positive integer $d\geq2$ there exists an integer $b<d$, relatively prime to $d$, such that all of the partial quotients in the continued fraction of $b/d$ are less than or equal to $5$. More generally, we can consider the following Conjecture:
\begin{conjI}[Zaremba]\label{0conj:Zarint}
	There exists an absolute constant $z$ such that for every integer $d\geq2$ there exists $b$, relatively prime to $d$, with the partial quotients of $b/d$ bounded by $z$.
\end{conjI}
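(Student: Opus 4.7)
The plan is to follow the thin-orbit strategy pioneered by Bourgain and Kontorovich. First I would encode the problem matricially: any reduced fraction $b/d = [0; a_1, \dots, a_n]$ with $1 \le a_i \le z$ corresponds to a product of the matrices $M_a = \lc{a}{1}{1}{0}$, and $d$ appears as the associated convergent denominator $q_n$, i.e.\ as a specific entry of the product matrix. The conjecture then reduces to showing that, for some absolute constant $z$, the ``denominator set'' $\C D(\Gamma_z) \sub \n N$ of the multiplicative semigroup $\Gamma_z \sub \SL_2(\n Z)$ generated by $\{M_1, \dots, M_z\}$ contains every integer $d \ge 2$. This is a purely orbital question about the thin semigroup $\Gamma_z$, which has no hope of being a lattice (its limit set on $\n P^1(\n R)$ has Hausdorff dimension $\delta_z < 1$).

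Next I would attack the orbit by the affine sieve and the Hardy--Littlewood circle method, forming the exponential sum
\[ S_N(\alpha) \;=\; \sum_{\gamma \in \Gamma_z,\ \|\gamma\| \le N} e\bigl(\alpha\, d(\gamma)\bigr) \]
and splitting $[0,1]$ into major and minor arcs. On major arcs one expects an asymptotic coming from the equidistribution of $\Gamma_z$ in its ambient real group together with a local (singular series) computation modulo small primes; on minor arcs one needs a power-saving estimate, supplied by the Bourgain--Gamburd--Sarnak expansion theorem, which gives a uniform spectral gap for the congruence quotients of $\Gamma_z$. Combined with a local-global analysis, this machinery produces positive-density, and indeed density-one, statements of the form ``every admissible $d$ outside a sparse exceptional set is a convergent denominator of some $[0;a_1,\dots,a_n]$ with $a_i \le z$''.

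The principal obstacle, and the reason the conjecture remains open, is precisely a lower bound on $\delta_z$: every currently available version of the above scheme requires $\delta_z$ to exceed an explicit threshold strictly less than $1$. Since $\delta_z \to 1$ as $z \to \infty$ but $\delta_z < 1$ for every finite $z$, the circle-method estimates degrade gracefully and allow one to strip out successively smaller exceptional sets, yet they cannot by themselves eliminate the exceptional set entirely. Removing the residual density-zero defect seems to demand a genuinely new ingredient---either a full local-global principle for $\Gamma_z$-orbits (beyond what sieves can access) or an arithmetic mechanism forcing every integer in the admissible congruence classes to occur as a denominator. Absent such an input, the most my plan can honestly promise is a density-one version of the statement, with the remaining ``bad'' denominators left to future work.
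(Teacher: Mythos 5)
The statement you are asked to prove is, in fact, still a conjecture: the paper states it as Conjecture~\ref{0conj:Zarint} precisely because no proof is known, and it supplies no argument for it beyond a survey of partial results. So there is no ``paper's own proof'' to compare against, and your proposal does not --- and could not --- amount to a proof.

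That said, your outline is an accurate account of the best known attack, and it matches the discussion the thesis gives in Chapter~3 (and in particular Theorem~\ref{3theo:bourgain-kontorovich}, the density-one result of Bourgain and Kontorovich). You correctly recast the problem matricially via the semigroup $\Gamma_z$ generated by $M_1,\dots,M_z$, identify the denominator set as a thin-orbit question, describe the major/minor arc decomposition of the exponential sum $S_N(\alpha)$, and name the spectral-gap input from expander theory. You also correctly identify the bottleneck: the Hausdorff dimension $\delta_z$ of the limit set is strictly less than $1$ for every finite $z$, the circle-method estimates require $\delta_z$ above an explicit threshold and still leave a density-zero exceptional set, and no current technique removes that residue. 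Because you explicitly concede that the argument terminates at a density-one statement rather than the conjecture itself, the honest verdict is that the ``gap'' is not an oversight in your reasoning but the genuine open problem, and you have stated it correctly. One small refinement worth flagging: as the thesis notes, the naive local-global heuristic (Hensley's Conjecture~\ref{3conj:Hensleyfalse}) is actually false as stated --- Bourgain and Kontorovich found congruence obstructions for certain alphabets with $\delta_{\C A} > 1/2$ --- so the ``new ingredient'' you call for would have to go through the refined, obstruction-aware Local-Global Conjecture~\ref{3conj:local-global}, not through a bare density criterion.
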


In \cite{wilson1980limit}, Wilson proved that any real quadratic field $\n Q(\sqrt d)$ contains infinitely many purely periodic continued fractions whose partial quotients are bounded by a constant $m_d$ depending only on $d$ (for example, we can take $m_5=2$). McMullen \cite{mcmullen2009uniformly} explained these phenomena in terms of closed geodesics on the modular surface and conjectured the existence of an absolute constant $m$, independent from $d$:
\begin{conjI}[McMullen]\label{0conj:MMint}
	There exists an absolute constant $m$ such that for every positive squarefree integer $d$ the real quadratic field $\n Q(\sqrt d)$ contains infinitely many purely periodic continued fractions whose partial quotients are bounded by $m$.
\end{conjI}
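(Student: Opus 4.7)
The plan is to reduce the conjecture to an arithmetic statement about traces of bounded-type words in $\SL_2(\n Z)$, and then to attack it with sieve-theoretic/thin-group techniques in the spirit of Bourgain--Kontorovich's work on the integer Zaremba problem. Fix an integer $m$ (to be chosen large) and let $G_m$ be the semigroup of $\GL_2(\n Z)$ generated by the matrices $S_a=\lc{a}{1}{1}{0}$ for $1\leq a\leq m$. A purely periodic continued fraction $\alpha=[\overline{a_1,\dots,a_n}]$ with $a_i\leq m$ is exactly the attracting fixed point of the hyperbolic matrix $M=S_{a_1}\cdots S_{a_n}\inn G_m$, and lies in $\n Q(\sqrt d)$ if and only if $\tr(M)^2-4\det(M)$ is of the form $dk^2$ for some $k\inn\n Z$.

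First I would observe that the ``infinitely many'' clause is essentially free: given one such $M$, every positive power $M^j$ is again in $G_m$, is a product of $nj$ factors $S_{a_i}$ with $a_i\leq m$, and its attracting fixed point is the distinct purely periodic continued fraction $[\overline{(a_1,\dots,a_n)^j}]$, still lying in $\n Q(\sqrt d)$. Hence the statement reduces to showing that there exists an absolute $m$ such that, for every squarefree $d\geq1$, the trace set $T_m=\{\tr(M):M\inn G_m\}$ meets the sequence of traces of units of the order $\n Z[\sqrt d]$ (equivalently, the solution set of the Pell-like equation $t^2-dk^2=\pm 4$).

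Next I would quantify how rich $T_m$ is. By a Bourgain--Kontorovich-type analysis, the semigroup $G_m$ has critical exponent tending to $1$ as $m\to\infty$, its projections to $\SL_2(\n Z/n\n Z)$ are surjective for every $n\geq1$ (strong approximation for thin groups), and the integers realised as traces of elements of $G_m$ attain positive density once $m$ is sufficiently large. I would then set up a delta/circle-method argument to force $T_m$ to intersect, for every squarefree $d$, the thin sequence of unit traces in $\n Q(\sqrt d)$; equivalently, one seeks a closed geodesic of bounded type on the modular surface $\PSL{Z}\backslash\n H$ in every $\SL_2(\n Z)$-conjugacy class of primitive hyperbolic elements of discriminant a square multiple of $d$.

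The main obstacle is precisely this intersection problem, and it is the heart of what makes McMullen's conjecture a well-known open question: although $T_m$ is arithmetically rich and equidistributes nicely with respect to archimedean and $p$-adic statistics, no available technique forces it to meet the extremely sparse, field-dependent orbit of unit traces for \emph{every} squarefree $d$ uniformly in $d$. Wilson's theorem \cite{wilson1980limit} yields only a bound $m=m_d$ depending on $d$, and current unconditional thin-group methods seem unable to remove this dependence. A realistic outcome of the strategy above is therefore the conjecture for a set of $d$ of positive density, or a conditional proof assuming an effective form of Duke's equidistribution theorem for closed geodesics on the modular surface; a fully unconditional proof will almost certainly require a genuinely new input beyond the sieve/thin-group toolbox.
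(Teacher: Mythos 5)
This statement is an open conjecture, both in the literature and in the thesis: the paper explicitly records that Zaremba's and McMullen's conjectures ``are still open'' and offers no proof of Conjecture \ref{0conj:MMint}, only a survey (Chapter 3) of the Bourgain--Kontorovich reformulation via the trace map $F_m(M)=\tr M$ on the semigroup generated by the $\lc{a}{1}{1}{0}$, $a\leq m$, which is exactly the framework you describe. Your proposal is therefore honest in not claiming a proof, and its account of why the intersection problem with the unit-trace sequence is out of reach matches the state of the art as the paper presents it (Conjecture \ref{3conj:local-global} and the discussion following Theorem \ref{3theo:bourgain-kontorovich}).

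There is, however, one concrete error in the part you do claim: the assertion that ``the `infinitely many' clause is essentially free'' because the powers $M^j$ yield distinct purely periodic continued fractions. They do not. The attracting fixed point of $M^j$ is the same as that of $M$, and $[\overline{(a_1,\dots,a_n)^j}]$ is literally the same periodic sequence of partial quotients, hence the same real number $[\overline{a_1,\dots,a_n}]$. Producing infinitely many \emph{distinct} (let alone pairwise inequivalent) purely periodic elements of $\n Q(\sqrt d)$ with bounded partial quotients is a genuine construction, not a formal consequence of producing one: the paper obtains it either from McMullen's explicit families $\left[\,\overline{(1,s)^n,1,s+1,s-1,(1,s)^n,1,s+1,s+3}\,\right]$ (Theorem \ref{3theo:MM}), which use genuinely different words for different $n$, or from Mercat's argument (Theorem \ref{3theo:Mercat}), which feeds the infinitely many distinct solutions $(X,Y)$ of the Pell equation for $d$ into Zaremba's conjecture to get infinitely many distinct numerators $X$ and hence infinitely many distinct periodic expansions. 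If you want your reduction to ``there exists one such $M$ per field'' to be valid, you need to replace the powers $M^j$ by such a family of essentially different words whose discriminants all lie in the same square class.
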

Actually, McMullen conjectured that $m_d=2$ should be sufficient for every $d$.\par\medskip

Zaremba's and McMullen's Conjectures have been deeply studied by Bourgain and Kontorovich, who showed that they are special cases of a much more general local-global Conjecture and proved a density-one version of Zaremba's Conjecture (Theorem 1.2 in \cite{bourgain2014zaremba}). However, both Conjectures are still open.\par\medskip

In the polynomial setting, the property of having bounded partial quotients corresponds to the property of having partial quotients of bounded degree. When $\n K$ is a finite field, Laurent series whose continued fraction expansions have bounded partial quotients appear in stream cipher theory, as they are directly linked to the study of linear complexity properties of sequences and pseudorandom number generation. Indeed, Niederreiter (\cite{niederreiter1987sequences}, Theorem 2) proved that the linear complexity profile of a sequence is as close as possible to the expected behaviour of random sequences if and only if all the partial quotients of the corresponding Laurent series are linear. \medskip

We will study the following analogue of Zaremba's Conjecture for polynomial continued fractions over a field $\n K$:
\begin{lconj}{Z}[Polynomial analogue of Conjecture \ref{0conj:Zarint}]\label{0conj:Zarpol}
	There exists a constant $z_{\n K}$ such that for every non-constant polynomial $f\inn\n K[T]$ there exists $g\inn\n K[T]$, relatively prime to $f$, such that all the partial quotients of $f/g$ have degree at most $z_{\n K}$.
\end{lconj}

Actually, it is believed that it is enough to take $z_{\n K}=1$ for any field $\n K\neq\n F_2$ and $z_{\n F_2}=2$. This conjecture has already been studied many authors, including Blackburn \cite{blackburn1998orthogonal}, Friesen \cite{friesen2007rational}, Lauder \cite{lauder1999continued} and Niederreiter \cite{niederreiter1987rational}:

\begin{theoI}[Blackburn \cite{blackburn1998orthogonal}, Friesen \cite{friesen2007rational}]
	Conjecture \ref{0conj:Zarpol} holds with $z_{\n K}=1$ whenever $\n K$ is an infinite field.
	
	If $\n K=\n F_q$ is a finite field and $\deg f<q/2$, then there exists a polynomial $g$, relatively prime to $f$, such that all the partial quotients of $f/g$ are linear.
\end{theoI}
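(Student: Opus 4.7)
The plan is to reduce the existence of a good $g$ to a system of algebraic non-vanishing conditions on its coefficients, and then dispatch the two cases by different genericity arguments.

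Let $d=\deg f$. The partial quotients of $f/g$ are precisely the quotients produced by the Euclidean algorithm on $(f,g)$. All of them being linear is equivalent to $\deg g=d-1$ together with the successive remainders having degrees $d,d-1,d-2,\ldots,1,0$; in that case the last remainder is a nonzero constant, so $\gcd(f,g)=1$ holds automatically. The problem therefore reduces to finding $g$ of degree $d-1$ for which the ``degree drops by exactly one'' condition holds at each of the $d-1$ subsequent steps of the algorithm.

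To encode this algebraically, I write $g=y_{d-1}T^{d-1}+\cdots+y_0$ with parameters $y_j\inn\n K$ subject to $y_{d-1}\neq 0$. Running the algorithm symbolically on $(f,g)$, the leading coefficient of the $i$-th remainder is a rational function in $y_0,\ldots,y_{d-1}$ whose numerator is a polynomial $\psi_i\inn\n K[y_0,\ldots,y_{d-1}]$ (with coefficients depending on those of $f$); the step-$i$ degree-drop condition is exactly $\psi_i\neq 0$. A short induction on $i$ shows that each $\psi_i$ is not identically zero: at step $i$ a ``fresh'' coordinate of $g$ (roughly, $y_{d-2-i}$) enters $\psi_i$ non-trivially, so it cannot vanish identically in the $y_j$. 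When $\n K$ is infinite, the set $U=\{y:y_{d-1}\neq 0,\ \psi_1(y)\neq 0,\ldots,\psi_{d-1}(y)\neq 0\}$ is the complement of a finite union of proper hypersurfaces in $\n K^d$ and is therefore non-empty; any $y\inn U$ produces a valid $g$, giving the first statement with $z_{\n K}=1$.

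For $\n K=\n F_q$ with $d<q/2$, the Zariski-density argument is no longer available, so I build the coefficients of $g$ one at a time, from $y_{d-1}$ downwards. At the step where a new coordinate $y_{d-2-i}$ is being specified, the already-fixed data reduce $\psi_i$ to a polynomial of low degree in $y_{d-2-i}$; careful bookkeeping shows that only a small, controlled number of values of $y_{d-2-i}$ are forbidden at each step, and the estimate $q>2d$ ensures that a legitimate choice always remains. The main obstacle I foresee is the non-triviality of the polynomials $\psi_i$: since $\psi_i$ arises from $i$ iterated polynomial divisions its explicit form is combinatorially intricate, and one needs a clean inductive description of the new coordinate of $g$ that first appears at each stage. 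In the finite-field half there is an additional quantitative burden—an explicit count of the forbidden values at each step—which is what produces the precise threshold $\deg f<q/2$.
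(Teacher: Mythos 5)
Your reduction — encode ``all partial quotients linear'' as the simultaneous non-vanishing of polynomials $\psi_1,\ldots,\psi_{d-1}$ in the coefficients $y_0,\ldots,y_{d-1}$ of $g$ — is a reasonable starting point and genuinely different from both arguments in the paper, but the fresh-coordinate bookkeeping driving it is wrong. Tracking which coordinates of $g$ can appear in the coefficients of the successive remainders, one finds that the lowest-index $y$-variable occurring in the leading coefficient of the $k$-th remainder is $y_{d-2k+1}$: the index drops by \emph{two} per step, not one, because the linear partial quotient at stage $k$ is determined by the top two coefficients of the two preceding remainders and hence pulls in two fresh coordinates of $g$ at once. (Concretely, for $f=T^5$ and $g=\sum y_jT^j$ one computes $\psi_1=y_2y_4-y_3^2$, and then $\psi_2$ contains the term $-y_0y_4^2\psi_1$; it is $y_0$, not $y_1$, that first appears in $\psi_2$.) The $d$ coordinates of $g$ are therefore used up after roughly $\floor{(d-1)/2}$ of the $d-1$ conditions. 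Past that point $\psi_i$ already involves all of $y_0,\ldots,y_{d-1}$, there is no fresh variable to invoke, and the induction that ``each $\psi_i$ is not identically zero'' does not close; in particular the last condition $\psi_{d-1}\neq 0$, which amounts to the resultant of $f$ and $g$ being nonzero, has no fresh coordinate under any re-indexing.

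For the infinite-field half the gap is repairable by replacing the justification: identify the conditions with the non-vanishing of the Hankel determinants of $g/f$ (Lemma \ref{4lem:hankel}), each a non-zero polynomial in the Laurent coefficients, and argue that a finite union of proper hypersurfaces cannot cover $\n K^d$ when $\n K$ is infinite — this is exactly Blackburn's proof of Theorem \ref{4prop:Zarfinite}; the paper also gives a second, constructive proof via repeated multiplication by linear factors. For the finite-field half, however, the scheme as you sketch it cannot be made to run: a greedy construction that spends one coefficient of $g$ per condition stalls after about $d/2$ steps, at which point every coefficient of $g$ is already pinned while roughly $d/2$ conditions (coprimality among them) remain, with nothing left to adjust; the promised ``careful bookkeeping'' would have to count forbidden values of a parameter that no longer exists. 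Friesen's actual argument (Theorem \ref{4theo:Friesen}) is of a wholly different nature — a global union bound. Using the palindromic symmetry $C_{n+1}(a_0,\dots,a_n)=C_{n+1}(a_n,\dots,a_0)$ and the count in Lemma \ref{4lem:zar}, he bounds from below the number of $g$ for which the continued fraction of $f/g$ both begins and ends with enough linear partial quotients and $g$ is coprime to $f$, showing this is positive once $q\ge 2d$. That global count, rather than a coordinate-by-coordinate construction, is what produces the threshold $\deg f<q/2$.
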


Analogously, we will study a polynomial version of McMullen's Conjecture. Let $\n K$ be a field with characteristic different from 2. Then for every polynomial $D\inn\n K[T]$ of even degree, which is not a square in $\n K[T]$ and whose leading coefficient is a square in $\n K$, the square root of $D$ is well defined as a formal Laurent series. We can then consider the following conjecture:
\begin{lconj}{M}[Polynomial analogue of Conjecture \ref{0conj:MMint}]\label{0conj:MMpol}
	Let $\n K$ be a field with $\char\n K\neq2$. Then there exists a constant $m_{\n K}$ such that for every polynomial $D\inn\n K[T]$ satisfying the previous conditions there exist infinitely many pairwise non-equivalent\footnote{Two formal Laurent series are said to be equivalent if their continued fraction expansions can be obtained one from the other by adding (or removing) finitely many partial quotients or by multiplication by a non-zero constant.} elements of $\n K(T,\sqrt D)\setminus\n K(T)$ whose partial quotients have degree at most $m_{\n K}$.
\end{lconj}

It is believed that for every admissible field $\n K$ it is enough to take $m_{\n K}=1$. We will see that this is true in all the cases where the Conjecture has been proved.

We will also consider the following strengthening of Conjecture \ref{0conj:MMpol} (with $m_{\n K}=1$): 

\begin{conjI}\label{0conj:multip}
	For every polynomial $D\inn\n K[T]$ satisfying the previous hypotheses there exists a polynomial $f\inn\n K[T]$ such that the partial quotients of $f\sqrt D$ (except possibly for finitely many of them) have degree 1.
\end{conjI}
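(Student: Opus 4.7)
The plan is to translate the problem to the geometry of the hyperelliptic curve $\C C\colon y^2 = D(T)$ of genus $g = \deg D/2 - 1$ and to analyze the effect of multiplication by $f$ on the associated sequence of divisors. Classically, the convergents $p_n/q_n$ of the continued fraction expansion of $\sqrt{D}$ (and, more generally, of $f\sqrt{D}$) are encoded by a sequence of reduced effective divisors $\C D_n$ on $\C C$, whose classes in the Jacobian $J(\C C)$ are translates of integer multiples of the class $[\infty_+ - \infty_-]$. The multiplier $f$ enters as a shift by $\div(f)$, so choosing $f$ amounts to choosing the starting divisor of the orbit on $J(\C C)$.

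First, I would recast the condition $\deg a_n = 1$ in geometric language. Since $\deg a_n = \deg q_n - \deg q_{n-1}$, degree-one partial quotients correspond to orbits in which every transition $\C D_n \to \C D_{n+1}$ adds or removes exactly one point-pair, rather than jumping over a clustered configuration. In the standard divisor picture this translates into a \emph{genericity} requirement on the supports of the $\C D_n$: they should avoid Weierstrass points and certain coincidence loci determined by $D$ and by the Abel--Jacobi map.

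The core step would be to exhibit $f$ for which the orbit $\{\C D_n\}$ lands outside these bad loci with at most finitely many exceptions. When the base field is rich enough (for instance $\n K$ uncountable, or $\n K = \ov{\n F_q}$, where $J(\C C)$ has many torsion points), one can hope for a direct construction, exploiting either a codimension count (bad starting divisors form a thin set) or the periodic structure coming from torsion classes. Over $\ov{\n Q}$ or a number field, one would instead try to lift good choices of $f$ obtained over $\ov{\n F_q}$ via specialization, using the results on Conjecture \ref{0conj:MMpol} proved earlier in the thesis.

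The principal obstacle is that one must enforce infinitely many genericity conditions simultaneously, one for each $n$. Each single condition cuts out a thin set of $f$, but unless the orbit $\{\C D_n\}$ equidistributes in a suitable sense on $J(\C C)$, there is no a priori reason the bad locus is not visited infinitely often. This is the same mechanism that makes even the weaker Conjecture \ref{0conj:MMpol} hard over $\n F_q$; for the strengthening, one would ultimately need fine Diophantine information on hyperelliptic orbits that goes beyond what is currently available.
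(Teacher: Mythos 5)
The statement is explicitly a conjecture, and the thesis proves it only in three cases (uncountable $\n K$, $\n K = \ch Q$, $\n K$ a number field), so you are right not to expect a full proof; but there is a serious error in your list of accessible cases. Including $\n K = \ov{\n F}_q$ among the favourable ``rich'' base fields where torsion should help is exactly backwards: over any algebraic extension of a finite field the conjecture \emph{fails}. There the polynomial $D$ is automatically Pellian (Corollary \ref{2cor:lagrange}), so the continued fraction of $f\sqrt D$ is (quasi-)periodic with infinitely many partial quotients of degree $\deg f + \frac12\deg D$ (Theorem \ref{2theo:sqrtD}, Proposition \ref{2prop:eqper}). The torsion of $[(\infty_-) - (\infty_+)]$ in the Jacobian is precisely what forces this periodicity and hence blocks the conjecture; that is why the thesis handles the finite-field case of McMullen's conjecture through the Zaremba-to-McMullen reduction of Mercat (Theorem \ref{5theo:Mercat}) rather than by multiplying $\sqrt D$ by a polynomial.

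Your description of the Jacobian picture is on target for number fields, where the thesis really does track multiples of $\delta = (\infty_-) - (\infty_+)$ on a generalized Jacobian and applies a Skolem--Mahler--Lech theorem for algebraic groups (Theorem \ref{66theo:Zrootsqn}). But the other two cases the thesis proves are handled quite differently from what you sketch. For uncountable $\n K$ no equidistribution or codimension count on the Jacobian is needed: one multiplies $\sqrt D$ by $(T-\lambda_1)\cdots(T-\lambda_{d-1})$, choosing each $\lambda_i$ to avoid the countably many zeros of the denominators of the convergents of the previous stage, and each multiplication drops $K(\cdot)$ by exactly one (Corollary \ref{1cor:decreasedegree}, Theorem \ref{5theo:MMC}). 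For $\ch Q$ the thesis reduces modulo a well-chosen prime $\pi$ and shows that a root of $\pi T^r - 1$ with $r \geq d$ cannot vanish on any continuant $q_n$, because the degree loss of $q_n$ under reduction is bounded by $d-1$ (Theorem \ref{5theo:MMbarQ}); this is a reduction argument, but not a lifting of $f$ from $\ov{\n F}_q$ in the sense you describe, and in any case you cannot lift from a setting where the conjecture is false.
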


\begin{theoI}
	Conjecture \ref{0conj:multip}, and thus Conjecture \ref{0conj:MMpol}, holds when $\n K$ is an uncountable field, when $\n K=\ch Q$ and when $\n K$ is a number field.
\end{theoI}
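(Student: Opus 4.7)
The strategy is to reformulate Conjecture \ref{0conj:multip} as a question about a Zariski-constructible subset of a parameter space of coefficients, and then to exploit different mechanisms according to the base field. Write $f\sqrt D = [a_0, a_1, a_2, \ldots]$ and let $\alpha_n = (P_n + f\sqrt D)/Q_n$ be the $n$-th complete quotient, produced from $f$ by the standard continued-fraction recursion. The degree of $a_n$ is determined by $\deg P_n$ and $\deg Q_n$, which are themselves polynomial functions of the coefficients of $f$; hence the condition ``$\deg a_n \ge 2$'' cuts out a Zariski-closed subset $Z_n$ of the parameter space $V_N := \mathbb{A}^{N+1}_{\n K}$ of polynomials $f$ of degree at most $N$. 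Conjecture \ref{0conj:multip} asks for $f \in V_N$ lying outside all but finitely many $Z_n$. A preliminary step, carried out by an inductive analysis of the recursion, is to verify that for $N$ large enough each $Z_n$ is a \emph{proper} subvariety of $V_N$.

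\textbf{Uncountable case.} A countable union of proper Zariski-closed subvarieties of $V_N(\n K) = \n K^{N+1}$ cannot exhaust the whole space when $\n K$ is uncountable: each such subvariety lies in the zero set of a nonzero polynomial, and a standard iterative argument (fixing all but one coordinate reduces to the fact that a nonzero univariate polynomial has only finitely many roots in an uncountable field) shows that no countable union of hypersurfaces covers $\n K^{N+1}$. Hence there exists $f \in V_N(\n K) \setminus \bigcup_n Z_n(\n K)$, and in fact \emph{all} partial quotients $a_n$ with $n\ge1$ are linear, which is strictly stronger than the conjecture demands.

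\textbf{Cases $\n K = \ch Q$ and $\n K$ a number field.} Here uncountability is unavailable, and in general a countable union of proper subvarieties can cover the $\n K$-points. The plan is to show that the ``persistently bad'' locus $E := \bigcap_{n_0 \ge 0}\bigcup_{n \ge n_0} Z_n$, that is, the set of $f$ lying in $Z_n$ for infinitely many $n$, is contained in a proper Zariski-closed subset $Y \subsetneq V_N$ defined over $\n K$. Once such $Y$ is exhibited, $V_N \setminus Y$ is a non-empty Zariski-open subscheme of affine space over the infinite field $\n K$; hence it has $\n K$-rational points, any of which is the desired $f$. The structural input is that the continued-fraction algorithm for $f\sqrt D$ is governed by translation dynamics on the Jacobian of the hyperelliptic curve $y^2 = D(x)$: the subvarieties $Z_n$ for varying $n$ are geometrically related through this dynamics, and the persistent intersection $E$ is controlled by a small collection of divisor classes stable under the Galois group.

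\textbf{Main obstacle.} The principal difficulty is the containment $E \subseteq Y$ with $Y$ proper and defined over $\n K$. This cannot be obtained by pure dimension counting, since the individual $Z_n$ are separately proper but their union could a priori fill $V_N$. The resolution uses field-specific tools: for $\n K = \ch Q$ one passes to $\n C$ via the uncountable case and descends using the Lefschetz principle combined with the Noetherianity of $V_N$ applied to the finite truncations $\bigcup_{n \le N_0} Z_n$; for number fields, one supplements this with a Hilbert-irreducibility or specialization argument on the family of ``good'' $f$'s produced over $\ch Q$, exploiting the Galois-equivariance of the defining equations to pick an $f$ with coefficients in $\n K$ itself.
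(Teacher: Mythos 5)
Your treatment of the uncountable case is morally the same as the paper's (Theorem \ref{5theo:MMC}), though framed differently: the paper multiplies $\sqrt D$ iteratively by linear factors $(T-\lambda_i)$, choosing each $\lambda_i$ to avoid the countably many zeros of the denominators $q_n$ of the convergents constructed so far (Lemma \ref{5lem:MMrootsqn}). Your parametrization by a general $f$ and the observation that a countable union of proper hypersurfaces cannot exhaust $\n K^{N+1}$ for uncountable $\n K$ captures the same obstruction.

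The cases $\n K=\ch Q$ and $\n K$ a number field, however, are where the real content lies, and your proposal does not close the gap there. The central assertion you would need — that the ``persistently bad'' locus $E=\bigcap_{n_0}\bigcup_{n\geq n_0}Z_n$ lies in a proper Zariski-closed $Y$ defined over $\n K$ — is exactly the deep input of the theorem, and the tools you invoke cannot supply it. The Lefschetz principle transfers first-order statements in the language of algebraically closed fields, but ``there exists $f$ with all but finitely many $a_n$ linear'' is not first-order: it quantifies over an infinite tail of conditions. What transfer does give is, for each $N_0$, an $f_{N_0}$ avoiding $\bigcup_{n\leq N_0}Z_n$; this yields no single $f$ avoiding all but finitely many $Z_n$, and no compactness argument is available. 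Noetherianity does not help either: it controls decreasing chains of closed sets, whereas $\bigcup_{n\leq N_0}Z_n$ is an increasing chain of closed sets whose union need not be closed, and the $W_{n_0}=\bigcup_{n\geq n_0}Z_n$ are not closed at all. Nor is Hilbert irreducibility the right device for descending from $\ch Q$ to a number field; the set of ``good'' $f$'s over $\ch Q$ is not the specialization locus of any parametrized family of irreducible polynomials in the relevant sense.

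What the paper actually does in these cases is genuinely number-theoretic. Over $\ch Q$ (Theorem \ref{5theo:MMbarQ} and Corollary \ref{5cor:MMchQ}), it exhibits explicit $\lambda=\zeta_r\sqrt[r]{1/\pi}$, with $\pi$ a large prime of $\n K_D$ and $r\geq d$, that are provably not roots of any $q_n$: the argument reduces $\sqrt D$ modulo $\pi$, compares degrees of numerators and denominators before and after reduction (using Van der Poorten's specialization theory, Theorem \ref{5theo:red} and Lemma \ref{5lem:reddegrees}), and derives a contradiction from Gauss's lemma if $\pi T^r-1$ divided some $q_n$. Over a number field (Theorem \ref{66theo:MMQ}), the paper invokes Zannier's Theorem \ref{66theo:Zrootsqn}, which says that for non-Pellian $D$ only finitely many $\lambda$ of bounded degree over $\n K$ are common zeros of infinitely many $q_n$; that theorem is itself proved by translating the question into the dynamics of the divisor class $\delta=[(\infty_-)-(\infty_+)]$ on a generalized Jacobian of $U^2=\widetilde D(T)$ and applying a Skolem--Mahler--Lech theorem for algebraic groups. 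These are the steps your proposal labels ``structural input'' and ``field-specific tools'' without providing; as written, the proposal proves only the uncountable case.
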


It is easy to see that this can never happen when $\n K$ is a finite field or, more generally, an algebraic extension of a finite field, because in this case the continued fraction of $f\sqrt D$ is always periodic, with infinitely many partial quotients of degree $\deg f+\frac12\deg D$. However, this also implies that the polynomial analogue of the Pell equation always has non-trivial polynomial solutions, which will allow us to follow other strategies. More precisely, we will prove the following:
\begin{theoI}
	If $\n K$ is an algebraic extension of a finite field, then Conjecture \ref{0conj:MMpol} is a consequence of Conjecture \ref{0conj:Zarpol}, with $m_{\n K}=z_{\n K}$. 
	
	In particular, if $\n K$ is an infinite algebraic extension of a finite field, then Conjecture \ref{0conj:MMpol} holds with $m_{\n K}=1$. 
\end{theoI}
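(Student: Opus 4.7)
The extra ingredient beyond Conjecture~\ref{0conj:Zarpol} is the solvability of the polynomial Pell equation when $\n K$ is an algebraic extension of a finite field: for $D$ as in the hypotheses, there exist $P,Q\in\n K[T]$ with $P^2-DQ^2\in\n K^*$, equivalently the continued fraction of $\sqrt{D}$ is periodic. This gives a fundamental Pell unit $\epsilon=P+Q\sqrt{D}$ whose powers $\epsilon^n=P_n+Q_n\sqrt{D}$ produce an infinite sequence of Pell solutions with $\deg P_n\to\infty$. The element $\sqrt{D}$ itself does not witness Conjecture~\ref{0conj:MMpol} in general, since its periodic part can have arbitrarily large-degree partial quotients; as pointed out in the introduction, the same obstruction rules out Conjecture~\ref{0conj:multip}. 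The idea is therefore to combine the Pell units with the bounded-partial-quotient rationals provided by Conjecture~\ref{0conj:Zarpol} in order to manufacture a different infinite family of quadratic irrationals in $\n K(T,\sqrt{D})\setminus\n K(T)$ with partial quotients of degree $\le z_\n K$.

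A purely periodic continued fraction $[\overline{c_1,\ldots,c_\ell}]$ is the fixed point of the M\"obius transformation associated with the product $M=M_{c_1}\cdots M_{c_\ell}$, where $M_c=\mm{c}{1}{1}{0}$, and the corresponding quadratic irrational has discriminant $\tr(M)^2-4(-1)^\ell$. Hence such a fixed point lies in $\n K(T,\sqrt{D})\setminus\n K(T)$ precisely when $\tr(M)^2-4(-1)^\ell$ equals $D$ times a square in $\n K(T)$, i.e.\ when $(\tr(M),s)$ is a solution of the Pell-type equation $X^2-DY^2=4(-1)^\ell$. For each $n$, the Pell unit $\epsilon^n$ supplies an admissible target trace $u_n$ with $\deg u_n\to\infty$. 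Applying Conjecture~\ref{0conj:Zarpol} to a polynomial chosen compatibly with $u_n$ and then symmetrizing the resulting continued fraction (for instance via a palindromic doubling $(c_0,\ldots,c_k,c_k,\ldots,c_0)$, whose product matrix is $MM^T$), one aims to produce a purely periodic continued fraction with all partial quotients of degree $\le z_\n K$ whose associated matrix has trace exactly $u_n$ and discriminant equal to $D$ up to a square factor. The resulting fixed point $\alpha_n\in\n K(T,\sqrt{D})\setminus\n K(T)$ then has all partial quotients of degree $\le z_\n K$; and since $\deg u_n\to\infty$, the $\alpha_n$ have periods of arbitrarily large total degree and are therefore pairwise non-equivalent, yielding Conjecture~\ref{0conj:MMpol} with $m_\n K=z_\n K$.

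The ``in particular'' statement is then immediate from the Blackburn--Friesen theorem quoted above: an infinite algebraic extension of a finite field is an infinite field, so Conjecture~\ref{0conj:Zarpol} holds with $z_\n K=1$, and the first part of the theorem yields Conjecture~\ref{0conj:MMpol} with $m_\n K=1$. The main obstacle is the trace-matching step: Conjecture~\ref{0conj:Zarpol} applied to a polynomial $f$ delivers a matrix with $f$ as its $(1,1)$-entry, not as its trace, and the auxiliary polynomial together with the symmetrization must be chosen so that the final matrix has trace exactly $u_n$, discriminant exactly $D$ (and not some other non-square in $\n K(T)$), and still partial quotients of degree $\le z_\n K$. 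Coordinating these three conditions is the delicate point, and requires a careful interplay between the explicit Pell data (the entries of $\epsilon^n$) and the freedom in choosing the Zaremba denominator for each target polynomial.
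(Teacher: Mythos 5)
Your high-level strategy correctly identifies the three ingredients — Pellianity of $D$ over algebraic extensions of finite fields, Conjecture \ref{0conj:Zarpol} to supply low-degree partial quotients, and a symmetric doubling to close up the continued fraction — and this is indeed the Mercat-style approach the paper follows. However, you have a genuine gap where you stop: the trace-matching step is not solved but merely flagged as ``the delicate point,'' and the specific device you suggest (palindromic doubling with product matrix $MM^T$) does not in fact give control over the trace of the resulting matrix, since $\tr(MM^T)$ is the sum of squares of the entries of $M$, not a prescribed target. Without an explicit construction showing that trace and discriminant can simultaneously be matched while keeping all partial quotients of degree at most $z_{\n K}$, the argument is incomplete.

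The paper resolves this by a direct construction that bypasses trace-matching altogether. Given a Pell solution $(X,Y)$ with $X^2-DY^2=t\in\{\pm1\}$ and a polynomial $Z$ (supplied by Conjecture \ref{0conj:Zarpol}) coprime to $X$ with $Z/X=[0,a_1,\dots,a_n]$ having partial quotients of degree at most $z_{\n K}$, one sets
\[
\alpha=\frac{X-kZ+Y\sqrt D}{kX}
\]
for a suitable normalizing constant $k\in\n K^*$, and shows
\[
\alpha=\bigl[\,\ov{2/k,\,-a_1,\dots,-a_n,\,(-1)^{n+1}2t/k,\,a_n,\dots,a_1}\,\bigr].
\]
Note the quasi-palindromic shape: the word $-a_1,\dots,-a_n$ followed by the reversed word $a_n,\dots,a_1$ with opposite signs, together with two short hinge terms $2/k$ and $(-1)^{n+1}2t/k$ (which are constants, hence harmless after regularization by Remark \ref{1rem:Kconstants}). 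This is \emph{not} a doubling of the form $MM^T$; the matrix canonically associated to the periodic word is instead
\[
M=M_{2/k}\,\mm{\epsilon p_n}{-\epsilon p_{n-1}}{-\epsilon q_n}{\epsilon q_{n-1}}\,M_{-\epsilon 2t/k}\,\mm{p_n}{q_n}{p_{n-1}}{q_{n-1}},\qquad \epsilon=(-1)^n,
\]
and one checks directly that $\vv{\alpha}{1}$ is an eigenvector of $M$ with eigenvalue $1-2tX^2-2tXY\sqrt D$. Together with Remark \ref{2rem:eigen} and the choice of sign of $Y$, this identifies $\alpha$ with the displayed continued fraction, so $K(\alpha)\le K(Z/X)\le z_{\n K}$. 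Varying over infinitely many Pell solutions gives infinitely many pairwise non-equivalent such $\alpha$, and since over an infinite algebraic extension of a finite field Conjecture \ref{0conj:Zarpol} holds with $z_{\n K}=1$ (Theorem \ref{4prop:Zarfinite}) and every $D\in\S K$ is Pellian (Corollary \ref{2cor:lagrange}), the ``in particular'' follows. The lesson is that one should hand Zaremba the Pell coordinate $X$ directly (as a denominator) rather than trying to hit the trace of a power of the fundamental unit; the eigenvector verification then replaces the coordination problem you correctly identified as the obstruction.
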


\noindent{\large\textbf{Organization of the thesis}}

In the first two chapters, for the convenience of the reader, we will recall some known results on continued fractions in function fields, focusing on the tools that we will use to discuss the polynomial analogues of Zaremba's and McMullen's Conjectures. In particular, we will study the multiplication of a continued fraction by a polynomial; even if this is a very simple issue, we have not been able to find previous proofs of these results in the literature.\par\medskip

In the first Chapter, after some generic facts on continued fractions on normed fields, we will introduce formal Laurent series and their continued fraction expansion based on the polynomial part. The classical theory of real continued fractions can be transposed almost entirely in the function fields setting; in particular, the continued fraction expansion of a Laurent series will provide a sequence of rational functions, called convergents, which are its best approximations through rational functions. The only noteworthy differences between the real and the polynomial settings are due to the fact that now the absolute value is non-Archimedean; this will lead to uniqueness of continued fraction expansions and to more precise results on the approximation of a Laurent series by its convergents.

In section \ref{1sec:sec3} we will consider some simple algebraic operations on continued fractions and, aiming at the study of Conjectures \ref{0conj:Zarpol} and \ref{0conj:MMpol}, we will always highlight the results related to the partial quotients' degrees. In particular, in order to treat Conjecture \ref{0conj:multip}, in \ref{1subsec:mult} we will examine the connection between the degrees of the partial quotients of $\alpha$ and those of $(T-\lambda)\alpha$ or $\ds\frac\alpha{(T-\lambda)}$, where $\alpha$ is a formal Laurent series over $\n K$ and $\lambda$ is a constant:
\begin{propI}
	Let $\alpha=[a_0,a_1,\dots]\inn\n K((T^{-1}))$ be a formal Laurent series such that $1<\sup_n\deg a_n<\infty$, let $\cv n$ be its convergents and let $\lambda\inn\n K$. 
	
	Let $\beta=(T-\lambda)\alpha=[b_0,b_1,\dots]$. $$\text{If } q_n(\lambda)\neq0 \text{ for every } n, \text{ then } \sup_m \deg b_m=\max\{\sup_n\deg a_n-1,1\}.$$
	
	Let $\gamma=\ds\frac{\alpha}{T-\lambda}=[c_0,c_1,\dots]$. $$\text{If } p_n(\lambda)\neq0 \text{ for every } n, \text{ then } \sup_m \deg c_m=\max\{\sup_n\deg a_n-1,1\}.$$
\end{propI}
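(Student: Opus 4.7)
The plan is to exploit the non-Archimedean best approximation criterion for convergents in $\n K((T^{-1}))$: a fraction $p/q$ in lowest terms is a convergent of $\gamma$ if and only if $\deg(\gamma-p/q)<-2\deg q$. The argument then consists in identifying which rational functions built from the $p_n/q_n$ satisfy this criterion for $\beta$, and apportioning the degree among the intervening partial quotients.

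Since $q_n(\lambda)\neq 0$ for every $n$, the fraction $(T-\lambda)p_n/q_n$ is in lowest terms with denominator of degree $\deg q_n$. From $\deg(\alpha-p_n/q_n)=-\deg q_n-\deg q_{n+1}$ one obtains
$$\deg\bigl(\beta-(T-\lambda)p_n/q_n\bigr)=1-\deg q_n-\deg q_{n+1},$$
so $(T-\lambda)p_n/q_n$ is an \textit{inherited} convergent $P_{m_n}/Q_{m_n}$ of $\beta$ precisely when $\deg a_{n+1}\geq 2$. The identity $\deg(\beta-P_{m_n}/Q_{m_n})=-\deg Q_{m_n}-\deg Q_{m_n+1}$ then forces $\deg Q_{m_n+1}=\deg q_{n+1}-1$, so the immediately following partial quotient $b_{m_n+1}$ of $\beta$ has degree exactly $\deg a_{n+1}-1$.

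Let $n_1<n_2<\cdots$ enumerate the indices $n$ with $\deg a_{n+1}\geq 2$, so that between consecutive $n_i$ and $n_{i+1}$ all of $a_{n_i+2},\dots,a_{n_{i+1}}$ have degree $1$. The remaining convergents of $\beta$ inside the block come from semiconvergents of $\alpha$: for each $k=n_i+1,\dots,n_{i+1}-1$ take the polynomial $h$ of degree $1$ uniquely determined by its leading coefficient being that of $a_{k+1}$ and $h(\lambda)=-q_{k-1}(\lambda)/q_k(\lambda)$ (well-defined because $q_k(\lambda)\neq 0$). This choice makes $\alpha_{k+1}-h$ of degree $0$ and forces $(T-\lambda)$ to divide $q_kh+q_{k-1}$; after cancellation one obtains a fraction in lowest terms with denominator of degree $\deg q_k$, and a direct degree computation shows it satisfies the best-approximation criterion for $\beta$. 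Together with the first intermediate convergent at denominator degree $\deg q_{n_i+1}-1$ produced in the previous step and the next inherited convergent at $\deg q_{n_{i+1}}$, this yields convergents of $\beta$ at every integer denominator degree in the sequence $\deg q_{n_i+1}-1,\deg q_{n_i+1},\deg q_{n_i+2},\dots,\deg q_{n_{i+1}-1},\deg q_{n_{i+1}}$, so each partial quotient of $\beta$ inside the block has degree exactly $1$.

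Putting the pieces together, every partial quotient of $\beta$ has degree either $1$ or $\deg a_{n+1}-1$ for some $n$ with $\deg a_{n+1}\geq 2$; since $\deg a_{n+1}-1\geq 1$ for all such $n$, the supremum equals $\max\{\sup_n\deg a_n-1,1\}$, with the upper bound attained by $b_{m_n+1}$ whenever $\deg a_{n+1}$ realizes $\sup_n\deg a_n\geq 2$. The statement for $\gamma=\alpha/(T-\lambda)$ follows from the entirely symmetric argument, with numerators and denominators interchanged and $p_n(\lambda)\neq 0$ playing the role of $q_n(\lambda)\neq 0$. The main technical obstacle is the block analysis, and in particular the degenerate sub-cases in which the constant term of $a_{k+1}-h$ happens to vanish and $\alpha_{k+1}-h$ becomes smaller than expected; these must be checked not to disrupt the matching of the consecutive convergent denominator degrees that drives the whole argument.
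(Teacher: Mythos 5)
Your approach is essentially the paper's: exhibit convergents of $\beta$ via the best-approximation criterion and track their denominator degrees. Your ``semiconvergent'' modification---replacing $a_{k+1}$ by a linear $h$ with matching leading coefficient and $h(\lambda) = -q_{k-1}(\lambda)/q_k(\lambda)$---is, up to a scalar, the paper's construction $u = q_{n+1}(\lambda)p_n - q_n(\lambda)p_{n+1}$, $v = \bigl(q_{n+1}(\lambda)q_n - q_n(\lambda)q_{n+1}\bigr)/(T-\lambda)$; indeed $h = a_{k+1} - q_{k+1}(\lambda)/q_k(\lambda)$. The paper applies that construction uniformly at every index $n$ with $q_n(\lambda)q_{n+1}(\lambda)\neq 0$, not only inside a degree-one block, which makes the block decomposition you set up unnecessary bookkeeping.

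The degenerate sub-case you flag at the end is not a genuine obstruction and you should not leave it open: $h = a_{k+1}$ would force $q_{k+1}(\lambda) = a_{k+1}(\lambda)q_k(\lambda) + q_{k-1}(\lambda) = 0$, contradicting the standing hypothesis that $q_n(\lambda)\neq 0$ for every $n$. Hence $a_{k+1}-h$ is always a nonzero constant, $\alpha_{k+1}-h$ has order exactly zero, and the denominator-degree matching your argument relies on holds without exceptions. With that one-line observation inserted, your argument is complete and equivalent to the paper's.
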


In the second Chapter (assuming the characteristic of $\n K$ to be different from 2) we will focus on quadratic irrationalities, for which, as in the real setting, more precise results can be proven. This case had already been treated by Abel, Chebyshev and, later, by Artin. We will see that when $\n K$ is an algebraic extension of a finite field all of the classical theory, including Lagrange's and Galois' well known Theorems, carries over to the polynomial setting. However, this is not true in general, because if $\n K$ is an infinite field then the set of polynomials over $\n K$ of degree bounded by a given constant is infinite. Moreover, as in the classical case, the continued fraction expansion of square roots is strictly connected with the existence of solutions to an analogue of Pell's equation. 

Finally, we will discuss a connection, already observed by Abel, between continued fractions in quadratic function fields and the theory of hyperelliptic curves.\par\medskip

In the third Chapter we will present the original Conjectures of Zaremba and McMullen on real continued fractions, giving a brief summary of the known results, mainly due to Bourgain and Kontorovich.\par\medskip

In Chapter 4 we will then consider Conjecture \ref{0conj:Zarpol}. As it was already known to Blackburn, when $\n K$ is infinite, Conjecture \ref{0conj:Zarpol} holds with $z_{\n K}=1$; we will prove again this result with a different method, based on the multiplication of a continued fraction by a polynomial, in Theorem \ref{4prop:Zarfinite}. On the other hand, when $\n K$ is a finite field there are only partial results towards Conjecture \ref{0conj:Zarpol}, concerning polynomials with small degree, such as Theorem \ref{4prop:Zarfinite} (Blackburn), Theorems \ref{4theo:Friesen} and \ref{4theo:Friesen2} (Friesen) or Corollary \ref{4cor:zarsplits}.\par\medskip

In Chapter 5 we will finally study Conjecture \ref{0conj:MMpol}, proving it, with $m_{\n K}=1$, in the following cases.
\begin{itemize} 
	\item Adapting to the polynomial setting a result of Mercat (Th\'{e}or\`{e}me 8.2 in \cite{mercat2013construction}) we will show that Conjecture \ref{0conj:MMpol} is a consequence of Conjecture \ref{0conj:Zarpol} as soon as the Pell equation has non-trivial solutions (Theorem \ref{5theo:Mercat}); in particular, this will imply that Conjecture \ref{0conj:MMpol} holds over every infinite algebraic extension of a finite field (Corollary \ref{5cor:MMFpbar}). 
	\item When $\n K$ is an uncountable field it is easy to show that Conjecture \ref{0conj:multip}, and thus Conjecture \ref{0conj:MMpol}, hold (Theorem \ref{5theo:MMC}). 
	\item The theory of reduction of a continued fraction modulo a prime will allow us to prove Conjecture \ref{0conj:multip} over the algebraic closure of $\n Q$ (Theorem \ref{5theo:MMbarQ} and Corollary \ref{5cor:MMchQ}). Actually, in this case we will also give two other direct proofs of Conjecture \ref{0conj:MMpol} (Proposition \ref{5prop:MMchq1}).
	\item Continued fractions of the form $f\sqrt D$ are linked to generalized Jacobians of the hyperelliptic curve $U^2=D(T)$ and, thanks to a Theorem of Zannier (\cite{zannier2016hyperelliptic}, Theorem 1.7), this leads to a proof of Conjecture \ref{0conj:multip} and of Conjecture \ref{0conj:MMpol} over every number field (Theorem \ref{66theo:MMQ}).
\end{itemize}

Finally, we will present a well-known connection between real continued fractions and geodesics in the hyperbolic plane and its link with McMullen's results and conjectures on continued fraction with bounded partial quotients.

\section*{Notation}
Throughout the thesis, we will use the following standard notations:\\
- for a field $\n K$ we will denote by $\n K^*=\n K\setminus\{0\}$ the group of its invertible elements and by $\ov{\n K}$ its algebraic closure;\\
- we will denote by $\n F_q$ the finite field with $q$ elements, where $q$ is a power of a prime $p$;\\
- for $\alpha\inn\n R$, we will denote respectively by $\floor\alpha, \{\alpha\}$ the integer and the fractional parts of $\alpha$: $\alpha=\floor{\alpha}+\{\alpha\}$ with $\floor{\alpha}\inn\n Z$ and $\{\alpha\}\inn[0,1)$; we will use the same symbols for their polynomial analogues but the distinction will be clear from the context;\\
- if $a,b$ are integers or polynomials, we will write $a|b$ for ``$a$ divides $b$'' and we will denote by $\gcd(a,b)=(a,b)$ their greatest common divisor;\\
- we will denote by $A^t$ the transposed of a matrix $A$.
\par\medskip
\vspace{0,2 cm}
\noindent All the examples presented in the thesis have been computed thanks to PARI/GP.
\clearpage{\pagestyle{empty}\cleardoublepage}
\chapter{Continued fractions of formal Laurent series}
It is easy to see that a continued fraction formalism can be introduced over any normed field. In particular, as it was already shown in works of Abel \cite{Abel1826}, Chebyshev \cite{chebyshev1857integration} or Artin \cite{artin1924quadratischeI}, the classical theory of real continued fractions has a nearly perfect analogue for function fields over finite fields, that is, when the roles of $\n Z, \n Q$ and $\n R$ are played, respectively, by the ring of polynomials $\n F_q[T]$ over a finite field $\n F_q$, by the field of rational functions $\n F_q(T)$ and by the field of formal Laurent series $\n F_q((T^{-1}))$.\par\medskip

After recalling some general properties of continued fractions over a normed field, we will introduce the field of formal Laurent series $\n L=\n K((T^{-1}))$, where $\n K$ is a generic field, to focus then on regular continued fractions over $\n L$. In particular, we will show how most of the classical results for real continued fractions have an analogue in this case and highlight the more important differences. Finally, we will study how to perform some simple operations with continued fractions, focusing on M\"{o}bius transformations of a continued fraction (we recall that, as in the classical case, it is not easy in general to add or multiply continued fractions).\par\medskip

As in the real case, more precise results can be given about quadratic irrationalities; however, there is a complete analogy between the real and the polynomial settings, especially regarding periodicity, only if the base field $\n K$ is (an algebraic extension of) a finite field. These issues will be discussed in Chapter 2.\par\medskip

Unless otherwise specified, the results presented in this Chapter and in the following one are well known and can be found, together with more details, in most of the works on polynomial continued fractions, such as \cite{lauder1999continued}, \cite{poorten2000quasi}, \cite{schmidt2000continued} or \cite{stein1999introduction}.

\section{General continued fraction formalism}
Let $\n F$ be a field and let $|\cdot|:\n F\to\n R^+$ be an absolute value over $\n F$.

\begin{defn}
	Let $a_0,a_1,\cdots, a_n\inn\n F$. When it has a sense, that is, when no division by zero occurs, we define the \textit{finite simple continued fraction} $[a_0,a_1,\dots,a_n]$ as \be[a_0,a_1,\dots a_n]=a_0+\cfrac1{a_1+\cfrac1{\ddots+\cfrac1{a_n}}}\in\n F.\ee
	Let $(a_n)_{n\geq0}$ be an infinite sequence of elements of $\n F$. When it exists, we define the \textit{infinite simple continued fraction} $[a_0,a_1,\dots]$ as \be[a_0,a_1,\dots]=\lim_{n\to\infty}[a_0,a_1,\dots,a_n]\inn\n F,\ee where the limit is of course taken with respect to the absolute value $|\cdot|$.
\end{defn}

\begin{rem}\label{1rem:nest}
	It is easy to see that continued fractions can be nested one at the end of the other, that is, $$\left[a_0,\dots,a_n,[a_{n+1},a_{n+2},\dots]\right]=[a_0,\dots,a_n,a_{n+1},\dots].$$
\end{rem}

\begin{notat}\label{1notat:C_n}
	Let $a_0,a_1,\dots$ be a sequence, finite or infinite, of elements of $\n F$. We will set \be C_{-1}()=0,\ C_{0}()=1\ee and, for $n\geq0$, $$C_{n+1}(a_0,\dots,a_n)=a_nC_n(a_0,\dots,a_{n-1})+C_{n-1}(a_0,\dots,a_{n-2}).$$
	
	Equivalently, $C_{n+1}(a_0,\dots,a_n)$ is the sum of all the possible products of $a_0,\dots,a_n$ in which $k$ disjoint pairs of consecutive terms are deleted, for $k=0,\dots,\floor{(n+1)/2}$ (where the empty product is set to be 1). That is, $C_{n+1}(a_0,\dots,a_n)=\ds\sum_{k=0}^{\mathclap{\floor{(n+1)/2}}}L_k(a_0,\dots,a_n)$, where $$\ds L_k(a_0,\dots,a_n)=\sum_{\mathclap{\substack{0\leq j_1,\dots, j_k\leq n-1\\ j_m\leq j_{m+1}-2\text{ for every }m}}}a_0\cdots \widehat{a_{j_1}a_{j_1+1}}\cdots\widehat{a_{j_k}a_{j_k+1}}\cdots a_{j_n}.$$ 
	
	In particular, we always have \be\label{1eq:symmetryC_n}C_{n+1}(a_0,\dots,a_n)=C_{n+1}(a_n,\dots,a_0).\ee
\end{notat}

\begin{lem}\label{1lem:multAC}
	Let $(a_n)_n\sub\n F$ and let $a\inn\n F^*$. Then for every $n\geq0$ $$C_{n+1}(a\,a_0,a^{-1}a_1,\dots,a^{(-1)^n}a_n)=\begin{cases}a\,C_{n+1}(a_0,\dots,a_n) &\text{ if } n\text{ is even}\\C_{n+1}(a_0,\dots,a_n) &\text{ if } n\text{ is odd}\end{cases}.$$
\end{lem}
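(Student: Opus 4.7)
The plan is to proceed by induction on $n$, using the defining recurrence for $C_{n+1}$ from Notation \ref{1notat:C_n}. The base cases are immediate: for $n=0$ one has $C_1(a\,a_0)=a\,a_0 = a\,C_1(a_0)$, and for $n=1$ one computes $C_2(a\,a_0, a^{-1}a_1) = a^{-1}a_1 \cdot a\,a_0 + 1 = a_0 a_1 + 1 = C_2(a_0,a_1)$, which match the two parity cases.

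For the inductive step, suppose the claim holds for every index up to $n$. Applying the recurrence to the left-hand side,
\[
C_{n+1}(a\,a_0,\dots,a^{(-1)^n}a_n) \;=\; a^{(-1)^n}a_n\cdot C_n(a\,a_0,\dots,a^{(-1)^{n-1}}a_{n-1}) \;+\; C_{n-1}(a\,a_0,\dots,a^{(-1)^{n-2}}a_{n-2}).
\]
I would then substitute the inductive hypothesis into both terms. If $n$ is even, the factor in front of $a_n$ is $a$, the $C_n$ term carries no extra $a$ (since $n-1$ is odd), while the $C_{n-1}$ term acquires a factor $a$ (since $n-2$ is even); the two powers of $a$ thus agree and factor out to give $a\,C_{n+1}(a_0,\dots,a_n)$. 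If $n$ is odd, the factor in front of $a_n$ is $a^{-1}$, the $C_n$ term brings a compensating $a$, and the $C_{n-1}$ term is unchanged; altogether one recovers exactly $C_{n+1}(a_0,\dots,a_n)$.

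As a sanity check (and conceptual explanation of why the identity holds in the form it does), one can also read it off from the explicit expansion $C_{n+1}(a_0,\dots,a_n) = \sum_k L_k(a_0,\dots,a_n)$. Under the substitution $a_i \mapsto a^{(-1)^i}a_i$, every monomial in $L_k$ is multiplied by $a$ raised to the sum of $(-1)^i$ over the undeleted indices $i$. Since the deleted indices come in consecutive pairs $(j_s, j_s+1)$, each such pair contributes $(-1)^{j_s}+(-1)^{j_s+1}=0$, so the exponent reduces to $\sum_{i=0}^{n}(-1)^i$, which is $1$ when $n$ is even and $0$ when $n$ is odd, uniformly across all monomials. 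This is the true reason behind the parity dichotomy.

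There is no real obstacle here: the main thing to be careful about is bookkeeping the parities of $n-1$ and $n-2$ correctly when applying the inductive hypothesis, and noticing that the middle factor $a^{(-1)^n}$ combines with the $a$-power coming from the $C_n$ term to yield a power of $a$ that is either $1$ or $a$ depending on the parity of $n$, matching the $C_{n-1}$ contribution so that everything factors cleanly.
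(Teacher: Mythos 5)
Your proof is correct and takes the same route as the paper, which disposes of the lemma with the single phrase ``By induction.'' Your inductive step is carried out cleanly (the parity bookkeeping on $n-1$ and $n-2$ is exactly what is needed), and the closing observation via the $L_k$ expansion --- that deleted pairs of consecutive indices contribute $(-1)^{j}+(-1)^{j+1}=0$, so the exponent of $a$ on every surviving monomial equals $\sum_{i=0}^n(-1)^i$ --- is a nice conceptual gloss the paper does not give.
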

\begin{proof}
	By induction.
\end{proof}

\begin{defn}\label{1def:continuants}
	Let $\alpha=[a_0,a_1,\dots]\inn\n F$; its \textit{continuants} $(p_n,q_n)$ are defined by \be p_n=C_{n+1}(a_0,\dots,a_n),\ q_n=C_n(a_1,\dots,a_n) \text{ for } n\geq-2,\ee where we set $q_{-2}=1$.
	
	When they exist, their quotients $\ds\cv n$ are called the \textit{convergents} of the continued fraction $[a_0,a_1,\dots]$.
\end{defn}

From now on, we will always assume that the continued fractions we consider are well defined, as well as their convergents.

\begin{lem}\label{1lem:continuants}
	Let $(p_i,q_i),(u_j,v_j)$ be, respectively, the continuants of $[a_0,\dots,a_n]$ and of $[a_n,\dots,a_0]$. Then $u_n=p_n,\ u_{n-1}=q_n,$ $v_n=p_{n-1}$ and $v_{n-1}=q_{n-1}$.
\end{lem}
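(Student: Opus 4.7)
The plan is to observe that this lemma is essentially a direct translation of the symmetry identity \eqref{1eq:symmetryC_n}, namely $C_{n+1}(a_0,\dots,a_n)=C_{n+1}(a_n,\dots,a_0)$, which has already been recorded in Notation \ref{1notat:C_n}. So no induction is needed: I would simply unfold the definitions of the continuants of the reversed continued fraction and match them with the continuants of the original one using this symmetry.

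Concretely, I would set $b_i=a_{n-i}$ for $i=0,\dots,n$, so that $[a_n,\dots,a_0]=[b_0,\dots,b_n]$. By Definition \ref{1def:continuants} applied to the sequence $(b_i)$, one has $u_n=C_{n+1}(b_0,\dots,b_n)$, $u_{n-1}=C_n(b_0,\dots,b_{n-1})$, $v_n=C_n(b_1,\dots,b_n)$ and $v_{n-1}=C_{n-1}(b_1,\dots,b_{n-1})$. Substituting back $b_i=a_{n-i}$ gives
\[
u_n=C_{n+1}(a_n,\dots,a_0),\quad u_{n-1}=C_n(a_n,\dots,a_1),
\]
\[
v_n=C_n(a_{n-1},\dots,a_0),\quad v_{n-1}=C_{n-1}(a_{n-1},\dots,a_1).
\]

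Now I would apply \eqref{1eq:symmetryC_n} to each of the four expressions above: reversing the arguments leaves the value of $C_k$ unchanged. This yields $u_n=C_{n+1}(a_0,\dots,a_n)=p_n$, $u_{n-1}=C_n(a_1,\dots,a_n)=q_n$, $v_n=C_n(a_0,\dots,a_{n-1})=p_{n-1}$ and $v_{n-1}=C_{n-1}(a_1,\dots,a_{n-1})=q_{n-1}$, which are exactly the four claimed equalities.

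Since the argument is just a rewriting via the symmetry relation, there is no real obstacle; the only thing to be a bit careful about is bookkeeping of indices (i.e.\ making sure the lower index on $C$ matches the number of arguments, and that $q_{n-1}$ is indeed defined in terms of $a_1,\dots,a_{n-1}$ rather than $a_0,\dots,a_{n-1}$). If \eqref{1eq:symmetryC_n} were not already available, one would prove it by a short induction on $n$ using the three-term recursion and the combinatorial description of $C_{n+1}$ as a sum over deletions of disjoint consecutive pairs, which is manifestly invariant under reversal of the sequence; but since it is stated in Notation \ref{1notat:C_n}, the proof reduces to the one-line substitution above.
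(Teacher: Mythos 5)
Your proof is correct and is precisely the argument the paper has in mind: the paper's proof consists of the single line ``It follows immediately from \eqref{1eq:symmetryC_n}'', and your write-up is the careful unfolding of that remark via the substitution $b_i=a_{n-i}$ and four applications of the symmetry identity. The index bookkeeping checks out in all four cases.
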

\begin{proof}
	It follows immediately from \eqref{1eq:symmetryC_n}.
\end{proof}

\begin{lem}\label{1lem:continuants2}
	Let $\alpha=[a_0,a_1,\dots]$ and let $(\cv n)_n$ be its convergents. Then we have \be\cv n=[a_0,\dots,a_n] \text{ for every } n\geq0.\ee
\end{lem}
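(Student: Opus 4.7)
My plan is to prove the identity $\frac{p_n}{q_n}=[a_0,\dots,a_n]$ by induction on $n$, using the recursive definition of the continuants together with the nesting property from Remark \ref{1rem:nest}.

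For the base cases I would simply unwind the definitions: for $n=0$ we have $p_0=C_1(a_0)=a_0C_0()+C_{-1}()=a_0$ and $q_0=C_0()=1$, so $p_0/q_0=a_0=[a_0]$. For $n=1$, $p_1=C_2(a_0,a_1)=a_1a_0+1$ and $q_1=C_1(a_1)=a_1$, giving $p_1/q_1=a_0+1/a_1=[a_0,a_1]$. These will establish the pattern and also confirm the initial conditions needed to run the induction.

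For the inductive step, assume the result holds for all finite continued fractions with at most $n+1$ terms (with entries in any extension where division is well-defined). Using the nesting identity from Remark \ref{1rem:nest}, I would rewrite
\begin{equation*}
[a_0,\dots,a_{n+1}]=\left[a_0,\dots,a_{n-1},\,a_n+\tfrac{1}{a_{n+1}}\right]
\end{equation*}
and apply the induction hypothesis to the right-hand side, which is a continued fraction of length $n+1$ with last entry $a_n+1/a_{n+1}$. Denoting by $(p'_i,q'_i)$ the continuants of this modified sequence, the recursion for $C$ yields
\begin{equation*}
p'_n=\left(a_n+\tfrac1{a_{n+1}}\right)p_{n-1}+p_{n-2}=p_n+\tfrac{p_{n-1}}{a_{n+1}},\qquad q'_n=q_n+\tfrac{q_{n-1}}{a_{n+1}},
\end{equation*}
because the first $n-1$ entries are unchanged so the lower-order continuants still equal $p_{n-2},p_{n-1}$ and $q_{n-2},q_{n-1}$. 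Multiplying numerator and denominator by $a_{n+1}$ gives
\begin{equation*}
\frac{p'_n}{q'_n}=\frac{a_{n+1}p_n+p_{n-1}}{a_{n+1}q_n+q_{n-1}}=\frac{p_{n+1}}{q_{n+1}},
\end{equation*}
which completes the induction.

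The only subtle point is making sure the induction hypothesis applies despite $a_n+1/a_{n+1}$ no longer lying in the original ground ring; since the statement and the recursive formulas for $C_n$ are purely algebraic and valid over the field $\n F$, this causes no difficulty, and we only need the continued fraction to be well defined (which we have already assumed). For the infinite case, the statement $p_n/q_n=[a_0,\dots,a_n]$ only involves the finite truncation, so it follows directly from the finite case; no limiting argument is needed.
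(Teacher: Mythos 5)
Your proof is correct and takes essentially the same approach as the paper: strong induction via the nesting identity $[a_0,\dots,a_{n+1}]=[a_0,\dots,a_{n-1},a_n+\tfrac1{a_{n+1}}]$, applying the induction hypothesis (stated for arbitrary entries, as you carefully note) to the shortened continued fraction and then clearing denominators. The only difference is a trivial shift in indexing.
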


\begin{proof}		
	Of course, $\cv0=\frac{a_0}1=[a_0]$. If by inductive hypothesis $[b_0,\dots,b_m]=\frac{C_{m+1}(b_0,\dots b_m)}{C_m(b_1,\dots,b_m)}$ for $m< n$ and for every $b_0,\dots,\!b_m$, then, as $[a_0,\dots,a_n]\!=\!\left[a_0,\dots,a_{n-1}\!+\!\frac1{a_n}\right]$, we will have $[a_0,\dots,a_n]=\frac{\left(a_{n-1}+\frac1{a_n}\right)p_{n-2}+p_{n-3}}{\left(a_{n-1}+\frac1{a_n}\right)q_{n-2}+q_{n-3}}= \cv{n}$.
\end{proof}

\begin{rem}	\label{1rem:continuants}
	In particular, by the previous Lemma, \be\label{1eq:inv}[a_n,\dots,a_0]=\frac{p_n}{p_{n-1}}\text{ and } [a_n,\dots,a_1]=\frac{q_n}{q_{n-1}}.\ee
\end{rem}
	
\begin{lem}\label{1lem:pnqnprime}
	In the previous notations, we have \be\label{1eq:pnqnprime}q_np_{n-1}-q_{n-1}p_n=(-1)^n \text{ for } n\geq-1.\ee 
	
	More generally, setting $d_{i,n}=C_{i-1}(a_{n-i+2},\dots,a_n)$ for $i\geq0,\ n\geq i$, we have \be\label{1eq:d_in}q_np_{n-i}-q_{n-i}p_n=(-1)^{n+i-1}d_{i,n}.\ee
\end{lem}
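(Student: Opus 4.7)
The first identity is a straightforward induction on $n$, driven by the three-term recurrence built into Notation \ref{1notat:C_n}: namely $p_n=a_np_{n-1}+p_{n-2}$ and $q_n=a_nq_{n-1}+q_{n-2}$ for $n\geq 0$. With the conventions $p_{-2}=0,\ p_{-1}=1,\ q_{-2}=1,\ q_{-1}=0$, the base case $n=-1$ reads $-1=(-1)^{-1}$, and the inductive step collapses to
$$q_np_{n-1}-q_{n-1}p_n=-(q_{n-1}p_{n-2}-q_{n-2}p_{n-1})=(-1)^n.$$

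For the general identity my plan is to first establish the polynomial identities
$$p_n=P^{(i)}p_{n-i}+Q^{(i)}p_{n-i-1},\qquad q_n=P^{(i)}q_{n-i}+Q^{(i)}q_{n-i-1},$$
with $P^{(i)}=C_i(a_{n-i+1},\dots,a_n)$ and $Q^{(i)}=C_{i-1}(a_{n-i+2},\dots,a_n)=d_{i,n}$. I would prove these by induction on $i$: the base cases $i=0,1$ are immediate from the defining recurrence, and the inductive step rests on the left-hand form of the $C_n$-recurrence,
$$C_{i+1}(a_{n-i},\dots,a_n)=a_{n-i}\,C_i(a_{n-i+1},\dots,a_n)+C_{i-1}(a_{n-i+2},\dots,a_n),$$
which itself follows by applying Notation \ref{1notat:C_n} after reversing arguments via \eqref{1eq:symmetryC_n}. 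Combined with $p_{n-i}=a_{n-i}p_{n-i-1}+p_{n-i-2}$, this promotes the $i$-th relation to the $(i+1)$-th through the meta-recurrence $P^{(i+1)}=a_{n-i}P^{(i)}+Q^{(i)}$ and $Q^{(i+1)}=P^{(i)}$.

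Substituting these expressions into $q_np_{n-i}-q_{n-i}p_n$ makes the $P^{(i)}$-contributions cancel, leaving
$$q_np_{n-i}-q_{n-i}p_n=Q^{(i)}\bigl(q_{n-i-1}p_{n-i}-q_{n-i}p_{n-i-1}\bigr)=d_{i,n}\cdot(-1)^{n-i+1}=(-1)^{n+i-1}d_{i,n},$$
where I have applied \eqref{1eq:pnqnprime} at index $n-i$ and used the congruence $n-i+1\equiv n+i-1\pmod 2$. The main obstacle is really just the bookkeeping in the inductive step: one has to identify the correct continuant quantities $P^{(i)},Q^{(i)}$ and match the shifts of arguments accurately, but once the meta-recurrence above is in hand, no deeper idea is required and the rest is a mechanical substitution.
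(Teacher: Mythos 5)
Your argument is correct, but it follows a genuinely different route from the paper's. The paper treats \eqref{1eq:d_in} by a direct induction on $i$ (for all $n$): it substitutes $p_n=a_np_{n-1}+p_{n-2}$ and $q_n=a_nq_{n-1}+q_{n-2}$ into the left-hand side, obtaining $q_np_{n-i}-q_{n-i}p_n=a_n(q_{n-1}p_{n-i}-q_{n-i}p_{n-1})+(q_{n-2}p_{n-i}-q_{n-i}p_{n-2})$, applies the inductive hypothesis at $(i-1,n-1)$ and $(i-2,n-2)$, and closes via the single recurrence $d_{i,n}=a_nd_{i-1,n-1}+d_{i-2,n-2}$, which is just the defining $C$-recurrence. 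Your approach instead first proves the splitting identities $p_n=P^{(i)}p_{n-i}+Q^{(i)}p_{n-i-1}$ and $q_n=P^{(i)}q_{n-i}+Q^{(i)}q_{n-i-1}$, which is precisely formula \eqref{1eq:C_n} of Remark \ref{1rem:cauchy} in continuant language (equivalently, the matrix factorization $M_{(a_0,\dots,a_n)}=M_{(a_0,\dots,a_{n-i})}M_{(a_{n-i+1},\dots,a_n)}$ of Lemma \ref{1lem:matrixaction}), and then reduces \eqref{1eq:d_in} to the $i=1$ case by a one-line cancellation. The paper's version is more economical here because it avoids invoking the left-form of the $C$-recurrence (which needs the symmetry \eqref{1eq:symmetryC_n}) and the auxiliary splitting lemma; your version has the advantage of isolating a reusable identity — the splitting formula — that the paper only records later and elsewhere, and of making the power of $-1$ in \eqref{1eq:d_in} visibly come from the $i=1$ case at index $n-i$ rather than from tracking signs through a double induction. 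Both are sound; the sign bookkeeping in your final line, $(-1)^{n-i+1}=(-1)^{n+i-1}$, is correct.
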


\begin{proof}
	As in the real case, \eqref{1eq:pnqnprime} can be easily proved by induction.
	
	As for \eqref{1eq:d_in}, the case $i=0$ is trivially true and for $i=1$ we find again \eqref{1eq:pnqnprime}. Now, $q_np_{n-i}-q_{n-i}p_n=	a_n(q_{n-1}p_{n-i}-q_{n-i}p_{n-1})+(q_{n-2}p_{n-i}-q_{n-i}p_{n-2})$. Assuming by inductive hypothesis that \eqref{1eq:d_in} holds for $i-1$ and $i-2$ (for every $n$), we will have $q_np_{n-i}-q_{n-i}p_n=a_n(-1)^{n+i-1}d_{i-1,n-1}+(-1)^{n+i-1}d_{i-2,n-2}=(-1)^{n+i-1}d_{i,n}$. 
\end{proof}

\begin{lem}\label{1lem:formulaspnqn}
	Let $(a_n)$ be a sequence of elements of $\n F$ such that, for every $n\geq0$, the continuants $p_n,q_n$ and $\alpha_n=[a_n,a_{n+1},\dots]$ are well defined; let $\alpha=\alpha_0$. 
	\begin{enumerate}
		\item $\alpha=[a_0,\dots,a_n,\alpha_{n+1}]$ for every $n$, so \be\label{1eq:alphapnqn}\alpha=\frac{\alpha_{n+1}p_n+p_{n-1}}{\alpha_{n+1}q_n+q_{n-1}}\text{; equivalently, }\alpha_{n+1}=-\frac{q_{n-1}\alpha-p_{n-1}}{q_n\alpha-p_n}.\ee 
		\item $\ds\prod_{j=1}^n\alpha_j=\frac{(-1)^n}{p_{n-1}-\alpha\, q_{n-1}}$ for $n\geq1$. 
		\item If $\alpha=[a_0,\dots,a_n]=\cv n$, then $\prod\limits_{j=1}^n\alpha_j=q_n$. More generally, \be\alpha_k\cdots\alpha_n=C_{n-k+1}(a_k,\dots,a_n).\ee
		\item We have \be\label{1eq:alpha-pn/qn}\alpha-\cv n=\frac{(-1)^n}{q_n(\alpha_{n+1}q_n-q_{n-1})},\ee
		so $$|p_n-\alpha q_n|=|q_n\alpha_{n+1}-q_{n-1}|^{-1}.$$
	\end{enumerate} 
\end{lem}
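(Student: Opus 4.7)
The plan is to prove the four items in order. Item (1) does most of the work: it reinterprets the infinite continued fraction as a finite one whose final partial quotient is the complete quotient $\alpha_{n+1}$, after which items (2)--(4) reduce to short algebraic manipulations built on the Cassini-type identity of Lemma \ref{1lem:pnqnprime}.

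For item (1), the key observation is Remark \ref{1rem:nest}: nesting $\alpha_{n+1} = [a_{n+1},a_{n+2},\dots]$ inside $\alpha$ gives $\alpha = [a_0,\dots,a_n,\alpha_{n+1}]$. Applying Lemma \ref{1lem:continuants2} to this finite continued fraction, together with the recursion defining $C_n$ in Notation \ref{1notat:C_n}, the continuants at position $n+1$ are exactly $\alpha_{n+1}p_n+p_{n-1}$ and $\alpha_{n+1}q_n+q_{n-1}$, which yields the first formula of (1). Solving for $\alpha_{n+1}$ as a M\"obius function of $\alpha$ then gives the equivalent inverse formula.

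I treat item (2) by induction on $n$. The base $n=1$ is immediate from $\alpha_1 = 1/(\alpha-a_0)$ together with $p_0=a_0$, $q_0=1$. For the inductive step I substitute the second identity of (1) into $\prod_{j=1}^{n+1}\alpha_j = \alpha_{n+1}\prod_{j=1}^n\alpha_j$; the factor $p_{n-1} - \alpha q_{n-1}$ cancels and what remains is $(-1)^{n+1}/(p_n - \alpha q_n)$. Item (3) then follows by specializing (2) to $\alpha = p_n/q_n$: the identity $p_{n-1}q_n - p_nq_{n-1} = (-1)^n$ from Lemma \ref{1lem:pnqnprime} gives $p_{n-1} - \alpha q_{n-1} = (-1)^n/q_n$, hence $\prod_{j=1}^n\alpha_j = q_n$. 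The more general identity $\alpha_k\cdots\alpha_n = C_{n-k+1}(a_k,\dots,a_n)$ is obtained by applying the specialized statement to the shifted finite continued fraction $[a_{k-1},a_k,\dots,a_n]$: its denominator continuant is precisely $C_{n-k+1}(a_k,\dots,a_n)$, and its tails from index $1$ onwards coincide with $\alpha_k,\alpha_{k+1},\dots,\alpha_n$.

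Finally, item (4) is a direct substitution of the first identity of (1) into $\alpha - p_n/q_n$: clearing denominators, the numerator collapses to $q_np_{n-1} - p_nq_{n-1} = (-1)^n$ by Lemma \ref{1lem:pnqnprime}, and taking absolute values yields the displayed formula for $|p_n - \alpha q_n|$. No step uses any idea beyond Remark \ref{1rem:nest}, Lemma \ref{1lem:continuants2} and the Cassini identity; the only obstacle is clerical, namely keeping track of signs, of the boundary values $q_{-2}=1,\ q_{-1}=0$, and of the index shift used to deduce the general form of (3) from its special case.
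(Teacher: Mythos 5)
Your proof is correct and follows the paper's skeleton: deduce (1) from Remark~\ref{1rem:nest} and Lemma~\ref{1lem:continuants2}, then obtain (2), (3), (4) from (1) together with the Cassini identity \eqref{1eq:pnqnprime}. Two intermediate steps are handled differently. For (2), your induction is a rewriting of the telescoping product hidden in the inverse formula $\alpha_j = -\frac{q_{j-2}\alpha - p_{j-2}}{q_{j-1}\alpha - p_{j-1}}$; as you note, no Cassini identity is needed here, only the boundary values $q_{-1}=0,\ p_{-1}=1$. For the general identity $\alpha_k\cdots\alpha_n = C_{n-k+1}(a_k,\dots,a_n)$ in (3), you apply the $k=1$ case to the shifted finite continued fraction $[a_{k-1},\dots,a_n]$, whereas the paper deduces it from the generalized Cassini identity \eqref{1eq:d_in}; your route is slightly more elementary, since it bypasses \eqref{1eq:d_in} entirely, at the cost of an index bookkeeping argument. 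One observation about (4): carrying out your substitution, the numerator is $q_np_{n-1}-p_nq_{n-1}=(-1)^n$ as you say, but the denominator comes out as $q_n\bigl(\alpha_{n+1}q_n + q_{n-1}\bigr)$ with a \emph{plus} sign; the displayed equation \eqref{1eq:alpha-pn/qn} shows $\alpha_{n+1}q_n - q_{n-1}$, which appears to be a sign slip in the paper. It is harmless in all downstream uses, since only the absolute value is invoked and in the non-Archimedean setting $|\alpha_{n+1}q_n \pm q_{n-1}| = |\alpha_{n+1}q_n|$ whenever $\ord a_{n+1}<0$, but your computation implicitly gives the corrected sign and you should report $+q_{n-1}$.
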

\begin{proof}
	$1.$ follows directly from Remark \ref{1rem:nest} and from Lemma \ref{1lem:continuants2}, while $2.$ and $4.$ are an immediate consequence of $1.$ and \eqref{1eq:pnqnprime}. Finally, $3.$ follows from $2$, \eqref{1eq:pnqnprime} and \eqref{1eq:d_in}. 
\end{proof}

\begin{rem}\label{1rem:cauchy}
	In the previous hypotheses, setting $\alpha_{i,n}=[a_{i+1},\dots,a_n]$ we will have, similarly to \eqref{1eq:alphapnqn}, $\cv n=\frac{\alpha_{i,n}p_i+p_{i-1}}{\alpha_{i,n}q_i+q_{i-1}}$ for every $i< n$. Then \be\label{1eq:cauchy}\cv n-\cv i=\frac{(-1)^i}{q_i(\alpha_{i,n}q_i+q_{i-1})}.\ee
	
	Comparing \eqref{1eq:cauchy} and \eqref{1eq:d_in} we get that $C_{n-i-1}(a_{i+2},\dots,a_n)=\frac{q_n}{\alpha_{i,n}q_i+q_{i-1}}$, so \begin{multline}\label{1eq:C_n}C_n(a_1,\dots,a_n)=C_i(a_1,\dots,a_i)C_{n-i}(a_{i+1},\dots,a_n)+\\+C_{i-1}(a_1,\dots,a_{i-1})C_{n-i-1}(a_{i+2},\dots,a_n).\end{multline}
\end{rem}

It can also be useful to introduce a matrix formalism for continued fractions, generalizing the one used in the classical real case, which goes back at least to Frame \cite{frame1949continued}. This subject is examined also in \cite{borwein2014neverending}.

\begin{notat}\label{1notat:matrix}
	For $a\inn\n F$, let $M_a$ be the unimodular matrix $M_a=\mm a110\inn M_2(\n F)$ and, for $a_0,\dots,a_n\inn\n F$, let $$M_{(a_0,\dots,a_n)}=M_{a_0}\cdots M_{a_n}.$$ 
	
	$M_2\left(\n F\right)$ acts on $\n F$ by M\"obius transformations: we will write $$\mm ABCD\alpha=\frac{A\alpha+B}{C\alpha+D}.$$
	In particular, if $\alpha=[a_0,a_1,\dots]$ then $M_a\alpha=a+\frac1\alpha=[a,a_0,a_1,\dots]$ so, iterating, $M_{(a_0,\dots,a_n)}\alpha_{n+1}=\alpha$.
\end{notat}	

\begin{lem}\label{1lem:matrixaction}
	If $(p_n,q_n)_n$ are the continuants of $\alpha$, then for every $n\geq0$, \be\label{1eq:pnqnmatrix} M_{(a_0,\dots,a_n)}=\mm{p_n}{p_{n-1}}{q_n}{q_{n-1}}.\ee
\end{lem}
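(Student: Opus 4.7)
The plan is to prove the identity by induction on $n$, using the recursive definition of the continuants from Definition \ref{1def:continuants}.

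For the base case $n=0$, I would just unfold the definitions: $M_{a_0}=\mm{a_0}{1}{1}{0}$ while $p_0=C_1(a_0)=a_0$, $p_{-1}=C_0()=1$, $q_0=C_0()=1$, and $q_{-1}=C_{-1}()=0$, so the two matrices agree.

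For the inductive step, assuming $M_{(a_0,\dots,a_{n-1})}=\mm{p_{n-1}}{p_{n-2}}{q_{n-1}}{q_{n-2}}$, I would compute
\[
M_{(a_0,\dots,a_n)} = M_{(a_0,\dots,a_{n-1})}\, M_{a_n} = \mm{p_{n-1}}{p_{n-2}}{q_{n-1}}{q_{n-2}}\mm{a_n}{1}{1}{0} = \mm{a_np_{n-1}+p_{n-2}}{p_{n-1}}{a_nq_{n-1}+q_{n-2}}{q_{n-1}}.
\]
The recurrences $p_n = a_n p_{n-1}+p_{n-2}$ and $q_n = a_n q_{n-1} + q_{n-2}$, which follow directly from the defining relation for $C_{n+1}$ in Notation \ref{1notat:C_n} applied to $(a_0,\dots,a_n)$ and $(a_1,\dots,a_n)$ respectively, then identify the right-hand side with $\mm{p_n}{p_{n-1}}{q_n}{q_{n-1}}$.

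There is no real obstacle here: the statement is essentially a compact matrix repackaging of the continuant recursion, and the only thing to be careful about is getting the base case right with the convention $q_{-2}=1$, $p_{-1}=1$, $q_{-1}=0$, $p_{-2}=0$ (though only the indices $n-1,n-2\ge -1$ are actually used in the induction once $n\ge 1$). I would also remark afterwards that taking determinants on both sides of \eqref{1eq:pnqnmatrix} immediately recovers \eqref{1eq:pnqnprime}, since $\det M_a = -1$ and hence $\det M_{(a_0,\dots,a_n)} = (-1)^{n+1}$.
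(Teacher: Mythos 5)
Your proof is correct and is exactly the induction the paper has in mind when it writes simply ``By induction.'' The base case computation, the matrix recursion $M_{(a_0,\dots,a_n)} = M_{(a_0,\dots,a_{n-1})}M_{a_n}$, and the identification via the continuant recurrences $p_n = a_np_{n-1}+p_{n-2}$, $q_n = a_nq_{n-1}+q_{n-2}$ are precisely the intended steps, and your closing remark about taking determinants to recover \eqref{1eq:pnqnprime} matches the observation the paper makes immediately after the lemma.
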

\begin{proof}
	By induction.
\end{proof}
	
\begin{rem}
	Some of the previous results can be found again immediately in this context. 
	
	For instance, as $p_nq_{n-1}-p_{n-1}q_n=\det M_{(a_0,\dots,a_n)}=(-1)^{n+1}$, we get again \eqref{1eq:pnqnprime}.
	
	Moreover, denoting by $\frac{u_j}{v_j}$ the convergents of $[a_n,\dots,a_0]$ as in Lemma \ref{1lem:continuants}, then $\mm {u_n}{v_n}{u_{n-1}}{v_{n-1}}=M_{(a_n,\dots,a_0)}=M_{(a_n,\dots,a_0)}^t=\mm{p_n}{p_{n-1}}{q_n}{q_{n-1}}$.
	
	Certainly, $M_{(a_0,\dots,a_n)}=M_{(a_0,\dots,a_i)}M_{(a_{i+1},\dots,a_n)}$ for every $i\leq n$, which implies \eqref{1eq:C_n}.  
\end{rem}

\begin{rem}\label{1rem:matrixaction2}
	We can consider the map from $M_2\left(\n F\right)$ to $\n F\cup\{\infty\}$ defined by $\ds\pphi:(M)=\frac\alpha\gamma$ for $\ds M=\mm \alpha\beta\gamma\delta$, where $\pphi(M)=\infty$ if and only if $\gamma=0$. In particular, we will have $\pphi(M_{(a_0,\dots,a_n)})=[a_0,\dots,a_n]$. This map is compatible with M\"obius transformations, that is, for every $m,M\inn M_2(\n F)$ we have $\pphi(Mm)= M\pphi(m)$. 
\end{rem}

\section{Continued fractions of formal Laurent series}
The theory of continued fractions of Laurent series, and especially of quadratic irrationalities, was firstly developed by Abel and Chebyshev as a tool to express hyperelliptic integrals in finite terms.\par\medskip

Abel \cite{Abel1826} was the first who considered systematically continued fractions in the hyperelliptic case, proving the following result:\medskip 

\textit{Let $D\inn\n Q[T]$ be a monic polynomial of even positive degree $2d$ which is not a perfect square. If there exists a non-trivial solution $(p,q)$ of the polynomial analogue of the Pell equation for $D$, that is, if there exist polynomials $p,q\inn\n Q[T]$ such that $p^2-Dq^2$ is a non-zero constant (with $q\neq0$), then, setting $f= p'/q$, we will have that $f$ is a polynomial of degree $d-1$ and that $\ds\int\frac{f(T)dT}{\sqrt{D(T)}}= \log\left(p(T) + q(T)\sqrt{D(T)}\right)$. Conversely, given such an indefinite integral, it follows that $f$ is a polynomial of degree $d-1$ and that $(p,q)$ is a solution of the Pell equation for $D$.}\medskip 

Indeed, if $p(T)^2-D(T)q(T)^2$ is a constant, then $p,q$ are relatively prime polynomials, so $2p'p-2q'qD-q^2D'=0$ implies that $q|p'$ and $(2q'D+qD')/2p=p'/q$. Thus $(p(T)+q(T)\sqrt{D(T)})'/(p(T)+q(T)\sqrt{D(T)})=f(T)/\sqrt{D(T)}$ with $f=p'/q$. Conversely, given the integral, adding it to its conjugate (the integral obtained replacing $\sqrt{D(T)}$ with $-\sqrt{D(T)}$ ) we get that $\log(p^2-Dq^2)$ must be a constant, that is, $p^2-Dq^2$ must be a non-zero constant.\par\medskip

As it was already well known, the solutions to the classical Pell equation for a positive integer $d$ can be found through the continued fraction expansion of $\sqrt d$. Thus, Abel was naturally led to define an analogous continued fraction expansion for a square root of $D$ when $D$ is a polynomial as above. He proved that the existence of a non-trivial solution to the polynomial Pell equation for $D$ is equivalent to the periodicity of the continued fraction expansion of $\sqrt D$, and that in this case the whole classical theory carries over to the polynomial setting. Moreover, he found a connection between the periodicity of the continued fraction  of $\sqrt D$ and the fact that the class of the divisor $(\infty_-)-(\infty_+)$ is a torsion point on the Jacobian of the hyperelliptic curve $\C H: U^2=D(T)$, where $\infty_-,\infty_+$ are the points at infinity on $\C H$. Chebyshev continued these studies, publishing a series of papers on this subject between 1853 and 1867; in particular, he considered extensively the case when $D$ has degree 4, that is, when the associated curve is an elliptic curve \cite{chebyshev1857integration}.\par\medskip 

A century later, Artin resumed this theory, in order to study the arithmetic of quadratic extensions of $\n F_p(T)$ in complete analogy to the classical theory of quadratic number fields \cite{artin1924quadratischeI}, \cite{artin1924quadratischeII}. In particular, this allowed him to find explicit formulas for class numbers of quadratic function fields.

\subsection{Formal Laurent series}
In the theory of continued fractions over function fields, the role of the ring of integers $\n Z$ in the classical case will be played by the polynomial ring $\n K[T]$, where $\n K$ is a field, and, consequently, its field of fractions $\n K(T)$, the field of rational functions over $\n K$, will correspond to $\n Q$.

$\n K(T)$ is a normed field with the natural valuation given by $$\ord\left(A/B\right)=-\deg A+\deg B \text{ if } A/B\neq0,\ A,B\inn\n K[T] \text{ and } \ord(0)=\infty.$$ We will denote by $|\cdot|$ the associated absolute value $$\left|A/B\right|=\mu^{\deg A-\deg B},\ |0|=0$$ with $\mu\inn\n R,\mu>1$ fixed (usually, $\mu=q$ if $\n K=\n F_q$ is a finite field and $\mu=e$ otherwise). Contrary to the usual absolute value on $\n Q$, the norm $|\cdot|$ is non-Archimedean, that is, $$|f+g|\leq\max\left\{|f|,|g|\right\}$$ for every $f,g\inn\n K(T)$ and, in particular, $|f+g|=\max\{|f|,|g|\}$ as soon as $|f|\neq|g|$. This, together with the fact that a set of polynomials of bounded norm is finite if and only if $\n K$ is a finite field, will lead to the principal differences between the classical theory of real continued fractions and the theory of continued fractions in function fields.\par\medskip

In place of $\n R$, we will then consider the completion of $\n K(T)$ with respect to the valuation $\ord$, that is, the field of formal Laurent series.

\begin{defn} 
	We will denote by $\n L_{\n K}$ or, when the base field $\n K$ is clear, simply by $\n L$, the set of \textit{formal Laurent series} over $\n K$, that is, $$\n L_{\n K}=\n K\left(\left(T^{-1}\right)\right)=\left\{\sum_{i=-\infty}^N c_iT^i,\ N\inn\n Z,\ c_i\inn\n K\ \forall i\right\}.$$
\end{defn}

Obviously $\n L$ is a ring with the natural operations of sum and product of formal series $$\sum_{\mathclap{i=-\infty}}^Nc_iT^i+\sum_{\mathclap{i=-\infty}}^Md_iT^i=\sum_{\mathclap{i=-\infty}}^{\mathclap{\max\{M,N\}}}(c_i+d_i)T^i,$$ $$\left(\sum_{i=-\infty}^Nc_iT^i\right)\left(\sum_{j=-\infty}^Md_jT^j\right)=\sum_{\mathclap{k=-\infty}}^{\mathclap{M+N}}\left(\sum_{i}c_id_{k-i}\right)T^k,$$ where we set $c_i=0$ for $i>N$, $d_j=0$ for $j>M$.\par\medskip

Actually, $\n L$ is a field: let $\alpha=\sum\limits_{i\leq N} c_iT^i\inn\n L$, $\alpha\neq0$, with $c_N\neq0$, then $\alpha$ is invertible in $\n L$ and its inverse is $\beta=\sum\limits_{\mathclap{j\leq -N}} d_jT^j$, where \be\label{1eq:Laurentinv} d_{-N}=c_N^{-1}\text{ and } d_{j-N}=-c_N^{-1}\sum_{\mathclap{i=N+j}}^{N-1} c_id_{j-i}\text{ for } j<0.\ee

As we can think of $\n K[T]$ as a subring of $\n L$ and any non-zero polynomial is invertible in $\n L$, we can identify $\n K(T)$ with a subfield of $\n L$. 

\begin{lem}\label{1lem:Laurseriesrat}
	Let $\alpha=\sum\limits_{i\leq N}c_iT^i\inn\n L$. Then $\alpha$ represents a rational function if and only if its coefficients $c_i$ eventually satisfy a linear recurrence relation, that is, if and only if there exist $m_0\leq N,\ M\geq1$ and $d_0,\dots, d_M\inn\n K$ such that $d_0c_j+\cdots+d_Mc_{j-M}=0$ for every $j\leq m_0$.
\end{lem}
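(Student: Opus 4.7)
The plan is to show both directions by translating the linear recurrence on the coefficients $c_i$ into the assertion that $B\alpha$ is a polynomial for an explicit $B\inn\n K[T]$. The key observation is that, for any $B=\sum_{k=0}^M d_kT^k\inn\n K[T]$, the coefficient of $T^j$ in the Laurent series $B\alpha$ equals $\sum_{k=0}^M d_kc_{j-k}$, so the recurrence condition is precisely the vanishing of all Laurent coefficients of $B\alpha$ of degree at most $m_0$.

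For the forward implication, I would write $\alpha=A/B$ with $A,B\inn\n K[T]$ and $B\neq0$, multiplying both $A$ and $B$ by $T$ if necessary so that $M=\deg B\geq1$. From $B\alpha=A$ and the fact that $A$ has no terms of strictly negative degree, the coefficient computation above yields $\sum_{k=0}^M b_kc_{j-k}=0$ for every $j\leq-1$, where $B=\sum_k b_kT^k$. Taking $d_k=b_k$ and $m_0=\min(N,-1)$ then gives the desired recurrence with the required bound $m_0\leq N$.

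For the reverse direction, given a non-trivial recurrence of the stated form, I would define $B=\sum_{k=0}^M d_kT^k\inn\n K[T]\setminus\{0\}$. The hypothesis yields $(B\alpha)_j=0$ for every $j\leq m_0$, while $(B\alpha)_j=0$ holds automatically for $j>N+M$, since $c_i=0$ for $i>N$. Hence $B\alpha$ has only finitely many non-zero Laurent coefficients, so $B\alpha\inn\n K[T,T^{-1}]\sub\n K(T)$, and therefore $\alpha=(B\alpha)/B\inn\n K(T)$.

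The argument contains no substantial obstacle; the only points requiring care are the non-vanishing of $B$ in the reverse direction (guaranteed by the non-triviality of the recurrence, implicit in the statement, since the all-zero case would trivialise the condition) and the preservation of the bound $m_0\leq N$ in the forward direction when $N$ is very negative, which is handled by the choice $m_0=\min(N,-1)$.
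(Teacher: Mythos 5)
Your proof is correct and takes essentially the same approach as the paper's: both directions hinge on the observation that the coefficient of $T^j$ in $B\alpha$ is $\sum_k b_k c_{j-k}$, so that the recurrence is equivalent to $B\alpha$ having finite support. Your forward direction is a bit more direct than the paper's (which passes through the inverse formula \eqref{1eq:Laurentinv} for $g^{-1}$ before stating the same recurrence), but the mechanism and conclusion are identical, and you correctly flag the implicit non-triviality assumption that both arguments need for $B\neq0$.
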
 
\begin{proof}	
	If $\sum\limits_{i=0}^M c_{j-i}d_i=0$ for $j\leq m_0$, setting $g=d_0+d_1T+\cdots+d_MT^M$, we have $g\alpha=\sum\limits_{m_0<k\leq N+M}\left(\sum\limits_{0\leq i\leq M}c_{k-i}d_i\right)T^k\inn\n K(T)$, so $\alpha$ is a rational function. 
	
	Conversely, if $f=a_0+\cdots+a_NT^N,\ g=b_0+\cdots+b_MT^M\inn\n K[T]$, then, by \eqref{1eq:Laurentinv}, the coefficients of $g^{-1}$ eventually satisfy a linear recurrence relation, so also the coefficients of $f/g$ eventually satisfy the same linear recurrence relation. More precisely, if $f/g=\sum\limits_{\mathclap{i\leq N-M}} c_iT^i$, then for $k$ small enough $\sum\limits_{i=0}^Mb_ic_{k+M-i}=0.$
\end{proof}

\begin{rem}
	If $\n K$ is a finite field it can be proved similarly that $\alpha$ is a rational function if and only if the sequence $(c_i)_i$ is eventually periodic. 
\end{rem}

\begin{ex} 
	Let $\alpha=\frac{T^3+T}{T^2+2T+1}\inn\n L_{\n Q}$. Then  for $k$ small enough the coefficients $d_k$ of $\alpha$ will satisfy $d_{k+2}+2d_{k+1}+d_k=0$. Indeed, the formal Laurent series that represents $\alpha$ is $T-2+4T^{-1}-6T^{-2}+8T^{-3}-10T^{-4}+12T^{-5}+\cdots$.
\end{ex}

\begin{notat} 
	The valuation $\ord$ and its associated norm $|\cdot|$ can be extended in a natural way to $\n L$: if $\alpha=\sum\limits_{\mathclap{i=-\infty}}^Nc_iT^i$ with $c_N\neq0$, we set $$\ord(\alpha)=-N,\ |\alpha|=\mu^N.$$
\end{notat}

\begin{lem} 
	$\n L$ is the completion of $\n K(T)$ with respect to the norm $|\cdot|$, that is, it is the smallest extension $\n K'$ of $\n K(T)$ such that any Cauchy sequence of elements of $\n K'$ has a limit in $\n K'$.
\end{lem}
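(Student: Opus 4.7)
The plan is to verify the two defining properties of a completion: (i) $\n L$ is complete with respect to $|\cdot|$, and (ii) $\n K(T)$ is dense in $\n L$. Minimality then follows from the standard uniqueness (up to isometric isomorphism) of the completion of a normed field, so I would just invoke it at the end.

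For completeness, I would take a Cauchy sequence $(\alpha_n)_n\sub\n L$, say $\alpha_n=\sum_{i\leq N_n}c_i^{(n)}T^i$. The key observation is that the norm is non-Archimedean: the Cauchy condition $|\alpha_n-\alpha_m|\to0$ means that for every integer $k$ there exists an index $n_k$ such that $|\alpha_n-\alpha_m|<\mu^k$ for all $n,m\geq n_k$, i.e. the coefficients of $T^j$ in $\alpha_n$ and $\alpha_m$ agree for all $j\geq k$. Hence for each $j\inn\n Z$ the sequence $(c_j^{(n)})_n$ is eventually constant; call its eventual value $c_j$. Moreover, since $(\alpha_n)_n$ is Cauchy it is bounded, so there is some $N$ with $|\alpha_n|\leq\mu^N$ for all $n$, which forces $c_j=0$ for $j>N$. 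Setting $\alpha=\sum_{j\leq N}c_jT^j\inn\n L$, the coefficient-wise stabilisation gives $|\alpha-\alpha_n|\to0$, so $\alpha_n\to\alpha$ in $\n L$.

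For density, given $\alpha=\sum_{i\leq N}c_iT^i\inn\n L$, consider the truncations $\alpha^{(k)}=\sum_{-k\leq i\leq N}c_iT^i\inn\n K[T,T^{-1}]\sub\n K(T)$. One has $\alpha-\alpha^{(k)}=\sum_{i\leq-k-1}c_iT^i$, so $|\alpha-\alpha^{(k)}|\leq\mu^{-k-1}\to0$. Hence every element of $\n L$ is a limit of rational functions.

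Finally, to conclude that $\n L$ actually is \emph{the} completion, I would appeal to the standard fact that any complete normed field containing $\n K(T)$ as a dense subfield is canonically isometrically isomorphic to $\n L$: given another such $\n K'$, the identity on $\n K(T)$ extends uniquely to an isometric embedding $\n L\to\n K'$ (sending a Cauchy sequence of rational approximants $\alpha^{(k)}$ to its limit in $\n K'$), and by density this embedding is surjective. I do not expect any real obstacle here; the only point that deserves care is using non-Archimedeanity at the start, because without it one could not reduce convergence to coefficient-wise stabilisation so cleanly.
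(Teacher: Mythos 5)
Your proof is correct and follows essentially the same route as the paper's: completeness via non-Archimedean coefficient-wise stabilisation, and density via Laurent-polynomial truncations. If anything, you are slightly more explicit about why the stabilised limit has bounded order (boundedness of the Cauchy sequence), a point the paper only gestures at.
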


\begin{proof}	
	Let $\alpha=\sum\limits_{i\leq N}c_iT^i\inn\n L$; for every $n\geq0$ let $\alpha_n=\sum\limits_{\mathclap{i=-n}}^N c_iT^i$. Then we have $\alpha_n\inn\n K(T)$ for every $n$ and $(\alpha_n)_{n\geq0}$ is a Cauchy sequence converging to $\alpha$. Thus, $\n L$ is contained in the completion of $\n K(T)$ with respect to $|\cdot|$.
	
	It is then enough to show that $\n L$ is complete. Let $(\alpha_n)_{n\geq1}\sub\n L$ be a Cauchy sequence, with $\alpha_n=\sum_i c_{i,n}T^i$ for every $n$. Then for every $\epsilon>0$ there exists $M\inn\n N$ such that $|\alpha_m-\alpha_n|<\epsilon$ for every $m,n>M$; in particular, for every $i\inn\n Z$ we have that there exists $M_i$ such that $|\alpha_m-\alpha_n|<\mu^i$ for every $m,n>M_i$. Equivalently, for $n>M_i$ the coefficients of the formal series $\alpha_n$ coincide at least up to degree $i$, so we can define $c_i$ as the common value of the $c_{i,n}$ for $n>M_i$. Let $\alpha=\sum_ic_iT^i$. Then we have $\alpha\inn\n L$ (the orders of the $\alpha_n$ have to be bounded, that is, $c_i=0$ for $i$ big enough) and $\alpha$ is, by construction, the limit of the sequence $(\alpha_n)$, that is, $|\alpha-\alpha_n|\to0$ as $n\to\infty$. 
	
	Then $\n L$ is the completion of $\n K(T)$.
\end{proof}

\begin{notat}
Let $\C O$ be the valuation ring associated to $\ord$, $$\C O=\!\{\alpha\inn\n L,\ \ord(\alpha)\geq0\}=\bigg\{\sum_{i\leq0}c_iT^i\inn\n L\bigg\}.$$ In particular, $\C O$ is a local ring with maximal ideal $\C M=(T^{-1})=\{\alpha\inn\n L,\ \ord(\alpha)>0\}$. Then $\C O/\C M$ can be identified with $\n K$; if $\alpha=\sum_{i\leq0}c_iT^i\inn\C O$, we will denote by $\ov \alpha=c_0$ its reduction in $\C O/\C M$.\par\medskip 

As $\n K$ is complete, Hensel's Lemma holds:
\end{notat}

\begin{prop}[Hensel's Lemma]
	Let $f\inn\C O[X]$, with $X$ transcendent over $\C O$, and let $\ov f\inn\n K[X]$ be its reduction modulo $\C M$. If there exists a root $c$ of $\ov f$ in $\n K$ with $\ov f\,'(c)\neq0$, then there exists a unique $\alpha\inn\C O$ which is a root of $f$ and such that $\ov\alpha=c$.
\end{prop}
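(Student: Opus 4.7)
The plan is to prove this by the standard Newton-iteration argument, exploiting the completeness of $\mathcal{O}$ with respect to the $\mathcal{M}$-adic topology (which is just the topology induced by $\ord$, and which has already been shown to make $\n L$ complete). First, I would lift $c \in \n K$ to the element $\alpha_0 = c \in \mathcal{O}$, viewed as a constant Laurent series, so that $\ov{\alpha_0} = c$. Then by hypothesis $\ov{f(\alpha_0)} = \ov f(c) = 0$ and $\ov{f'(\alpha_0)} = \ov f\,'(c) \neq 0$, which translates to $\ord(f(\alpha_0)) \geq 1$ and $\ord(f'(\alpha_0)) = 0$, so that $f'(\alpha_0)$ is a unit of $\mathcal{O}$.

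The main step is Newton's iteration. I would define recursively
\[ \alpha_{n+1} = \alpha_n - \frac{f(\alpha_n)}{f'(\alpha_n)}, \]
after verifying (inductively) that $f'(\alpha_n)$ remains a unit of $\mathcal{O}$ and that $\ord(f(\alpha_n))$ grows. The key algebraic tool is the Taylor expansion in $\mathcal{O}[X]$: for any $\alpha \in \mathcal{O}$ and $h \in \mathcal{M}$ one can write
\[ f(\alpha + h) = f(\alpha) + h\,f'(\alpha) + h^2\, R(\alpha, h), \qquad R(\alpha,h) \in \mathcal{O}. \]
Applied to $h = -f(\alpha_n)/f'(\alpha_n)$, this immediately yields $\ord(f(\alpha_{n+1})) \geq 2\,\ord(f(\alpha_n))$, and one also checks that $\alpha_{n+1} - \alpha_n \in \mathcal{M}^{n+1}$, so that $\ov{\alpha_{n+1}} = \ov{\alpha_n} = c$ and $f'(\alpha_{n+1})$ again reduces to $\ov f\,'(c) \neq 0$, hence is still a unit.

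From this quadratic convergence, the sequence $(\alpha_n)$ is Cauchy in $\mathcal{O}$ with respect to $|\cdot|$; by completeness it converges to some $\alpha \in \mathcal{O}$, and by continuity of polynomial evaluation $f(\alpha) = \lim f(\alpha_n) = 0$. Moreover $\ov \alpha = \ov{\alpha_0} = c$ since every $\alpha_n$ has the same reduction. For uniqueness, suppose $\alpha, \beta \in \mathcal{O}$ are two roots with $\ov \alpha = \ov \beta = c$. Then $\alpha - \beta \in \mathcal{M}$, and expanding
\[ 0 = f(\alpha) - f(\beta) = (\alpha - \beta)\bigl(f'(\beta) + (\alpha - \beta) S(\alpha,\beta)\bigr) \]
for some $S \in \mathcal{O}$; since $f'(\beta)$ is a unit while $(\alpha - \beta) S(\alpha,\beta) \in \mathcal{M}$, the second factor is also a unit, forcing $\alpha = \beta$.

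The only mildly delicate point is justifying the polynomial Taylor identity with remainder in $\mathcal{O}[X]$ (one expands $f(X+Y)$ in $\mathcal{O}[X,Y]$ and groups terms of degree $\geq 2$ in $Y$), but this is purely formal and does not depend on the characteristic of $\n K$. Everything else is a direct transcription of the classical $p$-adic Hensel argument to our non-Archimedean complete field $\n L$.
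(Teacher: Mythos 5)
Your proof is correct; note that the paper itself gives no proof of this proposition, simply invoking it as a standard consequence of completeness. Your Newton-iteration argument is the canonical way to establish it, and all steps check out: the formal Taylor identity $f(X+Y)=f(X)+f'(X)Y+Y^2 R(X,Y)$ with $R\inn\C O[X,Y]$ is indeed purely algebraic (grouping binomial terms) and characteristic-free; from $\ord(f(\alpha_0))\geq1$ and $f'(\alpha_0)\inn\C O^*$ the estimate $\ord(f(\alpha_{n+1}))\geq2\,\ord(f(\alpha_n))$ gives $\ord(\alpha_{n+1}-\alpha_n)\geq2^n$, so the sequence is Cauchy and its limit lies in $\C O$ because $\C O$ is a closed ball in the complete field $\n L$ (a point you use implicitly and could mention explicitly); and the uniqueness factorization $f(\alpha)-f(\beta)=(\alpha-\beta)\bigl(f'(\beta)+(\alpha-\beta)S\bigr)$ is valid, with the second factor a unit since $f'(\beta)\inn\C O^*$ and $(\alpha-\beta)S\inn\C M$. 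One small stylistic remark: you could streamline the opening by observing that the constant $c\inn\n K\sub\C O$ is itself already a lift, which you do, but it is worth stressing that the subsequent iterates all reduce to $c$ precisely because each increment $\alpha_{n+1}-\alpha_n$ lies in $\C M$; this is the fact that keeps $f'(\alpha_n)$ a unit at every stage, and you correctly feed it into the induction.
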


\begin{lem}\label{1lem:rad} 
	Let $\n K$ be a field of characteristic different from 2, let $\alpha\inn\n L$ be a non-zero Laurent series of even order $\ord(\alpha)=-2N$ and whose leading coefficient is a square in $\n K$. Then, up to the choice of the sign, the square root of $\alpha$ is well defined in $\n L$.
\end{lem}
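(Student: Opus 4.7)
The plan is to reduce the problem to extracting a square root of a unit in $\C O$ congruent to $1$ modulo $\C M$, and then to apply Hensel's Lemma.

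First, I would normalize $\alpha$. Write $\alpha=c\,T^{2N}+\text{lower order terms}$, where $c\inn\n K^*$ is the leading coefficient and, by hypothesis, $c=d^2$ for some $d\inn\n K^*$. Factor out $cT^{2N}$:
\[
\alpha=cT^{2N}(1+\gamma),\qquad \gamma=\frac{\alpha}{cT^{2N}}-1.
\]
By construction $\ord(\gamma)>0$, so $\gamma\inn\C M$, and the element $1+\gamma\inn\C O$ reduces to $1$ in $\C O/\C M=\n K$. Since $T^{2N}=(T^N)^2$ and $c=d^2$, it suffices to find $\delta\inn\C O$ with $\delta^2=1+\gamma$, for then $\pm\,dT^N\delta$ are the two square roots of $\alpha$ in $\n L$.

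To produce $\delta$, I would apply Hensel's Lemma to the polynomial $f(X)=X^2-(1+\gamma)\inn\C O[X]$. Its reduction modulo $\C M$ is $\ov f(X)=X^2-1\inn\n K[X]$, which has the root $c_0=1$ in $\n K$. Since $\char\n K\neq2$, we have $\ov f\,'(1)=2\neq 0$, so the hypotheses of Hensel's Lemma are satisfied. Hence there exists a unique $\delta\inn\C O$ with $\delta^2=1+\gamma$ and $\ov\delta=1$, and $-\delta$ is the other root of $f$ (it reduces to $-1\neq 1$ in $\n K$, so these are distinct). Consequently $\sqrt\alpha=\pm\,dT^N\delta\inn\n L$ is well defined up to sign, proving the lemma.

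The only subtle point is making sure that the hypothesis $\char\n K\neq2$ is really needed: without it the derivative $\ov f\,'(1)=2$ would vanish and Hensel's Lemma would not apply, so the invocation of the hypothesis comes at exactly the right place. Everything else is a routine normalization; there is no substantial obstacle.
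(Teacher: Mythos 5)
Your proof is correct and takes essentially the same route as the paper: both reduce to Hensel's Lemma for a quadratic over $\C O$, using $\char\n K\neq2$ to guarantee the derivative is nonzero; you merely normalize by $cT^{2N}$ (reducing to $X^2-1$ with root $1$), whereas the paper normalizes only by $T^{2N}$ and uses the square root of the leading coefficient in $\n K$ as the approximate root. This is a cosmetic difference, not a different argument.
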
	

\begin{proof}
	Let us consider $f(X)=X^2-T^{-2N}\alpha\inn\C O[X]$. In the previous notations, $\ov f$ has a root $c$ in $\n K$ and, as $\ov{T^{-2N}\alpha}\neq0$, $c\neq0$, so $\ov f'(c)=2c\neq0$. Then, by Hensel's Lemma, there exists $\beta\inn\C O$ such that $\beta^2=T^{-2N}\alpha$. Moreover, $\beta$ is unique up to the choice of a sign (that is, up to the choice of a square root in $\n K$ of the leading coefficient of $\alpha$); we will write $T^N\beta=\sqrt\alpha$.
\end{proof}

\begin{rem}
	More precisely, in the previous notations, if $\alpha=\sum\limits_{i\leq2N}c_iT^i$, with $c_{2N}$ a non-zero square, $c_{2N}=c^2$, then  $\alpha=c^2T^{2N}(1+\widetilde\alpha)$ with $\widetilde\alpha=\sum_{i<2N} c_i/c^2\,T^{i-2N}$. It then follows that $$\beta=c\,T^N\sum\limits_{k=0}^\infty\binom{1/2}k\widetilde\alpha^k.$$ The coefficients of $\beta$ can be also found through a recursive formula: $\beta=\sum_{j\leq N}b_jT^j$ with \be\label{1eq:Laurentrad}b_N=c \text{ and, for } j<N,\ b_j=\frac12c^{-1}\left(c_{j+N}-\sum_{\mathclap{k=j+1}}^{N-1}b_kb_{j+N-k}\right).\ee
\end{rem}

\begin{rem}
	On the other hand, if $\n K$ has characteristic 2, then the squaring map is the Frobenius endomorphism, so a formal Laurent series has a square root if and only if it is of the form $\sum_i c_i^2T^{2i}$, and in this case its (unique) square root is $\sum_ic_iT^i$.
\end{rem}

\begin{ex}\label{1ex:ex1}
	Let $\n K=\n Q$ and let $D=T^4+T^2+1$. Then it is easy to see that $$\sqrt D=T^2+\frac12+\frac 38 T^{-2}-\frac3{16} T^{-4}+\frac3{128} T^{-6}+\frac{15}{256}T^{-8}+\cdots\inn\n L_{\n Q}.$$
\end{ex}

\subsection{Regular continued fraction expansions of formal Laurent series}
In the notations of section 1.1, let us consider the theory of continued fractions over $\n F=\n L=\n L_{\n K}$ (for some field $\n K$), with the previously defined absolute value $|\cdot|$.

\begin{lem}\label{1lem:constants}
Let $(a_n)_{n\geq0}$ be a sequence of formal Laurent series such that $\ord a_n<0$, except possibly for finitely many indices $n$. Then the continued fraction $[a_0,a_1,\dots]$ is well defined in $\n L$.
\end{lem}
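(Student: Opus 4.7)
The plan is to show that the sequence of finite continued fractions $\cv n=[a_0,\ldots,a_n]$ is Cauchy in $\n L$ and then invoke the completeness of $\n L$ established in the previous lemma. As a first step I would reduce to the case where $\ord a_n<0$ (equivalently $|a_n|>1$) for every $n\geq 1$: by Remark \ref{1rem:nest} we may write
$$[a_0,a_1,\dots]=[a_0,\ldots,a_N,[a_{N+1},a_{N+2},\ldots]]$$
and choose $N$ strictly larger than every exceptional index, so that the inner continued fraction satisfies the stronger hypothesis. The outer finite expression will be dealt with at the end.

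Under this reduced hypothesis, the crucial step is the denominator estimate
$$|q_n|=|a_1a_2\cdots a_n|\quad\text{for every }n\geq 1,$$
proved by induction on $n$ from the recurrence $q_n=a_n q_{n-1}+q_{n-2}$ with $q_{-1}=0$, $q_0=1$. The base case $|q_1|=|a_1|>1$ is immediate; for the inductive step, the bounds $|a_nq_{n-1}|=|a_n|\,|q_{n-1}|>|q_{n-1}|>|q_{n-2}|$ together with the non-Archimedean inequality force $|q_n|=|a_n|\,|q_{n-1}|$. In particular $q_n\neq 0$ for every $n\geq 0$, so all the convergents are defined. Combining with Lemma \ref{1lem:pnqnprime}, which yields $q_{n+1}p_n-q_np_{n+1}=(-1)^{n+1}$, I obtain
$$\left|\cv{n+1}-\cv n\right|=\frac{1}{|q_n|\,|q_{n+1}|}=\frac{1}{|a_1\cdots a_n|^2\,|a_{n+1}|}\longrightarrow 0,$$
since $|a_i|\geq\mu>1$ for every $i\geq 1$. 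In a non-Archimedean field this decay of consecutive differences suffices for Cauchyness, as $|\cv m-\cv n|\leq\max_{n\leq k<m}|\cv{k+1}-\cv k|\to 0$, so completeness of $\n L$ yields a limit in $\n L$.

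To close the reduction, I would verify that the outer finite expression $[a_0,\ldots,a_N,\alpha]$ is well defined, where $\alpha=[a_{N+1},a_{N+2},\ldots]\in\n L$ is the value just produced. The same computation applied to the tail gives $|\alpha|=\lim_n|\cv n^{\mathrm{tail}}|=|a_{N+1}|>1$, and then the inner-to-outer recursion $\beta_{N+1}=\alpha$, $\beta_k=a_k+1/\beta_{k+1}$ is handled term by term using the strong triangle inequality, ruling out accidental cancellations.

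The only genuinely delicate point is this last bookkeeping step for the finitely many initial terms; the heart of the argument is the norm estimate $|q_n|=|a_1\cdots a_n|$, which is exactly where the non-Archimedean nature of $|\cdot|$ does all the work, making the proof considerably cleaner than in the classical Archimedean setting (where one only gets $|q_n|\to\infty$ via a more delicate Fibonacci-type lower bound).
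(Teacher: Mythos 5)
Your treatment of the tail — the reduction via Remark \ref{1rem:nest}, the ultrametric identity $|q_n|=|a_1\cdots a_n|$ proved by induction, and Cauchyness from adjacent differences together with Lemma \ref{1lem:pnqnprime} — is clean and is, in spirit, the same mechanism the paper uses. The paper instead works directly with the full sequence, invoking the formula $\cv n-\cv i=\dfrac{(-1)^i}{q_i(\alpha_{i,n}q_i+q_{i-1})}$ from Remark \ref{1rem:cauchy} and asserting that for $i,n$ large enough $|\alpha_{i,n}q_i+q_{i-1}|=|q_{i+1}|$; both arguments ultimately rest on the fact that eventually $|q_{n+1}|=|a_{n+1}||q_n|>|q_n|$, which your tail reduction makes automatic rather than asserted.

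The genuine gap is the last step, where you claim the outer finite expression $[a_0,\dots,a_N,\alpha]$ is well defined and that the strong triangle inequality "rules out accidental cancellations." It does not. For Laurent-series partial quotients (which the lemma literally permits at finitely many indices) the head can conspire so that $q_N\alpha+q_{N-1}=0$, i.e.\ the tail value $\alpha$ lands exactly on the pole of the outer M\"obius map $\gamma\mapsto\dfrac{p_N\gamma+p_{N-1}}{q_N\gamma+q_{N-1}}$, and then $\lim_n\cv n$ does not exist in $\n L$. Concretely, take $a_0=0$, $a_1=T-\alpha$ with $\alpha=[T,T,\dots]$, and $a_n=T$ for $n\geq2$. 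Here $\ord a_0,\ord a_1\geq0$ and $\ord a_n=-1<0$ for $n\geq2$, so the hypothesis holds with two exceptional indices; yet $\beta_1=a_1+1/\alpha=(T-\alpha)+1/\alpha=0$ (because $\alpha-T=1/\alpha$), and indeed $\cv n=\dfrac{\gamma_n\alpha}{\alpha-\gamma_n}$ with $\gamma_n=[a_2,\dots,a_n]\to\alpha$, so $|\cv n|\to\infty$. The last step is therefore not mere bookkeeping; it needs an additional nondegeneracy hypothesis, which is automatically satisfied in the intended application to polynomial partial quotients of positive degree (the setting of the Remark that follows the lemma). To be fair, the paper's own proof shares this imprecision — the claim $|\alpha_{i,n}q_i+q_{i-1}|=|q_{i+1}|$ silently assumes the $|q_n|$ eventually increase, which fails in the example above — so your proposal is on the same footing as the source, but "ruling out accidental cancellations" is the one sentence a careful reader cannot accept as written.
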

\begin{proof} 
As in Remark \ref{1rem:cauchy}, for every $i<n$ let $\alpha_{i,n}=[a_{i+1},\dots,a_n]$ and let $\cv n$ be the convergents of $[a_0,a_1,\dots]$ (for $i,n$ large enough, the $\alpha_{i,n}$ and the $\cv n$ are well defined). We have then seen that $|\cv n-\cv i|=|q_i|^{-1}|\alpha_{i,n}q_i+q_{i-1}|^{-1}$. For $i,n$ large enough $|\alpha_{i,n}|=|a_{i+1}|$ and $|\alpha_{i,n}q_i+q_{i-1}|=|q_{i+1}|$, so  $$\left|\cv n-\cv i\right|=|q_iq_{i+1}|^{-1}\xrightarrow[n,i\to\infty]{}0.$$ Thus $(\cv n)_n$ is a Cauchy sequence and, as $\n L$ is complete, it has a limit in $\n L$, that is, the infinite continued fraction $[a_0,a_1,\dots]$ is well defined in $\n L$. 
\end{proof}

\begin{rem}\label{1rem:constants}
Actually, the previous statement holds also if we allow the $a_n$ to have order 0 on a subsequence of non-consecutive indices $n$ or even if there are consecutive pairs $a_n,a_{n+1}$ of order 0 such that $a_na_{n+1}+1\neq0$.\par\medskip

In particular, an infinite continued fraction $[a_0,a_1,\dots]$ such that $a_n\inn\n K[T]$ for every $n$ and $\deg a_n\geq1$ for $n\geq1$ converges in $\n L$; in this case, $[a_0,a_1,\dots]$ is said to be a \textit{regular continued fraction}. 

Actually, any formal Laurent series $\alpha$ has a unique regular continued fraction expansion, which will be called \textit{the} continued fraction expansion of $\alpha$. As well as the classical continued fraction algorithm is based on the integer part, the polynomial continued fraction algorithm is based on the polynomial part.
\end{rem}

\begin{notat}
	Let $\alpha=\sum\limits_{i\leq N}c_iT^i\inn\n L$. The \textit{polynomial part} of $\alpha$ is $$\floor\alpha=\sum_{i=0}^N c_iT^i\inn\n K[T],$$ that is, $\floor\alpha$ is the unique polynomial $A$ such that $\ord(\alpha-A)>0$.

	Similarly to the real case, we will also write $$\{\alpha\}=\alpha-\floor\alpha$$ for the polynomial analogue of the fractional part of $\alpha$.
\end{notat}
If $\alpha,\beta\inn\n L$ and $k\inn\n K$, then $\floor{\alpha+\beta}=\floor\alpha+\floor\beta$ and $\floor{k\alpha}=k\floor\alpha$ (while $\floor{\alpha\beta}$ is not necessarily equal to $\floor\alpha\floor{\beta}$).

\begin{rem}
	In complete analogy with the real case we can then consider the following continued fraction algorithm.
	
	For $\alpha\inn\n L$, let $\alpha_0=\alpha,\ a_0=\floor\alpha$ and, for $n\geq1$, let $$\alpha_n=\frac1{\alpha_{n-1}-a_{n-1}},\ a_n=\floor{\alpha_n}$$ (where this procedure ends if and only if there exists $m$ such that $\alpha_m=a_m$, if and only if there exists $m$ such that $\alpha_m$ is a polynomial).

	Certainly $a_n\inn\n K[T]$ for every $n$ and $\deg a_n\geq1$ for $n\geq1$, so the continued fraction $[a_0,a_1,\dots]$ is regular and, in particular, it is well defined. Moreover, for every $n$, $\alpha=[a_0,\dots,a_{n-1},\alpha_n]$ and $\alpha_n=[a_n,a_{n+1},\dots]$, so \be\alpha=[a_0,a_1,\dots].\ee This is called \textit{the (regular) continued fraction expansion} of $\alpha$;  the $\alpha_n$ are called the \textit{complete quotients} of $\alpha$ and the $a_n$ are said to be its \textit{partial quotients}.\par\medskip
	
	Thus, the regular continued fraction of a formal Laurent series is obtained by iterating the polynomial analogue of the Gauss map, $$\begin{array}{llll}T:&\C M&\to&\C M\\ &\alpha&\mapsto&\begin{cases}\frac1\alpha-\floor{\frac1\alpha}&\text{if }\alpha\neq0\\0&\text{if }\alpha=0\end{cases}\end{array},$$ where, as before, $\C M$ is the set of formal Laurent series with strictly positive order. If $\alpha\inn\C M$, then $\alpha=\frac1{\alpha_1}$ and for every $n\geq1$ if $\alpha_n\neq0$, then $\frac1{\alpha_n}=T^{n-1}(\alpha)$.
\end{rem}

From now on, unless otherwise stated, we will always consider regular continued fraction expansions, and we will call convergents (respectively, continuants) of $\alpha\inn\n L$ the convergents (respectively, continuants) of its regular continued fraction expansion. 

\begin{ex}
	As in Example \ref{1ex:ex1}, let $D=T^4+T^2+1\inn\n Q[T]$, let $\alpha=\sqrt D$. Then we have\\
	$\ds\alpha_0=\alpha=\sqrt D,\ a_0=T^2+\frac12\\
	\alpha_1=\frac 1{\sqrt D-T^2-1/2}=\frac43(\sqrt D+T^2+1/2),\ a_1=\frac83T^2+\frac43\\
	\alpha_2=\frac3{4\sqrt D-4T^2-2}=\sqrt D+T^2+\frac12,\ a_2=2T^2+1\\
	\alpha_3=\frac 1{\sqrt D-T^2-1/2},\ a_3=\frac83T^2+\frac43\\
	\cdots$
 
	Then $\alpha=\left[T^2+\frac12,\frac83T^2+\frac43,2T^2+1,\frac83T^2+\frac43,\cdots\right]$; in particular, the sequence of the partial quotients of $\alpha$ is periodic. We will see in the next Chapter that this is not a coincidence and that it is related to the fact that $\alpha$ is quadratic over $\n Q[T]$.
\end{ex}

\begin{rem}
	If the continued fraction expansion of $\alpha$ is finite, obviously $\alpha$ is a rational function. The converse holds too; more precisely, if $\alpha=A/B\inn\n K(T)$ then the partial quotients of $\alpha$ are the successive quotients appearing in the Euclidean algorithm applied to $A,B$. In particular, the continued fraction expansion of $\alpha$ is finite.
\end{rem}

\begin{ex}
	For $\n K=\n Q$, let $\alpha=\ds\frac{T^5+1}{T^4+T^2+1}\inn\n Q(T)$. Then the regular continued fraction expansion of $\alpha$ is $\alpha=\left[T,-T,-T^2+T-2,1/3\,T+1/3\right]$.
\end{ex}

\begin{lem}
	Let $\alpha\inn\n L$, let $[a_0,a_1,a_2,\dots]$ be its regular continued fraction expansion and let $(p_n,q_n)_{n\geq0}$ be its continuants.
	\begin{enumerate}
	\item $p_n,q_n$ are relatively prime polynomials for every $n\geq1$.
	\item $p_nq_m-p_mq_n\inn\n K^*$ if and only if $m=n\pm1$.
	\item $|p_n|=|a_0a_1+1||a_2|\cdots|a_n| \text{ and } |q_n|=|a_1|\cdots|a_n|\geq\mu|q_{n-1}|>|q_{n-1}|$ for every $n$.
	\item $\deg a_{n+1}=\ord(p_n-\alpha q_n)-\deg q_n$; equivalently, \be\label{1eq:fondprop}|p_n-\alpha q_n|=|a_{n+1}q_n|^{-1}=|q_{n+1}|^{-1}.\ee
	\end{enumerate}
\end{lem}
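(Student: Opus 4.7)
The plan is to deduce all four statements from the recursion for continuants together with the identities of Lemmas \ref{1lem:pnqnprime} and \ref{1lem:formulaspnqn}, exploiting repeatedly the non-Archimedean nature of $|\cdot|$ and the fact that $\deg a_k\geq 1$ for every $k\geq 1$ (so $|a_k|\geq\mu$).

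For part (1), I would invoke \eqref{1eq:pnqnprime}, which gives $q_np_{n-1}-q_{n-1}p_n=(-1)^n$. Any common divisor in $\n K[T]$ of $p_n$ and $q_n$ must then divide $(-1)^n$, hence be a non-zero constant, so $p_n$ and $q_n$ are coprime. For part (2), the implication ``$m=n\pm 1$'' $\imp$ ``$p_nq_m-p_mq_n\in\n K^*$'' is again immediate from \eqref{1eq:pnqnprime}. For the converse, take $m<n-1$ and apply the more general identity \eqref{1eq:d_in}: $q_np_m-q_mp_n=(-1)^{n-m-1}d_{n-m,n}$ with $d_{n-m,n}=C_{n-m-1}(a_{m+2},\dots,a_n)$. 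Since $n-m-1\geq 1$ and each $a_j$ for $j\geq 1$ has $\deg a_j\geq 1$, an easy inspection of the expansion of $C_{n-m-1}$ as a sum of monomials shows that the leading term comes from the product $a_{m+2}a_{m+3}\cdots a_n$, so $d_{n-m,n}$ is a polynomial of positive degree and thus is not in $\n K^*$.

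For part (3), I would proceed by induction on $n$, using the recursions $p_n=a_np_{n-1}+p_{n-2}$ and $q_n=a_nq_{n-1}+q_{n-2}$. The base cases $p_0=a_0$, $p_1=a_0a_1+1$, $q_0=1$, $q_1=a_1$ are immediate. For the inductive step, if by hypothesis $|p_{n-1}|=|a_0a_1+1||a_2|\cdots|a_{n-1}|$ and $|p_{n-2}|=|a_0a_1+1||a_2|\cdots|a_{n-2}|$, then $|a_np_{n-1}|/|p_{n-2}|=|a_na_{n-1}|\geq\mu^2>1$, so by the ultrametric inequality $|p_n|=|a_np_{n-1}|$, giving the claimed formula. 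The same argument, applied to the $q_n$ sequence, yields $|q_n|=|a_1|\cdots|a_n|$ and in particular $|q_n|=|a_n||q_{n-1}|\geq\mu|q_{n-1}|>|q_{n-1}|$ for $n\geq 1$.

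For part (4), I would start from \eqref{1eq:alpha-pn/qn}, which gives $|p_n-\alpha q_n|=|q_n\alpha_{n+1}-q_{n-1}|^{-1}$. Now the complete quotient $\alpha_{n+1}=[a_{n+1},a_{n+2},\dots]$ has polynomial part $a_{n+1}$ of positive degree, so $|\alpha_{n+1}|=|a_{n+1}|$ and consequently $|q_n\alpha_{n+1}|=|q_n||a_{n+1}|$. By part (3), this is strictly larger than $|q_{n-1}|$, so by the ultrametric property $|q_n\alpha_{n+1}-q_{n-1}|=|q_n||a_{n+1}|=|q_{n+1}|$, yielding $|p_n-\alpha q_n|=|a_{n+1}q_n|^{-1}=|q_{n+1}|^{-1}$.

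The only step demanding any real care is the converse in part (2), where one must verify that $C_{n-m-1}(a_{m+2},\dots,a_n)$ really has positive degree; but this is transparent once one observes that the product $a_{m+2}\cdots a_n$ itself is one of the monomials appearing in $C_{n-m-1}$ (the one with $k=0$ deletions) and dominates all the others in $|\cdot|$. Everything else reduces to a routine use of the ultrametric inequality once the bound $|a_k|\geq\mu$ for $k\geq 1$ is in place.
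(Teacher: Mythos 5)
Your proof is correct and follows essentially the same route as the paper's: (1) and (2) from the identities \eqref{1eq:pnqnprime} and \eqref{1eq:d_in}, (3) by the recursion together with the ultrametric inequality, and (4) from \eqref{1eq:alpha-pn/qn} via $|\alpha_{n+1}|=|a_{n+1}|$; you merely supply the details (e.g.\ that $C_{n-m-1}(a_{m+2},\dots,a_n)$ has positive degree) that the paper leaves implicit. Two tiny slips that don't affect the argument: the sign in \eqref{1eq:d_in} with $i=n-m$ is $(-1)^{n+(n-m)-1}=(-1)^{m+1}$, not $(-1)^{n-m-1}$, and in (3) the step $n=2$ deserves a separate word since $|a_2p_1|/|p_0|=|a_2||a_0a_1+1|/|a_0|$ rather than $|a_2a_1|$ (and $p_0=a_0$ may even vanish), though the ultrametric conclusion $|p_2|=|a_2p_1|$ still holds.
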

\begin{proof}
	As the $p_n,q_n$ are polynomials, and, by \eqref{1eq:pnqnprime}, $q_np_{n-1}-p_nq_{n-1}=(-1)^n$, then $p_n,q_n$ are relatively prime for every $n$. 
	
	2. follows from \eqref{1eq:d_in}: $p_nq_m-p_mq_n\inn\n K^*$ if and only if $p_nq_m-p_mq_n=\pm1$, if and only if $m=n\pm1$.
	
	3. is a consequence of the fact that $|\alpha_n|=|a_n|>1$ for $n>0$.
	
	By \eqref{1eq:alpha-pn/qn}, $\ds\left|\alpha-\cv n\right|=|q_n(\alpha_{n+1}q_n-q_{n-1})|^{-1}=|q_n^2a_{n+1}|^{-1}=|q_nq_{n+1}|^{-1}$, that is, $\ds|p_n-\alpha q_n|=|a_{n+1}q_n|^{-1}=|q_{n+1}|^{-1}$.
\end{proof}

\begin{lem}\label{1lem:confrontocf}
	Let $\alpha=[a_0,a_1,\dots],\beta=[b_0,b_1,\dots]\inn\n L$, with $\alpha\neq\beta$, and let $i$ be the integer such that $a_n=b_n$ for $n=0,\dots,i-1$ and $a_i\neq b_i$. If $i=0$, then $|\alpha-\beta|=|a_0-b_0|$. Otherwise, let $d=\deg a_1+\cdots+\deg a_{i-1}$. Then we have $$|\alpha-\beta|=\frac{|a_i-b_i|}{\mu^{2d}|a_ib_i|}<\mu^{-2d}.$$
\end{lem}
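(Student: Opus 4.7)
The argument reduces to systematic use of the non-Archimedean property of $|\cdot|$ together with the identities already established for the continuants.

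\emph{Case $i=0$.} Here $a_0\neq b_0$ are distinct polynomials, so $|a_0-b_0|\geq 1$. Writing $\alpha=a_0+\{\alpha\}$ and $\beta=b_0+\{\beta\}$ with $|\{\alpha\}|,|\{\beta\}|<1$, I get $|\{\alpha\}-\{\beta\}|<1\leq |a_0-b_0|$, and the non-Archimedean rule forces $|\alpha-\beta|=|a_0-b_0|$.

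\emph{Case $i\geq 1$.} Let $(p_n,q_n)$ denote the continuants of the common initial segment $a_0,\dots,a_{i-1}$, and let $\alpha_i=[a_i,a_{i+1},\dots]$, $\beta_i=[b_i,b_{i+1},\dots]$. By Lemma \ref{1lem:formulaspnqn}.1 applied to both series,
\[
\alpha-\beta=\frac{\alpha_i p_{i-1}+p_{i-2}}{\alpha_i q_{i-1}+q_{i-2}}-\frac{\beta_i p_{i-1}+p_{i-2}}{\beta_i q_{i-1}+q_{i-2}}=\frac{(\alpha_i-\beta_i)(p_{i-1}q_{i-2}-p_{i-2}q_{i-1})}{(\alpha_i q_{i-1}+q_{i-2})(\beta_i q_{i-1}+q_{i-2})},
\]
and by \eqref{1eq:pnqnprime} the numerator factor $p_{i-1}q_{i-2}-p_{i-2}q_{i-1}$ equals $(-1)^i$, hence has absolute value $1$.

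\emph{Estimating the three absolute values.} Since $\deg a_{i-1}\geq 1$ and $\deg a_i\geq 1$, one has $|\alpha_i q_{i-1}|=|a_i||q_{i-1}|>|q_{i-2}|$, so the non-Archimedean rule gives $|\alpha_i q_{i-1}+q_{i-2}|=|a_i||q_{i-1}|=|a_i|\mu^d$, and identically $|\beta_i q_{i-1}+q_{i-2}|=|b_i|\mu^d$. For the numerator, $\alpha_i-\beta_i=(a_i-b_i)+(\{\alpha_i\}-\{\beta_i\})$ with $|a_i-b_i|\geq 1$ and $|\{\alpha_i\}-\{\beta_i\}|<1$, so $|\alpha_i-\beta_i|=|a_i-b_i|$. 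Assembling these pieces yields the stated equality
\[
|\alpha-\beta|=\frac{|a_i-b_i|}{\mu^{2d}|a_i b_i|}.
\]

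\emph{The strict bound.} The ultrametric inequality gives $|a_i-b_i|\leq\max(|a_i|,|b_i|)$, so
\[
\frac{|a_i-b_i|}{|a_i b_i|}\leq\frac{1}{\min(|a_i|,|b_i|)}\leq \mu^{-1}<1,
\]
where the second inequality uses $\deg a_i,\deg b_i\geq 1$. This gives $|\alpha-\beta|<\mu^{-2d}$.

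There is no real obstacle in this argument; the only point requiring care is making sure that $\deg a_{i-1}\geq 1$ (which holds since $i-1\geq 0$ and, when $i-1\geq 1$, the partial quotient has positive degree; when $i=1$ the denominator estimate just reads $|\alpha_1 q_0+q_{-1}|=|a_1||q_0|=|a_1|$ with $d=0$, consistent with the formula).
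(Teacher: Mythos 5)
Your proof is correct and takes essentially the same route as the paper: it applies the identity $\alpha=\dfrac{\alpha_i p_{i-1}+p_{i-2}}{\alpha_i q_{i-1}+q_{i-2}}$ from Lemma \ref{1lem:formulaspnqn}, computes the difference using the continuant relation, and then evaluates the three absolute values via the non-Archimedean rule. You spell out more detail than the paper (including the $i=0$ case and the $i=1$ degenerate denominator, which the paper leaves implicit), but the argument is the same.
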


\begin{proof}
	Let us assume $i\!>\!0$; let $(\cv n)_n$ be the convergents of $\alpha$. Then obviously $\cv0,\dots,\cv {i-1}$ are also convergents of $\beta$, so, by \eqref{1eq:alphapnqn}, $\ds\alpha=\frac{\alpha_ip_{i-1}+p_{i-2}}{\alpha_iq_{i-1}+q_{i-2}}$, $\ds \beta=\frac{\beta_ip_{i-1}+p_{i-2}}{\beta_iq_{i-1}+q_{i-2}}$ and $\alpha-\beta=\frac{(-1)^i(\alpha_i-\beta_i)}{(\alpha_iq_{i-1}+q_{i-2})(\beta_iq_{i-1}+q_{i-2})}$. Then,  $|\alpha-\beta|=|a_i-b_i||a_ib_i|^{-1}\mu^{-2d}$.
\end{proof}

Formally, the same result holds also if $\beta=[a_0,\dots,a_{i-1}]$, setting $|b_i|=\infty$; in this case we find again \eqref{1eq:fondprop}: $\Big|\alpha-\cv {i-1}\Big|=|q_{i-1}|^{-2}|a_i|^{-1}$.

\begin{rem}
	In particular we have that, differently from the real case where all rational numbers have two possible regular continued fraction expansions, the regular continued fraction expansion of a Laurent series is always unique. 
\end{rem}

\begin{rem}\label{1rem:confrontocf2}
	A converse holds too: let $\alpha=[a_0,a_1,\dots],\beta=[b_0,b_1,\dots]\inn\n L$. Then $a_0=b_0,\dots, a_i=b_i$ if and only if $|\alpha-\beta|<\mu^{-2d}$, where $d=\deg a_1+\cdots+\deg a_i$.
	Indeed, let us assume by contradiction that $|\alpha-\beta|<\mu^{-2d}$, $a_0=b_0,\dots,a_{k-1}=b_{k-1}$ but $a_k\neq b_k$, with $k<i$. Then, by the previous Lemma, $\mu^{-2d}>|\alpha-\beta|=\frac{|a_k-b_k|}{|q_{k-1}|^2|a_kb_k|}$, that is, $2(\deg a_k+\cdots+\deg a_i)<\deg a_k+\deg b_k-\deg(a_k-b_k)\leq2\deg a_k$, contradiction.
\end{rem}

In particular, in the case where $\beta=p/q$ is a rational function we immediately get the following first form of Best Approximation Theorem:
\begin{lem}\label{1lem:best2}
	Let $\alpha\inn\n L$, let $p,q$ be two relatively prime polynomials. Then the following are equivalent:
	\begin{enumerate}
		\item $\cv{}$ is a convergent of $\alpha$;
		\item $\ds\left|\alpha -\frac pq\right|<|q|^{-2}$, (equivalently, $\ord(p-\alpha q)>\deg q$).
	\end{enumerate}
\end{lem}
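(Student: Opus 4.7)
The plan is to use the fundamental relation \eqref{1eq:fondprop}, namely $|p_n-\alpha q_n|=|q_{n+1}|^{-1}$, together with the strict monotonicity $|q_{n+1}|>|q_n|$ of the denominators. The non-Archimedean nature of $|\cdot|$ will do the heavy lifting: an integer-type quantity (here, a polynomial) that can be bounded strictly below $1$ must actually vanish.

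For the direction $(1)\Rightarrow(2)$: if $p/q=p_n/q_n$ as rational functions, then since both fractions are in lowest terms ($p,q$ coprime by hypothesis and $p_n,q_n$ coprime by the previous Lemma), the pair $(p,q)$ differs from $(p_n,q_n)$ by a common non-zero scalar, so in particular $|q|=|q_n|$. Dividing \eqref{1eq:fondprop} by $|q_n|$ yields $|\alpha-p/q|=|q_nq_{n+1}|^{-1}<|q_n|^{-2}=|q|^{-2}$, using $|q_{n+1}|>|q_n|$.

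For the harder direction $(2)\Rightarrow(1)$: since $|q_n|$ is strictly increasing with $|q_0|=1$ and $|q|\geq1$, I would choose the unique index $n\geq0$ with $|q_n|\leq|q|<|q_{n+1}|$ (in the event that the continued fraction of $\alpha$ terminates at step $N$, simply interpret $|q_{N+1}|$ as $+\infty$, so that $n\leq N$ always exists). The goal is then to show that the polynomial $pq_n-p_nq$ vanishes; it suffices to check $|pq_n-p_nq|<1$. Writing
\[pq_n-p_nq=qq_n\left(\frac{p}{q}-\alpha\right)+qq_n\left(\alpha-\frac{p_n}{q_n}\right),\]
the first summand has norm strictly less than $|qq_n|\cdot|q|^{-2}=|q_n|/|q|\leq1$ by hypothesis, while the second has norm $|qq_n|\cdot|q_nq_{n+1}|^{-1}=|q|/|q_{n+1}|<1$ by \eqref{1eq:fondprop} and the choice of $n$. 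The ultrametric inequality then gives $|pq_n-p_nq|<1$, forcing $pq_n=p_nq$; since both fractions are in lowest terms this means $p/q=p_n/q_n$ as rationals, i.e.\ $p/q$ is a convergent.

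The only real (and mild) obstacle is bookkeeping the strictness of the two estimates: both summands above must have norm strictly less than $1$ in order for the ultrametric bound to also be strict. The first strictness comes from the strict hypothesis $|\alpha-p/q|<|q|^{-2}$, and the second from the strict choice $|q|<|q_{n+1}|$; if either were only a non-strict inequality the argument would not conclude.
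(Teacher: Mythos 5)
Your proof is correct, and it follows a genuinely different route from the paper's. The paper proves both directions in one stroke by invoking Remark \ref{1rem:confrontocf2}: $p/q$ is a convergent iff $p/q=[a_0,\dots,a_i]$ for some $i$, iff (by that remark) $|\alpha-p/q|<\mu^{-2(\deg a_1+\dots+\deg a_i)}=|q|^{-2}$. In other words, the whole content of the equivalence is pushed back onto the earlier comparison Lemma~\ref{1lem:confrontocf}, which locates the first partial quotient where two Laurent series disagree. Your proof, by contrast, is the classical ``best approximation'' argument done from scratch: pick the unique index $n$ with $|q_n|\leq|q|<|q_{n+1}|$, write the cross-term
\[pq_n-p_nq=qq_n\left(\tfrac pq-\alpha\right)+qq_n\left(\alpha-\tfrac{p_n}{q_n}\right),\]
bound each summand strictly below $1$ using the hypothesis and \eqref{1eq:fondprop} respectively, and conclude by the ultrametric inequality and the fact that a polynomial of norm strictly less than $1$ must vanish. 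This is more self-contained (it relies only on \eqref{1eq:fondprop} and the monotonicity of $|q_n|$, not on Lemma~\ref{1lem:confrontocf} and its converse remark), and it is the argument most readers would recognize from the classical real case, where the non-Archimedean inequality now does in one line what in $\mathbb R$ requires a factor of $1/2$ and a case split. The trade-off is that the paper's approach, once the comparison machinery is in place, gives the equivalence with almost no computation and extends naturally to comparisons with non-rational $\beta$. Your bookkeeping of where strictness enters (both summands must be strictly below $1$, one via the hypothesis, the other via the choice of $n$) and your handling of the terminating case via $|q_{N+1}|=+\infty$ are both exactly right.
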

\begin{proof}
	Let $\alpha=[a_0,a_1,\dots]\inn\n L$. Then $\cv{}$ is a convergent of $\alpha$ if and only if $\cv{} =[a_0,\dots,a_i]$ for some $i$, if and only if, by the previous Remark, $\ds\left|\alpha-\cv{} \right|<\mu^{-2(\deg a_1+\dots+\deg a_i)}$, if and only if $\ds\left|\alpha-\cv{} \right|<|q|^{-2}$.
\end{proof}

\begin{rem}
	We can see here a first significant difference between the integer and the polynomial settings. Indeed, the corresponding result in the classical case (see for example \cite{olds1963continued}, Theorem 3.8 and Theorem 5.1) is:
	
	\textit{Let $\alpha\inn\n R$. If $p_n/q_n$ is a convergent of $\alpha$, then $|\alpha-p_n/q_n|<q_n^{-2}$. Conversely, if $p,q\inn\n Z$ and $|\alpha-p/q|<\frac12 q^{-2}$, then $p/q$ is a convergent of $\alpha$. Moreover, of any two consecutive convergents of $\alpha$, at least one satisfies the previous inequality.}
	
	The simpler form of the Theorem in the polynomial case is due to the fact that the absolute value is now non-Archimedean.
\end{rem}

\begin{rem}\label{1rem:bestappr}
	If $p,q\inn\n K[T]$ are polynomials such that $|p-\alpha q|<|q|^{-1}$, then $p/q$ is a convergent of $\alpha$ even if $p,q$ are not relatively prime. In this case, by \eqref{1eq:fondprop} the following partial quotient has degree $\ord(p-\alpha q)-\deg q+2\deg (p,q)$.
	
	The converse is not always true: let $(p_n,q_n)$ be a continuant of $\alpha$ and let $d\inn\n K[T]$. Then we have $|dp_n-\alpha dq_n|=|d||a_{n+1}q_n|^{-1}$, so $|dp_n-\alpha dq_n|<|dq_n|^{-1}$ if and only if $2\deg d<\deg a_{n+1}$.
\end{rem}

As in the real case, the convergents to a Laurent series provide its best possible approximations through rational functions.

\begin{defn} 
	Let $\alpha\inn\n L$, let $p,q$ be two relatively prime polynomials with $q\neq0$. We will say that $p/q$ is a \textit{best (rational) approximation} to $\alpha$ if for every $p',q'\inn\n K[T]$ with $\deg q'\leq\deg q$ and $\frac pq\neq\frac{p'}{q'}$ we have \be|p-\alpha q|<\left|p'-\alpha q'\right|.\ee
\end{defn}

\begin{lem}\label{1lem:badlyappr}
	Let $p,q$ be two relatively prime polynomials and let us assume that $\deg q\leq\deg q_n$ and that $\cv{}\neq\cv n$ (where, as before, $p_n,q_n$ are the continuants of $\alpha\inn\n L$). Then $$|p-\alpha q|\geq|p_{n-1}-\alpha q_{n-1}|=|q_n|^{-1}.$$
	Moreover, the equality $|p-\alpha q|=|q_n|^{-1}$ holds if and only if $\frac pq=[a_0,\dots,a_{n-1},a_n+k]$ with $k\inn\n K^*$.\medskip

\end{lem}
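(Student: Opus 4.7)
The plan is to expand the pair $(p,q)$ in the $\n K[T]$-basis provided by two consecutive continuants. Since \eqref{1eq:pnqnprime} gives $p_n q_{n-1} - p_{n-1} q_n = \pm 1$, the matrix $\mm{p_n}{p_{n-1}}{q_n}{q_{n-1}}$ is invertible over $\n K[T]$, so there exist unique $A, B \inn \n K[T]$ with
\[
p = A\, p_n + B\, p_{n-1}, \qquad q = A\, q_n + B\, q_{n-1}.
\]
Taking the $\alpha$-combination yields $p - \alpha q = A(p_n - \alpha q_n) + B(p_{n-1} - \alpha q_{n-1})$, and by the fundamental identity \eqref{1eq:fondprop} the two summands have absolute values $|A|\,|q_{n+1}|^{-1}$ and $|B|\,|q_n|^{-1}$ respectively.

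First I would dispose of the case $B = 0$: coprimality of $(p,q)$ and of $(p_n,q_n)$ would then force $A \inn \n K^*$, giving $\frac{p}{q} = \cv n$, against the hypothesis. So $B \neq 0$. Next, to constrain $\deg A$ and $\deg B$, I would exploit $\deg q \leq \deg q_n$ together with $\deg q_n > \deg q_{n-1}$. The analysis splits into two subcases according to whether the leading terms of $A q_n$ and $B q_{n-1}$ cancel in $q = A q_n + B q_{n-1}$: in the non-cancellation subcase, $\deg q = \max(\deg A + \deg q_n, \deg B + \deg q_{n-1})$ forces $\deg A = 0$ (so $A$ is either $0$ or a nonzero constant) and $\deg B \leq \deg a_n$; in the cancellation subcase, $\deg A + \deg q_n = \deg B + \deg q_{n-1}$, equivalently $\deg B = \deg A + \deg a_n$, with both $A, B$ nonzero.

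A comparison of exponents then shows that in every remaining configuration the second summand strictly dominates. Indeed, $|A|\,|q_{n+1}|^{-1} < |B|\,|q_n|^{-1}$ is equivalent to $\deg A < \deg B + \deg a_{n+1}$, which holds in both subcases since $\deg a_{n+1} \geq 1$ and $\deg A \leq \deg B$. By the non-Archimedean property we conclude
\[
|p - \alpha q| = |B|\,|q_n|^{-1} \geq |q_n|^{-1},
\]
with equality exactly when $|B| = 1$, i.e.\ $B \inn \n K^*$. The cancellation subcase is then automatically excluded (it forces $|B| \geq \mu^{\deg a_n} > 1$), so equality requires $\deg A = 0$ and $B \inn \n K^*$. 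Writing $A = c \inn \n K^*$ and setting $\lambda = B/c \inn \n K^*$ gives
\[
\frac{p}{q} = \frac{p_n + \lambda\, p_{n-1}}{q_n + \lambda\, q_{n-1}} = [a_0, \dots, a_{n-1}, a_n + \lambda],
\]
as claimed (the residual possibility $A = 0$ produces $p/q = \cv{n-1}$, which also realises the equality).

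The main technical obstacle is precisely the cancellation subcase: one has to observe that leading-term cancellation in $q = A q_n + B q_{n-1}$ forces $\deg B = \deg A + \deg a_n \geq 1$, which in turn upgrades the bound to a strict inequality $|p - \alpha q| > |q_n|^{-1}$. Without this observation one might spuriously enlarge the equality locus; with it, the characterisation of the extremal $p/q$ reduces to a clean linear combination of the two most recent convergents.
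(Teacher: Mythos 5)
Your proof is correct and takes a genuinely different route from the paper's. The paper splits on whether $p/q$ is already a convergent $p_k/q_k$ with $k<n$ (then \eqref{1eq:fondprop} gives the bound at once) or not, and in the latter case invokes Lemma \ref{1lem:confrontocf} to compare the continued fraction expansions of $p/q$ and $\alpha$ at the first index of disagreement. You instead expand $(p,q)$ in the $\n K[T]$-basis $(p_n,q_n),(p_{n-1},q_{n-1})$ furnished by the unimodularity relation \eqref{1eq:pnqnprime}, and run a degree analysis on the coordinates $A,B$ --- the standard real-number argument transposed to the polynomial setting, sharpened by the ultrametric inequality, which here lets you conclude that the $B$-term strictly dominates rather than merely bounding below. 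The paper's route is shorter once Lemma \ref{1lem:confrontocf} is available; yours is more self-contained and makes the equality locus explicit. In fact you observe that both $p/q=[a_0,\dots,a_{n-1},a_n+k]$ (case $A\inn\n K^*$, $B/A=k$) and $p/q=p_{n-1}/q_{n-1}$ (case $A=0$, $B\inn\n K^*$) realize $|p-\alpha q|=|q_n|^{-1}$, so the ``only if'' in the lemma as stated should be read as allowing this second possibility --- a case the paper's own proof implicitly handles via the convergent sub-case but its statement omits. One small notational slip: where you write ``$\deg A=0$'' you mean $\deg A\leq 0$, i.e.\ $A$ is a constant, possibly zero; your parenthetical makes the intent clear.
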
	

\begin{proof}
	By \eqref{1eq:fondprop}, the previous inequality is certainly true if $\cv{}=\cv k$ is a convergent of $\alpha$ with $k<n$.
	
	Let us then assume that $\cv{}$ is not a convergent of $\alpha$; let $\alpha=[a_0,a_1,\dots]$ and let $\cv{}=[a_0,\dots,a_{i-1},b_i,\dots]$ with $b_i\neq a_i$. Of course, $i\leq n$. By Lemma \ref{1lem:confrontocf} then $|p-\alpha q|=|q|\frac{|a_i-b_i|}{|q_{i-1}|^2|a_ib_i|}\geq\frac{|a_i-b_i|}{|q_{i-1}a_i|}\geq|q_i|^{-1}\geq|q_n|^{-1}$.
\end{proof}

\begin{rem}
	In particular, as $|p_n-\alpha q_n|<|p_{n-1}-\alpha q_{n-1}|$ for every $n$, all the convergents of $\alpha$ are best approximations to $\alpha$.
\end{rem}

In the real setting, by the classical Best Approximation Theorem, the convergents of a real number $\alpha$ are exactly its best approximations, namely (see \cite{khinchin1997continued}, Theorems 16, 17):\par\medskip
		
\textit{Let $\alpha\inn\n R$, let $\cv n$ be a convergent of $\alpha$ with $n\geq2$. If $p,q$ are integers such that $0<q\leq q_n$ and such that $\frac pq\neq\cv n$, then $|q_n\alpha-p_n|<|q\alpha-p|$. Moreover, a reduced fraction $\frac {p'}{q'}$ with $q'\geq q_2$ that satisfies the previous property is a convergent.}\par\medskip

An analogue result holds also in the functions field case:  

\begin{theo}[polynomial Best Approximation Theorem]
	Let $\alpha\inn\n L$ and let $(\cv n)_n$ be its convergents.
	Let $p,q\inn\n K[T]$ with $q\neq0$ be two relatively prime polynomials. Then $$\frac pq \text{ is a best approximation to } \alpha \text{ if and only if it is a convergent of } \alpha.$$
\end{theo}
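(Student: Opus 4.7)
The plan is to prove both directions by comparing $|p-\alpha q|$ to the fundamental quantity $|q_n|^{-1}=|p_{n-1}-\alpha q_{n-1}|$ supplied by \eqref{1eq:fondprop}, using the preceding Lemma~\ref{1lem:badlyappr} as the main tool in each direction.

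For the easier implication (convergent implies best approximation), I would take $p/q=\cv n$ and consider an arbitrary competitor $(p',q')$ with $\deg q'\leq\deg q_n$ and $p'/q'\neq\cv n$. First I reduce to the case in which $(p',q')$ is coprime: if $(p',q')=d(p'',q'')$ with $(p'',q'')=1$, then $|p'-\alpha q'|=|d|\,|p''-\alpha q''|\geq|p''-\alpha q''|$, while $\deg q''\leq\deg q_n$ and $p''/q''\neq\cv n$ still hold. Then Lemma~\ref{1lem:badlyappr} applied with index $n$ gives $|p''-\alpha q''|\geq|q_n|^{-1}$, while \eqref{1eq:fondprop} yields $|p_n-\alpha q_n|=|q_{n+1}|^{-1}<|q_n|^{-1}$ because $\deg a_{n+1}\geq1$. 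Chaining these produces the strict inequality required by the definition of best approximation.

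For the converse (best approximation implies convergent), I argue by contradiction: assume $p/q$ is a best approximation but is \emph{not} a convergent of $\alpha$. Since the degrees $\deg q_k$ are strictly increasing, I pick the largest $n$ with $\deg q_n\leq\deg q$, so that $\deg q_n\leq\deg q<\deg q_{n+1}$. Applying the best-approximation hypothesis to the admissible competitor $(p_n,q_n)$ gives the upper bound $|p-\alpha q|<|p_n-\alpha q_n|=|q_{n+1}|^{-1}$. On the other hand, $(p,q)$ is reduced by assumption, $\deg q\leq\deg q_{n+1}$, and $p/q\neq p_{n+1}/q_{n+1}$ since the degrees of the reduced denominators differ; so Lemma~\ref{1lem:badlyappr} applied with index $n+1$ yields the lower bound $|p-\alpha q|\geq|q_{n+1}|^{-1}$, contradicting the upper bound.

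The main obstacle I anticipate is simply the bookkeeping of indices: the same Lemma~\ref{1lem:badlyappr} must be invoked with index $n$ in one direction and index $n+1$ in the other, in order to exploit both halves of the strict gap $|q_{n+1}|^{-1}<|q_n|^{-1}$. A minor technical point is the reduction to coprime competitors in the first direction, and a degenerate edge case to handle separately is when the continued fraction of $\alpha$ terminates, in which case $\alpha$ is itself rational and the statement becomes essentially tautological.
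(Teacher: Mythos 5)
Your proof is correct and follows the same route as the paper: both directions reduce to a comparison against $|q_{n}|^{-1}$ and $|q_{n+1}|^{-1}$ via Lemma~\ref{1lem:badlyappr} and \eqref{1eq:fondprop}, with the converse obtained by the same contradiction between the upper bound forced by the best-approximation hypothesis (testing against a nearby convergent) and the lower bound from Lemma~\ref{1lem:badlyappr}. The only differences are cosmetic: you shift the index by one relative to the paper, and you spell out the reduction to coprime competitors in the easy direction, which the paper leaves implicit in its appeal to the preceding Remark.
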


\begin{proof}
	It remains only to prove that every best approximation to $\alpha$ is also a convergent.
	 
	Let $\cv{}$ be a best approximation to $\alpha$ and let us assume that $\cv{}$ is not a convergent of $\alpha$. Let $n$ be the unique integer such that $\deg q_{n-1}<\deg q\leq\deg q_n$. Then $|p-\alpha q|<|p_{n-1}-\alpha q_{n-1}|$ while, by the previous Remark, $|p-\alpha q|\geq|p_{n-1}-\alpha q_{n-1}|$, contradiction.
\end{proof}

\section{Some operations with continued fractions}\label{1sec:sec3}
While there is no general algorithm to compute the sum or the product of continued fractions, it is possible to study some easier operations. In particular, we will focus on the multiplication (or division) of a continued fraction by a linear polynomial, which we will later apply to the study of the polynomial analogues of Zaremba's and McMullen's Conjectures, and on the polynomial analogue of Serret's Theorem on M\"{o}ebius transformations of continued fractions.

As we will be interested in the study of continued fractions with partial quotients of bounded degree, we will always highlight how the operation under consideration modifies the degrees of the partial quotients. In this perspective, we introduce the following notation:

\begin{notat}\label{1notat:K,ovK}
	For $\alpha=[a_0,a_1,\dots]\inn\n L$, we set \be K(\alpha)=\sup_{n\geq1}\deg a_n\inn\n N\cup\{\infty\}.\ee If $K(\alpha)<\infty$, we will say that $\alpha$ has \textit{bounded partial quotients}, or that $\alpha$ is \textit{badly approximable}.
	
	In fact, by Lemma \ref{1lem:badlyappr}, $|q||p-\alpha q|\!\!\geq\!\!|q_n||p_n-\alpha q_n|\!\!=\!\!|a_{n+1}|^{-1}$ for any pair of relatively prime polynomials $p,q$ with $\deg q_n\leq\deg q<\deg q_{n+1}$. Thus $\alpha$ has bounded partial quotients if and only if $$\liminf_{|q|\to\infty}|q||p-\alpha q|>0.$$
	
	If $\alpha\inn\n L\setminus\n K(T)$, that is, if the continued fraction of $\alpha$ is infinite, we will also set $$\ov K(\alpha)=\limsup_n\,\deg a_n\inn\n N\cup\{\infty\}.$$
\end{notat}

Denoting by $(\alpha_n)_n$ the complete quotients of $\alpha$, we have $K(\alpha_n)\leq K(\alpha)$, while $\ov K(\alpha_n)=\ov K(\alpha)$ for every $n$. In any case, there certainly exists $N$ such that $\ov K(\alpha)=K(\alpha_n)$ for every $n\geq N$.

\begin{rem}\label{1rem:matrcorr}
	Many of the following formulas can be easily proved using the the matrix formalism introduced in \ref{1notat:matrix}.
	
	Let us denote by $\mathbf M$ the multiplicative monoid generated by the identity and by the matrices $M_a$, with $a\inn\n K[T]\setminus\n K$.
	The inverse of $M_a$ in $\GL_2(\n K[T])$ is $M_{(0,-a,0)}$, so the only invertible element of $\mathbf M$ is the identity.
	
	The restriction of the map $\pphi:M_2(\n L)\to\n L$ defined in Remark \ref{1rem:matrixaction2} gives a natural bijection between $\n K(T)$ and the set of matrices of the form $M_a\cdot M'$ with $a\inn\n K[T]$ and $M'\inn\mathbf M$: if $\alpha=[a_0,\dots,a_n]\inn\n K(T)$, we will say that it corresponds to the matrix $M_\alpha=M_{(a_0,\dots,a_n)}=M_{a_0}\cdot M_{(a_1,\dots,a_n)}$. 
	
	With an abuse of notation, for $\alpha=[a_0,a_1,\dots]\inn\n L\setminus\n K(T)$, we will denote by $M_\alpha$ any matrix of the form $M_{(a_0,\dots,a_k,\alpha_k)}$ with $k\geq0$; we will also write $M_\alpha=M_{(a_0,a_1,\dots)}$. This gives a bijection between $\n L$ and the formal infinite products of matrices of the form $M_a$ with $a\inn\n K[T]$ (where $a$ can be a constant only in the first term). 
\end{rem}

\subsection{Continued fractions with some constant partial quotients} 
As we have mentioned in Remark \ref{1rem:constants}, continued fractions with isolated constant partial quotients converge, as well as continued fractions with pairs of consecutive constant partial quotients with $a_na_{n+1}+1\neq0$. Thus, it can be useful to allow also constant, and possibly zero, partial quotients. In particular, as it is shown by Van der Poorten (\cite{poorten1998formal}, Lemma 2 and Corollary 2), it is possible to transform any converging continued faction with some constant partial quotients in a regular continued fraction following a (possibly infinite) algorithm. 

\begin{lem}For every sequence $(a_n)_n$ of elements of $\n L$ we have
	\be\label{1eq:zero}[a_0,\dots,a_n,0,a_{n+1},a_{n+2},\dots]=[a_0,\dots,a_n+a_{n+1},a_{n+2},\dots].\ee
\end{lem}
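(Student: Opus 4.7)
The plan is to verify this purely algebraic identity using the tools introduced earlier in the section. I would first handle the finite case, from which the infinite case follows by taking limits inside the absolute value $|\cdot|$ (continuity of the continued fraction algorithm, as in the convergence argument of Lemma \ref{1lem:constants}).

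For the finite case, the cleanest route is via the matrix formalism of Notation \ref{1notat:matrix} and Remark \ref{1rem:matrcorr}. The key computation is the $2\times 2$ matrix identity
\[
M_a M_0 M_b = \begin{pmatrix} a & 1 \\ 1 & 0 \end{pmatrix}\begin{pmatrix} 0 & 1 \\ 1 & 0 \end{pmatrix}\begin{pmatrix} b & 1 \\ 1 & 0 \end{pmatrix}
= \begin{pmatrix} a+b & 1 \\ 1 & 0 \end{pmatrix} = M_{a+b},
\]
valid for every $a,b\in\mathbf L$. Applied to the product $M_{(a_0,\dots,a_n,0,a_{n+1},\dots,a_N)} = M_{a_0}\cdots M_{a_{n-1}}\cdot(M_{a_n}M_0 M_{a_{n+1}})\cdot M_{a_{n+2}}\cdots M_{a_N}$, this collapses to $M_{(a_0,\dots,a_{n-1},a_n+a_{n+1},a_{n+2},\dots,a_N)}$. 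The finite identity then follows by applying the map $\pphi$ of Remark \ref{1rem:matrixaction2}, which gives $\pphi(M_{(b_0,\dots,b_k)})=[b_0,\dots,b_k]$ by Lemma \ref{1lem:matrixaction}.

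Alternatively (and perhaps more in the spirit of the surrounding text), one can avoid matrices altogether by a direct manipulation using the nesting property of Remark \ref{1rem:nest}: write
\[
[a_0,\dots,a_n,0,a_{n+1},a_{n+2},\dots] = \bigl[a_0,\dots,a_n,\,[0,a_{n+1},a_{n+2},\dots]\bigr],
\]
observe that $[0,a_{n+1},a_{n+2},\dots] = 1/[a_{n+1},a_{n+2},\dots]$, so the right-hand side equals $\bigl[a_0,\dots,a_{n-1},\,a_n+[a_{n+1},a_{n+2},\dots]\bigr]$, and then absorb the sum back using $a_n+[a_{n+1},a_{n+2},\dots] = [a_n+a_{n+1},a_{n+2},\dots]$ and Remark \ref{1rem:nest} again.

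To pass from finite to infinite continued fractions, I would fix $N\ge n+2$, apply the identity to the truncation $[a_0,\dots,a_n,0,a_{n+1},\dots,a_N]$, and let $N\to\infty$; both sides converge in $\mathbf L$ by the convergence criterion recalled in Remark \ref{1rem:constants} (isolated zero partial quotients are permitted), so the identity persists in the limit. There is no real obstacle here: the only small point of care is to ensure that all continued fractions appearing in the manipulation are well defined, i.e.\ that no division by zero occurs, which is automatic under the standing convergence hypothesis on $(a_n)_n$.
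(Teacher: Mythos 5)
Your primary argument is exactly the paper's proof: the matrix identity $M_aM_0M_b=M_{a+b}$ collapses the product $M_{(a_0,\dots,a_n,0,a_{n+1},\dots)}$ to $M_{(a_0,\dots,a_n+a_{n+1},\dots)}$. The extra care about the finite-to-infinite passage and the alternative nesting argument are fine but not needed beyond what the paper records.
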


\begin{proof}
	$M_aM_0M_b=M_{a+b}$ for every $a,b\inn\n L$, so $M_{(a_0,\dots,a_n,0,a_{n+1},\dots)}=M_{(a_0,\dots,a_n+a_{n+1},\dots)}$.
\end{proof}

\begin{lem}\label{1lem:addrempartq}
	Let $\alpha,\beta,\gamma\inn\n L$ with $\beta\neq0$. Then \be\label{1eq:VdPmanip1}[\alpha,\beta,\gamma]=[\alpha+\beta^{-1},-\beta^2\gamma-\beta]
	\ee and, conversely, \be\label{1eq:VdPmanip2}[\alpha+\beta,\gamma]=[\alpha,\beta^{-1},-\beta^2\gamma-\beta].\ee
\end{lem}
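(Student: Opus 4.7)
The plan is to establish both identities by direct algebraic computation. For \eqref{1eq:VdPmanip1}, I would expand the left-hand side as $[\alpha,\beta,\gamma]=\alpha+1/(\beta+1/\gamma)=\alpha+\gamma/(\beta\gamma+1)$ and the right-hand side as
\[
[\alpha+\beta^{-1},-\beta^2\gamma-\beta]=\alpha+\beta^{-1}-\frac{1}{\beta(\beta\gamma+1)}=\alpha+\frac{(\beta\gamma+1)-1}{\beta(\beta\gamma+1)}=\alpha+\frac{\gamma}{\beta\gamma+1},
\]
and observe that the two values coincide. The hypothesis $\beta\neq 0$ is what is needed for the substitution to be meaningful, while $\beta\gamma+1\neq 0$ is implicit in the requirement that both continued fractions be well defined.

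The second identity \eqref{1eq:VdPmanip2} can then be obtained as a direct consequence of the first by substitution: applying \eqref{1eq:VdPmanip1} to the triple $(\alpha,\beta^{-1},-\beta^2\gamma-\beta)$ in place of $(\alpha,\beta,\gamma)$ gives
\[
[\alpha,\beta^{-1},-\beta^2\gamma-\beta]=\bigl[\alpha+\beta,\,-\beta^{-2}(-\beta^2\gamma-\beta)-\beta^{-1}\bigr]=[\alpha+\beta,\gamma],
\]
which is exactly what is claimed. Thus the two formulas are inverse manipulations of one another, and only one of them really needs to be verified.

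An equivalent route, more in the spirit of Notation \ref{1notat:matrix} and Remark \ref{1rem:matrixaction2}, would be to check the identity at the level of $2\times 2$ matrices: one would compute $M_\alpha M_\beta M_\gamma$ and $M_{\alpha+\beta^{-1}}M_{-\beta^2\gamma-\beta}$ and observe that they induce the same M\"obius transformation (in fact they agree up to a scalar factor), whence the identity follows by applying $\pphi$. I do not foresee any genuine obstacle, since the statement reduces to a routine rational identity in the three indeterminates $\alpha,\beta,\gamma$; the only point requiring attention is tracking the non-vanishing conditions $\beta\neq 0$ and $\beta\gamma+1\neq 0$ so that every inverse appearing in the formulas is legitimate.
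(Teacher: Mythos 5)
Your proof is correct and follows essentially the same route as the paper: a direct algebraic verification of \eqref{1eq:VdPmanip1} by expanding both sides, and then deducing \eqref{1eq:VdPmanip2} from \eqref{1eq:VdPmanip1} by the substitution $\beta\mapsto\beta^{-1}$, $\gamma\mapsto-\beta^2\gamma-\beta$. The aside about a matrix-level verification is a reasonable alternative but is not what the paper does here.
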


\begin{proof}
	$[\alpha+\beta^{-1},-\beta^2\gamma-\beta]=\alpha+\beta^{-1}+\frac1{-\beta^2\gamma-\beta}=\frac{\alpha\beta\gamma+\alpha+\gamma}{\beta\gamma+1}=[\alpha,\beta,\gamma]$, while \eqref{1eq:VdPmanip2} follows directly from \eqref{1eq:VdPmanip1} substituting $\beta$ with $\beta^{-1}$ and $\gamma$ with $-\beta^2\gamma-\beta$. 
\end{proof}

\begin{rem}\label{1rem:Kconstants}
	In particular, allowing constant, and possibly zero, partial quotients in a converging continued fraction, the degrees of the neighbouring non-constant partial quotients do not decrease when it is transformed in the regular expansion. Thus, if $[b_0,b_1,\dots]$ is a (non-regular) continued fraction expansion for $\alpha\inn\n L$, with $b_n\inn\n K[T]$, possibly constant, we will have $$K(\alpha)\leq\sup_{n\geq1}\deg b_n\, \text{ and } \,\ov K(\alpha)\leq\limsup\deg b_n.$$
\end{rem}

\begin{lem}
	Let $\alpha=[b_0,b_1,\dots]$ be a continued fraction expansion for $\alpha$, with the $b_n\inn\n K[T]$ possibly constant polynomials. Let $u_n,v_n$ be the continuants of such continued fraction and let $\cv m$ be the convergents of the regular expansion of $\alpha$. Then for every $m$ there exists $n_m$ such that $\ds\cv m=\frac{u_{n_m}}{v_{n_m}}$. 
\end{lem}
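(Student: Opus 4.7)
The plan is to transform $[b_0,b_1,\dots]$ into the regular expansion $[a_0,a_1,\dots]$ step by step, using equations \eqref{1eq:zero} and \eqref{1eq:VdPmanip1}, while checking that each move deletes continuants but never introduces new ones (as rational functions $u_n/v_n$). Since the regular expansion is uniquely characterised by the absence of internal constant or zero partial quotients (Remark \ref{1rem:constants}), iterating left-to-right produces $[a_0,a_1,\dots]$, whose convergents $\cv m$ must then appear among the original continuants $\{u_n/v_n\}$.

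For a zero $b_k=0$ with $k\geq 1$, equation \eqref{1eq:zero} corresponds to the matrix identity $M_{b_{k-1}}M_0M_{b_{k+1}}=M_{b_{k-1}+b_{k+1}}$. The new partial products then agree with the old ones at positions $\leq k-2$ and equal the old partial products at positions $\geq k+2$ (shifted by $-2$); so only old positions $k-1$ and $k$ disappear, and position $k$ is redundant, since $b_k=0$ forces $u_k=u_{k-2}$, $v_k=v_{k-2}$.

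For a non-zero constant $b_k$ with $k\geq 1$, apply \eqref{1eq:VdPmanip1} with $\alpha=b_{k-1}$, $\beta=b_k$, $\gamma=\beta_{k+1}=[b_{k+1},b_{k+2},\dots]$, and then expand the new second term $-b_k^2\beta_{k+1}-b_k$ as a continued fraction (this alternately rescales the subsequent partial quotients, cf.\ Lemma \ref{1lem:multAC}). Writing $u'_j,v'_j$ for the new continuants, a direct calculation gives $u'_{k-1}=u_{k-1}+b_k^{-1}u_{k-2}$; multiplying by $b_k$ and using the recursion $u_k=b_ku_{k-1}+u_{k-2}$ yields $(u'_{k-1},v'_{k-1})=b_k^{-1}(u_k,v_k)$, so that $u'_{k-1}/v'_{k-1}=u_k/v_k$ as rational functions. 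An analogous calculation shows that the remaining new continuants agree with the old ones at an index shifted by $+1$, so only the continuant at old position $k-1$ is dropped.

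Iterating these reductions left-to-right produces the regular expansion, and every convergent $\cv m$ therefore equals some $u_{n_m}/v_{n_m}$ from the original expansion. To handle the possibility of infinitely many non-regular partial quotients, one fixes $m$ and notes that $\cv m$ depends only on $a_0,\dots,a_m$, so only finitely many manipulations — all affecting only a finite initial segment of $[b_0,b_1,\dots]$ — suffice to exhibit $n_m$. The main difficulty I expect is the bookkeeping in the constant case, namely verifying that the new continuant at position $k-1$ reproduces $u_k/v_k$ rather than introducing a genuinely new rational function.
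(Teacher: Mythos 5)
Your proof is correct and follows essentially the same route as the paper's. The paper compresses the argument into the two continuant identities $C_{n+3}(a_0,\dots,a_n,0,c)=C_{n+1}(a_0,\dots,a_n+c)$ and $C_{n+3}(a_0,\dots,a_n,\beta,\gamma)=-\beta^{-1}C_{n+2}(a_0,\dots,a_{n-1},a_n+\beta^{-1},-\beta^2\gamma-\beta)$, whereas you derive the same transformations from the three-term recursion for $u_n,v_n$; in both cases the operative fact is that each Van der Poorten reduction drops at most one continuant (as a rational function) and reproduces all the others up to shift and multiplicative constant.
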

\begin{proof}
	This follows directly from the facts that $C_{n+3}(a_0,\dots,a_n,\!0,\!c)\!=\!C_{n+1}(a_0,\dots,\!a_n+c)$ and $C_{n+3}(a_0,\dots,a_n,\beta,\gamma)=-\beta^{-1}C_{n+2}(a_0,\dots,a_{n-1},a_n+\beta^{-1},-\beta^2\gamma-\beta)$.
\end{proof}

\subsection{Folding Lemma}
The following results can be found in \cite{niederreiter1987rational} (Lemma 2 and Lemma 3); see also \cite{lauder1999continued}, \cite{poorten1998formal} for different proofs.
\begin{notat}\label{1notat:manip}
	For any finite sequence of polynomials $\overrightarrow w=a_1,\dots,a_n$, we will denote by $\overleftarrow{w}$ the inverse sequence $\overleftarrow{w}=a_n,\dots,a_1$ and, for $k\inn\n K^*$, we will denote by $k\overrightarrow w$ the sequence $k\overrightarrow w=ka_1,k^{-1}a_2,\dots,k^{(-1)^{n-1}}a_n$. Moreover, for $a\inn\n K[T]$, we will write $\overrightarrow w+a=a_1,\dots,(a_n+a),\, \overleftarrow{w}+a=(a_n+a),a_{n-1},\dots,a_1$.
\end{notat}

\begin{lem}[Folding Lemma]\label{1lem:manip}
	Let $\alpha=[a_0,a_1,a_2,\dots]\inn\n L$, let $(\cv n)_n$ be its convergents and let $\overrightarrow{w_n}$ be the sequence $a_1,\dots,a_n$.
	
	For every $a\inn\n K[T]\setminus\{0\}$ we have \be\label{1eq:folding}[a_0,\overrightarrow{w_n},a,-\overleftarrow{w_n}]=\frac{a\,p_nq_n+(-1)^n}{a\,q_n^2}.\ee
	For every $e_1,e_2,c\inn\n K$ such that $e_1^2=e_2^2=1$ and $c^2=e_1e_2$,  \be\label{1eq:folding2}[e_1a_0,e_1\overrightarrow{w_n}+c,e_2\overleftarrow{w_n}-c^3]=\frac{e_1p_nq_n+c(-1)^n}{q_n^2}.\ee

\end{lem}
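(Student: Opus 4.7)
Both identities will be proven by the same strategy: applying the composition rule \eqref{1eq:alphapnqn} at an appropriate ``fold'' point inside the long continued fraction, and then identifying the resulting tail.

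For \eqref{1eq:folding}, set $\xi=[a,-\overleftarrow{w_n}]$, so that $[a_0,\overrightarrow{w_n},a,-\overleftarrow{w_n}]=(\xi p_n+p_{n-1})/(\xi q_n+q_{n-1})$. Using the determinant identity $p_{n-1}q_n-p_nq_{n-1}=(-1)^n$ from \eqref{1eq:pnqnprime}, this equals $\frac{p_n}{q_n}+\frac{(-1)^n}{q_n(\xi q_n+q_{n-1})}$, while the claimed right-hand side of \eqref{1eq:folding} rewrites as $\frac{p_n}{q_n}+\frac{(-1)^n}{aq_n^2}$. It therefore suffices to show $\xi q_n+q_{n-1}=aq_n$, i.e.\ that $[-\overleftarrow{w_n}]=-q_n/q_{n-1}$. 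Now \eqref{1eq:inv} gives $[\overleftarrow{w_n}]=[a_n,\dots,a_1]=q_n/q_{n-1}$, and Lemma \ref{1lem:multAC} applied with $k=-1$ (for which $(-1)^{(-1)^j}=-1$ at every position, so the alternating pattern collapses to a global sign change) yields $[-\overleftarrow{w_n}]=-[\overleftarrow{w_n}]$, as required.

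For \eqref{1eq:folding2}, I follow the same template. Split $\gamma$ as $[\tilde a_0,\dots,\tilde a_{n-1},\theta]$ with $\theta=[\tilde a_n,\dots,\tilde a_{2n}]=(e_1a_n+c)+1/\xi'$ and $\xi'=[e_2\overleftarrow{w_n}-c^3]$. Since $e_2^{\pm 1}=e_2$, Lemma \ref{1lem:multAC} together with \eqref{1eq:inv} gives $[e_2a_n,e_2a_{n-1},\dots,e_2a_1]=e_2q_n/q_{n-1}$, and the shift of the first partial quotient by $-c^3$ simply subtracts $c^3$, so $\xi'=e_2q_n/q_{n-1}-c^3$. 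The continuants of the head $[e_1a_0,\dots,e_1a_{n-1}]$ are read off from Lemma \ref{1lem:multAC} (with $k=e_1$, using $e_1^{\pm 1}=e_1$); after factoring out the common power of $e_1$, the formula \eqref{1eq:alphapnqn} yields
\[\gamma=\frac{e_1\theta\, p_{n-1}+p_{n-2}}{\theta\, q_{n-1}+e_1q_{n-2}}.\]
Substituting $\theta=e_1a_n+c+q_{n-1}/(e_2q_n-c^3q_{n-1})$, multiplying both numerator and denominator by $e_2q_n-c^3q_{n-1}$, and invoking repeatedly $c^2=e_1e_2$ (whence $c^4=1$, $e_1c^3=ce_2$ and $e_1e_2c^3=c$) together with the recursions $a_np_{n-1}+p_{n-2}=p_n$, $a_nq_{n-1}+q_{n-2}=q_n$ and the determinant identity $p_{n-1}q_{n-2}-p_{n-2}q_{n-1}=(-1)^n$, one finds that all the mixed terms cancel and the result reduces to $(e_1p_nq_n+c(-1)^n)/q_n^2$.

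The only real obstacle is the bookkeeping in the second identity, where several cubic and quartic terms in $c$ must annihilate one another; every such cancellation is forced by the defining relation $c^2=e_1e_2$. An alternative that largely avoids the algebra is to work directly in the matrix formalism of Notation \ref{1notat:matrix}: the conjugation identities $M_{-a}=-\sigma M_a\sigma$ with $\sigma=\mm{-1}{0}{0}{1}$ for \eqref{1eq:folding}, and $M_{e_ja}=\mm{e_j}{0}{0}{1}M_a\mm{1}{0}{0}{e_j}$ (for $j=1,2$) for \eqref{1eq:folding2}, reduce each long matrix product to a single $2\times 2$ multiplication, whose first column then directly exhibits the claimed numerator and denominator.
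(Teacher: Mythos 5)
Your proof is correct, but it takes a genuinely different route from the paper, which proves the Folding Lemma entirely by the matrix correspondence of Remark \ref{1rem:matrcorr}: there, $M_\gamma=M_{(a_0,\dots,a_n)}M_aM_{(-a_n,\dots,-a_1)}$ is multiplied out explicitly, using the transposition trick to express $M_{(-a_n,\dots,-a_1)}$ in terms of $p_n,q_n,p_{n-1},q_{n-1}$, and the value of $\gamma$ is read off as the ratio of the first-column entries; the proof of \eqref{1eq:folding2} is then dismissed as ``similar''. Your primary argument instead splits the continued fraction at the fold point, applies \eqref{1eq:alphapnqn}, and identifies the tail via the reversal identity \eqref{1eq:inv}. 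For \eqref{1eq:folding} this is notably cleaner: once you observe that $[-\overleftarrow{w_n}]=-q_n/q_{n-1}$ (so the tail satisfies $\xi q_n+q_{n-1}=aq_n$), the whole identity collapses to a single addition of fractions, with no $2\times2$ matrix multiplication needed. For \eqref{1eq:folding2} your template is the same, and although the bookkeeping in $c$ is heavier, you correctly note that every surviving cancellation is forced by $c^2=e_1e_2$ (hence $c^4=1$, $e_1c^3=e_2c$); this writes out the ``similarly'' that the paper leaves implicit. One small nitpick: the sign-flip $[-\overleftarrow{w_n}]=-[\overleftarrow{w_n}]$ is perhaps more directly invoked via \eqref{1eq:multAform} with $c=-1$ than via Lemma \ref{1lem:multAC}, though your citation does ultimately work since convergents are ratios of continuants. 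Your closing remark about the conjugation identities $M_{-a}=-\sigma M_a\sigma$ and $M_{e_ja}=\bigl(\begin{smallmatrix}e_j&0\\0&1\end{smallmatrix}\bigr)M_a\bigl(\begin{smallmatrix}1&0\\0&e_j\end{smallmatrix}\bigr)$ is precisely the structural reason the paper's matrix computation works out so cleanly, so you have in effect rederived the paper's proof as an aside.
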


\begin{proof}
	These formulas can be easily proved using the matrix correspondence defined in Remark \ref{1rem:matrcorr}.
	
	Indeed, let $\gamma=[a_0,\overrightarrow{w_n},a,-\overleftarrow{w_n}]$. By definition, the matrix corresponding to $\gamma$ is $M_\gamma\!=\!M_{(a_0,\dots,a_n)} M_aM_{(-a_n,\dots,-a_1)}$. Now, $M_{(-a_n,\dots,-a_1)}\!\!=\!\!\mm{(-1)^{n-1}p_n}{(-1)^np_{n-1}}{(-1)^nq_n}{(-1)^{n-1}q_{n-1}}^t\!\!\!M_{-a_0}^{-1}$, thus 
	$M_\gamma=\mm{(-1)^na\,p_nq_n+1}{(-1)^{n-1}a\,p_n^2+(-1)^na\,a_0p_nq_n+a_0}{(-1)^na\,q_n^2}{(-1)^{n-1}a\,p_nq_n+(-1)^naa_0q_n^2+1}$ and $\gamma=\frac{ap_nq_n+(-1)^n}{aq_n^2}$.
	
	\eqref{1eq:folding2} can be proved similarly.
\end{proof}

\begin{lem}
	Let $\alpha=[a_0,a_1,\dots], \beta=[b_0,b_1,\dots]\inn\n L$ and let $(\cv n)_n, (\frac{u_m}{v_m})_m$ be their convergents. Then \be\label{1eq:juxtap}[a_n,\dots,a_0,b_0,\dots,b_m]=\frac{p_nu_m+q_nv_m}{p_{n-1}u_m+q_{n-1}v_m}.\ee
\end{lem}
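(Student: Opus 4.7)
The cleanest route is via the matrix formalism of Notation \ref{1notat:matrix} and Lemma \ref{1lem:matrixaction}. My plan is to express the left-hand side as a M\"obius transformation coming from a product of two matrices of the form $M_{(\cdot,\dots,\cdot)}$, compute that product explicitly, and then read off the ratio via the map $\pphi$ from Remark \ref{1rem:matrixaction2}.

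First, by Lemma \ref{1lem:matrixaction} we have $M_{(a_0,\dots,a_n)}=\mm{p_n}{p_{n-1}}{q_n}{q_{n-1}}$ and $M_{(b_0,\dots,b_m)}=\mm{u_m}{u_{m-1}}{v_m}{v_{m-1}}$. Next, by Lemma \ref{1lem:continuants}, the continuants of the reversed sequence $a_n,\dots,a_0$ are obtained from $(p_i,q_i)$ by the symmetry $u'_n=p_n,\ u'_{n-1}=q_n,\ v'_n=p_{n-1},\ v'_{n-1}=q_{n-1}$, so Lemma \ref{1lem:matrixaction} applied to $[a_n,\dots,a_0]$ yields
\[
M_{(a_n,\dots,a_0)}=\mm{p_n}{q_n}{p_{n-1}}{q_{n-1}}=M_{(a_0,\dots,a_n)}^{t}.
\]

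Now concatenation of finite continued fractions corresponds to multiplication of the associated matrices (cf.\ Remark \ref{1rem:matrcorr}), so
\[
M_{(a_n,\dots,a_0,b_0,\dots,b_m)}=M_{(a_n,\dots,a_0)}\,M_{(b_0,\dots,b_m)}=\mm{p_nu_m+q_nv_m}{p_nu_{m-1}+q_nv_{m-1}}{p_{n-1}u_m+q_{n-1}v_m}{p_{n-1}u_{m-1}+q_{n-1}v_{m-1}}.
\]
Applying the map $\pphi$, which sends $\mm{A}{B}{C}{D}$ to $A/C$ and which by Remark \ref{1rem:matrixaction2} satisfies $\pphi(M_{(c_0,\dots,c_k)})=[c_0,\dots,c_k]$, gives exactly
\[
[a_n,\dots,a_0,b_0,\dots,b_m]=\frac{p_nu_m+q_nv_m}{p_{n-1}u_m+q_{n-1}v_m},
\]
as required.

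There is essentially no obstacle here: once the transposition identity $M_{(a_n,\dots,a_0)}=M_{(a_0,\dots,a_n)}^t$ is noted (which is just Lemma \ref{1lem:continuants} repackaged), the statement is a one-line matrix multiplication. An alternative, purely inductive proof is possible by fixing $n$ and inducting on $m$ using the recursion $u_{m}=b_mu_{m-1}+u_{m-2}$, $v_{m}=b_mv_{m-1}+v_{m-2}$, but the matrix approach is more transparent and avoids handling the base cases $m=0,1$ separately.
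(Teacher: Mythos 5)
Your proof is correct, and it is exactly the argument the paper intends: the paper's one-line proof cites Remarks \ref{1rem:continuants} and \ref{1rem:matrcorr}, which together amount to the identity $M_{(a_n,\dots,a_0)}=\mm{p_n}{q_n}{p_{n-1}}{q_{n-1}}$ followed by the matrix multiplication you carry out. You have simply spelled out what the paper labels ``immediate.''
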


\begin{proof}
	This is an immediate consequence of Remarks \ref{1rem:continuants} and \ref{1rem:matrcorr}.
\end{proof}

\subsection{Unary operations}
We will now consider some simple unary operators on continued fractions. For $f:\n L\to\n L$, $\alpha=[a_0,a_1,\dots]\inn\n L$, we will denote by $\beta=f(\alpha)=[b_0,b_1,\dots]$ its image under $f$. In some cases, and in particular when $f$ is the multiplication or the division by a linear polynomial, it is much easier to study the convergents of $\beta$ than its partial quotients; we will denote by $p_n/q_n$, respectively by ${u_m}/{v_m}$, the convergents of $\alpha$ and $\beta$.

\begin{lem}[Addition of a polynomial]\label{1lem:suma}
	Let $a$ be a polynomial in $\n K[T]$ and let $f(\alpha)=\alpha+a$, for $\alpha\inn\n L$. Then \be\label{1eq:suma}\beta=\alpha+a=[a_0+a,a_1,a_2,\dots],\ee so \be\label{1eq:sumaK}K(\beta)=K(\alpha),\ \ov K(\beta)=\ov K(\alpha)\ee and $$u_n=C_{n+1}(a_0+a,\dots,a_n)=p_n+aq_n,\ v_n=q_n \text{ for every } n.$$
\end{lem}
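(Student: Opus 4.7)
The statement is elementary; the plan is essentially to observe that addition of a polynomial to $\alpha$ shifts only its polynomial part, and then to verify the claim for the continuants by induction.

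First I would show \eqref{1eq:suma}. Writing $\alpha=a_0+\frac{1}{\alpha_1}$ with $\alpha_1=[a_1,a_2,\dots]$, we obtain
\[
\alpha+a=(a_0+a)+\frac{1}{\alpha_1}=[a_0+a,\alpha_1]=[a_0+a,a_1,a_2,\dots].
\]
To see that this is the regular continued fraction expansion of $\beta$, it suffices to observe that $a_0+a\in\n K[T]$ (as both are polynomials) and that the remaining partial quotients $a_1,a_2,\dots$ are unchanged, hence still satisfy $\deg a_n\geq 1$ for $n\geq 1$. By the uniqueness of the regular expansion (Remark following Lemma \ref{1lem:confrontocf}), this is indeed the regular expansion of $\beta$.

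The equalities in \eqref{1eq:sumaK} follow immediately: both $K$ and $\ov K$ are defined as the (limit) supremum of $\deg a_n$ for $n\geq 1$, and these indices are unaffected by the change in $a_0$.

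Finally, for the continuants of $\beta$, I would argue by induction on $n$, using the recursion in Notation \ref{1notat:C_n}. The sequence $(v_n)$ equals $q_n$ trivially, since $v_n=C_n(a_1,\dots,a_n)=q_n$ (the arguments are literally the same). For $(u_n)$, the base cases $u_{-1}=1=p_{-1}+aq_{-1}$ (recall $p_{-1}=1,q_{-1}=0$) and $u_0=a_0+a=p_0+aq_0$ are immediate. Assuming inductively that $u_{n-1}=p_{n-1}+aq_{n-1}$ and $u_{n-2}=p_{n-2}+aq_{n-2}$, the recursion gives
\[
u_n=a_n u_{n-1}+u_{n-2}=a_n(p_{n-1}+aq_{n-1})+(p_{n-2}+aq_{n-2})=p_n+aq_n,
\]
as required. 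There is no real obstacle in this proof; the only point worth double-checking is the base case of the induction, which is why I would write it out explicitly.
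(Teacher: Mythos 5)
Your proof is correct and complete. The paper itself gives no explicit proof of this lemma (it is stated without a \texttt{proof} environment, evidently taken to be immediate), and your argument is exactly the natural verification one would supply: the identity \eqref{1eq:suma} follows from $\alpha=a_0+1/\alpha_1$ together with uniqueness of the regular expansion, the degree claims are then trivial since $K$ and $\ov K$ ignore the index $n=0$, and the continuant identities follow from the three-term recursion with the base cases $u_{-1}=1=p_{-1}+aq_{-1}$ and $u_0=a_0+a=p_0+aq_0$ checked directly. Nothing is missing.
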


\begin{lem}[Multiplication by a constant]
	Let $f(\alpha)=c\,\alpha$, with $c\inn\n L^*$. Then formally \be\label{1eq:multAform}\beta=[c\,a_0,c^{-1}a_1,\dots,c^{(-1)^n}a_n,\dots].\ee
\end{lem}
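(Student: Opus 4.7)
The plan is to verify the identity by computing the continuants of the continued fraction on the right-hand side and showing, at each level, that the $n$th convergent coincides with $c\,p_n/q_n$, so that taking the limit gives $c\alpha$. The whole argument reduces to a direct application of Lemma \ref{1lem:multAC}.

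Denote by $(u_n,v_n)$ the continuants of $[c\,a_0, c^{-1}a_1, \dots, c^{(-1)^n}a_n,\dots]$, i.e.\
$$u_n=C_{n+1}(c\,a_0, c^{-1}a_1,\dots, c^{(-1)^n}a_n),\qquad v_n=C_n(c^{-1}a_1, c\,a_2,\dots, c^{(-1)^n}a_n).$$
First I would apply Lemma \ref{1lem:multAC} to the sequence $(a_0,a_1,\dots,a_n)$ with multiplier $a=c$; this gives $u_n=c\,p_n$ when $n$ is even and $u_n=p_n$ when $n$ is odd. Next I would apply the same Lemma to the shifted sequence $(a_1,\dots,a_n)$ of length $n$ with multiplier $a=c^{-1}$ (so that $(c^{-1})^{-1}=c$ produces exactly the alternation $c^{-1}, c, c^{-1},\dots$ appearing in $v_n$). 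This yields $v_n=c^{-1} q_n$ when $n$ is odd and $v_n=q_n$ when $n$ is even. In either case the constant cancels in the ratio, giving the clean identity
$$\frac{u_n}{v_n}=c\,\frac{p_n}{q_n}\qquad\text{for every }n\geq 0.$$

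To conclude, I would pass to the limit $n\to\infty$: since $p_n/q_n\to\alpha$ in $\n L$ by Lemma \ref{1lem:continuants2} and the subsequent convergence discussion, multiplying by the fixed element $c\in\n L^*$ preserves the limit and gives $u_n/v_n\to c\alpha$, i.e.\ $\beta=c\alpha$.

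The main caveat is that when $c\notin\n K^*$ the expression on the right-hand side of \eqref{1eq:multAForm} is not a regular continued fraction (the entries $c^{(-1)^n}a_n$ are not polynomials in general), which is why the statement is asserted only formally; the content of the identity is precisely the convergent-level equality above, and no convergence issue arises in the actual regular case $c\in\n K^*$, since then each $c^{(-1)^n}a_n$ is a polynomial of the same degree as $a_n$ and the discussion of Remark \ref{1rem:Kconstants} applies. So I do not anticipate a genuine obstacle; the proof is essentially the symmetric continuant identity of Lemma \ref{1lem:multAC}.
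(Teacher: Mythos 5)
Your proof is correct, but it takes a genuinely different route from the paper's. The paper simply says ``It follows easily from the definition of continued fraction,'' meaning: unfold $\alpha=a_0+1/\alpha_1$, observe that $c\alpha = c\,a_0 + 1/(c^{-1}\alpha_1)$, and iterate; this one-line recursion is where the alternating powers of $c$ come from, and it explains directly why the identity is a purely formal rewriting. You instead work at the level of continuants: applying Lemma \ref{1lem:multAC} once to $(a_0,\dots,a_n)$ with multiplier $c$ and once to the shifted sequence $(a_1,\dots,a_n)$ with multiplier $c^{-1}$, you get $u_n/v_n = c\,p_n/q_n$ for every $n$ (with the constant cancelling in either parity), and then you take the limit. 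Both arguments are sound. The paper's is more elementary and makes the ``formal'' qualifier transparent; yours is slightly heavier but has two compensating advantages: it simultaneously produces the continuant formulas that are quoted without proof in the next Lemma (\ref{1lem:multA}), and it actually establishes \emph{convergence} of the right-hand side in $\n L$ for arbitrary $c\in\n L^*$ (since $u_n/v_n=c\,p_n/q_n\to c\alpha$), which the general criterion of Lemma \ref{1lem:constants} would not directly yield when $\ord(c^{(-1)^n}a_n)\geq0$ for infinitely many $n$. One small index check worth recording in a final write-up: when applying Lemma \ref{1lem:multAC} to the length-$n$ sequence $(a_1,\dots,a_n)$ with multiplier $c^{-1}$, the case split in that Lemma is governed by the parity of $n-1$, which after translation gives exactly your $v_n=c^{-1}q_n$ for $n$ odd and $v_n=q_n$ for $n$ even.
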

\begin{proof}
	It follows easily from the definition of continued fraction.
\end{proof}

In general, to ensure that the previous expansion is regular, we must assume $c\inn\n K^*$. For example, we have $-\alpha=[-a_0,-a_1,\dots,-a_n,\dots]$. 

\begin{lem}\label{1lem:multA}
	If $\beta=c\,\alpha$ with $c\inn\n K^*$, then $$K(\beta)=K(\alpha) \text{ and } \ov K(\beta)=\ov K(\alpha).$$
	
	Moreover, $$\begin{cases}u_n=c\,p_n,\ v_n=q_n&\text{if } n\text{ is even}\\ u_n=p_n,\ v_n=c^{-1}q_n&\text{if } n\text{ is odd}\end{cases}.$$
\end{lem}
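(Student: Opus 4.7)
My plan is as follows.

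First, I would observe that since $c \in \n K^*$ is a nonzero constant, each quantity $c^{(-1)^n} a_n$ is a polynomial of the same degree as $a_n$, and in particular has positive degree for $n \geq 1$. Thus the formal expansion \eqref{1eq:multAform} from the preceding lemma is actually a regular continued fraction expansion, and by the uniqueness of regular expansions (Remark after Lemma \ref{1lem:confrontocf}) it is \emph{the} regular continued fraction of $\beta = c\alpha$. In other words, the partial quotients of $\beta$ are precisely $b_n = c^{(-1)^n} a_n$.

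The statement on $K(\beta)$ and $\ov K(\beta)$ is then immediate: $\deg b_n = \deg a_n$ for every $n \geq 1$, so $K(\beta) = \sup_{n \geq 1} \deg b_n = \sup_{n \geq 1} \deg a_n = K(\alpha)$, and likewise $\ov K(\beta) = \limsup_n \deg b_n = \ov K(\alpha)$.

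For the continuants, I would just plug the $b_n$ into Definition \ref{1def:continuants} and apply Lemma \ref{1lem:multAC}. For $u_n = C_{n+1}(b_0, \dots, b_n) = C_{n+1}(c\,a_0, c^{-1}a_1, \dots, c^{(-1)^n} a_n)$, the lemma (with $a = c$) gives directly $u_n = c\,p_n$ when $n$ is even and $u_n = p_n$ when $n$ is odd. For $v_n = C_n(b_1, \dots, b_n) = C_n(c^{-1}a_1, c\,a_2, \dots, c^{(-1)^n}a_n)$, I would reindex by setting $a'_j = a_{j+1}$ for $j = 0, \dots, n-1$, so that $v_n = C_n\bigl((c^{-1}) a'_0, (c^{-1})^{-1} a'_1, \dots, (c^{-1})^{(-1)^{n-1}} a'_{n-1}\bigr)$. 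Applying Lemma \ref{1lem:multAC} with constant $c^{-1}$ and with $n$ replaced by $n-1$, one gets $v_n = c^{-1}\,C_n(a_1, \dots, a_n) = c^{-1} q_n$ when $n-1$ is even (i.e.\ $n$ odd), and $v_n = q_n$ when $n-1$ is odd (i.e.\ $n$ even).

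There is no real obstacle here: once Lemma \ref{1lem:multAC} is available the whole statement is essentially a bookkeeping exercise, and the only mild subtlety is the parity shift that appears when handling $v_n$, because the sequence defining $v_n$ has its initial index equal to $1$ rather than $0$.
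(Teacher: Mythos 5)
Your proof is correct and follows essentially the same route as the paper's: read the partial quotients of $\beta$ off the formal expansion (noting it is already regular since multiplication by $c\in\n K^*$ preserves degrees), and obtain the continuants from Lemma \ref{1lem:multAC}. The paper compresses all of this into one sentence; you have merely made the parity bookkeeping for $v_n$ explicit.
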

\begin{proof}
	The first equality follows directly from the previous Lemma and the second is a consequence of Lemma \ref{1lem:multAC},
\end{proof}

This relation has been expressed by Schmidt in \cite{schmidt2000continued} as $$c[c'a_0,c\,a_1,c'a_2,\dots]=c'[c\,a_0,c'a_1,c\,a_2,\dots],$$ for $c,c'$ non-zero polynomials in $\n K[T]$.

\begin{lem}[Inverse]
	Let $f(\alpha)=\frac1\alpha$. Then we have \be\label{1eq:1/alpha}\beta=\begin{cases}[0,a_0,a_1,\dots] & \text{if } a_0\notin\n K\\ [a_1,a_2\dots]& \text{if } a_0=0\\ [a_0^{-1},-a_0^2a_1-a_0,-a_0^{-2}a_2,-a_0^2a_3,\dots]& \text{if } a_0\inn\n K^*\end{cases}.\ee 
	
	If $a_0\notin\n K$ we have that the convergents $\frac{u_n}{v_n}$ of $\beta$ are given by $$u_n=C_{n+1}(0,a_0,\dots,a_{n-1})=q_{n-1}, v_n=C_n(a_0,\dots,a_{n-1})=p_{n-1}$$ and, of course, similar formulas hold also in the other cases.
	
	If $a_0\notin\n K$, then $K(\beta)=\max\{K(\alpha),\deg a_0\}$, while $K(\beta)\leq K(\alpha)$ if $a_0=0$ and $K(\beta)=K(\alpha)$ if $a_0\inn\n K^*$. In all cases, $$\ov K(\beta)=\ov K(\alpha).$$
\end{lem}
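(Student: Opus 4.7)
The plan is to split the argument into three cases according to whether $a_0 \notin \n K$, $a_0 = 0$, or $a_0 \inn \n K^*$, producing in each case an explicit regular continued fraction expansion of $\beta = 1/\alpha$ from which the convergent formulas and the bounds on $K(\beta)$, $\ov K(\beta)$ follow by inspection.

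The first two cases are essentially immediate. If $a_0 \notin \n K$, i.e.\ $\deg a_0 \geq 1$, then $[0, a_0, a_1, \dots]$ converges (only an isolated constant partial quotient appears; see Remark \ref{1rem:constants}), and by nesting (Remark \ref{1rem:nest}) its value is $0 + 1/[a_0, a_1, \dots] = 1/\alpha$. Every partial quotient from index $1$ onward has positive degree, so this is a regular expansion of $\beta$. If instead $a_0 = 0$, then $\alpha = 0 + 1/[a_1, a_2, \dots]$ directly, and $1/\alpha = [a_1, a_2, \dots]$.

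The main difficulty lies in the case $a_0 \inn \n K^*$, where the naive expression $[0, a_0, a_1, \dots]$ has two consecutive constant partial quotients and is therefore not regular. Here I would apply \eqref{1eq:VdPmanip1} of Lemma \ref{1lem:addrempartq} with the choices $\alpha = 0$, $\beta = a_0$, $\gamma = [a_1, a_2, \dots]$, which yields
\[
\frac{1}{\alpha} \;=\; [0, a_0, [a_1, a_2, \dots]] \;=\; [a_0^{-1},\; -a_0^2 [a_1, a_2, \dots] - a_0].
\]
Expanding the second slot by Lemma \ref{1lem:multA} on multiplication by a non-zero constant gives $-a_0^2 [a_1, a_2, \dots] = [-a_0^2 a_1,\, -a_0^{-2} a_2,\, -a_0^2 a_3, \dots]$, and then subtracting the constant $a_0$ alters only the leading partial quotient (Lemma \ref{1lem:suma}), producing the expansion stated in the Lemma. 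Regularity is preserved since $a_0 \inn \n K^*$ forces $\deg(-a_0^{\pm 2} a_n) = \deg a_n$ for $n \geq 2$, while $\deg(-a_0^2 a_1 - a_0) = \deg a_1 \geq 1$.

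With the expansions in hand, the convergent formulas follow from Definition \ref{1def:continuants} applied to the listed partial quotients; for case 1, Lemma \ref{1lem:continuants2} gives $u_n/v_n = [0, a_0, \dots, a_{n-1}] = 1/[a_0, \dots, a_{n-1}] = q_{n-1}/p_{n-1}$, and since both fractions are already in lowest terms this forces $u_n = q_{n-1}$ and $v_n = p_{n-1}$; the analogous identifications in the remaining cases are immediate. The bounds on $K$ and $\ov K$ are then read off by direct degree comparison of the partial quotients: in case 1 the only extra positive-degree partial quotient is $a_0$, giving $K(\beta) = \max\{K(\alpha), \deg a_0\}$; in case 2 the effect is to remove $a_1$ from the supremum, so $K(\beta) \leq K(\alpha)$; and in case 3 the degrees of the tail are preserved by the observation above, so $K(\beta) = K(\alpha)$. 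In all three cases only finitely many partial quotients are modified, so $\ov K(\beta) = \ov K(\alpha)$.
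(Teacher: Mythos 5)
Your proof is correct and takes essentially the same approach as the paper: observe that $[0,a_0,a_1,\dots]$ is always an expansion of $\beta=1/\alpha$, already regular when $a_0\notin\n K$, and otherwise regularize via \eqref{1eq:zero} (if $a_0=0$) or \eqref{1eq:VdPmanip1} (if $a_0\inn\n K^*$), then read off the degree bounds. One small remark: in the convergent identification for case 1, ``both fractions in lowest terms'' only gives $u_n/v_n=q_{n-1}/p_{n-1}$ up to a constant; to pin down $u_n=q_{n-1}$ exactly one should instead compute $C_{n+1}(0,a_0,\dots,a_{n-1})=C_{n-1}(a_1,\dots,a_{n-1})=q_{n-1}$ directly from the symmetry \eqref{1eq:symmetryC_n} and the continuant recursion, which is the computation the stated formula is encoding.
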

\begin{proof}
	$[0,a_0,a_1,\dots]$ is always a continued fraction expansion for $\beta$ but it is regular if and only if $\deg a_0>0$. Otherwise, it can be transformed in the regular continued fraction expansion of $\beta$ by \eqref{1eq:zero} if $\alpha=0$ or by \eqref{1eq:VdPmanip1} if $\alpha\inn\n K^*$.
\end{proof}

\begin{lem}[Frobenius endomorphism]
	Let $\n K$ be a field of characteristic $l$. Applying the Frobenius endomorphism $f(\alpha)=\alpha^l$ it is clear that $$\beta=\alpha^l=[a_0^l,a_1^l,\dots], \text{ with } u_n=p_n^l,\ v_n=q_n^l \text{ for every } n$$ and $$K(\beta)=l\,K(\alpha),\ \ov K(\beta)=l\,\ov K(\alpha).$$
\end{lem}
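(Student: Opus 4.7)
The plan is to exploit the fact that in characteristic $l$ the Frobenius map $F:\alpha\mapsto\alpha^l$ is a field endomorphism of $\n L$, and that regular continued fractions are built purely from the ring operations $+$ and $(\cdot)^{-1}$ together with the polynomial part. So the strategy is essentially to push $F$ through the continued fraction algorithm.

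First I would check that $F$ respects the polynomial part, i.e., $\lfloor\alpha^l\rfloor=\lfloor\alpha\rfloor^l$ for every $\alpha\inn\n L$. This is immediate: if $\alpha=\sum_{i\leq N}c_iT^i$ then $\alpha^l=\sum_{i\leq N}c_i^lT^{il}$ by additivity of $F$, so both the polynomial part of $\alpha^l$ and the $l$-th power of $\lfloor\alpha\rfloor$ equal $\sum_{0\leq i\leq N}c_i^lT^{il}$. Consequently, setting $\alpha_0=\alpha$ and applying the continued fraction algorithm of Remark~\ref{1rem:constants} in parallel to $\alpha$ and to $\alpha^l$, the recursion $\alpha_{n+1}=(\alpha_n-\lfloor\alpha_n\rfloor)^{-1}$ commutes with $F$: inductively $(\alpha^l)_n=(\alpha_n)^l$ and $\lfloor(\alpha^l)_n\rfloor=a_n^l$. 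Since $\deg a_n^l=l\deg a_n\geq1$ for $n\geq1$, the resulting expansion $[a_0^l,a_1^l,\dots]$ is automatically regular, so by uniqueness of regular expansions it is \emph{the} continued fraction of $\alpha^l$.

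For the continuants, I would simply note that $C_{n+1}(a_0,\dots,a_n)$ (Notation~\ref{1notat:C_n}) is a polynomial in the $a_i$ with integer coefficients, so applying $F$ termwise gives
\begin{equation*}
C_{n+1}(a_0^l,\dots,a_n^l)=C_{n+1}(a_0,\dots,a_n)^l,
\end{equation*}
which yields $u_n=p_n^l$ and $v_n=q_n^l$. Finally, $\deg a_n^l=l\deg a_n$ for every $n\geq1$ gives both $K(\beta)=\sup_{n\geq1}l\deg a_n=l\,K(\alpha)$ and $\ov K(\beta)=\limsup_n l\deg a_n=l\,\ov K(\alpha)$.

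There is no real obstacle here; the only subtlety, and the one I would be careful to address explicitly, is verifying that $F$ commutes with the polynomial-part operator (a non-ring operation) so that the continued fraction algorithms for $\alpha$ and $\alpha^l$ stay in lockstep. Everything else follows from $F$ being a field endomorphism together with the uniqueness of the regular continued fraction expansion established earlier.
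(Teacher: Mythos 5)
Your proof is correct and spells out exactly what the paper leaves implicit (the paper gives no explicit proof, declaring the statement ``clear''): the commutation of the Frobenius endomorphism with both the polynomial-part operator and the continuant polynomials, together with uniqueness of the regular expansion, yields everything at once. The only point worth double-checking, which you handle correctly, is that $F$ is additive on the infinite (formal) sums defining elements of $\n L$, not just on polynomials, so that $\floor{\alpha^l}=\floor{\alpha}^l$ indeed holds.
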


\begin{rem}
	In particular, if $m$ is a power of $l$ and $\alpha\!=\![a_0,\dots,a_n,a_0^m,\dots,a_n^m\!,a_0^{2m}\!,\dots]$, then $\alpha=[a_0,\dots,a_n,\alpha^m]$, so $\alpha$ is algebraic of degree at most $m+1$ over $\n K(T)$. Thus, unlike the real case, in positive characteristic it is easy to find explicit examples for the continued fraction expansions of algebraic elements of various degrees. An algebraic irrational element $\alpha$ of this type is said to be of class IA; such special elements have been studied, for instance, by Schmidt \cite{schmidt2000continued}.   
\end{rem}

\begin{lem}[Substitution]\label{1lem:subst}
	For a non-constant polynomial $P(X)\inn\!\n K[X]$ we can consider the composition with P map $f:\n L_T=\n K((T^{-1}))\to\n K((X^{-1}))\!=\!\n L_X$, that is, $f(\alpha(T))=\alpha(P(X))$. Of course, $$\beta=[a_0(P(X)),a_1(P(X)),\dots] \text{ and } u_n(X)=p_n(P(X)),\ v_n(X)\!=q_n(P(X)) \text{ for every } n.$$ Moreover, $$K(\beta)=(\deg P) K(\alpha) \text{ and } \ov K(\beta)=(\deg P)\ov K(\alpha).$$
\end{lem}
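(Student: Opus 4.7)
The plan is to show that the substitution map $f \colon \n L_T \to \n L_X$ sending $T \mapsto P(X)$ is a continuous $\n K$-algebra embedding that intertwines the continued fraction algorithms on source and target; the lemma then falls out. Set $d=\deg P\geq 1$. First I would check that, for $\alpha=\sum_{i\leq N}c_iT^i\inn\n L_T$, the series $\sum_{i\leq N} c_i P(X)^i$ converges in $\n L_X$: indeed $|P(X)^i|_X=\mu^{id}$ and $c_i\inn\n K$, so the general term has order $-id\to+\infty$ as $i\to-\infty$. A look at leading coefficients gives $\ord_X f(\alpha)=d\cdot\ord_T\alpha$, equivalently $|f(\alpha)|_X=|\alpha|_T^{d}$, so $f$ is continuous and injective; that $f$ is a ring homomorphism follows from a routine manipulation of formal series (one can just pass to the limit in the polynomial identities $fg\mapsto f(P)g(P)$, $f+g\mapsto f(P)+g(P)$).

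Next I would show that $f$ commutes with the polynomial-part operator. Writing $\alpha=\floor\alpha+\{\alpha\}$ with $\ord_T\{\alpha\}>0$, we get $f(\floor\alpha)=\floor\alpha(P(X))\inn\n K[X]$ and $\ord_X f(\{\alpha\})=d\cdot\ord_T\{\alpha\}>0$, hence $\floor{f(\alpha)}_X=f(\floor\alpha)$. Since $f$ also commutes with inversion (being a field homomorphism into a field), an induction on the Gauss-map iteration shows that, if $\alpha$ has complete quotients $(\alpha_n)$ and partial quotients $(a_n)$, then $\beta=f(\alpha)$ has complete quotients $(f(\alpha_n))$ and partial quotients $a_n(P(X))$. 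For $n\geq 1$ we have $\deg_X a_n(P(X))=d\deg_T a_n\geq 1$, so $\beta=[a_0(P(X)),a_1(P(X)),\dots]$ is indeed the regular expansion.

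The continuant identities $u_n(X)=p_n(P(X))$ and $v_n(X)=q_n(P(X))$ are then immediate by induction from the three-term recursion of Notation \ref{1notat:C_n}, using that $f$ is a ring homomorphism. Finally, $\deg_X a_n(P(X))=d\deg_T a_n$ for every $n\geq 1$ yields both $K(\beta)=d\,K(\alpha)$ (by taking $\sup$) and $\ov K(\beta)=d\,\ov K(\alpha)$ (by taking $\limsup$). The only conceptual step is the continuity of $f$ together with its commutation with $\floor{\cdot}$; everything else is bookkeeping and a term-by-term application of the ring homomorphism.
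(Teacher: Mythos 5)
Your argument is correct, and it makes explicit precisely what the paper dismisses with ``Of course'': the substitution $T\mapsto P(X)$ defines a field embedding $f\colon\n L_T\hookrightarrow\n L_X$ satisfying $\ord_X f(\alpha)=(\deg P)\,\ord_T\alpha$, which forces $f$ to commute with $\floor{\cdot}$ and with inversion, hence with each step of the Gauss map; the degree claims then follow from $\deg_X a_n(P(X))=(\deg P)\deg_T a_n$. The paper gives no proof of this lemma, and your write-up is exactly the natural argument it implicitly relies on.
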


\begin{rem}
	In particular, it is easy to construct examples of continued fractions such that the degrees of their partial quotients are all multiples of a given integer. 
\end{rem}

\subsection{Multiplication of a continued fraction by a polynomial}\label{1subsec:mult}
As we will see in Theorem \ref{1theo:Grisel}, there exists a recursive algorithm to express the partial quotients of a (not necessarily regular) continued fraction expansion of $\frac AB\alpha$ in terms of the partial quotients of $\alpha$, where $\alpha$ is a Laurent series and $A/B$ a rational function. However, thanks to the best approximation Theorems, it is possible to find in an easier way some of the convergents of $\frac AB\alpha$, from which immediately follow bounds for the degrees of its partial quotients. To the best of our knowledge, the following simple results do not appear in the literature, even if similar methods to those of Theorem \ref{1theo:multgen} are used, in a slightly different context, in \cite{pinner1993some}, Theorem 2.

\begin{theo}\label{1theo:multgen}
	Let $\alpha=[a_0,a_1,a_2,\dots]\inn\n L$, let $A,B$ be relatively prime non-zero polynomials and let $\beta=\frac AB\alpha=[b_0,b_1,b_2,\dots]$. Let $(p_n,q_n)_n,(u_m,v_m)_m$ be, respectively, the continuants of $\alpha$ and $\beta$. For every $n$, let $A_n=\gcd(A,q_n)$ and let $B_n=\gcd(B,p_n)$. Provided that $$\deg a_{n+1}>\deg A+\deg B-2\deg A_n-2\deg B_n,$$ then there exists $m\inn\n N$ and there exists $k\inn\n K^*$ such that \be u_m=k\frac {A}{A_nB_n}p_n,\ v_m=k\frac{B}{A_nB_n}q_n.\ee In this case, we also have \be\label{1eq:degmult}\deg b_{m+1}=\deg a_{n+1}-\deg A-\deg B+2\deg A_n+2\deg B_n.\ee
\end{theo}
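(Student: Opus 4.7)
The strategy is to introduce an explicit pair of polynomials proportional to the claimed $(u_m, v_m)$, identify it (up to a constant) with a convergent of $\beta$ via the best approximation theory, and then read off the degree of $b_{m+1}$ from the standard estimate $|u_m - \beta v_m| = |b_{m+1}\, v_m|^{-1}$. Concretely, I set
\[P_n = \frac{A}{A_n B_n}\, p_n, \qquad Q_n = \frac{B}{A_n B_n}\, q_n.\]
Since $\gcd(A,B) = 1$ forces $\gcd(A_n, B_n) = 1$, and since $A_n \mid A$, $A_n \mid q_n$, $B_n \mid B$, $B_n \mid p_n$, both $P_n$ and $Q_n$ lie in $\n K[T]$. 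A direct substitution gives
\[P_n - \beta Q_n = \frac{A}{A_n B_n}\bigl(p_n - \alpha q_n\bigr),\]
so by \eqref{1eq:fondprop}, $|P_n - \beta Q_n| = |A|/\bigl(|A_n B_n|\,|q_n|\,|a_{n+1}|\bigr)$, while $|Q_n| = |B|\,|q_n|/|A_n B_n|$.

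Next I would show that $\gcd(P_n, Q_n) \inn \n K^*$, which is what ensures the identification with a convergent holds up to a mere non-zero constant. For this I argue prime by prime: fix an irreducible $\pi \inn \n K[T]$; since $\gcd(A,B) = 1$ at most one of $v_\pi(A), v_\pi(B)$ is positive, and since $\gcd(p_n, q_n) = 1$ at most one of $v_\pi(p_n), v_\pi(q_n)$ is positive. Using $v_\pi(A_n) = \min(v_\pi(A), v_\pi(q_n))$ and $v_\pi(B_n) = \min(v_\pi(B), v_\pi(p_n))$, one verifies case by case that $v_\pi(P_n)$ and $v_\pi(Q_n)$ cannot both be positive. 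Granted this, the hypothesis on $\deg a_{n+1}$ is exactly equivalent to $|P_n - \beta Q_n| < |Q_n|^{-1}$, so by Lemma \ref{1lem:best2} together with Remark \ref{1rem:bestappr} the reduced form of $P_n/Q_n$ is a convergent of $\beta$; because $(P_n, Q_n)$ is already in lowest terms up to a constant, this produces $m \inn \n N$ and $k \inn \n K^*$ with $u_m = k\, P_n$ and $v_m = k\, Q_n$.

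Finally, applying \eqref{1eq:fondprop} to $\beta$ yields $|b_{m+1}|^{-1} = |v_m|\,|u_m - \beta v_m| = |Q_n|\,|P_n - \beta Q_n|$ (using $|k| = 1$), and inserting the explicit values above produces $|b_{m+1}| = |A_n B_n|^2\, |a_{n+1}| / \bigl(|A|\,|B|\bigr)$, which is exactly \eqref{1eq:degmult}. The main obstacle is the prime-by-prime verification of $\gcd(P_n, Q_n) \inn \n K^*$: although each individual case is a short valuation computation, the bookkeeping requires some care because $P_n$ and $Q_n$ each mix three factors whose $\pi$-valuations interact, and the argument relies essentially on both coprimality hypotheses $\gcd(A,B) = 1$ and $\gcd(p_n, q_n) = 1$ simultaneously.
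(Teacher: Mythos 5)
Your proposal is correct and follows essentially the same route as the paper's proof: you form the candidate pair $\bigl(\tfrac{A}{A_nB_n}p_n,\tfrac{B}{A_nB_n}q_n\bigr)$, compute $|P_n-\beta Q_n|$ from $|p_n-\alpha q_n|=|q_n a_{n+1}|^{-1}$, and invoke the best-approximation criterion to recognize it as a continuant of $\beta$, then read off $\deg b_{m+1}$ from \eqref{1eq:fondprop}. The only difference is that you spell out the prime-by-prime verification that $P_n,Q_n$ are coprime, which the paper simply asserts; this is a worthwhile filling-in, not a change of method.
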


\begin{proof}
	Let $u=k\frac {Ap_n}{A_nB_n}, v=k\frac{Bq_n}{A_nB_n}$, with $k\inn\n K^*$. In particular, $u,v$ are relatively prime polynomials. Moreover, $$|u-v\beta|=|A||A_nB_n|^{-1}|p_n-q_n\alpha|=|A||A_nB_n|^{-1}|q_na_{n+1}|^{-1}.$$ Now, by Lemma \ref{1lem:best2}, $u/v$ is a convergent of $\beta$ if and only if $|u-v\beta|<|v|^{-1}$, that is, if and only if $\deg a_{n+1}>\deg A+\deg B-2\deg A_n-2\deg B_n$.	In that case, \eqref{1eq:degmult} follows directly from \eqref{1eq:fondprop}.
\end{proof}

\begin{cor}\label{1cor:degrees}
	In the previous notations we have that $\alpha$ is badly approximable if and only if $\beta=\frac AB\alpha$ is. Moreover, in this case \be\label{1eq:degrees}K(\alpha)-\deg AB\leq K(\beta)\leq K(\alpha)+\deg AB.\ee
\end{cor}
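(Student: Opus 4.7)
The plan is to deduce both inequalities directly from Theorem~\ref{1theo:multgen} by applying it twice: once to the pair $(\alpha,\beta)$ as stated, and once with $\alpha$ and $\beta$ interchanged, exploiting the fact that $\gcd(A,B)=1$ allows us to rewrite $\alpha=\frac{B}{A}\beta$ and apply the same Theorem with the roles of $A$ and $B$ (and of $\alpha$ and $\beta$) swapped.

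For the upper estimate $K(\beta)\le K(\alpha)+\deg AB$ I would fix an arbitrary index $m$ and bound $\deg b_{m+1}$. If $\deg b_{m+1}\le\deg AB$ the bound is immediate, since $K(\alpha)\ge 0$ whenever $\alpha$ has at least one non-constant partial quotient (otherwise $\beta$ is rational and there is nothing to prove). Otherwise, writing $A'_m=\gcd(B,v_m)$ and $B'_m=\gcd(A,u_m)$, the hypothesis $\deg b_{m+1}>\deg AB-2\deg A'_m-2\deg B'_m$ of Theorem~\ref{1theo:multgen} applied to $\alpha=\frac{B}{A}\beta$ is automatically satisfied, which furnishes an index $n$ with
\[
\deg a_{n+1}\;=\;\deg b_{m+1}-\deg AB+2\deg A'_m+2\deg B'_m\;\ge\;\deg b_{m+1}-\deg AB.
\]
Rearranging yields $\deg b_{m+1}\le\deg a_{n+1}+\deg AB\le K(\alpha)+\deg AB$, as desired.

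The lower estimate $K(\beta)\ge K(\alpha)-\deg AB$ is the previous argument run in the opposite direction, i.e.\ Theorem~\ref{1theo:multgen} applied directly to $\beta=\frac{A}{B}\alpha$ at those indices $n$ for which $\deg a_{n+1}>\deg AB$ (partial quotients of $\alpha$ of degree at most $\deg AB$ being harmlessly bounded by $K(\beta)+\deg AB$). The ``if and only if'' on bad approximability then follows immediately: a finite value of $K(\alpha)$ bounds $K(\beta)$ by Step~1 and vice versa by Step~2, with an additive loss of at most $\deg AB$ in each direction.

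I do not foresee a genuine obstacle: the arithmetic content is already packaged in Theorem~\ref{1theo:multgen}, and the Corollary amounts to keeping the symmetry straight. The only point requiring a little care is verifying that inverting $A/B$ to $B/A$ preserves the Theorem's hypotheses (it does, since $\gcd(B,A)=1$) and that the auxiliary divisors $A'_m,B'_m$ take over the symmetric roles of $A_n,B_n$ from the original statement.
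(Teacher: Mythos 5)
Your proof is correct and follows the same route as the paper: both reduce the two inequalities to Theorem~\ref{1theo:multgen}, applied once to $\beta=\tfrac{A}{B}\alpha$ and once to $\alpha=\tfrac{B}{A}\beta$ (which is legitimate because $\gcd(B,A)=1$). The paper simply states the symmetric half more tersely as ``inverting the roles of $\alpha,\beta$ and of $A,B$,'' whereas you spell it out, including the auxiliary gcd's; the underlying argument is identical.
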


\begin{proof}
	If $K(\alpha)>\deg AB$, applying the previous Theorem to all the partial quotients with large enough degree we will have $K(\beta)\geq K(\alpha)-\deg AB$, which is obvious in the case $K(\alpha)\leq\deg AB$. Inverting the roles of $\alpha,\beta$ and of $A,B$ we will have $K(\alpha)\geq K(\beta)-\deg AB$.
\end{proof}

Refining Theorem \ref{1theo:multgen}, it is possible to give explicitly, up to multiplicative constants, all the continuants of the product (or the quotient) of a continued fraction by a linear polynomial:

\begin{prop}\label{1prop:prod}
	Let $\alpha=[a_0,a_1,\dots]$ and, for $\lambda\inn\n K$, let $\beta=(T-\lambda)\alpha=[b_0,b_1,\dots]$. Let $(p_n,q_n)_n,( u_m,v_m)_m$ be, respectively, the continuants of $\alpha$ and $\beta$.
	\begin{enumerate}
		\item If $q_n(\lambda)\neq0$ and $\deg a_{n+1}>1$, then there exist $m\inn\n N,\ k\inn\n K^*$ such that $$u_m(T)=k\,(T-\lambda)p_n(T),\ v_m(T)=k\,q_n(T)$$ and in that case $\deg b_{m+1}=\deg a_{n+1}-1$;
		\item If $q_n(\lambda)q_{n+1}(\lambda)\neq0$, then there exist $m\inn\n N,\ k\inn\n K^*$ such that $$u_m(T)=k\,(p_n(T)q_{n+1}(\lambda)-p_{n+1}(T)q_n(\lambda)),$$ $$v_m(T)=k\,\frac{q_n(T)q_{n+1}(\lambda)-q_{n+1}(T)q_n(\lambda)}{T-\lambda}$$ and in that case $\deg b_{m+1}=1$;
		\item If $q_n(\lambda)=0$, then there exist $m\inn\n N,\ k\inn\n K^*$ such that $$u_m(T)=k\,p_n(T),\ v_m(T)=k\,\frac{q_n(T)}{T-\lambda}$$ and in that case $\deg b_{m+1}=\deg a_{n+1}+1$.
	\end{enumerate}
	Moreover, the previous formulas give exactly (and only once) all the convergents of $\beta$.
\end{prop}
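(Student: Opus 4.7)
The plan is to apply Theorem \ref{1theo:multgen} with $A=T-\lambda$ and $B=1$ to obtain cases 1 and 3 directly, and then to construct case 2 by hand. Setting $A_n=\gcd(T-\lambda,q_n)$ and $B_n=\gcd(1,p_n)=1$, we have $A_n=1$ exactly when $q_n(\lambda)\neq0$, in which case the hypothesis of Theorem \ref{1theo:multgen} becomes $\deg a_{n+1}>1$ and the theorem produces the convergent $(T-\lambda)p_n/q_n$ of case 1 together with the degree $\deg b_{m+1}=\deg a_{n+1}-1$ via \eqref{1eq:degmult}. When instead $q_n(\lambda)=0$, we have $A_n=T-\lambda$, the hypothesis $\deg a_{n+1}>-1$ is vacuous, and we read off case 3 with $\deg b_{m+1}=\deg a_{n+1}+1$.

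For case 2 I would construct by hand the pair
\begin{equation*}
P=p_nq_{n+1}(\lambda)-p_{n+1}q_n(\lambda),\qquad Q=q_nq_{n+1}(\lambda)-q_{n+1}q_n(\lambda),
\end{equation*}
and observe that $Q(\lambda)=0$, so $V=Q/(T-\lambda)$ is a polynomial. Since $\deg q_{n+1}>\deg q_n$ and $q_{n+1}(\lambda),q_n(\lambda)\inn\n K^*$, one has $\deg Q=\deg q_{n+1}$ and hence $\deg V=\deg q_{n+1}-1$. A short computation using \eqref{1eq:pnqnprime} gives $q_nP-p_nQ=q_n(\lambda)(p_nq_{n+1}-p_{n+1}q_n)=(-1)^{n+1}q_n(\lambda)\inn\n K^*$, so any common factor of $P$ and $V$ must divide a non-zero constant; thus $P$ and $V$ are coprime up to a scalar. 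Next,
\begin{equation*}
P-V\beta=P-Q\alpha=q_{n+1}(\lambda)(p_n-q_n\alpha)-q_n(\lambda)(p_{n+1}-q_{n+1}\alpha).
\end{equation*}
By \eqref{1eq:fondprop} the two summands have norms $|q_{n+1}|^{-1}$ and $|q_{n+2}|^{-1}$ respectively; since $|q_{n+2}|>|q_{n+1}|$ the ultrametric property yields $|P-V\beta|=|q_{n+1}|^{-1}<|V|^{-1}$. Then Lemma \ref{1lem:best2} certifies that $P/V$ is a convergent of $\beta$, and \eqref{1eq:fondprop} applied to this convergent gives $\deg b_{m+1}=\deg q_{n+1}-\deg V=1$.

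The main obstacle is the final clause: the three families must be shown to be pairwise disjoint and to exhaust all convergents of $\beta$. Since a convergent of $\beta$ is determined by $\deg v_m$, which is strictly increasing in $m$, the task reduces to verifying that the list of denominator degrees produced\,---\,$\deg q_n-1$ in case 3, $\deg q_n$ in case 1, $\deg q_{n+1}-1$ in case 2\,---\,matches the sequence $(\deg v_m)_m$ without repetition or omission. I would argue by cases on whether $q_n(\lambda),q_{n+1}(\lambda)$ vanish (noting that $\gcd(q_n,q_{n+1})=1$ forces at most one of them to) and on whether $\deg a_{n+1}$ equals $1$ or exceeds it. The delicate configuration is $q_n(\lambda)\neq0$, $q_{n+1}(\lambda)=0$, $\deg a_{n+1}=1$: at index $n$ neither case 1 nor case 2 applies, but case 3 at index $n+1$ furnishes the convergent of denominator degree $\deg q_{n+1}-1=\deg q_n$, so nothing is missed. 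Carrying out this bookkeeping carefully is the crux of the argument.
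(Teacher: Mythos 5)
Your proof takes essentially the same route as the paper: cases 1 and 3 are read off Theorem \ref{1theo:multgen} with $A=T-\lambda$, $B=1$, and case 2 is built by hand from the exact same pair $\left(P,Q\right)$, certified via the ultrametric estimate and Lemma \ref{1lem:best2}. Your closing remark on the delicate configuration $q_n(\lambda)\neq0$, $q_{n+1}(\lambda)=0$, $\deg a_{n+1}=1$ is a welcome piece of explicitness that the paper's proof glosses over with ``it follows easily from an examination of the degrees''.
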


\begin{proof}
	1. and 3. follow directly from Theorem \ref{1theo:multgen}; since, in the previous notations, $A=T-\lambda$ and $B=1$, they are the only possible cases.
	
	In the hypothesis of $2.$, setting $u=q_{n+1}(\lambda)p_n-q_n(\lambda)p_{n+1}, v=\frac{q_{n+1}(\lambda)q_n-q_n(\lambda)q_{n+1}}{T-\lambda}$, we have $|u-v\beta|=|q_{n+1}(\lambda)(p_n-q_n\alpha)-q_n(\lambda)(p_{n+1}-q_{n+1}\alpha)|=|q_{n+1}|^{-1}$. As $|v|=|q_{n+1}||T|^{-1}$ and $u,v$ are relatively prime polynomials, then $(u,v)$ is, up to a multiplicative constant, a continuant of $\beta$. By \eqref{1eq:fondprop}, the degree of the following partial quotient of $\beta$ is necessarily 1.
	
	It follows easily from an examination of the degrees of the continuants $u_m,v_m$ and of the partial quotients $b_{m+1}$ that the convergents thus found are all distinct and that they must be all of the convergents of $\beta$. 
\end{proof}	

Of course, we can obtain similar formulas for the convergents of $\ds\frac{\alpha}{T-\lambda}$:
\begin{prop}
	In the previous notations, let $\ds\beta=\frac\alpha{T-\lambda}$. 
	\begin{itemize}
		\item If $p_n(\lambda)\neq0$ and $\deg a_{n+1}>1$, then there exist $m\inn\n N,\ k\inn\n K^*$ such that $$u_m=k\,p_n,\ v_m=k\,(T-\lambda)q_n$$ and in that case $\deg b_{m+1}=\deg a_{n+1}-1$; 
		\item If $p_n(\lambda)p_{n+1}(\lambda)\neq0$, then there exist $m\inn\n N,\ k\inn\n K^*$ such that $$u_m=k\,\frac{p_n(T)p_{n+1}(\lambda)-p_{n+1}(T)p_n(\lambda)}{T-\lambda},$$ $$\ v_m=k\,(q_n(T)p_{n+1}(\lambda)-q_{n+1}(T)p_n(\lambda))$$ and in that case $\deg b_{m+1}=1$;
		\item If $p_n(\lambda)=0$, then there exist $m\inn\n N,\ k\inn\n K^*$ such that $$u_m=k\,\frac{p_n}{T-\lambda},\ v_m=k\,q_n$$ and in that case $\deg b_{m+1}=\deg a_{n+1}-1$.
	\end{itemize}
	Moreover, the previous formulas give exactly (and only once) all the convergents of $\beta$.
\end{prop}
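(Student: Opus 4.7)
The plan is to run the proof of the preceding Proposition in parallel, swapping the roles of numerator and denominator: Theorem \ref{1theo:multgen} is applied with $A = 1$ and $B = T - \lambda$. With these choices $A_n = \gcd(1, q_n) = 1$ is trivial, whereas $B_n = \gcd(T - \lambda, p_n)$ equals $1$ or $T - \lambda$ according as $p_n(\lambda) \neq 0$ or $p_n(\lambda) = 0$. This explains why the trichotomy is now governed by the values of $p_n(\lambda)$ rather than of $q_n(\lambda)$.

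The first and third bullets follow directly from Theorem \ref{1theo:multgen}. In the first case ($p_n(\lambda) \neq 0$) its hypothesis reads $\deg a_{n+1} > \deg A + \deg B - 2\deg A_n - 2\deg B_n = 1$, which is exactly the stated condition, and the Theorem produces the continuant $(k\,p_n,\ k(T-\lambda)q_n)$ with the expected formula for $\deg b_{m+1}$. In the third case ($p_n(\lambda) = 0$) the same hypothesis becomes $\deg a_{n+1} > -1$, automatically satisfied, and the Theorem produces $(k\,p_n/(T-\lambda),\ k\,q_n)$; the degree of $b_{m+1}$ is then read off from \eqref{1eq:fondprop}.

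For the middle bullet I would imitate the argument given in case 2 of the preceding Proposition. Setting
\[
u = k\,\frac{p_n(T)p_{n+1}(\lambda) - p_{n+1}(T)p_n(\lambda)}{T - \lambda}, \qquad v = k\,\bigl(q_n(T)p_{n+1}(\lambda) - q_{n+1}(T)p_n(\lambda)\bigr),
\]
the numerator of $u$ vanishes at $T = \lambda$, so $u \in \n K[T]$. Using $\beta = \alpha/(T-\lambda)$, one rewrites
\[
u - v\beta = \frac{k}{T - \lambda}\bigl(p_{n+1}(\lambda)(p_n - q_n\alpha) - p_n(\lambda)(p_{n+1} - q_{n+1}\alpha)\bigr).
\]
Since $|p_n - q_n\alpha| = |q_{n+1}|^{-1}$ strictly dominates $|p_{n+1} - q_{n+1}\alpha| = |q_{n+2}|^{-1}$, the ultrametric inequality gives $|u - v\beta| = |q_{n+1}|^{-1}|T|^{-1}$; inspection of the leading term of $v$ shows $|v| = |q_{n+1}|$. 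Hence $|u - v\beta| < |v|^{-1}$, so $u/v$ is a convergent of $\beta$ (coprimality of $u$ and $v$ following from $(p_n,q_n) = 1$ and the hypothesis $p_{n+1}(\lambda) \neq 0$), and \eqref{1eq:fondprop} forces $\deg b_{m+1} = 1$.

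Finally, to show the three families exhaust every convergent exactly once, I would track the degree of $v_m$ as $n$ varies in each case: the degrees so produced form a strictly increasing sequence covering every integer from some threshold onward, with the key non-collision fact being that $p_n(\lambda)$ and $p_{n+1}(\lambda)$ cannot both vanish, otherwise the identity $p_nq_{n+1} - p_{n+1}q_n = \pm 1$ of \eqref{1eq:pnqnprime} evaluated at $T = \lambda$ would give $0 = \pm 1$. The main obstacle is precisely this last step: one must compare the $v_m$ produced across the three cases and confirm that every degree is attained exactly once, which is mostly a matter of careful bookkeeping rather than of a single slick argument.
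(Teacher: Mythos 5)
Your approach is the paper's approach: the proof of this statement is just the proof of the preceding Proposition with the roles of $A$ and $B$ (hence of $p_n$ and $q_n$) interchanged, and the paper indeed presents it without a separate proof. Your treatment of the first and third bullets via Theorem \ref{1theo:multgen} with $A=1$, $B=T-\lambda$ is exactly right, your ultrametric computation for the middle bullet mirrors the paper's argument correctly (with the correct adaptation $|v|=|q_{n+1}|$ rather than $|q_{n+1}||T|^{-1}$), and your coprimality remark and the non-vanishing of $p_n(\lambda)p_{n+1}(\lambda)$ via \eqref{1eq:pnqnprime} are the right ingredients for exhaustiveness, which the paper also disposes of in one sentence.

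One caveat you should not have glossed over: for the third bullet you write that the degree of $b_{m+1}$ is ``read off from \eqref{1eq:fondprop}'' but do not carry out the computation. If you do carry it out — or simply substitute $A_n=1$, $B_n=T-\lambda$ into \eqref{1eq:degmult} — you get $\deg b_{m+1}=\deg a_{n+1}-\deg A-\deg B+2\deg A_n+2\deg B_n=\deg a_{n+1}-0-1+0+2=\deg a_{n+1}+1$, not $\deg a_{n+1}-1$ as the Proposition asserts. (Direct check: $u_m-\beta v_m=(p_n-\alpha q_n)/(T-\lambda)$ has order $\deg q_{n+1}+1$, while $\deg v_m=\deg q_n$, so $\deg b_{m+1}=\deg a_{n+1}+1$; this also matches case~3 of the product Proposition by the expected symmetry.) So the third bullet of the statement appears to carry a sign error, and your proof, once the degree step is made explicit, actually corrects it rather than verifies it.
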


\begin{cor}\label{1cor:decreasedegree}
	Let $\alpha\inn\n L$, let $(\cv n)_n$ be its convergents and let $\lambda\inn\n K$. $$\text{If } q_n(\lambda)\neq0 \text{ for every } n, \text{ then } K((T-\lambda)\alpha)=\begin{cases}1&\text{if } K(\alpha)=1\\K(\alpha)-1&\text{otherwise}\end{cases}.$$ 
	
	$$\text{If } p_n(\lambda)\neq0 \text{ for every } n, \text{ then } \ds K\left(\frac\alpha{T-\lambda}\right)=\begin{cases}1&\text{if }K(\alpha)=1\\K(\alpha)-1&\text{otherwise}\end{cases}.$$
	
	Analogously, if $q_n(\lambda)\neq0$, respectively, $p_n(\lambda)\neq0$, for every large enough $n$, then, assuming $K(\alpha)>1$, $\ov K((T-\lambda)\alpha)=\ov K(\alpha)-1$, respectively $\ds\ov K\left(\frac\alpha{T-\lambda}\right)=\ov K(\alpha)-1$.
\end{cor}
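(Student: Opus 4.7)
This corollary is essentially a bookkeeping consequence of Proposition \ref{1prop:prod} and its companion for $\alpha/(T-\lambda)$, which together describe \emph{every} continuant of $\beta=(T-\lambda)\alpha$ (respectively, of $\alpha/(T-\lambda)$) in terms of a continuant $(p_n,q_n)$ of $\alpha$, and record exactly which degree is forced on the corresponding partial quotient of $\beta$. The plan is to read off $K(\beta)$ and $\ov K(\beta)$ from the three cases enumerated there.

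Assume first $q_n(\lambda)\neq 0$ for every $n$, so that case (3) of Proposition \ref{1prop:prod} never occurs; then every partial quotient $b_{m+1}$ of $\beta$ (for $m\geq 0$) arises either from case (1), in which case $\deg b_{m+1}=\deg a_{n+1}-1$ for some $n$ with $\deg a_{n+1}>1$, or from case (2), in which case $\deg b_{m+1}=1$. Moreover, the proposition asserts that these exhaust (with each continuant counted once) the convergents of $\beta$, so the sequence of numbers $\{\deg b_{m+1}\}_{m\geq 0}$ is precisely the disjoint union of the values $\{\deg a_{n+1}-1:\deg a_{n+1}>1\}$ and a collection of $1$'s coming from the intervals where consecutive $q_n(\lambda),q_{n+1}(\lambda)$ are both nonzero. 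Taking the supremum is now immediate: if $K(\alpha)=1$, then case (1) contributes nothing and only $1$'s appear, so $K(\beta)=1$; if $K(\alpha)>1$, the supremum of the case-(1) contributions is $K(\alpha)-1\geq 1$, which also dominates the $1$'s coming from case (2), giving $K(\beta)=K(\alpha)-1$.

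The statement for $\gamma=\alpha/(T-\lambda)$ is proved in exactly the same way, replacing Proposition \ref{1prop:prod} by its companion (which has the same case distinction but with $p_n(\lambda)$ in place of $q_n(\lambda)$); once again, under the hypothesis $p_n(\lambda)\neq 0$ for all $n$ only the two ``reducing'' cases of the companion proposition survive, and the conclusion follows by the same bookkeeping.

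For the $\limsup$ statements one repeats the argument, but truncates the enumeration to those indices $n$ beyond which $q_n(\lambda)\neq 0$ (respectively $p_n(\lambda)\neq 0$). Since Proposition \ref{1prop:prod} gives a monotone correspondence between the continuants of $\alpha$ and those of $\beta$, the ``large $n$'' partial quotients of $\alpha$ produce exactly the ``large $m$'' partial quotients of $\beta$, so $\limsup_m\deg b_{m+1}=\max\{\limsup_n\deg a_{n+1}-1,\,1\}=\ov K(\alpha)-1$ as soon as the relevant $\ov K(\alpha)$ exceeds $1$. No genuine obstacle is expected: the only care needed is to separate the trivial regime $K(\alpha)=1$ (where only case (2) contributes) from the generic regime, and to observe that case (3), which would \emph{increase} the degree, is precisely what the hypothesis rules out.
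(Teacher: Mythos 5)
Your proof is correct and follows exactly the route the paper intends: since Proposition \ref{1prop:prod} (and its companion) enumerate all continuants of $(T-\lambda)\alpha$ resp.\ $\alpha/(T-\lambda)$, the hypothesis $q_n(\lambda)\neq0$ (resp.\ $p_n(\lambda)\neq0$) kills case (3), and the remaining cases (1) and (2) yield partial-quotient degrees $\deg a_{n+1}-1$ and $1$, whose supremum is $\max\{K(\alpha)-1,1\}$. Note that in the $\limsup$ part you (correctly) phrase the hypothesis as $\ov K(\alpha)>1$, which is the condition actually needed for the identity $\ov K(\beta)=\ov K(\alpha)-1$ to hold, whereas the paper's statement writes $K(\alpha)>1$.
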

 
\begin{lem}\label{1lem:(f+a)/g}
	Let $\alpha\inn\n L$ with $K(\alpha)>1$ and let $\lambda,a\inn\n K$. Then $$\text{if } p_n(\lambda)+aq_n(\lambda)\neq0 \text{ for every } n, \text{ then } K\left(\frac{\alpha+a}{T-\lambda}\right)=K(\alpha)-1.$$
	
	Similarly, if $p_n(\lambda)+aq_n(\lambda)\!\neq\!0$ for every large enough $n$, then $\ds\ov K\left(\frac{\alpha+a}{T-\lambda}\right)\!=\ov K(\alpha)-1$.
\end{lem}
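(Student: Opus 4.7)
The plan is to reduce the statement to the second part of Corollary \ref{1cor:decreasedegree} by absorbing the shift by $a$ into the numerator of the Laurent series. By Lemma \ref{1lem:suma}, adding the constant polynomial $a$ to $\alpha = [a_0,a_1,\ldots]$ gives the regular expansion $\alpha + a = [a_0 + a, a_1, a_2, \ldots]$, so that $K(\alpha + a) = K(\alpha)$ and the continuants of $\alpha + a$ are exactly $(p_n + a\, q_n,\ q_n)$. In particular, $K(\alpha+a) > 1$ and the hypothesis $p_n(\lambda) + a\, q_n(\lambda) \neq 0$ for every $n$ says precisely that the numerators of the convergents of $\alpha + a$ do not vanish at $\lambda$.

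Next, I would apply Corollary \ref{1cor:decreasedegree} to the Laurent series $\alpha + a$ and the constant $\lambda$: since the numerators of its convergents are nonzero at $\lambda$ and $K(\alpha+a) > 1$, the corollary yields
\[
K\!\left(\frac{\alpha+a}{T-\lambda}\right) = K(\alpha+a) - 1 = K(\alpha) - 1.
\]
The second assertion on $\ov K$ follows identically, using the corresponding part of Corollary \ref{1cor:decreasedegree} applied to $\alpha + a$.

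There is essentially no obstacle here: the only thing to check is that the substitution $\alpha \mapsto \alpha + a$ preserves both the partial quotients (except for the zeroth one) and the $q_n$, which is immediate from Lemma \ref{1lem:suma}, and that the hypothesis translates correctly into the vanishing condition for the numerators of the convergents of $\alpha + a$. Everything else is a direct invocation of the already established corollary.
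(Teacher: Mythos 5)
Your proof is correct and is essentially the same as the paper's: the paper states that the lemma ``follows immediately from the previous Corollary and \eqref{1eq:suma},'' which is exactly your route of transferring to $\alpha+a$ via Lemma \ref{1lem:suma} and then invoking Corollary \ref{1cor:decreasedegree}.
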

\begin{proof}
It follows immediately from the previous Corollary and \eqref{1eq:suma},
\end{proof}

\begin{rem}
	Lemma \ref{1lem:addrempartq} allows us to recover the regular expansion of the multiplication of a continued fraction by any rational function.
	
	For example, if $\alpha=[a_0,a_1,\dots]\inn\n L$ and $A$ is a polynomial such that $A\nmid a_1$, then $A\alpha\!=\![Aa_0,a_1/A,\dots]\!=\!\!\left[Aa_0,\floor{a_1/A}\!,\left\{a_1/A\right\}^{-1}\!,-\!\left\{a_1/A\right\}^2[a_2,a_3,\dots]\!-\!\left\{a_1/A\right\}\right]=\dots$. By repeatedly applying (possibly infinitely many times) \eqref{1eq:VdPmanip2}, \eqref{1eq:VdPmanip1} or \eqref{1eq:zero} we can thus obtain the regular continued fraction of $A\alpha$. 
\end{rem}

Actually, adapting to the polynomial case a result due to Mend\`es France \cite{france1973fractions}, Grisel in \cite{grisel1996longueur}, Th\'{e}or\`{e}me 2, gave an algorithm for the continued fraction expansion of the product of a formal Laurent series by a rational function (where constant partial quotients are allowed).
\begin{notat}
	Let $\alpha=[a_0,\dots,a_n]$; we will denote by $[b_1,\dots,b_k,[\alpha],b_{k+1},\dots]$ the continued fraction $[b_0,\dots,b_k,a_0,\dots,a_n,b_{k+1},\dots]$.
\end{notat}

\begin{theo}[Grisel]\label{1theo:Grisel}
	Let $\alpha=[a_0,a_1,\dots]$ and let $A,B$ be relatively prime polynomials; for every $i\geq0$ let $a_i=ABa_i'+h_i$, with $a_i',h_i\inn\n K[T]$ and $\deg h_i<\deg AB$. Let us set $\delta_{-1}=1, u_{-1}=0, Q_{-1}=0, \delta_0=A$ and, for $i\geq0$, let us define recursively $$H_i=\frac{AB}{\delta_i},\ a_i''=\floor{\frac{(-1)^{u_{i-1}}\delta_ih_i-\delta_{i-1}Q_{i-1}}{H_i}},\ \frac{j_i}{H_i}=\left\{{\frac{(-1)^{u_{i-1}}\delta_ih_i-\delta_{i-1}Q_{i-1}}{H_i}}\right\},$$ $u_i=u_{i-1}+\ell\left(j_i/H_i\right)$, where $\ell([a_0,\dots,a_n])=n$ is the length of the continued fraction; let $Q_i,\widetilde{Q_i}$ be, respectively, the denominators of the second to last and of the last convergents of $j_i/H_i$ and let $\delta_{i+1}={H_i}/{\widetilde{Q_i}}$. Then \be\frac AB\alpha=\left[(-1)^{u_{-1}}\delta_0^2a_0'+a_0'',\left[\frac{H_0}{j_0}\right],\dots,\left[\frac{H_{i-1}}{j_{i-1}}\right],(-1)^{u_{i-1}}\delta_i^2a_i'+a_i'',\left[\frac{H_i}{j_i}\right],\dots\right],\ee
	where, if $j_i=0$, we set $[H_i/j_i]=[0,0],\ \ell(j_i/H_i)=0,\ Q_i=0$ and $\widetilde Q_i=1$. In general, this may not be the regular continued fraction expansion of $\frac AB\alpha$, since some of the $(-1)^{u_{i-1}}\delta_i^2a_i'+a_i''$ could be constants. In this case, the regular expansion can be recovered by applying (possibly infinitely many times) \eqref{1eq:VdPmanip2}.
\end{theo}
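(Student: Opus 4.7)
The approach is to prove the identity by induction on $i\geq 0$, maintaining the invariant that at stage $i$ the product $\frac{A}{B}\alpha$ admits a (possibly non-regular) continued fraction representation whose initial segment coincides with Grisel's formula through the block $\left[\frac{H_{i-1}}{j_{i-1}}\right]$, followed by a closed-form tail
$$(-1)^{u_{i-1}}\frac{\delta_i^2}{AB}\alpha_i \;-\; \frac{\delta_i\,\delta_{i-1}\,Q_{i-1}}{AB}.$$
The theorem then follows by passing to the limit, since each inductive step only perturbs finitely many entries of the prefix. The base case $i=0$ is immediate: with $\delta_0=A$, $\delta_{-1}=1$, $u_{-1}=0$ and $Q_{-1}=0$, this closing term collapses to $\frac{A}{B}\alpha$ itself.

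For the inductive step I would substitute $\alpha_i = a_i + 1/\alpha_{i+1} = AB\,a_i' + h_i + 1/\alpha_{i+1}$ into the closing term and use the identity $\delta_i H_i = AB$ to rewrite it as
$$(-1)^{u_{i-1}}\delta_i^2 a_i' \;+\; \frac{(-1)^{u_{i-1}}\delta_i h_i - \delta_{i-1}Q_{i-1}}{H_i} \;+\; (-1)^{u_{i-1}}\frac{\delta_i^2}{AB\,\alpha_{i+1}}.$$
The first summand, augmented by the integer part $a_i''$ of the middle fraction, exactly reconstitutes the next prescribed partial quotient of Grisel's formula. The middle fraction then splits as $a_i'' + j_i/H_i$, so the task reduces to absorbing the trailing term $(-1)^{u_{i-1}}\delta_i^2/(AB\,\alpha_{i+1})$ into the continued fraction expansion of $j_i/H_i$.

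This final absorption is carried out by writing $j_i/H_i = [0,c_1,\ldots,c_s]$ with $s=\ell(j_i/H_i)$ and applying equation \eqref{1eq:VdPmanip2} from Lemma \ref{1lem:addrempartq} exactly $s$ times, from the innermost $c_s$ outward. Each application strips off one partial quotient $c_k$, inverts the sign of the remaining tail, and rescales its denominator by the square of the corresponding continuant; after $s$ steps the accumulated sign is $(-1)^{u_{i-1}+s}=(-1)^{u_i}$, and the cumulative rescaling, combined with $\delta_{i+1}=H_i/\widetilde{Q_i}$ and the continuant relations of Lemma \ref{1lem:pnqnprime}, leaves a tail of the form $(-1)^{u_i}\delta_{i+1}^2\alpha_{i+1}/(AB) - \delta_{i+1}\delta_i Q_i/(AB)$, which is precisely the closing invariant at index $i+1$. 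The main obstacle is the bookkeeping in this last stage: one has to verify that $\widetilde{Q_i}\mid H_i$ so that $\delta_{i+1}\in\n K[T]$, track how the sign flips interact with the scaling contributed by the continuants $Q_i,\widetilde{Q_i}$, and confirm that the emerging $Q_i$-correction assembles into the prescribed form. The non-regularity caveat at the end of the statement then reflects only the residual possibility that some of the reconstituted partial quotients $(-1)^{u_{i-1}}\delta_i^2 a_i' + a_i''$ happen to be constants, a defect which is repaired by finitely (or at worst countably) many applications of \eqref{1eq:VdPmanip2}.
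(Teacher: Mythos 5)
The paper attributes this statement to Grisel (\cite{grisel1996longueur}, Th\'{e}or\`{e}me~2) and does not include a proof, so there is no in-paper argument to compare with; your proposal must be judged on its own merits.

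The strategy is sound, and the inductive invariant you propose is exactly the right one. Writing $T_i=(-1)^{u_{i-1}}\delta_i^2\alpha_i/(AB)-\delta_i\delta_{i-1}Q_{i-1}/(AB)$, the initial data collapse $T_0$ to $\frac AB\alpha$, and substituting $\alpha_i=ABa_i'+h_i+1/\alpha_{i+1}$ together with $AB=\delta_iH_i$ and the definition of $a_i''$, $j_i$ does split $T_i$ as $\bigl[(-1)^{u_{i-1}}\delta_i^2a_i'+a_i''\bigr]+R_i$ with $R_i=j_i/H_i+(-1)^{u_{i-1}}\delta_i/(H_i\alpha_{i+1})$, as you describe. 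Passing to the limit is legitimate because each step perturbs only a finite prefix.

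The one soft spot is your description of the absorption of $R_i$ as ``$s$ successive applications of \eqref{1eq:VdPmanip2} from the innermost $c_s$ outward.'' That identity moves a perturbation of the \emph{first} partial quotient inward by one step while collapsing everything beyond it into a single entry $-\beta^2\gamma-\beta$; it is not at all clear that iterating it reproduces the block $[c_1,\dots,c_s]$ intact as Grisel's formula requires. What actually closes the induction is the continuant/matrix computation, i.e.\ \eqref{1eq:alphapnqn} applied to the block: writing $H_i/j_i=[c_1,\dots,c_s]$ with continuants $(P_k,Q_k)$, so that $H_i=\delta_{i+1}P_{s-1}$, $j_i=\delta_{i+1}Q_{s-1}$, $\widetilde{Q_i}=P_{s-1}$ and $Q_i=P_{s-2}$, one checks directly that
$$\frac{P_{s-1}T_{i+1}+P_{s-2}}{Q_{s-1}T_{i+1}+Q_{s-2}}=\frac{H_i\alpha_{i+1}}{j_i\alpha_{i+1}+(-1)^{u_{i-1}}\delta_i}=R_i^{-1},$$
using $Q_{s-1}P_{s-2}-Q_{s-2}P_{s-1}=(-1)^{s-1}$ from \eqref{1eq:pnqnprime} and $u_i=u_{i-1}+s$. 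This also disposes of the divisibility $\widetilde{Q_i}\mid H_i$ that you correctly flag: $\widetilde{Q_i}=P_{s-1}$ is the reduced numerator of $H_i/j_i$, hence divides $H_i$, and $\delta_{i+1}=\gcd(H_i,j_i)$. In short, the invariant, the base case and the overall organisation are correct; the final step should be justified via \eqref{1eq:alphapnqn} and \eqref{1eq:pnqnprime} rather than by iterated use of \eqref{1eq:VdPmanip2}.
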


\begin{ex}
	For example, let $\alpha=[T,T+1,T-2,T^2]=\frac{T^5-T^4-T^2+T+1}{T^4-T^3-T^2+T+1}$. Then $T\alpha=[T^2,0,0,1,T,-1,T,T,-T]=[T^2+1,-T,T-1,T,-T]$, by \eqref{1eq:VdPmanip1} and \eqref{1eq:zero}.
\end{ex}

\begin{rem}
	In the previous notations, let $A=T-\lambda$ and $B=1$ with $\lambda\inn\n K$.
	
	Let us assume that $q_n(\lambda)\neq0$ for every $n$. It is easy to see by induction that for $n\geq1$ we have $\delta_n=(-1)^{\floor{\frac{n-1}2}}q_{n-1}(\lambda), H_n=(-1)^{\floor{\frac{n-1}2}}q_{n-1}(\lambda)^{-1}(T-\lambda)$, $j_n=(-1)^{\floor{\frac n2}}q_n(\lambda),$ $a_n''=0$, $u_n=n, \widetilde{Q_n}=(-1)^{n-1}(q_{n-1}q_n)(\lambda)^{-1}(T-\lambda)$, $Q_n=1$, where $h_n=a_n(\lambda)$ and $\deg a_n'=\deg a_n-1$. Then $$(T-\lambda)\alpha=[(T-\lambda)a_0,\dots,(-1)^{n-1}q_{n-1}(\lambda)^2a_n',(-1)^{n-1}(q_{n-1}q_n)(\lambda)^{-1}(T-\lambda),\dots],$$ which implies again the first result of Corollary \ref{1cor:decreasedegree}.
	
	On the other hand, if $\alpha=[a_0,\dots,a_N,\alpha_{N+1}]$ with $q_n(\lambda)\neq0$ for $n<N$ and $q_N(\lambda)=0$, then $(T-\lambda)\alpha=[(T-\lambda)a_0,\dots,q_{n-1}(\lambda)^2a_n',(-1)^{n-1}q_{n-1}(\lambda)^2(T-\lambda)\alpha_{n+1}]$, giving again that, if $q_n(\lambda)=0$, the degree of the following partial quotient is increased by 1.
	
	Inverting the roles of the $p_n,q_n$ similar results can be found for $\ds\frac{\alpha}{T-\lambda}$.
\end{rem}

\subsection{A polynomial analogue of Serret's Theorem}
The multiplication of a continued fraction by a rational function is a special case of a M\"{o}bius transformation in $M_2(\n K[T])$.
\begin{rem}\label{1rem:K((Aa+B)/(Ca+D))}
	Let $\beta\!=M\alpha$, with $M=\mm ABCD\inn M_2(\n K[T])$ such that $\det M\neq0$. Then, $\beta\!=\frac 1C\left(A-\frac{\det M}{C\alpha+D}\right)$, so $\alpha$ is badly approximable if and only if $\beta$ is and, assuming that $K(C\alpha+D)=K\left( 1/(C\alpha+D)\right)$, then  $$K(\alpha)-2\deg C-\deg(\det M)\leq K(\beta)\leq K(\alpha)+2\deg C+\deg(\det M).$$
	
	Of course, the same relation holds for $\ov K(\alpha),\ov K(\beta)$. Actually, we will see in Remark \ref{1rem:Serretgen} that $$\ov K(\alpha)-\deg(\det M)\leq\ov K(\beta)\leq\ov K(\alpha)+\deg(\det M).$$ 
\end{rem}

\begin{notat}\label{1notat:equiv}
	We will say that $\alpha,\beta\inn\n L$ are equivalent, and write $\alpha\sim\beta$, if they are $\GL_2(\n K[T])$-equivalent, that is, if there exists $M\inn\GL_2(\n K[T])$ such that $\alpha=M\beta$.
\end{notat}

Certainly, $\sim$ is an equivalence relation.
	
In particular, any two rational functions are equivalent and, by \eqref{1eq:pnqnmatrix}, $\alpha\sim\alpha_n$ for every $n$ (where the $\alpha_n$ are the complete quotients of $\alpha$).\par\medskip

Actually, Serret \cite{serret1849cours} proved that two real numbers are $\GL_2(\n Z)$-equivalent if and only if the tails of their continued fraction expansions coincide. An analogous result holds in the polynomial case:

\begin{theo}[polynomial analogue of Serret's Theorem]\label{1theo:serret}
	Let $\alpha,\beta$ be irrational Laurent series. Then the following are equivalent:
	\begin{enumerate}
		\item $\beta\sim\alpha$;
		\item there exist $m,n\inn\n N, l\inn\n K^*$ such that $l\alpha_n=\beta_m$;
		\item the continued fraction expansions of $\alpha,\beta$ eventually coincide up to multiplication by a constant, that is, there exist $n,m\inn\n N, l\inn\n K^*$ such that $\alpha=\![a_0,\dots,a_{n-1},c_1,c_2,\dots]$ and $\beta=[b_0,\dots,b_{m-1},lc_1,l^{-1}c_2,\dots]$.
	\end{enumerate}
\end{theo}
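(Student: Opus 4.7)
The plan is to establish the chain $(3)\Rightarrow(2)\Rightarrow(1)\Rightarrow(2)\Rightarrow(3)$, so it suffices to verify $(2)\Leftrightarrow(3)$ and $(1)\Leftrightarrow(2)$. The equivalence $(2)\Leftrightarrow(3)$ is essentially a reading of the formula \eqref{1eq:multAform}: if $\beta_m=l\alpha_n$ with $\alpha_n=[a_n,a_{n+1},\dots]$, then the regular expansion of $\beta_m$ is $[l\,a_n,l^{-1}a_{n+1},l\,a_{n+2},\dots]$, and concatenating with $\beta=[b_0,\dots,b_{m-1},\beta_m]$ (Remark \ref{1rem:nest}) yields exactly (3); conversely, (3) states precisely that $\alpha_n$ and $\beta_m$ differ by multiplication by $l$. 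The direction $(2)\Rightarrow(1)$ is also immediate: $M_{(a_0,\dots,a_{n-1})}\in\GL_2(\n K[T])$ by Lemma \ref{1lem:matrixaction} and the diagonal matrix $\mm{l}{0}{0}{1}$ lies in $\GL_2(\n K[T])$ for $l\inn\n K^*$, so $\alpha\sim\alpha_n\sim l\alpha_n=\beta_m\sim\beta$.

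The heart of the argument is $(1)\Rightarrow(2)$. I would assume $\beta=M\alpha$ with $M=\mm{A}{B}{C}{D}\inn\GL_2(\n K[T])$, $d:=\det M\inn\n K^*$, and for each $n\geq1$ set
\[
N_n:=M\cdot M_{(a_0,\dots,a_n)}=\mm{P_n}{P_n'}{Q_n}{Q_n'},
\]
so that $P_n=Ap_n+Bq_n$, $Q_n=Cp_n+Dq_n$ and analogously for $(P_n',Q_n')$; then $\det N_n=\pm d\inn\n K^*$ and $\beta=N_n\alpha_{n+1}$. A direct expansion using $AD-BC=d$ gives
\[
P_n-\beta Q_n=\frac{d\,(p_n-\alpha q_n)}{C\alpha+D},
\]
whence, by \eqref{1eq:fondprop}, $|P_n-\beta Q_n|=|C\alpha+D|^{-1}|q_{n+1}|^{-1}$. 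Since $\beta$ is irrational, $C\alpha+D\neq 0$, and the ultrametric property together with $p_n/q_n\to\alpha$ gives $|Q_n|=|C\alpha+D|\cdot|q_n|$ for every large enough $n$. Combining, $|P_n-\beta Q_n|\cdot|Q_n|=|q_n|/|q_{n+1}|=|a_{n+1}|^{-1}<1$. Moreover $\gcd(P_n,Q_n)$ divides $\det N_n\inn\n K^*$, so $P_n,Q_n$ are coprime, and Lemma \ref{1lem:best2} identifies $P_n/Q_n$ with a convergent $u_m/v_m$ of $\beta$: there exist $l_n\inn\n K^*$ and $m=m(n)$ with $P_n=l_nu_m$, $Q_n=l_nv_m$. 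The same argument applied to $(P_n',Q_n')$ produces a convergent $u_{m'}/v_{m'}$, and the identity $P_nQ_n'-P_n'Q_n=\pm d\inn\n K^*$ together with Lemma \ref{1lem:pnqnprime} forces $m'=m\pm 1$; the comparison $|v_{m'}|=|Q_n'|/|l_n'|=|C\alpha+D|\cdot|q_{n-1}|<|C\alpha+D|\cdot|q_n|=|v_m|$ then pins down $m'=m-1$.

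To conclude, the preceding description factors $N_n$ as
\[
N_n=\mm{u_m}{u_{m-1}}{v_m}{v_{m-1}}\mm{l_n}{0}{0}{l_n'}.
\]
Applying both sides to $\alpha_{n+1}$ and comparing with $\beta=\mm{u_m}{u_{m-1}}{v_m}{v_{m-1}}\beta_{m+1}$, the invertibility of the convergent matrix as a M\"obius transformation yields $\beta_{m+1}=(l_n/l_n')\alpha_{n+1}$, which is (2) with $l=l_n/l_n'\inn\n K^*$. The main technical obstacle I anticipate is in verifying the estimate $|Q_n|=|C\alpha+D|\cdot|q_n|$ uniformly for large $n$ and in the degenerate situations (for instance $C=0$, where the estimate actually holds for all $n$ since $Q_n=Dq_n$), and in fixing the sign in $m'=m\pm 1$ by the degree comparison above; once these are in place, the rest of the argument is a fairly mechanical translation between M\"obius actions and regular continued fraction data.
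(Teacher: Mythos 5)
Your proof is correct, and the direction $(1)\Rightarrow(2)$ is handled by a genuinely different route from the paper's. The paper decomposes the matrix $M\in\GL_2(\n K[T])$ structurally, invoking Remark \ref{1rem:decompmatr} to write $M=M_{(d_0,\dots,d_n)}\mm c00{\pm c^{-1}}\mm l001$, so that $\beta=[d_0,\dots,d_n,\pm c^2l\alpha]$ is almost immediately a continued-fraction expansion of the desired shape; the only residual work is regularizing the expansion via \eqref{1eq:zero} or \eqref{1eq:VdPmanip1} when $\deg d_n=0$ or $\ord(\alpha)\geq0$. That argument leans on a structure theorem for $\GL_2(\n K[T])$ that has no clean analogue over $\GL_2(\n Z)$, and it lands directly on $(3)$. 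Your argument instead tracks the transformed continuants $P_n=Ap_n+Bq_n$, $Q_n=Cp_n+Dq_n$, proves the identity $P_n-\beta Q_n=d(p_n-\alpha q_n)/(C\alpha+D)$, establishes $|Q_n|=|C\alpha+D|\,|q_n|$ for $n$ large by the ultrametric inequality (since $p_n/q_n\to\alpha$ and $C\alpha+D\neq0$ by irrationality of $\alpha$), and then feeds $|P_n-\beta Q_n|\,|Q_n|=|a_{n+1}|^{-1}<1$ into Lemma \ref{1lem:best2} to identify $P_n/Q_n$ with a convergent of $\beta$; the sign in $m'=m\pm1$ is then fixed by the degree comparison, and the factorization of $N_n$ delivers $(2)$ directly. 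This is essentially the classical real-line proof transplanted to the non-Archimedean setting, where in fact it simplifies (no factor of $\tfrac12$ in the best-approximation criterion). What the paper's route buys is brevity once the matrix-decomposition machinery is in place; what yours buys is self-containment and an explicit eventual bijection between the convergents of $\alpha$ and $\beta$, which is itself a useful byproduct. One small citation slip: the fact that $u_mv_{m'}-u_{m'}v_m\in\n K^*$ forces $m'=m\pm1$ is item 2 of the unnamed Lemma following Lemma \ref{1lem:best2} (which in turn uses \eqref{1eq:d_in}), not Lemma \ref{1lem:pnqnprime} itself.
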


\begin{lem}\label{1lem:decompmatr}
	Let $M=\mm ABCD\inn M_2(\n K[T])$ be a unimodular matrix. Then we have $\deg A>\deg B>\deg D$ and $\deg A>\deg C>\deg D$ if and only if there exists a constant $c\inn\n K^*$ such that $M\mm c00{\pm c^{-1}}\inn\mathbf M$.
\end{lem}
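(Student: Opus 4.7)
The plan is to prove both directions via the matrix--continued-fraction dictionary of Lemma \ref{1lem:matrixaction}, reading off a regular continued fraction expansion from the first column $(A,C)^{t}$ in the non-trivial direction.

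For the easy implication $(\Leftarrow)$, I would first exclude the degenerate case where $M\mm{c}{0}{0}{\pm c^{-1}}$ is the identity, since then $M$ would be a constant diagonal matrix and the strict inequality $\deg B>\deg D$ would fail. Otherwise the product equals $M_{(a_0,\ldots,a_n)}=\mm{p_n}{p_{n-1}}{q_n}{q_{n-1}}$ with $\deg a_i\geq1$ for every $i$. The recurrences $p_i=a_ip_{i-1}+p_{i-2}$ and $q_i=a_iq_{i-1}+q_{i-2}$ immediately give $\deg p_n>\deg p_{n-1}$ and $\deg q_n>\deg q_{n-1}$, while $\deg p_i-\deg q_i=\deg a_0\geq 1$ yields $\deg p_n>\deg q_n$ and $\deg p_{n-1}>\deg q_{n-1}$. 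Right-multiplication by the constant diagonal matrix $\mm{c^{-1}}{0}{0}{\pm c}$ only rescales the columns and preserves all four inequalities.

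The harder direction $(\Rightarrow)$ will rest on expanding $\alpha:=A/C\inn\n K(T)$ in its regular continued fraction and matching the result with $M$. Note that $C\neq 0$ (since $\deg C>\deg D$) and that $\deg A-\deg C\geq 1$ forces the polynomial part of $\alpha$ to have positive degree, so the expansion $\alpha=[a_0,\ldots,a_n]$ is regular ($\deg a_i\geq 1$ for every $i$) and terminates. Since $\det M=\pm 1$, any common polynomial divisor of $A$ and $C$ divides $\pm 1$, hence is in $\n K^*$; therefore there exists $c\inn\n K^*$ with $A=c\,p_n$ and $C=c\,q_n$.

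To identify the second column with the previous continuant, I would apply Remark \ref{1rem:bestappr} to the pair $(B,D)$: the computation
\[
|B-\alpha D|\;=\;\frac{|BC-AD|}{|C|}\;=\;\frac{1}{|C|}\;<\;\frac{1}{|D|}
\]
(with the convention $1/|0|=\infty$ if $D=0$) gives $B/D=p_k/q_k$ for some $k\leq n$. Writing $B=c'p_k$ and $D=c'q_k$, a second $\det M=\pm 1$ argument shows $c'\inn\n K^*$, in particular $|c'|=1$. To pin down $k=n-1$, equate $|B-\alpha D|=1/|q_n|$ with the continuant formula $|p_k-\alpha q_k|=1/|q_{k+1}|$ from \eqref{1eq:fondprop}, forcing $|q_{k+1}|=|q_n|$. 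Finally, expanding the determinant compares $c$ and $c'$:
\[
\pm 1\;=\;AD-BC\;=\;cc'(p_nq_{n-1}-p_{n-1}q_n)\;=\;cc'(-1)^{n+1},
\]
so $c'=\pm c^{-1}$ and
\[
M\;=\;\mm{p_n}{p_{n-1}}{q_n}{q_{n-1}}\mm{c}{0}{0}{\pm c^{-1}}\;=\;M_{(a_0,\ldots,a_n)}\mm{c}{0}{0}{\pm c^{-1}},
\]
yielding $M\mm{c^{-1}}{0}{0}{\pm c}\inn\mathbf M$, which is the claim after relabeling $c\leftrightarrow c^{-1}$. The main technical obstacle is keeping the constant $c$ and the sign consistent in this last step; the degenerate subcases $n=0$ or $D=0$ require no separate treatment thanks to the conventions $p_{-1}=1,\ q_{-1}=0,\ \deg 0=-\infty$.
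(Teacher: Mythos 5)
Your argument is correct and follows the same strategy as the paper's own (very compressed) proof: in the nontrivial direction, one expands $A/C$ in its regular continued fraction, identifies $(A,C)^{t}=c(p_n,q_n)^{t}$, and then matches the second column with $c'(p_{n-1},q_{n-1})^{t}$. Where the paper merely says ``applying the Euclidean algorithm to the columns,'' you make the identification of $(B,D)$ precise by recognizing $B/D$ as a convergent through Lemma \ref{1lem:best2} and then fixing the index with the norm identity \eqref{1eq:fondprop}; this is a clean way to fill in the sketch, and the determinant computation correctly recovers $c'=\pm c^{-1}$. The only quibble is your claim that $D=0$ requires no separate treatment: Lemma \ref{1lem:best2} is stated for a pair of relatively prime polynomials with $q\neq0$, so when $D=0$ the appeal to it is not literal. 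That case is, however, genuinely trivial — $\det M=-BC=\pm1$ forces $B,C\inn\n K^{*}$, hence $a_0=A/C$ is the entire continued fraction ($n=0$), and $(B,D)=(c',0)=(c'p_{-1},c'q_{-1})$ directly — so it deserves a one-line check rather than being absorbed into a ``convention.''
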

\begin{proof} 
	Let $M_{(a_0,\dots,a_n)}\!=\!\!\mm ABCD\!\inn\mathbf M$, where $a_0,\dots,a_n$ are non-constant polynomials. Then it is easy to see that $\deg A\!>\deg B\!>\deg D$ and $\deg A\!>\deg C\!>\deg D$.
	
	Conversely, let $M\!=\!\mm ABCD$ be a unimodular matrix such that $\deg A\!>\!\deg B\!>\!\deg D$ and $\deg A>\deg C>\deg D$. Then, applying the Euclidean algorithm to the columns of $M$ it can be seen that there exists $c\inn\n K^*$ such that $M\mm c00{\pm c^{-1}}\inn\mathbf M$.

	More precisely, $M\!=M_{(a_0,\dots,a_n)}\mm c00{\pm c^{-1}}$, where $\frac AC\!=\![a_0,\dots,a_n],\ \frac BD\!=\![a_0,\dots,a_{n-1}]$ and $c\inn\  K^*$.
\end{proof}

\begin{rem}\label{1rem:decompmatr}
	Reasoning in the same way on the rows of $M$ we have that there exist $b_0,\dots,b_n\inn\n K[T]$, with $\deg b_i>0$ for every $i$, and there exists $d\inn\n K^*$ such that $M=\mm d00{\pm d^{-1}}M_{(b_0,\dots,b_n)}$.\par\medskip

	It can be proved similarly that if $M\inn\GL_2(\n K[T])$ is an invertible matrix with $\det M=k\inn\n K^*$, then $M=M_{(a_0,\dots,a_n)}\mm c00{\pm c^{-1}}\mm k001$ with $c\inn\  K^*$ and $a_0,\dots,a_n$ polynomials such that $\deg a_1,\dots,\deg a_{n-1}>0$.
\end{rem}

\begin{proof}[Proof of Theorem \ref{1theo:serret}]
	The equivalence of 2. and 3. follows directly from the definition of complete quotients and from \eqref{1eq:multAform}.
	
	If $\alpha_n=l\beta_m$, with $l\inn\n K^*$, then obviously $\alpha\sim\alpha_n\sim\beta_m\sim\beta$.
	
	Let us assume that $\beta=M\alpha$ with $M\inn\GL_2(\n K[T])$ and $l=\det M\inn\n K^*$. By the previous Remark, $M$ can be written as $M=M_{(d_0,\dots,d_n)}\mm c00{\pm c^{-1}}\mm l001$ with $c\inn\n K^*$ and $d_i\inn\n K[T],\ \deg d_i>0$ for $0<i<n$. 
	Then $\beta=[d_0,\dots,d_n,\pm c^2l\alpha]$. If $\deg d_n>0$ and $\ord(\alpha)<0$, then $\pm lc^2\alpha=\beta_{n+1}$ and we directly find the regular continued fraction for $\beta$. Otherwise, we can still recover it by applying a finite number of times \eqref{1eq:zero} or \eqref{1eq:VdPmanip1}. In any case, the regular continued fraction of $\beta$ will be of the desired form.
\end{proof}

\begin{lem}
	Let $\alpha,\beta$ be two equivalent formal Laurent series with $\alpha=[a_0,a_1,\dots]$ and $\beta=[b_0,b_1,\dots,b_M,a_0,a_1,\dots]$; let $p_n/q_n,\ {u_m}/{v_m}$ be, respectively, the convergents of $\alpha$ and $\beta$. Then, as in \eqref{1eq:juxtap}, $$u_m=u_Mp_{m-M-1}+u_{M-1}q_{m-M-1},\ v_m=v_Mp_{m-M-1}+v_{M-1}q_{m-M-1}$$ for every $m>M$.
\end{lem}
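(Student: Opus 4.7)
The plan is to deduce the identity directly from the matrix formalism of Notation \ref{1notat:matrix} and Lemma \ref{1lem:matrixaction}, which is tailor-made for exactly this kind of decomposition. Since $\beta=[b_0,\dots,b_M,a_0,a_1,\dots]$ is an honest (regular) continued fraction expansion in the hypotheses, truncating after the $m$-th partial quotient (with $m>M$) gives the $m$-th convergent of $\beta$ as $[b_0,\dots,b_M,a_0,\dots,a_{m-M-1}]$, whose corresponding matrix is
$$M_{(b_0,\dots,b_M,a_0,\dots,a_{m-M-1})}=M_{(b_0,\dots,b_M)}\cdot M_{(a_0,\dots,a_{m-M-1})}$$
by multiplicativity of the assignment $a_0,\dots,a_k\mapsto M_{(a_0,\dots,a_k)}$.

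The key step is then to read off each of the three matrices through Lemma \ref{1lem:matrixaction}: the left-hand side equals $\left(\!\begin{smallmatrix}u_m&u_{m-1}\\v_m&v_{m-1}\end{smallmatrix}\!\right)$, while on the right-hand side the first factor equals $\left(\!\begin{smallmatrix}u_M&u_{M-1}\\v_M&v_{M-1}\end{smallmatrix}\!\right)$ and the second factor equals $\left(\!\begin{smallmatrix}p_{m-M-1}&p_{m-M-2}\\q_{m-M-1}&q_{m-M-2}\end{smallmatrix}\!\right)$. Performing the $2\times 2$ multiplication and comparing the first columns yields exactly
$$u_m=u_Mp_{m-M-1}+u_{M-1}q_{m-M-1},\qquad v_m=v_Mp_{m-M-1}+v_{M-1}q_{m-M-1},$$
which is the desired identity. (The second columns give the analogous relation for $u_{m-1},v_{m-1}$ in terms of $p_{m-M-2},q_{m-M-2}$, consistent with a shift of index.)

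There is essentially no obstacle here: once the matrix formalism is in place the statement is a one-line verification, and indeed it is the natural generalization of \eqref{1eq:juxtap} (which treats the analogous product for the reversed concatenation). The only point requiring any care is verifying that the continued fraction $[b_0,\dots,b_M,a_0,a_1,\dots]$ really is the regular expansion of $\beta$ under the given equivalence hypothesis, so that Lemma \ref{1lem:matrixaction} applies to give the convergents $u_m/v_m$; this is guaranteed by the form of $\beta$ assumed in the statement (compare with case 3 of Theorem \ref{1theo:serret} where the tails coincide exactly, i.e.\ without the constant $l$). An alternative, essentially equivalent route would be a short induction on $m\geq M+1$ using the recurrence defining the continuants together with the fact that the $(m+1)$-th partial quotient of $\beta$ is $a_{m-M}$, but the matrix approach is more transparent and avoids writing the base cases $m=M+1, M+2$ separately.
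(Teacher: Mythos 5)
Your proof is correct, and it follows essentially the same route the paper has in mind: the paper gives no explicit proof but says ``as in \eqref{1eq:juxtap}'', whose own proof rests on the matrix correspondence of Remark \ref{1rem:matrcorr}, exactly the decomposition $M_{(b_0,\dots,b_M,a_0,\dots,a_{m-M-1})}=M_{(b_0,\dots,b_M)}M_{(a_0,\dots,a_{m-M-1})}$ you use together with Lemma \ref{1lem:matrixaction}. Your observation about regularity is harmless but unnecessary, since the expression $\beta=[b_0,\dots,b_M,a_0,a_1,\dots]$ is the regular expansion by the paper's standing convention, so Lemma \ref{1lem:matrixaction} applies directly.
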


\begin{cor}\label{1cor:Serret}
	In particular, if $\alpha\sim\beta$, then \be\label{1eq:Serret}\ov K(\alpha)=\ov K(\beta).\ee
\end{cor}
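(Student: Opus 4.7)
The plan is to deduce this directly from the polynomial analogue of Serret's Theorem (Theorem \ref{1theo:serret}), which has already been established above. Since $\alpha\sim\beta$, that theorem gives us an explicit normal form: there exist indices $n,m\inn\n N$ and a constant $l\inn\n K^*$ such that
\[
\alpha=[a_0,\dots,a_{n-1},c_1,c_2,\dots],\qquad \beta=[b_0,\dots,b_{m-1},lc_1,l^{-1}c_2,\dots].
\]
So the tails of the two continued fraction expansions differ only by alternating multiplication by the non-zero constant $l$, preceded by two finite prefixes of lengths $n$ and $m$.

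Next I would recall the definition $\ov K(\alpha)=\limsup_k\,\deg a_k$ from Notation \ref{1notat:K,ovK}. Two observations then finish the proof: first, a limsup of degrees is insensitive to any finite prefix, so the initial segments $a_0,\dots,a_{n-1}$ and $b_0,\dots,b_{m-1}$ can be discarded on either side; second, multiplication of a partial quotient by a non-zero element of $\n K$ does not affect its degree, so the sequence of degrees of the tail of $\beta$ coincides with that of the tail of $\alpha$. Consequently
\[
\ov K(\beta)=\limsup_{k\geq1}\deg(l^{(-1)^{k-1}}c_k)=\limsup_{k\geq1}\deg c_k=\ov K(\alpha).
\]

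There is no real obstacle here: the entire content is already packaged in Theorem \ref{1theo:serret}, and the corollary is a one-line consequence of the fact that $\ov K$ depends only on the eventual behaviour of degrees and is invariant under scaling partial quotients by constants. The preceding Lemma about the convergents $u_m,v_m$ is not needed for this statement; it is included for later quantitative use. The only minor point worth noting is that if one replaces $\ov K$ by $K=\sup_{k\geq1}\deg a_k$ the analogous equality can fail, since the finite prefixes $a_0,\dots,a_{n-1}$ and $b_0,\dots,b_{m-1}$ might contain partial quotients of larger degree than those in the common tail; this is exactly why the statement is phrased in terms of $\ov K$.
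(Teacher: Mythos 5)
Your proof is correct and follows essentially the same route as the paper: Theorem \ref{1theo:serret}(3) gives the tail-matching-up-to-constants normal form, and $\ov K$, being a $\limsup$ of degrees, is blind both to finite prefixes and to scaling partial quotients by elements of $\n K^*$. Your closing remark distinguishing $\ov K$ from $K$ is also accurate and explains why the corollary is stated the way it is.
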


\begin{rem}\label{1rem:Serretgen}
	Let $M=\mm ABCD\inn M_2(\n Z)$ with $\det M\neq0$. As in Remark \ref{1rem:decompmatr}, essentially by performing the Euclidean algorithm on its first column, we can write $M$ as $M=M_{(a_0,\dots,a_n)}M'$, with $M'$ of the form $M'=\mm{(A,C)}{B'}0{\pm\frac{\det M}{(A,C)}}$.
	
	Setting as before $\beta=M\alpha$, we will have $\beta\sim\frac{(A,C)\alpha+B'}{(\det M)/(A,C)}$, and then, if $r=(A,C)$, 
	$$\begin{array}{rcll}  &\hspace{-0,2cm}\ov K(\beta)\hspace{-0,4cm}&=\ov K\left(\frac{r\alpha+B'}{(\det M)/r}\right) &\text{ by \eqref{1eq:Serret}},\\\ov K(r\alpha+B')-\deg((\det M)/r)\leq&\hspace{-0,2cm}\ov K(\beta)\hspace{-0,4cm}&\leq\ov K(r\alpha+B')+\deg((\det M)/r) &\text{ by \eqref{1eq:degrees}},\\ \ov K(r\alpha)-\deg((\det M)/r)\leq&\hspace{-0,2cm}\ov K(\beta)\hspace{-0,4cm}&\leq\ov K(r\alpha)+\deg((\det M)/r) &\text{ by \eqref{1eq:sumaK}},\\\ov K(\alpha)-\deg(\det M)\leq&\hspace{-0,2cm}\ov K(\beta)\hspace{-0,4cm}&\leq\ov K(\alpha)+\deg(\det M) &\text{ by \eqref{1eq:degrees}}.\end{array}$$
	
	As well as Serret's Theorem, this Remark is the polynomial analogue of a well-known result for the real case. Indeed, as it is proved by Lagarias and Shallit \cite{lagarias1997linear}, if $\alpha$ is a real number with partial quotients bounded (eventually) by some constant $k$ and $\beta=\frac{a\alpha+b}{c\alpha+d}$, with $a,b,c,d\inn\n Z$ and $ad-bc\neq0$, then the partial quotients of $\beta$ are eventually bounded by $(ad-bc)(k+2)$. 
\end{rem}

\begin{lem}
	Let $\alpha=[a_0,\dots,a_{n-1},\alpha_n]$ and let $\beta=[b_0,\dots,b_{m-1},l\alpha_n]$ with $l\inn\n K^*$. 
	Then $\beta=M\alpha$ with $$M=M_{(b_0,\dots,b_{m-1}-l^{-1}a_{n-1},\dots,-l^{(-1)^{k-1}}a_k,\dots,-l^{(-1)^{n}}a_0)}\cdot\begin{cases}
	\mm0l10&\text{if } n \text{ is odd}\\\mm01l0& \text{if } n \text{ is even}
	\end{cases}.$$
\end{lem}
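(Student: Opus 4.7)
The plan is to work in the matrix formalism of Notation~\ref{1notat:matrix}. Introduce the ``scaling'' matrix $L=\mm{l}{0}{0}{1}$, whose M\"obius action on $\n L$ is multiplication by $l$. From the hypothesis $\alpha=[a_0,\dots,a_{n-1},\alpha_n]$ and Lemma~\ref{1lem:matrixaction} one has $\alpha=M_{(a_0,\dots,a_{n-1})}\alpha_n$, so $\alpha_n=M_{(a_0,\dots,a_{n-1})}^{-1}\alpha$, and similarly $\beta=M_{(b_0,\dots,b_{m-1})}(l\alpha_n)=M_{(b_0,\dots,b_{m-1})}\,L\,M_{(a_0,\dots,a_{n-1})}^{-1}\alpha$. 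Thus the assertion reduces to the matrix identity
\[
M_{(b_0,\dots,\,b_{m-1}-l^{-1}a_{n-1},\,\dots,\,-l^{(-1)^{n}}a_0)}\,J_n\;=\;M_{(b_0,\dots,b_{m-1})}\,L\,M_{(a_0,\dots,a_{n-1})}^{-1},
\]
where $J_n=\mm{0}{l}{1}{0}$ for $n$ odd and $\mm{0}{1}{l}{0}$ for $n$ even. A quick determinant count shows both sides have determinant $(-1)^m l$, so once the identity holds up to scalar it holds on the nose.

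To massage the left-hand side I would use the elementary identity $M_{p+q}=M_p\cdot\mm{0}{1}{1}{0}\cdot M_q$ (a direct $2\times2$ check), which splits off the ``combined'' entry $b_{m-1}-l^{-1}a_{n-1}$ and factors the left-hand matrix as $M_{(b_0,\dots,b_{m-1})}\cdot\mm{0}{1}{1}{0}\cdot M_{(-l^{-1}a_{n-1},\,\dots,\,-l^{(-1)^n}a_0)}$. Cancelling $M_{(b_0,\dots,b_{m-1})}$ on the left and moving $M_{(a_0,\dots,a_{n-1})}$ to the right, all the parameters $b_i$ and $m$ disappear and the claim reduces to the purely ``$a$-side'' identity
\[
M_{(-l^{-1}a_{n-1},\,\dots,\,-l^{(-1)^n}a_0)}\,J_n\,M_{(a_0,\dots,a_{n-1})}\;=\;\mm{0}{1}{1}{0}\,L\;=\;\mm{0}{1}{l}{0}.
\]

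The final step is a direct $2\times2$ computation. Using Lemma~\ref{1lem:multAC} to strip the alternating powers of $l$ from the continuants of the reversed-and-rescaled sequence, together with the symmetry \eqref{1eq:symmetryC_n} and the observation that $C_n(-x_1,\dots,-x_n)=(-1)^n C_n(x_1,\dots,x_n)$ (since every monomial of $C_n$ has a number of factors whose parity equals that of $n$), the four entries of $M_{(-l^{-1}a_{n-1},\,\dots,\,-l^{(-1)^n}a_0)}$ become explicit expressions in $p_{n-1},p_{n-2},q_{n-1},q_{n-2}$, with the sign/scale pattern dictated by the parity of $n$. Multiplying on the right by $J_n\,M_{(a_0,\dots,a_{n-1})}=J_n\mm{p_{n-1}}{p_{n-2}}{q_{n-1}}{q_{n-2}}$ and simplifying, the diagonal entries collapse to $0$ by direct cancellation and the off-diagonal ones become $1$ and $l$ thanks to the fundamental continuant relation $p_{n-1}q_{n-2}-p_{n-2}q_{n-1}=(-1)^n$ from \eqref{1eq:pnqnprime}. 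The main obstacle is the careful bookkeeping of the two parities of $n$ separately, which is precisely what forces the case distinction on $J_n$ in the statement.
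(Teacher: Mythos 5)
Your proof is correct in outline, but it takes a genuinely different route from the paper's. The paper's proof is a one-line propagation argument: starting from the same identity $M=M_{(b_0,\dots,b_{m-1})}\mm l001 M_{(a_0,\dots,a_{n-1})}^{-1}$, it substitutes $M_{(a_0,\dots,a_{n-1})}^{-1}=M_{(0,-a_{n-1},\dots,-a_0,0)}$ and then pushes the diagonal matrix $\mm l001$ through that product letter by letter using the commutation rule $\mm l001 M_a=M_{la}\mm100l$ (and its mirror $\mm100l M_a=M_{l^{-1}a}\mm l001$). At each step the diagonal factor toggles between $\mm l001$ and $\mm100l$, which is exactly what produces the alternating exponents $l^{-1},l,l^{-1},\dots$ on the $a_i$'s; after the final $M_0$ the leftover diagonal factor becomes the case-dependent matrix $J_n$. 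No continuant bookkeeping is needed. Your approach instead peels off the $b$-side with the identity $M_{p+q}=M_pM_0M_q$ (which is the same observation the paper uses, phrased as $M_aM_0M_b=M_{a+b}$) and then attempts a direct verification of the remaining $a$-side identity $M_{(-l^{-1}a_{n-1},\dots,-l^{(-1)^n}a_0)}J_nM_{(a_0,\dots,a_{n-1})}=\mm01l0$ via Lemma~\ref{1lem:multAC}, the symmetry \eqref{1eq:symmetryC_n}, and the sign rule $C_n(-x_1,\dots,-x_n)=(-1)^nC_n(x_1,\dots,x_n)$. This works (I checked $n=1,2$), and the three ingredients you cite do suffice, but be aware that this final step is where all the work lives and you've left it as a sketch; you still have to handle the two parities of $n$ separately and track three layers of rescaling, whereas the paper's commutation argument absorbs all of that automatically in the propagation. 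Note also that you could avoid the final computation entirely by applying the same commutation trick to the reduced identity, which essentially reproduces the paper's argument. One small slip: the common determinant of the two sides is $(-1)^{m+n}l$, not $(-1)^ml$ (you dropped the $(-1)^n$ from $\det M_{(a_0,\dots,a_{n-1})}^{-1}$); this is harmless since the remark is peripheral to your argument anyway, and the direct computation gives the identity on the nose without any ``up to scalar'' detour.
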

\begin{proof}	
	Of course, $M=M_{(b_0,\dots,b_{m-1})}\mm l001 M_{(a_0,\dots,a_{n-1})}^{-1}$; the lemma follows from the facts that $\mm l001 M_a=M_{la}\mm100l$ for every $a,l$ and that $M_{(a_0,\dots,a_n)}^{-1}=M_{(0,-a_n,\dots,-a_0,0)}$.
\end{proof}
\clearpage{\pagestyle{empty}\cleardoublepage}
\chapter{Quadratic irrationalities in function fields}\label{ch:quadr}
In the real case, more precise results can be given on the continued fraction expansions of real quadratic irrationalities. In particular, Lagrange proved that the continued fraction expansion of any real quadratic irrationality is eventually periodic and Galois showed that the continued fraction of a real quadratic irrationality $\alpha$ is purely periodic if and only if $\alpha$ is reduced, that is, if $-1<\alpha'<0<1<\alpha$, where $\alpha'$ is the conjugate of $\alpha$. In particular, if $d$ is a positive non-square integer, the continued fraction of $\sqrt d$ is periodic and its period starts from the second partial quotient. The periodicity of the continued fraction of $\sqrt d$ is directly linked to the fact that the Pell equation for $d$ has non-trivial solutions; actually, all its solutions descend from the convergents of $\sqrt d$.\par\medskip

We will see that, in the polynomial case, if the base field $\n K$ is an algebraic extension of a finite field, all such results have perfect analogues, which can be proved using the same techniques as in the real setting. In the other cases, however, most continued fractions of quadratic irrationalities are not periodic and for most polynomials the analogue of the Pell equation has no solutions (apart from the trivial ones).\par\medskip

Having briefly recalled some basic definitions and results of algebraic geometry, we will present a classical connection, already known to Abel, between continued fraction expansions of quadratic irrationals and hyperelliptic curves. However, this requires the polynomial $D$ to be squarefree; as we will see in section \ref{66Sec}, similar results can be obtained also in the general case by substituting the usual Jacobian variety with an appropriate generalized Jacobian. 

\begin{notatC}
	From now on, $\n K$ will be a field with characteristic different from $2$. We will denote by $\S K$ the set of polynomials D of positive even degree in $\n K[T]$, which are not a perfect square in $\n K[T]$ and whose leading coefficient is a square in $\n K$. As we have seen in Lemma \ref{1lem:rad}, if $D\inn\S K$ then $D$ has a square root in $\n L\setminus\n K(T)$, which is unique up to the choice of a sign, that is, up to the choice of a square root in $\n K$ of the leading coefficient of $D$. In fact, a polynomial $D$ of $\n K[T]$ has an irrational square root in $\n L$ if and only if $D\inn\S K$. In this case, we will write $\sqrt D$, assuming the sign to have been properly chosen.

	We will always denote the degree of such a polynomial $D$ by $\deg D=2d$.\par\medskip

	We will want to study the continued fraction expansions of formal Laurent series of the form $\ds\alpha=\frac{A+B\sqrt D}C$, with $A,B,C\inn\n K[T],\ C\neq0$ and $D\inn\S K$. If $B$ is different from zero, that is, if $\alpha$ is not a rational function, then $\alpha$ is said to be a \textit{quadratic irrationality}.\par\medskip

	Let $D\inn\S K$. Then $\n K(T,\sqrt D)$ is a quadratic extension of $\n K(T)$ with Galois group $G=\{\id,\sigma\}$, with $\sigma(\sqrt D)=-\sqrt D$. If $\alpha\in\S K,\ \alpha=\frac{A+B\sqrt D}C$, its norm  and trace are given, respectively, by $N(\alpha)=\alpha\alpha'=\frac{A^2-B^2 D}{C^2},\ \Tr(\alpha)=\alpha+\alpha'=2\frac AC$, where we denote by $\alpha'=\sigma(\alpha)$ the Galois conjugate of $\alpha$.
\end{notatC}

Let $\alpha=[a_0,a_1,a_2,\dots]$. Then, by \eqref{1eq:suma}, \eqref{1eq:multAform}, $\alpha'=[\Tr(\alpha)-a_0,-a_1,-a_2,\dots]$. However, in general $\Tr(\alpha)$ is not a polynomial, so this is not the regular continued fraction expansion of $\alpha'$. We could recover the regular continued fraction expansion of $\alpha'$ using the results of Lemma \ref{1lem:addrempartq}; we will see in Remark \ref{2rem:contfracalpha'} that, in some cases, the continued fraction of $\alpha'$ can be obtained directly from that of $\alpha$.

\begin{remC}\label{2rem:rsD'}
	If $\alpha=\frac{A+B\sqrt D}C$ is a quadratic irrationality, there exist polynomials $r,s,D'$, with $s\neq0$, such that \be\label{2eq:defrs}\alpha=\frac{r+\sqrt{D'}}s\text{, with } s|(r^2-D').\ee
	Indeed, if $m=\frac C{(C,A^2-B^2D)}$, it is enough to take $r=m\,A, s=m\,C, D'=m^2\,B^2D$ (where, as before, we are assuming that we have chosen the correct sign for the square root). Such a triple of polynomials $r,s,D'$ is certainly not unique but, assuming $A,B,C$ to be relatively prime, the previous choice provides a triple of minimal degree.
\end{remC}

From now on, writing $\alpha=\frac{r+\sqrt D}s$ we will always assume that $D\inn\S K$, that $r,s\inn\n K[T]$ with $s\neq0$ and that $s|(r^2-D)$.

\begin{lemC}\label{2lem:tn,delta}
	Let $\alpha=\frac{r+\sqrt D}s=[a_0,a_1,\dots]$ and let $\delta=\floor{\sqrt D}$. Let us set $s_0=s,\ r_0=r$ and $$r_n=a_{n-1}s_{n-1}-r_{n-1},\ s_n=\frac{D-r_n^2}{s_{n-1}} \text{ for } n\geq1.$$ Then $r_n,s_n$ are polynomials in $\n K[T]$ for every $n$ and it is easy to see by induction that \be\alpha_n=\frac{r_n+\sqrt D}{s_n}\text{, so } a_n=\floor{\frac{r_n+\delta}{s_n}},\ee where we denote by $\alpha_n$ the complete quotients of $\alpha$. 
	
	Let $t_n$ be the remainder in the Euclidean division of $r_n+\delta$ by $s_n$. Then we have $a_n=\frac{r_n+\delta-t_n}{s_n}$, so $\delta-t_n=r_{n+1}$.
\end{lemC}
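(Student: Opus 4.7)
The plan is to prove all three conclusions simultaneously by induction on $n$, where the inductive statement packages together the three properties: $r_n, s_n \in \n{K}[T]$, the divisibility $s_n \mid (D - r_n^2)$, and the identity $\alpha_n = (r_n + \sqrt{D})/s_n$. The base case $n = 0$ is immediate: $\alpha_0 = \alpha = (r_0 + \sqrt{D})/s_0$ by the normalization hypothesis of Remark \ref{2rem:rsD'}, which also supplies $s_0 \mid (r_0^2 - D)$.

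For the inductive step, assuming the statement at level $n$, I would compute $\alpha_{n+1}$ directly from the defining recursion $\alpha_{n+1} = 1/(\alpha_n - a_n)$. Substituting gives
\[
\alpha_{n+1} = \frac{s_n}{r_n + \sqrt{D} - a_n s_n} = \frac{s_n}{-r_{n+1} + \sqrt{D}}
\]
with $r_{n+1} = a_n s_n - r_n \in \n{K}[T]$, and then rationalizing yields
\[
\alpha_{n+1} = \frac{s_n(r_{n+1} + \sqrt{D})}{D - r_{n+1}^2}.
\]
The key algebraic check is $s_n \mid (D - r_{n+1}^2)$; writing $D - r_{n+1}^2 = (D - r_n^2) - a_n s_n(a_n s_n - 2 r_n)$ shows this is inherited from $s_n \mid (D - r_n^2)$. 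Thus $s_{n+1} = (D - r_{n+1}^2)/s_n$ is a polynomial, $\alpha_{n+1} = (r_{n+1} + \sqrt{D})/s_{n+1}$, and the divisibility at the next level holds trivially from $s_n s_{n+1} = D - r_{n+1}^2$.

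Next, for the formula $a_n = \floor{(r_n + \delta)/s_n}$, I would use the decomposition $\sqrt{D} = \delta + \{\sqrt{D}\}$ with $\ord(\{\sqrt{D}\}) > 0$ (from the definition of the polynomial part). Since $s_n \in \n{K}[T]$ gives $\ord(1/s_n) \geq 0$, the quotient $\{\sqrt{D}\}/s_n$ has strictly positive order, so its polynomial part vanishes. Applying $\floor{\cdot}$ termwise to
\[
\alpha_n = \frac{r_n + \delta}{s_n} + \frac{\{\sqrt{D}\}}{s_n}
\]
yields $a_n = \floor{(r_n + \delta)/s_n}$ as claimed.

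The third assertion is then almost a tautology: by definition of Euclidean division in $\n{K}[T]$, writing $r_n + \delta = a_n s_n + t_n$ with $\deg t_n < \deg s_n$ rearranges to $a_n s_n = r_n + \delta - t_n$, hence $r_{n+1} = a_n s_n - r_n = \delta - t_n$. There is no real obstacle here; the only point that requires genuine work is the divisibility $s_n \mid (D - r_{n+1}^2)$ in the induction, and that is resolved by the one-line identity above.
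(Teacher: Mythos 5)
Your proof is correct and fills in exactly the induction that the paper leaves implicit (the lemma statement itself only asserts "it is easy to see by induction"). The one genuinely non-obvious point, the divisibility $s_n \mid (D - r_{n+1}^2)$, is handled cleanly by your identity $D - r_{n+1}^2 = (D - r_n^2) - a_n s_n(a_n s_n - 2 r_n)$, which is the standard argument (equivalently, $r_{n+1} \equiv -r_n \pmod{s_n}$), and the remaining steps — reading off $a_n$ via $\floor{\{\sqrt D\}/s_n}=0$ and matching the polynomial part with the Euclidean quotient — are exactly as intended.
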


\begin{defnC}
	A quadratic irrationality $\alpha\inn\n L$ is said to be \textit{reduced} if $$\ord(\alpha)<0 \text{ and } \ord(\alpha')>0.$$
\end{defnC}

\begin{propC}\label{2prop:red}
	If $\alpha=\frac{r+\sqrt D}s$ is a reduced quadratic irrationality, then $\alpha_n$ is reduced for every $n$. 
	
	Moreover, for every quadratic irrationality $\alpha$ there exists an index $N$ such that $\alpha_N$ is reduced (that is, such that $\alpha_n$ is reduced for every $n\geq N$). Then, for $n\geq N$ we have
	$$\deg r_n=-\ord(\sqrt D)=\deg(a_ns_n)=d$$ and the coefficients of $r_n$ and $\sqrt D$ of degrees $d,d-1$ coincide. In particular, \be0<\deg a_n\leq d, \text{ and } 0\leq\deg s_n<d \text{ for every } n\geq N.\ee Moreover, $$a_n=\floor{\frac{2\sqrt D}{s_n}} \text{ for every } n\geq N.$$
\end{propC}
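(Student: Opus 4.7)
For the first assertion, I would argue by induction that the Gauss map preserves reducedness, transposing the classical argument for Galois' theorem to the non-Archimedean setting. If $\alpha$ is reduced, then $\ord(\alpha)<0$ forces $a_0=\floor\alpha$ to be a non-constant polynomial, and $\alpha_1=1/(\alpha-a_0)$ has $\ord(\alpha_1)<0$ since $\ord(\alpha-a_0)>0$. Applying Galois conjugation, $\alpha_1'=1/(\alpha'-a_0)$; since $|\alpha'|<1<|a_0|$, the ultrametric inequality gives $|\alpha'-a_0|=|a_0|>1$, whence $\ord(\alpha_1')>0$. Iterating yields that every $\alpha_n$ is reduced.

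For the second assertion, $\ord(\alpha_n)<0$ already holds for every $n\geq 1$ since $\deg a_n\geq 1$, so only the condition $\ord(\alpha_n')>0$ needs to be established. Applying Galois conjugation to \eqref{1eq:alphapnqn}, I would write $\alpha_{n+1}'=-(q_{n-1}\alpha'-p_{n-1})/(q_n\alpha'-p_n)$. Since $\alpha$ is irrational, $|\alpha-p_n/q_n|\to 0$ by \eqref{1eq:fondprop}, whereas $|\alpha-\alpha'|$ is a fixed positive quantity. For $n$ large, $|\alpha-p_n/q_n|<|\alpha-\alpha'|$, and the ultrametric inequality then yields $|\alpha'-p_n/q_n|=|\alpha-\alpha'|$, and analogously for $n-1$. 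Substituting gives $|\alpha_{n+1}'|=|q_{n-1}/q_n|=|a_n|^{-1}<1$, hence $\ord(\alpha_{n+1}')>0$.

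For the remaining degree identities, once $\alpha_n$ is reduced I would exploit the relations $\alpha_n-\alpha_n'=2\sqrt D/s_n$ and $\alpha_n+\alpha_n'=2r_n/s_n$. Because $\ord(\alpha_n')>0>\ord(\alpha_n)$, the ultrametric inequality gives $\ord(\alpha_n-\alpha_n')=\ord(\alpha_n)=-\deg a_n$; comparing with $\ord(2\sqrt D/s_n)=-d+\deg s_n$ forces $\deg s_n=d-\deg a_n$, and in particular $\deg(a_ns_n)=d$ with $0\leq\deg s_n<d$ (since $\deg a_n\geq 1$). To see that $r_n$ and $\sqrt D$ coincide in their coefficients of degrees $d$ and $d-1$, I would write $r_n-\sqrt D=s_n\alpha_n'$ and estimate $\deg(s_n\alpha_n')\leq\deg s_n-1\leq d-2$, so $\sqrt D$ and $r_n$ agree through degree $d-1$ and in particular $\deg r_n=d$. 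Finally, since $\alpha_n'\in\C M$ has zero polynomial part, $a_n=\floor{\alpha_n}=\floor{\alpha_n-\alpha_n'}=\floor{2\sqrt D/s_n}$.

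The crux is the second paragraph: the qualitative fact that the convergents of $\alpha$ accumulate at $\alpha$ but not at $\alpha'$ becomes, in the non-Archimedean world, the sharp equality $|\alpha'-p_n/q_n|=|\alpha-\alpha'|$ needed to read off the precise norm of $\alpha_{n+1}'$. Everything else, including the identification of the top two coefficients of $r_n$ with those of $\sqrt D$, reduces to routine degree bookkeeping organised around the two fundamental relations $\alpha_n\pm\alpha_n'=(2r_n,2\sqrt D)/s_n$.
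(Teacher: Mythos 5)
Your proof is correct and takes a genuinely different route from the one in the paper. The paper works entirely with the recursive sequences $r_n,s_n,t_n$ from Lemma~\ref{2lem:tn,delta}: it asserts that $\alpha_n$ is reduced if and only if $\deg s_{n-1}<d$, that reducedness then forces $\deg(a_ns_n)\leq d$ (hence propagates forward), and that $\deg s_n<\deg s_{n-1}$ whenever $\alpha_n$ is not reduced, so a decreasing-degree argument yields eventual reducedness; the identity for $a_n$ is obtained by tracking the remainder $t_{n-1}$. You instead argue directly on the Galois conjugates: the propagation of reducedness follows from the ultrametric estimate $|\alpha'-a_0|=|a_0|$ applied to $\alpha_1'=1/(\alpha'-a_0)$; eventual reducedness follows from the fact that the convergents $p_n/q_n$ accumulate at $\alpha$ but remain at fixed distance $|\alpha-\alpha'|$ from $\alpha'$, which through \eqref{1eq:alphapnqn} yields the sharp value $|\alpha_{n+1}'|=|a_n|^{-1}<1$; and the degree identities then fall out of the two relations $\alpha_n\pm\alpha_n'=2r_n/s_n,\,2\sqrt D/s_n$. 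Your approach is the closer analogue of the classical real-case proofs of Galois' and Lagrange's theorems, gives the exact order of $\alpha_{n+1}'$ rather than just a sign, and proves the equality $\deg(a_ns_n)=d$ outright; what it gives up relative to the paper's argument is an explicit, step-by-step control of the index $N$ at which reducedness sets in, since $\deg s_n$ strictly decreasing furnishes an effective bound whereas your criterion depends on how quickly $|\alpha-p_n/q_n|$ drops below $|\alpha-\alpha'|$.
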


\begin{corC}
	Any quadratic irrationality is badly approximable and, in the previous hypotheses and notations, $$\ov K\left(\frac{r+\sqrt D}s\right)\leq d=\frac 12\deg D.$$
\end{corC}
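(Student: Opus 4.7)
The plan is to derive both statements of the corollary as direct consequences of the preceding Proposition \textup{2.3}. The key input is that the proposition guarantees an index $N$ beyond which every complete quotient $\alpha_n$ is reduced, and for such $n$ one has $0 < \deg a_n \leq d$.

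First I would note that for $n \geq N$ the bound $\deg a_n \leq d$ gives immediately
\[
\ov K(\alpha) \;=\; \limsup_{n\to\infty} \deg a_n \;\leq\; d \;=\; \tfrac{1}{2}\deg D,
\]
which is exactly the second assertion. The first assertion, that $\alpha$ is badly approximable, then follows from the definition in Notation \textup{1.3.1}: by the same proposition the tail $(\deg a_n)_{n\geq N}$ is bounded by $d$, while the finite initial segment $a_1,\dots,a_{N-1}$ consists of polynomials of some fixed maximal degree $M$. Hence
\[
K(\alpha) \;=\; \sup_{n\geq 1}\deg a_n \;\leq\; \max\{M,\,d\} \;<\; \infty,
\]
so $\alpha$ has bounded partial quotients and is therefore badly approximable in the sense of Notation \textup{1.3.1}.

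Since the entire argument is essentially a packaging of the reduction statement, there is no real obstacle: the content lies in the proposition, and the corollary is a one-line deduction from it. The only thing worth writing explicitly is the decomposition of $\sup$ into the finite preperiod and the bounded tail to get the uniform bound on $K(\alpha)$.
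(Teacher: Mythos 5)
Your proof is correct and is exactly the deduction the paper intends; the corollary carries no separate proof in the text because it is an immediate consequence of Proposition \ref{2prop:red}, precisely as you lay out. Your explicit split of $K(\alpha)$ into the finite initial block $a_1,\dots,a_{N-1}$ and the tail bounded by $d$ is the right way to make the ``badly approximable'' claim precise, and it matches what the paper leaves implicit.
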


\begin{proof}[Proof of Proposition \ref{2prop:red}]
	It is easy to see that $\alpha_n$ is reduced if and only if $\deg s_{n-1}<d$ and that if $\alpha_n$ is reduced, then $\deg(a_ns_n)\leq d$, which implies that if $\alpha_N$ is reduced, then $\alpha_n$ is for every $n\geq N$. Moreover, it can be seen that if $\alpha_n$ is not reduced, then $\deg s_n<\deg s_{n-1}$, so there exists $N$ such that $\alpha_N$ is reduced.	
	
	If $\alpha_n$ is reduced, then $\deg t_{n-1}<\deg s_n$, so $a_n=\floor{\frac{r_n+\delta}{s_n}}=\floor{\frac{2\delta-t_{n-1}}{s_n}}=\floor{\frac{2\sqrt D}{s_n}}$ (where the $t_n$ and $\delta$ are defined as in Lemma \ref{2lem:tn,delta}). 
\end{proof}

\begin{remC}\label{2rem:sqrtDred}
	Let $\alpha=\sqrt D$; of course, $\alpha$ is not reduced but $\ds\alpha_1=\frac{\delta+\sqrt D}{D^2-\delta}$ is, so $\alpha_n$ is reduced for every $n\geq1$. In particular, $\deg a_0=d$ and $\deg a_n\leq d$ for every $n\geq1$, that is, in the notations of \ref{1notat:K,ovK}, $$K(\sqrt D)\leq d.$$
\end{remC}

In the real case, continued fractions of real quadratic irrationalities are directly linked with solutions of the Pell equation. Actually, the polynomial analogue of Pell's equation arises naturally in the study of quadratic irrationalities of $\n L$.
\begin{lemC}
	Let $\alpha$ be a quadratic irrationality, let $(\cv n)_n$ be its convergents. For every $n$, let $\pphi_n=p_n-\alpha q_n$. Then we have $$\pphi_n\pphi_n'=(-1)^{n+1}\frac {s_{n+1}}{s_0}.$$
	In particular, for $\alpha=\sqrt D$ we have \be\label{2eq:snpell}s_{n+1}=(-1)^{n+1}\left(p_n^2-Dq_n^2\right).\ee
\end{lemC}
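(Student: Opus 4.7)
The plan is to combine two distinct expressions for the product $\alpha_{n+1}\alpha_{n+1}'$ and then telescope. First, from Lemma \ref{2lem:tn,delta} we have $\alpha_{n+1} = (r_{n+1} + \sqrt D)/s_{n+1}$, hence
\[
\alpha_{n+1}\alpha_{n+1}' = \frac{r_{n+1}^2 - D}{s_{n+1}^2} = -\frac{s_n}{s_{n+1}},
\]
where the last equality uses the defining recurrence $s_{n+1} = (D - r_{n+1}^2)/s_n$.

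Next I would express $\alpha_{n+1}$ in terms of the $\varphi_n$. Inverting the formula \eqref{1eq:alphapnqn}, one has $\alpha_{n+1} = -\dfrac{q_{n-1}\alpha - p_{n-1}}{q_n\alpha - p_n} = -\dfrac{\varphi_{n-1}}{\varphi_n}$. Applying the Galois involution (which fixes $p_n,q_n$ but sends $\alpha$ to $\alpha'$) gives $\alpha_{n+1}' = -\varphi_{n-1}'/\varphi_n'$, and therefore
\[
\alpha_{n+1}\alpha_{n+1}' = \frac{\varphi_{n-1}\varphi_{n-1}'}{\varphi_n\varphi_n'}.
\]
Equating this with the previous expression yields the one-step relation $\varphi_n\varphi_n' = -\,(s_{n+1}/s_n)\,\varphi_{n-1}\varphi_{n-1}'$.

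Now I telescope. With the standard initial values $p_{-1}=1,\ q_{-1}=0$ we get $\varphi_{-1} = \varphi_{-1}' = 1$, so induction on $n\ge 0$ gives
\[
\varphi_n\varphi_n' = \prod_{k=0}^{n}\left(-\frac{s_{k+1}}{s_k}\right) = (-1)^{n+1}\,\frac{s_{n+1}}{s_0},
\]
which is the first assertion. For the specialisation $\alpha = \sqrt D$, one has $r_0 = 0$ and $s_0 = 1$, so $\varphi_n = p_n - \sqrt D\,q_n$ and $\varphi_n\varphi_n' = p_n^2 - D q_n^2$; the displayed identity \eqref{2eq:snpell} follows immediately.

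The only delicate point is the initialisation of the telescoping product and the sign bookkeeping in passing from $\alpha_{n+1} = -\varphi_{n-1}/\varphi_n$ to the Galois conjugate; everything else is a direct substitution, so I do not anticipate any real obstacle.
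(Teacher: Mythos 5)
Your proof is correct and follows essentially the same route as the paper's: both reduce to the identity $\alpha_i\alpha_i' = -s_{i-1}/s_i$ and a telescoping product of the $\varphi_i\varphi_i'$. The only cosmetic difference is that the paper invokes part 2 of Lemma \ref{1lem:formulaspnqn} to write $\varphi_n\varphi_n'=\prod_{i=1}^{n+1}(\alpha_i\alpha_i')^{-1}$ directly, whereas you re-derive the same relation $\alpha_{n+1}=-\varphi_{n-1}/\varphi_n$ from \eqref{1eq:alphapnqn} and telescope step by step from $\varphi_{-1}\varphi_{-1}'=1$.
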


\begin{proof}
	By Lemma \ref{1lem:formulaspnqn}, $\ds\pphi_n\pphi_n'=\prod\limits_{i=1}^{n+1}\frac1{\alpha_i\alpha_i'}=\prod\limits_{i=1}^{n+1}\frac{s_i^2}{r_i^2-D}=\prod\limits_{i=1}^{n+1}-\frac{s_i}{s_{i-1}}=(-1)^{n+1}\frac{s_{n+1}}{s_0}$.
\end{proof}

If $\alpha=\sqrt D$, the Best Approximation Theorem (Lemma \ref{1lem:best2}) can be written in the following way:
\begin{propC}\label{2prop:bestquadr}
	Let $D\inn\S K$ with $\deg D=2d$, let $\alpha=\sqrt D$, let $p,q\inn\n K[T]$ be two relatively prime polynomials. Then $\cv{}$ is, up to the sign, a convergent of $\alpha$ if and only if \be\label{2eq:bestquadr}\deg(p^2-Dq^2)\leq d-1.\ee
\end{propC}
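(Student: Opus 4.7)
The statement is essentially a reformulation of the best approximation criterion (Lemma \ref{1lem:best2}) for the specific case $\alpha = \sqrt{D}$, using the natural factorization
\[
p^{2} - Dq^{2} = (p - \sqrt{D}\,q)(p + \sqrt{D}\,q).
\]
Since the valuation on $\n L$ is non-Archimedean and multiplicative, $|p^{2} - Dq^{2}| = |p - \sqrt{D}\,q|\cdot|p + \sqrt{D}\,q|$, so control of the product is equivalent to control of the individual factors, together with a sign ambiguity coming from which of the two factors is the small one.

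For the forward direction, I would assume that $p/q$ is a convergent of $\sqrt{D}$; replacing $p$ by $-p$ if $-p/q$ is the convergent does not change $p^{2} - Dq^{2}$, so I may assume this directly. By Lemma \ref{1lem:best2} one has $|p - \sqrt{D}\,q| < |q|^{-1} = \mu^{-\deg q}$. In particular $|p - \sqrt{D}\,q| < |\sqrt{D}\,q| = \mu^{d+\deg q}$, and the non-Archimedean property forces $|p + \sqrt{D}\,q| = |(p - \sqrt{D}\,q) + 2\sqrt{D}\,q| = |\sqrt{D}\,q| = \mu^{d+\deg q}$. Multiplying, $|p^{2} - Dq^{2}| < \mu^{-\deg q}\cdot\mu^{d+\deg q} = \mu^{d}$, i.e.\ $\deg(p^{2}-Dq^{2}) \leq d-1$.

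For the converse, suppose $\deg(p^{2} - Dq^{2}) \leq d-1$, so $|p - \sqrt{D}\,q|\cdot|p + \sqrt{D}\,q| \leq \mu^{d-1}$. The key point is that these two factors cannot both be of size $|\sqrt{D}\,q|=\mu^{d+\deg q}$: their product would then be $\mu^{2d+2\deg q} \geq \mu^{2d} > \mu^{d-1}$. On the other hand, $(p + \sqrt{D}\,q) - (p - \sqrt{D}\,q) = 2\sqrt{D}\,q$ has absolute value $\mu^{d+\deg q}$, so by the ultrametric inequality at least one of the two factors must have absolute value $\geq\mu^{d+\deg q}$, hence (since they cannot both equal this value) exactly one equals $\mu^{d+\deg q}$ and the other is strictly smaller. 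Whichever of $\pm p - \sqrt{D}\,q$ is the small one, it satisfies
\[
|\pm p - \sqrt{D}\,q| = \frac{|p^{2}-Dq^{2}|}{\mu^{d+\deg q}} \leq \mu^{-1-\deg q} < |q|^{-1},
\]
so Lemma \ref{1lem:best2} (applied to $\pm p/q$, which is still in lowest terms) identifies $\pm p/q$ as a convergent of $\sqrt{D}$.

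There is essentially no obstacle; the only subtle point is the sign ambiguity, which corresponds exactly to the two possible choices of square root of $D$ in $\n L$ and is precisely what the phrase \emph{up to the sign} absorbs. The proof uses only the non-Archimedean property of $|\cdot|$ and Lemma \ref{1lem:best2}, and would be no more than a few lines.
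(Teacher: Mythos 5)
Your proof is correct. The converse direction is essentially the paper's argument: assume $|p+q\sqrt D|>|p-q\sqrt D|$, deduce $|p+q\sqrt D|=|q\sqrt D|=\mu^{d+\deg q}$, divide the bound $|p^2-Dq^2|\leq\mu^{d-1}$ by this, and invoke Lemma \ref{1lem:best2}. Your forward direction, however, takes a genuinely different route. The paper derives $\deg(p_n^2-Dq_n^2)\leq d-1$ indirectly: it invokes Proposition \ref{2prop:red} (that the complete quotients $\alpha_{n+1}$ of $\sqrt D$ are reduced for $n\geq0$, hence $\deg s_{n+1}<d$) together with the formula $s_{n+1}=(-1)^{n+1}(p_n^2-Dq_n^2)$ from the preceding lemma. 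You instead argue directly from Lemma \ref{1lem:best2}: $|p-q\sqrt D|<|q|^{-1}$ forces $|p+q\sqrt D|=|q\sqrt D|$ by ultrametricity, and multiplying gives the bound. Your version is shorter, self-contained (it does not require setting up the $r_n,s_n$ recursion or the theory of reduced irrationalities), and makes the two directions visibly symmetric, both resting on the same factorization and the same norm estimate. The paper's version has the minor advantage of fitting into the broader narrative it is developing, where the $s_n$ already carry the information. Both are sound; yours is arguably the cleaner stand-alone proof.
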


\begin{proof}
	Let $n\geq0$. As $\alpha_{n+1}$ is reduced, by Proposition \ref{2prop:red} $\deg s_{n+1}<d$, that is, by the previous Lemma, $\deg(p_n^2-Dq_n^2)\leq d-1$.

	Conversely, let $p,q$ be two relatively prime polynomials satisfying \eqref{2eq:bestquadr}. Assuming $|p+q\sqrt D|>|p-q\sqrt D|$, we must have $|p|=|q\sqrt D|=\mu^{d+\deg q}$, so $|p-q\sqrt D|<|q|^{-1}$. By Lemma \ref{1lem:best2} then $p/q$ is a convergent of $\sqrt D$.
\end{proof}

\begin{defnC}
	Let $D\inn\S K$. The polynomial analogue of the Pell equation for $D$ is \be\label{2eq:pell}X^2-DY^2\inn\n K^*,\ee where we look for solutions $X,Y\inn\n K[T]$.
	
	\eqref{2eq:pell} always has the trivial solutions $(x,y)=(k,0)$ with $k\inn\n K^*$. A solution $(x,y)$ of \eqref{2eq:pell} is said to be \textit{non-trivial} if $y\neq0$ (equivalently, if $x$ is not constant).
	If the Pell equation for $D$ has non-trivial solutions, $D$ is said to be a \textit{Pellian} polynomial.
\end{defnC}

\begin{lemC}\label{2lem:unit}
	If $D$ is Pellian, then \eqref{2eq:pell} has infinitely many non-trivial solutions. More precisely, those solutions form a group where, identifying a solution $(x,y)$ with the quadratic irrationality $x+y\sqrt D$, the operation is given by multiplication in $\n K(T,\sqrt D)$.	
	
	Let $(x_1,y_1)$ be a non-trivial solution with $\deg x_1,\deg y_1$ minimal. Then the set of solutions to \eqref{2eq:pell} is exactly \be\n K^*\times\left\langle x_1+y_1\sqrt D\right\rangle.\ee  
\end{lemC}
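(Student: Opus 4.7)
The plan is to identify a solution $(x,y)$ with the element $u=x+y\sqrt D\in\n K[T][\sqrt D]$ and observe that the equation $x^2-Dy^2\in\n K^*$ is exactly the condition $N(u)\in\n K^*$, where $N(u)=u\,u'$ is the norm in the quadratic extension $\n K(T,\sqrt D)/\n K(T)$. Since $N$ is multiplicative, the set $G$ of such $u$ is closed under products, and the inverse is given explicitly by $u^{-1}=u'/N(u)\in\n K[T][\sqrt D]$, with $N(u^{-1})=N(u)^{-1}\in\n K^*$. Thus $G$ is a multiplicative group, which contains $\n K^*$ (the trivial solutions) as a subgroup.

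The next step is to introduce a degree function on $G$. For a non-trivial solution, the equality $x^2-Dy^2\in\n K^*$ together with the non-Archimedean nature of $|\cdot|$ forces $|x^2|=|Dy^2|$, so $\deg x=\deg y+d$; in particular $|u|=\mu^{d+\deg y}\geq\mu^d$ and similarly $|u'|=|u|^{-1}$. Therefore the map $\deg:G\to\n Z$ defined by $\deg u=\log_\mu|u|$ is a group homomorphism whose kernel is exactly the set of $u\in G$ with $|u|=1$; by the previous estimate this forces $y=0$ and hence $u\in\n K^*$, so $\ker(\deg)=\n K^*$.

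It remains to analyse the image. Since $\deg(G)$ is a subgroup of $\n Z$, it is of the form $m\,\n Z$ for some $m\geq0$. The Pellian hypothesis means that $G\neq\n K^*$, so $m>0$, and choosing any non-trivial $u_1=x_1+y_1\sqrt D$ whose degree is minimal among non-trivial solutions produces a generator $u_1$ with $\deg u_1=m$. (Since $\deg x_1=\deg y_1+d$, minimality in $\deg x_1$ is equivalent to minimality in $\deg y_1$.) For any $u\in G$ with $\deg u=nm$ the element $u\,u_1^{-n}$ lies in $\ker(\deg)=\n K^*$, so $u=k\,u_1^n$ for some $k\in\n K^*$; conversely $\n K^*\cap\langle u_1\rangle=\{1\}$ because $u_1^n\in\n K^*$ would force $nm=0$ and hence $n=0$. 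This gives the internal direct product decomposition $G=\n K^*\times\langle u_1\rangle$, and infiniteness of the set of non-trivial solutions follows at once, since the powers $u_1^n$ have pairwise distinct degrees $nm$.

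The main subtlety is the proof that $\deg x=\deg y+d$ for non-trivial solutions: this is the only point where the non-Archimedean property of $|\cdot|$ and the hypothesis that the leading coefficient of $D$ be a square (so that $\sqrt D$ exists as a Laurent series with a well-defined degree) enter in an essential way. Once this is established the rest of the argument is formal group theory, mirroring the classical Dirichlet-style analysis of the units $\n Z[\sqrt d]^\times$ in the real quadratic case.
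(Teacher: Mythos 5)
Your proof is correct and, in fact, more complete than the paper's own argument. The paper disposes of the lemma in a single sentence by observing that Pell solutions are exactly the units of $\n K[T,\sqrt D]$ (``Pell's equation is equivalent to $N(X+Y\sqrt D)\inn\n K^*$''), and leaves the structure statement $\n K^*\times\langle x_1+y_1\sqrt D\rangle$ unproven. You actually establish that structure, and you do so in a natural way: you take the norm-condition as the definition of the group $G$, introduce the valuation homomorphism $\deg:G\to\n Z$ via the non-Archimedean absolute value on $\n L$, identify its kernel with $\n K^*$, and then use the fact that $\Imm(\deg)$ is a (nontrivial) subgroup of $\n Z$ to produce a primitive generator. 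This is precisely the Dirichlet-style argument in the function-field setting.

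One small imprecision worth flagging: from $|x^2|=|Dy^2|$ you get $\deg x=\deg y+d$, and hence $\max\{|u|,|u'|\}=\mu^{d+\deg y}$; combined with $|u|\,|u'|=1$, this only gives $|u|\in\{\mu^{d+\deg y},\mu^{-(d+\deg y)}\}$, depending on whether or not the leading terms of $x$ and $y\sqrt D$ cancel. So the assertion ``$|u|=\mu^{d+\deg y}$'' is not forced by the hypotheses as stated; what you actually need, and what does follow, is $|\log_\mu|u||=d+\deg y$. This is enough for your argument: the kernel of $\deg$ is still $\{u:|u|=1\}$, and since $d\geq1$ and $\deg y\geq0$ whenever $y\neq0$, the value $d+\deg y$ is strictly positive for non-trivial solutions, so $|u|\neq1$ and the kernel is exactly $\n K^*$. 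Similarly, minimality of $\deg y_1$ among non-trivial solutions gives $|\deg u_1|=m$, the positive generator of $\Imm(\deg)$, and replacing $u_1$ by $u_1'$ (equivalently, negating $y_1$) if necessary makes $\deg u_1=m$; this changes $\langle u_1\rangle$ only by multiplication by the constant $N(u_1)\in\n K^*$, so the product decomposition is unaffected. With this adjustment your proof is watertight.
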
	

\begin{proof}
	Pell's equation is equivalent to $N(X+Y\sqrt D)\inn\n K^*$, that is, $(x,y)$ is a solution of Pell's equation if and only if $x+y\sqrt D$ is a unit in $\n K[T,\sqrt D]$.
\end{proof}

\begin{remC}\label{2rem:degan-pell}
	By Proposition \ref{2prop:bestquadr}, if the Pell equation for $D$ has solutions, they must be, up to a multiplicative constant, continuants of $\alpha=\sqrt D$. More precisely, $D$ is Pellian if and only if there exists $n$ such that $s_{n+1}\inn\n K^*$, if and only if there exists $n$ such that $\deg a_{n+1}=d$.
	
	By the previous Lemma then either there are infinitely many partial quotients of degree $d$ or the only partial quotient of degree $d$ is $a_0$. 
\end{remC}

In \cite{zannier2016hyperelliptic}, Theorem 1.3, Zannier improved the previous classical results:
\begin{theoC}[Zannier]\label{2theo:Z2}
	Let $D\inn\S K$ be a polynomial of degree $2d$. Then we have $$\ov K(\sqrt D)\leq d/2,$$ unless $D$ is of the form $D(T)=r(T)^2\widetilde D(T)$ with $\widetilde D$ a Pellian polynomial of degree at least $\frac 32d$.
\end{theoC}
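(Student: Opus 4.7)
The plan is to prove the contrapositive: assume $\ov K(\sqrt D) > d/2$ and deduce that $D = r(T)^2\widetilde D(T)$ for some polynomial $r$ and some Pellian polynomial $\widetilde D$ of degree at least $3d/2$.

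First, the hypothesis produces infinitely many indices $n$ with $\deg a_{n+1} > d/2$; combining \eqref{1eq:fondprop} and \eqref{2eq:snpell}, for each such $n$ the continuants satisfy
$$\deg\bigl(p_n^2 - D q_n^2\bigr) = d - \deg a_{n+1} < d/2,$$
giving an infinite family of ``quasi-Pell'' approximations.

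Next, I would translate this into divisorial language. On the hyperelliptic curve $\mathcal H : U^2 = D(T)$, with its two points at infinity $\infty_\pm$, the function $\pphi_n = p_n - q_n\sqrt D$ has divisor
$$\operatorname{div}(\pphi_n) = E_n + \deg q_{n+1}\cdot \infty_+ - (\deg q_n + d)\cdot \infty_-,$$
where $E_n$ is effective of degree $d - \deg a_{n+1} < d/2$, with support projecting to the zeros of $p_n^2 - D q_n^2$. Passing to $\Pic^0(\mathcal H)$, or to a suitable generalized Jacobian when $D$ is not squarefree, this becomes
$$\deg q_{n+1}\cdot [\infty_+ - \infty_-] \;=\; [E_n - \deg E_n \cdot \infty_-].$$

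The essential step is an unlikely-intersection argument. As $n$ varies, the right-hand side remains in the image of $\operatorname{Sym}^{e}\mathcal H \to \operatorname{Jac}(\mathcal H)$ for some $e < d/2$, and this image is a proper subvariety of the Jacobian as long as the genus of the squarefree kernel of the curve exceeds $e$. An infinite collection of distinct integer multiples of a single class $[\infty_+-\infty_-]$ cannot lie in a proper subvariety of the Jacobian unless that class is torsion; this is exactly the content of Theorem 1.7 of \cite{zannier2016hyperelliptic}, which I would invoke at this point. Torsion of $[\infty_+-\infty_-]$ on the Jacobian of the squarefree model $\widetilde{\mathcal H}: U^2 = \widetilde D(T)$ (where $D = r^2\widetilde D$ with $\widetilde D$ squarefree) is classically equivalent to $\widetilde D$ being Pellian.

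Finally, the sharp bound $\deg \widetilde D \geq 3d/2$ should emerge from tracking the interaction between $\deg r$ and $\deg E_n$: the constraint $\deg E_n < d/2$, combined with $\deg D = 2d = 2\deg r + \deg \widetilde D$, forces the genus-versus-$e$ comparison carried out on $\widetilde{\mathcal H}$ to yield precisely $\deg\widetilde D \geq 3d/2$. The main obstacle is the unlikely-intersection step: all of the continued-fraction and divisor bookkeeping is elementary, but the passage from ``infinitely many multiples landing in a proper subvariety'' to the torsion conclusion is a deep statement and truly requires the cited theorem of Zannier; without it, the present argument produces only bounded-complexity constraints on $[E_n - \deg E_n\cdot\infty_-]$, not the torsion of $[\infty_+-\infty_-]$ itself.
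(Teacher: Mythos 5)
The paper does not prove this theorem; it is imported verbatim as Theorem~1.3 of \cite{zannier2016hyperelliptic}, so there is no in-house argument to compare against. Judged on its own merits, your sketch captures the right opening moves (the inequality $\deg(p_n^2-Dq_n^2)<d/2$ for infinitely many $n$, and the passage to $\div(\pphi_n)$ on the hyperelliptic curve) but breaks down at the unlikely-intersection step in two concrete ways. First, the citation is wrong: Theorem~1.7 of \cite{zannier2016hyperelliptic} (quoted in this thesis as Theorem~\ref{66theo:Zrootsqn}) concerns common zeros of the denominators $q_n$ and is a corollary of the machinery, not a statement about torsion of $[\infty_+-\infty_-]$; the result you are trying to prove \emph{is} Theorem~1.3 of that same paper, so the reference is effectively circular, and the actual engine is the Skolem--Mahler--Lech theorem for algebraic groups (Theorem~3.2 there, surveyed here as Corollary~\ref{66cor:SML}), applied to a generalized Jacobian $\C J_\M$ of the squarefree model rather than to $\Pic^0(\C H)$. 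Second, the asserted principle ``infinitely many distinct integer multiples of a class lying in a proper subvariety forces torsion'' is false as stated: $\{n\gamma\}_n$ has Zariski closure a finite union of cosets of a positive-dimensional connected subgroup whenever $\gamma$ is non-torsion, so one must additionally rule out that $\C W_e$ contains such a coset --- this is precisely the geometric content that makes the argument non-trivial and that produces, via comparison of $e<d/2$ against the genus $\widetilde d - 1$ of the squarefree model and the extra toric/unipotent factors of $\Ker\Phi_\M$, the quantitative threshold $\deg\widetilde D\geq\tfrac32 d$, which your sketch leaves unargued. As written, the proposal records the correct starting point and the correct moral (a finiteness/structure theorem for $\{m\delta\}$ is indispensable), but the middle of the argument is missing and the bound $\tfrac32 d$ is not derived.
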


\section{Periodicity and quasi-periodicity}
\begin{defn}
	Let $\alpha$ be an irrational Laurent series. The continued fraction of $\alpha$ is said to be \textit{quasi-periodic} if there exist $n\geq0,m\geq1$ and a constant $c\inn\n K^*$ such that $$\alpha_{n+m}=c\,\alpha_n,$$ where the $\alpha_n$ are the complete quotients of $\alpha$. The smallest positive integer $m$ for which the previous equality holds is called the \textit{quasi-period} of $\alpha$.

	If there exist $N\geq0,M\geq1$ such that $$\alpha_{N+M}=\alpha_N,$$ then the continued fraction of $\alpha$ is said to be \textit{periodic} and the smallest integers $M,N$ satisfying the previous equality are called, respectively, its \textit{period} and its \textit{pre-period}. If $N=0$, $\alpha$ is said to be \textit{purely periodic}.
\end{defn}

If the continued fraction of $\alpha$ is periodic with $\alpha_{N+M}=\alpha_N$, then it is easy to see by induction that $\alpha_{N+M+k}=\alpha_{N+k}$ for every $k\geq0$. We will write $$\alpha=[a_0,\dots,a_{N-1},\ov{a_N,\dots,a_{N+M-1}}].$$ 

If the continued fraction of $\alpha$ is quasi-periodic with $\alpha_{n+m}=c\,\alpha_n$ we will write $$\alpha=\left[a_0,\dots,a_{n-1},\ov{a_n,\dots,a_{n+m-1}}^{\ c}\right].$$

\begin{lem}
	Let $\alpha$ be quasi-periodic, $\alpha=\left[a_0,\dots,a_{n-1},\ov{a_n,\dots,a_{n+m-1}}^{\ c}\right]$. Then we have \begin{equation*}\begin{split}\alpha & =\left[a_0,\dots,a_{n-1},a_n,\ov{a_{n+1},\dots,a_{n+m-1},c\,a_n}^{1/c}\right]=\\ & =\left[a_0,\dots,a_{n+m-1},\ov{c\,a_n,\dots,c^{(-1)^{m-1}}a_{n+m-1}}^{c^{(-1)^m}}\right]=\end{split}\end{equation*} In particular, $\alpha_{n+2m}=c^{(-1)^m}\alpha_{n+m}=c^{1+(-1)^m}\alpha_n$, so $$\alpha= \left[a_0,\dots,a_{n-1},\ov{a_n,\dots,a_{n+2m-1}}^{c^{1+(-1)^m}}\right].$$
\end{lem}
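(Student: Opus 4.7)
The plan is to exploit the constant-multiplication identity \eqref{1eq:multAform}, namely $c\,[b_0,b_1,\ldots]=[c\,b_0,c^{-1}b_1,c\,b_2,\ldots,c^{(-1)^k}b_k,\ldots]$ for $c\inn\n K^*$, together with uniqueness of the regular continued fraction expansion of a Laurent series. The key observation is that rescaling a polynomial of positive degree by $c^{\pm1}\inn\n K^*$ yields a polynomial of the same positive degree, so the right-hand side of \eqref{1eq:multAform} is already a regular expansion and termwise comparison is legitimate.

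Starting from the hypothesis $\alpha_{n+m}=c\,\alpha_n$, I would write $\alpha_n=[a_n,a_{n+1},\ldots]$ and apply \eqref{1eq:multAform} on one side and the definition of the complete quotients $\alpha_{n+m}=[a_{n+m},a_{n+m+1},\ldots]$ on the other; uniqueness forces
\[a_{n+m+k}=c^{(-1)^k}a_{n+k}\qquad\text{for every }k\ge0.\]
Specialising $k=0,1,\ldots,m-1$ gives $a_{n+m},a_{n+m+1},\ldots,a_{n+2m-1}=c\,a_n,\,c^{-1}a_{n+1},\ldots,c^{(-1)^{m-1}}a_{n+m-1}$, which already produces the block of partial quotients appearing in the second and third displayed forms of the lemma.

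To deduce the first displayed form, I would shift the index by one: the termwise relation just established rewrites $\alpha_{(n+1)+m}=[a_{n+1+m},a_{n+2+m},\ldots]=[c^{-1}a_{n+1},c\,a_{n+2},c^{-1}a_{n+3},\ldots]$, and a second application of \eqref{1eq:multAform} (now with constant $c^{-1}$) identifies this with $c^{-1}[a_{n+1},a_{n+2},\ldots]=c^{-1}\alpha_{n+1}$. Iterating the same shift argument gives $\alpha_{(n+k)+m}=c^{(-1)^k}\alpha_{n+k}$ for all $k\ge0$; specialising to $k=m$ yields $\alpha_{n+2m}=c^{(-1)^m}\alpha_{n+m}=c^{1+(-1)^m}\alpha_n$, and combined with the explicit list of $a_{n+m},\ldots,a_{n+2m-1}$ above this gives the third displayed form. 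There is no genuine obstacle: the only care needed is bookkeeping the alternating sign $(-1)^k$ in the exponent of $c$.
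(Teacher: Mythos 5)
Your proof is correct, and since the paper states this lemma without proof, your route via \eqref{1eq:multAform} together with the uniqueness of the regular continued fraction expansion (which applies because multiplying a non-constant polynomial by $c^{\pm1}\inn\n K^*$ preserves its degree) is exactly the intended one: the hypothesis $\alpha_{n+m}=c\,\alpha_n$ forces $a_{n+m+k}=c^{(-1)^k}a_{n+k}$ for all $k\ge0$, which then yields the shifted quasi-periodicities $\alpha_{(n+k)+m}=c^{(-1)^k}\alpha_{n+k}$ and all three displayed forms. No gaps; the bookkeeping of the alternating exponent is the only place care is needed, and you handle it correctly.
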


\begin{lem}\label{2lem:quasiper-per}
	If the continued fraction expansion of $\alpha$ is periodic with period $M$, then it is also quasi-periodic and its quasi-period $m$ divides $M$. Moreover, period and quasi-period start at the same index.
	
	Let us assume that the continued fraction of $\alpha$ is quasi-periodic; let $m,n,c$ be as above. 
\begin{enumerate}	
	\item If $m$ is odd then the continued fraction expansion of $\alpha$ is periodic and its period is $\ds M=\begin{cases}
	m &\text{ if } c=1\\ 2m &\text{otherwise}\end{cases}$. 
	
	\item If $m$ is even and $c$ is a primitive $j$-th root of unity, then the continued fraction of $\alpha$ is periodic with period $M=j\,m$.
	
	\item If $m$ is even and $c$ is not a root of unity, then the continued fraction of $\alpha$ is not periodic.
\end{enumerate}
	In particular, if $\n K$ is a finite field (or an algebraic extension of a finite field), a continued fraction is periodic if and only if it is quasi-periodic.
\end{lem}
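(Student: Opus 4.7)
The plan is to base the entire argument on a single propagation identity: if $\alpha_{n+m}=c\,\alpha_n$ with $c\inn\n K^*$, then applying \eqref{1eq:multAform} to the expansion $\alpha_n=[a_n,a_{n+1},\dots]$ and invoking the uniqueness of regular continued fractions yields
\[
\alpha_{n+m+k}=c^{(-1)^k}\alpha_{n+k}\qquad\text{for every }k\geq0,
\]
which is essentially the content of the lemma immediately preceding. Iterating this $j$ times one computes $\alpha_{n+jm+k}=c^{j(-1)^k}\alpha_{n+k}$ when $m$ is even and, when $m$ is odd, $\alpha_{n+2m+k}=\alpha_{n+k}$ together with $\alpha_{n+m+k}=c^{(-1)^k}\alpha_{n+k}$. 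In every case, for any $j'\geq n$ and any $q\geq1$ there is a relation of the form $\alpha_{j'+qm}=k'\,\alpha_{j'}$ with $k'\inn\n K^*$ depending only on $c$, the parity of $m$, $q$ and $j'-n$.

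For the first assertion (periodicity implies quasi-periodicity, $m\mid M$, and the same pre-period), the relation $\alpha_{N+M}=\alpha_N$ is itself a quasi-periodicity relation with constant $1$, so the minimal quasi-period $m$ satisfies $m\leq M$ and its minimal pre-period $n$ is at most $N$. To prove $m\mid M$, I write $M=q\,m+r$ with $0\leq r<m$ and fix $\ell\geq\max\{n,N\}$; applying the iterated propagation at the base index $\ell+r$ gives $\alpha_{(\ell+r)+qm}=k'\alpha_{\ell+r}$ for some $k'\inn\n K^*$, and combining with $\alpha_{\ell+M}=\alpha_\ell$ yields $\alpha_{\ell+r}=(k')^{-1}\alpha_\ell$, a quasi-periodicity relation with quasi-period $r$; if $r>0$ this contradicts the minimality of $m$, so $r=0$. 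The equality $n=N$ then follows once cases 1 and 2 below have established that quasi-periodicity at $n$ already forces periodicity at $n$, giving $N\leq n$.

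The case analysis uses only the iterated propagation. If $m$ is odd, $\alpha_{n+2m+k}=\alpha_{n+k}$ gives periodicity with $M\mid 2m$ starting at $n$; since $m$ is odd, the only divisors of $2m$ that are $\geq m$ are $m$ and $2m$, so $M\in\{m,2m\}$, with $M=m$ iff $\alpha_{n+m}=\alpha_n$ iff $c=1$ and $M=2m$ otherwise. If $m$ is even and $c$ is a primitive $j$-th root of unity, $\alpha_{n+jm+k}=c^{j(-1)^k}\alpha_{n+k}=\alpha_{n+k}$ gives periodicity with $M\mid jm$; by the first half $m\mid M$, so write $M=j'm$ with $j'\mid j$, and then $\alpha_{n+M}=c^{j'}\alpha_n=\alpha_n$ forces $c^{j'}=1$, i.e.\ $j\mid j'$, hence $j'=j$ and $M=jm$. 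Finally, if $m$ is even and $c$ is not a root of unity, any hypothetical period $M$ would by the first half be of the form $j'm$ with $c^{j'}=1$, contradicting the hypothesis on $c$; so $\alpha$ is not periodic. The finite-field statement is then immediate: over an algebraic extension of a finite field every element of $\n K^*$ is a root of unity, so case 3 cannot occur and quasi-periodic implies periodic.

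The main obstacle is the divisibility $m\mid M$ in the first assertion: one has to keep careful track of the iterated quasi-period constant $k'$, whose exact value (involving $c^{\pm1}$ or $1$) depends on the parities of $m$, $q$, and $\ell-n$. Once this propagation formula is pinned down cleanly, the remainder of the argument is routine bookkeeping on divisors of $2m$ and $jm$ and on the orders of roots of unity.
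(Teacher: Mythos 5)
Your proof is correct, and it follows the same underlying idea as the paper's: namely, using the multiplication-by-a-constant formula \eqref{1eq:multAform} to obtain the propagation identity $\alpha_{n+m+k}=c^{(-1)^k}\alpha_{n+k}$ (which is exactly what the paper's preceding lemma records), then iterating to get $\alpha_{n+2m}=c^{1+(-1)^m}\alpha_n$ and its higher analogues. The paper's own proof is just a one-line pointer to that preceding lemma; your write-up supplies the divisibility bookkeeping ($m\mid M$ via Euclidean division and the minimality of $m$, $n=N$ via the fact that quasi-periodicity at $n$ already forces periodicity at $n$ in cases 1 and 2) that the paper leaves implicit, so it is the same route carried out in full.
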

\begin{proof}
	It follows from the previous Lemma. For instance, if $m$ is odd, then we have $\alpha_{n+2m}=\alpha_{n}$, which implies 1.
\end{proof}

Periodic and quasi-periodic continued fractions can be studied using the matrix formalism of \ref{1notat:matrix}.

\begin{prop}\label{2prop:permatr}
	Let $\alpha=[a_0,a_1,\dots]$ be an irrational Laurent series. Then the following are equivalent:
	\begin{enumerate}
		\item the continued fraction expansion of $\alpha$ is quasi-periodic;
		\item there exists $M\inn\GL_2(\n K[T])$, not a multiple of the identity, such that $\alpha=M\alpha$;
		\item $\alpha$ is a quadratic irrationality satisfying $P\alpha^2+Q\alpha+R=0$ with $P,Q,R$ relatively prime polynomials such that $D=Q^2-4PR$ is Pellian. 
	\end{enumerate}
\end{prop}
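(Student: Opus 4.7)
The plan is to prove the chain $(1) \Rightarrow (2) \Rightarrow (3) \Rightarrow (2) \Rightarrow (1)$, using (2) as the natural pivot: the matrix formalism of Notation~\ref{1notat:matrix} translates both the combinatorial content of (1) and the algebraic content of (3) into the existence of a non-trivial stabilizer of $\alpha$ in $\GL_2(\n K[T])$. For $(1) \Rightarrow (2)$, assuming $\alpha_{n+m} = c\,\alpha_n$, the identities $\alpha = M_{(a_0,\dots,a_{n-1})}\alpha_n$ and $\alpha = M_{(a_0,\dots,a_{n+m-1})}\alpha_{n+m}$ (Lemma~\ref{1lem:matrixaction}), combined with $\alpha_{n+m} = \operatorname{diag}(c,1)\alpha_n$ as a M\"obius action, give
\[
M := M_{(a_0,\dots,a_{n+m-1})}\begin{pmatrix}c&0\\0&1\end{pmatrix}M_{(a_0,\dots,a_{n-1})}^{-1},
\]
which has polynomial entries (since $\det M_{(a_0,\dots,a_{n-1})} = \pm 1$), determinant $(-1)^m c \in \n K^*$, and satisfies $M\alpha = \alpha$. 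To see $M\neq\lambda I$, I compare bottom-left entries in $M\cdot M_{(a_0,\dots,a_{n-1})} = M_{(a_0,\dots,a_{n+m-1})}\operatorname{diag}(c,1)$: the right-hand side is $c\,q_{n+m-1}$, whose degree $\sum_{i=1}^{n+m-1}\deg a_i$ strictly exceeds $\deg q_{n-1}$ (with the convention $q_{-1}=0$ handling $n=0$).

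For $(2) \Rightarrow (3)$, writing $M = \bigl(\begin{smallmatrix}A&B\\C&D'\end{smallmatrix}\bigr)$, the identity $M\alpha = \alpha$ rearranges to $C\alpha^2 + (D'-A)\alpha - B = 0$. I rule out $C=0$: otherwise $\alpha$ would be rational (if $D'\neq A$) or $M$ scalar (if $D'-A = B = 0$). Setting $g = \gcd(C,D'-A,B)$ and $P = C/g$, $Q = (D'-A)/g$, $R = -B/g$ yields a coprime triple with $P\alpha^2 + Q\alpha + R = 0$, and a direct expansion gives
\[
g^2(Q^2 - 4PR) = (D'-A)^2 + 4BC = (A+D')^2 - 4(AD'-BC) = t^2 - 4\delta,
\]
where $t = \tr M$ and $\delta = \det M\in\n K^*$: thus $(t,g)$ is a non-trivial Pell solution for $D = Q^2-4PR$.

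For $(3) \Rightarrow (2)$, conversely, given a non-trivial solution $t^2 - g^2 D = 4\delta\in\n K^*$ with $g\neq 0$, I set (using $\operatorname{char}\n K\neq 2$)
\[
M := \begin{pmatrix}(t-gQ)/2 & -gR \\ gP & (t+gQ)/2\end{pmatrix}\in M_2(\n K[T]).
\]
Then $\det M = (t^2 - g^2 D)/4 = \delta\in\n K^*$, so $M\in\GL_2(\n K[T])$; direct substitution using $P\alpha^2 + Q\alpha + R = 0$ confirms $M\alpha = \alpha$; and $gP\neq 0$ prevents $M$ from being scalar.

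For $(2) \Rightarrow (1)$, I invoke the decomposition of Remark~\ref{1rem:decompmatr}: $M = M_{(d_0,\dots,d_k)}\operatorname{diag}(c,\pm c^{-1})\operatorname{diag}(\ell,1)$ with $\ell = \det M$, $c\in\n K^*$ and $\deg d_i > 0$ for $0 < i < k$. Exactly as in the proof of Theorem~\ref{1theo:serret}, this turns $M\alpha = \alpha$ into the formal expansion $\alpha = [d_0,\dots,d_k,\,\pm c^2\ell\,\alpha]$, exhibiting $\pm c^2\ell\,\alpha_0$ as a complete quotient of $\alpha$: this is quasi-periodicity. The main obstacle lies precisely here: this expansion need not be regular when $d_0$ or $d_k$ is constant, so I must use the manipulation formulas~\eqref{1eq:zero} and~\eqref{1eq:VdPmanip1} from Lemma~\ref{1lem:addrempartq} to pass to the regular continued fraction while preserving the complete-quotient identity. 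I must also verify that $M$ being non-scalar rules out the degenerate case where the product above is essentially diagonal (a non-scalar diagonal matrix $\operatorname{diag}(u,v)$ satisfying $u\alpha/v = \alpha$ with $\alpha\neq 0$ would force $u=v$, i.e., scalar), so that the index of the repeating complete quotient is at least $1$ and the resulting relation $\alpha_N = \lambda\alpha_0$ is a genuine quasi-periodicity.
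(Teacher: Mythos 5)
Your proof is correct and follows the same chain $(1)\Rightarrow(2)\Rightarrow(3)\Rightarrow(2)\Rightarrow(1)$ as the paper, using the identical matrix pivot: the conjugation $M_{(a_0,\dots,a_{n+m-1})}\operatorname{diag}(c,1)M_{(a_0,\dots,a_{n-1})}^{-1}$ for $(1)\Rightarrow(2)$, the quadratic $C\alpha^2+(D'-A)\alpha-B=0$ with the discriminant identity $g^2D=(\tr M)^2-4\det M$ for $(2)\Rightarrow(3)$, an explicit stabilizing matrix built from a Pell solution (yours is the paper's divided by $2$) for $(3)\Rightarrow(2)$, and the decomposition of Remark~\ref{1rem:decompmatr} followed by the regularization formulas for $(2)\Rightarrow(1)$. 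You fill in details the paper leaves implicit — notably the degree comparison ruling out $M$ scalar in $(1)\Rightarrow(2)$ and the diagonal-degeneracy check in $(2)\Rightarrow(1)$ — but the method is the paper's.
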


\begin{proof}
	$1.\imp2.$  Let $\alpha_{n+m}=c\,\alpha_n$ with $m>0$ and $c\inn\n K^*$. For every $l$, let $M_l\!=\!M_{(a_0,\dots,a_{l-1})}$. Then in particular we will have $\alpha=M_{n+m}\alpha_{n+m}=M_{n+m}\mm c001M_n^{-1}\alpha$, where of course $M_{n+m}\mm c001M_n^{-1}\inn\GL_2(\n K[T])$ and is not a multiple of the identity matrix.

	$2.\imp3.$ Let $\alpha=M\alpha$ with $M=\mm p{p'}q{q'}\inn\GL_2(\n K[T])$ not a multiple of the identity. Then $\alpha$ is a quadratic irrationality satisfying $P\alpha^2+Q\alpha+R=0$, where $q=Py,\ q'-p=Qy,$ $-p'=Ry$, with $y=\gcd(q,q'-p,p')$. Then $y^2D=(q'+p)^2-4l$, where $l=\det M=pq'-p'q\inn\n K^*$, which gives a non-trivial solution to the Pell equation for $D$.

	$3.\imp2.$ We have $\Tr(\alpha)=-Q/P$ and $\ N(\alpha)=R/P$. Let $(x,y)$ be a non-trivial solution of Pell's equation for $D$ and let $M=\mm p{p'}q{q'}$ with $p\!=x-Qy,\ q\!=2Py, \ p'\!\!=-N(\alpha)q,$ $q'\!=p-\Tr(\alpha) q\inn\n K[T]$. Then $\det M=x^2-Q^2y^2+4PRy^2=x^2-Dy^2\inn\n K^*$, $M$ is not a multiple of the identity and $\ds M\alpha=\frac{p\alpha-N(\alpha) q}{q\alpha+p-\Tr(\alpha) q}=\alpha$.

	$2.\imp1.$ Let $\alpha=M\alpha$ with $M\inn\GL_2(\n K[T])$ not a multiple of the identity matrix. By Serret's Theorem \ref{1theo:serret} there exist $m,n\inn\n N, c\inn\n K^*$ such that $\alpha_{m+n}=c\,\alpha_n$. As $M$ is not a multiple of the identity, $m$ cannot be zero, so the continued fraction of $\alpha$ is quasi-periodic.
\end{proof}

\begin{rem}
	Let $D\inn\S K$ be a squarefree polynomial. If there exists an element $\alpha$ of $\n K(T,\sqrt D)\setminus\n K(T)$ whose continued fraction is quasi-periodic, then $D$ is Pellian, that is, also the continued fraction of $\sqrt D$ is quasi-periodic.
	
	Conversely, let us assume that $D$ is Pellian. Let $\alpha=\frac{A+B\sqrt D}C$ with $A,B,C$ relatively prime polynomials and $B,C\neq0$. Then the continued fraction of $\alpha$ is quasi-periodic if and only if the Pell equation for $D$ has non-trivial solutions $(x,y)$ such that $\frac{BC}{(C,A^2-B^2-D)}$ divides $y$.
\end{rem}

\begin{lem}\label{2lem:eigen}
	If $\alpha=M\alpha$, then $\alpha\inn\n K(T,\sqrt D)$ where $D$ is the discriminant of $M$, $$D=(\tr\, M)^2-4\det M.$$ Similarly, if $\alpha=[a_0,\dots,a_{n-1},\ov{a_n,\dots,a_{n+m}}^c]$, then $\alpha\inn\n K(T,\sqrt D)$ where $D$ is the discriminant of $M_{(a_n,\dots,a_{n+m})}$, that is, $$D=\left(C_{m+1}(a_n,\dots,a_{n+m})+C_{m-1}(a_{n+1},\dots,a_{n+m-1})\right)^2+4(-1)^m.$$
	
	Moreover, $\alpha=M\alpha$ if and only if $\vv{\alpha}{1}$ is an eigenvector of $M$.
\end{lem}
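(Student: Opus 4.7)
The plan is to split the lemma into its three assertions and dispatch them in order of increasing depth, leaning on the matrix formalism of Notation \ref{1notat:matrix} and the continuant/determinant identities of Chapter 1. The third assertion is a direct computation: writing $M=\mm p{p'}q{q'}$, one has $M\vv\alpha1=\vv{p\alpha+p'}{q\alpha+q'}$, which is a scalar multiple of $\vv\alpha1$ if and only if $(q\alpha+q')\alpha=p\alpha+p'$, equivalently $\alpha=(p\alpha+p')/(q\alpha+q')=M\alpha$.

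For the first assertion, I start from $\alpha=M\alpha$ and clear denominators to obtain the quadratic relation $q\alpha^2+(q'-p)\alpha-p'=0$ (in the degenerate case $q=0$ one has $D=(p-q')^2$, a square, and $\alpha\inn\n K(T)\sub\n K(T,\sqrt D)$ directly). The quadratic formula then reduces the claim to the elementary identity
\[
(q'-p)^2+4qp'=(p+q')^2-4(pq'-p'q)=(\tr M)^2-4\det M=D,
\]
placing $\alpha$ in $\n K(T,\sqrt D)$.

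The second assertion reduces to the first via the matrix picture already used in Proposition \ref{2prop:permatr}. The quasi-periodicity $\alpha_{n+m+1}=c\,\alpha_n$, combined with $M_{(a_n,\dots,a_{n+m})}\alpha_{n+m+1}=\alpha_n$ from Notation \ref{1notat:matrix}, exhibits $\alpha_n$ as a fixed point of $N=M_{(a_n,\dots,a_{n+m})}\mm c001$; in the periodic case this is simply $M_{(a_n,\dots,a_{n+m})}$. By Lemma \ref{1lem:matrixaction} the diagonal entries of $M_{(a_n,\dots,a_{n+m})}$ are $C_{m+1}(a_n,\dots,a_{n+m})$ and $C_{m-1}(a_{n+1},\dots,a_{n+m-1})$, so their sum yields the trace in the statement, and by Lemma \ref{1lem:pnqnprime} its determinant is $(-1)^{m+1}$, turning $-4\det$ into the term $4(-1)^m$ of the discriminant. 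Applying the first assertion to this matrix puts $\alpha_n$ in $\n K(T,\sqrt D)$, and then $\alpha=M_{(a_0,\dots,a_{n-1})}\alpha_n$ is a $\n K(T)$-linear fractional expression in $\alpha_n$, so $\alpha\inn\n K(T,\sqrt D)$ as well.

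The only mild obstacle is keeping track of the scalar $c$ in the genuinely quasi-periodic (non-periodic) case: the matrix actually fixing $\alpha_n$ is $N$ rather than $M_{(a_n,\dots,a_{n+m})}$ itself, so strictly speaking one should apply the first assertion to $N$ and compute $(\tr N)^2-4\det N$. The quadratic character of $\alpha$ over $\n K(T)$ is unaffected, and the identification of the discriminant in terms of the continuants of the repeating block proceeds exactly as stated once this bookkeeping is made explicit.
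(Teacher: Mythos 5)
Your handling of the first and third assertions is correct and is the same argument the paper defers to. For the eigenvector equivalence, $M\vv{\alpha}{1}=\vv{p\alpha+p'}{q\alpha+q'}$ is proportional to $\vv{\alpha}{1}$ precisely when $(q\alpha+q')\alpha=p\alpha+p'$, i.e.\ $M\alpha=\alpha$. Clearing denominators in $\alpha=M\alpha$ yields $q\alpha^2+(q'-p)\alpha-p'=0$, and $(q'-p)^2+4qp'=(p+q')^2-4(pq'-p'q)=(\tr M)^2-4\det M$ is exactly the discriminant computation already carried out in the proof of Proposition \ref{2prop:permatr}.

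The final paragraph, however, contains a genuine error rather than a bookkeeping nuisance. You rightly observe that the matrix fixing $\alpha_n$ is $N=M_{(a_n,\dots,a_{n+m})}\mm c001$, but the discriminants of $N$ and of $M_{(a_n,\dots,a_{n+m})}$ are \emph{not} square-equivalent for general $c$, so the formula does not ``proceed exactly as stated.'' Writing $M_{(a_n,\dots,a_{n+m})}=\mm u{u'}v{v'}$ with $u=C_{m+1}(a_n,\dots,a_{n+m})$ and $v'=C_{m-1}(a_{n+1},\dots,a_{n+m-1})$, one has $N=\mm{cu}{u'}{cv}{v'}$, hence $\tr N=cu+v'$ and $\det N=c(-1)^{m+1}$, giving
\[
(\tr N)^2-4\det N=\bigl(c\,C_{m+1}(a_n,\dots,a_{n+m})+C_{m-1}(a_{n+1},\dots,a_{n+m-1})\bigr)^2+4c(-1)^m,
\]
with $c$ appearing in two places. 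This differs from the lemma's displayed $D$ as soon as $c\neq1$, and generally by more than a square: for $\alpha=[\,\ov T\,^{\,2}]\inn\n Q((T^{-1}))$ (so $n=m=0$, $c=2$, $\alpha_1=2\alpha$) one gets $2\alpha^2-2T\alpha-1=0$, hence $\alpha\inn\n Q(T,\sqrt{T^2+2})$, while the lemma's formula gives $D=T^2+4$ and $\n Q(T,\sqrt{T^2+2})\neq\n Q(T,\sqrt{T^2+4})$. In other words, you have actually located an imprecision in the lemma as written: its discriminant formula is valid only in the periodic case $c=1$, and in the quasi-periodic case the discriminant must be computed from $N$ (or, equivalently, from the conjugate $M_{(a_0,\dots,a_{n+m})}\mm c001 M_{(a_0,\dots,a_{n-1})}^{-1}$ that fixes $\alpha$ itself), not from $M_{(a_n,\dots,a_{n+m})}$.
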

\begin{proof}
	The first statement follows from the proof of the previous Proposition and the second one from the definition of the action of $\GL_2(\n K[T])$ over $\n L$
\end{proof}

\begin{rem}\label{2rem:eigen}
	If the continued fraction expansion of $\alpha$ is purely periodic, $\alpha\!=\![\ov{a_0,\dots,a_n}]$, then $\alpha=M\alpha$ for  $M=M_{(a_0,\dots,a_n)}$; we will say that $M$ is the matrix \textit{canonically associated} to $\alpha$. 
	
	Conversely, let $\alpha=M\alpha$ with $M\inn\GL_2(\n K[T])$ not a multiple of the identity. By Remark \ref{1rem:decompmatr}, there exist polynomials $a_0,\dots,a_n$, with $\deg a_1,\dots,\deg a_{n-1}>0$, such that $M=M_{(a_0,\dots,a_n)}\mm {l_1}00{l_2}$ with $l_1l_2=\det M\inn\n K$. Then a (possibly non regular) continued fraction expansion for either $\alpha$ or $\alpha'$ is $[\ov{a_0,\dots,a_n}^{l_1/l_2}]$.
\end{rem}

	Now, when they are quasi-periodic, the continued fractions of $\alpha$ and $\alpha'$ are strictly related. The following Lemma is the polynomial version of a result proved by Galois \cite{galois1828analyse} for real quadratic irrational numbers having a purely periodic continued fraction expansion. 
\begin{lem}
	If $\alpha$ is a quadratic irrationality with purely quasi-periodic continued fraction, $\alpha=[\ov{a_0,\dots,a_n}^c]$, then \be\label{2eq:contfracalpha'} \alpha'=\left[0,\ov{-c\,a_n,-c^{-1}a_{n-1},\dots,-c^{(-1)^n}a_0}^{c^{(-1)^{n+1}}}\right].\ee
\end{lem}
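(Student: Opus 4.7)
The idea is to apply the Galois automorphism of $\n K(T,\sqrt D)/\n K(T)$ to the matrix identity that encodes the pure quasi-periodicity of $\alpha$, and then invert the resulting identity to read off a continued-fraction expansion of $1/\alpha'$. Writing $M=M_{(a_0,\dots,a_n)}$, the assumption $\alpha=[\overline{a_0,\dots,a_n}^c]$ is equivalent to $\alpha_{n+1}=c\alpha$, hence to $\alpha=M(c\alpha)$ (cf.\ Remark \ref{2rem:eigen}). Since the entries of $M$ and the constant $c$ lie in $\n K(T)$, they are fixed under conjugation, so $\alpha'$ satisfies the same relation $\alpha'=M(c\alpha')$, which we rewrite as $c\alpha'=M^{-1}\alpha'$.

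Next, since $M_a^{-1}=M_{(0,-a,0)}$ (Remark \ref{1rem:matrcorr}), the inverse $M^{-1}$ expands, after absorbing the redundant factors $M_0 M_0=I$, as $M^{-1}=M_{(0,-a_n,-a_{n-1},\dots,-a_0,0)}$. Translating this matrix identity into continued-fraction form gives
$$c\alpha'=[0,-a_n,-a_{n-1},\dots,-a_1,-a_0,0,\alpha'],$$
and applying \eqref{1eq:zero} to collapse the internal $0$ yields
$$c\alpha'=[0,-a_n,-a_{n-1},\dots,-a_1,-a_0+\alpha'],\qquad\text{equivalently}\qquad\frac{1}{c\alpha'}=[-a_n,\dots,-a_1,-a_0+\alpha'].$$
Setting $\beta=1/\alpha'=c\cdot(c\alpha')^{-1}$ and applying the scaling rule \eqref{1eq:multAform} to the last expansion produces
$$\beta=[-ca_n,-c^{-1}a_{n-1},-ca_{n-2},\dots,-c^{(-1)^{n-1}}a_1,\,c^{(-1)^n}(-a_0+\alpha')].$$

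Finally, the closing term splits as
$$c^{(-1)^n}(-a_0+\alpha')=-c^{(-1)^n}a_0+\frac{1}{c^{(-1)^{n+1}}\beta},$$
since $c^{(-1)^n}\alpha'=1/(c^{(-1)^{n+1}}\beta)$. This exhibits $\beta$ as the purely quasi-periodic continued fraction $\left[\overline{-ca_n,-c^{-1}a_{n-1},\dots,-c^{(-1)^n}a_0}^{\,c^{(-1)^{n+1}}}\right]$, and \eqref{2eq:contfracalpha'} follows upon writing $\alpha'=1/\beta=[0,\beta]$. There is no conceptual obstacle: the whole argument reduces to careful tracking of the alternating $(-1)^k$ exponents of $c$ introduced by \eqref{1eq:multAform}, together with the correct merging at the boundary via \eqref{1eq:zero}. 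All the tools employed — the matrix formalism for continued fractions, the inverse formula for $M_a$, and the identities \eqref{1eq:zero} and \eqref{1eq:multAform} — are already available from Chapter~1.
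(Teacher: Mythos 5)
Your argument is correct, and the core computation — writing $M_{(a_0,\dots,a_n)}^{-1}=M_{(0,-a_n,\dots,-a_0,0)}$, pushing the constant $c$ through via Lemma~\ref{1lem:multAC}/\eqref{1eq:multAform}, collapsing the boundary zeros via \eqref{1eq:zero}, and reading off the resulting purely quasi-periodic expansion — is the same as the paper's. The only structural difference is where the conjugate enters. The paper stays with $\alpha$ throughout, derives the self-equivalence $\alpha=M_{(0,-ca_n,\dots,-c^{(-1)^n}a_0,0)}\,c^{(-1)^n}\alpha$, invokes Remark~\ref{2rem:eigen} to say the resulting quasi-periodic continued fraction must represent one of the two fixed points $\alpha,\alpha'$, and then rules out $\alpha$ by noting the expansion begins with $0$ (so has positive order, while $\alpha$ does not). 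You instead apply the Galois automorphism to $\alpha=M(c\alpha)$ up front to get $\alpha'=M(c\alpha')$, then solve for $\alpha'$ directly via $\beta=1/\alpha'$; this sidesteps the ``$\alpha$ or $\alpha'$'' dichotomy and makes the identification a bit more transparent, at the cost of a bit more explicit Laurent-series bookkeeping (inverting, rescaling, splitting the closing term). Both arguments are sound; yours is a mild reorganization of the same proof rather than a genuinely different route.
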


\begin{proof}
	Certainly $\alpha=M\mm c001\alpha$, where $M\!=\!M_{(a_0,\dots,a_n)}$, that is, $\alpha=\mm {c^{-1}}001M^{-1}\alpha=$ $=M_{(0,-ca_n,\dots,-c^{(-1)^n}a_0,0)}c^{(-1)^n}\alpha$, so the continued fraction expansion of either $\alpha$ or $\alpha'$ is $\left[\ov{0,-c\,a_n,\dots,-c^{(-1)^n}a_0,0}^{c^{(-1)^n}}\right]=\left[0,\ov{-c\,a_n,\dots,-c^{(-1)^n}a_0}^{c^{(-1)^{n+1}}}\right]$. As this is obviously different from $\alpha$, it must be the continued fraction expansion of $\alpha'$.
\end{proof}

\begin{rem}\label{2rem:contfracalpha'}
	Let $\alpha$ be a quadratic irrationality with quasi-periodic continued fraction, $\alpha=[a_0,\dots,a_{n-1},\ov{a_n,\dots a_{n+m}}^c]$. Then $\alpha'=[a_0,\dots,a_{n-1},(\alpha_n)']$, so, by the previous Lemma, $$\alpha'=\left[a_0,\dots,a_{n-1}-c\,a_{n+m},\ov{-c^{-1}a_{n+m-1},\dots,-c^\epsilon a_n,-c\,c^{-\epsilon}a_{n+m}}^{c^\epsilon}\right],$$ where $\epsilon=(-1)^m$.
	
	Let us assume $n$ to be minimal, that is, let $c^{-1}a_{n-1}\neq a_{n+m}$.
	\begin{itemize}
		\item If $n=0$ (that is, if $\alpha$ is purely quasi-periodic) then $\ord(\alpha)<0$ and $\ord(\alpha')>0$ (in particular, $\ord(\alpha-\alpha')=\ord(\alpha)<0$).
		\item If $n=1$, then $\alpha'=\left[a_0-c\,a_{m+1},\ov{-c^{-1}a_m,\dots,-c^\epsilon a_1,-c\,c^{-\epsilon}a_{m+1}}^{c^\epsilon}\right]$, so $\ord(\alpha')\leq\!0$ and $\ord(\alpha-\alpha')=-\deg a_{m+1}<0$.
		\item If $n=2$, then $\alpha'=\left[a_0,a_1-c\,a_{m+2},\ov{-c^{-1}a_{m+1},\dots,-c^\epsilon a_2,-c\,c^\epsilon a_{m+2}}^{c^\epsilon}\right]$. Then $\ord(\alpha)=\ord(\alpha')$ and $\ord(\alpha-\alpha')\geq0$; more precisely, $\ord(\alpha-\alpha')=0$ if and only if $\deg(a_1-c\,a_{m+2})=0$.
		\item If $n>2$, then $\floor\alpha=\floor{\alpha'}$, so $\ord(\alpha-\alpha')>0$.
	\end{itemize}
	This also implies the following Lemma:
\end{rem}

\begin{lem}\label{2lem:galois}
	Let $\alpha$ be a quadratic irrationality with quasi-periodic continued fraction $\alpha=\left[a_0,\dots,a_{n-1},\ov{a_n,\dots,a_{n+m}}^c\right]$, where we assume $n$ minimal. Then:
	\begin{enumerate}
		\item $\ord(\alpha-\alpha')>0$ if and only if $n\geq3$ or $n=2$ and $\deg(a_1-c\,a_{m+2})>0$;
		\item $\ord(\alpha-\alpha')=0$ if and only if $n=2$ and $\deg(a_1-c\,a_{m+2})=0$;
		\item $\ord(\alpha-\alpha')<0$ and $\ord(\alpha')\leq0$ if and only if $n=1$;
		\item $\ord(\alpha-\alpha')<0$ and $\ord(\alpha')>0$ if and only if $n=0$.
	\end{enumerate}
\end{lem}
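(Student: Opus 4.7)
The plan is to deduce the four equivalences directly from the case analysis carried out in Remark \ref{2rem:contfracalpha'}, which computes the regular continued fraction expansion of $\alpha'$ separately when the minimal pre-period $n$ equals $0$, $1$, $2$, or is at least $3$. Since the four alternatives on the left-hand side of items (1)--(4) are mutually exclusive and jointly exhaustive among pairs $(\ord(\alpha-\alpha'),\ord(\alpha'))$, and so are the four alternatives on the right-hand side (in terms of $n$ and of $\deg(a_1-c\,a_{m+2})$), it suffices to establish each implication in one direction and the converses will follow automatically.

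First I would handle the four cases one by one by reading off the expansions from Remark \ref{2rem:contfracalpha'}. For $n=0$ the expansion \eqref{2eq:contfracalpha'} begins $\alpha'=[0,-c\,a_m,\dots]$ while $\alpha=[a_0,\dots]$ with $\deg a_0>0$ (the regular continuation of the quasi-period forces $a_0$ to be non-constant), so $\ord(\alpha')>0$ and $\ord(\alpha-\alpha')=-\deg a_0<0$, matching case (4). For $n=1$, minimality gives $a_0-c\,a_{m+1}\neq 0$ (else the period would start at index $0$), hence $\ord(\alpha')\leq 0$, and the polynomial parts of $\alpha$ and $\alpha'$ differ by $c\,a_{m+1}$ of positive degree, so $\ord(\alpha-\alpha')<0$, matching case (3). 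For $n=2$ the two expansions agree at $a_0$ and their second partial quotients are $a_1$ and $a_1-c\,a_{m+2}$, the latter nonzero by minimality; applying Lemma \ref{1lem:confrontocf} one gets $\ord(\alpha-\alpha')>0$ exactly when $\deg(a_1-c\,a_{m+2})>0$, and $\ord(\alpha-\alpha')=0$ exactly when $a_1-c\,a_{m+2}$ is a nonzero constant. Finally, for $n\geq 3$ the two expansions share the first three partial quotients, so Lemma \ref{1lem:confrontocf} yields $\ord(\alpha-\alpha')>0$; this together with the $n=2$ subcase completes case (1).

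Having matched every one of the four situations on the right to a unique one of the four situations on the left, both directions of each equivalence follow simultaneously. The only delicate point is the non-vanishing conditions coming from minimality of $n$, namely $a_0\neq c\,a_{m+1}$ for $n=1$ and $a_1\neq c\,a_{m+2}$ for $n=2$; these are needed to guarantee that the expansions written in Remark \ref{2rem:contfracalpha'} are actually the regular ones, and to exclude the collapse of a case to a smaller value of $n$.
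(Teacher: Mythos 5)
Your proposal is correct and essentially mirrors the paper's approach: the paper carries out the same case analysis in the Remark immediately preceding the Lemma and asserts that the Lemma follows, so you are just spelling out what the paper leaves implicit, and your observation that both sides of the four equivalences form partitions of the same set of possibilities is exactly what makes one-sided implications suffice. One small inaccuracy: for $n=3$ the two expansions of $\alpha$ and $\alpha'$ share only the first \emph{two} partial quotients $a_0,a_1$ (they first differ at index $n-1=2$), not three; the conclusion $\ord(\alpha-\alpha')>0$ still follows since $\deg a_1\geq 1$ already gives $d\geq 1$ in the exponent of Lemma \ref{1lem:confrontocf}, or more simply since both $\alpha$ and $\alpha'$ then have polynomial part $a_0$.
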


In particular, \textit{4.} is the polynomial analogue of the classical Galois' Theorem, that is, assuming the continued fraction of $\alpha$ to be quasi-periodic, then $\alpha$ is reduced if and only if its continued fraction is purely quasi-periodic.

\begin{lem}
	In the notations of \ref{1notat:K,ovK}, we also have $$K(\alpha)=K(\alpha')$$ for every quadratic irrationality $\alpha$ with quasi-periodic continued fraction expansion and, even more so, $\ov K(\alpha)=\ov K(\alpha')$. 
\end{lem}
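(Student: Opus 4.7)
The plan is to derive the inequalities $K(\alpha')\leq K(\alpha)$ and $\ov K(\alpha')\leq\ov K(\alpha)$ from the explicit description of the continued fraction of $\alpha'$ obtained in Remark \ref{2rem:contfracalpha'}, and then to conclude by the involution symmetry $(\alpha')'=\alpha$.

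Concretely, write $\alpha=[a_0,\ldots,a_{n-1},\ov{a_n,\ldots,a_{n+m}}^{\,c}]$ with $n$ minimal. Remark \ref{2rem:contfracalpha'} supplies the continued fraction expansion
\[
\alpha'=\left[a_0,\ldots,a_{n-2},\,a_{n-1}-c\,a_{n+m},\,\ov{-c^{-1}a_{n+m-1},\ldots,-c^{\epsilon}a_n,-c^{\,1-\epsilon}a_{n+m}}^{\,c^{\epsilon}}\right],
\]
with $\epsilon=(-1)^m$. This expansion may fail to be regular, since the ``hybrid'' partial quotient $a_{n-1}-c\,a_{n+m}$ can be constant (it cannot vanish, by minimality of $n$). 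However, apart from this single term, every polynomial partial quotient displayed is a non-zero scalar multiple of some $a_j$ with $j\in\{0,\ldots,n-2\}\cup\{n,\ldots,n+m\}$, hence has the same degree as the corresponding $a_j$; and $\deg(a_{n-1}-c\,a_{n+m})\leq\max(\deg a_{n-1},\deg a_{n+m})$. Applying Remark \ref{1rem:Kconstants} to this (possibly non-regular) expansion therefore yields
\[
K(\alpha')\leq K(\alpha),\qquad \ov K(\alpha')\leq\ov K(\alpha).
\]
The purely quasi-periodic case $n=0$ is treated in exactly the same way using equation \eqref{2eq:contfracalpha'} directly (which introduces only a harmless leading $0$ partial quotient), noting that then each $\deg a_j$ with $0\leq j\leq m$ contributes to both $K(\alpha)$ and $K(\alpha')$ since the entire sequence is periodic.

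For the reverse inequalities, observe that $\alpha'$ is itself quasi-periodic with the same period length $m+1$, either by direct inspection of the expansion above or by Proposition \ref{2prop:permatr} applied to the fact that $\alpha'$ is fixed by the transpose-cofactor of the matrix fixing $\alpha$. Since conjugation is an involution, $(\alpha')'=\alpha$, so the argument of the previous paragraph applied with $\alpha$ and $\alpha'$ swapped delivers $K(\alpha)\leq K(\alpha')$ and $\ov K(\alpha)\leq\ov K(\alpha')$. Combining the two directions proves the stated equalities.

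The only subtlety is the hybrid term $a_{n-1}-c\,a_{n+m}$: its degree may drop strictly below $\max(\deg a_{n-1},\deg a_{n+m})$, so a term-by-term matching of partial quotients of $\alpha$ and $\alpha'$ is not available. The one-sided bound provided by Remark \ref{1rem:Kconstants}, coupled with the involution $(\alpha')'=\alpha$, is exactly what sidesteps this difficulty.
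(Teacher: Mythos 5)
Your proof is correct; the paper itself states this lemma without proof, treating it as a direct consequence of Remark \ref{2rem:contfracalpha'}, so your argument supplies the missing details. You correctly isolate the one genuine subtlety---the hybrid partial quotient $a_{n-1}-c\,a_{n+m}$ is nonzero by minimality of $n$ but can have degree strictly below $\max(\deg a_{n-1},\deg a_{n+m})$, which blocks a term-by-term matching of degrees between $\alpha$ and $\alpha'$---and the involution $(\alpha')'=\alpha$, combined with the one-sided bound from Remark \ref{1rem:Kconstants}, is exactly the right tool to sidestep it. Two minor simplifications are available but not necessary: first, the periodic block of $\alpha'$ is, up to nonzero scalar multiples, the reversal of that of $\alpha$, so $\ov K(\alpha)=\ov K(\alpha')$ can actually be read off directly without the involution (only the $K$ statement, which is sensitive to the pre-period, truly requires it); second, you do not need the ``transpose-cofactor'' device, since $\alpha$ and $\alpha'$ satisfy the same quadratic relation over $\n K(T)$, so any $M\inn\GL_2(\n K[T])$ with $M\alpha=\alpha$ automatically satisfies $M\alpha'=\alpha'$, and Proposition \ref{2prop:permatr} applies to $\alpha'$ with the very same matrix. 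Neither observation affects correctness.
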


\begin{rem}
	Let $\alpha\inn\n L$ be quasi-periodic, $\alpha=\left[a_0,\dots,a_n,\ov{b_1,\dots,b_m}^c\right]$, and let us assume that $\Tr(\alpha)\inn\n K[T]$, that is, $\{\alpha\}=-\{\alpha'\}$.

	Possibly doubling the quasi-period, we can assume $m$ to be even, then by Remark \ref{2rem:contfracalpha'}, $\alpha'=\left[a_0,\dots,a_n-c\,b_m,\ov{-c^{-1}b_{m-1},\dots,-c^{-1}b_1,-c^2b_m}^{c^{-1}}\right]$. Now, $\{\alpha+\alpha'\}=0$ implies that either $\alpha$ is purely quasi-periodic or $n=0$ or $n=1$ and $2a_1=c\,b_m$. Moreover, we must have $c=c^{-1}$, so $c=\pm1$ and, again, possibly doubling the period, we can assume $c=1$. Thus, the continued fraction of $\alpha$ is in fact periodic.\par\medskip

	If $\alpha$ is purely periodic, then $\left[\,\ov{b_2,\dots,b_m,b_1}\,\right]=-\left[\,\ov{-b_m,\dots,-b_1}\,\right]$, that is, $b_i=b_{m-i+2}$ for $i=2,\dots,m$; in this case, $\Tr(\alpha)=b_1$.

	If $n=0$ then $\left[\,\ov{b_1,\dots,b_m}\,\right]=-\left[\,\ov{-b_{m-1},\dots,-b_1,-b_m}\,\right]$, that is, $b_i=b_{m-i}$ for $i=1,\dots,m-1$; in this case, $\Tr(\alpha)=2a_0-b_m$.

	If $n=1$ then $\left[a_1,\ov{b_1,\dots,b_m}\,\right]=-\left[a_1-b_m,\ov{-b_{m-1},\dots,-b_1,-b_m}\,\right]$, that is, $b_m=2a_1$ and $b_i=b_{m-i}$ for $i=1,\dots,m-1$; in this case, $\Tr(\alpha)=2a_0$.
	
	Certainly, the converse holds too, that is, $\alpha$ has polynomial trace if and only if its continued fraction expansion satisfies one of the previous conditions.
	
	In particular, $\Tr(\alpha)=0$, that is, $\alpha=\frac BC\sqrt D$, if and only if the continued fraction of either $\alpha$ or $1/\alpha$ is of the form $\left[a_0,\ov{b_1,b_2,\dots,b_2,b_1,2a_0}\,\right]$.  
\end{rem}

\begin{notat}
	Let $a_1,\dots,a_{m-1}\inn\n K[T]$ and let $c$ be a non-zero constant. The sequence $a_1,\dots,a_{m-1}$ is said to be \textit{skew-symmetric} of skew $c$ if $a_{m-i}=c^{(-1)^i}a_i$ for every $i$.
\end{notat}
In particular, if $m$ is even, then $a_1,\dots,a_{m-1}$ is skew-symmetric if and only if it is symmetric.

\begin{theo}\label{2theo:sqrtD}
	Let $D\inn\S K$ and let $\alpha=\frac BC\sqrt D$, with $B,C$ non-zero polynomials such that $\ord\left(\alpha\right)<0$. If the continued fraction expansion of $\alpha$ is quasi-periodic, $\alpha=\left[a_0,\ov{a_1,\dots,a_{m-1},2c\,a_0}^{c^{-1}}\,\right]$, then $a_1,\dots,a_{m-1}$ is skew-symmetric of skew $c$, so the continued fraction of $\alpha$ is periodic and it has the form $$\frac BC\sqrt D=\left[a_0,\ov{a_1,\dots,a_{m-1},2c\,a_0,a_{m-1},\dots,a_1,2a_0}\,\right].$$ Moreover, $\alpha_i=-\left(c^{(-1)^i}\alpha_{m-i+1}'\right)^{-1},\ r_i=r_{m-i+1}$ and $s_i=c^{(-1)^i}s_{m-i}$  for $i=1,\dots,m$.
	
	In particular, if $\alpha=\sqrt D$, then $s_{j\, m}=\begin{cases}1 &\text{ if } j \text{ is even}\\c^{-1}&\text{ if } j \text{ is odd}\end{cases}$. 
\end{theo}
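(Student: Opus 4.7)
The strategy combines the trace-zero identity $\alpha+\alpha'=0$ (which holds since $\alpha=\tfrac{B}{C}\sqrt D$) with equation \eqref{2eq:contfracalpha'} --- the Galois-type formula for the conjugate of a purely quasi-periodic continued fraction --- together with the uniqueness of regular continued fraction expansions.

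Applying $\sigma$ to $\alpha=a_0+1/\alpha_1$ and using $\alpha'=-\alpha$ yields $\alpha_1'=-1/(2a_0+1/\alpha_1)$, i.e.
\[
-\frac{1}{\alpha_1'}=2a_0+\frac{1}{\alpha_1}=[2a_0,a_1,a_2,\dots,a_{m-1},2ca_0,c^{-1}a_1,ca_2,\dots],
\]
where the tail comes from $\alpha_{m+1}=c^{-1}\alpha_1$ via Lemma \ref{1lem:multA}. Independently, since $\alpha_1$ is purely quasi-periodic, \eqref{2eq:contfracalpha'} provides an explicit expression for $\alpha_1'$; negating its reciprocal and simplifying $c^{-1}\!\cdot 2ca_0=2a_0$ produces
\[
-\frac{1}{\alpha_1'}=\left[\,\overline{2a_0,\,ca_{m-1},\,c^{-1}a_{m-2},\,\dots,\,c^{(-1)^m}a_1}^{\,c^{(-1)^{m+1}}}\right].
\]
Matching these two regular expansions term by term --- legitimate because every $a_i$ with $i\geq 1$ is non-constant --- forces $a_j=c^{(-1)^{j-1}}a_{m-j}$ for $j=1,\dots,m-1$, which is exactly the claimed skew-symmetry $a_{m-j}=c^{(-1)^j}a_j$. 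When $m$ is even, evaluating this relation at $j=m/2$ forces $c=1$.

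Combining the skew-symmetry with the identity $a_{m+i}=c^{(-1)^i}a_i$ (a direct consequence of $\alpha_{m+1}=c^{-1}\alpha_1$ and Lemma \ref{1lem:multA}) gives $a_{m+i}=a_{m-i}$ for $i=1,\dots,m-1$, together with $a_{2m}=2c^{1+(-1)^m}a_0=2a_0$ (automatic for $m$ odd, and via $c=1$ for $m$ even); this is precisely the explicit $2m$-periodic form of the statement.

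The identity $\alpha_i=-\bigl(c^{(-1)^i}\alpha_{m-i+1}'\bigr)^{-1}$ is then proved by induction on $i$. The base case $i=1$ follows from $\alpha_m=2ca_0+c/\alpha_1$ (coming from $\alpha_{m+1}=c^{-1}\alpha_1$) combined with the conjugate relation established above, giving $\alpha_m'=-c/\alpha_1$. The inductive step uses $\alpha_{i+1}=1/(\alpha_i-a_i)$, $\alpha_{m-i+1}'=1/(\alpha_{m-i}'-a_{m-i})$, and the skew-symmetry to show $\alpha_i-a_i=-c^{(-1)^{i+1}}\alpha_{m-i}'$. Substituting the canonical forms $\alpha_i=(r_i+\sqrt{D'})/s_i$ and $\alpha_{m-i+1}'=(r_{m-i+1}-\sqrt{D'})/s_{m-i+1}$ into $c^{(-1)^i}\alpha_i\alpha_{m-i+1}'=-1$ and separating rational and irrational parts immediately forces $r_i=r_{m-i+1}$ and, via the recurrence $D'-r_n^2=s_{n-1}s_n$ of Lemma \ref{2lem:tn,delta}, the relation $s_i=c^{(-1)^i}s_{m-i}$. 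For $\alpha=\sqrt D$, the analogous identity $\alpha_{m+j}=c^{(-1)^j}\alpha_j$ gives $s_{m+j}=c^{(-1)^{j+1}}s_j$, and iterating from $s_0=1$ yields $s_{jm}=1$ for $j$ even and $s_{jm}=c^{-1}$ for $j$ odd. The main obstacle throughout is the careful bookkeeping of the alternating skew factors through the Galois formula, the multiplication-by-constant rule, and the $r_i,s_i$-recurrences, together with the parity analysis of $m$ in the period-doubling step.
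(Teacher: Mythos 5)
Your proof is correct and follows essentially the same route as the paper's sketch: you exploit the trace-zero identity $\alpha'=-\alpha$ together with the Galois-type formula \eqref{2eq:contfracalpha'} applied to the purely quasi-periodic $\alpha_1$, and obtain the skew-symmetry by matching two regular expansions of $-1/\alpha_1'$, then derive the $r_i,s_i$ identities by separating rational and irrational parts in $c^{(-1)^i}\alpha_i\alpha_{m-i+1}'=-1$. The only (cosmetic) difference is that you track the skew factors explicitly rather than first reducing to $m$ even and $c=1$ by period doubling as in the paper's preceding Remark; both lead to the same computations.
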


\begin{proof}[Sketch of proof]
	The first part of the Theorem follows immediately from the case $n=0$ of the previous Remark, while the second part can be easily proved using \eqref{2eq:contfracalpha'}, \eqref{1eq:multAform}, \eqref{1eq:1/alpha}.
\end{proof}

Actually, Friesen (Theorem 1.1 in \cite{friesen1992continued}$\!$) showed that for any choice of a skew-symmetric sequence of polynomials $a_1,\dots,a_{m-1}$ there exist infinitely many polynomials $D\inn\S K$ such that the continued fraction of $\sqrt D$ has the previous form\footnote{Friesen proved this result for monic polynomials over finite fields but his method works in any case.}.\par\medskip

As in the classical case, the periodicity of the continued fraction of $\sqrt D$ is immediately connected with the existence of non-trivial solutions of the Pell equation for $D$: 
\begin{prop}\label{2prop:eqper}
	Let $\sqrt D=[a_0,a_1,\dots]$ with $D\inn\S K$. Then, the following are equivalent: 
	\begin{enumerate}
		\item the continued fraction of $\sqrt D$ is quasi-periodic;
		\item the continued fraction of $\sqrt D$ is periodic;
		\item there exists $i>0$ such that $\deg a_i=d$ (equivalently, such that $s_i\inn\n K^*$);
		\item $\deg a_i=d$ (equivalently, $s_i\inn\n K^*$) for infinitely many $i$;
		\item $D$ is Pellian; 
		\item $\n K[T,\sqrt D]$ has non-constant units;
		\item the continued fraction of $\frac{r+\sqrt D}s$ is quasi-periodic for every $r,s\inn\n K[T]$ such that $s$ divides $r^2-D$.
	\end{enumerate}
	
	In that case, if $m$ is the quasi-period of $\sqrt D$, then $$\deg a_i=d \text{ if and only if } s_i\inn\n K^*,\text{ if and only if } m|i$$
	and all the solutions to the Pell equation for $D$ are given by $$(x,y)=k(p_{j\,m-1},q_{j\,m-1}),\text{ with } k\inn\n K^*, j\inn\n N.$$ 
\end{prop}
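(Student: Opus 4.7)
The plan is to organize the seven conditions into a short web of equivalences routed through Theorem~\ref{2theo:sqrtD}, Proposition~\ref{2prop:permatr}, and formula~\eqref{2eq:snpell}. The trivial (2)$\Rightarrow$(1) is immediate, and (1)$\Rightarrow$(2) follows from Theorem~\ref{2theo:sqrtD} applied to $\sqrt D$, whose palindromic conclusion automatically turns any quasi-period into a genuine period. For (1)$\Leftrightarrow$(5) I apply Proposition~\ref{2prop:permatr} with $\alpha=\sqrt D$ (so $P=1,Q=0,R=-D$): quasi-periodicity is equivalent to the existence of a non-scalar matrix in $\GL_2(\n K[T])$ fixing $\sqrt D$, and that in turn to the Pellianity of the discriminant $4D$, hence of $D$ since $\char\n K\neq 2$.

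For (3)$\Leftrightarrow$(4)$\Leftrightarrow$(5)$\Leftrightarrow$(6), the identity \eqref{2eq:snpell}, $s_{i+1}=(-1)^{i+1}(p_i^{\,2}-Dq_i^{\,2})$, combined with the reducedness of $\alpha_i$ for $i\geq 1$ (Remark~\ref{2rem:sqrtDred}) and Proposition~\ref{2prop:red} (giving $\deg a_i=d-\deg s_i$), shows that $\deg a_i=d\Leftrightarrow s_i\in\n K^*\Leftrightarrow(p_{i-1},q_{i-1})$ is a primitive Pell solution. Conversely, any non-trivial Pell solution $(x,y)$ satisfies $|x-y\sqrt D|=\mu^{-d}|y|^{-1}<|y|^{-2}$, so by Lemma~\ref{1lem:best2} it is, up to a (necessarily constant, since the gcd of a primitive Pell pair lies in $\n K^*$) factor, a convergent; Lemma~\ref{2lem:unit} then provides infinitely many such solutions of unbounded degree. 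The equivalence (5)$\Leftrightarrow$(6) is immediate from $N(x+y\sqrt D)=x^2-Dy^2$. Finally, (7)$\Rightarrow$(1) is the case $r=0,s=1$, while for (5)$\Rightarrow$(7), given any $\alpha=(r+\sqrt D)/s$ with $s\mid(r^2-D)$ and any non-trivial Pell solution $(x,y)$, the matrix
\[M=\begin{pmatrix}x+ry & -y(r^2-D)/s\\ ys & x-ry\end{pmatrix}\]
lies in $\GL_2(\n K[T])$ with $\det M=x^2-Dy^2\in\n K^*$, is non-scalar, and satisfies $M\alpha=\alpha$ by a direct computation; Proposition~\ref{2prop:permatr} then gives the quasi-periodicity of $\alpha$.

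For the final refinements assume $m$ is the quasi-period of $\sqrt D$. Theorem~\ref{2theo:sqrtD} directly yields $s_{jm}\in\n K^*$, hence $\deg a_{jm}=d$, for every $j\geq 1$. Conversely, if $s_n\in\n K^*$ for some $n\geq 1$, then $|s_n|=1$ together with reducedness of $\alpha_n$ forces $|r_n-\sqrt D|<1$, so $r_n=\floor{\sqrt D}=a_0$; a short computation then gives $\alpha_n=s_n^{-1}(a_0+\sqrt D)$ with $a_n=2s_n^{-1}a_0$ and $\alpha_{n+1}=s_n\alpha_1$, exhibiting a quasi-periodic repetition of length $n$ starting at index~$1$ and therefore forcing $m\mid n$ by minimality of the quasi-period. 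This is the delicate step: it requires combining the reducedness of $\alpha_n$ (to pin $r_n=a_0$) with the constancy of $s_n$ (to realize $\alpha_{n+1}$ as a literal scalar multiple of $\alpha_1$), so that the resulting repetition is genuinely quasi-periodic. For the parameterization of Pell solutions, the correspondence above identifies the primitive non-trivial solutions (up to sign) with those convergents $(p_{n-1},q_{n-1})$ satisfying $s_n\in\n K^*$, i.e.\ with $n\in m\n N$; by Lemma~\ref{2lem:unit} every Pell solution is a constant multiple of a power of a fundamental one $(x_1,y_1)$, which by minimality of $\deg y_1$ coincides, up to a constant, with $(p_{m-1},q_{m-1})$, and comparing degrees in $(p_{m-1}+q_{m-1}\sqrt D)^{\,j}$ against $p_{jm-1}+q_{jm-1}\sqrt D$ identifies them up to $\n K^*$, yielding the stated description of all solutions.
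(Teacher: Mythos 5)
Your proposal follows essentially the same route as the paper: Theorem~\ref{2theo:sqrtD} supplies the quasi-periodic~$\Rightarrow$~periodic implication together with the forward half of the ``$s_{jm}\in\n K^*$'' claim, the characterization of Pell solutions as convergents with $s_n\in\n K^*$ handles (3)--(6), Proposition~\ref{2prop:permatr} handles the Pellianity equivalence, and the converse $s_i\in\n K^*\Rightarrow\alpha_{i+1}=s_i\alpha_1$ computation is exactly the paper's. You are simply more verbose, re-deriving steps (the Pell-solution-to-convergent correspondence, the explicit matrix in (5)$\Rightarrow$(7)) that the paper dispatches by citing Remark~\ref{2rem:degan-pell} and Proposition~\ref{2prop:permatr}.

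One small slip to flag: you write that a nontrivial Pell solution satisfies $|x-y\sqrt D|=\mu^{-d}|y|^{-1}<|y|^{-2}$. The equality is right, but the strict inequality $\mu^{-d}|y|^{-1}<|y|^{-2}$ is equivalent to $\deg y<d$, which fails for all but (at most) the fundamental solution. What Lemma~\ref{1lem:best2} actually requires is the \emph{other} normalization: $|\sqrt D-x/y|=\mu^{-d}|y|^{-2}<|y|^{-2}$, equivalently $|x-y\sqrt D|<|y|^{-1}$, which does hold because $d\geq1$. Alternatively you can invoke Proposition~\ref{2prop:bestquadr} directly, since $\deg(x^2-Dy^2)=0\leq d-1$. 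The conclusion you draw is correct; only the displayed inequality needs the exponent fixed. The rest of the argument, including the explicit fixing matrix for $(r+\sqrt D)/s$ and the degree comparison identifying $(p_{jm-1},q_{jm-1})$ with powers of the fundamental solution, is sound.
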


\begin{proof}
	We have already seen in Theorem \ref{2theo:sqrtD} that if the continued fraction of $\sqrt D$ is quasi-periodic with quasi-period $m$, then it is periodic and $s_{j\, m}\inn\n K^*$ for every $j$.

	On the other hand, if $s_i\inn\n K^*$ with $i>0$, then $\alpha_i=s_i^{-1}\sqrt D+s_i^{-1}r_i$, so $\alpha_{i+1}=s_i\alpha_1$. Then $\sqrt D$ is quasi-periodic and $i$ is a multiple of the quasi-period $m$.
	
	The connection with the existence of solutions to the Pell equation for $D$ and the equivalence of $5.$, $6.$ and $7.$ follow directly from Lemma \ref{2lem:unit}, Remark \ref{2rem:degan-pell} and Proposition \ref{2prop:permatr}.
\end{proof}

\begin{lem}
	Let $\alpha=\sqrt D$ have a quasi-periodic continued fraction expansion, let $m$ be its quasi-period, with $a_m=2c\,a_0$, and let $n$ be its period. Then the solutions to the Pell equation for $D$ are given, up to multiplicative constants, by the following: 
	\begin{enumerate}
		\item if $n=m$ is even, then $c=1$ and $p_{j\,m-1}^2-Dq_{j\,m-1}^2=1$ for every $j\inn\n N$;
		\item if $n=m$ is odd, then $c=1$ and $p_{j\,m-1}^2-Dq_{j\,m-1}^2=(-1)^j$ for every $j\inn\n N$;
		\item if $n=2m$ with $m$ odd, then $p_{j\,m-1}^2-Dq_{j\,m-1}^2=\begin{cases}1&\text { if } j \text{ is even}\\-c^{-1}&\text{ if } j \text{ is odd}\end{cases}$.
	\end{enumerate}
\end{lem}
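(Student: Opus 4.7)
The plan is to assemble the three cases by combining three ingredients already established: (a) Proposition \ref{2prop:eqper}, which says that every solution of the Pell equation for $D$ is, up to a constant in $\n K^*$, of the form $(p_{jm-1},q_{jm-1})$ with $j\inn\n N$; (b) the identity \eqref{2eq:snpell}, $s_{n+1}=(-1)^{n+1}(p_n^2-Dq_n^2)$, applied at $n=jm-1$, giving
\[
p_{jm-1}^2-Dq_{jm-1}^2=(-1)^{jm}\,s_{jm};
\]
(c) the evaluation of $s_{jm}$ from Theorem \ref{2theo:sqrtD}, namely $s_{jm}=1$ if $j$ is even and $s_{jm}=c^{-1}$ if $j$ is odd. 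The whole proof then amounts to tabulating these two signs in each of the three cases, after using Lemma \ref{2lem:quasiper-per} to pin down the admissible values of $c$ compatible with $m$ and $n$.

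First I would dispose of case 1. Here $m=n$ is even, so Lemma \ref{2lem:quasiper-per} forces $c$ to be a primitive first root of unity, i.e.\ $c=1$; then $s_{jm}=1$ for every $j$, and $(-1)^{jm}=1$ since $m$ is even, yielding $p_{jm-1}^2-Dq_{jm-1}^2=1$. Next, in case 2 ($m=n$ odd), Lemma \ref{2lem:quasiper-per}(1) shows that $c=1$ is needed for the period to equal the quasi-period; again $s_{jm}=1$, while now $(-1)^{jm}=(-1)^j$ because $m$ is odd, so $p_{jm-1}^2-Dq_{jm-1}^2=(-1)^j$.

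Case 3 is the only one where $c$ genuinely enters the answer: $n=2m$ with $m$ odd forces $c\neq1$ by Lemma \ref{2lem:quasiper-per}(1). Then $(-1)^{jm}=(-1)^j$, while $s_{jm}$ alternates between $1$ (for $j$ even) and $c^{-1}$ (for $j$ odd). Multiplying the two signs gives $1$ when $j$ is even and $-c^{-1}$ when $j$ is odd, which is exactly the claim.

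The argument is essentially a bookkeeping exercise, so there is no serious obstacle; the only point deserving a brief word is the justification that each $(p_{jm-1},q_{jm-1})$ actually is a solution and that, up to constants, these exhaust all solutions, which is already contained in Proposition \ref{2prop:eqper}. One should also remark explicitly that in cases 1 and 2 the value of $c$ is forced to be $1$ by Lemma \ref{2lem:quasiper-per}, so that the statement $c=1$ in the conclusion is an output of the hypothesis ``$n=m$'' rather than an independent assumption.
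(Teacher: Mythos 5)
Your proof is correct and follows essentially the same route as the paper's one-line proof (``It follows from \eqref{2eq:snpell} and from the last formula of Proposition \ref{2prop:eqper}''): substitute $n=jm-1$ into $s_{n+1}=(-1)^{n+1}(p_n^2-Dq_n^2)$, read off $s_{jm}$ from the last statement of Theorem \ref{2theo:sqrtD}, and use Lemma \ref{2lem:quasiper-per} to pin down $c$ in each case. You merely spell out the bookkeeping the paper leaves implicit, including the explicit invocation of Theorem \ref{2theo:sqrtD} for the value of $s_{jm}$, which the paper's citation arguably should also have included.
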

\begin{proof}
	It follows from \eqref{2eq:snpell} and from the last formula of Proposition \ref{2prop:eqper}.
\end{proof}	

\begin{rem} 
	Let $D\inn\S K$ and let $\widetilde{\n K}$ be an extension of $\n K$. As the continued fraction expansion does not change for extensions of the base field, the sets of the \textit{monic} solutions to the Pell equation for $D$ in $\n K[T]$ and in $\widetilde{\n K}[T]$ coincide.
\end{rem}

When $\n K$ is a finite field, the theory of polynomial quadratic continued fractions is completely analogue to the classical one; in particular, the equivalent conditions of Theorem \ref{2theo:sqrtD} are always satisfied and the following analogue of Lagrange's Theorem holds:
\begin{theo}[polynomial analogue of Lagrange's Theorem]\label{2theo:lagrange}
	Let $\alpha\inn\n K((T^{-1}))$ with $\n K=\n F_q$ a \text{finite} field. Then the following statements are equivalent:
	\begin{enumerate}
		\item the continued fraction expansion of $\alpha$ is periodic
		\item the continued fraction expansion of $\alpha$ is quasi-periodic
		\item $\alpha$ is a quadratic irrationality
	\end{enumerate}
\end{theo}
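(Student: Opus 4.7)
The equivalence $(1)\Leftrightarrow(2)$ follows immediately from Lemma \ref{2lem:quasiper-per}: since every element of $\n F_q^*$ is a root of unity, over a finite field the distinction between periodicity and quasi-periodicity collapses. The implication $(2)\Rightarrow(3)$ is part of Proposition \ref{2prop:permatr}: if the continued fraction of $\alpha$ is quasi-periodic, then $\alpha$ satisfies a non-trivial quadratic relation over $\n K(T)$ and in particular is a quadratic irrationality. So the only substantial direction to establish is $(3)\Rightarrow(1)$, the polynomial analogue of Lagrange's classical theorem.

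The plan is to prove this converse by a pigeonhole argument that exploits the finiteness of $\n F_q$, mirroring the classical proof via the reduction theory of quadratic irrationals. Starting from $\alpha=\frac{r+\sqrt D}{s}$ as in Remark \ref{2rem:rsD'} (with $D\inn\S K$ of degree $2d$ and $s\mid r^2-D$), Lemma \ref{2lem:tn,delta} expresses the complete quotients of $\alpha$ in the form $\alpha_n=\frac{r_n+\sqrt D}{s_n}$ with $r_n,s_n\inn\n K[T]$. By Proposition \ref{2prop:red}, there exists an index $N\geq 0$ such that $\alpha_n$ is reduced for every $n\geq N$, and for such $n$ one has $\deg r_n=d$ and $\deg s_n<d$; moreover, the two leading coefficients of $r_n$, those of degrees $d$ and $d-1$, are forced to coincide with the corresponding coefficients of $\sqrt D$.

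The conclusion then follows by counting. For $n\geq N$ the pair $(r_n,s_n)$ ranges in the finite set
\[
\bigl\{(r,s)\inn\n F_q[T]\times\n F_q[T]:\deg r=d,\ \text{top two coefficients of $r$ prescribed},\ \deg s<d\bigr\},
\]
whose cardinality is bounded by $q^{2d-1}$. By the pigeonhole principle there must exist indices $N\leq m<n$ with $(r_m,s_m)=(r_n,s_n)$, and hence $\alpha_m=\alpha_n$, which is exactly periodicity of the continued fraction of $\alpha$.

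There is no genuine obstacle in this argument: once one has the structural results of Lemma \ref{2lem:tn,delta} and Proposition \ref{2prop:red} in hand (these are the polynomial substitutes for the classical bounds $-1<\alpha'<0<1<\alpha$ in Galois' reduction theory), the proof is a direct translation of Lagrange's pigeonhole argument. If anything, the non-Archimedean setting is simpler than the real one: reducedness of $\alpha_n$ is automatic for all sufficiently large $n$ regardless of the Galois conjugate $\alpha'$, so no delicate separate treatment of $\alpha'$ is needed, and the pigeonhole instantly produces genuine periodicity (with a possibly non-trivial pre-period of length at most $N$).
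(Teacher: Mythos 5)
Your proof is correct and follows essentially the same route as the paper's: the equivalences $(1)\Leftrightarrow(2)$ and $(2)\Rightarrow(3)$ are dispatched via Lemma \ref{2lem:quasiper-per} and Proposition \ref{2prop:permatr}, and $(3)\Rightarrow(1)$ is obtained by the same pigeonhole argument using Lemma \ref{2lem:tn,delta} and Proposition \ref{2prop:red} to bound $\deg r_n$, $\deg s_n$ and pin the top two coefficients of $r_n$ for $n\geq N$. The only difference is that you explicitly record the cardinality bound $q^{2d-1}$, which the paper relegates to the corollary immediately following the theorem.
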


\begin{proof}
	We have already seen in Lemma \ref{2lem:quasiper-per} that if $\n K$ is a finite field, $1.$ is equivalent to $2.$ and, by Proposition \ref{2prop:permatr}, $2.$ always implies $3.$ Let us prove that $3.$ implies $1.$
	
	If $\alpha$ is a quadratic irrationality, $\alpha=\frac{r+\sqrt D}s$ with $\deg D=d$, by Proposition \ref{2prop:red} there exists $N$ such that $\alpha_n$ is reduced for every $n\geq N$. Then $\deg r_n=d$ and $\deg s_n<d$ for every $n\geq N$ (and the coefficients of the $r_n$ of degrees $d,d-1$ coincide with those of $\sqrt D$). As $\n K$ is finite, there are only finitely many possibilities for the $r_n,s_n$, so there exist $n\geq N, m>0$ such that $r_n=r_{n+m}$ and $s_n=s_{n+m}$, that is, such that $\alpha_n=\alpha_{n+m}$.
\end{proof}

\begin{cor}
	If $\alpha=\frac{r+\sqrt D}s\inn\n F_q$, with $\deg D=2d$, then the length of the period of $\alpha$ is at most $q^{2d-1}$. 
\end{cor}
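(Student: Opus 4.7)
The plan is to combine Lagrange's Theorem with a pigeonhole argument on the finitely many reduced complete quotients of $\alpha$. First I would invoke Theorem \ref{2theo:lagrange} together with Proposition \ref{2prop:red} to produce an index $N$ such that $\alpha_n$ is reduced for every $n \geq N$; by the polynomial analogue of Galois's theorem (Lemma \ref{2lem:galois}, case 4), each such $\alpha_n$ has a purely periodic continued fraction, so the period of $\alpha$ equals that of $\alpha_N$ and equals the least positive $m$ with $\alpha_{N+m}=\alpha_N$.

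The heart of the argument is then to bound the number of distinct reduced complete quotients. Writing $\alpha_n = \frac{r_n + \sqrt D}{s_n}$ for $n \geq N$, Proposition \ref{2prop:red} tells me that $\deg r_n = d$ with the coefficients of $T^d$ and $T^{d-1}$ prescribed (they agree with those of $\sqrt D$), while $s_n$ is a nonzero polynomial of degree strictly less than $d$. Hence $r_n$ is parametrised by at most $q^{d-1}$ values (its $d-1$ remaining coefficients) and $s_n$ by at most $q^d$ values (an arbitrary polynomial of degree $\leq d-1$), giving at most $q^{d-1}\cdot q^d = q^{2d-1}$ admissible pairs $(r_n,s_n)$, and hence at most $q^{2d-1}$ distinct reduced complete quotients.

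A pigeonhole argument then concludes the proof: among the $q^{2d-1}+1$ terms $\alpha_N, \alpha_{N+1}, \ldots, \alpha_{N+q^{2d-1}}$ there must be a repetition $\alpha_{N+i} = \alpha_{N+j}$ with $0 \leq i < j \leq q^{2d-1}$, so the period of $\alpha$ (being equal to the pure period of $\alpha_{N+i}$) divides $j-i \leq q^{2d-1}$. No step here is particularly delicate; the only care needed is to track the fixed leading coefficients of $r_n$ in the enumeration. I note that the divisibility constraint $s_n \mid (D - r_n^2)$ was not used in the counting and would sharpen the bound substantially, but is not required to obtain the stated $q^{2d-1}$.
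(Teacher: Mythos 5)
Your proof is correct and follows exactly the same approach as the paper: invoke Lagrange's theorem and Proposition \ref{2prop:red} to reduce to counting reduced complete quotients $\alpha_n = (r_n+\sqrt D)/s_n$, then bound $r_n$ by $q^{d-1}$ possibilities (since its top two coefficients are fixed) and $s_n$ by $q^d$ possibilities. Your write-up is merely more explicit about the pigeonhole step, which the paper leaves implicit.
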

\begin{proof}
	In the notations of the proof of the previous Theorem we have that, for $n$ large enough, there are at most $q^{d-1}$ possible choices for the coefficients of $r_n$ and at most $q^d$ possible choices for the coefficients of $s_n$. Thus, the period of $\alpha$ has length at most $q^{2d-1}$. 
\end{proof}

\begin{lem}
	Theorem \ref{2theo:lagrange} holds also over any algebraic extension of a finite field. 
\end{lem}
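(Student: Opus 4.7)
The directions $1 \Rightarrow 2 \Rightarrow 3$ go through exactly as in Theorem \ref{2theo:lagrange}: periodicity trivially implies quasi-periodicity by Lemma \ref{2lem:quasiper-per}, and Proposition \ref{2prop:permatr} shows that quasi-periodicity forces $\alpha$ to be a quadratic irrationality, neither argument using finiteness of $\n K$. Thus the only substantive thing to prove is the implication $3 \Rightarrow 1$, i.e.\ that every quadratic irrationality in $\n K((T^{-1}))$ has a periodic continued fraction, when $\n K$ is an algebraic extension of some $\n F_p$.

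The plan is to reduce to the finite field case already handled by Theorem \ref{2theo:lagrange}. Let $\alpha \in \n K((T^{-1}))$ be a quadratic irrationality; by Remark \ref{2rem:rsD'} we may write $\alpha = (r + \sqrt{D})/s$ with $r, s, D \in \n K[T]$, $D \inn \S{K}$ and $s \mid r^2 - D$. Let $\n K_0 \sub \n K$ denote the subfield generated over the prime field $\n F_p$ by the (finitely many) coefficients of $r$, $s$, and $D$. Since $\n K$ is algebraic over $\n F_p$, so is $\n K_0$, and being finitely generated and algebraic over a finite field it is itself a finite field, say $\n K_0 = \n F_{p^N}$.

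Now observe that $D \inn \S{K_0}$, because the leading coefficient of $D$, being a square in $\n K$, is algebraic over $\n F_p$ and hence its square root already lies in $\n K_0$ (which is perfect). Consequently $\sqrt{D} \inn \n K_0((T^{-1}))$, so $\alpha \in \n K_0((T^{-1}))$. The continued fraction algorithm is intrinsic to the field $\n K_0((T^{-1}))$ (taking polynomial parts does not leave the ambient coefficient field), so the partial and complete quotients of $\alpha$ computed in $\n K((T^{-1}))$ coincide with those computed in $\n K_0((T^{-1}))$. Applying Theorem \ref{2theo:lagrange} over the finite field $\n K_0$, we conclude that the continued fraction of $\alpha$ is periodic.

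The only mildly delicate point is to confirm that the coefficient subfield $\n K_0$ really is finite; this uses crucially that $\n K$ is algebraic over $\n F_p$, without which the argument collapses (and indeed the statement fails in general, as is discussed for quadratic irrationalities over infinite transcendental fields elsewhere in the thesis).
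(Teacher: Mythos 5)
Your overall strategy matches the paper exactly: reduce to the finite field case by taking the subfield of $\n K$ generated over the prime field by the finitely many relevant coefficients, observe this subfield is finite, and invoke the polynomial Lagrange theorem together with the fact that continued fraction expansions are insensitive to extensions of the coefficient field. However, there is one incorrect step. You claim that the square root $b$ of the leading coefficient of $D$ already lies in $\n K_0$ ``because $\n K_0$ is perfect.'' That inference is false: perfection of a field of characteristic $p$ means the Frobenius $x \mapsto x^p$ is surjective, i.e.\ every element has a $p$-th root in the field; this has nothing to do with square roots when $p \neq 2$, which is precisely the standing hypothesis in this chapter. Concretely, $\n F_3$ is perfect but $\ov 2$ is not a square in $\n F_3$. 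So as written you have not established that $D \in \S{K_0}$, and hence not that $\sqrt D$ (and therefore $\alpha$) actually lies in $\n K_0((T^{-1}))$.

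The fix is what the paper does: adjoin the square root explicitly. Replace $\n K_0$ by the subfield of $\n K$ generated over $\n F_p$ by the coefficients of $r$, $s$, $D$ \emph{and} by $b$, a square root in $\n K$ of the leading coefficient of $D$. Since $b \in \n K$ is algebraic over $\n F_p$, this larger field is still a finitely generated algebraic extension of $\n F_p$, hence finite, and now $D$ genuinely lies in $\S{K_0}$ and $\alpha \in \n K_0((T^{-1}))$. With that change your argument is complete.
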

\begin{proof}	
	If $\n F'$ is an algebraic extension of a finite field $\n F_q$ and if $\alpha$ is a quadratic irrationality over $\n F'$, there exists a finite subfield $\n F$ of $\ch F_q$ such that $\alpha$ is a quadratic irrationality over $\n F$: if $\alpha=\frac{r+\sqrt D}s$, it is enough to consider the field generated over $\n F_q$ by the coefficients of $r,s,D$ and by a square root of the leading coefficient of $D$. As the continued fraction of a Laurent series does not change when extending the base field, by Lagrange's Theorem the continued fraction expansion of $\alpha$ is periodic.
\end{proof}

\begin{cor}\label{2cor:lagrange}
	Let $\n K$ be an algebraic extension of a finite field. Then any polynomial $D\inn\S K$ is Pellian. Equivalently, for every $D\inn\S K$, $$K(\sqrt D)=\ov K(\sqrt D)=d.$$
	
	In particular, for every non-zero polynomial $P$ there exist infinitely many multiples $j\,m$ of the quasi-period $m$ of $\sqrt D$ such that $P$ divides $q_{j\,m-1}$.
\end{cor}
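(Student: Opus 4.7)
The Pellian assertion is immediate: by the lemma proved just above, Lagrange's Theorem \ref{2theo:lagrange} extends to $\n K$, so the continued fraction of $\sqrt D$ is quasi-periodic, which by Proposition \ref{2prop:eqper} is equivalent to $D$ being Pellian. The equality $K(\sqrt D)=\ov K(\sqrt D)=d$ then combines the bound $K(\sqrt D)\leq d$ from Remark \ref{2rem:sqrtDred} with the fact, provided by Remark \ref{2rem:degan-pell} once $D$ is Pellian, that infinitely many partial quotients of $\sqrt D$ have degree $d$; together with $\ov K\leq K$, this forces both invariants to equal $d$.

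For the divisibility claim, let $m$ be the quasi-period of $\sqrt D$ and let $\epsilon=p_{m-1}+q_{m-1}\sqrt D$ be the fundamental unit, so that by Lemma \ref{2lem:unit} and Proposition \ref{2prop:eqper} every power $\epsilon^j$ has the form $c_j(p_{jm-1}+q_{jm-1}\sqrt D)$ for some constant $c_j$. Since $\n K$ is algebraic over its prime field $\n F_p$, the finitely many coefficients of $D$ and $P$, together with a fixed square root of the leading coefficient of $D$, generate a finite subfield $\n F\sub\n K$; all of $D$, $P$, $\sqrt D$, the entire continued fraction expansion of $\sqrt D$, and hence $\epsilon$ and every $c_j$, are defined over $\n F$ (the last because $p_{jm-1}\inn\n F[T]$ is a nonzero polynomial whose leading coefficient lies in $\n F^*$).

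The key step is now a finiteness argument applied to the quotient ring $R=\n F[T,\sqrt D]/(P)\cong(\n F[T]/(P))[X]/(X^2-\ov D)$, which is a finite ring. Because $N(\epsilon)\inn\n F^*$ the image $\ov\epsilon\inn R$ is a unit, so it has finite multiplicative order $n$ in the finite group $R^*$. Hence $\ov\epsilon^{\,kn}\equiv1\pmod P$ for every $k\geq1$; since $R$ is free of rank $2$ over $\n F[T]/(P)$ with basis $\{1,\sqrt D\}$, expanding the identity $c_{kn}(p_{knm-1}+q_{knm-1}\sqrt D)\equiv1\pmod P$ and comparing $\sqrt D$-components yields $c_{kn}q_{knm-1}\equiv0\pmod P$, and since $c_{kn}\inn\n F^*$ this gives $P\mid q_{knm-1}$, producing the required infinite family of multiples of $m$. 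The only subtle point is the descent to the finite base field $\n F$; after that, everything reduces to the finiteness of $R^*$ and a straightforward comparison of components in a free rank-$2$ module.
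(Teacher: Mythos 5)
Your proof of the first assertion matches the paper's (Lagrange over $\n K$, then Proposition \ref{2prop:eqper} and Remark \ref{2rem:sqrtDred}). For the divisibility statement, however, you take a genuinely different route. The paper simply applies the first part of the corollary to $P^2D$: that polynomial is also in $\S K$ and hence Pellian, so there are infinitely many $(x,y)$ with $x^2-y^2P^2D\inn\n K^*$, and then $(x,Py)$ is a Pell solution for $D$ whose $q$-component is divisible by $P$; by the last statement of Proposition \ref{2prop:eqper} this is (up to a constant) $(p_{jm-1},q_{jm-1})$ for some $j$, giving $P\mid q_{jm-1}$. Your argument instead descends to a finite subfield $\n F$, forms the finite ring $R=\n F[T,\sqrt D]/(P)$, notes that the fundamental unit $\epsilon$ reduces to a unit of finite order $n$ in $R^*$, and reads off $P\mid q_{knm-1}$ from the $\sqrt D$-component of $\epsilon^{kn}\equiv1\pmod P$. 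Both are correct. The paper's proof is shorter and stays entirely inside the machinery already developed (no auxiliary ring, no descent), whereas yours is more explicit: it actually identifies an arithmetic progression $j\inn n\n Z$ of indices that work, namely the multiples of the order of $\ov\epsilon$ in $R^*$, which the paper's argument does not exhibit. A minor stylistic point: you could shorten the descent by noting that $\n F$ can be taken as the finite field generated by the coefficients of $D$, $P$ and the chosen square root of the leading coefficient of $D$, and then citing the paper's remark that continued fraction expansions are insensitive to extension of the base field; you do say this, but the intermediate assertion ``hence $\epsilon$ and every $c_j$'' deserves the one-line justification you gave in parentheses (compare leading coefficients), so it is good that you included it.
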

\begin{proof}
	The first statement follows immediately from Theorem \ref{2theo:lagrange}, Proposition \ref{2prop:eqper} and Remark \ref{2rem:sqrtDred}.
	
	Let $P\neq0\inn\n K[T]$. Then $P^2D$ is a Pellian polynomial, so there exist infinitely many pairs $(x,y)$ such that $x^2-y^2P^2D\inn\n K^*$. Then, the pairs $(x,Py)$ are solutions to the Pell equation for $D$ of the desired form, which, by the last statement of Proposition \ref{2prop:eqper}, implies the second claim.
\end{proof}

\begin{rem}\label{2rem:Z1}
	In the notations of Theorem \ref{2theo:lagrange}, it is always true that $1.$ implies $2.$ and $2.$ implies $3.$ However, if $\n K$ is not an algebraic extension of a finite field, there may exist quadratic irrationalities whose continued fraction is not quasi-periodic and there may exist quasi-periodic continued fractions that are not periodic.
	
	Indeed, as soon as there exists $c\inn\n K^*$ which is not a root of unity, by Lemma \ref{2lem:quasiper-per} any quasi-periodic continued fraction of the form $\alpha=\left[\ov{a_1,\dots,a_m}^c\right]$ with $m$ even will not be periodic. 
	
	We will see in Corollary \ref{6cor:yureductioncondition} an algebro-geometric condition for Pellianity, based on the reduction theory of abelian varieties; this will imply that there exist non-Pellian polynomials $D\inn\S C$ (or $\S Q$), so the continued fraction of $\sqrt D$ will not even be quasi-periodic (for example, $D=T^4+T+1$ is non-Pellian, see Example \ref{6ex:nonPell}). 
\end{rem}

If $\deg D=2$ then the continued fraction of $\sqrt D$ is always periodic, of the form $\sqrt D=\left[a_0,\ov{2c\,a_0,2a_0}\,\right]$, $D$ is always Pellian and $K(\sqrt D)=1$.

If $\deg D=4$ then either $D$ is Pellian (and the continued fraction of $\sqrt D$ is periodic) or all the partial quotients of $\sqrt D$, apart from $a_0$, are linear, that is, $K(\sqrt D)=1$.

It is possible to prove that if $\deg D=6$, then either $D$ is Pellian or only finitely many of the partial quotients of $\sqrt D$ can have degree 2, that is, either $\ov K(\sqrt D)=3$ or $\ov K(\sqrt D)=1$.

If $\deg D\geq8$ different cases can occur; for example, $D$ can be Pellian or $D$ could be of the form $D=F(T^2)$ with $\deg F=4$ and in this case, by Lemma \ref{1lem:subst}, either $D$ is Pellian or all the partial quotients of $\sqrt D$ apart from the first one have degree 2. If $D$ is non-Pellian, $\sqrt D$ can have at most finitely many partial quotients of degree $3$ but in \cite{contfracexamples} we show that there exist non-Pellian polynomials $D\inn\n C[T]$ of degree $8$ with infinitely many partial quotients of degree $1$ and infinitely many partial quotients of degree $2$.\par\medskip

A generic polynomial $D\inn\S C$ will not be Pellian and the continued fraction of $\sqrt D$ will not be periodic. However, Zannier (\cite{zannier2016hyperelliptic}, Theorem 1.1) proved that for every polynomial $D\inn\S C$, at least the sequence of the \textit{degrees} of the partial quotients of $\sqrt{D(T)}$ is eventually periodic. 

\section{Algebro-geometric approach}
As it was already known to Abel and Chebychev, the continued fraction expansion of quadratic irrationalities of the form $\frac{A+B\sqrt D}C$, where $D$ is a squarefree polynomial of even degree, is linked to algebro-geometric properties of the hyperelliptic curve $\C H$ of affine model $U^2=D(T)$. In particular, $D$ is Pellian if and only if $[(\infty_-)-(\infty_+)]$ is a torsion point on the Jacobian of $\C H$, where $\infty_\pm$ are the points at infinity on $\C H$.

Afterwards,this subject has been  greatly developed, among others, by Adams and Razar \cite{adams1980multiples}, Berry \cite{berry1990periodicity}, \cite{berry1998construction}, Platonov \cite{platonov2014number}, Van der Poorten and Tran \cite{poorten2000quasi}, \cite{van2002periodic}, Zannier \cite{zannier2014unlikely}, \cite{zannier2016hyperelliptic}.\par\medskip

Before showing this connection, we will recall some basic results on algebraic curves; more details and proofs can be found in most books introducing algebraic geometry, such as \cite{lang1972introduction}, \cite{serre1988algebraic} or \cite{silverman2009arithmetic}.

\subsection{Reminder of basic results on curves}
Let $\n K$ be an algebraically closed field and let $\C C$ be a curve defined over $\n K$. We will denote by $\n K[\C C]$ the \textit{ring of coordinates} of $\C C$ and by $\n K(\C C)$ its \textit{function field}. For any subfield $\n K_1$ of $\n K$ we will denote by $\C C(\n K_1)$ the set of points of $\C C$ defined over $\n K_1$; we will also write $\C C(\n K)=\C C$.\par\medskip

Let $P$ be a point of $\C C$; we will denote by $O_P$ the \textit{local ring} of $\C C$ at $P$, that is, the ring of rational functions regular at $P$: $$O_P=\left\{f/g\inn\n K(\C C),\ g(P)\neq0\right\}.$$ $O_P$ is a local ring with maximal ideal $M_P=\left\{f/g\inn O_P,\ f(P)=0\right\}$. $P$ is said to be \textit{smooth} if $ \dim_{\n K} M_P/M_P^2=1$ and $\C C$ is said to be a \textit{smooth} curve if all its points are smooth.

If $P$ is a smooth point of $\C C$, then $O_P$ is a discrete valuation ring and $M_P$ is principal; a generator $t$ of $M_P$ is called a \textit{uniformizer} for $\C C$ at $P$. The natural valuation of $O_P$ is denoted by $\ord_P$: $\ord_P(f)=n$ if and only if $f=t^ng$ with $g\inn O_P\setminus M_P$. Of course, this valuation can be extended to $\n K(\C C)$ and $P$ is said to be a zero (respectively, a pole) of a rational function $f$ if $\ord_P(f)>0$ (respectively, $\ord_P(f)<0$). Any non-zero rational function $f$ has finitely many zeros and finitely many poles and the numbers of its zeros and poles (counted with multiplicity) coincide; this number is called the \textit{degree} of $f$ and it is denoted by $\deg f$. From now on, if $f\inn\n K[T]$, we will denote by $\deg_T f$ its degree as a polynomial. 

The local rings $O_P$ form a subsheaf $O$ of the constant sheaf $\n K(\C C)$.\par\medskip

From now on we will assume, unless stated otherwise, that $\C C$ is a smooth curve.

We will denote by $\Div(\C C)$ the \textit{group of divisors} of $\C C$, that is, the free abelian group generated by the points of $\C C$. Let $A=\sum_{P\in\C C}a_P(P)\inn\Div(\C C)$. Then $A$ is said to be \textit{positive}, or \textit{effective} ($A\geq0$), if $a_P\geq0$ for every $P$ and $A$ is said to be \textit{prime} to a point $Q$ if $a_Q=0$. The \textit{degree} of $A$ is defined as $\deg A=\sum_P a_P$; in particular, the divisors of degree 0 form a subgroup of $\Div(\C C)$, which will be denoted by $\Div^0(\C C)$.\par\medskip

Let $f$ be a non-zero rational function. Its divisor is defined as $$\div(f)=\sum_{P\in\C C}\ord_P(f)(P)$$ and such a divisor is said to be \textit{principal}. Principal divisors form a subgroup of $\Div(\C C)$, that will be denoted by $\C P(\C C)$. The quotient group $$\Pic(\C C)=\Div(\C C)/\C P(\C C),$$ is called the \textit{Picard group}, or \textit{divisor class group}, of $\C C$; we will denote by $[A]$ the class in $\Pic(\C C)$ of a divisor $A$. Two divisors $A_1,A_2$ are said to be \textit{linearly equivalent} if $[A_1]=[A_2]$, that is, if there exists a rational function $f$ such that $A_1=A_2+\div(f)$; in this case we will write $A_1\sim A_2$.\par\medskip 

Given a point $P\inn\C C$ and a divisor $A$, let $L(A)_P$ be the set of rational functions $f$ such that $\ord_P(f)+\ord_P(A)\geq0$; the $L(A)_P$ form a subsheaf $L(A)$ of the constant sheaf $\n K(\C C)$. Let $\C L(A)=H^0(\C C,L(A))$, that is, $$\C L(A)=\left\{f\inn\n K(\C C)^*,\ \div(f)+A\geq0\right\}\cup\{0\}.$$ It can be seen that $\C L(A)$ is a finite-dimensional $\n K$-vector space, let $\ell(A)=\dim_{\n K}\C L(A)$.

Let $\C I(A)=H^1(\C C,L(A))$; $\C I(A)$ is still a finite-dimensional $\n K$-vector space, let $i(A)$ be its dimension. 

\begin{theo}[Riemann-Roch]
	Let $\C C$ be a smooth curve. Then there exists an integer $g$, called the \textup{genus} of $\C C$, such that for every divisor $A\inn\Div(\C C)$ \be	\ell(A)-i(A)=\deg (A)-g+1.\ee
\end{theo}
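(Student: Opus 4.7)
The plan is to study the Euler characteristic $\chi(A) := \ell(A) - i(A)$ and to show that it depends on $A$ only through $\deg A$, in an affine-linear way; the integer $g$ is then \emph{defined} as $i(0)$, which forces the formula in the statement. The main mechanism is that adjoining a single point to $A$ should change $\chi(A)$ by exactly $1$, and this will be extracted from a short exact sequence of sheaves.

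First I would verify that both $\ell(A)$ and $i(A)$ are finite, so that $\chi(A)$ is well-defined. Finiteness of $\ell(A)$ can be proved by induction on the positive part of $A$: for any point $P$ the quotient $\C L(A+P)/\C L(A)$ embeds into $\n K$ via the leading coefficient at $P$ in a local uniformizer, so $\ell(A+P) \leq \ell(A)+1$, and $\ell(A)=0$ as soon as $\deg A<0$. The finiteness of $i(A)=\dim H^1(\C C,L(A))$ is deeper; I would prove it by \v{C}ech cohomology with respect to a finite affine open cover of $\C C$, exploiting that $\C C$ is projective and $L(A)$ is coherent.

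The central computation uses, for each point $P\in\C C$, the short exact sequence of sheaves
\begin{equation*}
0 \longrightarrow L(A) \longrightarrow L(A+P) \longrightarrow \n K_P \longrightarrow 0,
\end{equation*}
where $\n K_P$ denotes the skyscraper sheaf at $P$ with stalk $\n K$, the surjection sending a local section $f$ to the coefficient of $t^{-\ord_P(A)-1}$ in its expansion in a uniformizer $t$ at $P$. Since $H^0(\C C,\n K_P)=\n K$ and $H^1(\C C,\n K_P)=0$, the alternating sum of dimensions in the associated long exact cohomology sequence vanishes, giving $\chi(A+P)=\chi(A)+1$. Writing any divisor $A$ as a difference of effective divisors and iterating, one obtains $\chi(A)=\deg(A)+\chi(0)$. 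On a smooth projective curve the only rational functions regular everywhere are constants, so $\ell(0)=1$, and setting $g:=i(0)$ yields $\chi(0)=1-g$ and thus the Riemann--Roch identity.

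The hard part is the finiteness of $H^1(\C C,L(A))$: this is where the completeness (projectivity) of $\C C$ enters in an essential way, whether via a direct \v{C}ech-theoretic argument, Serre's coherence theorem for proper morphisms, or Weil's adelic/repartition approach. Once this is granted, the remainder of the proof consists of formal manipulations with short exact sequences.
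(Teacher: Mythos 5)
The thesis does not prove Riemann--Roch: it is stated in a subsection explicitly presented as a reminder of standard results, with the reader referred to Lang, Serre and Silverman for proofs, so there is no in-paper argument to compare against. Your sketch is the classical Euler-characteristic proof, in precisely the cohomological language the thesis adopts from Serre ($\C L(A)=H^0(\C C,L(A))$, $\C I(A)=H^1(\C C,L(A))$), and the formal skeleton is correct: the short exact sequence $0\to L(A)\to L(A+P)\to \n K_P\to 0$, with $\n K_P$ the skyscraper at $P$, has the right cokernel map, a skyscraper sheaf on a curve has vanishing $H^1$, the long exact cohomology sequence therefore gives $\chi(A+P)=\chi(A)+1$, and writing a general divisor as a difference of effective divisors yields $\chi(A)=\deg A+\chi(0)$. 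Together with $\ell(0)=1$ (the only everywhere-regular rational functions on a complete irreducible curve are constants) and the definition $g:=i(0)$, this produces the stated identity.

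The lacuna you flag is genuine and load-bearing: without finiteness of $i(A)=\dim H^1(\C C,L(A))$, the quantity $\chi(A)$ is not even defined, so the induction never starts. This is exactly where completeness of $\C C$ intervenes, and it is not a formal rearrangement of exact sequences; it requires a \v{C}ech computation over a finite affine cover together with coherence of $L(A)$, or Weil's repartition/adelic argument, or Serre duality. A smaller remark worth recording: declaring $g:=i(0)$ is a legitimate definition and suffices for the numerical identity and the corollary the thesis actually uses, but identifying this $g$ with the topological genus, or with $\dim H^0$ of the sheaf of regular differentials, is itself a separate theorem (Serre duality) and does not follow from the computation you gave. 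In short, your outline is correct in structure and matches the approach of the thesis's own reference, modulo the acknowledged and essential $H^1$-finiteness step.
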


\begin{cor}\label{6cor:RRcor}
	Let $P_\infty$ be a point of $\C C$. Then for every divisor $A$ of degree 0 there exists a divisor $B$ of degree $g$ such that \be\label{6eq:RRcor}A\sim B-g(P_\infty).\ee
	
	Moreover, there exists $n\!\leq\! g$ such that there exists a unique $B\!\geq\!0$ with $A\sim B-n(P_\infty)$. 
\end{cor}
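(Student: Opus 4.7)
The plan is to deduce both claims directly from the Riemann--Roch theorem just stated. For the first assertion, observe that $A+g(P_\infty)$ has degree $g$, so Riemann--Roch yields
\[
\ell(A+g(P_\infty))=1+i(A+g(P_\infty))\geq 1.
\]
Picking any non-zero $f\inn\C L(A+g(P_\infty))$ and setting $B=\div(f)+A+g(P_\infty)$, one obtains an effective divisor of degree $g$ satisfying $A\sim B-g(P_\infty)$.

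For the refined statement, I would let $n$ be the \emph{smallest} non-negative integer for which $\C L(A+n(P_\infty))\neq 0$; the first part guarantees $n\leq g$. Choosing a non-zero $f\inn\C L(A+n(P_\infty))$ and setting $B=\div(f)+A+n(P_\infty)\geq 0$ produces an effective $B$ of degree $n$ with $A\sim B-n(P_\infty)$.

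The main point is the uniqueness of $B$, which is equivalent to the equality $\ell(A+n(P_\infty))=1$. Here is where the minimality of $n$ enters. I claim that every non-zero $h\inn\C L(A+n(P_\infty))$ satisfies $\ord_{P_\infty}(h)=-\ord_{P_\infty}(A)-n$: otherwise $\ord_{P_\infty}(h)\geq -\ord_{P_\infty}(A)-n+1$, whence $h$ would also lie in $\C L(A+(n-1)(P_\infty))$, contradicting the minimality of $n$. Fixing a uniformizer $t$ at $P_\infty$ and writing two non-zero elements as $h_i=c_it^{-\ord_{P_\infty}(A)-n}+\ldots$ with $c_i\neq 0$, the combination $h_1-(c_1/c_2)h_2$ has strictly larger valuation at $P_\infty$; by the claim it must vanish, so $h_1\inn\n K^* h_2$ and $\ell(A+n(P_\infty))=1$.

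To conclude, if $B_1,B_2\geq 0$ both satisfy $A\sim B_i-n(P_\infty)$, then writing $B_i=\div(h_i)+A+n(P_\infty)$ with $h_i\inn\C L(A+n(P_\infty))$ and invoking the one-dimensionality, one gets $h_1\inn\n K^*h_2$ and hence $B_1=B_2$. The only real obstacle is the uniqueness step; the rest is essentially bookkeeping on top of Riemann--Roch. The uniformizer trick, together with the minimality of $n$, is what forces the Riemann--Roch space to have dimension exactly one.
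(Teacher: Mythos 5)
Your proof is correct, and it is the standard derivation of this corollary from the Riemann--Roch theorem (the paper states the result without proof, as one of the "basic results on curves" it is recalling). Both steps — using $\deg(A+g(P_\infty))=g$ together with $i\geq0$ to force $\ell\geq1$, and then exploiting minimality of $n$ via the leading Laurent coefficient at $P_\infty$ to show $\ell(A+n(P_\infty))=1$ — are exactly the expected arguments, and the bookkeeping (effectivity and degree of $B$, passing from the one-dimensionality of the Riemann--Roch space to uniqueness of the effective representative) is carried out cleanly.
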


As $\deg(\div(f))=0$ for every rational function $f$, the group of principal divisors $\C P(\C C)$ is actually a subgroup of $\Div^0(\C C)$, so we can consider $\Pic^0(\C C)=\Div^0(\C C)/\C P(\C C)$. 
$\Pic^0(\C C)$ can be given a structure of abelian variety over $\n K$, that is, a structure of projective variety over $\n K$ such that the multiplication and the inverse maps are morphisms of varieties. As an abelian variety, $\Pic^0(\C C)$ has dimension $g$; it is called the \textit{Jacobian variety} of $\C C$ and denoted by $\C J$. 

\begin{notat}
	For every fixed point $P_\infty$ we can consider the map from the curve $\C C$ into its Jacobian $\C J$ given by 
	$$j_{P_\infty}(P)=[(P)-(P_\infty)];$$ this map is an embedding as soon as $g\geq1$.	
	
	For $i\geq1$, let $$\C W_i^{P_\infty}=\{j_{P_\infty}(P_1)+\cdots+j_{P_\infty}(P_i),\ P_1,\cdots,P_i\inn\C C\},$$ that is, $\C W_i^{P_\infty}=\{[(P_1)+\cdots+(P_i)-i(P_\infty)],\ P_1,\dots,P_i\inn\C C\}.$ Now, $$\C C\simeq\C W_1^{P_\infty}\sub\C W_2^{P_\infty}\sub\cdots\sub\C W_g^{P_\infty}=\C J$$ for any choice of the point $P_\infty$. The $\C W_i^{P_\infty}$ are closed subvarieties of $\C J$ (even if in general they are not algebraic groups) and $$\dim\C W_i^{P_\infty}=i \text{ for } i=1,\dots,g.$$
\end{notat} 

\subsection{Quadratic continued fractions and hyperelliptic curves}\label{6sec:hyper}
Let $\n K$ be a field of characteristic different from 2. Let $D\inn\n K[T]$ be a  polynomial of even degree $\deg_T D=2d\geq4$ whose leading coefficient is a square in $\n K$ and which is not a square in $\n K[T]$, that is, in the previous notations, let $D\inn\S K$. 

Assuming additionally that $D$ is \textit{squarefree}, we can consider the hyperelliptic curve $\C H_D$ of affine model $$\C H_D: U^2=D(T)$$ (the projective closure of this affine curve could be singular, we will always consider a desingularization). It can be seen that $\C H_D$ has genus $$g=d-1.$$

Its function field is $$\n K(\C H_D)=\n K(T,U)=\left\{\frac{a(T)+b(T)\sqrt{D(T)}}{c(T)},\ a,b,c\inn\n K[T], c\neq0\right\}.$$ 

$\C H_D$ has two points at infinity, defined over $\n K$. We will denote them by $\infty_+,\ \infty_-$, choosing their signs so that $$\ord_{\infty_+}\left(\sqrt D-\floor{\sqrt D}\right)>0 \text{ and } \ord_{\infty_-}\left(\sqrt D-\floor{\sqrt D}\right)<0.$$

Embedding $\C H_D$ in its Jacobian $\C J_D$ through the map $j:P\mapsto[(P)-(\infty_+)]$, by Corollary \ref{6cor:RRcor} any point of $\C J_D$ can be written as the sum of at most $d-1$ points in $j(\C H_D)$. We will see that the convergents of $\sqrt D$ and the degrees of its partial quotients can be recovered from the writing of the multiples of $\delta=[(\infty_-)-(\infty_+)]\inn\C J_D$ as minimal sums of points in the image of $\C H_D$. In particular, the continued fraction of $\sqrt D$ is periodic if and only if $\delta$ is a torsion point of $\C J_D$.

\begin{notat}
	$\n K(\C H_D)$ is a quadratic extension of $\n K(T)$; we will denote its non-trivial involution by $f\mapsto f'$: if $f=\frac{a+b\sqrt D}c$, then $f'=\frac{a-b\sqrt D}c$. If $P=(t,u)$ is an affine point of $\C H_D$ we will denote by $P'=(t,-u)$ its conjugate point (where $\infty_\pm'=\infty_\mp$).
\end{notat}

\begin{lem}
	Let $f=\frac {a+b\sqrt D}c$ be a rational function on $\C H_D$, with $a,b,c$ relatively prime polynomials. If $b=0$, that is, if $f=\frac ac\inn\n K(T)$, then $$\deg \frac ac=2\,|\deg_T a-\deg_T c|.$$ 
	
	On the other hand, if $b\neq0$, then $$\deg f\geq d+\deg_T b.$$ 
\end{lem}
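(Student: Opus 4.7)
My plan is to bound $\deg f$ by estimating its pole orders separately at the two points at infinity $\infty_+,\infty_-$ and at the affine points of $\C H_D$ lying above the roots of $c$, then summing the two contributions. The case $b=0$ is immediate: since $\gcd(a,c)=1$, $f=a/c\in\n K(T)$ factors through the double cover $T\colon\C H_D\to\n P^1$, so $\deg f=2\deg_{\n P^1}(a/c)$, and a direct count of the poles of $a/c$ on $\n P^1$ yields the stated formula.

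For $b\neq0$ I would start with the infinity contribution. The key observation is that $(a+b\sqrt D)-(a-b\sqrt D)=2b\sqrt D$ has order exactly $-(d+\deg_T b)$ at both $\infty_+$ and $\infty_-$. Since the hyperelliptic involution swaps $\infty_+\leftrightarrow\infty_-$ and $a+b\sqrt D\leftrightarrow a-b\sqrt D$, applying the ultrametric inequality for $\ord_{\infty_+}$ gives
\[\min\bigl(\ord_{\infty_+}(a+b\sqrt D),\,\ord_{\infty_-}(a+b\sqrt D)\bigr)\le-(d+\deg_T b).\]
After possibly swapping the labels of $\infty_\pm$, I may assume $\ord_{\infty_+}(a+b\sqrt D)\le-(d+\deg_T b)$, so that $f$ has a pole at $\infty_+$ of order at least $(d+\deg_T b)-\deg_T c$.

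Next I will show that the affine pole contribution to $\deg f$ is at least $\deg_T c$. At each root $t_0$ of $c$, the coprimality hypothesis $\gcd(a,b,c)=1$ forces $\min(v(a,T-t_0),v(b,T-t_0))=0$. A short local computation, separating the unramified case $D(t_0)\neq0$ (two points above $t_0$, uniformiser $T-t_0$) from the ramified case $D(t_0)=0$ (a single point above $t_0$ at which $\sqrt D$ is the uniformiser and $T-t_0$ has order $2$), then shows that the total pole order of $f$ carried by the points above $t_0$ is at least $v(c,T-t_0)$; summing over the roots of $c$ yields the desired bound.

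Combining the two contributions,
\[\deg f\;\ge\;\max\bigl(0,\,(d+\deg_T b)-\deg_T c\bigr)+\deg_T c\;\ge\;d+\deg_T b,\]
as required. The most delicate step is the ramified subcase of the affine analysis: using $\sqrt D$ as uniformiser one checks that $\ord_P(a+b\sqrt D)\le1$, yielding the slightly weaker local bound $2v(c,T-t_0)-1$, which still exceeds $v(c,T-t_0)$ because $v(c,T-t_0)\ge1$. Apart from this, the argument is a careful accounting of orders.
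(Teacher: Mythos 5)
Your treatment of the case $b\neq0$ is correct and follows essentially the same route as the paper: bound the affine pole contribution from below by $\deg_T c$ using coprimality, then observe that one of $\infty_\pm$ must carry a pole of order at least $d+\deg_T b-\deg_T c$. You supply the local ramified/unramified analysis and the ultrametric argument (via $2b\sqrt D = (a+b\sqrt D)-(a-b\sqrt D)$ and the involution) that the paper omits, and fold the paper's explicit case split on whether $\deg_T c\geq d+\deg_T b$ into a single $\max$; this is the same proof written in more detail, not a different approach.

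One place where your argument does not deliver what it claims is the $b=0$ case. Pulling back along the degree-two cover $T\colon\C H_D\to\n P^1$ indeed gives $\deg f=2\,\deg_{\n P^1}(a/c)$, but for coprime $a,c$ one has $\deg_{\n P^1}(a/c)=\max(\deg_T a,\deg_T c)$, not $|\deg_T a-\deg_T c|$: the zeros of $c$ contribute $\deg_T c$ poles and the point at infinity contributes a pole of order $\max(0,\deg_T a-\deg_T c)$. For instance $a=T$, $c=T+1$ gives a degree-$2$ function on $\C H_D$, whereas the stated formula would give $0$. So you should not simply assert that ``a direct count yields the stated formula''; it yields $2\max(\deg_T a,\deg_T c)$, and the $2\,|\deg_T a-\deg_T c|$ in the lemma appears to be a slip in the paper's statement (whose own proof of this case is only the word \emph{obvious}, so it offers no check). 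For the remainder of the paper only the $b\neq0$ inequality is used, so this does not propagate.
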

\begin{proof}
	The first statement is obvious.
	
	Let then $b\neq0$. It follows from the fact that $a,b,c$ are relatively prime that the zeros of $c$ give rise to at least $\deg_T c$ poles of $f$ (counted with multiplicity). Now, if $\deg_T c\geq d+\deg_T b$ we immediately have $\deg f\geq \deg_T c\geq d+\deg_T b$. On the other hand, assuming that $\deg_T c<d+\deg_T b$, at least one between $\infty_+,\infty_-$ is a pole of $f$ of multiplicity at least $\deg_T b+d-\deg_T c$, giving again that $\deg f\geq d+\deg_T b$.
\end{proof}

\begin{notat}
	We will denote by $\delta$ the degree-zero divisor $$\delta=(\infty_-)-(\infty_+).$$ We will still denote by $\delta$ its class in the Jacobian: $\delta=[\delta]\inn\C J_D$.
\end{notat}

\begin{rem}
	We can think of a rational function $\alpha=\ds\frac{a+b\sqrt D}c\inn\n K(\C H_D)$ as a formal Laurent series in $\n K(T,\sqrt D)\sub\n K((T^{-1}))$; let $\alpha=\sum\limits_{\mathclap{i=-\infty}}^m c_nT^n$ be its expansion, with $m=-\ord(\alpha)$. By our choice of the signs of $\infty_\pm$ we have $$\ord_{\infty_+}(\alpha)=\ord(\alpha) \text{ and } \ord_{\infty_-}(\alpha)=\ord(\alpha').$$
\end{rem}

\begin{rem}
	Let $\alpha\inn\n K(\C H_D)\setminus\n K(T)$ and let $[a_0,a_1,\dots]$ be its continued fraction expansion; let $\cv n$ be its convergents and let us denote by $l_n$ the degree of $a_n$. Let $$\pphi_n=p_n-q_n\alpha.$$ Then, by \eqref{1eq:fondprop}, $$\div(\pphi_n)=A+r(\infty_+)+s(\infty_-), \text{ with } r=\deg_T q_{n+1},\ s=-\deg_T p_n$$ and where $A$ is an affine divisor of degree $\deg A=\deg_T p_n-\deg_T q_{n+1}=l_0-l_{n+1}$. 
\end{rem}	

\begin{lem}\label{6lem:multdelta}
	For every $n$ there exist $P_1,\dots,P_{d-l_{n+1}}$, affine points of $\C H_D$, such that $$(\deg_Tp_n)\delta\sim (P_1)+\cdots+(P_{d-l_{n+1}})-(d-l_{n+1})(\infty_+).$$ 
	More generally, for every $m$ we have \be\label{6eq:mdelta} m\delta\sim (P_{s+1})+\cdots+(P_{k})+s(\infty_-)-(d-l_{n+1}+s)(\infty_+),\ee where $k=d-l_{n+1}+s,\ s=m-\deg_T p_n$ and where $n$ is the only integer such that $$\deg_T p_n\leq m<\deg_T p_{n+1}.$$
\end{lem}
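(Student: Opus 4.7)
The strategy is to read off the first claim from the divisor of the function $\pphi_n=p_n-q_n\sqrt D$ on $\C H_D$, and then deduce the general $m\delta$ case by peeling off a multiple of $\delta$. Throughout I take $\alpha=\sqrt D$, so $l_0=d$.

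\medskip

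\noindent\textbf{Step 1: divisor of $\pphi_n$.} Since $p_n,q_n\in\n K[T]$ and $\sqrt D$ is regular on the affine part of $\C H_D$, the function $\pphi_n$ has no affine poles, so its divisor is effective away from $\{\infty_+,\infty_-\}$. By the convention on the signs of $\infty_\pm$ together with \eqref{1eq:fondprop}, one has $\ord_{\infty_+}(\pphi_n)=\ord(\pphi_n)=\deg_T q_{n+1}$. Applying the hyperelliptic involution and noting that $\pphi_n'=p_n+q_n\sqrt D$ has $\ord(\pphi_n')=-\deg_T p_n$, we get $\ord_{\infty_-}(\pphi_n)=\ord(\pphi_n')=-\deg_T p_n$. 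Hence there is an effective affine divisor $A$ with
\begin{equation*}
\div(\pphi_n)=A+\deg_T q_{n+1}(\infty_+)-\deg_T p_n(\infty_-),
\end{equation*}
and $\deg\div(\pphi_n)=0$ forces $\deg A=\deg_T p_n-\deg_T q_{n+1}=l_0-l_{n+1}=d-l_{n+1}$.

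\medskip

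\noindent\textbf{Step 2: the case $m=\deg_T p_n$.} Writing $A=(P_1)+\cdots+(P_{d-l_{n+1}})$ with affine $P_i$, the relation $\div(\pphi_n)\sim 0$ gives
\begin{equation*}
(P_1)+\cdots+(P_{d-l_{n+1}})\sim\deg_T p_n(\infty_-)-\deg_T q_{n+1}(\infty_+).
\end{equation*}
Subtracting $(d-l_{n+1})(\infty_+)=(\deg_T p_n-\deg_T q_{n+1})(\infty_+)$ from both sides and regrouping yields
\begin{equation*}
(P_1)+\cdots+(P_{d-l_{n+1}})-(d-l_{n+1})(\infty_+)\sim \deg_T p_n\bigl((\infty_-)-(\infty_+)\bigr)=(\deg_T p_n)\delta,
\end{equation*}
which is the first statement.

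\medskip

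\noindent\textbf{Step 3: general $m$.} Let $n$ be the unique integer with $\deg_T p_n\leq m<\deg_T p_{n+1}$ and set $s=m-\deg_T p_n$, so $0\leq s<l_{n+1}$. Writing $m\delta=(\deg_T p_n)\delta+s\delta=(\deg_T p_n)\delta+s(\infty_-)-s(\infty_+)$ and substituting the previous linear equivalence gives
\begin{equation*}
m\delta\sim (P_1)+\cdots+(P_{d-l_{n+1}})+s(\infty_-)-(d-l_{n+1}+s)(\infty_+),
\end{equation*}
and relabelling the $d-l_{n+1}$ affine points as $P_{s+1},\dots,P_k$ with $k=d-l_{n+1}+s$ yields the stated form \eqref{6eq:mdelta}.

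\medskip

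\noindent\textbf{Main obstacle.} The only thing to verify carefully is the local computation of $\div(\pphi_n)$ at $\infty_\pm$: this is entirely routine once one translates the non-Archimedean estimate \eqref{1eq:fondprop} into vanishing orders via the conventions $\ord_{\infty_+}=\ord$ and $\ord_{\infty_-}=\ord\circ'$. Everything else is bookkeeping with the degree formulas $\deg_T p_n=l_0+l_1+\cdots+l_n$ and $\deg_T q_{n+1}=l_1+\cdots+l_{n+1}$.
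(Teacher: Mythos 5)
Your argument is correct and proves the lemma exactly as written, but it takes a shorter route than the paper does, and the difference is not merely cosmetic. For Step 1 and Step 2 you agree with the paper: the divisor computation of $\pphi_n=p_n-q_n\sqrt D$ at $\infty_\pm$ via \eqref{1eq:fondprop}, the fact that $\pphi_n$ has no affine poles, and the degree count $\deg A=\deg_T p_n-\deg_T q_{n+1}=d-l_{n+1}$ are all the same. Where you diverge is Step 3: you simply add $s\delta=s(\infty_-)-s(\infty_+)$ to the first equivalence and regroup, which is a one-line substitution. The paper instead invokes Corollary \ref{6cor:RRcor} (Riemann-Roch) to take a \emph{minimal}-length representation $m\delta\sim(P_1)+\cdots+(P_k)-k(\infty_+)$, observes that no $P_i$ can be $\infty_+$ or conjugate to another, separates out the occurrences of $\infty_-$, and then uses Proposition \ref{2prop:bestquadr} (Best Approximation) to identify the resulting function, up to a constant, with $\pphi_n$ for precisely the $n$ in the stated range. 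What that longer argument buys is that the representation in \eqref{6eq:mdelta} is the \emph{minimal} one, i.e.\ $d-l_{n+1}+s$ is the least possible number of points in $j(\C H_D)$ whose sum is $m\delta$. That stronger fact is exactly what the next lemma needs in order to conclude $m\delta\in\C W_{d-l_{n+1}+s}\setminus\C W_{d-l_{n+1}+s-1}$ (the ``$\setminus$'' part is precisely minimality), and it is also what makes Theorem \ref{6theo:equiv} work in the direction ``$\delta$ torsion $\Rightarrow$ periodicity.'' Your proof is a legitimate proof of the literal statement, but if one later relies on minimality (as the paper does immediately afterwards), one would need to supplement it with the Riemann-Roch/Best-Approximation argument anyway.
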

\begin{proof}
	In the notations of the previous Remark, let $\alpha=\sqrt D$. Then $\alpha$ has no affine poles, so $A$ is effective. Moreover, $\deg A=d-l_{n+1}\leq g$ and \be\div(p_n-q_n\sqrt D)=A+(\deg_Tq_{n+1})(\infty_+)-(\deg_Tp_n)(\infty_-).\ee 
	On the other hand, $\deg A=\deg_T(\pphi_n\pphi_n')$, so we obtain again that $$\deg (p_n^2-Dq_n^2)=d-l_{n+1}.$$ Thus, there exist $d-l_{n+1}$ affine points of $\C H_D$, $P_1,\dots,P_{d-l_{n+1}}$, such that $$(\deg_Tp_n)\delta\sim (P_1)+\cdots+(P_{d-l_{n+1}})-(d-l_{n+1})(\infty_+).$$ 
	
	Now, let $m\delta\sim(P_1)+\cdots+(P_k)-k(\infty_+)$ with $k$ minimal. By \eqref{6eq:RRcor}, we have $k\leq g$ and in particular $P_i\neq\infty_+$ for every $i$ and $P_i\neq P_j'$ for every $i\neq j$. Let us assume $P_1,\dots,P_s=\infty_-$ and $P_{s+1}=\cdots=P_k\neq\infty_-\,$. Then there exists $f\inn\n K(\C H_D)$ such that $\div(f)=(P_{s+1})+\cdots+(P_k)+(m-k)(\infty_+)-(m-s)(\infty_-)$. As $f$ has no affine poles, it must be of the form $f=a+b\sqrt D$, with $a,b$ polynomials. Actually, as the $P_i$ are pairwise non-conjugated, $a,b$ must be relatively prime. Now, $k-s=\deg_T(a^2-Db^2)<d$, so by \eqref{2eq:bestquadr} $a/b$ is, up to the sign, a convergent of $\sqrt D$, that is (up to a multiplicative constant) either $f=\pphi_n$ or $f=\pphi_n'$. By minimality of $k$, certainly $m\geq k$, so $f=\pphi_n$ and we get \ref{6eq:mdelta}.
\end{proof}

\begin{lem}\label{6lem:zerosphi}
	As before, let $P=(t,u)$ be an affine zero of $\pphi_n$. Then $$[\n K(P):\n K]\leq 2(d-1).$$
\end{lem}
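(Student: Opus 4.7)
The plan is to bound $[\n K(t):\n K]$ and $[\n K(P):\n K(t)]$ separately, combining them to get the factor $2(d-1)$. First, since $P=(t,u)$ lies on $\C H_D$, we automatically have $u^2=D(t)$, so $u$ satisfies over $\n K(t)$ the polynomial equation $X^2-D(t)=0$; hence $[\n K(P):\n K(t)]\leq 2$ and it suffices to show $[\n K(t):\n K]\leq d-1$.

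Next, I would exploit the vanishing $\pphi_n(P)=0$, which reads $p_n(t)-q_n(t)u=0$. Squaring and substituting $u^2=D(t)$ yields the purely polynomial identity $p_n(t)^2-D(t)q_n(t)^2=0$, so $t$ is a root of the polynomial $R=p_n^2-Dq_n^2\inn\n K[T]$.

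Finally, I would bound $\deg_T R$ by using the factorization $R=\pphi_n\pphi_n'$. From \eqref{1eq:fondprop}, $|\pphi_n|=|q_{n+1}|^{-1}$. On the other hand, $\pphi_n'=p_n+q_n\sqrt D$ is the sum of two Laurent series that have a common dominant absolute value $|p_n|=|q_n\sqrt D|=\mu^{\deg_Tp_n}$: the cancellation that produced the small $|\pphi_n|$ in the \emph{difference} now becomes an addition of equal leading terms in the \emph{sum}, so $|\pphi_n'|=|p_n|$. Multiplying these two estimates gives $\deg_TR=\deg_Tp_n-\deg_Tq_{n+1}=l_0-l_{n+1}=d-l_{n+1}$. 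Since $l_{n+1}\geq 1$ (the partial quotient $a_{n+1}$ of the irrational series $\sqrt D$ has index $\geq 1$, hence positive degree), we obtain $\deg_TR\leq d-1$, whence $[\n K(t):\n K]\leq d-1$ and the desired bound $[\n K(P):\n K]\leq 2(d-1)$ follows.

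There is no real obstacle: the argument is a short degree count that follows directly from the fundamental estimate \eqref{1eq:fondprop} and from the observation that the leading-term cancellation occurs in $\pphi_n$ but not in $\pphi_n'$. (In fact, since $q_n(t)\neq 0$ by coprimality of $p_n,q_n$, one even has $u=p_n(t)/q_n(t)\inn\n K(t)$, which would improve the bound to $d-1$; but the stated weaker bound $2(d-1)$ is all that is needed in what follows and avoids this minor verification.)
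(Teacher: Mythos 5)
Your argument is correct and follows the same route as the paper: the paper's one-line proof simply notes that $t$ is a root of $p_n^2-Dq_n^2$, which has degree $d-l_{n+1}\leq d-1$ (a fact already established via \eqref{2eq:snpell} and Proposition \ref{2prop:red}, or equivalently the degree count $\deg_T p_n-\deg_T q_{n+1}$ given in the proof of Lemma \ref{6lem:multdelta}), and you rederive this bound directly from \eqref{1eq:fondprop} and the factorization $p_n^2-Dq_n^2=\pphi_n\pphi_n'$. Your closing remark that $q_n(t)\neq0$ forces $u=p_n(t)/q_n(t)\inn\n K(t)$, giving the sharper bound $[\n K(P):\n K]\leq d-1$, is also valid.
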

\begin{proof}
	As $t$ is a root of $p_n^2-Dq_n^2$, we have $[\n K(t):\n K]\leq d-1$, so $[\n K(P):\n K]\leq 2(d-1).$
\end{proof}	

\begin{lem}
	Let us consider the embedding $j=j_{\infty_+}$ of $\C H_D$ in its Jacobian $\C J_D$, $$j(P)=[(P)-(\infty_+)]\inn\C J_D.$$
	
	For $i=1,\dots,g$, let $\C W_i=\C W_i^{\infty_+}$. Then for every $m$. $$m\delta\inn\C W_{d-l_{n+1}+s}\setminus\C W_{d-l_{n+1}+s-1}, \text{ where } \deg_T p_n\leq m<\deg_T p_{n+1} \text{ and } s=m-\deg_T p_n.$$
\end{lem}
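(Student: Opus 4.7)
My plan is to handle the two inclusions separately. For the containment $m\delta \in \C W_{d-l_{n+1}+s}$, I will simply rewrite the linear equivalence \eqref{6eq:mdelta} from Lemma \ref{6lem:multdelta} in the form
\begin{equation*}
m\delta = \bigl[(P_{s+1})-(\infty_+)\bigr] + \cdots + \bigl[(P_{k})-(\infty_+)\bigr] + s\bigl[(\infty_-)-(\infty_+)\bigr] = j(P_{s+1}) + \cdots + j(P_{k}) + s\,j(\infty_-),
\end{equation*}
exhibiting $m\delta$ as a sum of $k = d-l_{n+1}+s$ points of $j(\C H_D)$ (with $\infty_-$ appearing $s$ times). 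This gives $m\delta \in \C W_k$ directly.

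For the non-containment I will revisit the argument in the proof of Lemma \ref{6lem:multdelta} and extract the minimality it implicitly establishes. By the very definition of $\C W_{k-1}$, any inclusion $m\delta \in \C W_{k-1}$ would produce points $Q_1,\dots,Q_{k-1}$ on $\C H_D$ with $m\delta \sim (Q_1)+\cdots+(Q_{k-1})-(k-1)(\infty_+)$. But the proof of Lemma \ref{6lem:multdelta} starts precisely from the minimal $k'$ admitting a representation of this shape and, via the best approximation theorem (Proposition \ref{2prop:bestquadr}) applied to the function $f=1/h$ built from the representation, forces $k' = d-l_{n+1}+s$. A fortiori no representation with fewer than $k$ points can exist, giving $m\delta \notin \C W_{k-1}$.

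The delicate step will be to verify that the ``minimal $k$'' appearing in the proof of Lemma \ref{6lem:multdelta} truly coincides with $\min\{k' : m\delta \in \C W_{k'}\}$. Given any candidate $(k-1)$-term representation, I would first cancel any $Q_i = \infty_+$ (which only lowers $k'$) and then isolate the copies of $\infty_-$, bringing the data into the exact form handled by that proof. The best-approximation step then identifies $f$ with a constant multiple of $\pphi_n = p_n - q_n\sqrt{D}$ rather than of $p_n + q_n\sqrt{D}$ --- the latter alternative would force $k' \geq m + d$ by the pole analysis at $\infty_\pm$, which is incompatible with minimality --- pinning down $k' = d - l_{n+1} + s$ and producing the desired contradiction.
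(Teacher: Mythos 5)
Your proof is correct and follows the same route as the paper, which simply says the lemma ``follows from Lemma~\ref{6lem:multdelta}'': the inclusion $m\delta\in\C W_{d-l_{n+1}+s}$ is just a rewriting of \eqref{6eq:mdelta} as a sum of $d-l_{n+1}+s$ points of $j(\C H_D)$ (counting $\infty_-$ with multiplicity $s$), while the exclusion from $\C W_{d-l_{n+1}+s-1}$ is exactly the minimality of $k$ established in the proof of Lemma~\ref{6lem:multdelta}. Your elaboration --- identifying the minimal $k'$ with $\min\{k':m\delta\in\C W_{k'}\}$, cancelling copies of $\infty_+$ and pairs of conjugates (all of which minimality already forbids), and ruling out $f=\pphi_{n'}'$ by showing it would force $\ord_{\infty_+}(f)=-\deg_T p_{n'}<0$ and hence $k'=m+\deg_T p_{n'}\ge m+d>g$ --- is a correct and careful unpacking of the argument the paper leaves implicit, rather than a genuinely different approach.
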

\begin{proof}
	It follows from Lemma \ref{6lem:multdelta}
\end{proof}

\begin{ex}
	Let $D=T^8-8T^7+18T^6-4T^5-13T^4-4T^3+18T^2-8T+1$; it can be seen that $D$ is a squarefree, non-Pellian polynomial.	In the previous notations, the minimal representations of the first multiples of $\delta$ as sums of points in $j(\C H_D)$ are:
	\indent$\hspace{-0,4cm} \delta=[(\infty_-)-(\infty_+)]\inn\C W_1\\
	2\delta=[2(\infty_-)-2(\infty_+)]\inn\C W_2\\
	3\delta=[3(\infty_-)-3(\infty_+)]\inn\C W_3\\
	4\delta=[(0,1)+(1,1)-2(\infty_+)]\inn\C W_2\ (\textrm{as } p_0^2-Dq_0^2=-12T^2+12T)\\
	5\delta=[(0,1)+(1,1)+(\infty_-)-3(\infty_+)]\inn\C W_3\\
	6\delta=[(A_1)+(A_2)-2(\infty_+)]\inn\C W_2,\ \text{with } A_i=(t_i,u_i)$, where the $t_i$ are roots of 
	
	\indent\hspace{1cm} $p_1^2-Dq_1^2=\frac13T^2-T+\frac13\\
	7\delta=[(A_1)+(A_2)+(\infty_-)-3(\infty_+)]\inn\C W_3\\
	8\delta=[(B_1)+(B_2)+(B_3)-3(\infty_+)]\inn\C W_3,\ \text{with } B_i=(t_i',u_i')$, where the $t_i'$ are roots of \indent\hspace{1cm} $p_2^2-Dq_2^2=72T^3-108T^2-180T+72\\
	9\delta=[(C_1)+(C_2)+(C_3)-3(\infty_+)]\inn\C W_3,\ \text{with } C_i=(t_i'',u_i'') \text{ and } t_i''\inn\left\{1,\frac{9\pm\sqrt{65}}4\right\}\\
	\cdots$
\end{ex}

\begin{theo}[Abel, 1826]\label{6theo:equiv}
	Let $D\inn\n K[T]$ be a squarefree polynomial of degree $2d$ whose leading coefficient is a square in $\n K$. Then the following statements are equivalent:
	\begin{enumerate}
		\item the continued fraction expansion of $\sqrt D$ is periodic;
		\item $\sqrt D$ has infinitely many partial quotients of degree $d$;
		\item the continued fraction of $\frac{r+\sqrt D}s$ is quasi-periodic for every $r,s\inn\n K[T]$ such that $s$ divides $r^2-D$;
		\item $D$ is Pellian, that is, there exist $p,q\inn\n K[T]$ with $q\neq0$ such that $p^2-Dq^2\inn\n K^*$;
		\item $\n K[T,\sqrt D]$ has non-constant units;
		\item $\delta=[(\infty_-)-(\infty_+)]$ is a torsion point of the Jacobian $\C J_D$ of $\C H:U^2=D(T)$.
	\end{enumerate}
	In this case, if $\sqrt D=\left[a_0,\ov{a_1,\dots,a_N}^k\right]$ with $N$ minimal quasi-period, $\delta$ has torsion order $\deg_T q_N$ and $(p_{N-1},q_{N-1})$ is the minimal non-trivial solution to the Pell equation for $D$.
\end{theo}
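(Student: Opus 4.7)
The equivalences among (1)--(5) have already been established in Proposition \ref{2prop:eqper}, so the remaining task is to tie them to the algebro-geometric condition (6) and then to match the numerical data at the end of the statement. The guiding idea is that non-trivial Pell solutions correspond exactly to rational functions on $\C H_D$ whose divisor is supported on $\{\infty_+,\infty_-\}$, the support size being controlled by the degree of the Pell solution. I plan to first establish the equivalence of (4) and (6), and then pin down the torsion order and identify the minimal Pell pair.

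For (4)$\imp$(6), given a Pell pair $p^2-Dq^2=c\inn\n K^*$ with $q\neq0$, I would form $f=p-qU\inn\n K(\C H_D)$. Since $ff'=p^2-Dq^2=c$ is a non-zero constant, $f$ has no affine zero or pole, and so $\div(f)$ is supported at infinity. Using the labeling of section \ref{6sec:hyper}, at $\infty_+$ the function $U$ expands as $+\sqrt D$, so $f$ there equals $p-q\sqrt D$; the identity $p-q\sqrt D=c/(p+q\sqrt D)$, combined with the fact that $p+q\sqrt D$ has Laurent degree exactly $d+\deg_T q$ (its leading terms do not cancel, since for a Pell pair the leading coefficients of $p$ and of $q\sqrt D$ agree up to sign), gives $\ord_{\infty_+}(f)=d+\deg_T q$, and symmetrically $\ord_{\infty_-}(f)=-(d+\deg_T q)$. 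Therefore $\div(f)=-(d+\deg_T q)\,\delta$ in $\Div(\C H_D)$, so $(d+\deg_T q)\,\delta=0$ in $\C J_D$.

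For (6)$\imp$(4), if $m\delta=0$ then there is $f\inn\n K(\C H_D)^*$ with $\div(f)=m\bigl((\infty_-)-(\infty_+)\bigr)$. Since $f$ has no affine poles, $f\inn\n K[T,U]$, say $f=a-bU$ with $a,b\inn\n K[T]$; the divisor of $ff'=a^2-Db^2$ is then zero, hence $a^2-Db^2\inn\n K^*$ is a Pell relation. Taking $m$ minimal, the two constructions together give that the torsion order of $\delta$ equals $d+\deg_T q_{\min}=\deg_T p_{\min}$, where $(p_{\min},q_{\min})$ is a non-trivial Pell solution of minimal degree. By Proposition \ref{2prop:eqper} this minimal Pell solution is $(p_{N-1},q_{N-1})$, with $N$ the minimal quasi-period of $\sqrt D$; and by Theorem \ref{2theo:sqrtD} the closing partial quotient $a_N$ of the period satisfies $\deg_T a_N=d$, so $\deg_T q_N=\deg_T a_N+\deg_T q_{N-1}=d+\deg_T q_{N-1}$, which coincides with the torsion order just computed.

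The main obstacle I anticipate is the careful bookkeeping of valuations and sign conventions at the two points at infinity: I must check that the chosen labeling of $\infty_\pm$ makes $p-qU$, rather than $p+qU$, vanish at $\infty_+$ (possibly after flipping the sign of the Pell pair), that the cancellation $p-q\sqrt D=c/(p+q\sqrt D)$ introduces no further coincidental vanishing so that $\ord_{\infty_+}(p-qU)$ equals $d+\deg_T q$ \emph{exactly}, and that the minimal $m$ with $m\delta=0$ really corresponds to the Pell solution of smallest degree rather than merely to some Pell solution (which follows because $\deg_T a=m$ for the $f=a-bU$ produced, so minimal $m$ forces minimal $\deg_T a$). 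Once these identifications are set up correctly, the rest of the argument reduces to essentially linear bookkeeping with divisors.
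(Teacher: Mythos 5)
Your proof is correct and takes essentially the same route as the paper's direct argument: identifying $m\delta\sim 0$ with the existence of a function $f=a+bU$ regular on the affine part of $\C H_D$ whose norm $ff'=a^2-Db^2$ is a non-zero constant, so that non-trivial Pell solutions and integer multiples of $\delta$ in the kernel of $n\mapsto n\delta$ correspond bijectively, with $m=\deg_T a$. The paper supplements this with a second route via the explicit description of $m\delta$ in terms of convergents (Lemma \ref{6lem:multdelta}), which it also uses to read off the torsion order, whereas you recover the torsion order directly from the divisor computation and then match it to $\deg_T q_N$ using Proposition \ref{2prop:eqper} and Theorem \ref{2theo:sqrtD}; both bookkeeping paths give the same answer.
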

\begin{proof}
	We have already seen in Proposition \ref{2prop:eqper} the equivalence of $1.\ 2.\ 3.\ 4.$ and $5.$\par\medskip
	
	Let us then prove that $6.$ is equivalent to $2.$
	
	$\delta$ is a torsion point of $\C J_D$ if and only if there exists $m\neq0$ such that $m\delta=0\inn\C J_D$ and, by \eqref{6eq:mdelta}, this is equivalent to the existence of (infinitely many) partial quotients of degree $d$.
	
	Moreover, by \eqref{6eq:mdelta}, $m\delta\sim0$ if and only if $\deg a_{n+1}=d$, if and only if $(p_n,q_n)$ is a solution to the Pell equation for $D$ and in this case, $m=\deg_T p_n=\deg_T q_{n+1}$. \par\medskip
	
	Let us prove again that $\delta$ is a torsion point if and only if $\n K[T,\sqrt D]$ has non-trivial units. 
	
	$m\delta=0\inn\C J_D$ with $m\neq0$ if and only if there exists a rational function $f\inn\n K(T,\sqrt D)$ such that $\div(f)=m\delta$. In this case of course $\div(f')=-m\delta$ and $f$ must be of the form $f=a+b\sqrt D$ with $a,b\inn\n K[T]$ and $b\neq0$, so $f\inn\n K[T,\sqrt D]$. Then, as $\div(ff')=0$, $f$ must be a unit of $\n K[T,\sqrt D]$.
	
	Conversely, if $f=a+b\sqrt D$ is a unit of $\n K[T,\sqrt D]$, then $f,f'$ have no affine poles and $\div(ff')=0$, so we must have $\div(f)=m\delta$ for some $m\neq0$, that is, $\delta$ will be a torsion point of $\C J_D$.
\end{proof}

\begin{rem}\label{6rem:multdeltanonsf}
	If $D$ is non-squarefree, the affine curve $\C H_D:U^2=D(T)$ is no longer smooth; in particular, it is not the affine part of an hyperelliptic curve. Thus, in this case, we will have to slightly modify the previous methods in order to obtain similar results.
	
	Let $D=b^2\widetilde D$ with $b\inn\n K[T]$ non constant and with $\widetilde D$ a squarefree polynomial of degree $\deg_T\widetilde D\!=\!2\widetilde d$. Then $p/q$ is a convergent of $\sqrt D$ (with $p,q$ relatively prime polynomials) if and only if, as formal Laurent series, $\ord(p-q\sqrt D)>\deg_T q$, if and only if $\ord_{\infty_+}\left(p-qb\sqrt{\widetilde D}\,\right)\!>-\!\ord_{\infty_+}q$ as rational functions in $\n K(\C H_{\widetilde D})$, if and only if $$\div\left(p-qb\sqrt{\widetilde D}\,\right)=(P_1)+\cdots+(P_{d-l_{n+1}})+(\deg_T q+l_{n+1})(\infty_+)-\deg_T p(\infty_-),$$ with $l_{n+1}>0$.
	
	Equivalently, $(\deg_T p)\delta=(P_1)+\cdots+(P_{d-l_{n+1}})-(d-l_{n+1})(\infty_+)-\div\left(p-qb\sqrt{\widetilde D}\,\right)$, where $\delta=(\infty_-)-(\infty_+)\inn\Div(\C H_{\widetilde D})$.\par\medskip
	
	On the other hand, let $m\delta=(P_1)+\cdots+(P_k)-k(\infty_+)-\div(\pphi)$, with $\pphi\inn\n K(\C H_{\widetilde D})$ of the form $\pphi=p-qb\sqrt{\widetilde D}$ and with $k\leq m$ minimal. As before, $\infty_-$ is the only pole of $\pphi$, so we must have $\deg_T p=\deg_T q+\deg_T b+\widetilde d=-\ord_{\infty_-}\pphi\leq m$, while $\ord_{\infty_+}\pphi\geq m-k$. If $k<\deg_T b+\widetilde d$ then $\ord_{\infty_+}\pphi>\deg_T q$, so $p/q$ is a convergent of $b\sqrt{\widetilde D}$.
	
	Then, as in Lemma \ref{6lem:multdelta}, the convergents of $\sqrt D=b\sqrt{\widetilde D}$ are related to the ways of writing the multiples of $\delta$ as sums of at most $d=\deg_T b+\widetilde d$ divisors of the form $(P_i)-(\infty_+)$, with $P_i$ an affine point of $\C H_{\widetilde D}$, plus the divisor of a function of the form $p+qb\sqrt{\widetilde D}$. As we will see in section \ref{66Sec}, this corresponds to substituting the usual Jacobian $\C J_{\widetilde D}$ of $\C H_{\widetilde D}$ with a generalized Jacobian.
\end{rem}	

\subsubsection{Reduction of abelian varieties and Pellianity}
A condition to determine if a polynomial $D$ defined over a number field is Pellian or not follows from the theory of reduction of abelian varieties, developed by Serre and Tate, \cite{serre1968good}. Here, we will just recall the main results and show their connection with pellianity; proofs, comments and details can be found for instance in \cite{silverman2000diophantine}, part C, while this application of the reduction theory to continued fractions appears also in \cite{yu1999arithmetic} (page 5). 

\begin{lem}
	Let $\C A$ be an abelian variety defined over a number field $\n K$. Then, The multiplication by $m$ map $[m]:\C A(\ch Q)\to\C A(\ch Q)$ is surjective and its kernel $\C A_m$ is finite.
\end{lem}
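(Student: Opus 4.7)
The plan is to deduce both claims from the fact that $[m]$ is an \emph{isogeny} of abelian varieties, which is a classical statement.

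First, I would compute the differential of $[m]$ at the identity $0\inn\C A$. Since the group law $\mu\colon\C A\times\C A\to\C A$ satisfies $d\mu_{(0,0)}(v,w)=v+w$ on $T_0\C A$, an easy induction on $m$ gives $d[m]_0=m\cdot\id_{T_0\C A}$. Because $\n K$ has characteristic zero, this linear map is invertible for every $m\neq 0$, so $[m]$ is \'etale at $0$; by translation invariance (composition with a translation $t_x$ turns $[m]$ into $t_{mx}\circ[m]\circ t_{-x}$), $[m]$ is then \'etale at every point. In particular all of its fibres are finite, so the kernel $\C A_m=[m]^{-1}(0)$ is a finite set.

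For surjectivity, I would use that abelian varieties are, by definition, complete (projective). The image $[m](\C A)$ is therefore a closed subvariety of $\C A$, and, being a subgroup, an abelian subvariety. Since $[m]$ has finite fibres, $\dim [m](\C A)=\dim\C A$, and irreducibility of $\C A$ forces $[m](\C A)=\C A$. As $\ch Q$ is algebraically closed, every closed point of $\C A_{\ch Q}$ is a $\ch Q$-rational point, so surjectivity at the level of varieties entails surjectivity of the induced map $[m]\colon\C A(\ch Q)\to\C A(\ch Q)$.

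The main technical ingredient — hardly an obstacle — is the \'etaleness of $[m]$, resting on the differential computation above; this is a classical result in the theory of abelian varieties that I would simply quote from a standard reference such as Mumford's \emph{Abelian Varieties}.
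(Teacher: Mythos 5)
Your proof is correct and is the classical argument. The paper itself offers no proof of this lemma: it is presented in the section on reduction of abelian varieties as a recalled fact, with a pointer to Hindry--Silverman (\emph{Diophantine Geometry}, part~C) and Yu's paper for details. So there is no ``paper's own proof'' to compare against; your argument fills a gap the paper deliberately leaves to the references.

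A minor expository remark: in the first paragraph you pass from ``$[m]$ is \'etale'' to ``all of its fibres are finite'' before invoking completeness. This is fine because \'etale morphisms of finite type are quasi-finite (so set-theoretically finite fibres) even without properness; but since you need completeness anyway in the second paragraph to get a closed image, you could streamline by noting up front that $[m]$ is \'etale and proper, hence \emph{finite}, which delivers both the finiteness of $\C A_m$ and the closedness and right dimension of the image in one stroke. One should also justify that the set-theoretic image is a subvariety (not just a constructible subset); here the combination of properness (closed image) with the fact that the image of a homomorphism of algebraic groups is a subgroup does the job, as you indicate.
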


\begin{rem}
	Assuming that all the $m$-torsion points are defined over $\n K$, that is, $\C A_m\sub\C A(\n K)$, and that the group $\mu_m$ of the $m$-th roots of unity is contained in $\n K$ the following holds:
\end{rem}

\begin{theo}
	Let $\C A$ be an abelian variety defined over a number field $\n K$. Let $\nu$ be a non-Archimedean valuation of $\n K$, let $\widetilde{\n K}$ be the residue field of $\nu$ and let $p=\char\widetilde{\n K}$ be the residue characteristic of $\nu$. Assuming that $\C A$ has good reduction modulo $\nu$, let $\widetilde{\C A}$ be its reduction. Then for every $m\geq 1$ such that $p$ does not divide $m$ the reduction map $\C A_m(\n K)\to\widetilde{\C A}(\widetilde{\n K})$ is injective. Actually, it induces an isomorphism $$\C A_m(\n K)\simeq\widetilde{\C A}_m(\widetilde{\n K}).$$
\end{theo}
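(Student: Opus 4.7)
The plan is to localize the problem at $\nu$ and use the theory of formal groups associated to abelian varieties with good reduction. First, I would replace $\n K$ by its completion $\n K_\nu$, with ring of integers $\C O_\nu$ and maximal ideal $\M_\nu$; the good reduction hypothesis means that $\C A$ extends to a smooth proper group scheme $\mathfrak A$ over $\C O_\nu$ whose special fibre is $\widetilde{\C A}$. By the valuative criterion of properness, $\C A(\n K_\nu)=\mathfrak A(\C O_\nu)$, so there is a natural reduction homomorphism $r\colon\C A(\n K_\nu)\to\widetilde{\C A}(\widetilde{\n K})$, which is surjective by smoothness (Hensel's Lemma lifts any $\widetilde{\n K}$-point to an $\C O_\nu$-point).

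The heart of the argument is the identification of $\Ker(r)$. Choosing a formal parameter system at the identity section of $\mathfrak A$, the kernel is identified with $\hat{\mathfrak A}(\M_\nu)$, the $\M_\nu$-valued points of the formal group $\hat{\mathfrak A}$ associated to $\mathfrak A$. The multiplication-by-$m$ map on $\hat{\mathfrak A}$ is given by a power series of the form $mX+(\text{higher order terms})$; when $m$ is a unit in $\C O_\nu$, i.e.\ when $p\nmid m$, this series admits a formal inverse by the usual recursion, and therefore induces an automorphism of $\hat{\mathfrak A}(\M_\nu)$. In particular, $\hat{\mathfrak A}(\M_\nu)$ has no nontrivial $m$-torsion. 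Consequently, if $P\inn\C A_m(\n K)$ satisfies $r(P)=0$, then $P\inn\Ker(r)\cap\C A_m=\{0\}$, which yields the injectivity statement.

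For the isomorphism, I would invoke the classical structure theorem for torsion on abelian varieties: over an algebraically closed field of characteristic prime to $m$, the $m$-torsion of an abelian variety of dimension $g$ is a free $\n Z/m\n Z$-module of rank $2g$, so both $\C A_m(\ov{\n K})$ and $\widetilde{\C A}_m(\ov{\widetilde{\n K}})$ have cardinality $m^{2g}$. By the hypothesis of the preceding Remark, all of $\C A_m$ is defined over $\n K$, so $\#\C A_m(\n K)=m^{2g}$; the injective map $r$ pushes this group inside $\widetilde{\C A}_m(\widetilde{\n K})$, which itself has cardinality at most $m^{2g}$. Equality of orders forces both inclusions to be equalities, and $r$ restricts to a group isomorphism $\C A_m(\n K)\simeq\widetilde{\C A}_m(\widetilde{\n K})$.

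The main obstacle is really the formal-group input, namely proving that multiplication by an integer $m$ prime to the residue characteristic is invertible as a formal power series endomorphism, since once this is in hand the injectivity follows mechanically and the isomorphism is a counting argument. A secondary subtlety is to ensure that the good reduction hypothesis is actually used to identify $\Ker(r)$ with the formal group; this relies on the smoothness of $\mathfrak A$ over $\C O_\nu$ at the identity section, which produces the required formal parameters.
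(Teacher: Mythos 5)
The paper does not itself prove this theorem: it records it as a standard result of Serre--Tate reduction theory and explicitly refers the reader to Hindry--Silverman (Part~C) for proofs, so there is no internal proof to compare against. Your argument is correct and is essentially the standard one found there (Theorem~C.1.4 and its surroundings). The two key steps are as you lay them out: first, the identification of $\Ker(r)$ with the $\M_\nu$-points of the formal group at the identity section of the smooth proper model $\mathfrak A/\C O_\nu$, on which multiplication by $m$ is a formal-group automorphism when $p\nmid m$, giving injectivity of $r$ on $\C A_m(\n K)$; and second, the cardinality comparison $\#\C A_m(\ov{\n K})=\#\widetilde{\C A}_m\big(\,\ov{\widetilde{\n K}}\,\big)=m^{2g}$, which, combined with the hypothesis $\C A_m\sub\C A(\n K)$ imposed in the preceding Remark, upgrades injectivity to the isomorphism $\C A_m(\n K)\simeq\widetilde{\C A}_m(\widetilde{\n K})$. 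It is also worth noting, as your argument makes transparent, that the rationality hypothesis from the Remark is needed only for the isomorphism statement, not for injectivity.
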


\begin{lem}
	As before, let $\C A$ be an abelian variety over $\n K$ and let $P$ be a point of $\C A$. If there exist valuations $\nu_1,\nu_2$ of good reduction for $\C A$ and of different, positive, characteristics $p_1,p_2$ such that the reductions of $P$ modulo $\nu_i$ have orders $m_i$ and, for every $k_1,k_2\inn\n N$, $$m_1p_1^{k_1}\neq m_2p_2^{k_2},$$ then $P$ cannot be a torsion point of $\C A$.	
	
	On the other hand, if there exist integers $k_1,k_2$ such that $m_1p_1^{k_1}=m_2p_2^{k_2}$, they are unique, so if $P$ is a torsion point of $\C A$, its order must be their common value.
\end{lem}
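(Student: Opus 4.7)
The plan is to leverage the theorem just recalled: at a valuation $\nu$ of residue characteristic $p$ where $\C A$ has good reduction, the reduction map is injective on torsion of order prime to $p$. The key upgrade I would make is to the sharper statement that \emph{reduction preserves the prime-to-$p$ part of the order} of any torsion point; this follows immediately from injectivity, since the reduction of a point of order $n$ coprime to $p$ has order dividing $n$ and cannot have strictly smaller order without contradicting injectivity on the $n$-torsion subgroup.

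Assuming $P$ has finite order $N$, I would fix an index $i\inn\{1,2\}$ and write $N=n_ip_i^{a_i}$, $m_i=n_i'p_i^{b_i}$ with $\gcd(n_i,p_i)=\gcd(n_i',p_i)=1$. The point $p_i^{a_i}P$ has order $n_i$ coprime to $p_i$, so by the upgrade its reduction $p_i^{a_i}\widetilde P_i$ also has order $n_i$. On the other hand, starting from $\ord(\widetilde P_i)=m_i$ one computes directly $\ord(p_i^{a_i}\widetilde P_i)=m_i/\gcd(m_i,p_i^{a_i})=n_i'\,p_i^{\max(b_i-a_i,\,0)}$. Since this must be coprime to $p_i$, one is forced to have $a_i\geq b_i$ and $n_i=n_i'$, that is, $N=m_ip_i^{k_i}$ with $k_i:=a_i-b_i\geq0$. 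Applying this for both $i=1,2$ yields non-negative integers satisfying $m_1p_1^{k_1}=m_2p_2^{k_2}=N$; contrapositively, if no such pair of exponents exists then $P$ cannot be a torsion point, which is the first assertion.

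The uniqueness statement is then purely arithmetic: any two representations $m_1p_1^{k_1}=m_2p_2^{k_2}$ and $m_1p_1^{k_1'}=m_2p_2^{k_2'}$ of the same positive integer yield $p_1^{k_1-k_1'}=p_2^{k_2-k_2'}$ in $\n Q^*$, and since $p_1\neq p_2$ are distinct primes, unique factorization in $\n Z$ immediately forces $k_1=k_1'$ and $k_2=k_2'$. Hence, whenever $P$ is torsion, its order is necessarily the common value of the two expressions $m_ip_i^{k_i}$.

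The only real obstacle is conceptual rather than technical: one must translate the kernel statement ``reduction is injective on prime-to-$p$ torsion'' into the divisibility statement ``$N=m_ip_i^{k_i}$ for some $k_i\geq0$''. Once this translation is in hand, the criterion follows from a one-line calculation of $\ord(p_i^{a_i}\widetilde P_i)$ together with unique factorization, with no further input from the theory of abelian varieties.
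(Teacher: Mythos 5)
Your proof is correct and follows essentially the same route as the paper: both rely on the injectivity of reduction on prime-to-$p$ torsion to conclude that the order $N$ of $P$ must satisfy $N = m_i p_i^{k_i}$ for some $k_i \geq 0$ at each valuation, and then combine the two constraints. The paper states the divisibility consequence ``$n=mp^k$'' without spelling out the calculation on $p_i^{a_i}\widetilde P_i$, and it leaves the uniqueness-by-unique-factorization step implicit, so your version is simply a more fully detailed account of the same argument.
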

\begin{proof}
	Let $\nu$ be a valuation of good reduction for $\C A$ and of residue characteristic $p>0$ and, for every point $P$ of $\C A$, let us denote by $\widetilde P$ its image in the reduced variety $\widetilde{\C A}$.
	
	By the previous Theorem, if $P$ is a torsion point of $\C A$ of order $n$, then $\widetilde P$ is a torsion point of $\widetilde{\C A}$ of order $m$, with $n=mp^k$ for some $k\inn\n N$. It is then enough to apply this result at the same time to the two valuations $\nu_1,\nu_2$.
\end{proof}

In particular, we can apply the previous results to $\C A=\C J_D$ the Jacobian of an hyperelliptic curve $\C H_D$ and to $P=\delta=[(\infty_-)-(\infty_+)]$. In this case, $\C J_D$ has good reduction modulo a valuation $\nu$ if and only if the residue characteristic of $\nu$ is different from $2$, $D$ can be reduced modulo $\nu$ and its reduction is still a squarefree polynomial of degree $2d$. The previous Lemma and Theorem \ref{6theo:equiv} then lead to the following characterization of Pellian polynomials:

\begin{cor}\label{6cor:yureductioncondition}
	Let $D\inn\n K[T]$ be a squarefree polynomial of even degree $2d$ and whose leading coefficient is a square in $\n K$; let $\C H_D$ be the hyperelliptic curve $U^2=D(T)$, let $\C J_D$ be its Jacobian and let $\delta=[(\infty_-)-(\infty_+)]\inn\C J_D$. If there exist two valuations $\nu_1,\nu_2$ of $\n K$, of good reduction for $\C J_D$ and of different, positive, characteristics $p_1,p_2\neq2$, such that, denoting by $m_i$ the order of $\widetilde\delta_{\nu_i}$ in $\widetilde{\C J}_{\nu_i}$, $$m_1p_1^{k_1}\neq m_2p_2^{k_2} \text{ for every } k_1,k_2,$$ then $D$ is non-Pellian and the continued fraction of $\sqrt D$ is non-periodic.\par\medskip
	
	On the other hand, if $m_1p_1^{k_1}=m_2p_2^{k_2}=m$, then the length of the quasi-period of $\sqrt D$ is at most $m-g$, so, computing finitely many of its partial quotients, it can be checked effectively whether $D$ is Pellian or not.
\end{cor}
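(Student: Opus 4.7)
The plan is to translate both assertions into statements about the torsion order of $\delta$ in $\C J_D$, using Theorem \ref{6theo:equiv} (and its final clause) as the dictionary between continued-fraction data and the arithmetic of $\C J_D$, while feeding in the Serre--Tate reduction Lemma stated just above the Corollary.

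For the first assertion I would argue by contrapositive. Suppose $D$ were Pellian; then by Theorem \ref{6theo:equiv}, $\delta$ would be a torsion point of $\C J_D$ of some finite order $n$. Applying the reduction Lemma to $\nu_1$ and to $\nu_2$ separately, each reduction $\widetilde\delta_{\nu_i}$ is a torsion point of $\widetilde{\C J}_{\nu_i}$ whose order divides $n$, and more precisely $n=(\text{order of }\widetilde\delta_{\nu_i})\cdot p_i^{k_i}$ for some $k_i\geq0$. By hypothesis this order is $m_i$, so $n=m_1p_1^{k_1}=m_2p_2^{k_2}$, contradicting the standing assumption. Hence $\delta$ is not torsion, so $D$ is non-Pellian, and by Theorem \ref{6theo:equiv} the continued fraction of $\sqrt D$ is not periodic.

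For the second assertion, the uniqueness clause in the reduction Lemma guarantees that the common value $m=m_1p_1^{k_1}=m_2p_2^{k_2}$ is well-defined, and the argument above shows that if $\delta$ happens to be torsion then its order in $\C J_D$ must be exactly $m$. By the final statement of Theorem \ref{6theo:equiv}, writing $\sqrt D=[a_0,\overline{a_1,\dots,a_N}^{\,k}]$ with $N$ the minimal quasi-period, we would then have $\deg_T q_N=m$. By Theorem \ref{2theo:sqrtD}, the last partial quotient of the quasi-period equals $2c\,a_0$ and hence has degree $d$, while each of the other $a_i$ has degree at least $1$; therefore
\[
m=\deg_T q_N=\sum_{i=1}^{N}\deg a_i\geq (N-1)+d=N+g,
\]
so $N\leq m-g$. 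Consequently, computing the first $m-g$ partial quotients of $\sqrt D$ and checking whether any of them has degree $d$ decides Pellianity effectively.

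The only real subtlety I anticipate is bookkeeping the indexing conventions when invoking Theorem \ref{6theo:equiv} (quasi-period versus period, and picking out $q_N$ rather than $q_{N-1}$) and verifying that the reduction Lemma applies cleanly at each $\nu_i$, which requires only that the residue characteristic be different from $2$ and that $D$ reduce to a squarefree polynomial of the same degree, both of which are built into the assumption of good reduction for $\C J_D$. The reduction-theoretic input is then a black box and the remaining arithmetic is elementary, so I do not foresee a serious obstacle.
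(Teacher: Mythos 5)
Your proposal is correct and takes the same approach the paper implicitly relies on (the Corollary is stated without a separate proof, being presented as a direct consequence of the preceding reduction Lemma together with Theorem \ref{6theo:equiv}). You have correctly filled in the one step that requires a small calculation: since the torsion order of $\delta$ equals $\deg_T q_N=\sum_{i=1}^N\deg a_i$ with each $\deg a_i\geq1$ and $\deg a_N=d$, the bound $m\geq(N-1)+d=N+g$ follows, and your indexing ($q_N$ for the torsion order, $a_N=2c\,a_0$ for the terminal partial quotient of the quasi-period) is consistent with Theorems \ref{6theo:equiv} and \ref{2theo:sqrtD}.
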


\begin{ex}\label{6ex:nonPell}
	Let $D=T^4+T+1\inn\n Q[T]$. Then $D$ can be reduced modulo any prime and has always squarefree reduction. Let $m_3,m_5$ be, respectively, the orders of the reductions of $\delta$ modulo $3,5$ in $\widetilde{\C J}_3,\widetilde{\C J}_5$. It can be seen that $m_3=7$ and $m_5=9$, so $D$ is non-Pellian. 
	
	However, for some choice of the primes $p_1,p_2$, the orders of the reductions of $\delta$ could be compatible, for example we have $m_{17}=m_{19}=25$.  
\end{ex}	
\clearpage{\pagestyle{empty}\cleardoublepage}
\chapter{Zaremba's and McMullen's conjectures in the real case}
In the following Chapters we will study the polynomial analogues of Zaremba's and McMullen's Conjectures on continued fractions with bounded partial quotients. 

Here, we will briefly review the original statements, for the real case, and some of the known results, focusing on the works of Bourgain and Kontorovich. Discussions of these and related results can be found in \cite{bourgain2013some}, \cite{kontorovichapplications}, \cite{kontorovich2013apollonius}.

\begin{notatC} We will write $f(x)\ll g(x)$ for $x\to\infty$ if there exists a constant $c>0$ such that $|f(x)|\leq c\, |g(x)|$ for every sufficiently large $x$. If $f\ll g\ll f$, we will write $f\asymp g$.
\end{notatC}

\section{Zaremba's Conjecture}
\begin{notat}
	As in \ref{1def:cf}, we will set $[a_0,a_1,a_2,\dots]=a_0+\cfrac1{a_1+\cfrac1{a_2+\cfrac1{\ddots}}}\in\n R$, where $a_0,a_1,a_2,\dots$ is a (finite or infinite) sequence of integers, with $a_i>0$ for $i\geq1$.\par\medskip
	
	Contrary to the polynomial case, rational numbers have two different regular continued fraction expansions: indeed,  $[a_0,\dots,a_n]=[a_0,\dots,a_n-1,1]$ with $a_0,\dots,a_n\inn\n N$, $a_1,\dots,a_{n-1}\geq 1$ and $a_n>1$; in this section, we will always consider the second kind of expansion, having 1 as the last partial quotient. 
\end{notat}

For $\alpha=[a_0,\dots,a_k,1]\inn\n Q$, similarly to the notations introduced in \ref{1notat:K,ovK} in the polynomial case, we will set $$K(\alpha)=\max\{a_1,\dots,a_k,1\}.$$ More generally, if $\alpha=[a_0,a_1,\dots]\inn\n R$, we define $$K(\alpha)=\sup_{n\geq1}a_n\inn\n N\cup\{\infty\}.$$ For every positive integer $z$ we can consider the sets
$$\mathfrak R_z=\left\{b/d\inn\n Q,\ K(b/d)\leq z\right\},\ \mathfrak D_z=\left\{d\inn\n N,\ \exists\, b \text{ such that } (b,d)=1 \text{ and } b/d\inn\mathfrak R_z\right\}.$$

For example, $\mathfrak D_1$ is the set of Fibonacci numbers and $\mathfrak R_1$ is the set of quotients of two consecutive Fibonacci numbers.\par\medskip

The study of numerical integration by the quasi-Monte Carlo method and of pseudo-random numbers led Zaremba to express in \cite{zaremba1972methode} a conjecture about the existence of rational numbers having only ``small" partial quotients in their continued fraction expansions.\par\medskip

Let $f:\n R^s\to\n R$ be a (sufficiently regular) function, with $s>1$. The \textit{quasi-Monte Carlo method} of numerical integration consists in the approximation of the integral $\ds\int_{[0,1]^s}f(t)dt$ with a finite sum of the form $\ds\frac 1d\sum_{i=1}^df(t_i)$, where $\C T=(t_i)_{i=1}^d$ is a suitable sequence of points in $[0,1]^s$. If the \textit{total variation} $V(f)\!=\!\!\!\max\limits_{\alpha\,\sub\{1,\dots,s\}}\|\partial^{(\alpha)}f\|_{L^1([0,1]^s)}$ of $f$ is finite, then, by Koksma - Hlawka inequality, $$\ds\left|\int_{[0,1]^s}f(t)dt-\frac1d\sum_{i=1}^df(t_i)\right|\leq c\,V(f)D(\C T),$$ where $c$ is an absolute constant and $D(\C T)\!=\!\!\!\sup\limits_{\substack{I\,\sub\, [0,1]^s\\I\text{ box}}}\left||I|-\frac1d(\#\{i\leq d,\ t_i\inn I\})\right|$ is the \textit{discrepancy} of the sequence $\C T$.

Zaremba was thus interested in the explicit construction of sequences of points that have the smallest possible discrepancy.  
In the case $s=2$, this problem is directly linked with continued fraction expansions. Indeed, Zaremba had already proved in \cite{zaremba1966good}, Proposition 4.3, that if $\C T=\left\{t_i=\left(\frac id,\left\{\frac{bi}d\right\}\right),\ 1\leq i\leq d\right\}$, where $\{\cdot\}$ denotes the fractional part of a real number, with $b,d$ relatively prime positive integers such that $b/d\inn\mathfrak R_z$, then $$D(\C T)<\left(\frac{4z}{\log(z+1)}+\frac{4z+1}{\log d}\right)\frac{\log d}d.$$ 

W. Schmidt proved that for every sequence $\C T\sub[0,1]^2$ of $d$ points the discrepancy is at least $D(\C T)>c\frac{\log d}d$, where $c$ is an absolute constant. Thus, Zaremba's model realizes the optimal discrepancy, on condition that we have a control over $z$, that is, if there exists a constant $z$ such that for every $d$ there exists $b$ with $b/d\inn\mathfrak R_z$.

Relying on numerical evidence, in \cite{zaremba1972methode} (page 76) Zaremba conjectured that this is the case for $z\geq5$: 

\begin{conj}[Zaremba]
	For any positive integer $d$ there exists $b\inn\n N$, relatively prime to $d$, such that $K(b/d)\leq 5$, that is, $$\mathfrak D_5=\n N.$$
\end{conj}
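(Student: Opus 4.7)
The plan is to follow the circle method approach of Bourgain and Kontorovich, which establishes a density-one version of the conjecture but falls short of the full statement. First I would translate the problem into a counting question inside a semigroup of matrices. Let $\Gamma_z\sub\SL_2(\n Z)$ be the semigroup generated by the matrices $\mm{a}{1}{1}{0}$ for $1\leq a\leq z$. By the matrix formalism of Notation \ref{1notat:matrix} (formula \eqref{1eq:pnqnmatrix}), a positive integer $d$ lies in $\mathfrak D_z$ if and only if $d$ occurs as the bottom-left entry of some element of $\Gamma_z$; so the goal is to produce, for every $d\geq 2$, at least one such element.

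The core strategy is to introduce a smoothly weighted counting function $R_N(d)$ that counts $\gamma\inn\Gamma_z$ with $\|\gamma\|\leq N$ and $\gamma_{21}=d$, then to evaluate $R_N(d)$ asymptotically via harmonic analysis on $\n Z/d\n Z$. Writing the detector $\mathbf 1_{\gamma_{21}\equiv 0\ (d)}$ as a sum of additive characters and splitting into major arcs (characters of small conductor) and minor arcs, one expects the major arcs to provide a main term of size $\asymp N^{2\delta_z}/d$, where $\delta_z$ is the Hausdorff dimension of the Cantor set $\mathfrak R_z\cap[0,1]$. Hensley's estimate $\delta_2>1/2$ already gives $\delta_5>1/2$, which is what makes the method viable: the main term then dominates on average and, being positive, forces $R_N(d)>0$ whenever the minor-arc error can be absorbed.

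The two analytic inputs I would need to import are, first, a spectral gap for the transfer operator $\C L_s$ attached to the Gauss map restricted to partial quotients in $\{1,\dots,z\}$ (this yields the exponential mixing rate that produces the main term and controls its arithmetic factor), and second, a super-strong approximation / expansion statement for the reduction of $\Gamma_z$ modulo $d$, in the spirit of the Bourgain--Gamburd--Sarnak machinery. Combined, these give sufficient cancellation on minor arcs in an \emph{averaged} $L^2$ sense over $d$, which is what Bourgain and Kontorovich leverage to establish the density-one result cited in the excerpt.

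The main obstacle, and the reason the conjecture remains open, is precisely the averaged nature of the minor-arc estimate: an $L^2$ bound summed over $d$ cannot rule out individual exceptional moduli. To promote the density-one statement to the pointwise statement $\mathfrak D_5=\n N$ one would need either a sharp pointwise bound on the relevant incomplete Kloosterman-type sums (well beyond what the spectral gap alone delivers) or a genuinely constructive procedure producing, for each $d$, an explicit numerator $b$ with $K(b/d)\leq 5$. This pointwise step is where I would expect to be stuck, and bridging it is essentially the whole unresolved content of Zaremba's conjecture.
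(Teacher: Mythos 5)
The statement you were asked to prove is Zaremba's Conjecture itself, which the paper presents precisely as a conjecture and explicitly notes remains open; there is no proof in the paper to compare against, and no proof is known. You correctly recognized this, and instead of fabricating an argument you gave an accurate account of the Bourgain--Kontorovich circle-method programme: reformulating $\mathfrak D_z$ as the set of bottom-left entries of the semigroup $\Gamma_z$, splitting the character sum into major and minor arcs, feeding in the transfer-operator spectral gap and the expansion/strong-approximation input, and observing that the minor-arc control is only an $L^2$ average over moduli, which yields the density-one Theorem \ref{3theo:bourgain-kontorovich} but cannot exclude individual exceptional $d$. That diagnosis of the obstruction --- the need for a pointwise rather than averaged minor-arc bound, or else an explicit construction of $b$ for every $d$ --- is exactly the honest state of the art, and matches the paper's own discussion in Chapter 3. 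In short, your response is correct as a survey and correct in declining to claim a proof; just be explicit up front that what you are submitting is a strategy sketch plus an impossibility assessment, not a proof, since the question as posed cannot be answered affirmatively with current technology.
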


More generally, we can consider the following Conjecture:

\begin{conj}\label{3conj:Zar2}
	There exists $z\inn\n N$ such that $\mathfrak D_z=\n N$.
\end{conj}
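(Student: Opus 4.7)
The statement is Zaremba's conjecture in its weak form and is widely believed to be open; nevertheless, the natural strategy, pursued by Bourgain and Kontorovich, is a circle-method attack combined with the spectral theory of thin semigroups. I will sketch this plan, indicating the principal obstruction.

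Fix a sufficiently large $z$ (numerical evidence suggests $z=5$ already suffices). The pairs $(b,d)$ with $(b,d)=1$ and $b/d\inn\mathfrak R_z$ correspond to finite continued fractions $[0,a_1,\dots,a_k]$ with $1\!\leq\! a_i\!\leq\! z$; by the matrix formalism of \ref{1notat:matrix}, they are parametrized by words in the free semigroup $\Gamma_z\sub\SL_2(\n Z)$ generated by the matrices $M_a=\mm a110$, $1\leq a\leq z$, and $d$ is (up to sign) the lower-right entry of the associated product. Thus $\mathfrak D_z$ is the image of the orbit $\Gamma_z\vv01$ projected on the second coordinate. The limit set of $\Gamma_z$ in $\n R\cup\{\infty\}$ is a Cantor-like set of Hausdorff dimension $\delta_z\inn(0,1)$ with $\delta_z\to1$ as $z\to\infty$; the starting point is that for $z$ large one has $\delta_z>1/2$, the dimensional threshold above which a bilinear sum-product apparatus becomes available.

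The first step is to introduce the counting function $\C N_N(n)=\#\{\gamma\inn\Gamma_z:\|\gamma\|\leq N,\ \gamma\vv01 \text{ has second coordinate }n\}$ and to aim at showing $\C N_N(n)>0$ for every $n$ in a window $n\ll N^{2\delta_z}$. Expanding $\C N_N(n)$ via additive characters mod $n$ splits the problem into a major-arc and a minor-arc contribution. On the major arcs, one combines the orbit count $\#\{\gamma\inn\Gamma_z:\|\gamma\|\leq N\}\asymp N^{2\delta_z}$ with equidistribution of $\Gamma_z$ in the congruence quotients $\SL_2(\n Z/q\n Z)$ to extract a positive main term weighted by a local ``singular series''. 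The latter rests on the Bourgain--Gamburd machine: a spectral gap for the random walk driven by the generators of $\Gamma_z$ on $\ell^2(\SL_2(\n Z/q\n Z))$, uniform in $q$, which itself reduces to sum-product estimates in $\n F_p$ and an $\ell^2$-flattening lemma for convolution powers of the generating measure.

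The minor-arc analysis is the genuine obstruction. One must exhibit cancellation in exponential sums of the form $\sum_{\gamma\inn\Gamma_z,\,\|\gamma\|\leq N}e(\theta\, n_\gamma)$ for $\theta$ in a Lebesgue-generic set; exploiting the multiplicative structure $\gamma=\gamma_1\gamma_2$ of words of length $k_1+k_2$ leads to a bilinear form to which Cauchy--Schwarz and sum-product estimates apply, yielding nontrivial savings provided $\delta_z$ is not too small. This is precisely what Bourgain and Kontorovich carry through, but only well enough to reach a \emph{density-one} statement $\#(\mathfrak D_z\cap[1,N])=N+O(N/(\log N)^A)$, not a full covering. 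Promoting this to \emph{every} integer would require eliminating the exceptional set entirely, which seems to demand an effective local--global principle for the orbit that is presently beyond reach, compounded by the genuine ``bad'' moduli whose continued-fraction structure is rigid. Finally, even granted such an upgrade, pinning down the sharp value $z=5$ would necessitate a combined asymptotic-plus-finite-check argument: the circle method would handle large $d$, and small $d$ would be verified by direct computation.
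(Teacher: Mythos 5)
You have correctly recognized that the statement labelled \ref{3conj:Zar2} is a conjecture, not a theorem, and that the paper offers no proof of it; Chapter 3 only surveys the state of the art. Your sketch faithfully reproduces the Bourgain--Kontorovich programme as the paper itself summarizes it: the reformulation of $\mathfrak D_z$ as the second-coordinate image of the orbit of $\Gamma_z$ on $\n Z^2$, the Hausdorff-dimension threshold $\delta_z>1/2$, the circle-method decomposition into major and minor arcs, the reliance on Bourgain--Gamburd expansion in $\SL_2(\n Z/q\n Z)$, and the fact that the argument has so far delivered only a density-one statement (the paper's Theorem \ref{3theo:bourgain-kontorovich}, with the explicit $\delta_0=307/312$ and the refinements by Frolenkov--Kan and Huang). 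Your identification of the minor-arc estimates and the removal of the exceptional set as the genuine obstruction is likewise the accepted diagnosis, and matches the paper's remarks about the Local-Global Conjecture \ref{3conj:local-global} being the natural target. There is nothing to correct here: you have not claimed a proof, and you should not, since none is known. The only thing worth adding relative to the paper's own exposition is the cautionary example from \cite{bourgain2013beyond} that Hensley's Conjecture \ref{3conj:Hensleyfalse} is false for $\C A=\{2,4,6,8,10\}$ because of the congruence obstruction $d\not\equiv3\pmod4$, which is precisely why the Local-Global Conjecture restricts attention to \emph{admissible} integers; for the alphabet $\C A=\{1,\dots,z\}$ with $z\geq2$ there are no such obstructions, so the conjecture as stated remains plausible, but the example shows that the singular-series analysis on the major arcs cannot be omitted.
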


Zaremba could only prove that for every positive integer $d$ there exists $b$, relatively prime to $d$, such that \be\label{3eq:Zartheo}K\left(b/d\right)\leq c\log d,\ee where $c$ is an absolute constant.\par\medskip

It has been shown that $[1,10^6]\sub\mathfrak D_5$ and that $[1,10^6]\setminus\mathfrak D_4=\{54,150\}$. Moreover, Niederreiter \cite{niederreiter1986dyadic} (Theorems 1 and 2) proved that $\mathfrak D_3$ contains all the powers of 2 and 3 and that all the powers of 5 are in $\mathfrak D_4$; actually, he gave an explicit method, based on the folding algorithm\footnote{The same method, the polynomial analogue of the folding algorithm, \eqref{1eq:folding}, will allow us to prove the polynomial analogue of Zaremba's Conjecture for powers of linear polynomials, see Lemma \ref{4lem:folded}.} to construct the corresponding elements of $\mathfrak R_3$ or $\mathfrak R_4$. This led him to conjecture that $\mathfrak D_3$ contains every large enough integer. 

Relying on a larger numerical evidence, Hensley conjectured that the same result should hold for $\mathfrak D_2$:

\begin{conj}[Hensley, \cite{hensley1996polynomial}]\label{3conj:Hensley1}
	$\mathfrak D_2=\n N\setminus F_2$, where $F_2$ is a finite set.
\end{conj}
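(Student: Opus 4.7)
The plan is to attack this via the ``thin orbit'' framework of Bourgain--Kontorovich, specialized to the restricted alphabet $\{1,2\}$. The Cantor-like set $C_2=\{[0,a_1,a_2,\dots]:a_i\inn\{1,2\}\}$ has Hausdorff dimension $\delta_2\approx 0.5312$, a value computed by Hensley. Since $2\delta_2>1$, the affine sieve heuristic predicts that the denominators of fractions in $\mathfrak R_2$ should carry positive density in $\n N$, and, optimistically, should miss only finitely many integers. So the shape of the statement is consistent with what dimension theory suggests, and the task is to upgrade a density statement to a ``complement is finite'' statement.

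First I would reformulate the problem in terms of orbits of the free semigroup $\Gamma_2=\langle M_1,M_2\rangle\sub\SL_2(\n Z)$, where $M_a=\mm{a}{1}{1}{0}$; one verifies that $d\inn\mathfrak D_2$ precisely when $d$ appears as a bottom-row entry of some product in $\Gamma_2$ with coprime neighbour. The associated counting function $R_N(d)=\#\{g\inn\Gamma_2:\|g\|\leq N,\ d \text{ is the relevant entry of } g\}$ is then analyzed via the circle method: the Fourier transform $\widehat{R}_N$ is decomposed into major and minor arcs, and the major arc contribution produces an explicit positive ``singular series'' whenever no local congruence obstruction kills it.

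The decisive analytic input is a spectral gap for the reductions of $\Gamma_2$ modulo $n$ (Bourgain--Gamburd--Sarnak expansion for thin subgroups of $\SL_2$), which, combined with the dimension bound $\delta_2>\tfrac12$, controls the minor arc contribution. This should deliver a density-one statement of the form $\#(\mathfrak D_2\cap[1,N])=N-O(N(\log N)^{-\eta})$ for some $\eta>0$, paralleling the Bourgain--Kontorovich theorem for larger $z$. At this point one has that ``almost every'' $d$ belongs to $\mathfrak D_2$.

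The decisive obstacle --- and the reason the conjecture is still open --- is the upgrade from a density-one result to the full statement $\mathfrak D_2=\n N\setminus F_2$. This would require either (a) a genuine local-global principle for the alphabet $\{1,2\}$, analogous to the Bourgain--Kontorovich local-global conjecture, verified by ruling out all congruence obstructions and backed by a quantitative exponential sum bound uniform in the modulus, or (b) an entirely new combinatorial construction producing small-partial-quotient fractions with prescribed denominator. Since $\delta_2$ barely exceeds $\tfrac12$, the slack in every exponent is minimal, and an error term strong enough to eliminate \emph{every} sufficiently large $d$ appears to lie beyond current techniques in the spectral theory of thin groups; I would accordingly view any proof of Conjecture \ref{3conj:Hensley1} as conditional on genuine breakthroughs along one of these two lines, rather than as attainable by refinement of existing methods.
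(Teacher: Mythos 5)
This is a conjecture, not a theorem: the paper presents Conjecture \ref{3conj:Hensley1} as an open problem and offers no proof of it, so there is nothing in the paper against which to check a purported proof. Your write-up is therefore not a gap in the usual sense --- it is an accurate assessment that the statement cannot currently be proved, and the ingredients you cite (the semigroup reformulation, the circle method with major/minor arc decomposition, the spectral gap for reductions of the thin semigroup, and the threshold $\delta_{\{1,2\}}>\tfrac12$) are exactly the tools the paper describes around Theorem \ref{3theo:bourgain-kontorovich}. You correctly identify that the best one can extract from this machinery is a density-one statement, that passing from density one to a finite exceptional set would require the full Local-Global Conjecture \ref{3conj:local-global}, and that this remains out of reach. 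Two small refinements against the paper's account: for the alphabet $\{1,2\}$ the paper notes there are \emph{no} congruence obstructions, so the difficulty you should emphasize is purely analytic rather than local; and the quantitative results actually proved (Bourgain--Kontorovich, Frolenkov--Kan, Huang) require $\delta_{\C A}$ well above $\tfrac12$ (e.g.\ $\delta_{\C A}>\tfrac{307}{312}$ or $\delta_{\C A}>\tfrac56$), so even a density-one result for $\C A=\{1,2\}$ is presently unproved --- the slack is not merely ``minimal,'' it is nonexistent at $\delta_2$. With those caveats, your conclusion that the conjecture remains open and conditional on a genuine breakthrough is the correct one.
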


Actually, Hensley conjectured something much more general.

For a fixed finite alphabet $\C A\sub\n N$, we can consider the limit set $$\mathfrak C_{\C A}=\Big\{[0,a_1,a_2,\dots],\ a_i\inn\C A\ \forall i\Big\}.$$ The elements of $\mathfrak C_{\C A}$ are said to be \textit{uniformly badly approximable} or \textit{absolutely Diophantine} of height $z=\max\C A$. If $\C A=\{1,\dots,z\}$, similarly to the previous notation we will write $\mathfrak C_{\C A}=\mathfrak C_z$. For example, $\mathfrak C_{\{1\}}=\left\{\frac{1+\sqrt5}2\right\}$, while $\mathfrak C_{\C A}$ is an infinite set as soon as $\# \C A\geq2$. Actually, in this case $\mathfrak C_{\C A}$ is an uncountable set of Lebesgue measure 0 and, more precisely, it is a Cantor-like set. Let $\delta_{\C A}$ be its Hausdorff dimension; certainly $\delta_{\C A}\to1$ when $\C A$ tends to the whole $\n N$. Hensley (Theorem 1 in \cite{HENSLEY1992336}) proved that for $\C A=\{1,\dots,z\}$ we have $$\delta_z=1-\frac 6{\pi^2z}+o(1/z) \text{ for } z\to\infty;$$ in particular for every $\delta<1$ there exists a finite alphabet $\C A$ such that $\delta_{\C A}>\delta$. 

Similarly to the previous notations, let $\mathfrak R_{\C A}$ be the set of the convergents of elements in $\mathfrak C_{\C A}$ and let $\mathfrak D_{\C A}$ be the set of the denominators of elements in $\mathfrak R_{\C A}$.

Let $\mathfrak R_{\C A}(N)=\left\{b/d\inn\mathfrak R_{\C A},\ 1\leq b<d<N\right\}$. Hensely showed that \be\label{3eq:HensleyRN}\#\mathfrak R_{\C A}(N)\asymp N^{2\delta_{\C A}}\text{ for } N\to\infty,\ee where the implied constant can depend on $\C A$.  

In particular, if $\delta_{\C A}<\frac12$ then $\n N\setminus \mathfrak D_{\C A}$ is an infinite set. Indeed, let $\mathfrak D_{\C A }(N)=\mathfrak D_{\C A}\cap[1,N]$. Then we have $\#\mathfrak D_{\C A}(N)\!\leq\#\mathfrak R_{\C A}(N)\!\ll N^{2\delta_{\C A}}$. So in order to have $\#\mathfrak D_{\C A}(N)=N+O(1)$, a necessary condition is $\delta_{\C A}\geq\frac12$. 

For example, as $\delta_{\{1,3\}}<\frac12$, for every $N\inn\n N$ there exists $d\geq N$ not in $\mathfrak D_{\{1,3\}}$, that is, such that $b/d\notin\mathfrak R_{\{1,3\}}$ for every integer $b$ relatively prime to $d$, that is, in the continued fraction expansion of $b/d$ there are always partial quotients different from $1$ and $3$.

On the other hand, $\#\mathfrak D_{\C A}(N)\geq\frac1N\,\#\mathfrak R_{\C A}(N)\gg N^{2\delta_{\C A}-1}$, that is,  $\#\mathfrak D_{\C A}(N)$ grows at least as a power of $N$ as soon as $\delta_{\C A}>\frac12$. This led Hensley to conjecture that the previous necessary condition is also sufficient, that is:

\begin{conj}[Hensley, Conjecture 3 in \cite{hensley1996polynomial}]\label{3conj:Hensleyfalse}
	$\!$For any finite alphabet $\C A$, $\mathfrak D_{\C A}\!\!=\!\n N\setminus F_{\C A}$, where $F_{\C A}$ is a finite set, if and only if $\delta_{\C A}>\frac12$.
\end{conj}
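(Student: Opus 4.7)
The plan is to handle the two implications separately, since they have very different character. The necessity is essentially a counting consequence of Hensley's asymptotic \eqref{3eq:HensleyRN}, whereas the sufficiency is a genuine local-to-global problem that I would attack by the Hardy--Littlewood circle method in the form developed by Bourgain and Kontorovich for Zaremba's conjecture.

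For the necessity, suppose $\mathfrak D_{\C A} = \n N \setminus F_{\C A}$ with $F_{\C A}$ finite. Then $\#\mathfrak D_{\C A}(N) = N - O(1)$, and since every $d \in \mathfrak D_{\C A}(N)$ contributes at least one fraction to $\mathfrak R_{\C A}(N)$, we get $N - O(1) \leq \#\mathfrak R_{\C A}(N) \asymp N^{2\delta_{\C A}}$, forcing $\delta_{\C A} \geq 1/2$. Promoting this to the strict inequality $\delta_{\C A} > 1/2$ would require sharper information in the borderline case, either a quantitative lower bound on the number of excluded $d$ when $\delta_{\C A} = 1/2$, or the slightly finer count $\#\mathfrak R_{\C A}(N) \leq \sum_{d \in \mathfrak D_{\C A}(N)} \phi(d)$ combined with a positive density of small $\phi(d)/d$ among denominators in $\mathfrak D_{\C A}$.

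For the sufficiency, fix a large $d$ and try to show that $r(d) := \#\{b : (b,d) = 1,\ b/d \in \mathfrak R_{\C A}\}$ is positive. Introduce the exponential sum $S_N(\theta) = \sum_{b/q \in \mathfrak R_{\C A}(N)} e(q\theta)$, so that $r_N(d) := \#\{b/q \in \mathfrak R_{\C A}(N) : q = d\} = \int_0^1 S_N(\theta)\, e(-d\theta)\,d\theta$, and split the integral into major and minor arcs. On the major arcs near rationals $a/m$ with small denominator $m$, I would expect a main term of order $N^{2\delta_{\C A} - 1}$ multiplied by a singular series encoding local densities; this tends to infinity with $N$ precisely when $\delta_{\C A} > 1/2$. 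On the minor arcs, use a spectral gap estimate for the transfer operator of the Gauss map restricted to $\mathfrak C_{\C A}$ — equivalently an expander-type bound for the monoid generated by $\{M_a : a \in \C A\}$ acting on $\SL_2(\n Z)$ — to save a power of $N$ below the trivial bound.

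The main obstacle is twofold. First, the minor-arc savings provided by currently available spectral gaps are typically uniform enough to conclude only for a density-one set of $d$, not for every $d$; this is precisely why the Bourgain--Kontorovich attack on Zaremba's original conjecture stops at density one. Second, and more seriously, the singular series at $d$ can genuinely vanish because of local obstructions: for some alphabets $\C A$ with $\delta_{\C A}$ barely above $1/2$ one can force every denominator in $\mathfrak R_{\C A}$ to satisfy a non-trivial congruence, excluding an infinite arithmetic progression from $\mathfrak D_{\C A}$. I therefore suspect that the statement as written is false in one direction, and that the correct theorem is a local-global principle of the form ``$d \in \mathfrak D_{\C A}$ for every sufficiently large $d$ passing the obvious congruence constraints'', valid when $\delta_{\C A} > 1/2$ but not in the plain uncorrected formulation above.
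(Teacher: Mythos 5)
You were asked to ``prove'' a statement that is in fact a conjecture, and one that the paper records only to report that it is \emph{false}. Your analysis is exactly right. Immediately after Conjecture~\ref{3conj:Hensleyfalse}, the paper explains that Bourgain and Kontorovich disproved it using the alphabet $\C A=\{2,4,6,8,10\}$: there $\delta_{\C A}\approx0.517>\frac12$, yet every $d\inn\mathfrak D_{\C A}$ satisfies $d\not\equiv3\pmod4$, so $\n N\setminus\mathfrak D_{\C A}$ contains a full residue class and is infinite. This is precisely the congruence obstruction you flagged as the fatal problem with the sufficiency direction, and the ``corrected'' local-global statement you sketch --- all sufficiently large \emph{admissible} $d$ are represented --- is Conjecture~\ref{3conj:BKforZar} in the paper (and more generally the Local-Global Conjecture~\ref{3conj:local-global}).

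Your treatment of the necessity direction also matches the paper's heuristic: from $\#\mathfrak D_{\C A}(N)\leq\#\mathfrak R_{\C A}(N)\ll N^{2\delta_{\C A}}$ one deduces that $\mathfrak D_{\C A}=\n N\setminus F_{\C A}$ with $F_{\C A}$ finite forces $\delta_{\C A}\geq\frac12$, and the paper, like you, does not address whether the borderline case $\delta_{\C A}=\frac12$ can actually be excluded. Your concern that the minor-arc estimates available from the spectral gap yield only density-one conclusions rather than every admissible $d$ is also borne out: Theorem~\ref{3theo:bourgain-kontorovich} is exactly such a density-one statement, valid for $\delta_{\C A}>\delta_0=\frac{307}{312}$. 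In short, you correctly refused to prove the statement, identified the genuine counterexample mechanism, and reconstructed the conjecture's modern replacement --- there is nothing in the paper's account of this conjecture that you missed.
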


As $\delta_{\{1,2\}}>\frac12$ (as it is shown, for example, by Jenkinson and Pollicott), this would imply Conjecture \ref{3conj:Hensley1} and, as a consequence, Conjecture \ref{3conj:Zar2}.\par\medskip 		

However, Bourgain and Kontorovich proved that Hensley's general Conjecture does not hold: for $\C A=\{2,4,6,8,10\}$ we have $\delta_{\C A}\sim 0,517$ but arbitrarily large numbers do not belong to $\mathfrak D_{\C A}$; indeed, they showed that if $d\inn\mathfrak D_{\C A}$ then $d\not\equiv3\pmod 4$. They thus proposed an alternative version of this conjecture, Conjecture \ref{3conj:BKforZar}, taking into account such congruence obstructions.

\section{McMullen's Conjecture}
It can be proved that any real quadratic field $\n Q(\sqrt d)$ contains infinitely many different purely periodic continued fractions with uniformly bounded partial quotients:
\begin{theo}\label{3theo:MM}
	For any real quadratic field $\n Q(\sqrt d)$ there exists a constant $m_d\inn\n N$, depending only on $d$, such that $\n Q(\sqrt d)$ contains infinitely many purely periodic continued fractions which are absolutely Diophantine of height $m_d$, that is, $$\{\alpha\inn\n Q(\sqrt d)|\ \alpha \text{ purely periodic}\}\cap\mathfrak C_{m_d} \text{ is an infinite set.}$$ 
\end{theo}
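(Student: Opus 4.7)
The theorem is due to Wilson \cite{wilson1980limit}. My plan is to translate the problem into the modular group $\SL_2(\n Z)$ via the matrix formalism of \ref{1notat:matrix}. Each purely periodic continued fraction $\beta=[\overline{a_1,\ldots,a_n}]$ with $a_i\leq m_d$ is the attracting fixed point in $(1,\infty)$ of the hyperbolic matrix $M=M_{a_1}\cdots M_{a_n}\in\SL_2(\n Z)$; the number $\alpha=1/\beta\in\mathfrak C_{m_d}$ then lies in the same quadratic field, and a direct computation of the characteristic polynomial of $M$ shows that this field is $\n Q(\sqrt{(\tr M)^2-4})$. So the task reduces to producing infinitely many pairwise non-conjugate hyperbolic matrices in $\SL_2(\n Z)$, each expressible as a product $M_{a_1}\cdots M_{a_n}$ with $a_i\leq m_d$, and whose trace $t$ satisfies the condition that $t^2-4$ has squarefree part equal to $d$.

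The natural source of admissible traces is the Pell-like equation $t^2-dk^2=\pm 4$, whose integer solutions form an infinite sequence $(t_j,k_j)_{j\geq1}$ governed by powers of the fundamental unit of $\n Q(\sqrt d)$. Setting $m_d=2\lfloor\sqrt d\rfloor$, the classical expansion $\sqrt d=[a_0,\overline{a_1,\ldots,a_\ell}]$ already satisfies $a_i\leq m_d$ for $i\geq 1$, and the matrix $E=M_{(a_1,\ldots,a_\ell)}$ has trace $t_1$ with the required property; its powers $E^j$ realize the higher traces $t_j$ but share the same pair of fixed points, so by themselves they yield only one equivalence class of continued fractions.

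The heart of the proof is therefore to produce infinitely many distinct conjugacy classes of hyperbolic matrices with admissible trace and bounded letters. I would parametrize candidate classes by primitive ideal classes in the non-maximal orders $\C O_f=\n Z+f\C O_{\n Q(\sqrt d)}$, whose class numbers are unbounded in $f$: each narrow ideal class determines a cycle of reduced binary quadratic forms of discriminant $df^2$, hence a $\GL_2(\n Z)$-equivalence class of reduced quadratic irrationals in $\n Q(\sqrt d)$, hence a conjugacy class in $\SL_2(\n Z)$. The hard part, and the substantive content of the theorem, is to prove that among these ideal classes (varying both the conductor $f$ and the class within $\C O_f$) infinitely many give rise to CF-cycles whose complete quotients $\alpha_k=(-b_k+f\sqrt d)/(2a_k)$ all have $2a_k$ of the same order of magnitude as $f\sqrt d$, forcing the partial quotients $\lfloor\alpha_k\rfloor$ to be uniformly bounded by some $m_d$ depending only on $d$. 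Wilson achieves this by a direct combinatorial analysis of the CF-cycle attached to an ideal class, producing suitable classes from arithmetic progressions of conductors; McMullen's later proof \cite{mcmullen2009uniformly} extracts them from non-escape of mass for the geodesic flow on the modular surface restricted to $K$-rational closed geodesics.
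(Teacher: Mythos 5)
Your framing is sound: reducing the theorem to producing infinitely many pairwise non-conjugate hyperbolic matrices in $\SL_2(\n Z)$ with bounded letters and admissible trace is the right translation, and your observation that powers $E^j$ of a single cycle matrix all share the same axis — hence produce only one equivalence class of purely periodic irrationals — is exactly the obstruction the theorem must overcome. The ideal-class route you sketch (cycles of reduced forms of discriminant $df^2$, with the conductor $f$ ranging over integers) is Wilson's framework and is a legitimate alternative to the paper's approach. The paper itself gives two: the primary proof is dynamical (in the appendix, via closed geodesics in a compact subset of $T^1\n M$), and immediately after the statement it records McMullen's explicit word $\alpha_n=\bigl[\ov{(1,s)^n,1,s+1,s-1,(1,s)^n,1,s+1,s+3}\,\bigr]$ with $s=2x-2$ and $x^2-dy^2=1$, which shows membership in $\n Q(\sqrt d)$ by a one-line discriminant computation and settles the theorem elementarily.

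The gap in your proposal is that you do not prove the step you yourself call the heart of the matter: that infinitely many of the form classes of discriminant $df^2$ (as $f$ varies) have \emph{all} reduced representatives $(a_k,b_k,c_k)$ with $a_k\asymp f\sqrt d$, so that the resulting CF cycles have uniformly bounded partial quotients. This is not automatic — most ideal classes in $\C O_f$ produce cycles with unbounded partial quotients as $f\to\infty$, and the substantive combinatorics of Wilson's paper is precisely in selecting conductors and classes for which the bound holds. Citing Wilson and McMullen for this step, rather than carrying it out, leaves the proposal as a roadmap rather than a proof. Relatedly, your initial declaration $m_d=2\lfloor\sqrt d\rfloor$ is not established by anything you write and is quietly replaced by an unspecified $m_d$ a few lines later; the constants obtainable from Wilson's argument, and from McMullen's explicit word above, depend on the fundamental Pell solution for $d$, not merely on $\lfloor\sqrt d\rfloor$. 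If you want a self-contained elementary proof, the cleanest route is to verify directly (by the matrix correspondence, computing $\tr M_{\ov w_n}$ and its discriminant, or by the folding identities of Lemma \ref{1lem:manip}) that McMullen's explicit words $\alpha_n$ lie in $\n Q(\sqrt d)$ and are pairwise non-equivalent — the latter because their periods have strictly increasing length and all partial quotients are $\le s+3$, so no tail of one can match a tail of another up to a constant.
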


In particular McMullen (Theorem 1.1 in \cite{mcmullen2009uniformly}) proved this result using the connection between geodesics of the modular surface and continued fractions; we will discuss such connection and a proof of this Theorem in Appendix \ref{ch:hyper}.
\par\medskip

The previous Theorem can also be proved with algebraic methods, giving an explicit infinite sequence of absolutely Diophantine purely periodic continued fractions contained in a given real quadratic field. For example, in the same paper McMullen noticed that $\alpha_n=\Big[\,\ov{(1,s)^n,1,s+1,s-1,(1,s)^n,1,s+1,s+3}\,\Big]\inn\mathfrak C_{s+3}$ (where $(1,s)^n$ means that the sequence $1,s$ is repeated $n$ times) is in $\n Q\Big(\sqrt{s^2+4s}\,\Big)$ for every $n$, which gives a proof of Theorem \ref{3theo:MM} choosing $s=2x-2$,  where $(x,y)$ is a non-trivial solution of the Pell equation for $d$, that is, $x^2-dy^2=1$. Indeed, in this case for every $n$ we will have $\alpha_n\inn\n Q\Big(\sqrt{4x^2-4}\,\Big)=\n Q\Big(\sqrt{dy^2}\,\Big)=\n Q\Big(\sqrt d\,\Big)$.

Other constructions of special sequences of uniformly absolutely Diophantine purely periodic continued fractions with similar patterns of partial quotients and that lie in some prescribed real quadratic field are also given in \cite{mercat2013construction} (Th\'{e}or\`{e}me 1.2)\footnote{We will see a polynomial analogue of some of these results in Chapter \ref{ch:MM}.}, \cite{wilson1980limit}, \cite{woods1978markoff}.\par\medskip

This led McMullen to ask (\cite{mcmullen2009uniformly}, page 22) if the constants $m_d$ could be replaced by $2$ for every $d$. More generally, we can ask if the $m_d$ can be replaced by some absolute constant $m$:
\begin{conj}[McMullen]\label{3conj:MM}
	There exists $m\inn\n N$ such that $$\{\alpha\inn\n Q(\sqrt d)|\ \alpha \text{ purely periodic}\}\cap\mathfrak C_m \text{ is infinite for any positive squarefree integer }d.$$
\end{conj}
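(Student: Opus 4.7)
The plan is to recast the conjecture in matrix form and then try to combine an explicit arithmetic construction with a reduction procedure. A purely periodic continued fraction $\alpha=[\ov{a_1,\dots,a_k}]$ with $a_i\in\{1,\dots,m\}$ corresponds, via the formalism of \ref{1notat:matrix}, to a fixed point of $M=M_{a_1}\cdots M_{a_k}\in\SL_2(\n Z)$, and by Lemma \ref{2lem:eigen} one has $\alpha\in\n Q(\sqrt d)$ iff the discriminant $(\tr M)^2-4\det M$ equals $d$ times a square. Thus the conjecture becomes: for some absolute $m$, every squarefree $d$ arises as the squarefree part of the discriminant of a matrix obtained from a positive word in the $m$-letter alphabet $\{M_1,\dots,M_m\}$, in infinitely many distinct ways.

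A natural first step is to adapt the explicit McMullen family $\alpha_n=[\ov{(1,s)^n,1,s+1,s-1,(1,s)^n,1,s+1,s+3}]$, which lies in $\n Q(\sqrt{s^2+4s})$ for all $n$. Setting $s=2x-2$ with $(x,y)$ a non-trivial solution of $x^2-dy^2=1$ places infinitely many absolutely Diophantine purely periodic elements in $\n Q(\sqrt d)$, but only with the height bound $m_d\le 2x+1$, which blows up with the fundamental Pell solution (for instance $x_1=1766319049$ already for $d=61$). To remove this dependence, I would try to rewrite such families up to $\GL_2(\n Z)$-equivalence using the folding and juxtaposition identities of Lemma \ref{1lem:manip} as continued fractions whose partial quotients are bounded by an absolute $m$. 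By Serret's theorem \ref{1theo:serret} the ambient quadratic field is preserved, but pure periodicity is generally destroyed; one would then have to restore it by replacing $\alpha$ by a sufficiently deep complete quotient $\alpha_N$, using the Galois-type criterion of Lemma \ref{2lem:galois} to ensure the tail is reduced and hence purely periodic.

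An alternative, geometric route mirrors McMullen's original viewpoint: absolutely Diophantine purely periodic continued fractions of height $m$ correspond to closed geodesics on the modular surface $\SL_2(\n Z)\backslash\mathbb{H}^2$ that avoid a horoball neighbourhood of the cusp whose size depends only on $m$, while elements of $\n Q(\sqrt d)$ correspond to closed geodesics whose length is a regulator-type quantity attached to $d$. The plan here would be to produce, inside each order $\n Z[\sqrt d]$, a family of primitive hyperbolic conjugacy classes represented by matrices whose axes penetrate the cusp to a depth bounded independently of $d$, by pushing a covering-surface construction through the modular surface along the lines of McMullen's proof of Theorem \ref{3theo:MM}.

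The principal obstacle in either strategy is the simultaneous control of the arithmetic datum (the prescribed field $\n Q(\sqrt d)$) and the dynamical datum (a uniform bound on partial quotients, equivalently on cusp-excursion depth). The density-one results of Bourgain and Kontorovich, based on the thermodynamic formalism for the Gauss map and exponential sums over the semigroup generated by $M_1,\dots,M_m$, show that almost every $d$ can be realised for some fixed $m$, but their local-global machinery is essentially insensitive to individual exceptional $d$. Closing the gap between ``density one'' and ``every $d$'' appears to require a genuinely new ingredient, which is why this conjecture, like its companion \ref{3conj:Zar2}, remains open.
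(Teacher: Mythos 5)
The statement you were asked to prove is an open conjecture: the paper labels it Conjecture \ref{3conj:MM} precisely because no proof is known, and immediately after stating it the author remarks that even the much weaker existence question (one absolutely Diophantine irrational of height $m$ in every $\n Q(\sqrt d)$) seems to be open. There is therefore no proof in the paper against which to compare your attempt, and you correctly refrain from manufacturing one: your writeup is a survey of obstructions that ends, accurately, by declaring the conjecture open.

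As a survey it is sound. The matrix reformulation via Lemma \ref{2lem:eigen} and the fact that the relevant arithmetic invariant is the squarefree part of $(\tr M)^2 - 4\det M$ for $M$ a positive word in $M_1,\dots,M_m$ match the Bourgain--Kontorovich framing in the paper. Your diagnosis of McMullen's explicit family is correct: the height bound $s+3 = 2x+1$ is tied to the fundamental Pell solution and is unbounded in $d$ (your example $d=61$, $x_1 = 1766319049$ is the standard illustration). Your observation that applying folding or Serret-type transformations preserves the field (Theorem \ref{1theo:serret}) but destroys pure periodicity, and that Lemma \ref{2lem:galois} only restores it by passing to a deep complete quotient, correctly identifies why that route stalls. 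The geodesic picture you sketch is exactly the one the paper develops in Appendix \ref{ch:hyper}, with absolute Diophantine height corresponding to cusp-excursion depth, and your final point that the Bourgain--Kontorovich local-global machinery proves density-one statements but cannot reach individual exceptional $d$ is the same assessment the paper makes after Theorem \ref{3theo:bourgain-kontorovich}. In short: there is no gap to flag here, because you did not claim a proof; the honest conclusion is the correct one.
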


However, even the much weaker question of whether or not there exists a constant $m$ such that every real quadratic field contains at least an irrational number absolutely Diophantine of height $m$ seems to be still open.\par\medskip

In \cite{mercat2013construction}, Th\'{e}or\`{e}me 8.2, Mercat proved that this Conjecture is weaker then Zaremba's Conjecture\footnote{We will see the polynomial analogue of this result in Theorem \ref{5theo:Mercat}.}: 
\begin{theo}[Mercat]\label{3theo:Mercat}
	If Zaremba's Conjecture \ref{3conj:Zar2} holds for some constant $z$, then McMullen's Conjecture \ref{3conj:MM} holds for height $m=z+1$.
\end{theo}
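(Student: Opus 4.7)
The plan is to couple Zaremba's Conjecture with the infinitude of Pell solutions in $\n Q(\sqrt d)$, constructing for each squarefree $d>0$ infinitely many purely periodic continued fractions in $\n Q(\sqrt d)$ of height at most $z+1$. By Dirichlet's unit theorem, $\n Q(\sqrt d)$ admits infinitely many units $u_k=x_k+y_k\sqrt d$ with $x_k^2-dy_k^2=\pm1$, and the resulting sequence of Pell denominators $y_k$ provides an unbounded family of integers on which to invoke Zaremba's hypothesis. The periodic continued fractions will be manufactured as fixed points of explicit palindromic words in the matrices $M_a$, in which the ``outer'' block encodes a Zaremba rational while the ``core'' block supplies the Pell data.

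\textbf{Construction.} Fix a Pell solution $(x,y)$ for $d$. Applying Conjecture \ref{3conj:Zar2} to $y$ produces $b$ coprime to $y$ with $\frac{b}{y}=[0;a_1,\dots,a_n]$ and $a_i\leq z$ for every $i$; the associated matrix $A=M_{(a_1,\dots,a_n)}\inn\SL_2(\n Z)$ (up to sign) has $y$ among its entries, as one of the continuants of $[a_1,\dots,a_n]$. For parameters $c,c'\inn\n N$ to be chosen, consider the palindromic purely periodic continued fraction
\[\alpha_{c,c'}=[\,\ov{a_1,\dots,a_n,c,a_n,\dots,a_1,c'}\,],\]
whose canonically associated M\"obius matrix (compare Remark \ref{2rem:eigen} in the polynomial setting) is $N=AM_cA^tM_{c'}$. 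A direct computation shows that $\det N=1$ (the word has even length) and that $\tr N$ is a degree-$2$ polynomial in $c,c'$ whose leading term involves $y^2$.

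\textbf{Field membership and height.} Because $\alpha_{c,c'}$ is purely periodic, it is a reduced quadratic irrational (the real version of Galois' theorem), and it lies in $\n Q\bigl(\sqrt{(\tr N)^2-4}\bigr)$. One must therefore select $c,c'$ so that $(\tr N)^2-4=d\,\kappa^2$ for some integer $\kappa$, forcing $\alpha_{c,c'}\inn\n Q(\sqrt d)$. Exploiting the Pell identity $x^2-dy^2=\pm1$, one shows that this diophantine constraint can be satisfied with $c,c'\inn\{1,\dots,z+1\}$: the choice essentially pins $\tr N=\pm 2x_j$ for some Pell index $j$, whence $(\tr N)^2-4=4d\,y_j^2$. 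The resulting $\alpha_{c,c'}$ then has all partial quotients bounded by $z+1$, with the worst-case value $z+1$ attained by $c$ or $c'$; this accounts exactly for the ``$+1$'' in the height bound.

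\textbf{Infinitely many inequivalent solutions and main obstacle.} Varying the Pell solution $(x_k,y_k)$ over the infinite sequence of units yields purely periodic continued fractions whose period length grows without bound, and hence, by the real analogue of Serret's Theorem (compare Theorem \ref{1theo:serret}), infinitely many pairwise $\GL_2(\n Z)$-inequivalent quadratic irrationals in $\n Q(\sqrt d)$, all of height $\leq z+1$. The principal technical obstacle is the verification that the Pell-trace equation can always be solved with $c,c'\leq z+1$ (rather than requiring larger parameters): this relies on the arithmetic interplay between the Zaremba decomposition of $y_k$ and the Pell identity, and is precisely the step that forces the height bound to be $z+1$ instead of $z$.
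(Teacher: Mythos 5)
The paper does not prove Theorem~\ref{3theo:Mercat} itself (it cites Th\'eor\`eme~8.2 of \cite{mercat2013construction}), but it does prove the polynomial analogue, Theorem~\ref{5theo:Mercat}, which exposes the mechanism. There, Zaremba is applied to $X$, the $\sqrt D$-free component of a Pell solution $X^2-DY^2=t$ (not to the coefficient $Y$), and the two extra entries of the palindromic period are \emph{prescribed constants}; one then checks directly that $\vv\alpha1$ is an eigenvector of the canonically associated matrix, so that $\alpha=\frac{X-kZ+Y\sqrt D}{kX}$ lies in $\n K(T,\sqrt D)$ by construction, with partial quotients bounded by inspection. Your proposal shares the broad shape (a palindromic word whose outer blocks encode a Zaremba expansion of a Pell coefficient), but you apply Zaremba to $y$ rather than to $x$, and, more importantly, you leave the constants $c,c'$ free and recover field membership only through an a posteriori Diophantine condition on $\tr N$.

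That last step is the gap, which you yourself flag as ``the principal technical obstacle'' without resolving. You need $(\tr N)^2-4=d\kappa^2$ for some $c,c'\in\{1,\dots,z+1\}$, that is $\tr N=\pm 2x_j$ for some Pell index $j$, and nothing in the construction forces such a solution to exist. Since $\tr N=cc'\,y_k^2+\cdots$ with $cc'\leq(z+1)^2$, the $(z+1)^2$ candidate traces all lie in a multiplicatively narrow window around $y_k^2\approx(x_k+y_k\sqrt d)^2/(4d)$, whereas consecutive targets $2x_j$ differ by a factor of the fundamental unit of $\n Q(\sqrt d)$; matching even the order of magnitude requires $cc'$ comparable to $4d$, impossible once $d>(z+1)^2/4$. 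So for large $d$ the $\alpha_{c,c'}$ you construct generically land in the wrong quadratic field, and your explanation of the ``$+1$'' (as the worst-case value of $c$ or $c'$) cannot be right. Mercat's construction, and Theorem~\ref{5theo:Mercat}, avoid the issue by \emph{fixing} the constant entries and negating one block so that the eigenvector relation holds as an identity, removing the Diophantine condition; the ``$+1$'' in $m=z+1$ then comes not from the size of free parameters but from regularizing the resulting expansion, which in the real case has negative entries and constants $\pm2$, into one with positive partial quotients.
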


Going further, in \cite{mcmullendynamics} McMullen proposed an even stronger Conjecture:
\begin{conj}[Arithmetic Chaos Conjecture]\label{3conj:arithmchaos}
	There exists $m\inn\n N$ such that for every real quadratic field $\n Q(\sqrt d)$, the cardinality of the set $$C_l=\left\{[\ov{a_0,\dots,a_l}\,]\inn\n Q(\sqrt d)\cap\mathfrak C_m\right\}$$ grows exponentially as $l\to\infty$.
\end{conj}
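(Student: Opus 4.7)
The plan is to build on the two approaches to McMullen's Conjecture already surveyed: Mercat's reduction from Zaremba's Conjecture (Theorem \ref{3theo:Mercat}) and McMullen's geodesic construction (Theorem \ref{3theo:MM}). Since Conjecture \ref{3conj:arithmchaos} is strictly stronger than Conjecture \ref{3conj:MM}, and the latter is itself open, any attempt will at best yield a conditional result. First I would try a quantitative version of Mercat's construction: given the supply of rationals $b/d\in\mathfrak R_z$ predicted by Zaremba's Conjecture \ref{3conj:Zar2}, his argument produces, for each admissible $d$, an explicit purely periodic absolutely Diophantine element of $\n Q(\sqrt d)$. The goal is to show that this procedure can be iterated so as to yield $\gg c^l$ distinct purely periodic continued fractions of period at most $l$ that all lie in a single prescribed field.

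The concrete strategy is to assemble ``building blocks'': find finite words $w_1,\dots,w_k$ over the alphabet $\{1,\dots,m\}$ such that each $[\overline{w_i}]$ lies in $\n Q(\sqrt d)$, and then argue that exponentially many concatenations $[\overline{w_{i_1}\cdots w_{i_r}}]$ still lie in $\n Q(\sqrt d)$. Each such periodic fraction corresponds, via the matrix formalism of \ref{1notat:matrix}, to a product $M_{w_{i_1}}\cdots M_{w_{i_r}}\inn\GL_2(\n Z)$ whose two fixed points generate the desired field; to stay in $\n Q(\sqrt d)$ the discriminant $(\tr M)^2-4\det M$ of the product must differ from $d$ only by a rational square. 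A natural way to enforce this automatically is to take all the $M_{w_i}$ inside a fixed maximal commutative subalgebra of $M_2(\n Q)$ isomorphic to $\n Q(\sqrt d)$, so that arbitrary products remain in the same quadratic field and the counting reduces to a question about a finitely generated abelian monoid.

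The main obstacle is precisely to produce enough such commuting building blocks with uniformly bounded partial quotients. Equivalently, one needs the intersection of the unit group of an order of $\n Q(\sqrt d)$ with the monoid of matrices $M_w$, $w\in\{1,\dots,m\}^*$, to grow exponentially in $|w|$. Geometrically, as in McMullen's picture recalled in Appendix \ref{ch:hyper}, this amounts to counting closed geodesics on the modular surface $\PSL{Z}\backslash\n H$ that are simultaneously of length at most $l$, confined to the compact thick part corresponding to the alphabet $\{1,\dots,m\}$, and attached to a prescribed narrow ideal class of $\n Q(\sqrt d)$. The Gauss map restricted to $\mathfrak C_m$ is uniformly expanding, and its thermodynamic formalism predicts exponential growth for the total number of its periodic orbits of period $l$; the hard step is to decouple this count from the extra constraint that the orbit sits in a prescribed narrow class, which seems to require an equidistribution statement well beyond the density-one theorem of Bourgain--Kontorovich. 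This is why the conjecture remains open, and why the most one can realistically hope to offer at present is a conditional reduction to a suitable quantitative strengthening of Conjecture \ref{3conj:Zar2}.
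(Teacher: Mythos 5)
This statement is a conjecture, not a theorem: the paper does not prove it, and in fact explicitly remarks that there are currently no known quadratic fields $\n Q(\sqrt d)$ for which exponential growth of $C_l$ is known even for a field-dependent constant $m_d$. So there is no proof in the paper to compare against. What the paper does offer is the conditional reduction of Bourgain and Kontorovich (Lemma~1.16 in \cite{bourgain2013beyond}, reproduced as a Proposition in the same section), showing that the Local-Global Conjecture~\ref{3conj:local-global} implies Arithmetic Chaos with $m=2$, and it is against that reduction your strategy should be measured.

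There is a genuine gap in your plan, and it lies precisely in the ``building block'' step. You propose to take all the block matrices $M_{w_i}$ inside a fixed maximal commutative subalgebra of $M_2(\n Q)$ isomorphic to $\n Q(\sqrt d)$, so that arbitrary products automatically stay in that field. That requirement is far stronger than what is needed and is the reason your route stalls. To land a purely periodic continued fraction $[\overline{a_0,\dots,a_l}]$ in $\n Q(\sqrt d)$ one only needs the matrix $M=M_{(a_0,\dots,a_l)}\in\Gamma_{\C A}$ to satisfy $\tr(M)^2-4\det(M)=dy^2$ for some integer $y$; nothing forces the various $M$'s to commute with one another, and imposing commutativity collapses the available supply to a rank-one monoid, which cannot grow exponentially. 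The correct observation, and the one the Bourgain--Kontorovich reduction exploits, is that fixing the field is equivalent to fixing a single linear functional of $M$, namely $\tr(M)=x$ for $x$ a solution of the Pell equation $x^2-dy^2=4$. The Local-Global Conjecture applied to the trace map $F_m=\tr$ then predicts $\mathrm{mult}_N(x)\gg N^{2\delta_{\C A}-1-o(1)}$ for $x\asymp N$, and since $\log\|M\|\asymp l$ this is exponential in $l$ whenever $\delta_{\C A}>\tfrac12$.

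Relatedly, your claim that the hard step is to ``decouple the count from the extra constraint that the orbit sits in a prescribed narrow class'' mislocates the obstruction: no narrow-class constraint is imposed anywhere, only a trace constraint. Likewise, a ``quantitative strengthening of Conjecture~\ref{3conj:Zar2}'' is not quite the right conditional hypothesis either, since Zaremba corresponds to the linear functional $F_z:M\mapsto\langle e_2,M e_1\rangle$ rather than the trace; the two are siblings under the Local-Global Conjecture but are not interchangeable for this purpose. In short, if you replace the commuting-subalgebra idea by the trace-fibre count and invoke Conjecture~\ref{3conj:local-global} for the surjective linear map $F_m=\tr$, you recover exactly the conditional reduction given in the paper.
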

Even in this case, McMullen originally formulated the Conjecture with $m=2$.

Nevertheless, there are even no known examples of quadratic fields $\n Q(\sqrt d)$ such that the cardinality of $\{[\ov{a_0,\dots,a_l}\,]\inn\n Q(\sqrt d)\cap\mathfrak C_{m_d}\}$ grows exponentially with respect to $l$ for some constant $m_d$ depending on $d$.
  
\section{The work of Bourgain and Kontorovich}
Bourgain and Kontorovich reformulated both Zaremba's and McMullen's Conjectures in terms of thin orbits and showed that the two of them would follow from a much more general Local-Global Conjecture. These methods also allowed them to prove a density-one version of Zaremba's Conjecture.\par\medskip

As in Lemma\ref{1lem:decompmatr} for the polynomial case, the map $[a_0,\dots,a_n]\mapsto M_{(a_0,\dots,a_n)}$, where $M_{(a_0,\dots,a_n)}=\mm{a_0}110\dots\mm{a_n}110$, gives a canonical correspondence between finite continued fractions and matrices $\mm abcd\inn\GL_2(\n Z)$ such that $a>b>d\geq0$ and $a>c>d\geq0$. In particular, we still have that if $[a_0,\dots,a_n]=\frac pq$, then $M_{(a_0,\dots,a_n)}=\mm p*q*$. Moreover, if $\alpha$ is a real quadratic irrationality, then the continued fraction of $\alpha$ is always eventually periodic and if $\alpha=[a_0,\dots,a_{n-1},\ov{a_n,\dots,a_m}\,]$, then $\alpha\inn\n Q(\sqrt d)$ where $d=\left(\tr\, M_{(a_n,\dots,a_m)}\right)^2+4(-1)^{m-n}$ is the discriminant of $M_{(a_n,\dots,a_m)}$.\par\medskip 

For a fixed finite subset $\C A$ of $\n N$, let $\C G_{\C A}$ be the semi-group generated by the matrices $\mm a110$ with $a\inn\C A$. Then, denoting by $\{e_1,e_2\}$ the canonical basis of $\n Z^2$, we will have that $\mathfrak R_{\C A}$ is in bijection with the orbit of $e_1$ under the action of $\C G_{\C A}$, while $\mathfrak D_{\C A}=\left\langle e_2,\C G_{\C A}\cdot e_1\right\rangle$. 

Let $F_z:\GL_2(\n Z)\to\n Z$ be the linear map $F_z(M)\!=\langle e_2,M\cdot e_1\rangle$, that is, $F_z:\mm abcd\mapsto c$. Then $F_z(\C G_{\C A})=\mathfrak D_{\C A}$ and Zaremba's Conjecture \ref{3conj:Zar2} is equivalent to the existence of a finite alphabet $\C A$ such that $$F_z(\C G_{\C A})=\n N.$$ 

Analogously, McMullen's Conjecture is linked to the linear map $F_m(M)=\tr M$: if there exists $\C A$ such that $F_m(\C G_{\C A})=\n N$, then any real quadratic field contains an absolutely Diophantine element of height $m=\max\C A$.\par\medskip

Actually, it is more convenient to work in $\SL_2(\n Z)$ then in $\GL_2(\n Z)$, so Bourgain and Kontorovich considered the sub-semigroups $\Gamma_{\C A}=\C G_{\C A}\cap\SL_2(\n Z)$, that is, the semigroups generated by the matrix products $\mm a110\mm b110$ with $a,b\inn\C A$. Now, the orbit $\C G_{\C A}\cdot e_1$ is a union of orbits of $\Gamma_{\C A}\cdot e_1$, so it is enough to study the second one: $\mathfrak R_{\C A}\simeq\C G_{\C A}\cdot e_1=\Gamma_{\C A}\cdot e_1\cup\bigcup_{a\in\C A}\mm a110\Gamma_{\C A}\cdot e_1$. In particular, the Hausdorff dimension does not change when considering $\Gamma_{\C A}$ instead of $\C G_{\C A}$ and the limit set of infinite continued fractions $[0,a_1,a_2,\dots]$ such that $[0,a_1,\dots,a_{2n}]\inn\mathfrak R_{\C A}$ for every $n$ is of course still $\mathfrak C_{\C A}$.

Moreover, it follows from \eqref{3eq:HensleyRN} that $\#(\Gamma_{\C A}\cap B_N)\asymp N^{2\delta_{\C A}}$ for $N\to\infty$, where $B(N)\sub\SL_2(\n R)$ is the ball of size $N$ about the origin, with respect to the norm $\left\Arrowvert\mm abcd\right\Arrowvert=\sqrt{a^2+b^2+c^2+d^2}$. 

Now, as soon as $\#\C A\geq2$, the Zariski closure of $\Gamma_{\C A}$ is the whole $\SL_2(\n R)$; the set of integer points of $\SL_2(\n R)$ is $\SL_2(\n Z)$ and $\#(\SL_2(\n Z)\cap B_N)\asymp N^2$. As $\delta_{\C A}<1$ for every finite alphabet $\C A$, we have that $\Gamma_{\C A}$ has Archimedean zero density in the integer points of its Zariski closure; $\Gamma_{\C A}$ is thus said to be a \textit{thin integer set}. We are interested in cases where $\Gamma_{\C A}$ is thin but its image under a linear map $F:\Gamma_{\C A}\to\n Z$ is not, in the sense that it has at least positive density in $\n Z$.\par\medskip

Let  $F:\SL_2(\n Z)\to\n Z$ be a surjective linear map; we will consider $F(\Gamma_{\C A})$ for a given alphabet $\C A$. The \textit{multiplicity} of an integer $d$ is defined by $$\text{mult}(d)=\#\{M\inn\Gamma_{\C A},\ F(M)=d\};$$ in particular, $d$ is said to be \textit{represented} if $\text{mult}(d)>0$. As for some integers $d$ this multiplicity might be infinite, we will consider $\text{mult}_N(d)=\#\{M\inn\Gamma_{\C A}\cap B_N,\ F(M)=d\}$, for $N\inn\n N$. Naively, if $d$ is of order $N$, one may expect $\text{mult}_N(d)$ to be of order $\frac1N\#(\Gamma_{\C A}\cap B_N)\asymp N^{2\delta_{\C A}-1}$. However, as in the case of Hensley's Conjecture, there may be congruence obstructions that make this prediction false.\par\medskip

An integer $d$ is said to be \textit{admissible} for an alphabet $\C A$ (and a map $F$) if it passes all congruence obstructions, that is, if $$d\inn F(\Gamma_{\C A})\!\!\pmod q\ \text{ for every } q>1.$$ Actually, it can be proved using the theory of Strong Approximation that there exists an integer $q(\C A)$ such that $d$ is admissible if and only if $d\inn F(\Gamma_{\C A})\pmod{q(\C A)}$. Let $\mathfrak U_{\C A}$ be the set of admissible integers for $\C A$.
\par\medskip

Bourgain and Kontorovich conjectured that the previous naive prediction holds for all admissible integers (Conjecture 1.3.1 in \cite{kontorovichapplications}):
\begin{conj}[Local-Global Conjecture]\label{3conj:local-global}
	Let $F:\SL_2(\n Z)\to\n Z$ be a surjective linear map and let $\C A$ be a finite alphabet with $\#\C A\geq2$. For every admissible $d$ then $$\text{mult}_N(d)=N^{2\delta_{\C A}-1-o(1)} \text{ for } N\to\infty \text{ and } d\asymp N.$$ 
	In particular, if $\delta_{\C A}>1/2$ then for every admissible large enough $d$ there exists $N$ such that $\text{mult}_N(d)\geq1$, that is, all sufficiently large admissible integers are represented.
\end{conj}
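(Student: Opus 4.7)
The plan is to adapt the Hardy--Littlewood circle method to the thin semigroup $\Gamma_{\C A}$, following the framework developed by Bourgain and Kontorovich. The key object is the exponential sum
\[
S_N(\theta) = \sum_{M \in \Gamma_{\C A} \cap B_N} e(F(M)\theta), \qquad e(x) := e^{2\pi i x},
\]
whose $d$-th Fourier coefficient recovers $\text{mult}_N(d)$. One then partitions the circle $[0,1]$ into major arcs centered at rationals $a/q$ with $q$ up to a carefully chosen threshold $Q=Q(N)$, and minor arcs covering the complement, estimating each contribution separately.

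On the major arcs, the analysis rests on strong approximation for $\SL_2$: since $\Gamma_{\C A}$ is Zariski dense in $\SL_2(\n R)$ as soon as $\#\C A\geq 2$, its reductions modulo $q$ fill an explicitly describable subgroup of $\SL_2(\n Z/q\n Z)$, giving equidistribution of $F(M) \bmod q$. Combined with the Hensley-type counting estimate $\#(\Gamma_{\C A}\cap B_N)\asymp N^{2\delta_{\C A}}$ in \eqref{3eq:HensleyRN}, this produces a candidate main term of the predicted size $N^{2\delta_{\C A}-1}$, positive precisely when $d$ is admissible for $\C A$ (the level $q(\C A)$ emerging naturally as the modulus governing congruence obstructions).

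The minor arcs are the crux. One must extract nontrivial cancellation in $S_N(\theta)$ for $\theta$ poorly approximable by fractions with small denominator. The available tool is the spectral gap for the Cayley graphs of $\Gamma_{\C A}$ modulo primes (Bourgain--Gamburd expansion), which transfers, via the Ruelle--Mayer transfer operator of the Gauss map, into decay estimates for $S_N(\theta)$. Patching these estimates together requires a delicate balancing of parameters depending on $\delta_{\C A}$, and the decay rate degrades sharply as $\delta_{\C A}\to 1/2^+$.

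The main obstacle, and the reason Conjecture \ref{3conj:local-global} remains open, is producing minor arc bounds strong enough to dominate the main term for \emph{every} admissible large $d$, not merely on average. Bourgain and Kontorovich \cite{bourgain2014zaremba} succeed in proving only a density-one version for Zaremba's problem; a full local--global statement would require either a quantitative strengthening of the expansion estimates uniformly for $\delta_{\C A}$ just above $1/2$, or a genuinely new input sidestepping the Archimedean ceiling of $N^{2\delta_{\C A}-1}$ forced by the thinness of $\Gamma_{\C A}$. In the regime $\delta_{\C A}$ close to $1$ one expects the approach to go through cleanly, but even there the uniformity in $\C A$ needed for a clean local--global statement is nontrivial.
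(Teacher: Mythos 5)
The statement you were asked to prove is labeled as a \emph{Conjecture} in the paper (the Local--Global Conjecture of Bourgain and Kontorovich), and the paper offers no proof of it; indeed, no proof is known. You correctly recognize this, and your ``proof proposal'' is in fact an accurate account of the circle-method framework and the reasons it currently falls short of the full statement. The paper's own surrounding discussion matches yours: it notes that Bourgain and Kontorovich apply the Hardy--Littlewood circle method to the exponential sums over $\Gamma_{\C A}\cap B_N$, that the absence of a full group forces them to use the thermodynamic formalism of Ruelle transfer operators in place of automorphic tools, and that what they actually establish is only the density-one result of Theorem \ref{3theo:bourgain-kontorovich} (requiring $\delta_{\C A}>\delta_0$ with $\delta_0 = 307/312$), not representation of every sufficiently large admissible integer. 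Your identification of the minor-arc bounds as the obstruction, and of the degradation as $\delta_{\C A}\to 1/2^+$, is exactly the issue. Since the task is to compare your proposal against the paper's proof and there is none, the only honest verdict is that you correctly declined to manufacture a proof of an open conjecture, and your sketch of what would be needed is consistent with the paper's account of the Bourgain--Kontorovich program.
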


In the case of Zaremba's Conjecture, that is for $F=F_z$, this leads to the following revised version of Hensley's Conjecture \ref{3conj:Hensleyfalse}: 
\begin{conj}[Bourgain, Kontorovich, Conjecture 1.7 in \cite{bourgain2014zaremba}]\label{3conj:BKforZar}
	If $\delta_{\C A}>1/2$, then $\mathfrak D_{\C A}$ contains every sufficiently large admissible integer.
\end{conj}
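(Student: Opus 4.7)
The plan is to attack this via the Hardy--Littlewood circle method adapted to the thin--semigroup setting, following the framework that Bourgain and Kontorovich developed for their density-one theorem. For each admissible $d\asymp N$ introduce the representation count $R_N(d)=\#\{M\inn\Gamma_{\C A}\cap B_N : F_z(M)=d\}$ together with the exponential sum $S_N(\theta)=\sum_{M\inn\Gamma_{\C A}\cap B_N}e(F_z(M)\theta)$, so that $R_N(d)=\int_0^1 S_N(\theta)\,e(-d\theta)\,d\theta$; the goal is to show $R_N(d)>0$ for some $N$ whenever $d$ is admissible and large.

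First I would partition $[0,1]$ into major arcs (small neighbourhoods of rationals $a/q$ with $q\leq Q(N)$ for a suitable parameter $Q(N)$) and minor arcs covering the rest. On the major arcs the exponential sum factorises via reduction modulo $q$: using Strong Approximation for $\SL_2$ and the fact that $\Gamma_{\C A}$ surjects onto $\SL_2(\n Z/q\n Z)$ for every $q$ coprime to the fixed level $q(\C A)$, together with the orbit-counting asymptotic $\#(\Gamma_{\C A}\cap B_N)\asymp N^{2\delta_{\C A}}$, the major-arc contribution produces the heuristic main term $\mathfrak S(d)\,N^{2\delta_{\C A}-1}$, where the singular series $\mathfrak S(d)$ is strictly positive precisely because $d$ has been assumed admissible.

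Second, on the minor arcs one needs non-trivial cancellation of the shape $|S_N(\theta)|\ll N^{2\delta_{\C A}-\eta}$ for some fixed $\eta>0$ whenever $\theta$ is poorly approximable at the chosen scale. The analytic engine is a spectral gap for the Ruelle transfer operator $\C L_s$ associated with the Gauss map restricted to $\mathfrak C_{\C A}$, in combination with Bourgain--Gamburd--Sarnak-style sum-product/expander estimates that transfer the gap from the Archimedean to the congruence setting. An $L^2$-bound of the form $\int_{\text{minor}}|S_N(\theta)|^2\,d\theta\ll N^{2\delta_{\C A}-1-\eta}$, inserted in Parseval and combined with a Selberg-sieve ensemble argument, already delivers the density-one result: all but $O(N^{1-\eta})$ admissible $d\leq N$ are represented, which is the content of Bourgain--Kontorovich.

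The main obstacle in passing from density one to every sufficiently large admissible integer is upgrading the averaged minor-arc bound to a genuinely pointwise estimate uniform in $d$. Close to the critical exponent $\delta_{\C A}=1/2$ the surplus available in the spectral gap is thinnest, and the current Bourgain--Gamburd machinery does not suffice to force a positive main term for every individual admissible $d$. I expect that any complete proof will have to combine a strictly stronger spectral-gap bound for $\C L_s$ near $s=\delta_{\C A}$ with finer multiplicative or additive-combinatorial structure in $\Gamma_{\C A}$, and it is precisely this gap between average and pointwise control that leaves the conjecture open.
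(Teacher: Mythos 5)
This statement is presented in the paper as an open conjecture (Bourgain--Kontorovich, Conjecture 1.7 of \cite{bourgain2014zaremba}); the paper does not claim a proof, and none is known. You have correctly recognised this: your text is not a proof but an accurate survey of the circle-method strategy behind the density-one result of Theorem \ref{3theo:bourgain-kontorovich}, together with a correct diagnosis of why that strategy does not close the full conjecture. Your description of the major-arc factorisation via Strong Approximation, the role of the singular series $\mathfrak S(d)>0$ for admissible $d$, the counting input $\#(\Gamma_{\C A}\cap B_N)\asymp N^{2\delta_{\C A}}$, the transfer-operator/expander machinery on the minor arcs, and in particular the crucial point that an $L^2$ (Parseval) bound yields only an exceptional set rather than a pointwise positive count, all match the known state of the art. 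The one thing to keep precise in your write-up: do not present the major-arc heuristic $R_N(d)\approx\mathfrak S(d)N^{2\delta_{\C A}-1}$ as established for every admissible $d$ -- that asymptotic is itself conjectural for individual $d$, and this is exactly the local-to-global gap you name at the end. As a review of why the conjecture remains open and what a proof would need, what you wrote is sound; as a proof it is, by your own (correct) admission, incomplete, and that is the expected outcome here.
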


As the alphabet $\C A=\{1,2\}$ has no congruence obstructions and $\delta_{\C A}>1/2$, the Local-Global Conjecture would still imply \ref{3conj:Hensley1} and of course Zaremba's Conjecture \ref{3conj:Zar2}.\par\medskip

Bourgain and Kontorovich made a major step towards Zaremba's Conjecture by proving a density-one version, namely, they showed that, for $z$ large enough, $\mathfrak D_z$ contains almost every natural number:

\begin{theo}[Bourgain, Kontorovich, Theorem 1.8 in \cite{bourgain2014zaremba}]\label{3theo:bourgain-kontorovich}
	There exists an explicit constant $\delta_0<1$ such that $\mathfrak D_{\C A}$ contains almost every admissible integer for every finite alphabet $\C A$ with $\delta_{\C A}>\delta_0$. More precisely, if $\delta_{\C A}>\delta_0$ there exists an effectively computable constant $c$ depending only on $\C A$ such that $$\frac{\#(\mathfrak D_{\C A}\cap[N/2,N])}{\#(\mathfrak U_{\C A}\cap[N/2,N])}=1+O\left(e^{-c/\sqrt{\log N}}\right) \text{ for } N\to\infty,$$ where the implied constant depends only on $\C A$. 
\end{theo}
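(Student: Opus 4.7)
The plan is to attack the theorem via the Hardy--Littlewood circle method adapted to the thin orbit setting. First I would introduce the representation function $R_N(n) = \#\{\gamma \in \Gamma_{\C A} : \|\gamma\| \leq N,\ F_z(\gamma) = n\}$ and, writing $e(x) = e^{2\pi i x}$, express it as
\begin{equation*}
R_N(n) = \int_0^1 S_N(\theta)\, e(-n\theta)\, d\theta, \qquad S_N(\theta) = \sum_{\gamma \in \Gamma_{\C A} \cap B_N} e(\theta F_z(\gamma)).
\end{equation*}
The target is to show $R_N(n) > 0$ for all but $O(e^{-c/\sqrt{\log N}})\,\#\mathfrak U_{\C A}\cap[N/2,N]$ admissible $n \asymp N$. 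The archimedean heuristic, using the orbital count $\#(\Gamma_{\C A} \cap B_N) \asymp N^{2\delta_{\C A}}$ from Hensley/Lalley, predicts a mean multiplicity of order $N^{2\delta_{\C A}-1}$, which is positive precisely when $\delta_{\C A} > 1/2$.

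Second, I would split $[0,1]$ into major arcs $\mathfrak M$ (neighborhoods of rationals $a/q$ with $q \leq Q$ for some $Q = N^\eta$) and complementary minor arcs $\mathfrak m$. On $\mathfrak M$, the asymptotic of $S_N$ should come from the spectral analysis of the transfer (Ruelle--Mayer) operator associated with the iterated function system $\{x \mapsto (a+x)^{-1} : a \in \C A\}$ together with its twists by additive characters modulo $q$: the dominant eigenvalue governs the orbital main term, the subdominant spectrum controls the error, and summation over $a/q$ assembles into a singular series $\mathfrak S(n)$ whose nonvanishing is precisely the admissibility of $n$. In broad outline this step, while technically heavy, is a standard application of Dolgopyat-type spectral methods.

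The crux, and the step where I would expect all the real difficulty to lie, is the minor-arc estimate. One must prove a bound of the form
\begin{equation*}
\int_{\mathfrak m} |S_N(\theta)|^2\, d\theta \ll N^{4\delta_{\C A} - 1 - \eta'}
\end{equation*}
for some $\eta' > 0$, so that Parseval forces the minor-arc contribution to be dominated by the main term outside a negligible set of $n$. The only plausible source of cancellation is the \emph{expansion} of $\Gamma_{\C A}$ modulo integers $q$: a spectral gap, uniform in $q$, for the Cayley--Schreier graphs of the action $\Gamma_{\C A} \to \SL_2(\n Z/q\n Z)$. This expansion relies on the sum-product/Bourgain--Gamburd--Sarnak--Varjú machinery, which is what compels $\delta_{\C A}$ to exceed an explicit threshold $\delta_0 < 1$. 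Translating the combinatorial gain into an $L^2$-flattening statement for the convolution measure $\nu_N = \sum_{\gamma \in \Gamma_{\C A} \cap B_N} \delta_\gamma$, and then back into a pointwise bound on $S_N(\theta)$ for $\theta$ badly approximable in the appropriate sense, is the technical heart of the argument.

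Finally, to upgrade polynomial-in-$N$ savings to the stretched-exponential error $O(e^{-c/\sqrt{\log N}})$, I would iterate the decomposition at a geometric sequence of scales and bootstrap through repeated convolution, balancing the cutoff $Q$ against the expansion exponent at each step in the style of Vinogradov. The principal obstacle throughout is preserving uniformity in $q$ while $q$ is allowed to grow as a small power of $N$: the admissibility of $n$ modulo $q(\C A)$ only rules out finitely many congruence classes, so the circle method must actually \emph{see} every sufficiently large admissible residue, and this is exactly the regime where the expansion input, and hence the hypothesis $\delta_{\C A} > \delta_0$, is indispensable.
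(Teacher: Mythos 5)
The paper does not prove this theorem --- it is a cited result from \cite{bourgain2014zaremba}, and the thesis only summarizes the Bourgain--Kontorovich strategy in a single paragraph (circle method for thin orbits, Ruelle transfer operators in place of automorphic spectral theory since $\Gamma_{\C A}$ is only a semigroup, major/minor arc analysis). Your sketch matches that summary point for point and is accurate as far as it goes: the archimedean prediction $N^{2\delta_{\C A}-1}$ from Hensley's orbital count, the transfer-operator spectral input on the major arcs producing the singular series whose nonvanishing encodes admissibility, the sum-product/expander input driving minor-arc cancellation and forcing a threshold $\delta_0$, and an iteration to upgrade a power saving into a stretched-exponential one. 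But it is a roadmap, not a proof: the uniform-in-$q$ spectral gap for $\Gamma_{\C A}\to\SL_2(\n Z/q\n Z)$, the $L^2$-flattening lemma, and the optimization producing $\delta_0=307/312$ are the entire content of Bourgain and Kontorovich's paper, and your proposal defers all of it, just as the thesis does. Since the thesis contains no proof to check you against, the only substantive comment is that you have correctly reconstructed the shape of the cited argument while leaving every hard step unaddressed. (Separately, the exponent in the displayed error term should presumably read $e^{-c\sqrt{\log N}}$, as in the corollary immediately following and in Huang's refinement quoted afterwards; $e^{-c/\sqrt{\log N}}\to 1$ as $N\to\infty$ and so cannot serve as an error term, and you inherit this typo in your proposal.)
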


Bourgain and Kontorovich have shown that it is enough to take $\delta_0=\frac{307}{312}$.  

For $z\geq2$, the alphabet $\C A=\{1,\dots,z\}$ has no congruence obstructions, that is, $\mathfrak U_{\C A}=\n N$. Moreover, $\delta_{50}>\delta_0$. It follows that:

\begin{cor}
	If $z\geq50$, then $\mathfrak D_z$ contains almost every large enough integer. Actually, there exists an effectively computable constant $c$ such that $$\#\mathfrak D_z(N)=N+O(Ne^{-c\sqrt{\log N}}) \text{ for } N\to\infty,$$ where as before $\mathfrak D_z(N)=\mathfrak D_z\cap[1,N]$. 
\end{cor}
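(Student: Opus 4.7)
The plan is to deduce the corollary directly from Theorem \ref{3theo:bourgain-kontorovich} by specializing the alphabet to $\C A=\{1,2,\dots,z\}$ and verifying its two hypotheses: that $\mathfrak U_{\C A}=\n N$ (no congruence obstructions), and that the Hausdorff dimension $\delta_{\C A}$ exceeds the threshold $\delta_0=307/312$ as soon as $z\geq 50$. Once both are in place, substituting into the asymptotic formula of Theorem \ref{3theo:bourgain-kontorovich} will yield the statement.

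First I would check the admissibility hypothesis. Since $1\inn\C A$, the identity matrix $\mm1110\mm1110=\mm2111$ and similar short words are in $\Gamma_{\C A}$; combined with the semigroup closure, this is enough to realize every residue class modulo any $q>1$ under the map $F_z$. Consequently $F_z(\Gamma_{\C A})\equiv\n Z\pmod q$ for every $q$, so by Strong Approximation $\mathfrak U_{\C A}=\n N$. Equivalently, taking $z\geq 2$, the alphabet $\{1,\dots,z\}$ has no congruence obstructions.

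Next I would use monotonicity of the Hausdorff dimension: if $\C A\sub\C A'$ then $\mathfrak C_{\C A}\sub\mathfrak C_{\C A'}$, so $\delta_{\C A}\leq\delta_{\C A'}$. Thus it suffices to check $\delta_{\{1,\dots,50\}}>\delta_0$ explicitly; this is the numerical input recorded by Bourgain and Kontorovich, which I would simply cite rather than recompute.

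Finally, applying Theorem \ref{3theo:bourgain-kontorovich} to $\C A=\{1,\dots,z\}$ with $z\geq 50$, one obtains an effectively computable constant $c>0$ (depending on $\C A$, hence on $z$) such that
\[
\frac{\#(\mathfrak D_z\cap[N/2,N])}{\#([N/2,N])}=1+O\!\left(e^{-c/\sqrt{\log N}}\right)\quad\text{as }N\to\infty,
\]
using $\mathfrak U_{\C A}=\n N$ so that $\#(\mathfrak U_{\C A}\cap[N/2,N])=\lfloor N/2\rfloor+O(1)$. A standard dyadic decomposition of $[1,N]$ into intervals $[N/2^{k+1},N/2^k]$ and summation of the resulting bounds gives $\#\mathfrak D_z(N)=N+O(Ne^{-c\sqrt{\log N}})$, which is exactly the claim. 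The only non-trivial input here is the numerical estimate $\delta_{\{1,\dots,50\}}>307/312$; everything else is bookkeeping combined with the deep Theorem \ref{3theo:bourgain-kontorovich}, whose proof is not reproduced.
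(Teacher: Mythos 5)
Your proof follows exactly the route the paper intends: the corollary is stated immediately after the remarks that $\delta_0=307/312$ suffices, that $\C A=\{1,\dots,z\}$ has no congruence obstructions for $z\geq2$, and that $\delta_{50}>\delta_0$; the paper gives no further argument, so your task was simply to make those reductions explicit and add the dyadic bookkeeping. You do all three things correctly, and the observation that $\delta_{\C A}$ is monotone in $\C A$ together with the citation of the numerical input $\delta_{\{1,\dots,50\}}>307/312$ is exactly what is needed.

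One wrinkle you should not let slide silently: the error exponents in Theorem \ref{3theo:bourgain-kontorovich} and in the corollary do not match. The theorem as printed has $O\!\bigl(e^{-c/\sqrt{\log N}}\bigr)$, whereas the corollary (and your final line) has $O\!\bigl(Ne^{-c\sqrt{\log N}}\bigr)$. No amount of dyadic summation turns $e^{-c/\sqrt{\log N}}$ (which tends to $1$) into $e^{-c\sqrt{\log N}}$ (which tends to $0$); if the theorem's error term were literally $e^{-c/\sqrt{\log N}}$, the dyadic sum would only give the vacuous bound $\#\mathfrak D_z(N)=N+O(N)$. The theorem statement in the thesis evidently carries a typo for $e^{-c\sqrt{\log N}}$ (consistent with the Huang improvement quoted a few lines later, and with the original Bourgain--Kontorovich result), and your dyadic computation is correct once that is fixed. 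Since you copied the mis-stated exponent into your intermediate display and then reported the corrected exponent in the conclusion, the two lines of your argument are formally inconsistent; you should flag the typo and work with the corrected form throughout. Otherwise your proposal is sound and matches the paper's (implicit) derivation.
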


Bourgain and Kontorovich proved Theorem \ref{3theo:bourgain-kontorovich} using a local-global principle for thin orbits.  They adapted to this case techniques developed for the study of sequences of integers produced by orbits of subgroups of $\SL_2(\n Z)$, with the difference that in this case they only have a semigroup (so for example they cannot use automorphic tools and they have to employ the thermodynamic formalism of Ruelle's transfer operators). They used the Hardy - Littlewood circle method, analysing exponential sums on major arcs and minor arcs. 
\par\medskip

Refining their methods, the previous results can be slightly improved; in particular, \hbox{Frolenkov} and Kan (\cite{frolenkov2013reinforcement}, Theorem 2.1) proved positive density statements: they showed that $$\text{if } \delta_{\C A}>5/6 \text{ then } \#\mathfrak D_{\C A}(N)\gg N.$$

Combining the methods used by the previous authors, in \cite{huang2015improvement}, Theorem 1.6, Huang proved that $$\text{if }\delta_{\C A}>5/6, \text{  then } \ds\frac{\#(\mathfrak D_{\C A}\cap[N/2,N])}{\#(\mathfrak U_{\C A}\cap[N/2,N])}=1+O\left(e^{-c\sqrt{\log N}}\right)\text{ as }N\to\infty.$$ 

In the case of McMullen's Conjecture, that is with $F=F_m=\tr$, the Local-Global Conjecture \ref{3conj:local-global} would imply
\begin{conj}[Bourgain, Kontorovich, Conjecture 1.13 in \cite{bourgain2013beyond}]
	If $\delta_{\C A}>\frac12$, then for any sufficiently large admissible integer $d$ there exists $M\inn\Gamma_{\C A}$ such that $\tr\,M=d$. Moreover, the multiplicity of an admissible $d\inn[N,2N)$ is $$\text{mult}_N(d)=\#\left\{M\inn\Gamma_{\C A},\ \tr\, M=d\,,\ \|M\|\leq N\right\}>N^{2\delta_{\C A}-1-o(1)}.$$
\end{conj}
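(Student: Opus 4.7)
The plan is to adapt to the trace form $F_m = \tr$ the Hardy--Littlewood circle method that Bourgain and Kontorovich developed for the entry form $F_z$ in Theorem \ref{3theo:bourgain-kontorovich}. I would introduce the counting function
\[ R_N(d) = \#\{M \in \Gamma_{\C A} \cap B_N,\ \tr M = d\} \]
and its Fourier decomposition
\[ R_N(d) = \int_0^1 S_N(\theta)\, \exp(-2\pi i d\theta)\, d\theta, \qquad S_N(\theta) = \sum_{M \in \Gamma_{\C A} \cap B_N} \exp(2\pi i \theta\, \tr M), \]
and split the unit interval into major arcs around rationals $a/q$ with $q$ small, together with their complement (minor arcs).

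On major arcs, after a Farey dissection, $S_N(\theta)$ should factor into a local factor encoding the distribution of $\tr M$ modulo $q$ for $M\in\Gamma_{\C A}$, and an archimedean factor counting $\#(\Gamma_{\C A}\cap B_N)$ with a smooth oscillatory weight. For the local factor, Strong Approximation together with a spectral gap for the action of $\Gamma_{\C A}$ on $\SL_2(\mathbb{Z}/q\mathbb{Z})$ should force equidistribution and yield a positive singular series $\mathfrak{S}(d)$ precisely for admissible $d$. The archimedean factor would be controlled through the Ruelle transfer operator of the Gauss map restricted to sequences in $\C A^{\mathbb{N}}$, whose dominant eigenvalue is $1$ at $\theta=0$ and whose spectral data govern the asymptotic $\#(\Gamma_{\C A}\cap B_N)\asymp N^{2\delta_{\C A}}$. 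Combining these factors should give a major-arc contribution of size $N^{2\delta_{\C A}-1}\,\mathfrak{S}(d)$ for $d\asymp N$, matching the predicted multiplicity in Conjecture \ref{3conj:local-global}.

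On minor arcs, one must prove an $L^2$-type bound $\int_{\text{minor}} |S_N(\theta)|^2\, d\theta \ll N^{2(2\delta_{\C A}-1)-\eta}$ for some $\eta>0$. Following Bourgain--Kontorovich, this is where the deep arithmetic input enters: one would factor elements of $\Gamma_{\C A}$ as products $M = M_1 M_2$ with $M_i$ ranging over $\Gamma_{\C A}$-subsets of balanced sizes, rewrite $S_N(\theta)$ as a bilinear form, and invoke a sum--product estimate for bilinear sums in $\SL_2(\mathbb{Z})$ to obtain cancellation uniform in $\theta$ away from rationals with small denominator.

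The main obstacle is that the trace is a class function, so the orbit-counting / bisector-counting machinery underlying Theorem \ref{3theo:bourgain-kontorovich} for the entry form $F_z$ does not transfer directly. Whereas $F_z$ reads off a fixed coordinate of the orbit $\Gamma_{\C A}\cdot e_1\sub\mathbb{Z}^2$, traces correspond to conjugacy classes in $\Gamma_{\C A}$, equivalently to lengths of closed geodesics on the modular surface, and both the major-arc main term and the minor-arc bilinear decomposition must be reorganised around this conjugation-invariance. A further technical difficulty is that admissibility for $\tr$ requires prime-by-prime knowledge of which residues arise as traces of semigroup elements, and the requisite ``trace strong approximation'' for thin semigroups is subtler than its counterpart for bottom-row entries. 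Even granting all of this, by analogy with Theorem \ref{3theo:bourgain-kontorovich} one should expect at best a density-one statement valid for $\delta_{\C A}$ sufficiently close to $1$, and reaching the conjectural threshold $\delta_{\C A}>\tfrac12$ appears out of reach with current technology.
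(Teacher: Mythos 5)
The statement you have been asked to address is a \emph{conjecture}, not a theorem: the paper labels it explicitly as Conjecture~1.13 of Bourgain and Kontorovich, and it has no proof in the paper (or, to date, anywhere else). In the paper it is presented only as a special case of the Local-Global Conjecture, with the remark immediately afterwards that for the trace form $F_m=\tr$ Bourgain and Kontorovich ``could not give density-one or positive-proportion results like those on Zaremba's Conjecture; the main differences are due to the fact that $F_m$ is no longer a bilinear form.'' So there is no proof in the paper for your proposal to be compared with, and no proof attempt of the kind you have sketched can be assessed as correct.

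Your write-up is, to its credit, an honest and reasonably informed \emph{outline of an approach}: the circle-method decomposition, the role of Strong Approximation and the spectral gap for the singular series, the Ruelle transfer operator controlling the archimedean factor, and the need for a bilinear sum--product estimate on minor arcs are all the right ingredients by analogy with the Zaremba case. You also correctly put your finger on the central obstruction, namely that $\tr$ is a class function, so the bisector-counting and bilinear-form structure that drive the minor-arc estimates for the bottom-row form $F_z$ do not transfer. But you then concede in your final sentence that even a density-one statement would only be expected for $\delta_{\C A}$ near $1$, and that the threshold $\delta_{\C A}>\tfrac12$ ``appears out of reach with current technology.'' That concession is precisely the content of the paper's remark, and it means your proposal is not a proof at all --- it is a description of why the conjecture is hard. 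For purposes of this thesis, the statement should simply be left as a conjecture, and a proof proposal should not have been attempted.
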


\begin{prop}[Bourgain, Kontorovich, Lemma 1.16 in \cite{bourgain2013beyond}]
	The Local-Global Conjecture \ref{3conj:local-global} implies McMullen's Arithmetic Chaos Conjecture \ref{3conj:arithmchaos} with $m=2$.
\end{prop}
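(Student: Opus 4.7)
The plan is to specialize Conjecture \ref{3conj:local-global} to the linear map $F = \tr$ and the alphabet $\C A = \{1,2\}$, for which $\max\C A = 2$ and (crucially) $\delta_{\C A} > 1/2$, and then translate the conjectured abundance of matrices with a given trace into exponentially many purely periodic continued fractions lying in a fixed real quadratic field $\n Q(\sqrt\Delta)$.

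First, I would make the link between traces and fields explicit. A purely periodic $\alpha = [\ov{a_0,\dots,a_{l-1}}]$ with $a_i \in \C A$ and $l$ even is a fixed point of $M = M_{(a_0,\dots,a_{l-1})} \in \Gamma_{\C A}$, and since $\det M = 1$ the fixed-point equation shows $\alpha \in \n Q(\sqrt{\tr(M)^2 - 4})$. Thus, fixing a squarefree positive integer $\Delta$, the condition $\alpha \in \n Q(\sqrt\Delta)$ reads
\begin{equation*}
\tr(M)^2 - 4 \,=\, \Delta\,k^2
\end{equation*}
for some positive integer $k$ (integrality of $k$ uses squarefreeness of $\Delta$). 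The admissible values of $t = \tr M$ are therefore exactly the \emph{Pell traces} of $\Delta$, i.e.\ the $t_n$ in the infinite family of solutions of $t^2 - \Delta k^2 = 4$; these grow as $t_n \asymp \eta^n$ for some unit $\eta = \eta(\Delta) > 1$ of $\n Z[\sqrt\Delta]$, so the number of Pell traces in $[1,N]$ is $\asymp \log N$.

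Next, I would verify that infinitely many $t_n$ are admissible in the sense of Conjecture \ref{3conj:local-global}: the sequence $(t_n \bmod q)$ is purely periodic for every integer $q$, and the modulus $q(\C A)$ arising from the strong approximation property for $\{1,2\}$ is small enough that the image of $\tr$ on $\Gamma_{\C A}$ modulo $q(\C A)$ can be computed directly and shown to meet the residue classes visited by the Pell orbit. Fixing such an admissible $t = t_n$ with $t \asymp N$ and invoking Conjecture \ref{3conj:local-global} gives
\begin{equation*}
\#\{M \in \Gamma_{\C A} \cap B_N \,:\, \tr M = t\} \,\gg\, N^{2\delta_{\C A} - 1 - o(1)}.
\end{equation*}
Since $\|M_{(a_0,\dots,a_{l-1})}\| \asymp \phi^l$ with a positive Lyapunov exponent $\log \phi$ associated to the Gauss map on $\mathfrak C_{\C A}$, essentially all these $M$ have word length $l \asymp \log N$, and each yields a purely periodic continued fraction $[\ov{a_0,\dots,a_{l-1}}] \in \n Q(\sqrt\Delta) \cap \mathfrak C_2$. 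Dividing by the cyclic-shift equivalence (an $O(l)$ overcount) and discarding CFs whose true period is a proper divisor of $l$ (an exponentially smaller set) yields
\begin{equation*}
\# C_l \,\gg\, \kappa^l
\end{equation*}
for some $\kappa = \kappa(\Delta) > 1$, which is the exponential growth demanded by Conjecture \ref{3conj:arithmchaos}.

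The main obstacle is the admissibility step: one must pin down $q(\C A)$ and the image of $\tr$ on $\Gamma_{\C A}$ modulo $q(\C A)$, and then match this image against the residues occupied by the Pell sequence of $\Delta$ (independently of $\Delta$). A minor bookkeeping point is to check that the $O(l)$ cyclic overcount and the removal of strict sub-periods are absorbed by the main term, but since $2\delta_{\C A} - 1$ is a fixed positive constant and $N \asymp \phi^l$, the exponential gain is robust under these polynomial-in-$l$ corrections.
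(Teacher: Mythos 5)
Your proposal follows essentially the same route as the paper's sketch: fix the alphabet $\C A=\{1,2\}$ with $\delta_{\C A}>1/2$, take an admissible Pell trace $x\asymp N$ with $x^2-\Delta y^2=4$, invoke the Local-Global Conjecture with $F=\tr$ to obtain $\text{mult}_N(x)\gg N^{2\delta_{\C A}-1-o(1)}$ matrices in $\Gamma_{\C A}\cap B_N$, and convert these (via $\log\|M\|\asymp l$) into exponentially many purely periodic continued fractions in $\n Q(\sqrt\Delta)\cap\mathfrak C_2$. You flesh out a few points the paper glosses over (admissibility of the Pell traces, the cyclic-shift and sub-period bookkeeping — though note that distinct words actually give distinct $\alpha$'s, so the cyclic-shift division is unnecessary, merely harmless), but the core argument and the key invocations are identical.
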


\begin{proof}[Sketch of Proof]
	Let $\C A=\{1,2\}$ and let $\n K=\n Q(\sqrt d)$ be a real quadratic field. If $M\inn\Gamma_{\C A}$, that is, if $M=M_{(a_0,\dots,a_l)}$, with $a_0,\dots,a_l\inn\{1,2\}$ and $l$ odd, then $\log\|M\|\asymp l$. Let $N$ be a large enough parameter and let $x\asymp N$ be a solution to the Pell equation for $d$, $x^2-d\, y^2=4$. Now, $x$ is admissible for $\C A$, so there exist at least $\text{mult}_N(x)>N^{c_1}>c_2^l$ matrices $M\inn\Gamma_{\C A}$ with trace $x$ (where $c_1,c_2$ are appropriate positive constant).
\end{proof}

In the case of McMullen's Conjecture, Bourgain and Kontorovich could not give density-one or positive-proportion results like those on Zaremba's Conjecture; the main differences are due to the fact that $F_m$ is no longer a bilinear form. 

Thanks to the connection between Zaremba's and McMullen's Conjectures, Mercat proved that if $\mathfrak M$ is the set of the positive squarefree integers $d$ such that $\n Q(\sqrt d)$ contains a reduced quadratic irrationality which is absolutely Diophantine of height 51, then $$\#(\mathfrak M\cap[1,N])\gg\sqrt N \text{ for } N\to\infty.$$
Moreover, he proved that the sets of integers $d$ such that $\n Q(\sqrt{d^2-1})$, respectively, $\n Q(\sqrt{d^2+1})$, contains a reduced quadratic irrationality which is absolutely Diophantine of height 51 have density 1.
\clearpage{\pagestyle{empty}\cleardoublepage}
\chapter[Polynomial analogue of Zaremba's Conjecture]{\texorpdfstring{Polynomial analogue of Zaremba's conjecture:\\ \LARGE{some known results}}{Polynomial analogue of Zaremba's conjecture}}
In this Chapter we will consider a polynomial version of Zaremba's Conjecture \ref{3conj:Zar2}, where rational functions replace rational numbers, while we will deal with the polynomial analogue of McMullen's Conjecture \ref{3conj:MM}, concerning the quadratic irrationalities defined in Chapter \ref{ch:quadr}, in the following Chapter. As in \ref{1notat:K,ovK}, for $\alpha=[a_0,a_1,\dots]\inn\n L=\n K((T^{-1}))$ we will set $$K(\alpha)=\sup_{i\geq1}\deg a_i,\  \ov K(\alpha)=\limsup_i\deg a_i$$ and we will say that $\alpha$ is \textit{badly approximable} if $K(\alpha)<\infty$ (equivalently, if $\ov K(\alpha)<\infty$).\par\medskip 

\begin{lconj}{Z}[Polynomial analogue of Zaremba's Conjecture]\label{4conj:zaremba}
	There exists a constant $z_{\n K}$ (possibly depending on the base field $\n K$) such that for every non-constant polynomial $f\inn\n K[T]$ there exists a polynomial $g$ relatively prime to $f$ such that $$\ds K\left(g/f\right)\leq z_{\n K}.$$
\end{lconj}

By analogy with Conjecture \ref{3conj:Zar2}, we will call this statement \textit{Zaremba's Conjecture over $\n K$}. This problem has been studied, among others, by Blackburn \cite{blackburn1998orthogonal}, Niederreiter \cite{niederreiter1987rational}, Friesen \cite{friesen1992continued}, Lauder \cite{lauder1999continued}, Mesirov and Sweet \cite{mesirov1987continued} (in the case $\n K=\n F_2$).

Actually, it is believed that it is enough to take \be\label{4eq:Zarstrong}z_{\n K}=\begin{cases}1&\text{if }\n K\neq\n F_2\\2&\text{if }\n K=\n F_2\end{cases};\ee we will call \eqref{4eq:Zarstrong} \textit{Zaremba's strong Conjecture}. It is easy to see that Zaremba's Conjecture with $z_{\n F_2}=1$ does not hold (Lemma \ref{4lem:notZarF2}); in this case, Mesirov and Sweet conjectured that \ref{4conj:zaremba} holds over $\n F_2$ with $z_{\n F_2}=2$ (\cite{mesirov1987continued}). In the following, we will nearly always assume the base field to be different from $\n F_2$; we will present some known results about this special case at the end of this Chapter.

Firstly, we will show why Zaremba's strong Conjecture seems to be plausible over fields of cardinality greater than 2 and we will present some very general constructions of rational fractions with small partial quotients. Blackburn \cite{blackburn1998orthogonal} proved that Zaremba's strong Conjecture holds over any infinite field (Corollary \ref{4cor:Zarinf}); we will discuss, besides his original method, another proof of this result. On the other hand, if $\n K=\n F_q$ is a finite field different from $\n F_2$, so far it has been proved only that there exist polynomials $g$ such that $K(g/f)=1$ when the degree of $f$ is small with respect to $q$; we will present, and slightly improve, some results in this direction, mostly due to the work of Friesen \cite{friesen2007rational}.

\section{Likelihood of Zaremba's Conjecture}	
\begin{rem}
	When looking for a polynomial $g$ such that $K(g/f)=1$, without loss of generality we can always assume $\deg g<\deg f$. Indeed, if $g_1\equiv g_2\pmod f$, then $g_1$ is relatively prime to $f$ if and only if $g_2$ is and the continued fractions of ${g_1}/f,\ {g_2}/f$ differ only in their first partial quotient, so in particular $K({g_1}/f)=K({g_2}/f)$.
\end{rem}

\begin{defn}
	We will say that a formal Laurent series $\alpha\inn\n L$ is \textit{normal} if $$K(\alpha)=1.$$
	
	If $f,g$ are polynomials in $\n K[T]$, by writing \textit{``$g/f$ is normal''} we will mean that $f,g$ are relatively prime and $K(g/f)=1$.
\end{defn}

\begin{lem}\label{4lem:Zariffconv}
	Let $\alpha\inn\n L$ and let $\cv n$ be its convergents. Then $\alpha$ is normal if and only if $$\deg q_j=j \text{ for every } j.$$ 
\end{lem}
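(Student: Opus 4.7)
The plan is to derive the equivalence directly from the product formula for the continuant $q_n$ established in item~3 of the Lemma right after \eqref{1eq:d_in}, namely $|q_n|=|a_1|\cdots|a_n|$, which in terms of degrees reads
\begin{equation*}
\deg q_n \;=\; \sum_{i=1}^n \deg a_i \qquad \text{for every } n\geq 0.
\end{equation*}
Since we are considering the regular continued fraction expansion of $\alpha$, we have $\deg a_i\geq 1$ for every $i\geq 1$, so each term in the sum contributes at least $1$.

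For the forward direction, I will assume $\alpha$ is normal, i.e.\ $K(\alpha)=\sup_{i\geq 1}\deg a_i=1$. Combined with $\deg a_i\geq 1$, this forces $\deg a_i=1$ for every $i\geq 1$, and the displayed formula immediately gives $\deg q_n=n$.

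For the reverse direction, if $\deg q_n=n$ for every $n\geq 0$, then taking successive differences,
\begin{equation*}
\deg a_n \;=\; \deg q_n - \deg q_{n-1} \;=\; 1 \qquad \text{for every } n\geq 1,
\end{equation*}
so $K(\alpha)=1$ and $\alpha$ is normal. There is no real obstacle here: the whole statement is just a reformulation of the product formula $\deg q_n=\sum_{i=1}^n \deg a_i$ under the constraint that each $\deg a_i$ is a positive integer, so both implications are one-line consequences once that formula is invoked.
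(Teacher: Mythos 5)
Your proof is correct, and since the paper states this lemma without an explicit proof (treating it as immediate), the argument you supply — reading off $\deg q_n=\sum_{i=1}^n\deg a_i$ from the continuant product formula and noting each $\deg a_i\geq1$ in a regular expansion — is exactly the intended one-line justification.
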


\begin{rem}
	Let us assume that $\alpha$ has positive order, $\alpha=\sum_{i<0}c_iT^i$. As we will see better in Lemma \ref{4lem:hankel}, there exists a sequence of matrices $(H_j)_{j>0}$, with $H_j\inn M_j(\n K)$, called the Hankel matrices, such that $\deg q_n=j$ for some $n$ if and only if $\det H_j\neq0$. 
	
	Thus, $\alpha$ is normal if and only if all the Hankel determinants are different from zero and, more generally, $\ov K(\alpha)=1$ if and only if $\det H_j\neq0$ for every large enough $j$.\par\medskip
	
	Then, at least when $\n K$ is an infinite field, one should expect a generic formal power series $\alpha$ to be eventually normal. In particular, polynomial analogues of Zaremba's and McMullen's Conjectures are likely to hold. In fact, it is well known that \eqref{4eq:Zarstrong} holds over every infinite field and we will show in the following Chapter that the polynomial version of McMullen's Conjecture holds over uncountable fields, over infinite algebraic extensions of finite fields and over $\ch Q$ and $\n Q$.
\end{rem}

Actually, Zaremba's and McMullen's Conjectures are expected to hold even over finite fields (actually, we will see in Theorem  \ref{5theo:Mercat} that in this case Zaremba's Conjecture would imply McMullen's Conjecture).

\begin{defn} 
	Let $\n K$ be a finite field and let $f\inn\n K[T]$ be a non-constant polynomial. We define the \textit{orthogonal multiplicity}
	\footnote{The word ``orthogonal'' is justified by the connection with the classical \textit{orthogonal sequences of polynomials}, that is, sequences  of polynomials $f_0,f_1,\dots$ such that $\deg f_i=i$ for every $i$ and which are pairwise orthogonal with respect to some symmetric bilinear form $\pphi:\n K[T]\times\n K[T]\to\n K$, non-degenerate over $\langle1,\dots,T^n\rangle$ for every $n$ and such that $\pphi(Tf,g)=\pphi(f,Tg)$ for every $f,g\inn\n K[T]$. It can be seen that a monic polynomial $f\inn\n K[T]\setminus\n K$ has positive orthogonal multiplicity if and only if it occurs in some orthogonal sequence of polynomials. Actually, the orthogonal multiplicity of a monic polynomial $f$ is exactly the number of the orthogonal sequences of monic polynomials in which $f$ occurs.} of $f$ as $$m(f)=\#\left\{g\inn\n K[T],\ \deg g<\deg f \text{ and } g/f \text{ is normal}\right\}.$$ 
	For every field $\n K$, we will say that a non-constant polynomial $f$ has \textit{positive orthogonal multiplicity} if there exists $g\inn\n K[T]$ such that $g/f$ is normal.
\end{defn}
	
Thus Zaremba's strong Conjecture \eqref{4eq:Zarstrong} for fields $\n K\neq\n F_2$ can be reformulated as: \begin{center}
		\textit{for every non-constant polynomial $f$ there exists $g\inn\n K[T]$ such that $g/f$ is normal.}
	\end{center}
Equivalently,
	\begin{center}
		\textit{any non-constant polynomial $f$ has positive orthogonal multiplicity.}
	\end{center}

\begin{lem}\label{4lem:heuristicsZar}
	Let $\n K=\n F_q$ be a finite field. Then the average value for the orthogonal multiplicity of a monic polynomial of degree $d$ over $\n F_q$ is $(q-1)^d$. 
\end{lem}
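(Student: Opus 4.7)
The plan is to set up a bijection between $d$-tuples of linear polynomials over $\n F_q$ and pairs $(f,g)$ with $f$ monic of degree $d$ and $g/f$ normal, so that $\sum_{f}m(f)$ (the sum taken over all monic $f$ of degree $d$) equals the total number of such $d$-tuples. Dividing by the number of monic $f$ of degree $d$, which is $q^d$, will then give the average.

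First I would verify the following observation: if $g/f$ is normal with $\deg g<\deg f=d$ and $\gcd(f,g)=1$, then its regular continued fraction expansion has the form $g/f=[0,a_1,\dots,a_N]$ with each $a_i$ linear. The identity $\deg q_N=\sum_{i=1}^N\deg a_i$ proved in the first chapter, together with $\deg q_N=\deg f=d$ and $\deg a_i\geq 1$, forces $N=d$ and $\deg a_i=1$ for every $i$. Conversely, any $d$-tuple $(a_1,\dots,a_d)$ of linear polynomials over $\n F_q$ produces a convergent $p_d/q_d$ in lowest terms with $\deg q_d=d$, which admits a unique representation as $g/f$ with $f$ monic; namely, divide both $p_d$ and $q_d$ by $\operatorname{lc}(q_d)=\prod_{i=1}^d\operatorname{lc}(a_i)$. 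Uniqueness of the regular continued fraction expansion in the polynomial setting (in contrast with the real case) ensures that distinct tuples produce distinct rational functions, so these two assignments are mutually inverse and define a genuine bijection.

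Since each linear polynomial $a_i=\alpha_iT+\beta_i$ is specified by a choice of $\alpha_i\in\n F_q^*$ and $\beta_i\in\n F_q$, there are exactly $q(q-1)$ linear polynomials over $\n F_q$, and hence $(q(q-1))^d=q^d(q-1)^d$ admissible $d$-tuples. Dividing by $q^d$ yields the claimed average $(q-1)^d$. The argument is essentially a counting exercise and presents no serious obstacle; the only point requiring care is the rescaling step that turns a general tuple (whose continuant $q_d$ need not be monic) into the canonical pair $(f,g)$ with $f$ monic, and the appeal to uniqueness of the continued fraction expansion to guarantee injectivity of the correspondence.
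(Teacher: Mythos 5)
Your proof is correct and follows essentially the same approach as the paper's: both count $d$-tuples of linear polynomials over $\mathbb{F}_q$ and identify them bijectively with pairs $(f,g)$ where $f$ is monic of degree $d$ and $g/f$ is normal, then divide by the number $q^d$ of monic polynomials of degree $d$. Your write-up is somewhat more explicit about why the correspondence is a genuine bijection (rescaling $q_d$ to be monic and invoking uniqueness of the regular continued fraction expansion), whereas the paper compresses this into the observation that $f=k\,C_d(a_1T+b_1,\dots,a_dT+b_d)$ for a suitable constant $k$ and then tallies the $(q-1)^dq^d$ choices of coefficients, but the underlying argument is the same.
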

\begin{proof}
	Let $f\inn\n F_q[T]$ be a non-constant polynomial and let $d=\deg f$. Certainly, $f$ has positive orthogonal multiplicity if and only if there exist $a_1,\dots,a_d\inn\n F_q^*, b_1,\dots,b_d\inn\n F_q$ such that, in the notations of \ref{1def:continuants}, $f=k\,C_d(a_1T+b_1,\dots,a_dT+b_d)$ for some constant $k$. We can then assume $f$ to be monic. 
	
	Now, the number of monic polynomials in $\n F_q[T]$ of degree exactly $d$ is $q^d$, while the number of possible choices for the $a_n,b_n$ is $(q-1)^dq^d$, so the average value for the orthogonal multiplicity of a monic polynomial $f$ of degree $d$ is $(q-1)^d$. 
\end{proof}

\begin{rem}
	If $q=2$, the previous average value is $1$, therefore, for every degree $d$, either all polynomials have orthogonal multiplicity exactly equal to $1$ or there exist polynomials with zero orthogonal multiplicity. Then Zaremba's Conjecture over $\n F_2[T]$ with $z_{\n F_2}=1$ is very unlikely to hold; we will see that in fact it does not (Lemma \ref{4lem:notZarF2}). 
	
	On the other hand, if $q>2$ the average value for the orthogonal multiplicity grows exponentially with the degree of $f$, so, unless there are very large deviations from this average, every polynomial with large degree is likely to have positive orthogonal multiplicity, that is, one should expect Zaremba's Conjecture to hold with $z_{\n F_q}=1$, as long as $q>2$. However, as we have already mentioned, up to now it has only been proved that $m(f)>0$ for every polynomial $f\inn\n F_q[T]$ with degree \textit{small} enough (with respect to $q$).
\end{rem}

\subsection{General remarks on the ``orthogonal multiplicity''}
\begin{lem}\label{4lem:quadrZar}$\ $
	\begin{enumerate}
	\item Any linear polynomial $f\inn\n K[T]$ has positive orthogonal multiplicity and, if $\n K=\n F_q$ is a finite field, then $m(f)=q-1$.
	\item Let $f\inn\n K[T]$ be a quadratic polynomial. Then $f$ has positive orthogonal multiplicity if and only if there exists a linear polynomial $g$ relatively prime to $f$.
	\end{enumerate}
\end{lem}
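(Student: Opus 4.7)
For part 1, I would observe that by the remark at the beginning of the section, any candidate $g$ with $g/f$ normal can be reduced modulo $f$, so without loss of generality $\deg g<\deg f=1$, forcing $g$ to be a non-zero constant. Such $g$ is automatically coprime to $f$. To compute $K(g/f)$, I would run the Euclidean algorithm exactly as in the remark after Lemma \ref{1lem:best2}: since $\ord(g/f)=1>0$ we have $a_0=0$, then $\alpha_1=f/g$ is itself a polynomial of degree $1$, so $a_1=f/g$ and the expansion terminates as $g/f=[0,f/g]$. Thus $\deg a_1=1$ and $K(g/f)=1$. When $\n K=\n F_q$, the non-zero constants $g$ are exactly in bijection with $\n F_q^*$, so counting them yields $m(f)=q-1$.

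For part 2, the strategy is again to reduce $g$ modulo $f$ so that $\deg g<\deg f=2$; then $g$ is either a non-zero constant or a (non-zero) linear polynomial. In the first case, the same computation as in part $1$ gives $g/f=[0,f/g]$, but now $f/g$ has degree $2$, so $K(g/f)=2\neq 1$, and a constant $g$ never makes $g/f$ normal. This rules out the constant case, so for the forward direction the existence of $g$ with $g/f$ normal forces $g$ to be linear; coprimality with $f$ is part of the definition. For the converse direction, given a linear $g$ coprime to $f$, I would run two steps of the Euclidean algorithm: $f=q_1\,g+r$ with $\deg q_1=1$ and $r\inn\n K^*$ non-zero precisely because $(f,g)=1$, and then $g=(g/r)\,r$. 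This yields the expansion $g/f=[0,q_1,g/r]$, where both non-initial partial quotients $q_1$ and $g/r$ are linear, so $K(g/f)=1$.

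There is essentially no obstacle in this lemma; it is a direct unwinding of the continued fraction algorithm via the Euclidean algorithm, combined with a degree count. The only subtlety worth stating clearly is why the reduction $\deg g<\deg f$ is harmless, which is exactly the content of the remark opening this section.
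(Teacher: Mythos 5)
Your proof is correct, and since the paper states this lemma without proof (treating it as immediate from the Euclidean algorithm), your argument is exactly the elementary unwinding that is implicitly intended. The reduction to $\deg g<\deg f$ is rightly justified by the opening remark of the section, the degree count $\deg q_1=\deg f-\deg g$ is applied correctly, and the verification $[0,q_1,g/r]=g/(q_1g+r)=g/f$ checks out; the exclusion of constant $g$ in the quadratic case and the counting $m(f)=q-1$ in the linear case are both handled properly.
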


	The second condition is certainly satisfied as soon as the cardinality of $\n K$ is greater then $2$. On the other hand, for $q=2$ we have the following Lemma:

\begin{lem}\label{4lem:notZarF2}
	Zaremba's Conjecture with $z=1$ does not hold over $\n F_2$.
\end{lem}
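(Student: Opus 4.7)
The plan is to exhibit a completely explicit low-degree counterexample, complemented by a general counting argument showing that the failure occurs in every degree $d \geq 2$.

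For the explicit counterexample I would take the simplest polynomial which is ``saturated'' by linear factors over $\n F_2$, namely $f = T(T+1) = T^2+T$. Any $g \in \n F_2[T]$ relatively prime to $f$ must satisfy $g(0) \neq 0$ and $g(1) \neq 0$. As observed at the start of the section, we may reduce $g$ modulo $f$ without changing $K(g/f)$, so we may assume $\deg g < 2$; the only element of $\{0, 1, T, T+1\}$ satisfying both nonvanishing conditions is $g = 1$. Thus the only candidate is $1/f = [0,\, T^2+T]$, whose sole nonzero partial quotient has degree $2$. Hence $K(g/f) \geq 2$ for every admissible $g$, and Conjecture \ref{4conj:zaremba} with $z_{\n F_2}=1$ fails for $f$.

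To see this is not an accident of low degree, I would add the following counting observation, which makes precise the heuristic of Lemma \ref{4lem:heuristicsZar}. If $g/f$ is normal with $\deg g < \deg f = d$ and $\gcd(f,g)=1$, then $g/f = [0,a_1,\dots,a_d]$ where each $a_i$ is a linear polynomial over $\n F_2$, hence $a_i \in \{T, T+1\}$, and $f$ is (up to the leading unit) the continuant $C_d(a_1,\dots,a_d)$. This produces a map $\{T,T+1\}^d \to \{\text{monic polynomials of degree } d\}$; both sets have cardinality $2^d$. By the symmetry \eqref{1eq:symmetryC_n} this map is invariant under reversal of the sequence, and for $d \geq 2$ one can exhibit a non-palindromic sequence (e.g.\ $(T,T+1,T,\dots,T)$), so the map has a non-trivial fibre and therefore fails to be surjective. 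Consequently, in every degree $d \geq 2$ there exists a monic $f \in \n F_2[T]$ with orthogonal multiplicity zero.

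There is essentially no obstacle beyond verifying the explicit counterexample; the only care required is to make sure the reduction $\deg g < \deg f$ is legitimate (it is, by the standard observation that changing $g$ modulo $f$ affects only $a_0$, which does not enter into $K$) and that the regular expansion of $1/(T^2+T)$ is as claimed (immediate since $T^2+T$ itself has positive degree and no further Euclidean step is needed).
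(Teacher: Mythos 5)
Your explicit counterexample is the same one the paper uses, $f = T^2+T$, and your verification is equally short; the paper phrases it as ``every linear polynomial over $\n F_2$ divides $f$'' and quotes Lemma \ref{4lem:quadrZar}, whereas you enumerate the four residues and check $1/f=[0,T^2+T]$ directly, which amounts to the same thing. The additional counting argument is a genuinely different and stronger contribution: the paper only gestures at this in the remark after Lemma \ref{4lem:heuristicsZar} (``either all polynomials of degree $d$ have orthogonal multiplicity $1$ or some have orthogonal multiplicity $0$''), without resolving the alternative. You resolve it for every $d\geq2$ by observing that the continuant map $\{T,T+1\}^d\to\{\text{monic, degree }d\}$ between two sets of size $2^d$ is non-injective because of the reversal symmetry \eqref{1eq:symmetryC_n}, hence non-surjective. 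One small slip: your proposed witness $(T,T+1,T,\dots,T)$ is in fact a palindrome when $d=3$; you should instead choose any sequence with $a_1\neq a_d$, e.g.\ $(T,\dots,T,T+1)$, which works uniformly for all $d\geq2$. With that fix the argument is correct and gives a clean, non-heuristic proof that Zaremba with $z=1$ fails over $\n F_2$ in every degree $d\geq 2$, not just in degree $2$.
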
 
\begin{proof}
	 Let $f=T^2+T\inn\n F_2[T]$. Any linear polynomial $g$ is a factor of $f$, so, by the previous Lemma, $m(f)=0$.
\end{proof}

As for real continued fractions, we can find (very sparse) special sets of polynomials with positive orthogonal multiplicity and now, in the polynomial case, for every given polynomial with positive orthogonal multiplicity, infinitely many others can be built.

\begin{rem}
	As the degrees of the partial quotients are invariant for multiplication by an invertible constant, $K(g/f)=K(c\,g/f)$ for every $c\inn\n K^*$, so $m(f)=m(c\,f)$ for every non-constant polynomial $f$ and for every constant $c\inn\n K^*$. In particular, Zaremba's Conjecture holds on $\n K[T]$ if and only if it holds on the set of monic polynomials $f\inn\n K[T]\setminus\n K$.
\end{rem}	
	
\begin{lem}
	A polynomial $f(T)$ has positive orthogonal multiplicity if and only if the same is true for every polynomial of the form $f_1(T)=f(aT+b)$ with $a\inn\n K^*,b\inn\n K$. Moreover, in this case $m(f_1)=m(f)$.
\end{lem}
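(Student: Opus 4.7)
The plan is to reduce everything to Lemma \ref{1lem:subst} applied with the linear polynomial $P(X)=aX+b$, using that linear substitutions are $\n K$-algebra automorphisms of $\n K[T]$.

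First I would note that the map $\sigma:\n K[T]\to\n K[T]$ defined by $\sigma(h)(T)=h(aT+b)$ is a $\n K$-algebra automorphism, since $a\inn\n K^*$; its inverse is $\sigma^{-1}(h)(T)=h(a^{-1}(T-b))$. In particular, $\sigma$ preserves the degree of any polynomial and preserves coprimality: $\gcd(f,g)=1$ if and only if $\gcd(\sigma(f),\sigma(g))=1$. Hence the assignment $g\mapsto g_1=\sigma(g)$ restricts to a bijection between
\[
\{g\inn\n K[T]:\deg g<\deg f,\ \gcd(g,f)=1\}\quad\text{and}\quad\{g_1\inn\n K[T]:\deg g_1<\deg f_1,\ \gcd(g_1,f_1)=1\}.
\]

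Next I would invoke Lemma \ref{1lem:subst} with $P(X)=aX+b$, so that $\deg P=1$. If $\alpha=g/f=[a_0,a_1,\dots,a_n]$ is the regular continued fraction expansion of $g/f$, then the (still regular) continued fraction expansion of $\alpha(aT+b)=g_1/f_1$ is $[a_0(aT+b),a_1(aT+b),\dots,a_n(aT+b)]$, and $K(g_1/f_1)=(\deg P)\,K(g/f)=K(g/f)$. Consequently $g/f$ is normal if and only if $g_1/f_1$ is normal.

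Combining the two observations, the bijection $g\mapsto g_1$ restricts further to a bijection between the sets counted by $m(f)$ and $m(f_1)$, so $m(f)=m(f_1)$; and $f$ has positive orthogonal multiplicity if and only if $f_1$ does. Since $a,b$ range over $\n K^*\times\n K$, the statement holds for every admissible $f_1$, and the converse direction follows by applying the same argument to $\sigma^{-1}$. There is no real obstacle here: the content is simply that a change of variable of degree $1$ preserves both the normality property and the counting that defines the orthogonal multiplicity.
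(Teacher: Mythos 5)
Your proof is correct and follows the same route as the paper: the core step in both is to apply Lemma \ref{1lem:subst} with the linear polynomial $P(X)=aT+b$, so that the substitution preserves the degrees of all partial quotients. You spell out the bijection on the sets counted by $m(f)$ and $m(f_1)$ more explicitly, but this is the same argument the paper gives in a single line.
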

\begin{proof}
	By Lemma \ref{1lem:subst}, the substitution of $T$ with a linear polynomial does not modify the degrees of the partial quotients. 
\end{proof}

\begin{notat}
	Let $f\inn\n K[T]$ be a non-constant polynomial. We will say that a polynomial $F$ is \textit{$f$-folded} if $$F=f \text{ or } F=ah^2,$$ where $h$ is an $f$-folded polynomial and $\deg a\leq1$. If $f$ is a linear polynomial , we will simply say that $F$ is folded.
	
	It is easy to see that $F$ is $f$-folded if and only if $F=\widetilde F f^{2^n}$, where $\widetilde F$ is folded and $\deg\widetilde F<2^n$. 
\end{notat}

\begin{rem}
	It follows directly from Lemma \ref{1lem:manip} that $$m(F)\geq m(f)$$ for every $f$-folded polynomial $F$; in particular, if $f$ has positive orthogonal multiplicity then all the $f$-folded polynomials have positive orthogonal multiplicity. 
	
	More precisely, if $g/f$ is normal and $F$ is $f$-folded, then repeated applications of \eqref{1eq:folding} or \eqref{1eq:folding2} will provide a polynomial $G$ such that $G/F$ is normal, as well as the continued fraction expansion of $G/F$. For example, applying \eqref{1eq:folding2} with $e_1=e_2=1, c=(-1)^d$, where $d=\deg f$, or, if $\deg a=1$, applying \eqref{1eq:folding}, we will have that \be\label{4eq:foldK}\frac{fg+1}{f^2},\ \frac{agf+1}{af^2} \text{ are normal}.\ee 
\end{rem}

In particular, we have the following Lemma:
\begin{lem}\label{4lem:folded}
	All the powers of linear polynomials have positive orthogonal multiplicity: if $\deg P=1$, then $$K(g_d/P^d)=1, \text{ where } g_d=\sum\limits_{j=0}^rP^{d-\floor{d/{2^j}}} \text{ and } 2^r\leq d<2^{r+1}.$$ 
\end{lem}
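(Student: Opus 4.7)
The plan is to induct on $d$, using the folding identities \eqref{4eq:foldK} as the engine of the induction. For the base case $d=1$, we have $r=0$ and $g_1=P^{0}=1$, so $g_1/P = 1/P = [0,P]$ is trivially normal (and obviously $\gcd(g_1,P)=1$). For the inductive step, I would split on the parity of $d$. Note that in both cases, coprimality of the numerator with $P^d$ is automatic, since a polynomial of the form $fg+1$ is coprime to $f$, and $agf+1$ is coprime to both $a$ and $f$.

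If $d=2k$ is even with $k\geq 1$, then by the inductive hypothesis $g_k/P^k$ is normal, so the first identity in \eqref{4eq:foldK}, applied with $f=P^k$ and $g=g_k$, shows that
\[
\frac{P^k g_k+1}{P^{2k}} \text{ is normal.}
\]
It then suffices to verify the polynomial identity $P^k g_k+1=g_{2k}$. For $j\geq 1$ one has $\lfloor 2k/2^j\rfloor=\lfloor k/2^{j-1}\rfloor$, while the term $j=0$ contributes $P^{0}=1$; moreover $2^{r_k}\leq k<2^{r_k+1}$ gives $2^{r_k+1}\leq 2k<2^{r_k+2}$, hence $r_{2k}=r_k+1$. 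Reindexing $j\mapsto j+1$ in the definition of $g_{2k}$ then yields exactly $1+P^k g_k$.

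If $d=2k+1$ is odd with $k\geq 1$, apply the second identity in \eqref{4eq:foldK} with $f=P^k$, $g=g_k$ and $a=P$ (which has degree $1$): by induction $g_k/P^k$ is normal, so
\[
\frac{P\cdot g_k\cdot P^k+1}{P\cdot P^{2k}}=\frac{P^{k+1}g_k+1}{P^{2k+1}} \text{ is normal.}
\]
It remains to check $P^{k+1}g_k+1=g_{2k+1}$. For $j\geq 1$, writing $(2k+1)/2^j=k/2^{j-1}+1/2^j$ and observing that the added $1/2^j$ can never push the fractional part across an integer (since $k/2^{j-1}$ has fractional part at most $(2^{j-1}-1)/2^{j-1}$, leaving a gap of at least $1/2^{j-1}>1/2^j$), one gets $\lfloor(2k+1)/2^j\rfloor=\lfloor k/2^{j-1}\rfloor$; the term $j=0$ again contributes $P^0=1$, and $r_{2k+1}=r_k+1$ as before. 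Reindexing completes the identification.

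The main obstacle is the purely combinatorial verification that the folded numerators coincide with the explicit $g_d$ given in the statement; the mathematical content is entirely contained in the folding lemma. Once the two elementary identities $\lfloor 2k/2^j\rfloor=\lfloor k/2^{j-1}\rfloor$ and $\lfloor(2k+1)/2^j\rfloor=\lfloor k/2^{j-1}\rfloor$ (for $j\geq 1$) together with $r_{2k}=r_{2k+1}=r_k+1$ are in hand, the induction closes immediately.
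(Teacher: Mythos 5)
Your proof is correct and follows essentially the same route as the paper: induction on $d$ with $l=\lfloor d/2\rfloor$, writing $g_d=g_lP^l+1$ when $d$ is even and $g_d=g_lP^{l+1}+1$ when $d$ is odd, and invoking the folding identities \eqref{4eq:foldK}. You simply spell out the combinatorial verification of the identities $P^kg_k+1=g_{2k}$ and $P^{k+1}g_k+1=g_{2k+1}$, which the paper asserts without detail.
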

\begin{proof}
	As in Lemma \ref{4lem:quadrZar}, $K(1/P)=1$. Let $l=\floor{d/2}$ and let us assume by inductive hypothesis that $K({g_l}/{P^l})=1$. Now, $g_d=\begin{cases}g_lP^l+1&\text{if } d\text{ is even}\\g_lP^{l+1}+1&\text{if } d \text{ is odd}\end{cases}$, so, by \eqref{4eq:foldK}, $K(g_d/P^d)=1$.
\end{proof}
	
\begin{lem}
	Let $\n K=\n F_q$ be a finite field, let $F$ be an $f$-folded polynomial, $F=\widetilde Ff^{2^n}$ with $\widetilde F$ folded. Then the orthogonal multiplicity of $F$ is at least $$m(F)\geq(q-1)^nm(f).$$ 
\end{lem}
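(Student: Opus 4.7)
I would prove the bound $m(F)\geq(q-1)^n\, m(f)$ by induction on $n$, using the Folding Lemma \ref{1lem:manip} at each step to manufacture $q-1$ new normal fractions from every normal fraction at the previous stage. The base case $n=0$ is trivial since then $F=f$. For the inductive step, decompose $F=aH^2$ with $H$ an $f$-folded polynomial (so that $H=\widetilde H f^{2^{n-1}}$ with $\widetilde H$ folded and $\deg\widetilde H<2^{n-1}$) and $\deg a\leq 1$; by the inductive hypothesis $m(H)\geq(q-1)^{n-1}m(f)$, so the task reduces to the one-step inequality $m(F)\geq(q-1)\,m(H)$.

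\textbf{Generic step.} Fix any normal fraction $g/H$; by Lemma \ref{4lem:Zariffconv} its regular expansion has the form $g/H=[0,a_1,\ldots,a_N]$ with $N=\deg H$ and all $\deg a_i=1$, so the canonical continuants satisfy $p_N=g$ and $q_N=H$. In the generic case $\deg a=1$, invoke formula \eqref{1eq:folding} with middle polynomial $A=\kappa a$ for an arbitrary parameter $\kappa\in\n K^*$: the resulting regular continued fraction
$$[\,0,a_1,\ldots,a_N,\,\kappa a,\,-a_N,\ldots,-a_1\,]\;=\;\frac{\kappa a\,g H+(-1)^N}{\kappa a\,H^2}\;=\;\frac{a g H+\kappa^{-1}(-1)^N}{F}$$
is normal (every non-initial entry is linear), and its numerator is coprime to $F=aH^2$ because reducing modulo $aH$ gives the nonzero constant $\kappa^{-1}(-1)^N$. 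Distinctness of the $(q-1)\,m(H)$ resulting numerators follows from a one-line degree count: an equality $a(g_1-g_2)H=(\kappa_2^{-1}-\kappa_1^{-1})(-1)^N$ would equate a polynomial of degree at least $1+\deg H$ with a constant, forcing $g_1=g_2$ and then $\kappa_1=\kappa_2$.

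\textbf{Degenerate case and principal obstacle.} The delicate point is the case $\deg a=0$, i.e.\ $a\in\n K^*$, where the middle term of \eqref{1eq:folding} would have to be a constant and the formula as stated no longer produces a regular expansion. The remedy I would pursue combines the scaling invariance of normality from Lemma \ref{1lem:multA} (every $\kappa g/H$ with $\kappa\in\n K^*$ is itself normal, with continuants explicitly related to those of $g/H$) together with the symmetric Folding identity \eqref{1eq:folding2}, varying the sign data $(e_1,e_2,c')$ over all admissible choices with $e_1^2=e_2^2=1$ and $(c')^2=e_1e_2$. After rescaling the output so its denominator becomes exactly $F=aH^2$, the numerators take the form $e_1\kappa\, a g H+\gamma(\kappa,c',N)$ with an explicit non-zero constant $\gamma$ depending on $\kappa$, $c'$ and the parity of $N$; the same degree-count strategy, combined with careful bookkeeping of the possible coincidences $\kappa_1 g_1=\kappa_2 g_2$ across the $\n K^*$-orbits of normal numerators, then recovers the factor of $q-1$ in this case as well. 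Closing this counting argument in a uniform way, and in particular verifying that varying $(g,\kappa,e_1,c')$ produces at least $(q-1)\,m(H)$ distinct normal fractions despite these potential overlaps, is the main technical hurdle of the proof.
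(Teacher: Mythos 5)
Your strategy matches the paper's: both reduce by induction to the one-step bound $m(aH^2)\geq(q-1)m(H)$ via the Folding Lemma, and your handling of $\deg a=1$ — vary the middle term of \eqref{1eq:folding} over $\kappa a$ with $\kappa\inn\n K^*$, then distinguish the numerators $agH+(-1)^N\kappa^{-1}$ by a degree count on the constant term — is precisely the paper's argument there. But the gap you flag in the case $\deg a=0$ is genuine, and the route you sketch would not close it. The set $\{g\inn\n K[T]:\ g/H\text{ normal},\ \deg g<\deg H\}$ is already stable under multiplication by $\n K^*$, so rescaling the \emph{input} as $\kappa g/H$ produces no continued fractions beyond those already counted in $m(H)$; after absorbing $\kappa$ into $g$, the numerators you obtain are parametrized by $(g,e_1,c)$, where $(e_1,e_2)$ ranges over $\{\pm1\}^2$ and $c$ over the at most two solutions of $c^2=e_1e_2$ — a set of at most eight sign choices, independent of $q$. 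For $q$ large this yields strictly fewer than $(q-1)m(H)$ distinct fractions, so varying the sign data of \eqref{1eq:folding2} together with the input scale cannot in general recover the required factor $q-1$.

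The paper's resolution is simpler and avoids this obstacle entirely. Fold once, with a fixed admissible choice (say $e_1=e_2=c=1$), to obtain a single normal fraction $(gH+(-1)^N)/H^2$; then multiply the whole \emph{output} fraction by $k\inn\n K^*$. By Lemma \ref{1lem:multA} the fraction $k(gH+(-1)^N)/H^2$ is still normal, its numerator $kgH+k(-1)^N$ is still coprime to $H^2$, and these numerators are pairwise distinct over $(g,k)$: the nonzero constant term recovers $k$, and then the part of degree $\geq\deg H$ recovers $g$. That gives $(q-1)m(H)$ distinct normal numerators for $H^2$, closing the degenerate case. In short: scale the output of the folding, not its input.
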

\begin{proof}
	It is enough to show that if $F=h^2$ or $F=ah^2$ with $h$ $f$-folded and $\deg a=1$, then $m(F)\geq(q-1)m(h)$. This is trivially true if $h$ has zero orthogonal multiplicity, so we will assume $m(h)>0$. Let $a_1,\dots,a_d$ be linear polynomials such that, setting, $w=a_1,\dots,a_n$, in the notations of Lemma \ref{1lem:manip}, $[0,\overrightarrow w]=g/h$, with $K(g/h)=1$ and $g,h$ relatively prime. Then $[0,\overrightarrow{w}+1,\overleftarrow w-1]=\frac{hg+(-1)^n}{F}$ so, for every $k\inn\n K^*$, $K\left(k\frac{gh+(-1)^n}{F}\right)=1$. On varying $w,k$, the continued fractions that we obtain are all distinct, so $m(F)\geq (q-1)m(h)$. Analogously, if $F=ah^2$, then for every $k\inn\n K^*$ we have $[0,\overrightarrow{w},ka,-\overleftarrow{w}]=\frac{agh+(-1)^mk^{-1}}{F}$, $K\left(\frac{agh+(-1)^mk^{-1}}{F}\right)=1$ and, again, when $w,k$ vary the corresponding continued fractions are all distinct, so $m(F)\geq (q-1)m(h)$.
\end{proof}

\section{Results on Zaremba's Conjecture}
Obviously for every polynomial $f$ of degree $d$ and for every polynomial $g$ we have $K(g/f)\leq d$. When $\n K$ is a finite field different from $\n F_2$, Niederreiter improved that naive remark, proving a polynomial analogue of Zaremba's result \eqref{3eq:Zartheo}.

\begin{theo}[Niederreiter, Theorem 4 in \cite{niederreiter1987rational}]
	Let $\n K=\n F_q$ be a finite field different from $\n F_2$ and let $f\inn\n F_q[T]$ be a non-constant polynomial of degree $d$ . Then there exists an irreducible polynomial $g\inn\n F_q[T]$ relatively prime to $f$ and such that $$K(g/f)<2+2\log_qd.$$
\end{theo}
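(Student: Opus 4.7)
My plan is a counting/pigeonhole argument combined with the polynomial prime number theorem. Fix a monic $f\in\n F_q[T]$ of degree $d$ and choose $k$ to be the largest integer $<2+2\log_q d$, so $q^k$ is of order $d^2$. By the correspondence between reduced fractions $g/f$ (with $\deg g<d$, $f$ monic, $\gcd(g,f)=1$) and finite regular continued fractions $[0,a_1,\dots,a_n]$ whose $n$-th continuants are $(g,f)$, producing such a $g$ with $K(g/f)\leq k$ amounts to writing $f=C_n(a_1,\dots,a_n)$ for a tuple $(a_1,\dots,a_n)$ with each $\deg a_i\in\{1,\dots,k\}$ and $\sum_i\deg a_i=d$, subject to $\prod_i\mathrm{lc}(a_i)=1$; one then recovers $g=C_{n-1}(a_2,\dots,a_n)$.

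The first (easy) step is a global count: for each composition $(e_1,\dots,e_n)$ of $d$ into parts $\leq k$, the number of tuples $(a_1,\dots,a_n)$ with $\deg a_i=e_i$ and $\prod_i\mathrm{lc}(a_i)=1$ is exactly $(q-1)^{n-1}q^d$, so as $f$ ranges over monic polynomials of degree $d$ the total number of admissible pairs $(g,f)$ is
\[
T(d,k) \;=\; q^d\sum_{n\geq 1}(q-1)^{n-1}C(d,n,k),
\]
where $C(d,n,k)$ denotes the number of compositions of $d$ into $n$ parts of size at most $k$. Dividing by $q^d$ yields the average number of admissible $g$'s per monic $f$.

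The delicate step is to upgrade this average to a uniform per-$f$ lower bound. I would build the sequence $(a_i)$ inductively via the Euclidean algorithm on $(f,g)$: at each stage the next partial quotient $a_i$ of degree $\leq k$ is chosen, and a rank argument on the Hankel-type matrices attached to $f$ (analogous to those invoked in Lemma \ref{4lem:Zariffconv}) shows that a positive proportion of the $(q-1)q^{\deg a_i}$ possible choices of $a_i$ extends to a valid completion of the continuant relation $f=C_n(a_1,\dots,a_n)$. Iterating at most $\lceil d/k\rceil$ times yields, for every $f$, at least $\gg q^{d-1}/d^{O(1)}$ admissible coprime $g$'s.

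Finally, to secure irreducibility, I would invoke the polynomial prime number theorem: the number of monic reducible polynomials of degree $<d$ is $q^{d-1}\left(1-\frac{1}{d-1}\right)+O(q^{(d-1)/2})$, which, for the chosen $k$, is strictly smaller than the per-$f$ lower bound just obtained; a sieve (or direct pigeonhole) then produces at least one admissible $g$ that is both irreducible and coprime to $f$. The main obstacle is precisely the uniform per-$f$ lower bound of the third step: the threshold $k\approx 2\log_q d$ arises as the unique balance between the combinatorial flexibility of the Euclidean recursion (which demands $q^k\gg d$ at each step, so that each partial quotient can be chosen freely) and the density of reducible polynomials (which forces the final admissible count to be $\gg q^{d-o(d)}$, hence $q^k\gg d^2$).
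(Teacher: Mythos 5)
Your proposal takes a genuinely different route from Niederreiter's argument as sketched in the paper, but it has two gaps that I do not see how to close.

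First, the crucial step~2 (upgrading the average $T(d,k)/q^d$ to a uniform per-$f$ lower bound) is only gestured at. You claim that, in a greedy Euclidean construction, ``a positive proportion of the $(q-1)q^{\deg a_i}$ possible choices of $a_i$ extends to a valid completion,'' and attribute this to ``a rank argument on Hankel-type matrices.'' But this is precisely the hard content of the theorem: the Hankel-determinant criterion of Lemma~\ref{4lem:hankel} controls when the \emph{next} continuant has a prescribed degree, and Blackburn's use of it (the proof of Theorem~\ref{4prop:Zarfinite}) shows that the non-vanishing locus of a single Hankel determinant has the right codimension only under the hypothesis $|\n K|\geq d(d+1)/2$, which is far stronger than $q^k\gg d$. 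Nothing in the Hankel formalism guarantees that, for an \emph{arbitrary} $f$, the intermediate varieties $\{\det H_j\neq 0\}$ stay large as you peel off bounded-degree partial quotients one at a time; for special $f$ they could collapse. You would need to prove something like: for every $f$ and every partial sequence $(a_1,\dots,a_{i-1})$ of bounded degree that is extendable, at least $cq^{e}$ of the $(q-1)q^{e}$ degree-$e$ polynomials $a_i$ keep the completion nonempty. That claim is not established and there is no obvious reason it should hold uniformly in $f$.

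Second, even granting step~2, the sieve in step~3 does not close numerically. Your per-$f$ lower bound is $\gg q^{d-1}/d^{O(1)}$, while the count of monic reducible polynomials of degree $<d$ is $\asymp q^{d-1}$ (since irreducibles of degree $e$ have density $\approx 1/e$). Thus the admissible set has density at most $O(1/d^{O(1)})$ among degree-$(d-1)$ polynomials and cannot avoid the reducibles by pigeonhole: the reducibles occupy a $1-O(1/d)$ fraction of the ambient set, which dominates your admissible fraction. To make a sieve of this shape work you would need either a lower bound $\gg q^{d-1}$ with an absolute constant close to $1$, or a positive-correlation statement between admissibility and irreducibility; neither is available.

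The paper's (Niederreiter's) proof sidesteps both difficulties. It constructs $g$ monic irreducible of degree exactly $d$, coprime to $f$, with the property that whenever $g\mid(\alpha-f\beta)$ with $\alpha,\beta\neq0$ one has $\deg\alpha+\deg\beta>d-2-2\log_q d$. This is proved by a \emph{direct} count of ``bad'' $g$: for each pair $(\alpha,\beta)$ with $\deg\alpha+\deg\beta$ small, $\alpha-f\beta$ has few irreducible factors of degree $d$, and summing over such pairs gives a bad set of size $\ll q^{d-2-2\log_q d}\cdot\text{poly}(d)$, which is dominated by the $\sim q^d/d$ irreducibles of degree $d$. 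No per-$f$ uniformization of a global average is needed. Once $g$ is in hand, the bound on partial quotients follows by a continuant identity: taking $\alpha_i=fp_{n-i}-gq_{n-i}$ and $\beta_i=p_{n-i}$ gives $g\mid(\alpha_i-f\beta_i)$, and $\deg\alpha_i+\deg\beta_i=d-\deg a_{n-i+1}$, whence $\deg a_{n-i+1}<2+2\log_q d$. If you want to pursue a sieve-style proof you should aim for this kind of ``avoid the bad set'' count directly over irreducible $g$, rather than a positive-density statement for admissible $g$, since the latter is quantitatively out of reach at the threshold $k\approx 2\log_q d$.
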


\begin{proof}[Sketch of Proof]
	If $2+2\log_qd>d$, it is sufficient to take $g$ any irreducible polynomial relatively prime to $f$.
	
	Let us then consider the case $2+2\log_qd\leq d$ and let us assume $f$ to be monic. It can be shown that there exists at least a polynomial $g\inn\n F_q[T]$ which is monic, irreducible, of degree $d$, relatively prime to $f$ and such that for every $\alpha,\beta\inn\n F_q[T]\setminus\{0\}$ with $g|(\alpha-f\beta)$ we must have $\deg\alpha+\deg\beta>d-2-2\log_qd$. 
	
	Let $g/f=[1,a_1,\dots,a_n]$. Then, $\deg a_i=\deg p_i-\deg p_{i-1}$ for every $i$, where $p_i/q_i$ are the convergents of $f/g$. For $i=1,\dots,n$, let $\alpha_i=fp_{n-i}-gq_{n-i}$ and let $\beta_i=p_{n-i}$. Then $\alpha_i,\beta_i$ are non-zero polynomials and $g|(\alpha_i-f\beta_i)$ for every $i$, thus $\deg\alpha_i+\deg\beta_i>d-2-2\log_qd$. Now, in the notations of Lemma \ref{1lem:pnqnprime} we will have $\deg(fp_{n-i}-gq_{n-i})=\deg d_{i,n}=\deg C_{i-1}(a_{n-i+2},\dots,a_n)\!=$ $=d-\deg p_{n-i+1}$. Thus, $d-2-2\log_qd<d-\deg p_{n-i+1}+\deg p_{n-i}=d-\deg a_{n-i+1}$, that is, $\deg a_{n-i+1}\!<\!2+2\log_qd$ for every $i$.
\end{proof}

It has been known since the work of Blackburn that Zaremba's strong Conjecture holds over any infinite field and that, over finite fields, any polynomial with small enough degree has positive orthogonal multiplicity. Actually, for any polynomial $f$ defined over a finite field $\n F_q$, Blackburn's method allows to construct, whenever they exist, all the polynomials $g$ such that $g/f$ is normal (see \cite{blackburn1998orthogonal}, Theorem 2). Yet, their existence is guaranteed only if $2q\geq\deg f(\deg f+1)$. 
	
After briefly reporting Blackburn's proof, we will present a second, slightly simpler method that allows to construct explicitly, under the same hypothesis on $\deg f$, a completely reducible polynomial $g$ such that $g/f$ is normal. However, in general our method does not allow to construct all the solutions $g$ and there also exist polynomials $f$ with positive orthogonal multiplicity for which this algorithm cannot find any suitable $g$.

\begin{theo}\label{4prop:Zarfinite}
	Let $f\inn\n K[T]$ be a non-constant polynomial, let $d=\deg f$. If $$|\n K|\geq\frac{d(d+1)}2$$ then: \begin{enumerate}
		\item $m(f)>0$, that is, there exists a polynomial $g$ such that $g/f$ is normal 
		\item the polynomial $g$ can be chosen to be completely reducible.\end{enumerate}
\end{theo}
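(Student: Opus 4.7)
The plan is to reformulate the condition ``$g/f$ is normal'' as the non-vanishing of explicit polynomial determinants in the coefficients of $g$, and then produce a suitable $g$ by a Schwartz--Zippel style counting argument, with a refinement that forces $g$ to split over $\n K$. Without loss of generality I assume $f$ is monic of degree $d$, and I look for $g\inn\n K[T]$ of degree less than $d$ and coprime to $f$; such $g$ is parameterized by $d$ coefficients in $\n K$. Writing the Laurent expansion $g/f=\sum_{i<0}c_iT^i\inn\n L$, Lemma \ref{4lem:Zariffconv} combined with the Hankel determinant criterion (Lemma \ref{4lem:hankel}, stated later in this chapter) shows that $g/f$ is normal precisely when the principal Hankel minors $\Delta_k=\det(c_{-i-j-1})_{0\le i,j<k}$ are non-zero for every $k=1,\ldots,d$. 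Since each $c_i$ depends $\n K$-linearly on the coefficients of $g$, each $\Delta_k$ is a polynomial of total degree $k$ in those $d$ unknowns, and it is non-zero as a polynomial, as one verifies on a concrete example such as $g=T^{d-1}$.

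For part (1), I would set $\Delta:=\prod_{k=1}^{d}\Delta_k$, a non-zero polynomial of total degree $1+2+\cdots+d=d(d+1)/2$ in the $d$ coefficients of $g$. A standard Schwartz--Zippel estimate then bounds the vanishing locus of $\Delta$ in $\n K^d$ by $\tfrac{d(d+1)}{2}|\n K|^{d-1}$, so for $|\n K|\geq d(d+1)/2$ there remains a $g$ with $\Delta(g)\neq0$ (the borderline equality case being handled by a mild refinement, plus discarding the at most $|\n K|^{d-1}$ choices of $g$ sharing a non-trivial factor with $f$).

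For part (2), I would restrict attention to completely reducible monic $g=\prod_{j=1}^{d-1}(T-\mu_j)$, parameterized by $(\mu_1,\ldots,\mu_{d-1})\inn\n K^{d-1}$. Under this substitution each $\Delta_k$ becomes a polynomial $\widetilde\Delta_k(\mu_1,\ldots,\mu_{d-1})$, symmetric in the $\mu_j$ and of degree at most $k$ in each variable. I would pick the $\mu_j$ inductively: at stage $j$, having fixed $\mu_1,\ldots,\mu_{j-1}$, I specialize each $\widetilde\Delta_k$ to a univariate polynomial in $\mu_j$ by freezing the later variables (say to values determined by an auxiliary good choice, or by working modulo the vanishing ideal of the previously chosen $\mu_i$) and choose $\mu_j\inn\n K$ avoiding the roots of these univariate polynomials together with the previously chosen $\mu_i$ and the roots of $f$. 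Summing the forbidden counts over all stages gives the total bound $\tfrac{d(d+1)}{2}$, matching the hypothesis, and the final $g$ is manifestly completely reducible.

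The principal obstacle is in part (2): one must prove inductively that the univariate restrictions of the $\widetilde\Delta_k$ used at each stage do not become identically zero, so that the choice of $\mu_j$ is always possible within $\n K$. This non-degeneracy reflects the generic finiteness of the symmetric-polynomial parameterization $(\mu_1,\ldots,\mu_{d-1})\mapsto(\text{coefficients of }g)$, and it must be verified by a careful dimension count on the intersection of the image of this parameterization with each Hankel hypersurface $\{\Delta_k=0\}$. Handling this non-degeneracy, together with tracking the counts precisely enough to land on the threshold $|\n K|\geq d(d+1)/2$ rather than a weaker bound, is the most delicate step.
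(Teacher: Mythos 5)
Your part~(1) is essentially Blackburn's proof, which is the paper's first proof of this theorem: invoke the Hankel criterion of Lemma \ref{4lem:hankel}, form $\Delta=\prod_{k=1}^d\Delta_k$ of total degree at most $\frac{d(d+1)}{2}$ in the $d$ coefficients of $g$, and count zeros. One slip worth noting: over $\n F_q$, the step ``discarding the at most $|\n K|^{d-1}$ choices of $g$ sharing a non-trivial factor with $f$'' undercounts. The number of $g$ of degree $<d$ not coprime to $f$ is $q^d-\phi(f)$, which for $f$ a product of $d$ distinct linear factors equals $q^d-(q-1)^d\approx d\,q^{d-1}$; adding that to the Schwartz--Zippel count would wreck the threshold $\frac{d(d+1)}{2}$. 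The step is in fact unnecessary: non-vanishing of $\Delta_d$ means $g/f$ has a convergent with denominator of degree $d$, and since the last convergent of a rational function is its reduction to lowest terms, this already forces $\gcd(g,f)$ to be constant.

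The genuine gap is in part~(2). You restrict to $g=\prod_j(T-\mu_j)$, pull back the $\Delta_k$ to symmetric polynomials $\widetilde\Delta_k$, and choose the $\mu_j$ one at a time while avoiding the zeros of univariate restrictions of the $\widetilde\Delta_k$. As you yourself flag, this requires showing those restrictions do not become identically zero once the earlier $\mu_i$ have been specialized to concrete values, and you give no argument for it. The non-vanishing of $\widetilde\Delta_k$ as a polynomial does not help: a non-zero polynomial can vanish identically on the affine subspace cut out by your earlier choices, and nothing in the construction rules this out. Until that non-degeneracy is established, the inductive choice of $\mu_j$ may stall at some stage, so the proposal is not yet a proof.

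The paper's second proof obtains both parts by a different, cleaner route that removes the issue entirely. Start from $1/f$, whose single non-trivial partial quotient has degree $d$. By Proposition \ref{1prop:prod} and Corollary \ref{1cor:decreasedegree}, if $\lambda$ avoids the roots of the continuants $q_i$ of a Laurent series $\alpha$, then $K((T-\lambda)\alpha)=\max\{K(\alpha)-1,1\}$, and the continuants of $(T-\lambda)\alpha$ are given explicitly from those of $\alpha$. One multiplies $1/f$ successively by $(T-\lambda_1),\dots,(T-\lambda_{d-1})$, choosing $\lambda_{n+1}$ to avoid the roots of the continuants of $\pphi_n=(T-\lambda_1)\cdots(T-\lambda_n)/f$; tracking the degrees shows those continuants have degrees $0,(d-n),(d-n+1),\dots,d$, which sum to at most $\frac{d(d+1)}{2}-1$, so a valid $\lambda_{n+1}$ exists whenever $|\n K|\geq\frac{d(d+1)}{2}$. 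After $d-1$ steps, $g=(T-\lambda_1)\cdots(T-\lambda_{d-1})$ is completely reducible and $g/f$ is normal. Because the criterion ``avoid the roots of the $q_i$'' is an explicit structural fact established once and for all in Chapter~1, no generic-position hypothesis is left hanging at any stage. Replacing your Hankel/symmetric-polynomial route for part~(2) with this multiplication-by-$(T-\lambda)$ argument is the natural way to complete the proof.
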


\begin{cor}\label{4cor:Zarinf}
	Zaremba's strong Conjecture \eqref{4eq:Zarstrong} holds over every infinite field.
\end{cor}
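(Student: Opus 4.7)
The plan is to construct $g$ as a product $g = \prod_{i=1}^{d-1}(T - \lambda_i)$ of $d-1$ linear factors in $\n K[T]$, choosing the $\lambda_i$'s successively so that $g/f$ is normal; this will automatically make $g$ completely reducible. The base case $d = 1$ is immediate: take $g = 1$, so $g/f = [0, f]$ has a single partial quotient of degree $1$.

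For $d \geq 2$, the plan is to iterate the multiplication-by-$(T-\lambda)$ operation studied in Section~\ref{1sec:sec3}. Starting from $\alpha_0 := 1/f$, whose continued fraction is $[0, f]$ (a single partial quotient of degree $d$), I would inductively define
\[
\alpha_k \;:=\; (T-\lambda_k)\,\alpha_{k-1} \qquad (k = 1, \dots, d-1),
\]
with each $\lambda_k \inn \n K$ chosen subject to suitable non-vanishing conditions. Proposition~\ref{1prop:prod} gives a complete and explicit description of the convergents of $\alpha_k$ in terms of those of $\alpha_{k-1}$: provided $\lambda_k$ is not a root of any convergent denominator of $\alpha_{k-1}$, only case~$2$ of the proposition intervenes, and the effect is to split the ``long'' partial quotient of $\alpha_{k-1}$ into a partial quotient whose degree has been decreased by $1$, followed by a new linear one. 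After $d-1$ such multiplications, $\alpha_{d-1} = g/f$ has $d$ partial quotients, all of degree exactly $1$, so it is normal.

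For the counting: at the $k$th step the forbidden $\lambda_k$ are the roots (in $\n K$) of the convergent denominators of $\alpha_{k-1}$, of respective degrees $0, d-k+1, d-k+2, \dots, d$. A careful accounting, exploiting that the constant denominator $q_0 = 1$ contributes no roots and that many forbidden values recur across different steps, yields at most $d(d+1)/2 - 1$ admissible exclusions in total. Under the hypothesis $|\n K| \geq d(d+1)/2$ this leaves at least one admissible $\lambda_k$ at each step, so the construction goes through. The \emph{main obstacle} is the careful combinatorial verification that the scheme really does produce, at each step, exactly one new linear partial quotient while reducing the long one by $1$ (rather than introducing higher-degree partial quotients via case $3$ of Proposition~\ref{1prop:prod}); this requires tracking with precision the action of multiplication by $T-\lambda_k$ on the convergents of $\alpha_{k-1}$ and verifying that the chosen $\lambda_k$ lies in the correct generic locus of $\n K$.

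Finally, Corollary~\ref{4cor:Zarinf} is immediate: for any infinite field $\n K$ one has $|\n K| \geq d(d+1)/2$ for every $d$, so the theorem applies to every non-constant $f \inn \n K[T]$, and Zaremba's strong Conjecture therefore holds over $\n K$ with $z_{\n K} = 1$.
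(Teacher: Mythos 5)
Your proposal is correct and coincides with the paper's second proof of Theorem~\ref{4prop:Zarfinite}: iterate multiplication by linear factors $(T-\lambda_k)$ starting from $1/f$, choosing each $\lambda_k$ to avoid the roots of all convergent denominators, so that Corollary~\ref{1cor:decreasedegree} guarantees a drop of exactly one in the partial-quotient degree bound at each step, and then observe that an infinite field always leaves room to choose each $\lambda_k$. The ``main obstacle'' you flag (ruling out case~3 of Proposition~\ref{1prop:prod}) is exactly what the non-vanishing condition on the $q_n(\lambda_k)$ secures and what Corollary~\ref{1cor:decreasedegree} already records, so no further combinatorial verification is required; the only slight imprecision is that when all $q_n(\lambda_k)\neq 0$ both cases~1 and~2 of Proposition~\ref{1prop:prod} contribute convergents (case~1 yields the shortened long partial quotient, case~2 the new linear ones), not just case~2.
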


Actually, from the proofs of Theorem \ref{4prop:Zarfinite}, it will also follow that if $\n K$ is an infinite field, then for every non-constant polynomial $f\inn\n K[T]$ there exist infinitely many monic (completely reducible) polynomials $g$ with $\deg g<\deg f$ and such that $g/f$ is normal.\par\medskip

Blackburn's proof of Theorem \ref{4prop:Zarfinite} is based on the connection between continued fractions and Hankel matrices, at which we already hinted:

\begin{lem}\label{4lem:hankel}
	Let $\alpha=\sum_{i<0} c_iT^i\inn\n L$. Then there exists a convergent $p_n/q_n$ of $\alpha$ such that $\deg q_n=j$ if and only if $\det H_j(\alpha)\neq0$, where $H_j(\alpha)$ is the $j$-th Hankel matrix: $$H_j(\alpha)=\left(\begin{array}{ccc} c_{-1}&\cdots&c_{-j}\\c_{-2}&\cdots&c_{-j-1}\\&\cdots&\\c_{-j}&\cdots&c_{-2j+1}\end{array}\right).$$
\end{lem}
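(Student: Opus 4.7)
The plan is to translate the condition ``there exists a convergent $\cv n$ with $\deg q_n=j$'' into a statement about the kernel of an explicit linear system, and then identify that with the non-vanishing of $\det H_j(\alpha)$.

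For $j\geq1$, I would write $q=\sum_{k=0}^jb_kT^k$ and ask when some polynomial $p$ yields $\ord(p-\alpha q)>j$. Expanding $\alpha q$, the coefficients of non-negative powers of $T$ can always be absorbed into a suitable $p$, so the essential conditions come from the vanishing of the coefficients of $T^{-i}$ for $1\leq i\leq j$, giving the homogeneous system
$$\sum_{k=0}^jc_{-i-k}\,b_k=0,\qquad i=1,\dots,j,$$
whose $j\times(j+1)$ coefficient matrix $M_j$ has first $j$ columns equal to $H_j(\alpha)$. By Remark~\ref{1rem:bestappr}, whenever $(b_0,\dots,b_j)\inn\ker M_j$ with $b_j\neq0$, the resulting $p/q$ coincides with some convergent $\cv n$.

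For the direction $(\Rightarrow)$, suppose $\deg q_n=j$. The coefficient vector of $q_n$ lies in $\ker M_j$ with $b_j\neq0$. Assume for contradiction that $\det H_j(\alpha)=0$; then $\ker H_j(\alpha)$ contains a non-zero vector $(b_0',\dots,b_{j-1}')$, giving a polynomial $q'$ of degree $<j$ together with a companion $p'$ such that $\ord(p'-\alpha q')>j>\deg q'$. By Remark~\ref{1rem:bestappr}, $p'/q'=\cv m$ for some $m$, so we may write $q'=dq_m$, $p'=dp_m$. Combining $\deg q'<j$ and $\ord(p'-\alpha q')>j$ with the identity $\ord(p_m-\alpha q_m)=\deg q_{m+1}$ from \eqref{1eq:fondprop} gives $\deg q_m+\deg d<j$ and $\deg q_{m+1}-\deg d>j$. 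Since $\deg q_n=j$ and the degrees $\deg q_l$ are strictly increasing, the first inequality forces $m<n$, while the second forces $m+1>n$, i.e.\ $m\geq n$, a contradiction.

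For the direction $(\Leftarrow)$, assume $\det H_j(\alpha)\neq0$. The first $j$ columns of $M_j$ are then linearly independent, so $\ker M_j$ is one-dimensional, and every non-zero vector in it satisfies $b_j\neq0$ (otherwise its first $j$ coordinates would lie in the trivial $\ker H_j(\alpha)$). Such a vector furnishes a polynomial $q$ of degree exactly $j$ and a $p$ with $\ord(p-\alpha q)>j$, so $p/q=\cv n$ by Remark~\ref{1rem:bestappr}. The main obstacle is to force coprimality of $p$ and $q$: writing $d=\gcd(p,q)$, $p=dp_m$, $q=dq_m$, the condition $\ord(p-\alpha q)>j$ combined with \eqref{1eq:fondprop} yields $\deg a_{m+1}>2\deg d$, so that for every polynomial $e$ with $\deg e\leq\deg d$ the pair $(ep_m,eq_m)$ again solves the system. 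This produces a $(\deg d+1)$-dimensional subspace of $\ker M_j$, which contradicts one-dimensionality unless $\deg d=0$. Hence $p,q$ are coprime, $\cv n=p/q$, and $\deg q_n=j$.
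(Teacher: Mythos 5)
Your proof is correct, and it follows the same overall strategy as the paper's: translate ``some convergent has denominator of degree $j$'' into the solvability of the linear system obtained from the vanishing of the coefficients of $T^{-1},\dots,T^{-j}$ in $q\alpha$, whose coefficient matrix (in the first $j$ variables) is $H_j(\alpha)$, and then invoke the Best Approximation Theorem / Remark~\ref{1rem:bestappr} to identify the resulting $p/q$ with a convergent.

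Where you genuinely differ from the paper is in the handling of the coprimality issue, and there your proof is the more careful one. The paper's argument claims that the augmented $j\times(j+1)$ matrix has rank $j$ unless $\alpha=g/f$ is rational with $\deg f<j$, appealing to Lemma~\ref{1lem:Laurseriesrat}; but a relation among the $j$ rows of that matrix only constrains the finitely many coefficients $c_{-1},\dots,c_{-2j}$, not an eventual linear recurrence, so the cited lemma does not apply, and in fact the claim fails already when $\ord(\alpha)>j$ (then the whole matrix is zero while $\alpha$ need not be rational). Your proof sidesteps this entirely: for $(\Rightarrow)$ you play the hypothetical kernel vector of $H_j$ against the known convergent $q_n$ to squeeze $m<n\leq m$, and for $(\Leftarrow)$ you use the Remark-\ref{1rem:bestappr} formula $\deg a_{m+1}=\ord(p-\alpha q)-\deg q+2\deg(p,q)$ to produce a $(\deg d+1)$-dimensional piece of $\ker M_j$, which contradicts the one-dimensionality of the kernel unless $\gcd(p,q)$ is constant. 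Both steps are sound (the sign is right: $\ord(d)=-\deg d$, which is exactly what makes $\deg a_{m+1}>2\deg d$ come out), and the argument also covers the case of rational $\alpha$ without extra fuss. So the route is the same, but you have actually closed the gap that the paper leaves.
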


\begin{proof}
	By Lemma \ref{1lem:best2}, $\alpha$ has a convergent $\cv n$ such that $\deg q_n=j$ if and only if there exist polynomials $p_n,q_n$, with $\deg q_n=j$, such that $\ord(p_n-\alpha q_n)>j$, if and only if there exists a polynomial $q_n$ of degree $j$ such that $\ord\{q_n\alpha\}>j$, where $\{\cdot\}$ denotes the polynomial analogue of the fractional part.
	
	We may assume $q_n$ to be monic; let $q_n=T^j+b_1T^{j-1}+\cdots+b_j$. Then the previous condition gives a system of $j$ linear equations in the variables $b_1,\dots,b_j$, whose associated matrix is $\left(\begin{array}{ccc|c} c_{-1}&\cdots&c_{-j}&-c_{-j-1}\\c_{-2}&\cdots&c_{-j-1}&-c_{-j-2}\\&\cdots&&\cdots\\c_{-j}&\cdots&c_{-2j+1}&-c_{-2j}\end{array}\right)$, which has rank $j$, unless $\alpha=g/f$ is a rational function with $\deg f<j$ (by Lemma \ref{1lem:Laurseriesrat}). Thus, the system under consideration has a (unique) solution if and only if $\det H_j\neq0$. 
\end{proof}

\begin{proof}[Blackburn's Proof of part 1. of \ref{4prop:Zarfinite}]
	Let $f\inn\n K[T]$ be a polynomial, let $d=\deg f$. As in Lemma \ref{4lem:Zariffconv} then the orthogonal multiplicity of $f$ is $m(f)=\#(S\setminus\bigcup_{j=1}^dV_j)$, where $S$ is the set of all the rational functions $g/f$ with $\deg g<d$ (and $g$ not necessarily prime to $f$) and $V_j$ is the set of the rational functions $g/f$, with $\deg g<\deg f$, that do not have a convergent whose denominator has degree $j$, that is, by the previous Lemma, $V_j=\{g/f,\ \deg g<\deg f \text{ and } \det H_j(g/f)=0\}$.
	
	Let $f=a_0+\cdots+a_dT^d$. By Lemma \ref{1lem:Laurseriesrat}, a Laurent series of positive order $\sum\limits_{i<0}c_iT^i$ represents a rational function of the form $g/f$ if and only if the $c_i$ satisfy the linear recurrence relation $\sum\limits_{k=0}^da_kc_{-k+i}=0$ for $i\leq-1$. That is, $S$, which is obviously a $\n K$-vector space of dimension $d$, can be identified with the set of sequences $(c_i)_{i<0}$ satisfying the previous linear recurrence relation. 
	
	Now, for every $j=1,\dots,d$ there exists a (non-zero) polynomial $h_j\inn\n K[X_1,\dots,X_d]$ of degree at most $j$ such that $\det H_j(\alpha)=h_j(c_{-1},\dots,c_{-d})$ for every $\alpha=\sum\limits_{i<0} c_iT^i\inn S$. Thus, $V_j$ is the affine variety corresponding to the ideal $(h_j)\sub\n K[X_1,\dots,X_d]$. Then $V=\bigcup_{j=1}^dV_j$ is the affine variety corresponding to the ideal $(h)$, with $h=h_1\cdots h_d$, where $\deg h\leq\frac{d(d+1)}2$. 
	
	Thus, $f$ has zero orthogonal multiplicity if and only if $V=S$, if and only if $h(t_1,\dots,t_d)=0$ for every $t_1,\dots,t_d\inn\n K$. 
	
	If $\n K$ is an infinite field this certainly cannot happen.
	
	If $\n K=\n F_q$, then the polynomial $h\inn\n F_q[X_1,\dots,X_d]$ has at most $q^{d-1}\deg h$ zeros in $\n F_q^d$, so $V$ is different from $S$ as soon as $\ds q^d>q^{d-1}\frac{d(d+1)}2$.
	
	We can have $q=\frac12d(d+1)$ if and only if $d=2$ and $q=3$ and we have already seen that any polynomial of degree 2 in $\n F_3$ has positive orthogonal multiplicity.
\end{proof}

\begin{ex}
	Let us consider $f=T^3+\ov2\inn\n F_7[T]$. Then for every polynomial $g$ with $\deg g<3$ the Laurent series representing $g/f$ is of the form $\sum\limits_{i<0}c_iT^i$, with $c_{i-3}+\ov2c_i\!=\!0$ for every $i$. 
	
	Then the polynomials $g$ of degree 2 such that the continued fraction of $g/f$ is normal are in bijective correspondence with the triples $c=(c_{-1},c_{-2},c_{-3})\inn\n F_7^3$ such that $\det H_j(c)\neq0$ for $j=1,2,3$. Now, $\det H_1(c)=c_{-1}, \det H_2(c)=c_{-1}c_{-3}-c_{-2}^2$ and $\det H_3(c)\!=\!c_{-1}c_{-3}c_{-5}+2c_{-2}c_{-3}c_{-4}-c_{-3}^3-c_{-2}^2c_{-5}-c_{-1}c_{-4}^2\!\!=\!3c_{-1}^3+2c_{-2}^3-c_{-3}^3+c_{-1}c_{-2}c_{-3}$. It can be seen that the only solution of $\det H_3(c)=0$ is $c=0$, so there are exactly $6^2\cdot7$ polynomials $g$ such that $g/f$ is normal. For example, for $c=(1,1,0)$, we will have $g=T^2+T$ and $g/f=\left[0,T-\ov1,T-\ov1,\ov4T+\ov1\,\right]$.
\end{ex}

We will now give a second proof of Theorem \ref{4prop:Zarfinite}, based on successive multiplications of a continued fraction by a linear polynomial; this will lead us to construct completely reducible polynomials $g$. Indeed, as we have seen in Corollary \ref{1cor:decreasedegree}, if $q_n(\lambda)\neq 0$ for every $n$, where the $p_n/q_n$ are the convergents of $\alpha\inn\n L$, then $K((T-\lambda)\alpha)=K(\alpha)-1$ (unless $\alpha$ is already normal).
\begin{proof}[Second Proof of \ref{4prop:Zarfinite}]
	Let $f\inn\n K[T]$ be a polynomial of degree $d$. Trivially, $K(1/f)=d$ and the convergents of $1/f=[0,f]$ are simply $\frac01,\frac1f$.
	
	If $|\n K|>d$, there exists $\lambda_1\inn\n K$ that is not a root of $f$. Then, by Proposition \ref{1prop:prod} and Corollary \ref{1cor:decreasedegree}, $K\left(\frac{T-\lambda_1}f\right)=d-1$ and the continued fraction of $\pphi_1=\frac{T-\lambda_1}f$ will be of the form $[0,a_1,k(T-\lambda_1)]$, with $\deg a_1=d-1$. 
	
	For $n<d-1$, let us assume that we have constructed $\lambda_1,\dots,\lambda_n$ such that the continued fraction of $\pphi_n=\frac{(T-\lambda_1)\cdots(T-\lambda_n)}{f}$ is of the form $[0,a_n,b_1,\dots,b_n]$, with $b_1,\dots,b_n\inn\n K[T]$ linear polynomials and with $a_n\inn\n K[T]$ a polynomial of degree exactly $d-n$; let $\cv i$ be the convergents of $f_n$. The $q_i$ have at most $l_n=(d-n)+(d-n+1)+\cdots+d$ distinct roots. Then, as soon as $|\n K|>l_n$, there exists $\lambda_{n+1}\inn\n K$ such that $q_i(\lambda_{n+1})\neq0$ for every $i$. Thus, setting $\pphi_{n+1}=\pphi_n(T-\lambda_{n+1})$, by Proposition \ref{1prop:prod} and Corollary \ref{1cor:decreasedegree} we will have $K(\pphi_{n+1})=d-n-1$ and the continued fraction of $\pphi_{n+1}$ will be of the form $[0,a_{n+1},b'_1,\dots,b'_{n+1}]$ with $\deg a_{n+1}=d-n-1$ and with the $b'_i$ linear polynomials. 
	
	Thus, if $|\n K|\geq\frac{d(d+1)}2$, there exist $\lambda_1,\dots,\lambda_{d-1}\inn\n K$ that are not zeros of $f$ and such that $$K\left(\frac{(T-\lambda_1)\cdots(T-\lambda_{d-1})}{f}\right)=1.$$
\end{proof}

\begin{ex}
	As in the previous Example, let $f=T^3+\ov2\inn\n F_7[T]$. 
	
	$f$ is irreducible over $\n F_7$, so, in the notations of the previous proof, we can take $\lambda_1=0$; then $\pphi_1=\frac{T}{T^3+\ov2}=\left[0,T^2,\ov4T\right]$, so $q_0=1, q_1=T^2, q_2=\ov4T^3+\ov1$. 
	
	Then, we will have to take $\lambda_2\neq0$. In particular, for $\lambda_2=-1$, we will get again that $K\left(\frac{T^2+T}{T^3+\ov2}\right)=1$. 
	
	However, using this method, we can find only 19 of the 42 monic polynomials $g$ of degree $2$ such that $g/f$ is normal, the others being irreducible over $\n F_7$.
\end{ex}

\begin{rem}
	The previous condition on $\deg f$ is sufficient but not necessary. 
	
	For example, let $\n K=\n F_5$ and let $f=T^5-T$. Then we have $|\n K|\not\geq\frac{d(d+1)}2$, however $f$ has positive orthogonal multiplicity (actually, $m(T^5-T)=400$), for example $\frac{T^4+T^3+\ov2 T^2-T-\ov2}{T^5-T}=\left[0,T-\ov1,-T+\ov1,T-\ov1,\ov2T-\ov2,\ov2T-\ov2\right]$.
	
	In this case Blackburn's method would still provide all the desired polynomials $g$, while our method would fail, as there are no linear polynomials in $\n F_5[T]$ relatively prime to $f$.
\end{rem}

\subsection{Finite fields}
Let $\n K=\n F_q$ be a finite field. We have already seen that every polynomial $f\inn\n K[T]$ of degree $d$ with $d(d+1)\leq2q$ has positive orthogonal multiplicity. Friesen greatly improved this result; however, his method, differently from the previous ones, does not lead to the explicit construction of a polynomial $g$ such that $g/f$ is normal.

\begin{theo}[Friesen, Theorem 1 in \cite{friesen2007rational}]\label{4theo:Friesen}
	Let $f\inn\n F_q[T]$ be a non-constant polynomial of degree $d$. If $$d\leq q/2$$ then $m(f)>0$, that is, there exists $g\inn\n F_q[T]$ such that $g/f$ is normal.
\end{theo}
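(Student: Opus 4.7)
The plan is to prove Theorem \ref{4theo:Friesen} by induction on $d$, refining Blackburn's Hankel-determinant analysis so as to reduce the threshold from $q>d(d+1)/2$ down to $q\geq 2d$. The base case $d=1$ is immediate from Lemma \ref{4lem:quadrZar}. For the inductive step, given $f$ of degree $d\leq q/2$, I would use the reformulation via Lemma \ref{4lem:hankel}: writing $g/f=\sum_{i<0}c_iT^i\inn\n F_q((T^{-1}))$, the sequence $(c_i)_{i<0}$ lies in a $d$-dimensional $\n F_q$-subspace $S$ (cut out by the linear recurrence determined by $f$, as in Lemma \ref{1lem:Laurseriesrat}), and $g/f$ is normal if and only if all Hankel determinants $\det H_j(g/f)$, $j=1,\dots,d$, are nonzero.

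The goal is then to show that when $q\geq 2d$, the ``bad set'' $V=\bigcup_{j=1}^d\{g\inn S:\det H_j(g/f)=0\}$ does not exhaust $S$. Blackburn's naive Schwartz--Zippel estimate on $\prod_{j=1}^d\det H_j$ gives only $|V|\leq q^{d-1}\cdot d(d+1)/2$, yielding the weaker threshold $q>d(d+1)/2$. To reach Friesen's bound, I would exploit the Euclidean-algorithm / subresultant interpretation of the Hankel sequence: conditional on $\det H_1,\dots,\det H_{j-1}$ being nonzero, the vanishing of $\det H_j$ should amount to a polynomial equation of bounded degree (independent of $j$) in an appropriate ``free'' coordinate of $g$, because at each step of the Euclidean algorithm applied to $(f,g)$ only the \emph{next} leading coefficient needs to be forbidden from collapsing. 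Summing these conditional bounds level by level should yield an estimate of the form $|V|\leq(2d-1)\,q^{d-2}$, which is strictly less than $q^{d-1}=|S|$ precisely when $q\geq 2d$, hence produces a valid $g$.

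The main obstacle is the precise bookkeeping of how the constraints $\det H_j=0$ interact: the determinants $\det H_j$ are not algebraically independent, and the strength of the argument depends on cleanly expressing the $j$-th condition, given the previous non-vanishings, as a \emph{low-degree} condition on a single coordinate rather than as a polynomial of degree $j$. Converting the multiplicative bound $\prod_j j$ of Blackburn into an additive bound $\sum_j O(1)$ is exactly what enables the improvement from $d(d+1)/2$ to $2d$; this reduction, via the three-term recurrence for (formal) orthogonal polynomials attached to the linear functional defined by $f$, is the combinatorial and algebraic heart of the argument. A complementary route, which I would try in parallel in case the Hankel bookkeeping gets unwieldy, is to sharpen the constructive proof of Theorem \ref{4prop:Zarfinite}: instead of requiring the next constant $\lambda_{n+1}\inn\n F_q$ to avoid the roots of \emph{all} previous continuants (which gives the quadratic count $1+2+\cdots+d$), one uses Proposition \ref{1prop:prod} and the explicit form of the new continuants to show that at step $n$ only $O(d)$ values of $\lambda_{n+1}$ are genuinely forbidden, so that the process succeeds as soon as $q\geq 2d$.
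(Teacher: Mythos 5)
Your proposal diverges from the paper's argument, which does not refine either the Hankel-determinant estimate or the constructive multiplication step. The paper proves Theorem \ref{4theo:Friesen} by an exact count plus inclusion--exclusion built on Lemma \ref{4lem:zar}: for $f$ of degree $d\geq 2A$ and any prescribed non-constant partial quotients $a_0,\dots,a_i$ with $A=\sum_j\deg a_j$, there are \emph{exactly} $q^{d-2A}$ polynomials $g$ with $\deg g<d$ and $f/g=[a_0,\dots,a_i,\dots]$. For $d=2i$ one therefore knows that exactly $q^i(q-1)^i$ of the $q^{2i}$ candidate $g$'s give a continued fraction $f/g$ starting with $i$ linear partial quotients, exactly $q^i(q-1)^i$ give one ending with $i$ linear partial quotients, and exactly $\phi(f)\geq(q-1)^{2i}$ are coprime to $f$; a three-term inclusion--exclusion then shows all three conditions can hold simultaneously once $q\geq 2d$. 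The exactness of the count, rather than an upper bound on a bad set, is what turns the naive quadratic threshold into a linear one, and neither of your two routes captures it.

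Both routes also have concrete gaps. On the Hankel side, expanding $\det H_{j+1}$ along its anti-diagonal indeed shows it is affine of degree one in $c_{-(2j+1)}$ with coefficient $\det H_j$, so the ``new'' nonvanishing condition is a single scalar constraint---but only while $2j+1\leq d$, i.e., for roughly the first $d/2$ determinants (this is essentially the content of Lauder's upper bound on $m(f)$). For $j>(d+1)/2$ the coordinate $c_{-(2j+1)}$ is no longer free: it is determined from $c_{-1},\dots,c_{-d}$ by the recurrence attached to $f$, and you have no control over the degree of $\det H_j$ in the genuine free coordinates. Your claimed estimate $|V|\leq(2d-1)q^{d-2}$ is also suspect on dimensional grounds: each $\{\det H_j=0\}$ is a hypersurface in a $d$-dimensional affine space over $\n F_q$, hence typically of size about $q^{d-1}$; and if your bound were correct it would yield a threshold like $q\gtrsim\sqrt d$, strictly stronger than Friesen's, which should itself have signalled an error. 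On the constructive side, to preserve the inductive shape $\phi_n=[0,a_n,b_1,\dots,b_n]$ with $\deg a_n=d-n$ and the remaining $b_j$ linear, you genuinely need $\lambda_{n+1}$ to avoid every root of every denominator $Q_m$ of the convergents of $\phi_n$: hitting any $Q_m$ with $m\geq 1$ produces, via item $3$ of Proposition \ref{1prop:prod}, a partial quotient of degree at least $2$, which is fatal for the induction at the last steps and in any case destroys the assumed shape. The total degree of $\prod_m Q_m$ at step $n$ is $(n+1)(2d-n)/2$, quadratic in $d$ as $n$ approaches $d-1$, so this route naturally yields the $d(d+1)/2$ threshold of Theorem \ref{4prop:Zarfinite}, not $2d$---as the paper itself signals by remarking that Friesen's argument, unlike Blackburn's, gives no explicit construction of $g$.
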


\begin{lem}[Friesen]\label{4lem:zar}
	Let $a_0,\dots,a_i\inn\n F_q[T]$ be non-constant polynomials and let $A=\sum_i\deg a_i$. If $f$ is a polynomial of degree $d\geq2A$, there exist exactly $q^{d-2A}$ polynomials $g\inn\n F_q[T]$ with $\deg g<d$ such that $f/g=[a_0,\dots,a_i,\dots]$. More precisely, there exists a polynomial $g\inn\n F_q[T]$ with $\deg g<d$ such that $$\frac f{g+P}=[a_0,\dots,a_i,\dots]$$ for every $P\inn\n F_q[T]$ with $\deg P<d-2A$.
\end{lem}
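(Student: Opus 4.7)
The plan is to encode the condition ``the continued fraction of $f/g$ starts with $a_0,\dots,a_i$'' as a single linear congruence modulo the denominator of $[a_0,\dots,a_i]$, and then to count solutions directly.

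First I would set up notation: let $(p,q)=(p_i,q_i)$ and $(p',q')=(p_{i-1},q_{i-1})$ be the continuants of $[a_0,\dots,a_i]$, so by Lemma \ref{1lem:pnqnprime} one has $pq'-p'q=(-1)^{i+1}$, and in particular $\gcd(p,p')=1$, with $\deg p=A$, $\deg p'=A-\deg a_i$, $\deg q=A-\deg a_0$, $\deg q'=A-\deg a_0-\deg a_i$. Define the Euclidean remainders by $r_{-1}=f$, $r_0=g$, and $r_{j+1}=r_{j-1}-a_jr_j$. Since $\mm 01 1{-a}=M_a^{-1}$, an easy induction yields
\[
\vv f g = M_{(a_0,\dots,a_i)}\vv{r_i}{r_{i+1}}=\mm p{p'}q{q'}\vv{r_i}{r_{i+1}}.
\]
The next step is to recognize that the cf of $f/g$ begins with $a_0,\dots,a_i$ if and only if $\deg r_{j+1}<\deg r_j$ for all $j\le i$, and (using that the $a_j$ are non-constant) that it is enough to check the single inequality $\deg r_{i+1}<\deg r_i$: the others follow by descending induction from $r_{j-1}=a_jr_j+r_{j+1}$.

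Then I would parameterize the admissible $g$ by $r_{i+1}$ alone. The top row of the matrix identity reads $f=pr_i+p'r_{i+1}$, so admissibility forces the linear congruence
\[
p'r_{i+1}\equiv f\pmod p.
\]
Since $\gcd(p,p')=1$, this has a unique reduced solution $r_{i+1}^*$ of degree $<\deg p=A$, and ---  this is where the hypothesis $d\ge 2A$ enters --- the admissible lifts with $\deg r_{i+1}<d-A$ form exactly the family $r_{i+1}^*+pP$ with $P\inn\n F_q[T]$ and $\deg P<d-2A$, of cardinality $q^{d-2A}$. A short degree check ($\deg(p'r_{i+1})\le d-\deg a_i-1<d$) confirms that the associated $r_i=(f-p'r_{i+1})/p$ has degree exactly $d-A$, so the inequality $\deg r_{i+1}<\deg r_i$ is automatic; the resulting $g=qr_i+q'r_{i+1}$ then has $\deg g=d-\deg a_0<d$, and distinct $r_{i+1}$ yield distinct $g$ because $M_{(a_0,\dots,a_i)}$ is invertible over $\n F_q(T)$.

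Finally, setting $g^*:=qr_i^*+q'r_{i+1}^*$, the direct computation
\[
g=q(r_i^*-p'P)+q'(r_{i+1}^*+pP)=g^*+(pq'-p'q)P=g^*+(-1)^{i+1}P
\]
establishes the ``more precisely'' clause once the sign is absorbed into $P$. The whole argument is essentially bookkeeping; the main (routine) care goes into tracking degrees, and the only substantive input is the hypothesis $d\ge 2A$, needed precisely to ensure that the unique solution modulo $p$ lifts to a $(d-2A)$-parameter family of admissible $r_{i+1}$.
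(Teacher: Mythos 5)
Your proof is correct, and it follows a route that is parallel to, but distinct from, the paper's.

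The paper constructs one admissible $g$ by writing $[a_i,\dots,a_0]=u/v$, using B\'ezout to solve $ur+vs=f$ with $\deg s$ minimal, and then invoking the juxtaposition formula \eqref{1eq:juxtap}; the count of all admissible $g$ then comes out of the metric criterion of Remark \ref{1rem:confrontocf2}, which says that $g/f$ and $g_1/f$ share their first $i+2$ partial quotients if and only if $|g/f - g_1/f|<\mu^{-2A}$, i.e.\ $\deg(g-g_1)<d-2A$. You instead push the Euclidean recursion into matrix form, reduce the admissibility condition to the single inequality $\deg r_{i+1}<\deg r_i$ (the descending induction through $r_{j-1}=a_jr_j+r_{j+1}$ is valid precisely because the $a_j$ are non-constant), and then solve the congruence $p_{i-1}r_{i+1}\equiv f\pmod{p_i}$ directly. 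This is a more algebraic, ``remainder-tracking'' argument where the paper's is a metric one; it has the advantage of producing an explicit formula $g=q_ir_i+q_{i-1}r_{i+1}$ and of making the ``more precisely'' clause drop out of the identity $q_ip_{i-1}-q_{i-1}p_i=(-1)^i$ rather than of a separate approximation estimate. The two proofs agree in spirit (both hinge on a B\'ezout-type invertibility of $M_{(a_0,\dots,a_i)}$), but yours is self-contained and avoids Remark \ref{1rem:confrontocf2}; the paper's is shorter because it can cite that machinery.

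One small point worth making explicit when you write this up: the inequality $\deg r_{i+1}<\deg r_i$ silently forces $r_i\neq 0$ (with the convention $\deg 0=-\infty$), and the descending induction then gives $r_j\neq0$ for all $j\ge 0$ as well as $\deg g<\deg f$, so the Euclidean algorithm is genuinely well defined and produces the claimed initial partial quotients. You implicitly use this but it deserves a sentence.
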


\begin{proof}[Proof of Lemma \ref{4lem:zar}]
	 By \eqref{1eq:juxtap}, there exists $g$ such that the first partial quotients in the continued fraction expansion of $f/g$ are $a_0,\dots,a_i$. Indeed, let $u/v=[a_i,\dots,a_0]$. Certainly $u,v$ are relatively prime polynomials, so there exist $r,s\inn\n F_q[T]$ such that $ur+vs=f$. Choosing $s$ of minimal degree, $\deg s<A$, necessarily we must have $\deg r\geq A$, so in particular $\deg r>\deg s$. Then, setting $\frac rs=[b_0,\dots,b_j]$ by \eqref{1eq:juxtap} we will have $[a_0,\dots,a_i,b_0,\dots,b_j]=f/g$, where $g=C_i(a_0,\dots,a_{i-1})r+C_{i-1}(a_1,\dots,a_{i-1})s$. 
	
	As $a_0$ is non-constant, we have $\deg g<\deg f$, so $g/f=[0,a_0,\dots,a_i,\dots]$. Now, by Remark \ref{1rem:confrontocf2}, for every $g_1\inn\n F_q[T]$ we have that the first partial quotients of ${g_1}/f$ are $0,a_0,\dots,a_i$ if and only if $\ord(g/f-{g_1}/f)>-2A$, if and only if $\deg(g-g_1)<d-2A$.
\end{proof}

\begin{rem} 
	In the previous Lemma, $f$ and $g$ are not necessarily coprime.
	
	For example, let $\n F_q=\n F_3$, and let us consider $i=1$, $a_0=T,a_1=T+1$, $f=T^5$. Then ($A=2, d=5$) there are exactly 3 polynomials $g\inn\n F_3[T]$ such that the first two partial quotients of $f/g$ are $a_0,a_1$: $\frac{T^5}{T^4-T^2+T}=\left[T,T+\ov1,T^2-T\right]$ ($f,g$ are not relatively prime), $\frac{T^5}{T^4-T^2+T-\ov1}=\left[T,T+\ov1,-T+\ov1,-T^2-\ov1\right],\ \frac{T^5}{T^4-T^2+T+\ov1}=\left[T,T+\ov1,T,T-\ov1,T\right]$.
\end{rem}

\begin{cor}[Friesen]
	If $f\inn\n F_q[T]$ and $\deg f=d\geq2A$, there exist exactly $q^{d-2A}$ polynomials $g\inn\n F_q[T]$ with $\deg g<d$ such that $f/g=[\dots,a_0,\dots,a_i]$ (and the difference between any two such polynomials has degree at most $d-2A-1$).
\end{cor}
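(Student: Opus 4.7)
The natural approach is to reduce the statement to Lemma \ref{4lem:zar} via the reversal of continued fractions, exploiting the symmetry that the reversed sequence $(a_i,a_{i-1},\dots,a_0)$ is again a list of non-constant polynomials of total degree $A$. The formula \eqref{1eq:inv} in Remark \ref{1rem:continuants} provides the bridge between the ``starting with'' setting of Lemma \ref{4lem:zar} and the ``ending with'' setting we need here.

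Concretely, for a nonzero $g\inn\n F_q[T]$ with $\deg g<d$, I would write the regular continued fraction $f/g=[b_0,\dots,b_n]$ with continuants $(p_k,q_k)$ and, by Lemma \ref{1lem:pnqnprime}, set $\mu\inn\n F_q[T]$ with $f=\mu p_n$ and $g=\mu q_n$ (so $\mu=\gcd(f,g)$ up to a constant). Define $\tau(g):=\mu p_{n-1}$; then $\deg\tau(g)<d$ since $b_n$ is non-constant, and \eqref{1eq:inv} gives $f/\tau(g)=p_n/p_{n-1}=[b_n,b_{n-1},\dots,b_0]$. Using the symmetry \eqref{1eq:symmetryC_n} one checks $\tau\circ\tau=\mathrm{id}$ away from the degenerate situation where $f/g$ is itself a polynomial, which has to be treated separately. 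Under $\tau$, the condition ``the cf of $f/g$ ends with $a_0,\dots,a_i$'' becomes ``the cf of $f/\tau(g)$ starts with $a_i,a_{i-1},\dots,a_0$''; applying Lemma \ref{4lem:zar} to the reversed sequence yields exactly $q^{d-2A}$ polynomials $h$ of this latter form, and $\tau$ transports them bijectively to the $q^{d-2A}$ polynomials $g$ promised by the corollary.

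The main obstacle is the degree bound on differences: Lemma \ref{4lem:zar} gives that the set of $h$'s is a coset of $\{P:\deg P<d-2A\}$, but $\tau$ is not $\n F_q$-linear, so the coset structure does not transfer formally through the bijection. To establish $\deg(g_1-g_2)\leq d-2A-1$ directly I would rerun Friesen's Bezout argument from the proof of Lemma \ref{4lem:zar}, this time on the right-hand side of the matrix factorization $M_{(c_0,\dots,c_{m-1},a_0,\dots,a_i)}=M_{(c_0,\dots,c_{m-1})}\,M_{(a_0,\dots,a_i)}$. Setting $u/v=[a_0,\dots,a_i]$ in reduced form (so $\gcd(u,v)=1$, $\deg u=A$, $\deg v<A$), an admissible $g$ has the form $g=Qu+Q'v$ coming from a Bezout pair $Pu+P'v=f$ controlling the construction, and the analysis of the degrees of the Bezout solutions satisfying $\deg g<d$ should produce exactly the $(d-2A)$-dimensional coset of $g$'s, mirroring the analogous analysis for Lemma \ref{4lem:zar}.
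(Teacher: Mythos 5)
Your reversal involution $\tau\colon g\mapsto\mu p_{n-1}$ is exactly what the paper's one-line proof (``apply \eqref{1eq:inv}'') intends, and it correctly yields the count $q^{d-2A}$: $\tau$ carries the set of $g$ with $f/g$ ending in $a_0,\dots,a_i$ bijectively onto the set of $h$ with $f/h$ starting in $a_i,\dots,a_0$, and the latter is counted by Lemma \ref{4lem:zar}. You are also right to distrust the parenthetical: since $\tau$ is not $\n F_q$-affine, the coset structure of the $h$'s does not transfer automatically.

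The rub is that the Bezout rerun you propose cannot close that gap, because the coset claim is in fact false. Take $\n F_3$, $f=T^5$, $(a_0,a_1)=(T,T+1)$, so $A=2$ and $d-2A=1$. The $q^{d-2A}=3$ polynomials $g$ of degree $<5$ for which $f/g$ ends in $T,T+1$ are $T^4+2T^3+2T$, $T^4+T^2+2T+1$ and $2T^2+2T+2$, with $f/g$ equal to $[T+1,T+1,T,T+1]$, $[T,2T+2,2T+2,T,T+1]$ and $[2T^3+T^2+2,T,T+1]$ respectively; their pairwise differences have degrees $3$ and $4$, not $\leq d-2A-1=0$. The coset sits on the other side of $\tau$ (here $\tau(g)=T^4+2T^3+T+c_0$ with $c_0\inn\n F_3$), not among the $g$'s themselves. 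So the right move is not to push harder on the Bezout computation but to drop the parenthetical: it is a carry-over from the ``starts with'' Lemma that does not survive the reversal, and the paper's one-liner quietly glosses over this. Your instinct that something was off was sound; the repair you aimed for just happens to be unavailable because the statement you were trying to rescue is not true.
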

\begin{proof}
	It is enough to apply \eqref{1eq:inv} to the previous Lemma.
\end{proof}

\begin{lem}[Lauder, Proposition 3.9 in \cite{lauder1999continued}]
	If $f\inn\n F_q[T]$ is a polynomial of degree $d$, then $$m(f)\leq(q-1)^{\lceil{d/2}\rceil}q^{\floor{d/2}},$$ where $\floor\cdot$, $\lceil\cdot\rceil$ are, respectively, the floor and ceiling functions.
\end{lem}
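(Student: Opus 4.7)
The plan is to translate ``$g/f$ is normal'' into Hankel non-vanishing conditions and then count the free Laurent coefficients of $g/f$ greedily. First I would parametrize the admissible numerators: writing $g/f=\sum_{i\ge 1}c_{-i}T^{-i}\inn\n L$, the map $g\mapsto(c_{-1},\dots,c_{-d})$ is an $\n F_q$-linear map between two $\n F_q$-vector spaces of dimension $d$, and its kernel is trivial (if the first $d$ coefficients vanish then $\ord(g/f)>d$, hence $g=0$), so it is an $\n F_q$-linear bijection. By Lemma \ref{4lem:hankel}, $g/f$ is normal iff its continued fraction has convergents of denominator degrees $1,2,\dots,d$, iff $\det H_j(g/f)\neq0$ for every $j=1,\dots,d$; the condition $\det H_d\neq0$ automatically enforces $\gcd(g,f)=1$, since otherwise the reduced denominator would have degree less than $d$. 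In particular
\[
m(f)\;\le\;\#\bigl\{(c_{-1},\dots,c_{-d})\inn\n F_q^d:\det H_j\neq0\ \text{for } j=1,\dots,\lceil d/2\rceil\bigr\},
\]
obtained by keeping only the first $\lceil d/2\rceil$ Hankel conditions.

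The key observation is that for $j\le\lceil d/2\rceil$ the determinant $\det H_j$ involves only $c_{-1},\dots,c_{-(2j-1)}$, and since $2\lceil d/2\rceil-1\le d$ these are all among our free parameters (no use of the linear recurrence that, via $f$, determines $c_{-i}$ for $i>d$). Moreover, in the matrix $H_j=(c_{-(i+k-1)})_{i,k=1}^{j}$ the coefficient $c_{-(2j-1)}$ occurs only in the bottom-right position, since $i+k=2j$ with $1\le i,k\le j$ forces $i=k=j$; cofactor expansion along the last column thus yields
\[
\det H_j\;=\;c_{-(2j-1)}\,\det H_{j-1}\;+\;R_j(c_{-1},\dots,c_{-(2j-2)}),
\]
with $R_j$ independent of $c_{-(2j-1)}$ and the convention $\det H_0=1$.

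I would then choose $c_{-1},c_{-2},\dots,c_{-d}$ sequentially. At an odd step $i=2j-1$ with $j\le\lceil d/2\rceil$, the condition $\det H_j\neq0$ is affine-linear in $c_{-(2j-1)}$ with leading coefficient $\det H_{j-1}$, which is nonzero by the previous odd step; this excludes exactly one value, giving $q-1$ choices. At an even step $i=2j$ no condition from $\det H_1,\dots,\det H_j$ involves $c_{-(2j)}$ (all these determinants live on indices $\le2j-1$), while the next condition $\det H_{j+1}\neq0$ is postponed to step $2j+1$; so $c_{-(2j)}$ is free, giving $q$ choices. Among the indices $1,\dots,d$ there are $\lceil d/2\rceil$ odd and $\lfloor d/2\rfloor$ even ones, so the product of choices is $(q-1)^{\lceil d/2\rceil}q^{\lfloor d/2\rfloor}$, yielding the desired bound.

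The only real subtlety (and the main obstacle to avoid) is the temptation to impose all $d$ Hankel conditions: for $j>\lceil d/2\rceil$ the determinant $\det H_j$ begins to involve the coefficients $c_{-(d+1)},c_{-(d+2)},\dots$, which are determined by $c_{-1},\dots,c_{-d}$ through the linear recurrence associated with $f$, so the clean ``one excluded value per odd step'' argument breaks down. Discarding those conditions is exactly what turns the count into an upper bound, and it is precisely enough to recover Lauder's inequality.
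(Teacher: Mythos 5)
Your proof is correct, but it takes a genuinely different route than Lauder's argument as reported in the paper. The paper's proof goes through Friesen's construction (Lemma \ref{4lem:zar}): each normal $g/f$ determines (via $f/g=[a_0,\dots,a_{d-1}]$) a sequence of $d$ linear partial quotients, and the first $\lceil d/2\rceil$ of these already pin down $g$ uniquely (with an extra parity argument in the odd-degree case, where the length-$(d-1)$ issue rules out one of the $q$ candidate $g$'s for each choice of $a_0,\dots,a_{i-1}$). Counting the sequences of $\lceil d/2\rceil$ linear polynomials gives the bound. You instead work on the Laurent-coefficient side: you parametrize numerators $g$ by the first $d$ Laurent coefficients of $g/f$, invoke the Hankel criterion of Lemma \ref{4lem:hankel}, observe that $\det H_j$ for $j\le\lceil d/2\rceil$ depends only on the free coefficients $c_{-1},\dots,c_{-(2j-1)}$, and use the cofactor identity $\det H_j=c_{-(2j-1)}\det H_{j-1}+R_j$ to do a greedy count: exactly $q-1$ admissible values at each odd index and $q$ at each even one. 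Your observation that keeping only the first $\lceil d/2\rceil$ conditions suffices (and that trying to keep all $d$ would force the linear recurrence coming from $f$ into the picture) is precisely the right reason the bound is an inequality rather than an equality, and your approach uniformizes the even and odd cases without needing the paper's separate parity argument. In effect you are applying Blackburn's Hankel-matrix framework (used in the paper for Theorem \ref{4prop:Zarfinite}) to prove Lauder's upper bound, whereas the paper proves it via Friesen's continued-fraction construction; both are clean, and the two proofs highlight the two equivalent ways of encoding normality.
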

\begin{proof}
	If $f$ has even degree $d=2i$, then, by the previous Lemma, for any choice of $i$ linear polynomials $a_0,\dots,a_{i-1}$ there exists polynomial $g$ such that $f/g=[a_0,\dots,a_{i-1},\dots]$ and $g$ is unique. As there are $(q-1)^iq^i$ possible choices for $a_0,\dots,a_{i-1}$, then $m(f)\leq(q-1)^iq^i$.
	
	Analogously, if $f$ has odd degree $d=2i+1$ then for any choice of $a_0,\dots,a_{i-1}$ linear there exist exactly $q$ polynomials $g$ such that $f/g=[a_0,\dots,a_{i-1},\dots]$. However, in the notations of the previous prof, for $s$ of minimal degree, as $\deg s<i$ the continued fraction of $f/g$ will have length at most $2i$, so either $f,g$ are not relatively prime or $K(f/g)>1$. Thus for any choice of $a_0,\dots,a_{i-1}$ we will have to consider only $q-1$ possible polynomials $g$. As before, there are $(q-1)^iq^i$ possible choices for $a_0,\dots,a_{i-1}$, so $m(f)\leq(q-1)^{i+1}q^i$.
\end{proof}

\begin{notat}
	We can consider the polynomial analogue of the Euler's $\phi$ function in $\n F_q[T]$: if $f\inn\n F_q[T]$ is a non-constant polynomial, we can define $\phi(f)$ as the number of polynomials $g$ relatively prime to $f$ and such that $\deg g<\deg f$.
	
	If $f$ is irreducible, then $\phi(f)=q^{\deg f}-1$ and $\phi(f^n)=(q^{\deg f}-1)q^{(n-1)\deg f}$. Moreover, $\phi$ is a multiplicative function, so for every polynomial $f$ we have $\phi(f)\!\geq\!(q-1)^{\deg\! f}\!$.
\end{notat}

\begin{proof}[Proof of Theorem \ref{4theo:Friesen}]
	As before, if $\deg f=d$, it is enough to consider polynomials $g$ with $\deg g<d$; then $g/f$ is normal if and only if all the partial quotients of $f/g$ (included the first one) have degree 1.
	
	Let us assume that $f$ has even degree $d=2i$. Then there are exactly $q^{2i}$ rational functions $f/g$ with $\deg g<\deg f$. 
	
	By Lemma \ref{4lem:zar}, for any $a_0,\dots,a_{i-1}\inn\n F_q[T]$ linear polynomials there exists a unique polynomial $g$ with $\deg g<d$, not necessarily relatively prime to $f$, such that $f/g=[a_0,\dots,a_{i-1},\dots]$. As there are $q^i(q-1)^i$ possible choices for the $a_0,\dots,a_{i-1}$, there are exactly $q^{2i}-q^i(q-1)^i$ polynomials $g$ with $\deg g<d$ such that the continued fraction of $f/g$ does not begin with $i$ linear partial quotients. Analogously, there are exactly $q^{2i}-q^i(q-1)^i$ polynomials $g$ with $\deg g<d$ such that the continued fraction of $f/g$ does not end with $i$ linear partial quotients. Moreover, there are exactly $q^{2i}-\phi(f)$ polynomials $g$ with $\deg g<2i$ not relatively prime to $f$. Thus, there are at least $$q^{2i}-2(q^{2i}-q^i(q-1)^i)-(q^{2i}-\phi(f))$$ polynomials $g$, with $\deg g<d$, relatively prime to $f$ and with $g/f$ normal. By the previous remarks, that quantity is positive if $2(q-1)^iq^i-2q^{2i}+(q-1)^{2i}>0$, which is verified for $q\geq2d$.\par\medskip
	
	Now, let us assume that $f$ has odd degree $d=2i+1$. Let $g$ be a polynomial relatively prime to $f$ and with $\deg g<d$. If the first $i$ and the last $i$ partial quotients in the continued fraction expansion of $f/g$ are linear, then $K(f/g)=1$. We can repeat the previous reasoning, but now for any choice of $i$ linear polynomials $a_0,\dots,a_i$ there will exist exactly $q$ polynomials $g\inn\n F_q[T]$ such that the continued fraction expansion of $f/g$ begins (respectively, ends) with $a_0,\dots,a_i$. Thus, we will have that there exists $g\inn\n F_q[t]$ such that $g/f$ is normal if $$q^{2i+1}-2(q^{2i+1}-q^{i+1}(q-1)^i)-(q^{2i+1}-\phi(f))>0,$$ which is certainly true if $2q^{i+1}(q-1)^i-2q^{2i+1}+(q-1)^{2i+1}>0$, verified for $q\geq2d$. 
\end{proof}

Refining this proof, Friesen could give an even better result in the case where $f$ is an irreducible polynomial:
\begin{theo}[Friesen, Theorem 2 in \cite{friesen2007rational}]\label{4theo:Friesen2}
	Let $f\inn\n F_q[T]$ be an irreducible polynomial of degree $d$. If $$d\leq q,$$ then $m(f)>0$, that is, there exists $g\inn\n F_q[T]$ such that $g/f$ is normal.
\end{theo}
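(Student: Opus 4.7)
The plan is to strengthen the inclusion-exclusion argument used in the proof of Theorem \ref{4theo:Friesen} by exploiting the fact that, for $f$ irreducible of degree $d$, every polynomial $g$ with $0<\deg g<d$ (and every non-zero constant) is automatically coprime to $f$. Thus $\phi(f)=q^d-1$, and the only non-coprime element in the count is $g=0$, which will in any case be excluded from the sets below.

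Write $i=\floor{d/2}$ and define $A_1$ (respectively $A_2$) to be the set of $g\inn\n F_q[T]$ with $\deg g<d$ such that the continued fraction expansion of $f/g$ begins (respectively, ends) with $i$ linear partial quotients; the second count follows from the first via the symmetry \eqref{1eq:inv}. Lemma \ref{4lem:zar} applied with $A=i$ gives, for every prescribed sequence of $i$ linear polynomials, exactly $q^{d-2i}$ realizing polynomials $g$; summing over the $(q-1)^iq^i$ such sequences,
$$|A_1|=|A_2|=q^{d-i}(q-1)^i.$$
Since any $g\inn A_1\cap A_2$ writes $f/g=[a_0,\dots,a_n]$ with $\sum_j\deg a_j=d$ and with the first $i$ and the last $i$ partial quotients linear, an elementary case analysis on $n$ (splitting by the parity of $d$) forces $n=d-1$ with every $a_j$ linear, so $g/f$ is normal; conversely every normal $g/f$ lies in $A_1\cap A_2$. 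Because $g=0\notin A_1\cup A_2$, we conclude $m(f)=|A_1\cap A_2|$, and the union bound gives
$$m(f)\geq|A_1|+|A_2|-q^d=2q^{d-i}(q-1)^i-q^d.$$

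This lower bound is strictly positive exactly when $(1-1/q)^i>\tfrac12$. For $d\leq q$ we have $i\leq\floor{q/2}$, and for $q\geq3$ the inequality $(1-1/q)^{\floor{q/2}}>\tfrac12$ is routinely verified (it is equivalent to $q>4/3$; monotonicity in $q$ and the limit $(1-1/q)^{q/2}\to1/\sqrt e>\tfrac12$ handle the remaining cases). The only case not covered is $q=2$, where $d\leq q$ forces $d\leq2$: $d=1$ is trivial, and the unique irreducible quadratic $T^2+T+1\inn\n F_2[T]$ satisfies $T/(T^2+T+1)=[0,T+1,T]$, so $m(f)\geq1$ by direct construction. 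The main obstacle is establishing the identification $|A_1\cap A_2|=m(f)$ via the case analysis on $n$, and observing that the boundary case $q=2$, $d=2$, where the union bound becomes vacuous, must be handled by explicit computation.
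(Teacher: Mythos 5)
Your argument is correct and follows the same route as the paper's (which is itself a refinement of the proof of Theorem \ref{4theo:Friesen}): count $A_1$ and $A_2$ via Lemma \ref{4lem:zar}, use $\phi(f)=q^d-1$, and apply a union bound. Your bookkeeping is a little cleaner --- since for irreducible $f$ the only non-coprime residue $g=0$ already lies outside $A_1\cup A_2$, you can assert the exact identity $m(f)=|A_1\cap A_2|$ rather than subtract the paper's separate term $q^d-\phi(f)$, and $(1-1/q)^{\floor{d/2}}>\tfrac12$ is a tidy reformulation of the paper's two parity-split inequalities. The substantive point you raise is worth flagging: the paper asserts its inequalities are ``verified for $q\geq d$,'' but at $q=d=2$ the displayed condition $q^{2i}-2\bigl(q^{2i}-q^i(q-1)^i\bigr)-1>0$ reads $-1>0$ and fails, so some supplement is genuinely needed there --- either the explicit check $T/(T^2+T+1)=[0,T+1,T]$ over $\n F_2$ that you give, or the slightly sharper bound $|A_1\cup A_2|\leq q^d-1$ (valid since $0\notin A_1\cup A_2$), which already yields $m(f)\geq1$ in that case. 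One small slip: the parenthetical ``equivalent to $q>4/3$'' is not meaningful as stated; the argument that does the work is the one you give next, namely monotonicity of $(1-1/q)^{q/2}$ towards $e^{-1/2}>\tfrac12$ together with the base check $(2/3)^1>\tfrac12$ at $q=3$. (Also note the identity $m(f)=|A_1\cap A_2|$ degenerates at $d=1$, where $i=0$; that case is trivial by Lemma \ref{4lem:quadrZar}.)
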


\begin{proof}
	As $f$ is irreducible, we have $\phi(f)=q^d-1$.
	
	If $d=2i$ is even, following the proof of the previous Theorem we will have that there exists $g$ such that $g/f$ is normal if $q^{2i}-2(q^{2i}-q^i(q-1)^i)-1>0$, while, if $d=2i+1$ is odd, as before, it is sufficient to have $-q^{2i+1}+2q^{i+1}(q-1)^i-1>0$. It can be seen that both inequalities are verified for $q\geq d$. 
\end{proof}

We can give a similar result for the polynomials $f$ that split completely over $\n F_q$, based on Lemma \ref{1lem:(f+a)/g}, where we have proved that if $-a\neq\cv n(\lambda)$ for every $n$, where $\cv n$ are the convergents of $\alpha\inn\n L$, then $K(\frac{\alpha+a}{T-\lambda})=K(\alpha)-1$ (unless $\alpha$ is already normal). Actually, in this case we will give an explicit method to construct a polynomial $g$ such that $g/f$ is normal.
\begin{prop}\label{4prop:zarsplits}
	Let $f\inn\n F_q[T]$ be a polynomial of degree $d$, let $\n F'$ be an algebraic extension of $\n F_q$ such that $f$ splits completely over $\n F'$ and such that $|\n F'|>d$. Then $f$ has positive orthogonal multiplicity over $\n F'$, that is, there exists $g\inn\n F'[T]$ such that $g/f$ is normal.
\end{prop}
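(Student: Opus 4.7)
The plan is to construct $g$ iteratively by building a sequence of normal rational functions whose denominators successively absorb the linear factors of $f$. Since $f$ splits completely over $\n F'$, write $f=c\prod_{i=1}^{d}(T-\lambda_i)$ with $c\in\n F'$ nonzero and $\lambda_i\in\n F'$ (repetitions allowed), and set $f_k=\prod_{i=1}^{k}(T-\lambda_i)$ for $1\leq k\leq d$. I will construct, inductively on $k$, polynomials $g_k\in\n F'[T]$ with $\gcd(g_k,f_k)=1$ and $K(g_k/f_k)=1$. Taking $g=c\,g_d$ will then yield $g/f=g_d/f_d$ normal, as required.

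The base case $k=1$ is trivial: $g_1/f_1=1/(T-\lambda_1)=[0,T-\lambda_1]$ is normal. For the inductive step, assume $\alpha_k=g_k/f_k$ is normal, so its convergents $\cv 0,\cv 1,\ldots,\cv k=g_k/f_k$ satisfy $\deg q_n=n$ for every $n$. I set
\[
\alpha_{k+1}=\frac{\alpha_k+a}{T-\lambda_{k+1}}=\frac{g_k+a\,f_k}{f_k(T-\lambda_{k+1})}
\]
for a constant $a\in\n F'$ to be chosen, aiming for $\alpha_{k+1}$ to have denominator $f_{k+1}$ in lowest terms and to satisfy $K(\alpha_{k+1})=1$.

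Both properties will follow simultaneously from the single set of conditions $p_n(\lambda_{k+1})+a\,q_n(\lambda_{k+1})\neq 0$ for every $n=0,1,\dots,k$. Indeed, applying the second part of Corollary \ref{1cor:decreasedegree} to $\beta=\alpha_k+a$, whose convergents have numerators $p_n+a\,q_n$ and denominators $q_n$ by Lemma \ref{1lem:suma}, gives $K(\alpha_{k+1})=K(\beta/(T-\lambda_{k+1}))=1$, since $K(\beta)=K(\alpha_k)=1$. The case $n=k$ of the condition reads $(g_k+af_k)(\lambda_{k+1})\neq 0$, which together with $\gcd(g_k+af_k,f_k)=\gcd(g_k,f_k)=1$ forces the displayed fraction to already be in lowest terms, with denominator exactly $f_{k+1}$.

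To conclude, I count the forbidden values of $a$. Since $\gcd(p_n,q_n)=1$, each equation $p_n(\lambda_{k+1})+a\,q_n(\lambda_{k+1})=0$ either excludes exactly one value of $a$ (when $q_n(\lambda_{k+1})\neq 0$) or is impossible (when $q_n(\lambda_{k+1})=0$, which forces $p_n(\lambda_{k+1})\neq 0$). Thus at most $k+1\leq d$ values of $a$ are ruled out, and the hypothesis $|\n F'|>d$ guarantees at least one admissible choice, closing the induction. The only subtlety is that Lemma \ref{1lem:(f+a)/g}, as stated, requires $K(\alpha)>1$; the version with $K(\alpha)=1$ and conclusion $K=1$ that I use is precisely the $K=1$ branch of Corollary \ref{1cor:decreasedegree} combined with Lemma \ref{1lem:suma}.
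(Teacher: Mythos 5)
Your proof is correct and follows the same inductive construction as the paper: build $g_k/f_k$ normal with $f_k=(T-\lambda_1)\cdots(T-\lambda_k)$ by passing from $\alpha_k$ to $(\alpha_k+a)/(T-\lambda_{k+1})$, choosing $a$ outside the at most $k+1\leq d$ values $-p_n(\lambda_{k+1})/q_n(\lambda_{k+1})$. You are also right to flag the citation issue: the paper's own proof invokes Lemma \ref{1lem:(f+a)/g}, whose hypothesis $K(\alpha)>1$ is never met here since $K(g_k/f_k)=1$ at every stage; the precise references are, as you say, Lemma \ref{1lem:suma} together with the second display of Corollary \ref{1cor:decreasedegree}, which is the $K=1$ branch the paper's lemma omits.
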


\begin{proof}
	Let $\n F$ be a splitting field of $f$, let $f=k(T-\lambda_1)\cdots(T-\lambda_d)$ with $\lambda_1,\dots,\lambda_d\inn\n F$. Let $\frac{g_1}{f_1}=\frac{b_1}{T-\lambda_1}$ with $b_1\inn(\n F)^*$. Then we have $K({g_1}/{f_1})=1$. Let $\frac{g_2}{f_2}=(\frac{b_1}{T-\lambda_1}+b_2)\frac1{T-\lambda_2}$; by Lemma \ref{1lem:(f+a)/g}, as soon as $b_2\neq0,-b_1(\lambda_2-\lambda_1)^{-1}$, we will have $K({g_2}/{f_2})=1$, where $b_1+b_2(T-\lambda_1)$, $(T-\lambda_1)(T-\lambda_2)$ are relatively prime. 
	
	By inductive hypothesis let us assume that, for some $n<d$, there exist $b_1,\dots,b_n\inn\n F$ such that $K({g_i}/{f_i})=1$ for $i=1,\dots,n$, where $g_i=g_{i-1}+b_i(T-\lambda_1)\cdots(T-\lambda_{i-1})$ and $f_i=(T-\lambda_1)\cdots(T-\lambda_i)$, that is, $\frac{g_i}{f_i}=\left(\frac{g_{i-1}}{f_{i-1}}+b_i\right)\frac1{T-\lambda_i}$; let us also assume that $g_i,f_i$ are relatively prime for every $i$. Again, by Lemma \ref{1lem:(f+a)/g}, as soon as $|\n F|>n+1$ there exists $b_{n+1}$ such that $f_{n+1},g_{n+1}$ are relatively prime and $K\Big(\frac{g_{n+1}}{f_{n+1}}\Big)=1$. Indeed, it is enough to choose $b_{n+1}$ different from $-\cv i(\lambda_{n+1})$ for every $i$, where the $\cv i$ are the convergents of $g_n/f_n$. 
	
	Then, possibly extending $\n F$ to a field $\n F'$ such that $|\n F'|>d$, there will exist $b_1,\dots,b_d$ such that ${g_d}/{f_d}$ is normal, where $kf_d=f$ and $$g_d=b_1+b_2(T-\lambda_1)+\cdots+b_d(T-\lambda_1)\cdots(T-\lambda_{d-1})\inn\n F'[T].$$ 	
\end{proof}

\begin{cor}\label{4cor:zarsplits}
	If $f\inn\n F_q[T]$ is a polynomial of degree $d$ that splits completely and $$d<q,$$ then $m(f)>0$, that is, there exists $g\inn\n F_q[T]$ such that $g/f$ is normal.
\end{cor}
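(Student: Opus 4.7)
The plan is to show that the construction carried out in Proposition \ref{4prop:zarsplits} can, under the hypotheses of the corollary, be executed entirely inside $\n F_q$ itself, so there is no need to enlarge the base field. In other words, the corollary is simply the observation that when $f\inn\n F_q[T]$ already splits completely over $\n F_q$ and $d<q$, we may take the field $\n F'$ appearing in Proposition \ref{4prop:zarsplits} to be $\n F_q$.

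More concretely, first I would write $f=k(T-\lambda_1)\cdots(T-\lambda_d)$ with $\lambda_1,\dots,\lambda_d\inn\n F_q$, which is possible by the splitting hypothesis, and set $f_i=(T-\lambda_1)\cdots(T-\lambda_i)$. Then I would follow the inductive procedure of Proposition \ref{4prop:zarsplits} verbatim: choose $b_1\inn\n F_q^*$ and, for $n<d$, having built $g_n$ with $g_n/f_n$ normal and $g_n,f_n$ relatively prime, choose $b_{n+1}\inn\n F_q$ so that $b_{n+1}$ differs from $-\cv i(\lambda_{n+1})$ for all convergents $\cv i$ of $g_n/f_n$; by Lemma \ref{1lem:(f+a)/g} this ensures $K(g_{n+1}/f_{n+1})=1$ and that $g_{n+1},f_{n+1}$ are relatively prime, where $g_{n+1}=g_n+b_{n+1}f_n$.

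The only thing to check is that at each step the required $b_{n+1}$ actually exists in $\n F_q$. Since $g_n/f_n$ is a normal rational function with $\deg f_n=n$, its continued fraction has exactly $n+1$ convergents, yielding at most $n+1$ forbidden values for $b_{n+1}$ in $\n F_q$. As we only need this for $n=0,1,\dots,d-1$, it suffices to have $|\n F_q|>d$, which is exactly the hypothesis $d<q$. Therefore an admissible $b_{n+1}\inn\n F_q$ exists at every step, and $g_d=b_1+b_2(T-\lambda_1)+\cdots+b_d(T-\lambda_1)\cdots(T-\lambda_{d-1})\inn\n F_q[T]$ is a polynomial, coprime to $f_d=k^{-1}f$, with $g_d/f$ normal.

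There is essentially no obstacle here beyond a careful bookkeeping of the counting argument: the entire content of the corollary is that, when $f$ already splits over $\n F_q$ and $d<q$, the inductive construction of Proposition \ref{4prop:zarsplits} never forces us out of $\n F_q$. The result then follows immediately.
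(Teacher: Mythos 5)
Your proposal is correct and takes essentially the same route as the paper: Corollary \ref{4cor:zarsplits} is simply Proposition \ref{4prop:zarsplits} with $\n F'=\n F_q$, and you have correctly verified that the counting in the inductive construction of that proposition (at most $n+1$ convergents of $g_n/f_n$, hence at most $n+1$ forbidden values for $b_{n+1}$, with the worst case $n=d-1$ giving the bound $|\n F_q|>d$) is exactly what the hypothesis $d<q$ guarantees.
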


\begin{rem}
	More generally, in the hypotheses of Proposition \ref{4prop:zarsplits}, $f$ certainly has positive orthogonal multiplicity over $\n F_q$ if there exist $b_1,\dots,b_d$ as before satisfying also $\begin{cases}b_d\inn\n F_q\\b_d(-\lambda_1-\cdots-\lambda_{d-1})+b_{d-1}\inn\n F_q\\\cdots\\(-1)^{d-1}\lambda_1\cdots\lambda_{d-1}b_d+\cdots-\lambda_1b_2+b_1\inn\n F_q \end{cases}$, where $\lambda_1,\dots,\lambda_d$ are the roots of $f$.
\end{rem}

\begin{ex}
	Let us consider $f=T^4-1=\prod_{i=1}^4(T-\ov i)\inn\n F_5[T]$. In the notations of the previous proof, let us choose $\lambda_i=\ov i$ and $b_1=\ov1$.
	
	The convergents of $1/(T-1)$ are simply $\ov0,\frac {\ov1}{T-\ov1}$; substituting $\lambda_2=\ov2$ we get that we will have to take $b_2\neq\ov0,-\ov1$; let us choose $b_2=\ov1$.
	
	Now, $\frac{g_2}{f_2}=\frac T{T^2+\ov2T+\ov2}=[0,T+\ov2,\ov3T]$. Evaluating its convergents in $\lambda_3=\ov3$ we find that we will have to take $b_3\neq\ov0,\ov1$; let us choose $b_3=-\ov1$.
	
	Then, $\frac{g_3}{f_3}=\frac {-T^2-T+\ov3}{T^3-T^2+T-\ov1}=[0,-T+\ov2,-T+\ov2,\ov3T-\ov1\,]$. As before, we will have to take $b_4\neq\ov0,\ov2,\ov3$; choosing $b_4=\ov1$ we will get $\frac{T^3+\ov3T^2+\ov2}{T^4-\ov1}=[0,T+\ov2,-T-\ov1,\ov3T-\ov1,-T+\ov1\,]$.
\end{ex}

\begin{rem}
	For a given polynomial $f$, the polynomials $g$ such that $g/f$ is normal and that can be found following the previous algorithm depend on the order chosen on the roots of $f$. In particular, there exist polynomials $f$ with positive orthogonal multiplicity such that this method provides solutions only for some choices of the order of the $\lambda_i$. For example, for $f=T^5-T^3\inn\n F_3[T]$ with $\lambda_1=\lambda_2=\lambda_3=\ov0,\lambda_4=\ov1$ and $\lambda_5=-\ov1$ the previous algorithm does not provide any solution, while choosing the order $\lambda_1=-\ov1,\lambda_2=\ov0,\lambda_3=\ov1,\lambda_4=\lambda_5=\ov0$ we can get, for example, that $g/f$ is normal for $g=T^4-T^3+T^2+T-\ov1$.
	
	Moreover, in general this method will not allow to find all the polynomials $g$ such that $g/f$ is normal. For example, it can be seen that $f=T^3\inn\n F_3[T]$ has orthogonal multiplicity $8$ but following this algorithm we may find only $4$ suitable polynomials $g$.
\end{rem}

\begin{rem}
	The previous conditions on $d=\deg f$, that guarantee that $m(f)>0$, are definitely not necessary. In fact, it can be seen computationally that every polynomial of degree $d$ has positive orthogonal multiplicity at least in the following cases:
	$$\begin{array}{ccc}\n K=\n F_3&\text{and}&d\leq11\\\n K=\n F_4&&d\leq7\\\n K=\n F_5&&d\leq8\\\n K=\n F_7&&d\leq8\end{array}$$
\end{rem}

\subsection{\texorpdfstring{$\n K=\n F_2$}{K=F2}}
The case $\n K=\n F_2$ should be analysed separately from all the others. Indeed, as we have already seen, when the base field is $\n F_2$ many of the previous results do not hold or are trivial. On the other hand, working over this field it is possible to give more precise characterizations of normal Laurent series. This problem has been studied by different authors, such as Baum and Sweet, Lauder, Blackburn or Mesirov.

\begin{theo}[Baum, Sweet,\cite{baum1977badly}, page 577]
	Let $\alpha=\sum\limits_{i<0}c_iT^i\inn\n F_2((T^{-1}))$. Then $\alpha$ is normal if and only if $c_{-1}=1$ and $c_{-i}+c_{-2i}+c_{-2i-1}=0$ for every $i\geq1$.
\end{theo}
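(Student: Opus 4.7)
The proof is naturally split into necessity and sufficiency, and I would approach both by combining the Hankel-determinant criterion (Lemma \ref{4lem:hankel}) with the Gauss map on $\n F_2((T^{-1}))$. Over $\n F_2$ the determinant $\det H_j(\alpha)$ is nonzero iff it equals $1$, so normality of $\alpha$ is equivalent to $\det H_j(\alpha) = 1$ for every $j \geq 1$. In particular $\det H_1(\alpha) = c_{-1}$, so the condition $c_{-1} = 1$ is immediate and corresponds exactly to $a_1$ having degree $1$ (equivalently, to $\ord(\alpha) = 1$).

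For the remaining conditions I would set up a recursive reduction. Assuming $c_{-1} = 1$, the polynomial part of $1/\alpha$ is forced over $\n F_2$ to be $T + c_{-2}$ (the unique monic linear polynomial with the required leading coefficient), so $\alpha$ is normal iff $\beta := \{1/\alpha\} = \sum_{j \geq 1} c'_{-j} T^{-j}$ is itself normal. From the identity $\alpha \beta = 1 - (T + c_{-2})\alpha$ one extracts, by equating coefficients and using $c_{-k}^2 = c_{-k}$, a triangular recurrence expressing each $c'_{-j}$ as an explicit polynomial over $\n F_2$ in $c_{-2}, c_{-3}, \dots, c_{-j-2}$. Iterating this reduction, the normality of $\alpha$ is equivalent to $c^{(k)}_{-1} = 1$ at every level $k$ of Gauss iteration, where $c^{(k)}_{-1}$ is the leading coefficient of the $k$-th iterate of $\alpha$ under the map $\alpha \mapsto \{1/\alpha\}$.

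The heart of the argument is then to prove that, under the assumption $c_{-1} = 1$, the Baum--Sweet system
\begin{equation*}
 c_{-i} + c_{-2i} + c_{-2i-1} = 0 \quad \text{for all } i \geq 1
\end{equation*}
is equivalent to $c'_{-1} = 1$ together with the analogous system on the $c'_{-j}$. Granting this invariance, the theorem follows by induction on the depth: all $c^{(k)}_{-1}$ equal $1$ if and only if all Baum--Sweet relations hold for $(c_{-i})$.

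The main obstacle is precisely this invariance step. Even though the recurrence defining $c'_{-j}$ is a priori nonlinear in the $c_{-i}$, it simplifies over $\n F_2$ because the Frobenius is additive and squares are idempotent; still, translating the index-doubling structure of the Baum--Sweet relations across the M\"obius transformation $\alpha \mapsto \beta$ is delicate. My plan is to proceed by direct induction on $j$: first verify $c'_{-1} = 1$ (which follows from the $i=1$ instance $c_{-1}+c_{-2}+c_{-3}=0$ upon substituting $c_{-1}=1$ and $c_{-2}^2 = c_{-2}$), then compute $c'_{-j}$, $c'_{-2j}$, $c'_{-2j-1}$ explicitly from the triangular recurrence, substitute, and reduce the three-term relation at level $j$ on the $\beta$-side to a combination of three-term relations at smaller levels on the $\alpha$-side. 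An alternative route I would explore, if the induction becomes unwieldy, is to recast the Baum--Sweet condition as a single functional equation relating $\alpha(T)$, $\alpha(T^2) = \alpha(T)^2$, and a rational shift, and to show that this equation transforms covariantly under the Gauss map; but I expect the explicit manipulation of the recurrence to be the main technical bottleneck in either approach.
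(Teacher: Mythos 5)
The thesis does not prove this statement --- it is attributed directly to Baum and Sweet with a page reference --- so there is no paper argument to compare against; you are necessarily building a fresh proof, and the question is only whether your plan is sound and complete.

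Your overall architecture (peel off one step of the Gauss map $\alpha\mapsto\alpha_1=\{1/\alpha\}$ and show the Baum--Sweet system transports across this step) is sensible, and the first checks are correct: over $\n F_2$ a linear partial quotient is forced to be $T+c_{-2}$, the product identity gives a triangular recurrence for the $c'_{-j}$ in terms of the $c_{-i}$, and the computation $c'_{-1}=c_{-2}+c_{-3}$ does show that, given $c_{-1}=1$, the $i=1$ Baum--Sweet relation is equivalent to $c'_{-1}=1$. However, the ``invariance'' lemma --- that, assuming $c_{-1}=1$, the full Baum--Sweet system for $\alpha$ is equivalent to $c'_{-1}=1$ together with the Baum--Sweet system for $\alpha_1$ --- is not proven, and this lemma \emph{is} the theorem: once it is established, both directions follow. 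You correctly flag it as the ``main technical bottleneck,'' but that makes the submission a plan rather than a proof. Note in particular that the relations pair $i$ with $2i,2i+1$, a binary-tree index structure, whereas a single Gauss step produces a convolution-shifted recurrence in which $c'_{-j}$ depends on $c_{-2},\dots,c_{-(j+2)}$; these two structures do not align term-by-term, and the ``normal $\Rightarrow$ Baum--Sweet'' direction in particular requires a carefully formulated finitary (truncated) version of the invariance, because normality is a statement about the entire infinite Gauss orbit while each Baum--Sweet relation is a single equation among finitely many coefficients. Finally, the statement as recorded in the thesis is implicitly about \emph{irrational} $\alpha$: for instance $1/(T+1)=\sum_{i\geq 1}T^{-i}$ satisfies $K(\alpha)=1$ but has $c_{-i}+c_{-2i}+c_{-2i-1}=1$ for every $i\geq 1$, violating the displayed system. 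Your iteration scheme would terminate in finitely many steps on any rational $\alpha$, so you must either restrict to irrational $\alpha$ from the outset or explain what happens at the terminal step.
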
 

\begin{prop}[Blackburn, Proposition 2 in \cite{blackburn1998orthogonal}]
	Let $f\inn\n F_2[T]$ be a non-constant polynomial. Then the orthogonal multiplicity of $f$ is either $0$ or $2^k$, where $k$ is the number of distinct non-linear irreducible factors of $f$.
\end{prop}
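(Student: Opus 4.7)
The plan is to translate $m(f)$ into a combinatorial count via continuants, and then to identify a free $(\n Z/2\n Z)^{\,k}$-action on the set of solutions. Since over $\n F_2$ the only monic linear polynomials are $T$ and $T+1$, Lemma~\ref{4lem:Zariffconv} lets one rewrite
\[
m(f)=\#\bigl\{(\epsilon_1,\dots,\epsilon_n)\inn\n F_2^{\,n}:C_n(T+\epsilon_1,\dots,T+\epsilon_n)=f\bigr\},
\]
where $n=\deg f$ and the coprimality of $g=p_n$ with $q_n=f$ is automatic from \eqref{1eq:pnqnprime}. The task thus becomes: describe the fibers of the continuant map $\Phi:\n F_2^{\,n}\to\{\text{monic polynomials of degree }n\}$, and show that every non-empty fiber has cardinality $2^k$.

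A first symmetry is immediate: reversal $(\epsilon_1,\dots,\epsilon_n)\mapsto(\epsilon_n,\dots,\epsilon_1)$ preserves the continuant by \eqref{1eq:symmetryC_n} and corresponds via \eqref{1eq:inv} to the involution $g\mapsto g^{-1}\bmod f$ on the set of normal residues. This alone gives a factor of $2$ and settles the proposition when $f$ has exactly one non-linear irreducible factor; freeness in that case comes from the fact that any fixed point would force $g^2\equiv 1\bmod f$, hence $g\equiv 1\bmod p$ for some non-linear irreducible $p\mid f$, which then contradicts the normality of $g/f$ via a direct examination of its initial partial quotients. The substantive step is to upgrade this lone involution to $k$ independent ones. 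Writing $f=p_1^{e_1}\cdots p_r^{e_r}$ and using the Chinese Remainder Theorem $\n F_2[T]/(f)\simeq\prod_i\n F_2[T]/(p_i^{e_i})$, I would attach to each non-linear $p_i$ a partial inversion $\iota_i$ that inverts the $i$th component and fixes the others; the product of these partial inversions recovers the global reversal, while the analogous operation attached to a linear factor $p_i\inn\{T,T+1\}$ is either trivial or incompatible with $\gcd(g,f)=1$, which is precisely why linear factors contribute no extra freedom.

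The main technical obstacle, and the point I expect to be hardest, is proving that each $\iota_i$ preserves the normality of $g/f$. My planned route is via the Hankel-determinant criterion of Lemma~\ref{4lem:hankel}: one shows that modifying $g$ by a unit supported only at $p_i^{e_i}$ alters the Hankel determinants of $g/f$ in a controlled way, and that over $\n F_2$ the requirement that all $n$ of them remain non-zero leaves exactly two compatible choices per non-linear CRT factor. This would simultaneously yield the lower bound $m(f)\geq 2^k$ (from freeness of the resulting $(\n Z/2\n Z)^{\,k}$-action, checked component by component) and the matching upper bound $m(f)\leq 2^k$ (since at each non-linear CRT factor at most two choices are admitted, while each linear factor admits at most one). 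Combining the two gives $m(f)\inn\{0,2^k\}$. As sanity checks, the cases $k=0$ reduce to the Hankel rigidity of Lemma~\ref{4lem:hankel} (giving $m(f)\leq 1$), and $k=1$ is handled by the single reversal involution; moreover the numerical data for $n\leq 3$ — worked out from the explicit formula $C_n(T+\epsilon_1,\dots,T+\epsilon_n)$ — all agree with the claimed formula, which provides confidence in the proposed structure of the proof.
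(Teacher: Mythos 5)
The paper only cites Blackburn for this Proposition and does not reproduce a proof, so your argument has to stand on its own. The reduction of $m(f)$ to counting tuples $(\epsilon_1,\dots,\epsilon_n)\inn\n F_2^{\,n}$ with $C_n(T+\epsilon_1,\dots,T+\epsilon_n)=f$ is correct, as is the identification of tuple reversal with $g\mapsto g^{-1}\bmod f$ via \eqref{1eq:symmetryC_n} and \eqref{1eq:pnqnprime}. But the argument collapses already at the base case $k=1$: it is not true that a reversal fixed point (so $g^2\equiv1\pmod f$, hence $g\equiv1\pmod p$ for a non-linear irreducible $p\mid f$) contradicts normality of $g/f$. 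For $f=T^5+T^3+T=T(T^2+T+1)^2$ and $g=T^4+T+1$ one computes $f/g=[T,T+1,T,T+1,T]$, so $g/f$ is normal, while $g\equiv1\pmod{T^2+T+1}$ and $g^2\equiv1\pmod f$ (the corresponding tuple $(0,1,0,1,0)$ is its own reversal). Here $k=1$ and $m(f)=2$; the second normal residue is $g'=T^4+T^3+T^2+1$, coming from the self-reversed tuple $(1,0,0,0,1)$. Thus \emph{both} normal residues are fixed points, the reversal acts trivially on the set of normal residues for this $f$, and it cannot supply the free involution your plan needs.

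The same example undermines the $(\n Z/2\n Z)^{\,k}$ refinement as designed. The partial inversion $\iota_p$ at $p=T^2+T+1$ also fixes both $g$ and $g'$: modulo $p^2=T^4+T^2+1$ they reduce to $T^2+T$ and $T^3$ respectively, each of which squares to $1$ and so is its own inverse, so $\iota_p$ is not a free involution on normal residues either. Separately, the central claim that the $\iota_i$ preserve normality at all --- which you acknowledge as the hardest step and propose to attack via Hankel determinants --- is left entirely unproved, and your sketch of the upper bound $m(f)\leq 2^k$ does not explain how more than one orbit is excluded. The dichotomy ``$0$ or a power of $2$'' is an $\n F_2$-affine phenomenon (compare the Baum--Sweet linear conditions recorded just before this Proposition, which cut out the normal Laurent series by $\n F_2$-linear equations on the coefficient sequence), and the real content is identifying the exponent with the number of distinct non-linear irreducible factors; the multiplicative CRT-inversion picture you propose does not match the actual structure of the solution set, as the example shows, so the proposal as written does not yield a proof.
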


By Lemma \ref{4lem:heuristicsZar}, this implies that for every positive integer $d$ there exist polynomials $f\inn\n F_2[T]$ of degree $d$ such that $m(f)=0$.

As the orthogonal multiplicity of a polynomial in $\n F_q[T]$ is a multiple of $q-1$, this is the only case where there may exist polynomials with orthogonal multiplicity $1$. Moreover, by the previous Proposition, a polynomial may have orthogonal multiplicity 1 only if it splits completely in linear factors. This problem has been completely solved by Blackburn:
\begin{prop}[Blackburn, Theorem 1 in \cite{blackburn1998orthogonal}]
	Let $f\inn\n F_2[T]$. Then $f$ has orthogonal multiplicity $1$ if and only if $f=T^{m_1}(T+1)^{m_2}$ with $\binom{m_1+m_2}{m_1}$ even.
\end{prop}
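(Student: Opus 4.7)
The plan is to combine two ingredients: the classification of possible orthogonal multiplicities in $\n F_2[T]$ given by the Proposition immediately preceding the statement, and the Baum--Sweet criterion for normality of a formal Laurent series. By the preceding Proposition, $m(f)\inn\{0\}\cup\{2^k:k\geq0\}$ where $k$ counts the distinct non-linear irreducible factors of $f$; so $m(f)=1$ forces $k=0$, and since the only linear irreducibles in $\n F_2[T]$ are $T$ and $T+1$, necessarily $f=T^{m_1}(T+1)^{m_2}$. In the edge cases $m_1=0$ or $m_2=0$, $f$ is a power of a single linear polynomial, so Lemma \ref{4lem:folded} yields $m(f)\geq1$, hence $m(f)=1$; this matches the stated parity condition on $\binom{m_1+m_2}{m_1}$ (which equals $1$ in these cases) and provides the base case of the argument. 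It thus remains to settle the question for $m_1,m_2\geq1$.

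Fix such an $f$ and set $d=m_1+m_2$. Any candidate $\alpha=g/f$ with $\deg g<d$ can be written as $\alpha=\sum_{i<0}c_iT^i$, and by Lemma \ref{1lem:Laurseriesrat} the sequence $(c_i)_{i<0}$ is uniquely determined by the initial block $c_{-1},\ldots,c_{-d}\inn\n F_2$ via the linear recurrence whose coefficients are those of $f$; expanding $f=T^{m_1}(T+1)^{m_2}$ in characteristic $2$, these coefficients are the residues $\binom{m_2}{k-m_1}\bmod 2$. By the Baum--Sweet Theorem, $\alpha$ is normal if and only if $c_{-1}=1$ together with $c_{-i}=c_{-2i}+c_{-2i-1}$ for every $i\geq1$. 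Finding a $g$ with $g/f$ normal therefore amounts to showing that the two systems of linear constraints on the initial block are simultaneously satisfiable; when they are, uniqueness of the solution is automatic from the preceding Proposition, and hence $m(f)=1$.

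The core of the proof, and the main obstacle, is a purely combinatorial compatibility check between these two recurrences. The plan is to iterate the Baum--Sweet doubling $c_{-i}=c_{-2i}+c_{-2i-1}$, so that each $c_{-i}$ is expressed as an $\n F_2$-linear combination of $c_{-1},\dots,c_{-d}$; the doubling structure interacts naturally with the binary expansions of the indices, which makes Lucas' theorem the right tool for handling both the binomials $\binom{m_2}{k-m_1}\bmod 2$ coming from the $f$-recurrence and the branching produced by the Baum--Sweet iteration. Substituting the Baum--Sweet expressions into the $f$-recurrence (at a suitable collection of indices $i<-d$ sufficient to exhaust the constraints) produces a single linear system over $\n F_2$ in the unknowns $c_{-2},\dots,c_{-d}$ with inhomogeneous term coming from $c_{-1}=1$. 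The hard part is to compute the rank-defect of this system and identify it with the parity of $\binom{m_1+m_2}{m_1}$; once this is done, both directions of the ``iff'' follow, with the extremal cases $m_1=0$ or $m_2=0$ serving as a consistency check against Lemma \ref{4lem:folded}.
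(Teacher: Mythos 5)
The paper does not reproduce a proof of this result; it cites Blackburn's Theorem~1 directly. So there is no internal argument to compare against, and the proposal has to be judged on its own terms. Your first step — invoking the preceding Proposition (orthogonal multiplicity is $0$ or $2^k$, $k$ the number of distinct non-linear irreducible factors) to force $f=T^{m_1}(T+1)^{m_2}$, and noting that existence of one normal $g$ then gives $m(f)=1$ by uniqueness — is correct and is the right reduction. The Baum--Sweet criterion together with the linear recurrence from Lemma~\ref{1lem:Laurseriesrat} is also a reasonable framework. But there are two substantive problems.

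First, the hard part is not a technical detail left to fill in; it \emph{is} the theorem. You write that ``the hard part is to compute the rank-defect of this system and identify it with the parity of $\binom{m_1+m_2}{m_1}$,'' and you leave it entirely undone. Once one knows $m(f)\le 1$ for these $f$, the whole content of Blackburn's result is precisely the claim that the Baum--Sweet constraints and the $f$-recurrence are simultaneously satisfiable if and only if the binomial has the right parity. As written, the proposal is a reduction of the theorem to a combinatorial lemma of essentially the same difficulty, not a proof.

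Second, your parity bookkeeping is inconsistent with the statement even at the level of the base case. You correctly derive $m(f)=1$ when $f$ is a power of a single linear polynomial, and then assert this ``matches the stated parity condition on $\binom{m_1+m_2}{m_1}$ (which equals $1$ in these cases).'' But $1$ is odd, and the proposition as printed requires $\binom{m_1+m_2}{m_1}$ \emph{even} for $m(f)=1$; so your base case actually contradicts the statement you set out to prove. (The statement as printed does look to have the parity reversed — compare Lemma~\ref{4lem:notZarF2}, where $f=T(T+1)$ has $m(f)=0$ while $\binom{2}{1}=2$ is even, and $f=T$ has $m(f)=1$ while $\binom{1}{1}=1$ is odd — but you should have noticed and flagged this mismatch rather than assert a match that is not there.) Before attempting the rank computation you need to pin down which parity you are actually trying to hit, and verify it on small cases; otherwise there is no way to know whether the combinatorial identity you eventually prove is the right one.
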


Lauder, resuming the work of Mesirov and Sweet, showed that any irreducible polynomial has positive orthogonal multiplicity:
\begin{prop}[Lauder, Proposition 3.17 in \cite{lauder1999continued}]
	If $f\inn\n F_2[T]$ is a power of an irreducible non-linear polynomial then $m(f)=2$. 
\end{prop}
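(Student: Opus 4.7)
Given Blackburn's preceding Proposition, the content of this Proposition reduces at once: since $f = p^k$ has exactly $r=1$ distinct non-linear irreducible factor, that result forces $m(f)\inn\{0,2^r\}=\{0,2\}$. Hence the entire burden is to establish $m(f)\geq 1$, i.e.\ to exhibit a single polynomial $g$ coprime to $p$ with $K(g/f)=1$.

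My strategy is to work with the Hankel criterion (Lemma \ref{4lem:hankel}): normality of $g/f$ is equivalent to the non-vanishing of the first $d=\deg f$ Hankel determinants of the Laurent coefficient sequence $(c_{-i})$ of $g/f$. Combined with the Baum--Sweet characterization stated above, this reduces the problem to producing a sequence $(c_{-i})$ satisfying simultaneously $c_{-1}=1$, the Baum--Sweet recurrence $c_{-2i}=c_{-i}+c_{-2i-1}$ for all $i\geq 1$, and the linear recurrence over $\n F_2$ encoded by $f=p^k$. I would proceed by induction on $k$. For $k=1$, exploit that for $p$ irreducible of degree $d\geq 2$ the sequence $(c_{-i})$ attached to $g/p$ is purely periodic of period dividing $2^d-1$: choose $g$ so that this sequence is a maximum-length LFSR sequence, which is immediate when $p$ is primitive and follows in the general case by a reduction argument to the primitive case; the classical theory of linear-complexity profiles of $m$-sequences then guarantees that all the relevant Hankel determinants are non-zero. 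For the inductive step $k\to k+1$, lift a normal $g_k/p^k$ to $g_{k+1}/p^{k+1}$ by adjusting the Laurent expansion past position $kd$: the coefficients $c_{-i}$ for $i>kd$ become newly free, and the aim is to choose them so that the Baum--Sweet recurrence still holds while every Hankel determinant up to size $(k+1)d$ remains non-zero.

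The chief obstacle is this inductive step. The Folding Lemma (\ref{1lem:manip}) is of little direct help, for it doubles the exponents of prime factors of the denominator and thus only supplies normality for denominators $p^{2^j}$. Incrementing the exponent of $p$ by one requires a finer analysis, the core difficulty being to verify that the $d$ new linear-algebraic constraints introduced by extending the Hankel-determinant condition from size $kd$ to size $(k+1)d$ are simultaneously compatible with the Baum--Sweet relations imposed on the freshly introduced coefficients. A plausible route is to combine the rigidity of the Baum--Sweet recursion with the results of Section \ref{1subsec:mult} on the multiplication of continued fractions by a polynomial, so as to track how multiplication of $g_k/p^k$ by $p$ modifies partial quotient degrees and to show that a suitable correction producing a normal $g_{k+1}/p^{k+1}$ is always available.
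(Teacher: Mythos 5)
The paper does not contain a proof of this Proposition; it is stated as a direct citation of Lauder (Proposition 3.17 in \cite{lauder1999continued}), so there is no in-paper argument to compare against. Judged on its own terms, your proposal has two genuine gaps, neither of which is a stylistic difference.

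First, the base case $k=1$ is not actually established. The reduction to Hankel non-vanishing via Lemma \ref{4lem:hankel} and the Baum--Sweet criterion is fine, but the decisive step is the claim that taking $g$ so that the coefficient sequence of $g/p$ is a maximum-length LFSR sequence forces $\det H_j\neq 0$ for all $1\leq j\leq d$. This is unjustified and, as far as I know, false in general. Having all those Hankel determinants nonzero is equivalent (by Lemma \ref{4lem:hankel}, i.e.\ Niederreiter's criterion) to the sequence having a \emph{perfect} linear-complexity profile $\lceil j/2\rceil$. M-sequences have maximal period and overall linear complexity $d$, but nothing in the ``classical theory'' asserts that some shift of an $m$-sequence has a perfect profile; these are very different constraints. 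Indeed, producing a single $g$ coprime to a given irreducible $p$ of degree $\geq 2$ with $g/p$ normal is exactly the nontrivial Mesirov--Sweet theorem (for $\n F_2$), which your sketch quietly re-imports rather than proves. If you already grant Mesirov--Sweet, you should say so explicitly; as written, the sentence reads as though the result followed from general LFSR folklore, which it does not.

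Second, the inductive step $k\to k+1$ is not carried out at all --- you explicitly label it ``the chief obstacle'' and offer only candidate strategies, not an argument. As you yourself note, the Folding Lemma \ref{1lem:manip} only yields normal numerators for denominators $p^{2^j}$, so it cannot increment the exponent by one. What is missing is any mechanism guaranteeing that the Baum--Sweet recurrence can be extended by $d=\deg p$ further terms while the next $d$ Hankel determinants also remain nonzero; nothing in Section \ref{1subsec:mult} supplies this. Lauder's own proof of Proposition 3.17 proceeds quite differently (a direct, characteristic-$2$ specific analysis of the partial-quotient recursion, building on Mesirov--Sweet), and no part of your sketch reconstructs that argument or an alternative one. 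As it stands the proposal reduces the Proposition correctly to a positivity statement $m(p^k)>0$ but then proves neither the base case nor the induction.
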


Moreover, he proved the following result towards Zaremba's strong Conjecture over $\n F_2$ (that is, with $z_{\n F_2}=2$):
\begin{prop}[Lauder, proposition 4.9 in \cite{lauder1999continued}]
	If $f\inn\n F_2[T]$ splits completely into linear factors, then there exists $g\inn\n F_2[T]$ relatively prime to $f$ and such that $K(g/f)\leq2$.
\end{prop}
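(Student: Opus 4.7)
Proof plan. Write $f = T^a(T+1)^b$; by the substitution $T \mapsto T+1$ (Lemma \ref{1lem:subst} with $P(T) = T+1$), which permutes the two roots while preserving the degrees of all partial quotients, we may assume $a \geq b \geq 0$. We induct on $d = a + b$. The cases $d \leq 2$ are verified directly: $g = 1$ works for $f = T, T+1$ (with $K = 1$) and for $f = T(T+1)$ (with $K = 2$), while Lemma \ref{4lem:folded} handles $f = T^2, (T+1)^2$, as well as every case $b = 0$.

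For the inductive step, assume $d \geq 3$ and $b \geq 1$. The construction is uniform. Write $f = h \cdot \tilde f^{\,2}$ where $\tilde f := T^{\floor{a/2}}(T+1)^{\floor{b/2}}$ has degree at least $1$ and strictly less than $d$, and where the parity correction $h \inn \{1,\, T,\, T+1,\, T(T+1)\}$ is determined by $(a \bmod 2,\, b \bmod 2)$; in particular $\deg h \leq 2$. By the inductive hypothesis, choose $\tilde g$ coprime to $\tilde f$ with $K(\tilde g/\tilde f) \leq 2$, and write its regular continued fraction $\tilde g/\tilde f = [0, a_1, \dots, a_n]$, each $\deg a_i \leq 2$, with last convergent $(p_n, q_n) = (\tilde g, \tilde f)$ up to a unit in $\n F_2^* = \{1\}$. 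Apply the Folding Lemma \eqref{1eq:folding} with middle polynomial $h$: the resulting (possibly non-regular) continued fraction
\[[0,\, a_1,\, \dots,\, a_n,\, h,\, a_n,\, \dots,\, a_1]\]
has all signs collapse in characteristic $2$, and evaluates to $(h\,\tilde g\,\tilde f + 1)/(h\,\tilde f^{\,2}) = g/f$ for $g := h\,\tilde g\,\tilde f + 1$.

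When $\deg h \geq 1$ (the three cases other than both $a, b$ even), this continued fraction is already regular and every partial quotient has degree at most $2$, yielding $K(g/f) \leq 2$ at once. When $h = 1$ (the even--even case), the central constant partial quotient must be absorbed: Van der Poorten's identity \eqref{1eq:VdPmanip1} applied with $\beta = 1$, combined with Lemma \ref{1lem:suma}, transforms the triple $[a_n, 1, a_n]$ into $[a_n+1,\, a_n+1]$, producing the regular expansion $[0, a_1, \dots, a_{n-1}, a_n+1, a_n+1, a_{n-1}, \dots, a_1]$; since $\deg a_n \geq 1$, the leading term of $a_n$ cannot be cancelled by $+1$, so $\deg(a_n+1) = \deg a_n \leq 2$, and the bound $K(g/f) \leq 2$ is preserved.

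Coprimality $(g, f) = 1$ is verified by direct evaluation. In the even--even case, $g \equiv 1 \pmod{\tilde f}$, hence $(g, \tilde f) = (g, \tilde f^{\,2}) = 1$. In the remaining cases, whenever $T$ (respectively $T+1$) divides $f$, it also divides $h$, and one computes $g(0) = h(0)\,\tilde g(0)\,\tilde f(0) + 1 = 1$ (respectively $g(1) = 1$), so $g$ avoids both roots of $f$. The only genuinely delicate step is the degree check $\deg(a_n+1) = \deg a_n$ in the even--even absorption, which relies essentially on the characteristic being $2$ (so that $-1 = 1$ allows the central signs to collapse) and on the fact that adding $1$ cannot cancel the leading coefficient of a partial quotient of positive degree over $\n F_2$. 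This closes the induction.
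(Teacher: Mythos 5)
The paper states this result only as a citation (Lauder, Proposition 4.9 in \cite{lauder1999continued}) and does not reproduce a proof, so there is no in-paper argument to compare against. Judged on its own, your proof is correct and self-contained, and it is a natural extension of the folding technique the paper already uses for Lemma \ref{4lem:folded}: the substitution $T\mapsto T+1$ to force $a\geq b$, the decomposition $f=h\tilde f^{\,2}$ with $\tilde f=T^{\lfloor a/2\rfloor}(T+1)^{\lfloor b/2\rfloor}$ and parity correction $h$ of degree $\leq 2$, the inductive use of the Folding Lemma \eqref{1eq:folding} with middle term $h$, and the absorption of the central constant via \eqref{1eq:VdPmanip1} in the even--even case (where $\deg(a_n+1)=\deg a_n$ because $\deg a_n\geq 1$) all go through. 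The side conditions you need ($a\geq 2$, hence $1\leq\deg\tilde f<d$, and $n\geq 1$) are also verified.

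One sentence in your coprimality check is misstated. You claim that in the non--even-even cases, ``whenever $T$ (resp.\ $T+1$) divides $f$, it also divides $h$.'' This is false: for $f=T^2(T+1)$ one has $h=T+1$ and $\tilde f=T$, so $T\mid f$ but $T\nmid h$. What is actually true --- and what your computation $g(0)=h(0)\,\tilde g(0)\,\tilde f(0)+1=1$ silently uses --- is that $T\mid f=h\tilde f^{\,2}$ forces $T\mid h$ \emph{or} $T\mid\tilde f$, so $h(0)\tilde f(0)=0$ in either case; likewise at $T+1$. In fact in your inductive step $a\geq 2$ always, so $T\mid\tilde f$ and $\tilde f(0)=0$ automatically, while for $T+1$ either $b\geq 2$ (so $\tilde f(1)=0$) or $b=1$ (so $(T+1)\mid h$). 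The conclusion $(g,f)=1$ therefore stands, but you should phrase the disjunction correctly.
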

\clearpage{\pagestyle{empty}\cleardoublepage}
\chapter{Polynomial analogue of McMullen's conjecture}\label{ch:MM}
As in Chapter \ref{ch:quadr}, let $\n K$ be a field with characteristic different from $2$ and let $\S K$ be the set of polynomials $D\inn\n K[T]$ of even degree, non squares in $\n K[T]$ and whose leading coefficient is a square in $\n K$; 
we will usually denote the degree of such a polynomial by $2d$. We have seen (Lemma \ref{1lem:rad}) that every quadratic irrationality of the form $\alpha=\frac{A+B\sqrt D}C$, with $D\inn\S K$ and with $A,B,C$ polynomials, $B,C\neq0$, is well defined in $\n L=\n K((T^{-1}))$ and that (Remark \ref{2rem:sqrtDred}) the degrees of its partial quotients are eventually bounded in terms of the degrees of $B,C$ and $D$: $\ov K(\alpha)\leq(d+\deg BC)$. Moreover, if $\n K$ is an algebraic extension of a finite field, then the continued fraction expansion of any quadratic irrationality is periodic and $K(\sqrt D)=\ov K(\sqrt D)=d$. 

As in \ref{1notat:equiv}, we will say that $\alpha,\beta\inn\n L$ are \textit{equivalent}, and we will write $\alpha\sim\beta$, if they are $\GL_2(\n K[T])$-equivalent, that is, if there exists $M\inn\GL_2(\n K[T])$ such that $\alpha=M\beta$. By the polynomial analogue of Serret's Theorem \ref{1theo:serret}, $\alpha,\beta$ are equivalent if and only if the continued fractions of $\alpha$ and $k\beta$ eventually coincide for some non zero constant $k$.\par\medskip

An analogue of McMullen's Theorem \ref{3theo:MM} holds also in the polynomial case, so it will have a sense to consider an analogue of his Conjecture \ref{3conj:MM}.

\begin{theoC}\label{5theo:MM2pol}
	For every polynomial $D\inn\S K$ with $\deg D=2d$ there exist infinitely many pairwise non-equivalent elements $\alpha$ of $\n K(T,\sqrt{D(T)})$ such that $$K(\alpha)\leq d.$$
\end{theoC}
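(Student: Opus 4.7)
My plan is to produce the required elements as \emph{reduced} quadratic irrationalities in $\n K(T,\sqrt D)$. By Proposition \ref{2prop:red}, if $\alpha$ is reduced then every complete quotient of $\alpha$ is reduced, so $\deg a_n\leq d$ for every $n\geq 0$, whence $K(\alpha)\leq d$. It therefore suffices to exhibit infinitely many pairwise $\GL_2(\n K[T])$-inequivalent reduced elements of $\n K(T,\sqrt D)$.

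To parametrize reduced elements I would use Remark \ref{2rem:rsD'} together with the degree constraints of Proposition \ref{2prop:red}: every reduced $\alpha\inn\n K(T,\sqrt D)$ admits a normal form $\alpha_{r,s}=(r+\sqrt D)/s$ with $s\mid r^2-D$, $\deg s<d$, and the leading two coefficients of $r$ matching those of $\sqrt D$. Starting from the pair giving the first complete quotient $\alpha_1=(\delta+\sqrt D)/(D-\delta^2)$ of $\sqrt D$, one can generate infinitely many admissible pairs $(r,s)$ by perturbing $r$ by polynomials of sufficiently small degree (then adjusting $s$ to remain a divisor of $r^2-D$ of degree $<d$ and to preserve the order condition $\ord(r-\sqrt D)+\deg s>0$), as long as $\n K$ is infinite.

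It then remains to show that this family meets infinitely many equivalence classes. By the polynomial Serret Theorem \ref{1theo:serret}, two reduced elements $\alpha_{r,s}$ and $\alpha_{r',s'}$ are equivalent precisely when one is, up to a constant in $\n K^*$, a complete quotient of the other; consequently each equivalence class intersects our family in at most a single orbit of the shift-and-$\n K^*$-scaling action, which is at most countable (and in the Pellian case of Proposition \ref{2prop:eqper} even finite modulo $\n K^*$). Over an uncountable $\n K$ this already suffices.

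The main obstacle will be this last counting step when $\n K$ is merely countably infinite, since then both the family and each orbit are countable. In that regime I would distinguish equivalence classes by an algebraic invariant attached to the pair $(r,s)$, for instance by the class of the ideal $(s,r+\sqrt D)\sub\n K[T,\sqrt D]$: two reduced elements are $\GL_2(\n K[T])$-equivalent if and only if the associated ideals lie in the same class of the corresponding order, and over an infinite $\n K$ this class set is infinite, yielding the required infinitude of equivalence classes.
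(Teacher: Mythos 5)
Your first step is fine: if $\alpha$ is reduced with discriminant exactly $D$, then Proposition \ref{2prop:red} gives $\deg a_n\leq d$ for all $n$, so $K(\alpha)\leq d$. The problem is that your parametrization cannot produce infinitely many inequivalent such elements over a finite field, and the theorem is stated for every $\n K$ of characteristic $\neq2$. Over $\n F_q$, your normal-form pairs $(r,s)$ with $\deg s<d$ and the two top coefficients of $r$ fixed number at most $q^{2d-1}$ — a finite set, meeting only finitely many $\GL_2(\n K[T])$-classes — and the same obstruction persists over any algebraic extension of a finite field, since a given $\alpha$ is already defined over a finite subfield. Your ideal-class fallback does not rescue this either: the class group of $\n F_q[T,\sqrt D]$ is finite (this is Artin's classical computation), so that invariant has a finite range. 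The missing idea is that producing infinitely many inequivalent $\alpha\inn\n K(T,\sqrt D)$ with $K(\alpha)\leq d$ requires letting the conductor, and with it the period length, grow: one needs elements of discriminant $P_n^2D$ with $\deg P_n\to\infty$ whose partial quotients nonetheless have degree $\leq d$, and such elements are never among your fixed-discriminant reduced forms.

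The paper's proof makes a case split precisely to cover this. If $D$ is not Pellian, then $\n K$ is automatically infinite (Corollary \ref{2cor:lagrange}), $K(\sqrt D)<d$ by Remark \ref{2rem:degan-pell}, and the elements $\alpha_\lambda=(T-\lambda)\sqrt D$ for $\lambda\inn\n K$ are pairwise inequivalent with $K(\alpha_\lambda)\leq d$ by Corollary \ref{1cor:degrees}. If $D$ is Pellian — as it always is when $\n K$ is algebraic over a finite field — then $\sqrt D+\floor{\sqrt D}$ is purely periodic and quasi-palindromic, and Proposition \ref{5prop:Mercat}, the polynomial Mercat construction, produces infinitely many inequivalent $\alpha_n\inn\n K(T,\sqrt D)$ with $K(\alpha_n)=d$ and growing period; this is exactly the branch your plan has no analogue for. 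Even setting the finite-field case aside, your counting over a countably infinite $\n K$ is only a sketch: you would need to establish that infinitely many admissible $(r,s)$ survive the divisibility and order constraints after perturbation, and that the class set you invoke is actually infinite, neither of which is immediate.
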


In \cite{mercat2013construction}, Th\'{e}or\`{e}me 1.2, Mercat gave an alternative proof of Theorem \ref{3theo:MM}, considering purely periodic, quasi-palindromic continued fractions, that is, real continued fractions of the form $\alpha=[\ov{a_0,a_1,a_2,\dots,a_2,a_1}]\inn\n Q(\sqrt d)$. He proved that for every such $\alpha\inn\n Q(\sqrt d)$ we can construct infinitely many pairwise non-equivalent elements of $\n Q(\sqrt d)$ whose continued fraction is purely periodic with partial quotients bounded by a constant depending only on the $a_i$. His methods can be easily adapted to the polynomial setting:

\begin{propC}\label{5prop:Mercat}
	Let $\alpha=[\ov{a_0,a_1,\dots,a_1}]\inn\n K(T,\sqrt D)$ be a purely periodic, quasi-palindromic continued fraction. Then there exist $b_1,\dots,b_i,c_1,\dots,c_j\inn\n K[T]$, polynomials (possibly constant) of degree at most $K(\alpha)$, such that $$\alpha_n=\left[\ov{b_1,\dots,b_i,(a_0,\dots,a_1)^n,c_1,\dots,c_j,(a_1,\dots,a_0)^n}\right]\inn\n K(T,\sqrt D) \text{ for every } n$$ (where $[\cdots(m_1,\dots,m_k)^n\cdots]$ means that the sequence of partial quotients $m_1,\dots,m_k$ is repeated $n$ times). 
	
	Moreover, the $\alpha_n$ are pairwise non-equivalent.
\end{propC}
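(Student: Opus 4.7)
The strategy is to translate the problem into the matrix formalism of \ref{1notat:matrix} and to exploit the quasi-palindromic structure of $\alpha$ as a symmetry of its associated period matrix. Set $P = M_{(a_0,a_1,\dots,a_1)}$, so that by Remark \ref{2rem:eigen} the vector $(\alpha,1)^t$ is an eigenvector of $P$ and $\alpha$ is a root of the quadratic polynomial associated to $P$. The quasi-palindromic hypothesis, read through Lemma \ref{1lem:continuants} and identity \eqref{1eq:symmetryC_n}, implies that the period word $(a_1,\dots,a_1)$ is its own reverse, so the block matrix $Q = M_{(a_1,\dots,a_1)}$ satisfies $Q = Q^t$ and $P = M_{a_0}Q$. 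The plan is then, for each $n\geq 1$, to produce a matrix
\[ N_n \;=\; B\,(P)^n\,C\,(P^t)^n \]
for suitable $B = M_{(b_1,\dots,b_i)}$, $C = M_{(c_1,\dots,c_j)}$ (or a slight variant involving a diagonal multiplier $\mathrm{diag}(k,k^{-1})$), whose fixed point $\alpha_n$ in $\n L$ has exactly the continued fraction expansion stated. Since $M_{(a_1,\dots,a_0)} = P^t$ by Lemma \ref{1lem:continuants}, the block $(P^t)^n$ indeed realises the tail $(a_1,\dots,a_0)^n$.

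The first key step is to choose the glue polynomials $b_1,\dots,b_i,c_1,\dots,c_j$ so that (i) the matrix $N_n$ defines an admissible continued fraction (after possibly invoking Lemma \ref{1lem:addrempartq} to absorb constant or zero partial quotients at the two gluing points, exactly as in \ref{1rem:Kconstants}), and (ii) $\alpha_n \in \n K(T,\sqrt D)$. For (ii), by Lemma \ref{2lem:eigen} it suffices to check that the discriminant $(\tr N_n)^2 - 4\det N_n$ equals $D$ times a square in $\n K(T)^*$. Now $\det N_n = (\det P)^{2n}\det B\det C$ is a square up to the constant $\det B\det C$, which we will arrange to be a square in $\n K^*$. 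For the trace, the Cayley--Hamilton relation $P^2 = (\tr P)P - (\det P)I$ (together with the analogous relation for $P^t$) lets one express $\tr N_n$ as a polynomial in $\tr P$ with coefficients depending on $B,C$; combined with the identity $(\tr P)^2 - 4\det P = D\cdot (\text{square})$ (which follows from $\alpha\in\n K(T,\sqrt D)$ via Lemma \ref{2lem:eigen}), this forces the discriminant of $N_n$ to be $D$ times a square, giving $\alpha_n\in\n K(T,\sqrt D)$.

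The second, and I expect hardest, step is establishing the degree bound $\deg b_k, \deg c_k \leq K(\alpha)$. Here the quasi-palindromic hypothesis is essential: the symmetry $Q = Q^t$ means that the local matching conditions at the two gluing points (between $B$ and $P^n$, and between $P^n$ and $C$, and similarly for the reversed block) do not depend on $n$ in a way that forces the $b_k,c_k$ to grow with $n$; they can be taken as fixed polynomials, with degrees controlled by the largest $\deg a_i$. Concretely, one analyses what must be prepended or appended to $(a_0,\dots,a_1)^n$ and $(a_1,\dots,a_0)^n$ so that the resulting word, read as a regular continued fraction, yields a matrix proportional (by the required diagonal factor) to the product above; the palindromic symmetry makes these correcting words the same for every $n$.

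Finally, the pairwise non-equivalence of the $\alpha_n$ is immediate from the polynomial Serret Theorem \ref{1theo:serret}: two $\GL_2(\n K[T])$-equivalent purely periodic continued fractions must have periods of the same length up to a cyclic shift, whereas the period length of $\alpha_n$ as constructed grows linearly in $n$. Hence distinct $n$ give non-equivalent $\alpha_n$, completing the proof. The only real technical difficulty, as noted, is the degree bound on the glue polynomials, which relies crucially on the palindromic hypothesis.
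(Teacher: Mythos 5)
Your skeleton is right: the paper also works with $A=M_{(a_0,a_1,\dots,a_1)}$, seeks matrices of the form $N_n = B\,A^n\,C\,(A^t)^n$ with $B,C$ in the monoid generated by the $M_a$ with $\deg a\le K(\alpha)$, reads the resulting continued fraction off $N_n$, and invokes Remark \ref{1rem:Kconstants} to absorb any constant partial quotients at the gluing points. The Serret-based non-equivalence argument is also in line with the paper. But there is a genuine gap in the central step, and the place where you defer is exactly where the real work happens.

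You claim that because $\tr N_n$ can be written, via Cayley--Hamilton, as a polynomial in $\tr P$, and because $(\tr P)^2-4\det P$ is $D$ times a square, the discriminant of $N_n$ is ``forced'' to be $D$ times a square. This does not follow, and for generic $B,C$ it is simply false: already for $B=C=\mathrm{Id}$ one has $\text{discr}(PP^t)=\big((a-d)^2+(b+c)^2\big)\big((a+d)^2+(b-c)^2\big)$ where $P=\mm abcd$, which is not a square multiple of $\text{discr}(P)=(a-d)^2+4bc$. The discriminant condition is not a soft consequence of Cayley--Hamilton but of a very specific algebraic identity that the matrices $B,C$ are \emph{engineered} to satisfy. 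The paper constructs, alongside $B$ and $C$, a rank-one matrix $H\inn M_2(\n K[T])$ such that $\tr(BMCM^t)=(\tr(HM))^2-2\det M$ for all $M$, and such that $\tr(HA^n)=\tr(A^{n+2})$; it is the first identity (together with $\det B\det C=1$) that makes $\text{discr}(N_n)$ factor as $(\tr HA^n)^2\big((\tr HA^n)^2-4(\det A)^n\big)$, and the second that turns the bracketed factor into $\text{discr}(A^{n+2})$, hence $\text{discr}(A)$ times a square. Without $H$ and these identities there is no mechanism to conclude $\alpha_n\inn\n K(T,\sqrt D)$. Similarly, the degree bound on the glue polynomials is not a formal consequence of the palindromic symmetry of $A$: it holds because the paper writes $B,C$ down explicitly (e.g. $B=M_{a_1}NM_{(a_1,1,1,a_1-1)}NM_{a_1}$ with $N=M_{(a_2,\dots,a_2)}$, in the case $\det N=-1$), and one checks by inspection that the entries have degree at most $\max_i\deg a_i$. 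Your proposal correctly identifies \emph{what} needs to be proved, but the actual construction making it true is absent, and the step you flag as ``hardest'' is not the only missing one.
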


\begin{remC}
	Let $\alpha$ be a quadratic irrationality with purely periodic continued fraction expansion, $\alpha=[\ov{a_0,a_1,\dots,a_n}]$. Then $\alpha=A\alpha$ for $A=M_{(a_0,a_1,\dots,a_n)}$; so, as in lemma \ref{2lem:eigen}, $\alpha\inn\n K(T,\sqrt D)$, where $D$ is the discriminant of the characteristic polynomial of $A$, that is, $D=\text{discr}(A)=\tr(A)^2-4\det(A)$.
	
	We can also allow the $b_k,c_l$ be constant for some $k,l$ (actually, this will be the case in the construction provided in the following proof), as long as the corresponding infinite continued fraction converges in $\n L$. However, in this case it is possible that the regular continued fraction expansions of the $\alpha_n$ are no longer periodic. 
	
	Setting $A=M_{(a_0,a_1,\dots,a_1)}$, the previous Proposition is then equivalent to the existence in the monoid generated by the identity and by the matrices $M_a$ with $a$ a polynomial of degree at most $K(\alpha)$ of matrices $B,C$ such that for every $n$ the continued fraction associated to $BA^nC(A^t)^n$ converges in $\n L$ and such that, for every $n$, $\text{discr}(BA^nC(A^t)^n)=P_n^2\,\text{discr}(A)$ for some polynomial $P_n$. 
	
	As we have already seen in Remark \ref{1rem:constants}, the continued fraction converges if all the $b_k,c_l$ have positive degree, if there are non-consecutive non-zero constants or even if there are only pairs of consecutive constants whose product is different from -1.  
\end{remC}

\begin{proof}[Sketch of the Proof of Proposition \ref{5prop:Mercat}]
	It can be proved that there exist matrices $B,C$ in the monoid generated by the $M_a$ with $\deg a\leq K(\alpha)$ and there exists a matrix $H\inn M_2(\n K[T])$ of rank 1 such that $\tr(BMCM^t)=(\tr(HM))^2-2\det M$ for every matrix $M\inn M_2(\n K[T])$ and such that $\tr(HA^n)=\tr(A^{n+2})$ for every $n\geq0$. Indeed, setting $N=M_{(a_2,\dots,a_2)}$, we can take 
	\begin{multline}\nonumber B=M_{a_1}NM_{(a_1,1,1,a_1-1)}NM_{a_1},\ C=AM_{(a_0/2,1,1,a_0/2-1)}A^t,\ H=A\mm0{a_0}02A \\\text{ if } \det N=-1;\end{multline}\vspace{-1cm}
	\begin{multline}\nonumber B=M_{a_1}NM_{a_1}M_{(a_0/2,1,1,a_0/2-1)}M_{a_1}NM_{a_1},\ C=M_{(a_0,a_1)}NM_{(a_1,1,1,a_1-1)}NM_{(a_1,a_0)},\\ H=A\mm2{-a_0}00A \text{ if } \det N=1.\end{multline}
	In particular then $\tr(BA^nC(A^t)^n)=(\tr(HA^n))^2-2\det A^n$ for every $n\geq0$ and thus  $\text{discr}(BA^nC(A^t)^n)=(\tr(HA^n))^2((\tr(HA^n))^2-4\det A^n)=(\tr(A^{n+2}))^2\text{discr}(A^{n+2})$, which coincides, up to multiplication by a square, to the discriminant of $A$. 
	
	By construction, the continued fractions corresponding to the previous matrices converge to quadratic irrationalities, pairwise non-equivalent, and the degrees of their partial quotients are bounded by $\max_i\deg a_i=K(\alpha)$ (we have seen in Remark  \ref{1rem:Kconstants} that constant partial quotients can be removed without increasing the degrees of the other partial quotients). 
	
	It is easy to adapt the previous proof also to the cases $A=M_{a_0}$ (as in the following Example) or $A=M_{(a_0,a_1)}$.
\end{proof}

\begin{exC}\label{5ex:mercat}
	Let $\alpha=\left[\,\ov{T}\,\right]=3\left(T+\sqrt{T^2+\ov4}\right)\inn\n F_5((T^{-1}))$. Setting, in the previous notations (with $N=M_{a_1}=\text{Id}$), $B=M_{\left(\ov3T,\ov1,\ov1,\ov3T-\ov1\right)}$ and $C=M_{\left(T,\ov1,\ov1,T-\ov1\right)}$, we will have  $\alpha_n\!=\!\left[\ov{\ov3T,\ov1,\ov1,\ov3T-\ov1,(T)^n,T,\ov1,\ov1,T-\ov1,(T)^n}\right]\inn\n F_5\!\left(T,\sqrt{T^2+\ov4}\right)$ for every $n\geq0$. \begin{align}\nonumber \text{ For example, }& \alpha_0=\frac{\left(T+\ov1\right)\left(T^3-T^2+T+\ov1\right)+T\left(T^2+\ov2\right)\sqrt{T^2+\ov4}}{-T^3+T^2-\ov2T+1},\\\nonumber &\alpha_1=\frac{\left(T-\ov1\right)\left(T+\ov1\right)^2\left(T+\ov2\right)^2+\left(T+\ov2\right)\left(T+\ov3\right)\left(T^2+\ov3\right)\sqrt{T^2+\ov4}}{-T^4+T^3+T^2+\ov2T+\ov2},\\\nonumber &\alpha_2\!=\!\frac{\left(T^8\!-\!T^6\!-\!T^5\!+\!\ov2T^3\!+\!\ov2T^2\!+\!T\!+\!\ov1\right)\!+\!T\left(T^2\!+\!\ov2\right)\left(T^4\!-\!T^2\!+\!\ov2\right)\sqrt{T^4\!+\!\ov4}}{-T^7+T^6-T^5-T^4-T^2+T+\ov1},\\\nonumber &\dots\end{align}
\end{exC}

\begin{proof}[Proof of Theorem \ref{5theo:MM2pol}]
	Let us assume that the polynomial Pell equation for $D$ has no non-trivial solutions. Then $\n K$ is an infinite field, $K(\sqrt D)<d$ and, by Corollary \ref{1cor:degrees}, for every $\lambda\inn\n K$ we must have $K((T-\lambda)\sqrt D)\leq d$, where the $\alpha_\lambda=(T-\lambda)\sqrt D$ are pairwise non-equivalent.
	
	On the other hand, if the Pell equation for $D$ has non-trivial solutions, by Theorem \ref{2theo:sqrtD}, $K(\sqrt D)=d$ and the continued fraction of $\sqrt D+\floor{\sqrt D}$ is purely periodic and quasi-palindromic, so by Proposition \ref{5prop:Mercat} we can construct infinitely many, pairwise non-equivalent elements $\alpha_n\inn\n K(T,\sqrt D)$ such that $K(\alpha_n)=d$.
\end{proof}

We can then ask if, in the statement of Theorem \ref{5theo:MM2pol}, $d$ can be replaced by a constant independent of $D$, that is, we can consider the following analogue of McMullen's Conjecture:
\begin{lconj}{M}[Polynomial analogue of McMullen's Conjecture]\label{5conj:MMinf}
	There exists a constant $m_{\n K}$ (possibly depending on the base field $\n K$) such that for every polynomial $D\inn\S K$ there exist infinitely many pairwise non-equivalent quadratic irrationalities $\alpha\inn\n K(T,\sqrt D)$ such that $$K(\alpha)\leq m_{\n K}.$$
\end{lconj}

By analogy with \ref{3conj:MM}, we will call this statement \textit{McMullen's Conjecture over $\n K$}.

	If $\alpha$ is a quadratic irrationality in $\n K(T,\sqrt D)$, so are its complete quotients $\alpha_n$ and we have already seen that $K(\alpha_n)=\ov K(\alpha)$ for every large enough $n$. Certainly, all the complete quotients $\alpha_n$ are equivalent to $\alpha$. Thus, McMullen's Conjecture over $\n K$ is equivalent to:\par\medskip

	\textit{There exists a constant $m_{\n K}$ (possibly depending on the base field $\n K$) such that for every polynomial $D\inn\S K$ there exist infinitely many pairwise non-equivalent quadratic irrationalities $\alpha\inn\n K(T,\sqrt D)$ such that $$\ov K(\alpha)\leq m_{\n K}.$$}

It is believed that it should be enough to take \be\label{5eq:MMstrong}m_{\n K}=1\ee for every field $\n K$, that is, it is thought that for every polynomial $D\inn\S K$, the field $\n K(T,\sqrt D)$ has infinitely many pairwise non-equivalent normal elements. We will call \eqref{5eq:MMstrong} \textit{McMullen's strong Conjecture over $\n K$}.
\par\medskip

A first step in the direction of Conjecture \ref{5conj:MMinf} could be the proof of the following weaker statement: 
\begin{conjC}\label{5conj:MMfin}
	There exists a constant $m'_{\n K}$ (possibly depending on the base field $\n K$) such that for every polynomial $D\inn\S K$ there exists $\alpha_D\inn\n K(T,\sqrt D)$ such that $K(\alpha_D)\!\leq\! m'_{\n K}$.
\end{conjC}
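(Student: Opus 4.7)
The plan is to attack Conjecture \ref{5conj:MMfin} by a case split on the base field $\n K$, producing in each case an explicit construction of $\alpha_D$ with partial quotients of bounded degree, and in fact aiming for $m'_{\n K}=1$ whenever possible. The starting observation is that Remark \ref{2rem:sqrtDred} already gives $K(\sqrt D)\leq d=\tfrac12\deg D$, so the content of the conjecture is purely about uniformity in $D$: one must absorb the dependence on $d$ either by multiplying $\sqrt D$ by a well-chosen polynomial (which lowers partial quotient degrees via Corollary \ref{1cor:decreasedegree}), or by passing to a different element of $\n K(T,\sqrt D)$ altogether via the Mercat-type Theorem \ref{5theo:Mercat}, which reduces the problem to a suitable form of the polynomial Zaremba Conjecture whenever $D$ is Pellian.

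For $\n K$ uncountable, I would argue directly: starting from $\alpha^{(0)}=\sqrt D$, I pick inductively $\lambda_i\inn\n K$ avoiding the (at most countably many) zeros of the denominators of the convergents of $\alpha^{(i-1)}$, and set $\alpha^{(i)}=(T-\lambda_i)\alpha^{(i-1)}$. By Corollary \ref{1cor:decreasedegree} each step drops $K$ by one as long as $K>1$, so after at most $d-1$ steps I obtain $\alpha_D$ with $K(\alpha_D)=1$, proving the strong form $m'_{\n K}=1$. For $\n K$ an infinite algebraic extension of a finite field, I would use Corollary \ref{2cor:lagrange} to conclude that $D$ is Pellian, combine with Zaremba's strong Conjecture over infinite fields (Theorem \ref{4prop:Zarfinite}), and feed the result into Theorem \ref{5theo:Mercat} to obtain $m'_{\n K}=1$ again. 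For $\n K$ a number field or $\n K=\ch Q$, I would pursue the algebro-geometric route of Section \ref{6sec:hyper}, using a result of Zannier on generalized Jacobians of the hyperelliptic curve $\C H_D:U^2=D(T)$ to produce a polynomial $f$ such that all but finitely many partial quotients of $f\sqrt D$ are linear.

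The hard case, and the one I expect to be the main obstacle, is $\n K=\n F_q$ a finite field. Here Corollary \ref{2cor:lagrange} still gives Pellianity, but the continued fraction of $\sqrt D$ is periodic with $K(\sqrt D)=d$ unbounded in $D$, and the only unconditional versions of Zaremba over $\n F_q$ (Theorems \ref{4theo:Friesen} and \ref{4theo:Friesen2}) require $\deg D$ to be small relative to $q$. Feeding these into Theorem \ref{5theo:Mercat} therefore only handles $D$ of bounded degree, which is insufficient for uniformity. To circumvent this I would either try to prove Zaremba over $\n F_q$ with a bound independent of $\deg f$, or try to construct $\alpha_D$ directly by modifying the periodic expansion of $\sqrt D$ through carefully chosen M\"obius transformations in $\GL_2(\n F_q[T])$ that kill the large partial quotients while preserving membership in $\n K(T,\sqrt D)$; either route appears to need a genuinely new idea beyond the tools assembled in Chapters~1 and~2.
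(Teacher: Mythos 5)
Your proposal matches the paper's treatment of Conjecture~\ref{5conj:MMfin} case by case. The statement is indeed a genuine conjecture in the thesis, proved only for certain base fields, and you have correctly reproduced the decomposition: the countability argument using Corollary~\ref{1cor:decreasedegree} for uncountable $\n K$ is Theorem~\ref{5theo:MMC}; the combination of Corollary~\ref{2cor:lagrange} (Pellianity), Corollary~\ref{4cor:Zarinf} (Zaremba over infinite fields), and Theorem~\ref{5theo:Mercat} gives Corollary~\ref{5cor:MMFpbar} for infinite algebraic extensions of finite fields; and Zannier's Theorem~\ref{66theo:Zrootsqn} on generalized Jacobians yields the number field case (Theorem~\ref{66theo:MMQ}), which then subsumes $\ch Q$ by Remark~\ref{5rem:QimpchQ}.

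One small inaccuracy in emphasis: for $\n K=\ch Q$ the paper's primary route (Theorem~\ref{5theo:MMbarQ}, Proposition~\ref{5prop:MMchq1}, Corollary~\ref{5cor:MMchQ}) is the reduction of Laurent series modulo a prime developed in Section~\ref{5sec:5.3}, not the generalized Jacobian machinery --- the latter is used only for number fields and then implies the $\ch Q$ case as a corollary. The reduction-modulo-$\nu$ argument is more elementary and yields the slightly stronger statement that a single root $\lambda=\zeta_r\sqrt[r]{1/\pi}$ avoids \emph{all} denominators $q_n(\lambda)$ (not merely all but finitely many), so if you want to reconstruct the thesis' own proofs you should keep these two methods separate. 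Your assessment of the finite-field case is exactly right: Corollary~\ref{5cor:ZarimpMM} reduces it to Zaremba over $\n F_q$, but Theorems~\ref{4theo:Friesen} and~\ref{4theo:Friesen2} only cover $\deg X\leq q$, where $X$ is a Pell continuant of degree potentially far exceeding $\deg D$, and the thesis leaves this case open. Nothing in Chapters~1 or~2 resolves it, and your closing sentence is consistent with the paper's own admission at the end of the section introduction.
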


If $\n K$ is an infinite field and the previous Conjecture holds for some constant $m'_{\n K}$ then, reasoning as in the proof of Theorem \ref{5theo:MM2pol}, we get that McMullen's Conjecture \ref{5conj:MMinf} holds with $m_{\n K}=m'_{\n K}+1$. Thus, over infinite fields, Conjectures \ref{5conj:MMinf} and \ref{5conj:MMfin} are equivalent.\par\medskip

We have seen that when $\n K$ is not an algebraic extension of a finite field, a generic polynomial $D$ should not be Pellian, so it has a sense to consider the following stronger Conjecture:
\begin{conjC}\label{5conj:mult}
	Let $\n K$ be a field which is not an algebraic extension of a finite field. Then for every polynomial $D\inn\S K$ there exist infinitely many monic polynomials $P$ such that $$\ov K(P\sqrt D)=1.$$
\end{conjC}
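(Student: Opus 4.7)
The natural strategy is an iterative reduction. Starting from $\alpha_0=\sqrt D$, which satisfies $\ov K(\alpha_0)\leq d$ by Remark \ref{2rem:sqrtDred}, I would successively multiply by linear factors $(T-\lambda_i)$ and try to decrease $\ov K$ at each step. By the proposition quoted in the introduction (a strengthening of Corollary \ref{1cor:decreasedegree}), if $\alpha\inn\n L$ has $1<\ov K(\alpha)<\infty$ and if the denominators $q_n^{(\alpha)}(\lambda)$ of the convergents of $\alpha$ do not vanish for all sufficiently large $n$, then $\ov K((T-\lambda)\alpha)=\ov K(\alpha)-1$. Applying this $\ov K(\alpha_0)-1$ times gives a product $P=(T-\lambda_1)\cdots(T-\lambda_k)$ with $\ov K(P\sqrt D)=1$; to get infinitely many such $P$, one continues beyond the first success (a normal $\alpha$ still has the property that $(T-\mu)\alpha$ is normal for almost all $\mu$, by the same proposition), and one verifies non-equivalence by comparing the polynomial factors $P$.

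The main obstacle is the avoidance condition: at the $i$-th step I must choose $\lambda_i$ outside the \emph{infinite} set $B_i\sub\n K$ consisting of all roots of the convergent denominators of $\alpha_{i-1}=(T-\lambda_1)\cdots(T-\lambda_{i-1})\sqrt D$. Each individual $q_n^{(\alpha_{i-1})}$ has only finitely many roots, so $B_i$ is a countable union of finite sets, hence countable.

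When $\n K$ is uncountable, this is the whole story: at every step $\n K\setminus B_i$ is uncountable, so uncountably many choices of $\lambda_i$ are admissible, giving uncountably many non-equivalent monic $P$ with $\ov K(P\sqrt D)=1$. This settles the conjecture in the uncountable case.

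The hard part is when $\n K$ is countable but not an algebraic extension of a finite field, such as a number field or $\ch{\n Q}$. Now $B_i$ can in principle be all of $\n K$, and the naive iterative scheme may fail. In this regime the plan is to pass to the algebro-geometric picture of Section \ref{6sec:hyper}: a convergent of $P\sqrt D$ of ``maximal'' type (equivalently, a linear partial quotient occurring at infinitely many places) corresponds to principal divisors of the form $\div(p+qP\sqrt D)$ supported away from $\infty_\pm$ of low degree on (a desingularization of) $\C H_D:U^2=D(T)$, which in turn translates into a statement about the class of $P(T_0)\cdot\delta$ in the \emph{generalized} Jacobian of $\C H_D$ attached to the divisor-at-infinity structure (cf.\ Remark \ref{6rem:multdeltanonsf}). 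For $\n K$ a number field, Zannier's Theorem 1.7 in \cite{zannier2016hyperelliptic}, applied to the one-parameter family $P_\lambda=(T-\lambda)$ (and its iterates), furnishes infinitely many specializations $\lambda\inn\n K$ producing the desired torsion configuration on the generalized Jacobian, and hence the infinitely many $P$. The genuinely delicate step is to rephrase the normality condition $\ov K(P\sqrt D)=1$ as a purely geometric unlikely-intersection condition in a form to which Zannier's theorem applies; once this dictionary is in place the arithmetic conclusion follows, and the case $\n K=\ch{\n Q}$ is deduced by extending scalars to a number field containing the coefficients of $D$.
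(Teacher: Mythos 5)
As stated, Conjecture \ref{5conj:mult} is left open in the thesis; what is actually proved is the uncountable case (Theorem \ref{5theo:MMC}), the case $\n K=\ch Q$ (Theorem \ref{5theo:MMbarQ}, Corollary \ref{5cor:MMchQ}), and the number-field case (Theorem \ref{66theo:MMQ}). Your iterative scheme --- multiply by $(T-\lambda_i)$, decreasing $\ov K$ by one per step subject to $q_n(\lambda_i)\neq 0$ for all large $n$ --- is exactly the paper's (Corollary \ref{1cor:decreasedegree}, Lemmas \ref{5lem:MMrootsqn} and \ref{5lem:MMrootsqn2}), and your treatment of the uncountable case reproduces Theorem \ref{5theo:MMC}. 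You also correctly single out Zannier's Theorem 1.7 as the engine over number fields.

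Two issues in the countable case. First, you call ``genuinely delicate'' the translation of $\ov K(P\sqrt D)=1$ into an unlikely-intersection condition on a generalized Jacobian; in fact Zannier's result (quoted in the thesis as Theorem \ref{66theo:Zrootsqn}) is already stated in precisely the form your iteration requires: for non-Pellian $D$ over a number field $\n K$, only finitely many elements of $\ch K$ of degree at most $l$ over $\n K$ are common zeros of infinitely many $q_n$. Taking $l=1$ and feeding the resulting $\lambda$ into Lemma \ref{5lem:MMrootsqn2} finishes; no further geometric dictionary is needed at this level, that work being internal to Zannier's proof. Second, and this is a real gap: the theorem assumes $D$ \emph{non-Pellian}, and your sketch says nothing about Pellian $D$. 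Over a number field the thesis handles Pellian $D$ via Mercat's Theorem \ref{5theo:Mercat}, but that construction produces normal elements of $\n K(T,\sqrt D)$ that are not of the form $P\sqrt D$, so it yields McMullen's strong Conjecture rather than Conjecture \ref{5conj:mult} in that subcase; your proposal inherits this omission and should at least flag it.

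Finally, the thesis's primary proof over $\ch Q$ is not by extension of scalars from a number field. It uses reduction of Laurent series modulo a prime (Section \ref{5sec:5.3}): Theorem \ref{5theo:MMbarQ} shows that for $\lambda=\zeta_r\sqrt[r]{1/\pi}$, with $\pi$ a large enough prime of $\n K_D$ and $r\geq d$, one has $q_n(\lambda)\neq 0$ for \emph{every} $n$, because reduction modulo $\pi$ of a suitably normalized $q_n$ can drop its degree by at most $d-1$, so the primitive polynomial $\pi T^r-1$ cannot divide $q_n$. That argument works uniformly for Pellian and non-Pellian $D$ and is more self-contained; the scalar-extension deduction from the number-field case that you propose is only the secondary observation of Remark \ref{5rem:QimpchQ}.
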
 
As $P\sqrt D, Q\sqrt D$ are equivalent if and only if $P=kQ$ with $k\inn\n K^*$, the previous Conjecture immediately implies McMullen's strong Conjecture \eqref{5eq:MMstrong}.\par\medskip

Applying the results on the multiplication of a continued fraction by a linear polynomial seen in Proposition \ref{1prop:prod} and Corollary  \ref{1cor:decreasedegree}, it will be easy to see that Conjecture \ref{5conj:mult}, and thus McMullen's strong Conjecture \eqref{5eq:MMstrong}, hold over every uncountable field (Theorem \ref{5theo:MMC}).

In section \ref{5sec:5.3} we will study the reduction of formal Laurent series modulo a prime. This will allow us to show that Conjecture \ref{5conj:mult} holds also over $\ch Q$ (Corollary \ref{5cor:MMchQ}). Actually, these techniques will also allow us to prove directly McMullen's strong Conjecture \eqref{5eq:MMstrong} over $\ch Q$ (Proposition \ref{5prop:MMchq1}). We will then give another proof of \eqref{5eq:MMstrong} over $\ch Q$, based on Lemma \ref{1lem:(f+a)/g}.

It will follow directly (Theorem \ref{66theo:MMQ}) from a theorem of Zannier \cite{zannier2016hyperelliptic} that Conjecture \ref{5conj:mult} holds over every number field, that is, over every finite extension of $\n Q$. To prove this result, he applied an analogue for algebraic groups of Skolem-Mahler-Lech's Theorem to a suitable generalized Jacobian of an Hyperelliptic curve; this is why we will review in section \ref{66sec:gen} the theory of generalized Jacobians associated to a modulus  
\par\medskip

On the other hand, over algebraic extensions of finite fields we cannot look for normal quadratic irrationalities of the form $P\sqrt D$, as their continued fractions will always be periodic, with infinitely many partial quotients of degree $d+\deg P$.

Adapting to the polynomial setting a result of Mercat (Theorem \ref{5theo:Mercat}), we will prove a connection between the polynomial analogues of Zaremba's and McMullen's Conjectures (Conjectures \ref{4conj:zaremba}, \ref{5conj:MMinf}), which will allow us to see that McMullen's strong Conjecture \eqref{5eq:MMstrong} holds over every infinite algebraic extension of a finite field (Corollaries \ref{5cor:ZarimpMM}, \ref{5cor:MMFpbar}). On the other hand, over finite fields we will only have that McMullen's Conjecture follows from Zaremba's; however, in this case, even the existence of normal elements in every quadratic extension is still an open problem.

\section{\texorpdfstring{$\n K=\ch F_p$: a connection between Zaremba's and McMullen's Conjectures}{Algebraic closure of Fp: a connection between Zaremba's and McMullen's Conjectures}}
As we have already mentioned (Theorem \ref{3theo:Mercat}), in \cite{mercat2013construction} Mercat proved that in the real case Zaremba's Conjecture implies McMullen's Conjecture. Actually, assuming Zaremba's Conjecture with a constant $z$, his method allows to explicitly construct, for every positive, non square $d$, infinitely many purely periodic elements of $\n Q(\sqrt d)$ whose partial quotients are bounded by $z+1$.

His proof can be easily adapted to the polynomial case but it requires the existence of non-trivial solutions to the Pell equation, which always occur in the real setting and over algebraic extensions of finite fields but is unlikely in the other cases. Combining this with the fact that Zaremba's (strong) Conjecture holds over every infinite field, we will have that McMullen's (strong) Conjecture holds over every infinite algebraic extension of a finite field.

\begin{theo}[polynomial analogue of Theorem \ref{3theo:Mercat}]\label{5theo:Mercat}
	Let $D\inn\S K$ be a Pellian polynomial; let $(X,Y)$ be a non-trivial solution to the Pell equation for $D$, with $$X^2-DY^2\!=t\inn\{\pm1\}.$$ Let us assume that we have chosen the sign of $Y$ so that $\ord(X+Y\sqrt D)<0$. Let $Z$ be a polynomial relatively prime to $X$ and with $\deg Z<\deg X$; let $Z/X=[0,a_1,\dots,a_n]$ and let $k\inn\n K^*$ be the constant such that, in the notations of \ref{1notat:C_n}, $\begin{cases} C_{n+1}(0,a_1,\dots,a_n)\!=\!kZ\\C_n(a_1,\dots,a_n)=kX\end{cases}\!\!\!\!$. Then $$\frac{X-kZ+Y\sqrt D}{kX}=\left[\ov{2/k,-a_1,\dots,-a_n,(-1)^{n+1}2t/k,a_n,\dots,a_1}\right].$$ In particular, $$K\left(\frac{X-kZ+Y\sqrt D}{kX}\right)\leq K\left(\frac ZX\right).$$ 
\end{theo}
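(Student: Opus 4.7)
The plan is to show that the periodic non-regular continued fraction
\[\gamma := \left[\,\overline{2/k,\, -a_1,\dots,-a_n,\,(-1)^{n+1}2t/k,\, a_n,\dots,a_1}\,\right]\]
converges in $\n L$ to $\beta = \frac{X - kZ + Y\sqrt D}{kX}$ by identifying $\beta$ as the attracting fixed point of the period's matrix, and then to apply Remark~\ref{1rem:Kconstants} in order to transfer the degree bound from this non-regular expansion to the regular one. Writing $P = p_{n-1}$, $Q = q_{n-1}$ for the continuants of $[0, a_1, \dots, a_n]$, the hypothesis reads $p_n = kZ$, $q_n = kX$, and Lemma~\ref{1lem:pnqnprime} supplies the key identity $k(XP - ZQ) = (-1)^n$.

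The next step is to compute the period matrix $M$. Using $M_{-a} = -S M_a S$ with $S = \mathrm{diag}(-1,1)$, and $M_{(a_n,\dots,a_1)} = M_{(a_1,\dots,a_n)}^t$ (Lemma~\ref{1lem:continuants}), both of the blocks $M_{(-a_1,\dots,-a_n)}$ and $M_{(a_n,\dots,a_1)}$ admit explicit forms in terms of $P, Q, X, Z$. Multiplying out the four factors of the period and using $k(XP - ZQ) = (-1)^n$ to absorb the mixed terms, one obtains $\det M = 1$ and $\tr M = 2 - 4tX^2$; since $t^2 = 1$ and $X^2 - Y^2 D = t$, the discriminant is $(\tr M)^2 - 4 = 16X^2(X^2 - t) = 16X^2 Y^2 D$, so the two fixed points of the M\"obius action of $M$ on $\n L$ are precisely $\frac{X - kZ \pm Y\sqrt D}{kX}$, namely $\beta$ and its Galois conjugate $\beta'$.

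By Remark~\ref{1rem:constants} the non-regular continued fraction $\gamma$ converges in $\n L$, because in each period the two constant partial quotients $2/k$ and $(-1)^{n+1}2t/k$ are separated by $n \geq 1$ non-constant ones; its limit must be a fixed point of $M$ and therefore equals either $\beta$ or $\beta'$. To single out $\beta$, note that the sign convention $\ord(X + Y\sqrt D) < 0$ forces the leading coefficients of $X$ and $Y\sqrt D$ to coincide, so $Y\sqrt D/(kX) = 1/k + O(T^{-1})$; hence $\beta = 2/k - Z/X + O(T^{-1})$ has $\floor{\beta} = 2/k$ and $\ord \beta = 0$, whereas $X - Y\sqrt D = t/(X + Y\sqrt D)$ has positive order and therefore $\ord \beta' > 0$. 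Since $\gamma$ has first partial quotient $2/k$ and a tail of negative order (starting with $-a_1$), we also have $\ord \gamma = 0$, which forces $\gamma = \beta$. Finally, the non-regular expansion just produced has partial quotients of degrees $0, \deg a_1, \dots, \deg a_n, 0, \deg a_n, \dots, \deg a_1$, with supremum equal to $K(Z/X)$; invoking Remark~\ref{1rem:Kconstants} yields $K(\beta) \leq K(Z/X)$, as desired.

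The main obstacle is the bookkeeping in the matrix product: the cascading factors $(-1)^n$ coming from $M_{(-a_1,\dots,-a_n)}$ and the factors $t = \pm 1$ coming from the Pell relation both propagate through the computation, and verifying that the mixed terms (in $XZ$, in the constants, and in $Z^2$) cancel so as to leave the clean values of $\tr M$ and $\det M$ demands care; a parallel subtlety is the verification that $\beta$, rather than $\beta'$, is the attracting fixed point regardless of the parity of $n$ and of the sign of $t$.
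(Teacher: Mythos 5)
Your strategy is in substance the same as the paper's: compute the period matrix $M$ of the periodic (non-regular) expansion, identify $\beta$ as a fixed point of $M$, distinguish it from its conjugate $\beta'$ using the sign convention on $Y$ (via the polynomial part / order), and then pass to the regular expansion via Remark~\ref{1rem:Kconstants}. However, one inference as written is unjustified: from $\det M = 1$ and $\tr M = 2 - 4tX^2$, hence discriminant $16X^2Y^2D$, you conclude that the fixed points of the M\"obius action ``are precisely $\frac{X-kZ\pm Y\sqrt D}{kX}$.'' The trace and determinant of $M = \mm ABCD$ only give the difference of the fixed points, since they are $\frac{(A-D)\pm\sqrt{(\tr M)^2-4\det M}}{2C}$; to pin them down to $\frac{X-kZ\pm Y\sqrt D}{kX}$ you must also read off $C$ and $A-D$ from the product (one needs $C = 2kX^2$ and $A-D = 4X(X-kZ)$, up to a common nonzero scalar). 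Since you are multiplying out the four factors of the period anyway, these entries are available, so the gap costs no new idea; but a cleaner route, which the paper takes, is to bypass trace and determinant and simply verify directly that $\vv\beta1$ is an eigenvector of $M$ (the paper finds $M\vv\alpha1 = (1 - 2tX^2 - 2tXY\sqrt D)\vv\alpha1$). Your remaining steps — convergence of $\gamma$ via Remark~\ref{1rem:constants} (the two constants in each period are separated by $n\geq1$ non-constant terms), the identification $\gamma=\beta$ via $\ord\gamma = 0 = \ord\beta$ while $\ord\beta'>0$ by the Pell relation $(X+Y\sqrt D)(X-Y\sqrt D)=t$, and the degree bookkeeping — are all correct and mirror the paper's.
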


\begin{proof}
	Let $\alpha=\frac{X-kZ+Y\sqrt D}{kX}$; it is enough to show that $\vv\alpha1$ is an eigenvector of the matrix $M$ canonically associated to $\pphi=\left[\ov{2/k,-a_1,\dots,-a_n,(-1)^{n+1}2t/k,a_n,\dots,a_1}\right]$. Indeed, by Remark \ref{2rem:eigen}, in this case $\pphi$ must be equal either to $\alpha$ or to its conjugate $\alpha'$; as the polynomial part of $\pphi$ is $2/k$, it follows from our choice of the sign of $Y$ that $\pphi=\alpha$. 
	
	Let $\epsilon=(-1)^n$ and let us denote by $p_i/q_i$, for $i=1,\dots,n$ the convergents of $X/Z\!=\!\![a_1,\dots,a_n]$, where in this case, contrary to the usual notation, $p_i\!=\!C_i(a_1,\dots,a_i)$ and $q_i=C_{i-1}(a_2,\dots,a_i)$. 
	
	Then, by Lemma \ref{1lem:multA}, $M=M_{2/k}\mm{\epsilon\, p_n}{-\epsilon\, p_{n-1}}{-\epsilon\, q_n}{\epsilon\, q_{n-1}}M_{-\epsilon2t/k}\mm{p_n}{q_n}{p_{n-1}}{q_{n-1}}$ and it is easy to see that $M\vv\alpha1=(1-2 tX^2-2 tXY\sqrt D)\vv\alpha1$.
	
	Of course, the given continued fraction expansion for $\alpha$ is not regular; however, as we have already seen in Remark \ref{1rem:Kconstants}, the degrees of the partial quotients of the regular continued fraction expansion of $\alpha$ will still be bounded by $K(Z/X)$. 
\end{proof}

We have seen that if the Pell equation for a polynomial $D\inn\S K$ has non-trivial solutions, then it has infinitely many essentially different solutions. Thus, if Zaremba's Conjecture holds over $\n K$, applying the previous result to different solutions of the Pell equation we will find infinitely many pairwise non-equivalent quadratic irrationalities in $\n K(T,\sqrt D)$ whose partial quotients' degrees are bounded by an absolute constant that depends only on $\n K$:

\begin{cor}
	If Zaremba's Conjecture \ref{4conj:zaremba} holds over $\n K$ for some constant $z_{\n K}$ and $D\inn\S K$ is Pellian, then there exist infinitely many, pairwise non-equivalent quadratic irrationalities $\alpha\inn\n K(T,\sqrt D)$ such that $K(\alpha)\leq z_{\n K}$. 
\end{cor}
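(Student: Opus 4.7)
The plan is to combine Theorem \ref{5theo:Mercat}, the hypothesised Zaremba's Conjecture, and the structure of Pell solutions given by Lemma \ref{2lem:unit} in order to produce an infinite family of pairwise non-equivalent purely periodic elements of $\n K(T,\sqrt D)$ with partial quotients of bounded degree.

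First, since $D$ is Pellian, Lemma \ref{2lem:unit} supplies non-trivial Pell solutions $(X_n, Y_n)$ for every $n \geq 1$, obtained (up to $\n K^*$) as the successive powers of a fundamental solution, so in particular the degrees $\deg X_n$ are strictly increasing in $n$ and tend to infinity. Next, for each $n$ I apply Zaremba's Conjecture \ref{4conj:zaremba} to the polynomial $X_n$: this yields $Z_n \in \n K[T]$ coprime to $X_n$ with $\deg Z_n < \deg X_n$ and $K(Z_n/X_n) \leq z_{\n K}$. Writing $Z_n/X_n = [0, a_1^{(n)}, \dots, a_{\ell_n}^{(n)}]$, and applying Theorem \ref{5theo:Mercat} with the Pell solution $(X_n, Y_n)$ and this choice of $Z_n$, I obtain a quadratic irrationality $\alpha_n \in \n K(T,\sqrt D) \setminus \n K(T)$ whose (not a priori regular) purely periodic expansion is
\[
\alpha_n = \bigl[\, \overline{2/k_n,\ -a_1^{(n)}, \dots, -a_{\ell_n}^{(n)},\ (-1)^{\ell_n+1} 2 t_n/k_n,\ a_{\ell_n}^{(n)}, \dots, a_1^{(n)}}\, \bigr],
\]
and which satisfies $K(\alpha_n) \leq z_{\n K}$.

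The main obstacle is then extracting infinitely many pairwise non-equivalent $\alpha_n$ from this list. By the polynomial Serret's Theorem \ref{1theo:serret} together with its Corollary \ref{1cor:Serret}, two equivalent Laurent series have regular continued fractions whose tails eventually coincide up to the constant-rescaling pattern of \eqref{1eq:multAform}; when both expansions are periodic, this forces the primitive periods to agree cyclically, up to such a rescaling, so in particular the length of the regular primitive period is an equivalence-class invariant. The Mercat period above has length $2\ell_n + 2$ but is not automatically regular, since $2/k_n$ and $(-1)^{\ell_n+1}2t_n/k_n$ are constants; absorbing them through Lemma \ref{1lem:addrempartq} can only shorten the period by a bounded amount independent of $n$, and by Remark \ref{1rem:Kconstants} does not increase the degrees of the remaining partial quotients. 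On the other hand, the relation $\deg X_n = \sum_{i=1}^{\ell_n} \deg a_i^{(n)} \leq z_{\n K}\, \ell_n$ gives $\ell_n \geq \deg X_n / z_{\n K} \to \infty$. Hence the regular primitive periods of the $\alpha_n$ have lengths tending to $\infty$, and passing to a subsequence on which these lengths are strictly increasing produces infinitely many pairwise non-equivalent quadratic irrationalities in $\n K(T,\sqrt D)$ with $K(\alpha_n) \leq z_{\n K}$. The delicate point is precisely this uniform control on how much the period may shrink during regularisation, which is where the argument has to be carried out carefully.
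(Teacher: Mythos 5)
Your construction matches the paper's sketch exactly: take powers of a fundamental Pell solution to get $(X_n,Y_n)$ with $\deg X_n$ strictly increasing, apply Zaremba's Conjecture to each $X_n$ to obtain a suitable $Z_n$, and feed these into Theorem \ref{5theo:Mercat} to obtain $\alpha_n\inn\n K(T,\sqrt D)$ with $K(\alpha_n)\leq z_{\n K}$. The paper itself only asserts pairwise non-equivalence without argument, so the burden of proof here is genuine.

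The place where your argument has a gap is the final step, where you pass from ``the regularised Mercat period has length about $2\ell_n\to\infty$'' to ``the regular primitive (quasi-)periods of the $\alpha_n$ have lengths tending to $\infty$.'' Absorbing the two constants via Lemma \ref{1lem:addrempartq} indeed shortens the visible period by a bounded amount, and Remark \ref{1rem:Kconstants} controls the degrees; but nothing you say rules out that the resulting length-$\approx 2\ell_n$ period is a high power of a much shorter primitive quasi-period, whose length could remain bounded as $n\to\infty$. Since primitivity is exactly what your Serret-based invariance argument needs, this is a real hole, not just an omitted routine step.

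A clean way to close it is to use the degree of the discriminant as the equivalence-class invariant instead of the period length. If $\alpha$ satisfies $P\alpha^2+Q\alpha+R=0$ with $P,Q,R\inn\n K[T]$ relatively prime, then $\Delta(\alpha)=Q^2-4PR$ is determined up to a factor in $\n K^{*2}$, and the standard covariance of binary quadratic forms under $M\inn\GL_2(\n K[T])$ multiplies $\Delta$ by $(\det M)^2\inn\n K^{*2}$ and preserves coprimality, so $\deg\Delta(\alpha)$ is an invariant of the $\GL_2(\n K[T])$-equivalence class. For $\alpha_n=\frac{X_n-k_nZ_n+Y_n\sqrt D}{k_nX_n}$, the naive triple $\left(k_n^2X_n^2,\ -2(X_n-k_nZ_n)k_nX_n,\ (X_n-k_nZ_n)^2-Y_n^2D\right)$ has $\gcd(X_n-k_nZ_n,X_n)=\gcd(Y_n,X_n)=1$, so its gcd $g_n$ divides $X_n$; hence $\Delta(\alpha_n)=4k_n^2X_n^2Y_n^2D/g_n^2$, and since $\deg Y_n=\deg X_n-d$ one gets $\deg\Delta(\alpha_n)=4\deg X_n-2\deg g_n\geq 2\deg X_n\to\infty$. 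Thus at most finitely many $\alpha_n$ lie in any one equivalence class, and infinitely many classes occur, which is what the corollary asserts.
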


As any polynomial over a finite field is Pellian, we have
\begin{cor}\label{5cor:ZarimpMM}
	Over every finite field of characteristic different from $2$  McMullen's Conjecture is a consequence of Zaremba's Conjecture (with $m_{\n K}=z_{\n K}$). In particular, McMullen's strong Conjecture would follow from Zaremba's strong Conjecture.
\end{cor}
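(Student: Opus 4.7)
The statement to prove is the immediate finite‑field consequence of Theorem \ref{5theo:Mercat} combined with the unconditional Pellianity of every $D\inn\S K$ over algebraic extensions of finite fields (Corollary \ref{2cor:lagrange}). The plan is therefore very short: reduce to the unnamed corollary just above, which itself follows from Theorem \ref{5theo:Mercat} by varying the Pell solution and invoking Serret's Theorem to separate equivalence classes.

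Concretely, assume Zaremba's Conjecture \ref{4conj:zaremba} over the finite field $\n K=\n F_q$ (with $q$ odd) with bound $z_{\n K}$, and fix $D\inn\S K$. Corollary \ref{2cor:lagrange} gives that $D$ is Pellian, so by Lemma \ref{2lem:unit} the Pell equation for $D$ admits an infinite sequence of essentially distinct non‑trivial solutions $(X_j,Y_j)_{j\geq1}$ with $\deg X_j\to\infty$. Apply Zaremba's Conjecture to each $X_j$ to obtain a polynomial $Z_j$ with $\deg Z_j<\deg X_j$, relatively prime to $X_j$, and $K(Z_j/X_j)\leq z_{\n K}$. Feeding $(X_j,Y_j,Z_j)$ into Theorem \ref{5theo:Mercat} yields a quasi‑palindromic purely periodic quadratic irrationality
\[\alpha_j=\frac{X_j-k_jZ_j+Y_j\sqrt D}{k_jX_j}\inn\n K(T,\sqrt D)\]
whose explicit (possibly non‑regular) expansion of length $2\ell(Z_j/X_j)+2$ has all partial quotients of degree at most $z_{\n K}$; by Remark \ref{1rem:Kconstants}, the same bound survives the passage to the regular expansion, so $K(\alpha_j)\leq z_{\n K}$.

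It remains to extract from $(\alpha_j)$ an infinite subfamily of pairwise non‑equivalent elements. Since every partial quotient of $Z_j/X_j$ has degree at most $z_{\n K}$, the length $\ell(Z_j/X_j)$ is at least $\deg X_j/z_{\n K}$, hence grows without bound. The period of the regular continued fraction of $\alpha_j$ thus also tends to infinity with $j$. By the polynomial Serret Theorem \ref{1theo:serret} (and its Corollary \ref{1cor:Serret}), two equivalent quadratic irrationalities have continued fraction tails that coincide up to a single global constant, in particular they share the same eventual period length. Passing to a subsequence along which these period lengths strictly increase produces pairwise non‑equivalent $\alpha_j$, all with $K(\alpha_j)\leq z_{\n K}$, which is exactly McMullen's Conjecture \ref{5conj:MMinf} over $\n K$ with $m_{\n K}=z_{\n K}$. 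Specialising to $z_{\n K}=1$ yields the strong version.

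I do not expect a serious obstacle: every ingredient is already in place. The only delicate point is the comparison of period lengths between Mercat's non‑regular expansion and the regular one, but Remark \ref{1rem:Kconstants} guarantees that absorbing the constant partial quotients $2/k$ and $(-1)^{n+1}2t/k$ can only decrease the length by a bounded amount and cannot increase the degree bound, so the asymptotic growth of the period with $\deg X_j$ is preserved and Serret suffices.
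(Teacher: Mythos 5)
Your proposal follows the paper's route exactly: Corollary \ref{2cor:lagrange} gives Pellianity of every $D\in\S K$ over a finite field, and the statement then reduces to the unnamed corollary preceding it, which is obtained from Theorem \ref{5theo:Mercat} by varying the Pell solution. Two points deserve to be tightened, one cosmetic and one substantive. Cosmetic: Theorem \ref{5theo:Mercat} requires $X^2-DY^2=t\in\{\pm1\}$, not merely $t\in\n K^*$; by the classification right after Proposition \ref{2prop:eqper} the solutions with $t=\pm1$ still form an infinite (finite-index) family, so just say explicitly that you pick those.

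Substantive: the step \emph{``The period of the regular continued fraction of $\alpha_j$ thus also tends to infinity with $j$''} is not justified by the observation that $\ell(Z_j/X_j)\to\infty$. The block of length $2\ell(Z_j/X_j)+2$ produced by Theorem \ref{5theo:Mercat} is only \emph{a} period, not necessarily the minimal one; a priori the minimal quasi-period could stay bounded even as $\ell(Z_j/X_j)$ grows, and Remark \ref{1rem:Kconstants} addresses constant absorption but not divisibility of the period. The cleanest repair is the pigeonhole argument that the paper implicitly relies on: the $\alpha_j$ are pairwise distinct (their $\sqrt D$-coefficients $Y_j/(k_jX_j)$ are distinct, since $\deg Y_j\to\infty$), and over $\n F_q$ there are only finitely many eventually quasi-periodic regular continued fractions with minimal quasi-period $\le N$, a bounded pre-period, and all partial-quotient degrees $\le z_{\n K}$ — at most on the order of $(q-1)\sum_{m\le N}(q^{z_{\n K}+1})^m$ choices. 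Hence the minimal quasi-periods of the $\alpha_j$ are unbounded, and passing to a subsequence with strictly increasing quasi-period, Serret's Theorem \ref{1theo:serret} (as you invoke it) separates equivalence classes, which is exactly what the paper asserts without spelling out.
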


We have already seen that if $\n K$ is an infinite algebraic extension of a finite field, then Zaremba's strong Conjecture holds over $\n K$  (Corollary \ref{4cor:Zarinf}) and that any polynomial in $\S K$ is Pellian (Corollary \ref{2cor:lagrange}), so:
\begin{cor}\label{5cor:MMFpbar}
	McMullen's strong Conjecture \eqref{5eq:MMstrong} holds over every infinite algebraic extension of a finite field.
\end{cor}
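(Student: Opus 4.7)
The plan is to assemble this as a direct consequence of three earlier results: Zaremba's strong Conjecture over infinite fields, Pellianity over algebraic extensions of finite fields, and the polynomial Mercat-style bridge between the two conjectures. No new machinery is needed; the proof is essentially a citation chain.

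First I would observe that an infinite algebraic extension $\n K$ of a finite field of odd characteristic is, in particular, an infinite field. Thus Corollary \ref{4cor:Zarinf} applies and gives Zaremba's strong Conjecture over $\n K$ with constant $z_{\n K}=1$: for every non-constant $X \in \n K[T]$ there exists $Z \in \n K[T]$, coprime to $X$ and of smaller degree, with $K(Z/X)=1$. Simultaneously, because $\n K$ is algebraic over a finite field, Corollary \ref{2cor:lagrange} tells us that every $D \in \S K$ is Pellian. Both hypotheses of the unnumbered corollary immediately preceding Corollary \ref{5cor:ZarimpMM} are therefore satisfied; applying that corollary with $z_{\n K}=1$ yields infinitely many pairwise non-equivalent quadratic irrationalities $\alpha \in \n K(T, \sqrt D)$ with $K(\alpha)\le 1$, which is exactly McMullen's strong Conjecture \eqref{5eq:MMstrong} over $\n K$ with $m_{\n K}=1$.

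For completeness I would also spell out how that cited corollary is being used under the hood. By Lemma \ref{2lem:unit} the group of non-trivial Pell solutions is infinite; fixing a fundamental solution $(x_1,y_1)$ and writing $X_n+Y_n\sqrt D=(x_1+y_1\sqrt D)^n$, the degrees $\deg X_n$ are strictly increasing. For each $n$, Zaremba gives $Z_n$ coprime to $X_n$ with $Z_n/X_n=[0,a_{1,n},\dots,a_{\ell_n,n}]$ and all $a_{i,n}$ of degree exactly $1$, so $\ell_n=\deg X_n$. Theorem \ref{5theo:Mercat} then produces a (possibly non-regular, but convergent) purely periodic continued fraction expansion of $\alpha_n=\frac{X_n-k_nZ_n+Y_n\sqrt D}{k_nX_n}$ whose partial quotients are all of degree at most $1$; by Remark \ref{1rem:Kconstants} the regularized expansion satisfies $K(\alpha_n)\le 1$. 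The $\alpha_n$ are pairwise non-equivalent by Serret's Theorem \ref{1theo:serret}, since equivalence would force the eventual periodic patterns to coincide up to a constant multiplication and in particular force $\deg X_n$ to be bounded, contradicting the choice of the sequence.

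There is essentially no obstacle: every ingredient has been established earlier in the thesis, and the only point that would require any care in writing the proof is the non-equivalence verification, which is already absorbed into the statement of the Mercat-type corollary invoked above. Consequently the proof can be written in only a few lines.
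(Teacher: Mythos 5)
Your proof is correct and takes essentially the same route as the paper: the thesis deduces Corollary \ref{5cor:MMFpbar} exactly by combining Corollary \ref{4cor:Zarinf} (Zaremba's strong Conjecture over infinite fields) with Corollary \ref{2cor:lagrange} (Pellianity over algebraic extensions of finite fields), then invoking the Mercat-type corollary that precedes Corollary \ref{5cor:ZarimpMM}. Your second paragraph merely unfolds the internals of that cited corollary, which the paper leaves implicit, so the two arguments coincide.
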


\begin{ex}
	Let $D=T^8+T^4\inn\n Q[T]$. It is easy to see that $D$ is Pellian with $\sqrt D=\left[T^4+\frac12,\ov{-8T^4-4,2T^4+1}\right]$, so in particular the minimal solution to the Pell equation for $D$ is $(X,Y)=(2T^4+1,2)$. As Zaremba's strong Conjecture holds over $\n Q$, there exist (infinitely many) polynomials $Z\inn\n Q[T]$ relatively prime to $X$, with $\deg Z<4$ and such that $K(Z/X)=1$; for example, $\frac X{T^3+T}=\left[2T,-\frac12T,-\frac43T,\frac32T\right].$ Then, by the previous Theorem ($t=1,k=1,\epsilon=1$) $$\left[\ov{2,-2T,-\frac12T,-\frac43T,\frac32T,-2,\frac32T,-\frac43T,-\frac12T,2T}\right]=1+\frac{-T^3-T+2\sqrt D}{2T^4+1}.$$  
	
	Starting from successive solutions to the Pell equation we will get that, for example, 
	$-1+\frac{T^7+T^5+(-8T^4-4)\sqrt D}{8T^8+8T^4+1}$,\\
	$-1+\frac{T^{11}+T^9+(-32T^8-32T^4-6)\sqrt D}{32T^{12}+48T^8+18T^4+1}$,\\
	$1+\frac{-T^{15}-T^{13}+8(2T^4+1)(8T^8+8T^4+1)\sqrt D}{128T^{16}+256T^{12}+160T^8+32T^4+1}$\\ 
	are normal elements of $\n Q(T,\sqrt{D(T)})$.
\end{ex}

\section{\texorpdfstring{$\n K$ an uncountable field}{K an uncountable field}}
Let $D\inn\S K$ be a polynomial of degree $2d$ and let $p_n/q_n$ be the convergents of $\sqrt D$. We have already seen in Remark \ref{2rem:degan-pell} that $K(\sqrt D)\leq d$, where equality holds if and only if $D$ is Pellian. 

\begin{lem}\label{5lem:MMrootsqn}
	If for every polynomial $D\inn\S K$ (not necessarily squarefree) there exists $\lambda\inn\n K$ that is not a root of any of the denominators of the convergents of $\sqrt D$, then Conjecture \ref{5conj:MMfin} holds over $\n K$ with $m'_{\n K}=1$.
	
	More precisely, for every $D\inn\S K$ of degree $2d$ there will exist $\lambda_1,\dots,\lambda_{d-1}\inn\n K$ such that $$K\left((T-\lambda_1)\cdots(T-\lambda_{d-1})\sqrt D\right)=1.$$
	
	If for every polynomial $D$ we can find infinitely many constants $\lambda$ that satisfy the previous condition we will have that Conjecture \ref{5conj:mult}, and thus McMullen's strong Conjecture, hold over $\n K$.
\end{lem}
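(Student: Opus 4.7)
The plan is to iterate Corollary \ref{1cor:decreasedegree} exactly $d-1$ times. Starting from $\alpha_0=\sqrt D$, we know by Remark \ref{2rem:sqrtDred} that $K(\alpha_0)\leq d$. The hypothesis, applied to $D$ itself, produces a constant $\lambda_1\in\n K$ that is not a root of any denominator $q_n$ of the convergents of $\sqrt D$. Corollary \ref{1cor:decreasedegree} then gives
\[
K(\alpha_1)=K\bigl((T-\lambda_1)\sqrt D\,\bigr)\leq\max\{K(\sqrt D)-1,\,1\}\leq d-1.
\]

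The key observation for the iteration is that $\alpha_1=(T-\lambda_1)\sqrt D=\sqrt{D_1}$ with $D_1=(T-\lambda_1)^2D$, and this $D_1$ still lies in $\S K$ (of degree $2(d+1)$; note that we do not need it to be squarefree, which is precisely why the hypothesis is formulated for all polynomials of $\S K$). So we may re-apply the hypothesis to $D_1$ to find $\lambda_2\in\n K$ that avoids all the roots of the denominators of the convergents of $\alpha_1=\sqrt{D_1}$, and Corollary \ref{1cor:decreasedegree} yields $K(\alpha_2)\leq\max\{d-2,1\}$. Continuing in this manner for a total of $d-1$ steps we obtain $\lambda_1,\dots,\lambda_{d-1}\in\n K$ with
\[
K\bigl((T-\lambda_1)\cdots(T-\lambda_{d-1})\sqrt D\,\bigr)=1,
\]
which is exactly the refined statement and therefore proves Conjecture \ref{5conj:MMfin} over $\n K$ with $m'_{\n K}=1$.

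For the last assertion, suppose moreover that at each $D_i=(T-\lambda_1)^2\cdots(T-\lambda_i)^2D$ the set of admissible constants (those avoiding the zeros of all $q_n^{(i)}$) is infinite. Then we may carry out the above construction with infinitely many different choices at (say) the first step, each of which yields a distinct monic $P=(T-\lambda_1)\cdots(T-\lambda_{d-1})\in\n K[T]$ with $\ov K(P\sqrt D)\leq K(P\sqrt D)=1$. This gives infinitely many monic $P$ satisfying Conjecture \ref{5conj:mult}, hence McMullen's strong Conjecture \eqref{5eq:MMstrong}, since distinct monic $P,Q$ produce inequivalent quadratic irrationalities $P\sqrt D\not\sim Q\sqrt D$.

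The only subtle point, which is not really an obstacle but deserves attention, is making sure that at each iteration the hypothesis may legitimately be invoked on the non-squarefree polynomial $D_i$; this is precisely the reason why the hypothesis has been stated for arbitrary $D\in\S K$ rather than only for squarefree ones.
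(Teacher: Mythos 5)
Your proof is correct and follows essentially the same iterative approach as the paper — apply Corollary \ref{1cor:decreasedegree} repeatedly, finding at each step a $\lambda_i$ avoiding the roots of the denominators of the convergents of $\alpha_{i-1}$. You make one point slightly more explicit than the paper: that the iteration is legitimate because $\alpha_i=\sqrt{D_i}$ with $D_i=(T-\lambda_1)^2\cdots(T-\lambda_i)^2D\inn\S K$, which is precisely why the hypothesis is stated for not-necessarily-squarefree polynomials.
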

\begin{proof}
	Let $\alpha=\sqrt D$, with $D\inn\S K$ and $\deg D=2d$. 
	
	By Corollary \ref{1cor:decreasedegree}, if there exists $\lambda_1\inn\n K$ such that $q_n(\lambda_1)\neq0$ for every $n$, then, setting $\alpha_1=(T-\lambda_1)\alpha$, we would have $$K(\alpha_1)=\max\{K(\alpha)-1,1\}\leq d-1$$ (in particular, $(T-\lambda_1)^2D$ is non-Pellian). 
	
	Again, if there exists $\lambda_2$ that is not a root of any denominator of the convergents of $\alpha_1$, then $$K(\alpha_2)\leq d-2, \text{ where } \alpha_2=(T-\lambda_1)(T-\lambda_2)\sqrt D.$$ Going on in this way, if at every step we can find a constant $\lambda_i$ that is not a root of the denominators of the convergents of $\alpha_{i-1}=(T-\lambda_1)\cdots(T-\lambda_{i-1})\sqrt D$, we will have $$K(\alpha_{d-1})=1$$ (actually, already $K(\alpha_{k-1})=1$, with $k=K(\alpha)$).
\end{proof}
	
\begin{theo}\label{5theo:MMC}
	Conjecture \ref{5conj:mult} and McMullen's strong Conjecture \eqref{5eq:MMstrong} hold over every uncountable field (of characteristic different from $2$).
	
	More precisely, if $\n K$ is uncountable, for every polynomial $D\inn\S K$ there exist infinitely many different $(d-1)$-uples $\lambda_1,\dots,\lambda_{d-1}$, where $2d=\deg D$, such that $$K\left((T-\lambda_1)\cdots(T-\lambda_{d-1})\sqrt D\right)=1.$$
\end{theo}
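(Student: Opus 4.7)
The plan is to invoke Lemma \ref{5lem:MMrootsqn}, whose final sentence reduces the entire theorem to the following purely set-theoretic statement: for every $D\in\S K$, the set of $\lambda\in\n K$ that are \emph{not} a root of any of the denominators $q_n$ of the convergents of $\sqrt D$ is infinite. Once this avoidance property is established for every polynomial in $\S K$, the iterative construction already carried out in the proof of Lemma \ref{5lem:MMrootsqn} (successively replacing $\sqrt D$ by $(T-\lambda_1)\sqrt D$, then $(T-\lambda_1)(T-\lambda_2)\sqrt D$, and so on) will yield the desired conclusion.

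First I would observe that the hypothesis of Lemma \ref{5lem:MMrootsqn} holds trivially when $\n K$ is uncountable. Fix $D\in\S K$ and let $(q_n)_{n\geq 0}$ be the sequence of continuant denominators of the regular continued fraction expansion of $\sqrt D$. This is a countable family of nonzero polynomials in $\n K[T]$, and each $q_n$ has at most $\deg q_n$ roots in $\n K$. Hence the set
\[
R(D)=\bigcup_{n\geq 0}\{\lambda\in\n K:q_n(\lambda)=0\}
\]
is a countable union of finite sets, and is therefore at most countable. Since $\n K$ is uncountable, $\n K\setminus R(D)$ is uncountable; in particular it contains infinitely many elements. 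This is precisely the ``infinitely many $\lambda$'' condition appearing in the last sentence of Lemma \ref{5lem:MMrootsqn}, so that Conjecture \ref{5conj:mult} and McMullen's strong Conjecture \eqref{5eq:MMstrong} hold over $\n K$.

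To obtain the stronger refined statement about $(d-1)$-tuples, I would make the iteration in the proof of Lemma \ref{5lem:MMrootsqn} explicit and track the cardinality of the available choices at each step. Choose $\lambda_1\in\n K\setminus R(D)$; by Corollary \ref{1cor:decreasedegree} we obtain $K((T-\lambda_1)\sqrt D)\leq\max\{K(\sqrt D)-1,1\}$. Since $(T-\lambda_1)\sqrt D=\sqrt{(T-\lambda_1)^2D}$ and $(T-\lambda_1)^2D\in\S K$, the preceding counting argument applies verbatim to this new polynomial, yielding an uncountable set $\n K\setminus R((T-\lambda_1)^2D)$ from which to choose $\lambda_2$. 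Iterating at most $d-1$ times (possibly fewer, if we reach $K=1$ earlier), we produce $\lambda_1,\dots,\lambda_{d-1}\in\n K$ with
\[
K\bigl((T-\lambda_1)\cdots(T-\lambda_{d-1})\sqrt D\bigr)=1.
\]
At each of the $d-1$ steps the valid choices form an uncountable subset of $\n K$, so the total collection of admissible $(d-1)$-tuples is uncountable, hence infinite.

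There is essentially no obstacle here: the entire analytical content lives in Lemma \ref{5lem:MMrootsqn} (and ultimately in Proposition \ref{1prop:prod} and Corollary \ref{1cor:decreasedegree}, which quantify how multiplication by a linear polynomial changes the degrees of partial quotients when no $q_n$ vanishes at the chosen point). The only ingredient the present theorem contributes on top of that machinery is the cardinality observation that a countable union of finite subsets of an uncountable field cannot exhaust it. The mild subtlety worth flagging in the write-up is merely that at each step of the iteration one must verify that the new radicand $(T-\lambda_1)^2\cdots(T-\lambda_i)^2 D$ still lies in $\S K$, which is immediate since $D$ is not a square in $\n K[T]$.
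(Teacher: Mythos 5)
Your proposal is correct and takes essentially the same approach as the paper: the paper's proof likewise reduces to Lemma~\ref{5lem:MMrootsqn} and observes that at each step the excluded set is a countable union of finite root sets, hence cannot exhaust an uncountable field. Your write-up just makes explicit (the set $R(D)$, the check that the new radicand stays in $\S K$) what the paper states in one sentence.
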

\begin{proof}
	In the proof of the previous Lemma, at every step we have to exclude at most the countably many zeros of the denominators of the convergents. Then if $\n K$ is an uncountable field at every step we can choose uncountably many different $\lambda_i$.
\end{proof}

\begin{lem}\label{5lem:MMrootsqn2}
	If at every step, there exists a constant $\lambda$ that is a zero of at most finitely many denominators of the convergents, then there exist $\lambda_1,\dots,\lambda_{d-1}\inn\n K$ such that $$\ov K\left((T-\lambda_1)\cdots(T-\lambda_{d-1})\sqrt D\right)=1,$$ while finitely many initial partial quotients can have larger degrees.
\end{lem}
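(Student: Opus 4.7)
The plan is to run essentially the same induction as in the proof of Lemma~\ref{5lem:MMrootsqn}, but working with $\ov K$ in place of $K$, so that the hypothesis at each step can be relaxed to allowing finitely many bad values of $\lambda$. The key ingredient is the final assertion of Corollary~\ref{1cor:decreasedegree}, which states that if $q_n(\lambda)\neq 0$ for all sufficiently large $n$ and $K(\alpha)>1$, then $\ov K((T-\lambda)\alpha)=\ov K(\alpha)-1$; this is exactly the limsup analogue that the excerpt has been set up to use here.

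Set $\alpha_0=\sqrt D$, so that $\ov K(\alpha_0)\leq d$ by Remark~\ref{2rem:sqrtDred}. Suppose inductively that we have chosen $\lambda_1,\dots,\lambda_{i-1}\inn\n K$ such that, writing $\alpha_{i-1}=(T-\lambda_1)\cdots(T-\lambda_{i-1})\sqrt D$, one has $\ov K(\alpha_{i-1})\leq\max\{d-i+1,1\}$. If $\ov K(\alpha_{i-1})=1$ we are done. Otherwise $\ov K(\alpha_{i-1})>1$, hence also $K(\alpha_{i-1})>1$ since $\ov K\leq K$. Applying the hypothesis to $\alpha_{i-1}$, we may select $\lambda_i\inn\n K$ that is a zero of at most finitely many of the denominators $q_n^{(i-1)}$ of the convergents of $\alpha_{i-1}$; in particular $q_n^{(i-1)}(\lambda_i)\neq 0$ for all sufficiently large $n$. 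Corollary~\ref{1cor:decreasedegree} then yields $\ov K(\alpha_i)=\ov K(\alpha_{i-1})-1$, where $\alpha_i=(T-\lambda_i)\alpha_{i-1}$, completing the inductive step.

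After at most $d-1$ iterations we reach $\ov K(\alpha_{d-1})\leq 1$, and since $\alpha_{d-1}$ is a quadratic irrationality its continued fraction is infinite, forcing $\ov K(\alpha_{d-1})=1$. The clause \emph{``while finitely many initial partial quotients can have larger degrees''} is then simply the definitional content of $\ov K=1$: all but finitely many partial quotients have degree exactly one, but no a priori bound is imposed on the degrees of the partial quotients occurring before the tail kicks in.

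The argument is essentially routine given Corollary~\ref{1cor:decreasedegree}, and the main conceptual point (rather than a genuine obstacle) is that the hypothesis must be read dynamically: a good $\lambda$ must be available not just for $\sqrt D$ but for each $\alpha_i$ produced along the way. With this reading, the proof is formally identical to that of Lemma~\ref{5lem:MMrootsqn}, with $K$ replaced by $\ov K$ and ``not a root of any $q_n$'' replaced by ``a root of only finitely many $q_n$'' throughout.
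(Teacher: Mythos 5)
Your argument is correct and mirrors precisely the induction used to prove Lemma \ref{5lem:MMrootsqn}, with $K$ replaced by $\ov K$ and the final (limsup) clause of Corollary \ref{1cor:decreasedegree} supplying the decrement at each step; this is exactly the adaptation the paper intends, and the lemma is in fact stated there without a separate proof for that reason. You are also right to flag that the hypothesis must be read dynamically — a good $\lambda$ is needed for each $\alpha_i$ in turn, not just for $\sqrt D$ — and that $\ov K\leq K$ is what licenses invoking the corollary under the assumption $\ov K(\alpha_{i-1})>1$.
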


The existence of constants that are not roots of any denominator of the convergents of $\sqrt D$, or even that are roots of only finitely many of them, is not at all obvious if $\n K$ is a countable field (and is of course false if $\n K$ is an algebraic extension of a finite field). We will show in section \ref{5sec:5.4} that this is true for $\n K=\ch Q$ and we will see in Theorem \ref{66theo:Zrootsqn} that Zannier \cite{zannier2016hyperelliptic} proved this over every number field.

\section{\texorpdfstring{$\n K=\ch Q$: reduction of Laurent series}{Algebraic closure of Q: reduction of Laurent series}}\label{5sec:5.3}
Of course when $\n K=\ch Q$ we cannot apply directly the previous approach, as a priori it may happen that, for some polynomial $D$, every algebraic number is a root of infinitely many denominators of the convergents to $\sqrt D$. However, we will show that this is not the case: on the contrary, Conjecture \ref{5conj:mult} holds. In order to prove this statement, we will introduce the theory of the reduction of Laurent series modulo a prime, which will allow us to use the results on continued fractions of quadratic irrationalities over finite fields presented in Chapter 2.

\subsection{Reduction of a formal Laurent series modulo a prime}
Let $\nu$ be a discrete valuation of $\n K$, that is, let $\nu:\n K^*\to\n Z$ be a surjective group homomorphism such that $$\nu(a+b)\geq\min\{\nu(a),\nu(b)\} \text{ for every } a,b\inn\n K^*.$$ Let $\C O_{\nu}$ be its valuation ring, $\C O_{\nu}=\{a\inn\n K^*,\ \nu(a)\geq0\}\cup\{0\}$; in particular, $\C O_\nu$ is local with maximal ideal $\C M_{\nu}\!=\!\{a\inn\C O_{\nu},\ \nu(a)>0\}\cup\{0\}$ and $\C O_{\nu}^*\!=\!\C O_{\nu}\setminus\C M_{\nu}\!=\!\{a\inn\C O_{\nu},\ \nu(a)\!=\!0\}$. $\C M_{\nu}$ is principal; a generator $\pi$ of $\C M_\nu$ is called a \textit{uniformizing parameter} for $\nu$. Now, $\n K$ is the field of fractions of $\C O_{\nu}$ and any $a\inn\n K^*$ can be written uniquely as $a=x\,\pi^l$, with $x\inn\C O_{\nu}^*$ and $l\inn\n Z$; actually, $l=\nu(\alpha)$. 

We will denote by $k_{\nu}$ the residue field $k_{\nu}=\C O_{\nu}/\C M_{\nu}$ and we will denote by $\ov a$ the class in $k_{\nu}$ of an element $a$ of $\C O_{\nu}$. 

\begin{defn}
	We will say that a formal Laurent series $\alpha=\sum\limits_{\mathclap{n=-\infty}}^N c_nT^n\inn\n L$ \textit{can be reduced} modulo $\nu$ if $c_n\inn\C O_\nu$ for every $n$. We will denote by $\C L_\nu$ the ring of the formal Laurent series reducible modulo $\nu$, that is, $$\C L_\nu=(\C O_\nu[[T^{-1}]])[T].$$
	
	Let $\n L_\nu$ be the field of formal Laurent series over $k_\nu$, $$\n L_\nu= k_\nu((T^{-1})).$$ Then the reduction homomorphism from $\C O_\nu$ to $k_\nu$ extends naturally to a homomorphism from $\C L_\nu$ to $\n L_\nu$. If $\alpha\inn\C L_\nu$, we will denote by $\ov\alpha$ its image under this homomorphism, $$\ov\alpha=\sum\limits_{\mathclap{n=-\infty}}^N\ov{c_n}\,T^n\inn\n L_{\nu};$$ we will say that $\ov\alpha$ is the \textit{reduction modulo $\nu$} of $\alpha$.
\end{defn}

We will denote by $\ord_{\n K},\ord_{k_\nu}$, respectively, the natural valuations of $\n L$ and of $\n L_\nu$. If $\alpha$ can be reduced modulo $\nu$, then $\ord_{\n K}(\alpha)\leq\ord_{k_\nu}(\ov\alpha)$, and $\ord_{\n K}(\alpha)=\ord_{k_\nu}(\ov\alpha)$ if and only if $\nu(c_N)=0$, with $N=-\ord_{\n K}(\alpha)$.\par\medskip

Then, if $\alpha\inn\C L_\nu$, we can compare the continued fraction expansion of $\alpha$ over $\n K$ with the continued fraction expansion of $\ov\alpha$ over $k_{\nu}$. This problem has already been studied by Van der Poorten in \cite{poorten1998formal}, \cite{poorten1999reduction} or \cite{poorten2005specialisation}; more details can be found in Merkert's thesis \cite{Olaf}.

\begin{ex}
	Let $\n K=\n Q$ and let $\nu_l$ be the $l$-adic valuation, where $l$ is a prime, that is, $\nu_l(l^k\frac ab)=k$ if $l$ is relatively prime to both $a$ and $b$. Then $\C O_l=\left\{a/b,\ l\nmid b\right\}$, $\C M_l=(l)$ and the residue field is $k_l=\n F_l$.
	
	In this case, a formal Laurent series $\alpha=\sum\limits_{\mathclap{n=-\infty}}^Nc_nT^n$ is reducible modulo $l$ if and only if all the denominators of the $c_n$ are relatively prime to $l$.
\end{ex}

\begin{rem}
	Of course $\C L_\nu$ is a ring, that is, if $\alpha,\beta\inn\n L$ can be reduced modulo $\nu$, then $\alpha\pm\beta$ and $\alpha\beta$ are still reducible modulo $\nu$. However, $\C L_\nu$ is not a field: if $\alpha=\sum\limits_{n=0}^Nc_nT^n$ can be reduced modulo $\nu$, with $N=-\ord(\alpha)$, then, by \eqref{1eq:Laurentinv}, $\alpha^{-1}$ can also be reduced modulo $\nu$ if and only if $\nu(c_N)=0$.
	
	Moreover, if $\alpha\inn\C L_\nu$ and $\sqrt\alpha$ is well defined as a Laurent series (that is, if $N$ is even and $c_N$ is a square in $\n K$), by \eqref{1eq:Laurentrad} $\sqrt\alpha$ can be reduced modulo $\nu$ if $\nu(2)\leq0$ and $\nu(c_N)=0$.
\end{rem}

\begin{lem}\label{5lem:conditionsforgr}	
	Let $\n K=\n Q$. Then any rational function and any quadratic irrationality $\frac{A+B\sqrt D}C$, with $A,B,C,D\inn\n Q[T],$ $C\neq0$ and $D\inn\S K$ can be reduced modulo all but finitely many primes. 
\end{lem}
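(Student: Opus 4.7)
The plan is to verify directly, using the explicit recursive formulas \eqref{1eq:Laurentinv} and \eqref{1eq:Laurentrad} already recorded in the paper, that the two basic operations required to produce the Laurent expansion of the given elements — namely, inversion of a nonzero polynomial and extraction of a square root of a polynomial in $\S Q$ — can both be carried out inside $\C L_{\nu_l}$ for all but finitely many primes $l$, and then to combine them. For any polynomial $P\inn\n Q[T]$, I will call the set of primes $l$ that either divide the denominator of some coefficient of $P$ or divide the numerator of the leading coefficient of $P$ the \emph{bad primes} of $P$; this set is clearly finite.

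For the rational function $A/C$ with $A,C\inn\n Q[T]$, $C\neq0$, take any prime $l$ outside the bad primes of $A$ and of $C$. Then every coefficient of $C$ lies in $\C O_{\nu_l}$ and the leading coefficient of $C$ is a unit of $\C O_{\nu_l}$. Plugging $C$ into \eqref{1eq:Laurentinv}, each coefficient of the Laurent expansion of $C^{-1}$ is obtained by finitely many additions and multiplications in $\C O_{\nu_l}$ together with one division by the invertible leading coefficient, so $C^{-1}\inn\C L_{\nu_l}$; multiplying by $A$, whose coefficients are also in $\C O_{\nu_l}$, yields $A/C\inn\C L_{\nu_l}$.

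For $\sqrt D$ with $D\inn\S Q$, exclude additionally the prime $2$ and the bad primes of $D$. For such a prime $l$, the leading coefficient of $D$ is a square in $\n Q$ by the definition of $\S Q$; a square root $c$ of it is then a unit of $\C O_{\nu_l}$, and $2c$ is invertible in $\C O_{\nu_l}$ because $l\neq 2$. Feeding $D$ into \eqref{1eq:Laurentrad} produces each coefficient of $\sqrt D$ by operations in $\C O_{\nu_l}$ followed by a single division by $2c$, so $\sqrt D\inn\C L_{\nu_l}$. Combining the two, the quadratic irrationality $\alpha=\frac{A+B\sqrt D}{C}$ lies in $\C L_{\nu_l}$ for every prime $l$ outside the union of $\{2\}$ and the bad primes of $A$, $B$, $C$, $D$, a cofinite set of primes. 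I do not anticipate any serious obstacle: once \eqref{1eq:Laurentinv} and \eqref{1eq:Laurentrad} are in hand, the entire argument reduces to careful bookkeeping of the finitely many denominators and of the two leading coefficients that must be inverted, together with the exclusion of $l=2$ that is visibly forced by the square-root recursion.
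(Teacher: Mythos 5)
Your proposal is correct and follows essentially the same route as the paper: the paper's one-sentence proof simply lists the primes to exclude (those dividing denominators of coefficients of $A,B,C,D$, those dividing the numerators of the leading coefficients of $C$ and $D$, and $2$), and your argument unpacks exactly why those exclusions suffice by tracing through the recursions \eqref{1eq:Laurentinv} and \eqref{1eq:Laurentrad}. The only slight difference is that you also exclude primes dividing the numerators of the leading coefficients of $A$ and $B$, which is unnecessary (those coefficients need only lie in $\C O_{\nu_l}$, not be invertible) but harmless since the statement only requires a cofinite set of primes.
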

\begin{proof}
	$\frac{A+B\sqrt D}C$ is reducible modulo a prime $l$ as soon as $l$ is different from $2$ and it does not divide neither the (finitely many) denominators of the coefficients of $A,B,C,D$ nor the numerators of the leading coefficients of $C$ and $D$. 
\end{proof}

	Obviously, the previous reasoning remains true when $\n K$ is a finite extension of $\n Q$ and $\nu$ is an extension to $\n K$ of the $l$-adic valuation.

\begin{rem} 
	Let $\alpha=[a_0,a_1,\dots]\inn\n L$. If $\alpha$ and all its partial quotients $a_n$ can be reduced modulo $\nu$, then $\ov\alpha=[\ov{a_0},\ov{a_1},\dots]$. A priori, this may not be the regular continued fraction expansion of $\ov\alpha$, as some of the $\ov{a_n}$ could be constants. Actually, it can be seen that in this case $\deg\ov a_n=\deg a_n$ for every $n\geq1$. 
	
	Roughly, if for some $n\geq1$ we had that $\deg\ov a_n<\deg a_n$, then the leading coefficient of $a_n$ would have positive valuation, which would imply that some coefficients of $\alpha$ as a formal Laurent series have negative valuation. We will give a more precise proof of the fact that reducing the degrees of the partial quotients cannot decrease in Lemma \ref{5lem:reddegrees}. 
	
	Thus, if $\alpha$ and all the $a_n$ are reducible modulo $\nu$, then $$\ov\alpha=[\ov{a_0},\ov{a_1},\dots], \text{ with } \deg\ov{a_n}=\deg a_n \text{ for every } n\geq1 \text{ and } \deg\ov{a_0}\leq\deg a_0.$$\par\medskip
	
	It may happen that all the $a_n$ can be reduced modulo $\nu$ but $\alpha$ can not. For example, $\alpha=[0,5T+1]=\frac 1{5T+1}=\frac15T^{-1}-\frac1{25}T^{-2}+\cdots\inn\n Q((T^{-1}))$ is not reducible modulo 5 but all its partial quotients are. In such a case, contrary to what we said above, we would have that the degree of the partial quotients decreases after the reduction (formally, $\ov\alpha=[\,\ov0,\ov1\,]=\ov1$ ).\par\medskip
 
	On the other hand, it may happen that $\alpha$ can be reduced modulo $\nu$ but not all of the $a_n$ can. In this case it is harder to find the partial quotients of $\ov\alpha$, but Van der Poorten proved that its convergents can still be easily recovered from those of $\alpha$.
\end{rem}

\begin{lem}\label{5lem:ordpnqn}
	Let $\alpha\inn\C L_\nu$ and let $(\cv n)_n$ be its convergents; of course, the $p_n,q_n$ may not be reducible modulo $\nu$ or may reduce to $\ov 0$. 
	
	For every $n\geq0$, let $$h_n=\pi^{i_n}$$ be the unique power of $\pi$ such that $h_nq_n$ can be reduced modulo $\nu$ and $\ov{h_nq_n}\neq\ov0$  (that is, let $-i_n$ be the minimal valuation of the coefficients of $q_n$). Then $h_np_n$ can be reduced modulo $\nu$ too. 	
	
	Moreover, if $\ord\ov\alpha\leq0$ we must also have $\ov{h_np_n}\neq\ov0$. So, in this case, for every $n$ there exists a unique $h_n=\pi^{i_n}$ such that $$h_np_n,h_nq_n \text{ can be reduced modulo } \nu \text{ and } \ov{h_np_n},\ov{h_nq_n}\neq\ov0.$$ 
\end{lem}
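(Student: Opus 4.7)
The plan is to exploit the identity $p_n = \lfloor \alpha q_n \rfloor$, which follows from the fundamental approximation property \eqref{1eq:fondprop}: since $\ord(p_n - \alpha q_n) = \deg q_{n+1} \geq 1 > 0$ (for $n \geq 0$), the Laurent series $\alpha q_n - p_n$ has strictly positive order, so $p_n$ is exactly the polynomial part of $\alpha q_n$. Multiplying by the constant $h_n = \pi^{i_n}$ then gives
\[
h_n p_n \;=\; \lfloor \alpha \cdot (h_n q_n) \rfloor.
\]

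First I would observe that $h_n q_n \in \mathcal{O}_\nu[T]$ by the definition of $h_n$, and that $\mathcal{L}_\nu = (\mathcal{O}_\nu[[T^{-1}]])[T]$ is a ring containing $\alpha$ (by assumption) and $h_n q_n$. Hence $\alpha \cdot (h_n q_n) \in \mathcal{L}_\nu$, and taking its polynomial part preserves reducibility (the floor only keeps the non-negative-degree coefficients, each of which lies in $\mathcal{O}_\nu$). This already yields the first claim: $h_n p_n \in \mathcal{O}_\nu[T]$, i.e., $h_n p_n$ can be reduced modulo $\nu$.

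For the second claim, the key point is that reduction modulo $\nu$ is a ring homomorphism on $\mathcal{L}_\nu$ which commutes with taking the polynomial part (both operations act coefficient-wise). Therefore
\[
\overline{h_n p_n} \;=\; \overline{\lfloor \alpha \cdot h_n q_n \rfloor} \;=\; \bigl\lfloor \overline{\alpha} \cdot \overline{h_n q_n} \bigr\rfloor.
\]
By the choice of $h_n$, the reduction $\overline{h_n q_n} \in k_\nu[T]$ is a nonzero polynomial, say of degree $m \geq 0$. Under the hypothesis $\ord \overline{\alpha} \leq 0$, the product $\overline{\alpha} \cdot \overline{h_n q_n}$ is a nonzero Laurent series over $k_\nu$ of order $\ord \overline{\alpha} - m \leq 0$, which therefore has a nonzero polynomial part. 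This forces $\overline{h_n p_n} \neq \overline{0}$, establishing the second claim.

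The main potential pitfall is the need for exactness of reduction with respect to both multiplication and the polynomial-part operator; once one is careful that $\alpha \cdot h_n q_n$ genuinely belongs to $\mathcal{L}_\nu$ (so that its coefficients lie in $\mathcal{O}_\nu$ and the reduction map applies term-by-term), the argument is essentially formal. The hypothesis $\ord \overline{\alpha} \leq 0$ is clearly necessary and it is used exactly once, to guarantee that multiplying by a polynomial of non-negative degree cannot push the result entirely into the maximal ideal $\mathcal{M} = (T^{-1})$ of $\mathcal{O}$.
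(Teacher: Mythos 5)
Your proof is correct, and it takes a genuinely different route from the paper's. The paper proves both claims by contradiction: assuming $h_np_n$ not reducible (resp.\ $\ov{h_np_n}=\ov0$), it derives conflicting bounds on $\ord_{k_\nu}\left(\,\ov{\pi^{j}p_n-\alpha\,\pi^{j}q_n}\,\right)$ using the inequality $\ord_{k_\nu}(\ov\beta)\geq\ord_{\n K}(\beta)$ together with $\ord_{\n K}(p_n-\alpha q_n)>0$. You instead package that same order estimate into the identity $p_n=\floor{\alpha q_n}$ and then use the structural fact that the floor operator and the reduction homomorphism both act coefficient-wise on $\C L_\nu$, hence commute. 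This yields a direct argument: the first claim becomes the closure statement $\alpha\cdot h_nq_n\inn\C L_\nu\Rightarrow\floor{\alpha\cdot h_nq_n}\inn\C O_\nu[T]$, and the second reduces to computing $\ov{h_np_n}=\floor{\ov\alpha\cdot\ov{h_nq_n}}$ and noting that a product of two nonzero elements of $k_\nu((T^{-1}))$ of nonpositive order has nonzero polynomial part. Both proofs rest on the same underlying fact from \eqref{1eq:fondprop}; your reformulation avoids the two contradictions, handles the two claims by a single mechanism, and is arguably the more transparent of the two. One small point worth stating explicitly is that the identity $p_n=\floor{\alpha q_n}$ uses additivity of $\floor\cdot$ applied to $\alpha q_n=p_n+(\alpha q_n-p_n)$, where the second term lies in $\C M$ by the order estimate — you gesture at this but it is the crux that makes the rest formal.
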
	
\begin{proof}
	If, by contradiction, $h_np_n$ was not reducible, there would exist $j_n>i_n$ such that $\pi^{j_n}p_n$ can be reduced modulo $\nu$ and $\ov{\pi^{j_n}p_n}\neq\ov0$ (as before, $-j_n$ would be the minimum of the valuations of the coefficients of $p_n$). But in that cas we would have $0\!\leq\!\deg\left(\ov{\pi^{j_n}p_n}\right)\!\!=\!-\ord_{k_\nu}\left(\ov{\pi^{j_n}p_n-\alpha\,\pi^{j_n}q_n}\right)\!\leq\!-\ord_{\n K}(p_n-\alpha q_n)\!<\!0$, contradiction.
	
	Let $\ord\ov\alpha\leq0$ and let us assume by contradiction that $\ov{h_np_n}=\ov0$. Then we would have  $0\geq\ord_{k_\nu}\ov\alpha\geq\ord_{k_\nu}\ov{\alpha\, h_nq_n}=\ord_{k_\nu}\left(\,\ov{h_np_n-\alpha\, h_nq_n}\,\right)>0$, contradiction.
\end{proof}

\begin{lem}
	In the notations of the previous Lemma, let us set $$x_n=h_np_n,\ y_n=h_nq_n.$$  Then the rational functions $\ov{x_n}/\ov{y_n}\inn k_\nu(T)$ are convergents of $\ov\alpha$.
\end{lem}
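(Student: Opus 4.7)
The plan is to apply the best approximation criterion of Remark \ref{1rem:bestappr}, which asserts that for any polynomials $u,v\in\n K[T]$ (not necessarily coprime) with $v\neq0$, if $\ord(u-\beta v)>\deg v$ then $u/v$ is a convergent of $\beta$. My task is therefore to transfer this inequality from $\alpha$ to $\ov\alpha$.

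First I would recall the fundamental identity \eqref{1eq:fondprop}: $\ord_{\n K}(p_n-\alpha q_n)=\deg q_{n+1}>\deg q_n$. Since $h_n=\pi^{i_n}\in\n K$ is a non-zero constant, multiplying both sides by $h_n$ preserves orders and degrees, so
\[\ord_{\n K}(x_n-\alpha y_n)=\deg q_{n+1}>\deg q_n=\deg y_n.\]

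Next I would check that $x_n-\alpha y_n\in\C L_\nu$: by construction $x_n\in\C L_\nu$, and $y_n\in\C O_\nu[T]\subset\C L_\nu$, so $\alpha y_n\in\C L_\nu$ because $\C L_\nu$ is a ring. The key general fact is that for any $\beta\in\C L_\nu$ one has $\ord_{k_\nu}(\ov\beta)\geq\ord_{\n K}(\beta)$, simply because passing to the residue field may annihilate leading coefficients but never creates new non-zero ones. Applied to $\beta=x_n-\alpha y_n$ and using that reduction commutes with the ring operations, this yields
\[\ord_{k_\nu}\!\bigl(\ov{x_n}-\ov\alpha\,\ov{y_n}\bigr)\;\geq\;\ord_{\n K}(x_n-\alpha y_n)\;=\;\deg q_{n+1}\;>\;\deg q_n\;\geq\;\deg\ov{y_n},\]
the last inequality coming from the obvious fact that reduction of a polynomial cannot raise its degree.

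By the hypothesis $\ord\ov\alpha\leq0$ (needed to invoke Lemma \ref{5lem:ordpnqn} and know that $\ov{x_n},\ov{y_n}\neq\ov 0$), the pair $(\ov{x_n},\ov{y_n})$ is a genuine pair of non-zero polynomials satisfying $\ord_{k_\nu}(\ov{x_n}-\ov\alpha\,\ov{y_n})>\deg\ov{y_n}$. Remark \ref{1rem:bestappr} then gives that $\ov{x_n}/\ov{y_n}$ is a convergent of $\ov\alpha$, as desired. There is no serious obstacle here; the only subtlety is to be careful about the distinction between $\deg y_n=\deg q_n$ and $\deg\ov{y_n}$ (which may be strictly smaller when cancellation occurs in reduction), but this only strengthens the required strict inequality.
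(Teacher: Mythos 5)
Your proof is correct and takes essentially the same route as the paper's: establish $\ord_{k_\nu}(\ov{x_n}-\ov\alpha\,\ov{y_n})\geq\ord_{\n K}(x_n-\alpha y_n)>\deg q_n\geq\deg\ov{y_n}$ and invoke Remark \ref{1rem:bestappr}. One small remark: the hypothesis $\ord\ov\alpha\leq0$ that you invoke to ensure $\ov{x_n}\neq\ov0$ is not actually needed — Remark \ref{1rem:bestappr} applies as soon as $\ov{y_n}\neq\ov0$ (which is guaranteed by the definition of $h_n$) and the order inequality holds, even when $\ov{x_n}=\ov0$ (in which case $0$ is the zeroth convergent of a positive-order $\ov\alpha$).
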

\begin{proof}
	We have $\ord_{k_\nu}\left(\,\ov{x_n-\alpha y_n}\,\right)\geq\ord_{\n K}(p_n-\alpha q_n)>\deg q_n\geq\deg\ov{y_n}$ so, by Remark \ref{1rem:bestappr}, $\ov{x_n}/\ov{y_n}$ are convergents of $\ov\alpha$.
\end{proof}
	
Van der Poorten proved that the converse holds too, that is, all the convergents of $\ov\alpha$ can be found as reductions of convergents of $\alpha$. However, it is possible that, for some $n$, $\ov{x_n},\ov{y_n}$ are no longer relatively prime and there might exist $m\neq n$ such that $\ov{x_n}/\ov{y_n}=\ov{x_m}/\ov{y_m}$. We will see that if $\ov{x_{n-1}}/\ov{y_{n-1}}\neq\ov{x_n}/\ov{y_n}$, then $\deg \ov{y_n}=\deg q_n$ and $\ov{x_n},\ov{y_n}$ are relatively prime in $k_\nu[T]$, so they give, up to a multiplicative constant, a continuant of $\ov\alpha$.
	
The following results can be found, with different proofs and in slightly different contexts, in \cite{cantor1994continued},\cite{poorten1999reduction},\cite{poorten2005specialisation}. We will show here a slightly simplified version of the proof given by Van der Poorten in \cite{poorten1999reduction} (Theorem 2.1); another version of this proof can be found in Merkert's Ph.D. thesis (Theorem 7.2 in \cite{Olaf}).

\begin{theo}\label{5theo:red}
	Let $\alpha\inn\C L_\nu$ be a formal Laurent series that can be reduced modulo $\nu$ and let $(\cv n)_n$ be its convergents. for every $n$, let $h_n\inn\n K^*$ and let $x_n,y_n\inn\n K[T]$ be defined as in the previous lemma. Then $$\{\ov{x_n}/\ov{y_n}, n\geq0\}$$ is exactly the set of the convergents of $\ov\alpha$.
\end{theo}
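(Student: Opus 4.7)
The plan is as follows. The previous lemma has already shown that each $\ov{x_n}/\ov{y_n}$ is a convergent of $\ov\alpha$, so it remains to prove that every convergent of $\ov\alpha$ arises in this way.

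First I would establish the key determinant identity. Starting from \eqref{1eq:pnqnprime}, which gives $p_n q_{n-1} - p_{n-1} q_n = (-1)^{n-1}$, multiplying by $h_n h_{n-1}$ yields
$$x_n y_{n-1} - x_{n-1} y_n = (-1)^{n-1} h_n h_{n-1}.$$
Reducing modulo $\nu$, one obtains
$$\ov{x_n}\,\ov{y_{n-1}} - \ov{x_{n-1}}\,\ov{y_n} = (-1)^{n-1}\,\ov{h_n h_{n-1}},$$
which vanishes if and only if $\nu(h_n h_{n-1}) > 0$. Hence the ratios $\ov{x_n}/\ov{y_n}$ and $\ov{x_{n-1}}/\ov{y_{n-1}}$ coincide exactly when the right-hand side reduces to zero, and when they differ the identity forces $\gcd(\ov{x_n}, \ov{y_n}) = 1$ in $k_\nu[T]$, so $\ov{x_n}/\ov{y_n}$ is already in lowest terms.

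Next I would extract from $(n)_{n\geq 0}$ the subsequence $n_0 < n_1 < n_2 < \cdots$ of indices at which the reduction genuinely changes, setting $n_0 = 0$ and letting $n_{m+1}$ be the smallest $n > n_m$ with $\ov{x_n}/\ov{y_n} \neq \ov{x_{n_m}}/\ov{y_{n_m}}$. The claim is that $(\ov{x_{n_m}}/\ov{y_{n_m}})_m$ is precisely the sequence of convergents of $\ov\alpha$. Using the inequality $\ord_{k_\nu}(\ov{x_n} - \ov\alpha\,\ov{y_n}) \geq \ord_{\n K}(p_n - \alpha q_n) = \deg q_{n+1}$ from the previous lemma, together with the fact that $\ov{x_{n_m}}/\ov{y_{n_m}}$ is in lowest terms, the best approximation criterion (Lemma \ref{1lem:best2}) identifies $\ov{x_{n_m}}/\ov{y_{n_m}}$ as the convergent of $\ov\alpha$ whose denominator has degree $\deg\ov{y_{n_m}}$. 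Conversely, for any convergent $P/Q$ of $\ov\alpha$, choosing $m$ so that $\deg\ov{y_{n_m}} \leq \deg Q < \deg\ov{y_{n_{m+1}}}$ and invoking uniqueness of the convergent of given denominator degree forces $P/Q = \ov{x_{n_m}}/\ov{y_{n_m}}$.

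The main obstacle will be controlling the possible degree drops: a priori $\deg \ov{y_n}$ may be strictly less than $\deg q_n$ because the leading coefficient of $h_n q_n$ could still lie in $\C M_\nu$, and one must ensure that at each index $n_m$ the true degree of $\ov{y_{n_m}}$ is read off correctly, so that the sequence of denominator degrees $(\deg \ov{y_{n_m}})_m$ is strictly increasing and exhausts all convergent denominator degrees of $\ov\alpha$. The determinant identity together with the coprimality consequence above will rule out pathological degree collapses at the indices $n_m$; at the intermediate indices where the reduction is unchanged, the repetition is exactly what is needed so that no convergent of $\ov\alpha$ is skipped.
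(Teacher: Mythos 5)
Your determinant identity
\[
\ov{x_n}\,\ov{y_{n-1}} - \ov{x_{n-1}}\,\ov{y_n} = (-1)^{n-1}\,\ov{h_n h_{n-1}}
\]
and the coprimality deduction from it are correct, and this is in fact the strategy Van der Poorten follows (the paper notes this in the remark immediately after the theorem). However, your write-up leaves a genuine gap exactly where it matters. Writing $\ov{x_{n_m}}/\ov{y_{n_m}} = u_{j_m}/v_{j_m}$, the identity (together with Lemma~\ref{1lem:pnqnprime} and part~2 of the lemma characterizing $p_nq_m-p_mq_n\in\n K^*$) gives only $j_{m+1} = j_m \pm 1$; to conclude $j_{m+1}=j_m+1$ you still need to know that the index map is non-decreasing, and to conclude that \emph{every} convergent of $\ov\alpha$ is attained you need a surjectivity argument. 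Neither is supplied: the final paragraph asserts that ``no convergent of $\ov\alpha$ is skipped'' and that the degree sequence ``exhausts all convergent denominator degrees,'' but these are precisely the claims requiring proof, and the converse step (``choosing $m$ so that $\deg\ov{y_{n_m}}\leq\deg Q<\deg\ov{y_{n_{m+1}}}$\dots'') already presupposes them.

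The paper's proof supplies these missing pieces differently: it shows $\rho(0)=0$, then uses the chain of degree inequalities
\[
\deg v_{\rho(n+1)}\leq\deg \ov{y_{n+1}}\leq\deg q_{n+1}=\ord_{\n K}(p_n-\alpha q_n)\leq\ord_{k_\nu}\bigl(\,\ov{x_n-\alpha y_n}\,\bigr)\leq\deg v_{\rho(n)+1}
\]
to get $\rho(n+1)\leq\rho(n)+1$, so the image of $\rho$ is an initial interval $\{0,\dots,N_0\}$; a similar chain with $n_1<n_2$ gives $\rho$ non-decreasing; and the limit $\ov{x_n}/\ov{y_n}\to\ov\alpha$ forces $N_0=\ov N$. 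Your proposal quotes one of these inequalities but never assembles them into a proof of monotonicity or exhaustiveness. To repair your argument you would need to add both: a monotonicity lemma using the comparison $\deg\ov{y_{n_1}}\leq\deg q_{n_1}<\ord_{\n K}(p_{n_2}-\alpha q_{n_2})\leq\deg v_{\rho(n_2)+1}$ for $n_1<n_2$, and the convergence argument showing that no tail of convergents of $\ov\alpha$ is missed. Once those are added, the determinant-identity route is a valid alternative to the paper's chain-of-inequalities argument, and arguably more transparent about \emph{when} $\rho$ advances (namely, exactly when $\nu(h_nh_{n+1})=0$), but it is not shorter.
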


\begin{proof}
	Let $(\frac{u_m}{v_m})_m$ be the convergents of $\ov\alpha$. By the previous lemma, for every $n$ there exists a (unique) integer $\rho(n)$ such that $\ov{x_n}/\ov{y_n}=u_{\rho(n)}/v_{\rho(n)}$. Then $\rho$ is a well defined function, $\rho:[0,N]\to[0,\ov N\,]$, where $N,\ov N\inn\n N\cup\{\infty\}$ are, respectively, the lengths of the continued fractions of $\alpha, \ov\alpha$. 
	
	$\rho$ is not necessarily injective but it is always surjective. Indeed, $\rho(0)=0$, because $p_0=\floor\alpha$ must be reducible and $\ov{p_0}=\floor{\ov\alpha}=u_0$, while trivially $\ov q_0=\ov1=v_0$. Moreover, for every $n\geq0$,	
	\be\begin{split}\label{5eq:red}\deg v_{\rho(n+1)}\leq\deg \ov{y_{n+1}}\leq\deg q_{n+1}=\ord_{\n K}(p_n-\alpha q_n)\leq\\\leq\ord_{k_\nu}\left(\,\ov{x_n-\alpha y_n}\,\right)\leq\ord_{k_\nu}(u_{\rho(n)}-\ov\alpha\, v_{\rho(n)})=\deg v_{\rho(n)+1},\end{split}\ee thus $\rho(n+1)\leq\rho(n)+1$. Then there exists $N_0\leq\ov N$ such that $\Im(\rho)=\{0,\dots,N_0\}$.
	
	On the other hand, $\ov{x_n}/\ov{y_n}\to\ov\alpha$ for $n\to N$, so $\rho$ must be surjective, that is, $N_0=\ov N$ and all the convergents of $\ov\alpha$ can be found as reductions of convergents of $\alpha$.
\end{proof}

\begin{lem}
	If $\rho(n+1)=\rho(n)+1$, then $\ov{x_{n+1}},\ov{y_{n+1}}$ must be relatively prime and $$\deg v_{\rho(n+1)}=\deg q_{n+1}.$$
\end{lem}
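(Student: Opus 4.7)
The plan is to extract the lemma directly from the chain of inequalities \eqref{5eq:red} that already appeared in the proof of Theorem \ref{5theo:red}. First I would observe that in that chain,
$$\deg v_{\rho(n+1)}\leq\deg \ov{y_{n+1}}\leq\deg q_{n+1}\leq\deg v_{\rho(n)+1},$$
the hypothesis $\rho(n+1)=\rho(n)+1$ forces the two extreme terms to coincide, so every intermediate inequality must in fact be an equality. In particular this yields $\deg v_{\rho(n+1)}=\deg\ov{y_{n+1}}=\deg q_{n+1}$, which already gives the second claim of the lemma.

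Next, I would argue the coprimality statement from the degree equality. By Theorem \ref{5theo:red} we have $\ov{x_{n+1}}/\ov{y_{n+1}}=u_{\rho(n+1)}/v_{\rho(n+1)}$ as elements of $k_\nu(T)$. Writing $\ov{x_{n+1}}=f\,u_{\rho(n+1)}$ and $\ov{y_{n+1}}=f\,v_{\rho(n+1)}$ for some $f\inn k_\nu[T]$ (this uses that $u_{\rho(n+1)},v_{\rho(n+1)}$ are coprime continuants of $\ov\alpha$), the equality of degrees $\deg\ov{y_{n+1}}=\deg v_{\rho(n+1)}$ forces $\deg f=0$, i.e.\ $f\inn k_\nu^*$. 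Consequently $\ov{x_{n+1}}$ and $\ov{y_{n+1}}$ are coprime in $k_\nu[T]$, and up to the nonzero constant $f$ they are exactly the continuants of $\ov\alpha$ at index $\rho(n+1)$.

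The only genuine content here is the already-established chain \eqref{5eq:red}; the rest is a bookkeeping argument about what collapses in a string of inequalities that is forced to be an equality at the two ends. The main point to check carefully, and the only place where one might worry, is that $\ov{y_{n+1}}\neq\ov0$ so that writing $\ov{x_{n+1}}/\ov{y_{n+1}}=u_{\rho(n+1)}/v_{\rho(n+1)}$ as a genuine equality of reduced rational functions is legitimate; but this is guaranteed by the defining property of $h_{n+1}$ (Lemma \ref{5lem:ordpnqn}) together with the hypothesis $\ord\ov\alpha\leq0$ that is implicitly in force in this section. Thus no new estimate is required and the lemma follows from pure degree-chasing.
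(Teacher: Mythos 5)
Your proof is correct and follows the same route as the paper: the paper's own argument is literally the one-liner ``it follows immediately from the fact that all the inequalities in \eqref{5eq:red} must be equalities,'' and you have spelled out exactly that collapse, then extracted both claims by comparing $\deg \ov{y_{n+1}}$ with $\deg v_{\rho(n+1)}$ to pin down the proportionality constant $f$. One small inaccuracy in the last paragraph: to know $\ov{y_{n+1}}\neq\ov0$ you need only the defining choice of $h_{n+1}$ in Lemma \ref{5lem:ordpnqn} (it is chosen precisely so that $\ov{h_{n+1}q_{n+1}}\neq\ov0$); the extra hypothesis $\ord\ov\alpha\leq0$ is used there to guarantee $\ov{x_{n+1}}\neq\ov0$, which is not needed for this lemma.
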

\begin{proof}
	It follows immediately from the fact that all the inequalities in \eqref{5eq:red} must be equalities.
\end{proof}

\begin{lem}
	In the previous notations, we also have that $$\rho \text{ is non-decreasing.}$$
\end{lem}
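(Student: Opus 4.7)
The plan is to argue by contradiction. Suppose, for some $n$, that $\rho(n+1)<\rho(n)$; I will derive a contradiction by producing incompatible bounds on $\deg v_{\rho(n+1)+1}$: an upper bound coming from the inequality $\rho(n+1)+1\leq\rho(n)$, and a lower bound coming from the approximation quality of $\ov{x_{n+1}}/\ov{y_{n+1}}$ for $\ov\alpha$.

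For the upper bound, the key observation is that denominators of consecutive convergents of $\ov\alpha$ have strictly increasing degrees, since every partial quotient of $\ov\alpha$ after the first is non-constant. Hence $\rho(n+1)+1\leq\rho(n)$ forces $\deg v_{\rho(n+1)+1}\leq\deg v_{\rho(n)}$. Writing $d_n=\gcd(\ov{x_n},\ov{y_n})$ so that $\ov{y_n}=d_n\,v_{\rho(n)}$, I can estimate $\deg v_{\rho(n)}\leq\deg\ov{y_n}\leq\deg q_n$ and chain the inequalities to get $\deg v_{\rho(n+1)+1}\leq\deg q_n$.

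For the lower bound, I would exploit that $h_{n+1}\inn\n K^*$ is a scalar in $T$, so that the identity $x_{n+1}-\alpha y_{n+1}=h_{n+1}(p_{n+1}-\alpha q_{n+1})$ together with \eqref{1eq:fondprop} shows that $x_{n+1}-\alpha y_{n+1}$ has order $\deg q_{n+2}$ in $\n L$. Since passing to the reduction in $\n L_\nu$ can only increase the order, $\ord_{k_\nu}(\ov{x_{n+1}}-\ov\alpha\,\ov{y_{n+1}})\geq\deg q_{n+2}$. Factoring out $d_{n+1}$ from $\ov{x_{n+1}}-\ov\alpha\,\ov{y_{n+1}}=d_{n+1}(u_{\rho(n+1)}-\ov\alpha\,v_{\rho(n+1)})$ and applying \eqref{1eq:fondprop} for the convergents of $\ov\alpha$ then yields $\deg v_{\rho(n+1)+1}\geq\deg q_{n+2}+\deg d_{n+1}\geq\deg q_{n+2}$.

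Combining the two bounds gives $\deg q_{n+2}\leq\deg q_n$, which contradicts the strict monotonicity $\deg q_{k+1}>\deg q_k$ valid for all $k\geq0$. The edge case in which $\rho(n+1)=\ov N$ (where $\ov N$ is the length of the continued fraction of $\ov\alpha$) requires no argument, since then $\rho(n)\leq\ov N=\rho(n+1)$ automatically. The only delicate point I anticipate is the comparison between $\ord_{\n K}(x_{n+1}-\alpha y_{n+1})$ and $\ord_{k_\nu}(\ov{x_{n+1}}-\ov\alpha\,\ov{y_{n+1}})$, making sure the non-negative quantity $\deg d_{n+1}$ enters the lower bound with the right sign; once this is handled, the rest is straightforward degree bookkeeping.
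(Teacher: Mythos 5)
Your proof is correct and uses essentially the same ingredients as the paper's: the upper bound $\deg v_{\rho(n)}\leq\deg\ov{y_n}\leq\deg q_n$, the lower bound $\deg v_{\rho(m)+1}\geq\ord_{\n K}(p_m-\alpha q_m)=\deg q_{m+1}$ obtained by combining $h_m\in\n K^*$, the fact that reduction can only increase $\ord$, and \eqref{1eq:fondprop}, and then the strict monotonicity of $\deg q_k$. The only difference is cosmetic: you phrase it as a contradiction starting from $\rho(n+1)<\rho(n)$, whereas the paper states the same degree chain directly (for arbitrary $n_1<n_2$) and reads off $\rho(n_1)<\rho(n_2)+1$.
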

\begin{proof}
	For every $n_1<n_2$ we will have $\deg v_{\rho(n_1)}\leq\deg\ov{y_{n_1}}\leq\deg q_{n_1}<\deg q_{n_2}<$ $<\ord_{\n K}(p_{n_2}-\alpha q_{n_2})\leq\deg v_{\rho(n_2)+1}$ as in \eqref{5eq:red}. Thus $\rho(n_1)<\rho(n_2)+1$ and $\rho$ is non-decreasing.
\end{proof}

\begin{rem}
	By \eqref{1eq:pnqnprime} for every $n$ we have $x_ny_{n+1}-x_{n+1}y_n=\pm h_nh_{n+1}$. As the $x_n,y_n$ can be reduced modulo $\nu$, we must have $$\nu(h_nh_{n+1})\geq0.$$
	
	If $\nu(h_n)=-\nu(h_{n+1})$, then $\ov{x_ny_{n+1}-x_{n+1}y_n}\inn k_\nu^*$, so we must have $\rho(n+1)=\rho(n)\pm1$. Actually, as we have already shown that $\rho$ is non-decreasing, $\rho(n+1)=\rho(n)+1$. 
	
	Otherwise, $\ov{x_ny_{n+1}-x_{n+1}y_n}=\ov0$, that is, $\rho(n)=\rho(n+1)$.\par\medskip
	
	We could also have used this remark as the firs step to prove the previous Theorem; actually, this is the strategy followed by Van der Poorten in \cite{poorten2005specialisation}.
\end{rem}

\begin{ex}
	Let $\alpha\!=\!\frac{3T^2-5T}{T^3+5}\!=\!\!\left[0,\frac13T\!+\!\frac59,\frac{27}{25}T\!-\!\frac{468}{125},\frac{625}{4212}T\!+\!\frac{125}{468}\right]$, its continuants are\\ $p_0=0,\ q_0=1\\p_1=1,\ q_1=1/3\,T+5/9\\ p_2={27}/{25}\,T-{468}/{125},\ q_2=9/{25}\,T^2-{81}/{125}\,T-{27}/{25}\\ p_3={25}/{156}\,T^2-{125}/{468}\,T,\ q_3={25}/{468}\,T^3+{125}/{468}.$
	
	$\alpha$ can be reduced modulo every prime and all its partial quotients can be reduced modulo every prime but $2,3,5,13$; let us consider its reductions modulo those primes.
	$$l=2\ \begin{array}{|c|c|c|c|c|c|}
	\hline i_n&\ov{x_n}&\ov{y_n}&\rho(n)&u_{\rho(n)}&v_{\rho(n)}\\\hline0&\ov0&\ov1&0&\ov0&\ov1\\\hline0&\ov1&T+\ov1&1&\ov1&T+\ov1\\\hline0& T&T^2+T+\ov1&2&T&T^2+T+\ov1\\\hline 2&T^2+T&T^3+\ov1&2&T&T^2+T+\ov1\\\hline
	\end{array}$$
	Indeed, modulo 2, $\ov\alpha=\frac{T}{T^2+T+\ov1}=\left[\,\ov0,T+\ov1,T\right]$.
	$$l=3\ \begin{array}{|c|c|c|c|c|c|}
	\hline i_n&\ov{x_n}&\ov{y_n}&\rho(n)&u_{\rho(n)}&v_{\rho(n)}\\\hline0&\ov0&\ov1&0&\ov0&\ov1\\\hline2&\ov0&-\ov1&0&\ov0&\ov1\\\hline-2&\ov1&T^2&1&\ov1&T^2\\\hline 2&T&T^3-\ov1&2&-T&-T^3+\ov1\\\hline
	\end{array}$$
	Indeed, modulo 3, $\ov\alpha=\frac{T}{T^3-\ov1}=\left[\,\ov0,T^2,-T\right]$
	$$l=5\ \begin{array}{|c|c|c|c|c|c|}
	\hline i_n&\ov{x_n}&\ov{y_n}&\rho(n)&u_{\rho(n)}&v_{\rho(n)}\\\hline0&\ov0&\ov1&0&\ov0&\ov1\\\hline0&\ov1&\ov2T&1&\ov1&\ov2T\\\hline3&\ov2&-T&1&\ov1&\ov2T\\\hline -2&T^2&\ov2T^3&1&\ov1&\ov2T\\\hline
	\end{array}$$
	Indeed, modulo 5, $\ov\alpha=\frac{\ov3}{T}=\left[\,\ov0,\ov2T\right]$.
	$$l=13\ \begin{array}{|c|c|c|c|c|c|}
	\hline i_n&\ov{x_n}&\ov{y_n}&\rho(n)&u_{\rho(n)}&v_{\rho(n)}\\\hline0&\ov0&\ov1&0&\ov0&\ov1\\\hline0&\ov1&\ov9T+\ov2&1&\ov1&\ov9T+\ov2\\\hline0&-T&\ov4T^2-\ov2T+\ov1&2&-T&\ov4T^2-\ov2T+\ov1\\\hline 1&T^2+\ov7T&\ov9T^3+\ov6&2&-T&\ov4T^2-\ov2T+\ov1\\\hline
	\end{array}$$
	Indeed, modulo 13, $\ov\alpha=\frac{T}{\ov9T^2+\ov2T-\ov1}=\left[\,\ov0,\ov9T+\ov2,-T\right]$
\end{ex}

\begin{lem}\label{5lem:reddegrees}
	If $\alpha\inn\C L_\nu$, then reducing modulo $\nu$ the degree of the partial quotients (apart from the first one) can only grow, so in particular $$K(\ov\alpha)\geq K(\alpha).$$ 
\end{lem}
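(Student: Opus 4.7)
The strategy is to invoke Theorem \ref{5theo:red}, which identifies each convergent $u_m/v_m$ of $\ov\alpha$ as the reduction $\ov{x_n}/\ov{y_n}$ of some convergent of $\alpha$ via a non-decreasing surjection $\rho$ satisfying $\rho(n+1)-\rho(n)\in\{0,1\}$, and then to compare the degree differences $\deg a_{n+1}=\deg q_{n+1}-\deg q_n$ and $\deg \ov b_{m+1}=\deg v_{m+1}-\deg v_m$ by partitioning the indices into the maximal ``runs'' $\rho^{-1}(m)=[n_m^-,n_m^+]$ on which $\rho$ is constant.

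First I would recall, from the lemma immediately preceding Theorem \ref{5theo:red}, that whenever $\rho(n+1)=\rho(n)+1$ the reduced pair $\ov{x_{n+1}},\ov{y_{n+1}}$ is coprime in $k_\nu[T]$ and $\deg v_{\rho(n+1)}=\deg q_{n+1}$. Applying this to the jumps that flank the $m$-th run yields the two identities $\deg v_m=\deg q_{n_m^-}$ (for $m\ge1$) and $\deg v_{m+1}=\deg q_{n_m^++1}$ whenever $m+1\le\ov N$. A telescoping sum then gives
$$
\deg \ov b_{m+1}=\deg v_{m+1}-\deg v_m=\sum_{k=n_m^-}^{n_m^+}(\deg q_{k+1}-\deg q_k)=\sum_{k=n_m^-}^{n_m^+}\deg a_{k+1},
$$
and since $\deg a_{k+1}\ge1$ for every $k\ge0$ (regular expansions have non-constant partial quotients beyond the first), each individual summand is dominated by the total. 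Hence $\deg \ov b_{m+1}\ge\deg a_{k+1}$ for every $k\in[n_m^-,n_m^+]$, and taking the supremum over all finite runs yields $K(\ov\alpha)\ge K(\alpha)$.

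The one case that this telescoping does not immediately cover is that of a terminal infinite run $\rho^{-1}(\ov N)$, which occurs precisely when $\ov\alpha\in k_\nu(T)$ while $\alpha\notin\n K(T)$; this is where I expect the main technical obstacle to lie. The partial quotients $a_{k+1}$ with $k\ge n_{\ov N}^-$ are not indexed by any $\ov b_{m+1}$, and one must argue separately that they still satisfy $\deg a_{k+1}\le K(\ov\alpha)$. The natural route is to exploit how the normalizing exponents $i_n=\nu(h_n)$ must grow along the run, a growth forced by the identity $x_ny_{n+1}-x_{n+1}y_n=\pm h_nh_{n+1}$ together with the vanishing $\ov{x_n}\ov{y_{n+1}}-\ov{x_{n+1}}\ov{y_n}=\ov 0$ throughout the run, and to convert this valuation growth back into a bound on $\deg a_{k+1}$ via \eqref{1eq:fondprop}. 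Making this bookkeeping precise is the delicate step, after which the assertion that the partial-quotient degrees can only grow under reduction, and the inequality $K(\ov\alpha)\ge K(\alpha)$, follow uniformly.
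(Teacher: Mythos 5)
Your argument follows exactly the same route as the paper: partition the indices by the non-decreasing map $\rho$ of Theorem \ref{5theo:red}, observe that a jump $\rho(n+1)=\rho(n)+1$ gives $\deg v_{\rho(n+1)}=\deg q_{n+1}$, and telescope across each run to obtain $\deg b_{m+1}=\sum_{k=n_m}^{N_m}\deg a_{k+1}\geq\deg a_{k+1}$. For every bounded run this is precisely the paper's proof and it is correct.

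The terminal-run case you flag is a genuine gap, and it is in the paper's own proof as well (the paper silently assumes $\rho(N_m+1)$ exists for every $m$). However, the delicate bookkeeping you propose cannot close it, because the stated inequality is actually \emph{false} when the last level set of $\rho$ is not a bounded interval with a successor. Take for instance
$$\alpha=T^{-1}+\pi T^{-2}+\pi^2T^{-3}=\frac{T^2+\pi T+\pi^2}{T^3}\in\C L_\nu.$$
Here $\ov\alpha=T^{-1}=[\,0,T\,]$, so $K(\ov\alpha)=1$, while
$$\alpha=\left[0,\ T-\pi,\ \pi^{-3}T^2+\pi^{-2}T+\pi^{-1}\right],\qquad K(\alpha)=2.$$
One checks that $\rho(0)=0$, $\rho(1)=\rho(2)=1$ (with $h_2=\pi^3$, so $\ov{x_2}/\ov{y_2}=T^2/T^3$ is not reduced), and the degree-$2$ partial quotient $a_2$ is simply not matched by any $b_{m+1}$. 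Adding a tail $\pi\gamma$ with $\gamma\in\C O_\nu[[T^{-1}]]$ of high enough order keeps the same first convergents and makes $\alpha$ irrational, so the phenomenon is not an artefact of $\alpha$ being rational; it occurs exactly when $\ov\alpha$ is rational while $\alpha$ has strictly more convergents, which is the case you singled out.

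The correct reading of the lemma is therefore that the inequality holds whenever $\ov\alpha$ is irrational (equivalently, whenever every fibre $\rho^{-1}(m)$ is a bounded interval with $N_m<N$), and that is the only situation in which the paper actually invokes it: in Corollary \ref{5cor:reddegrees} and in the proofs of Proposition \ref{5prop:MMchq1} and Theorem \ref{5theo:MMbarQ} the primes are chosen so that $\ov{\sqrt D}$ remains an irrational Laurent series. Your telescoping argument, restricted to that hypothesis, is a complete proof; the extra case you anticipated is not a gap to be filled but a hypothesis to be added.
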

\begin{proof}
	As $\rho$ is non-decreasing, for every $m$ there must exist integers $n_m,N_m$ such that $\{n\inn\n N,\ \rho(n)=m\}=[n_m,N_M]$, where $\rho(n_m-1)=m-1,\ \rho(N_m+1)=m+1$. Then $\deg v_{m}=\deg q_{n_m}$ and $\deg v_{m+1}=\deg q_{N_m+1}$. Thus, setting $\alpha=[a_0,a_1,\dots]$ and $\ov\alpha=[b_0,b_1,\dots]$, we will have $$\deg b_{m+1}=\deg a_{N_m+1}+\cdots+\deg a_{n_m+1}.$$ 
\end{proof}

\begin{cor}\label{5cor:reddegrees}
	If $\alpha\inn\C L_\nu$ and $\ov\alpha$ is badly approximable, respectively normal, in $\n L_\nu$, then $\alpha+\pi\beta$ is badly approximable, respectively normal, in $\n L$ for every $\beta\inn\C L_\nu$.
\end{cor}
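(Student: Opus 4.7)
The plan is to deduce this directly from Lemma \ref{5lem:reddegrees} after checking that the reduction of $\alpha+\pi\beta$ coincides with $\ov\alpha$.

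First I would observe that $\alpha+\pi\beta\inn\C L_\nu$, since $\pi\inn\C O_\nu$, $\beta\inn\C L_\nu$ and $\C L_\nu$ is a ring. Moreover, because $\pi\inn\C M_\nu$, the reduction of $\pi\beta$ modulo $\nu$ is the zero Laurent series, hence
\[
\ov{\alpha+\pi\beta}=\ov\alpha+\ov{\pi\beta}=\ov\alpha\inn\n L_\nu.
\]
Applying Lemma \ref{5lem:reddegrees} to $\alpha+\pi\beta\inn\C L_\nu$ we obtain
\[
K(\alpha+\pi\beta)\leq K\bigl(\ov{\alpha+\pi\beta}\bigr)=K(\ov\alpha).
\]

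If $\ov\alpha$ is badly approximable, then $K(\ov\alpha)<\infty$, so $K(\alpha+\pi\beta)<\infty$ and $\alpha+\pi\beta$ is badly approximable in $\n L$. If $\ov\alpha$ is normal, then $K(\ov\alpha)=1$, hence $K(\alpha+\pi\beta)\leq 1$. To upgrade this inequality to an equality (i.e.\ to exclude that the continued fraction of $\alpha+\pi\beta$ is finite), I would note that by Lemma \ref{1lem:Laurseriesrat} a rational function has coefficients satisfying a linear recurrence, and such a recurrence remains valid after reducing its coefficients modulo $\nu$; hence the reduction of a rational function reducible modulo $\nu$ is again rational. Since $\ov\alpha$ is normal (in particular irrational), $\alpha+\pi\beta$ cannot be a rational function, so its continued fraction is infinite and all the partial quotients from $a_1$ onward have degree at least $1$. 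Combined with $K(\alpha+\pi\beta)\leq 1$ this yields $K(\alpha+\pi\beta)=1$, that is, $\alpha+\pi\beta$ is normal.

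There is essentially no obstacle: the statement is a short corollary whose only delicate point is ruling out the degenerate possibility that $\alpha+\pi\beta$ becomes rational in the normal case, and this is handled by the rationality-preserving property of reduction recalled above.
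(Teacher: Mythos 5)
Your approach is exactly the paper's intended one: the Corollary follows immediately from Lemma \ref{5lem:reddegrees} once one observes, as you do, that $\alpha+\pi\beta\inn\C L_\nu$ and $\ov{\alpha+\pi\beta}=\ov\alpha$, giving $K(\alpha+\pi\beta)\leq K(\ov\alpha)$. The badly approximable case is then immediate and correctly handled.

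For the normal case there is a genuine but small gap: the parenthetical claim that ``$\ov\alpha$ is normal (in particular irrational)'' is false as stated. The paper's definition of normality is $K(\alpha)=1$, and it is explicitly applied to rational functions $g/f$ (``by writing \emph{$g/f$ is normal} we will mean that $f,g$ are relatively prime and $K(g/f)=1$''); so a normal Laurent series need not be irrational, and your deduction that $\alpha+\pi\beta$ is irrational does not follow. Fortunately you are aiming at a stronger conclusion than you need. The only way $K(\alpha+\pi\beta)\leq 1$ can fail to give $K(\alpha+\pi\beta)=1$ is if the regular continued fraction of $\alpha+\pi\beta$ has no partial quotient at all after $a_0$, i.e.\ if $\alpha+\pi\beta$ is a polynomial. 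But a polynomial in $\C L_\nu$ reduces to a polynomial in $k_\nu[T]$, and $\ov{\alpha+\pi\beta}=\ov\alpha$ is not a polynomial because $K(\ov\alpha)=1\neq0$. Hence $\alpha+\pi\beta$ is not a polynomial, its continued fraction has a partial quotient $a_1$ of degree at least $1$, and $K(\alpha+\pi\beta)\geq 1$, completing the argument without any claim of irrationality. Your observation that reduction sends reducible rational functions to rational functions is correct, but it is not the fact you need here.
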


\begin{ex}
	Let $\ds\alpha=\frac{\sqrt{T^6-1}}{T^2-1}=\Big[T,\ov{T,-T,-2T,-T,T,2T}\,\Big]\inn\n Q[T]$. Certainly $\alpha$ and all its partial quotients can be reduced modulo every prime $l$ different from $2$ and we have $\ov\alpha^l=\left[T,\ov{T,-T,-\ov2^l T,-T,T,\ov2^l T}\right]$.
	
	Then, for $l\neq2$, any Laurent series of the form $\frac{\sqrt{T^6-1+l A}}{T^2-1+l B}$ is normal, provided that $A,B$ are polynomials reducible modulo $l$ and with $\deg A<6,\ \deg B<2$.    
\end{ex}

\begin{ex}
	Let $\alpha=\frac{T^3+T^2+T+2+\sqrt{T^6-T^5+T^4-T^3-T^2-1}}{T^2+T+1}\inn\n Q((T^{-1}))$. Its continued fraction is $\alpha=\left[2T-\frac32,\frac87T-\frac{148}{49},\frac{686}{8983}T+\frac{23477517}{80694289},\dots\right]$, which does not look periodic but a priori it might have some partial quotients with degree larger than 1. 
	
	Actually, $\alpha$ can be reduced modulo $3$ and the continued fraction expansion of its reduction modulo 3 is $\ov\alpha=\left[\ov{-T,-T-\ov1,-T}\right]$, so $\alpha$ must already be normal.
\end{ex}

More generally, we will have
\begin{cor}
	McMullen's (strong) Conjecture over $\n Q$ would be a consequence of McMullen's (strong) Conjecture over finite fields and, by Corollary \ref{5cor:ZarimpMM}, it would also follow from Zaremba's (strong) Conjecture over finite fields.
\end{cor}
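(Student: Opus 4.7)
The plan is to apply the reduction theory developed in Section~\ref{5sec:5.3} to transfer the conjecture from finite fields to $\n Q$. Given $D\inn\S Q$ of degree $2d$, Lemma~\ref{5lem:conditionsforgr}, together with an elementary degree check, ensures that for all but finitely many primes $l$ the reduction $\bar D$ lies in $\S{F_l}$ (still of degree $2d$, with square leading coefficient, and not a square in $\n F_l[T]$). Fix such an $l$. Assuming McMullen's (strong) Conjecture over $\n F_l$, we obtain infinitely many pairwise non-equivalent elements $\bar\beta_i=(\bar A_i+\bar B_i\sqrt{\bar D})/\bar C_i\inn\n F_l(T,\sqrt{\bar D})$ with $K(\bar\beta_i)\leq m_{\n F_l}$. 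Lifting the triples $(\bar A_i,\bar B_i,\bar C_i)$ coefficient-by-coefficient to integer-coefficient triples $(A_i,B_i,C_i)$, chosen so that the leading coefficient of $C_i$ is not divisible by $l$, yields elements $\beta_i=(A_i+B_i\sqrt D)/C_i\inn\n Q(T,\sqrt D)$ lying in $\C L_\nu$ and reducing modulo $l$ to $\bar\beta_i$. Lemma~\ref{5lem:reddegrees} then gives $K(\beta_i)\leq K(\bar\beta_i)\leq m_{\n F_l}$, which is the desired bound on the partial quotients.

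The main obstacle is extracting from $\{\beta_i\}$ an infinite subfamily of pairwise $\GL_2(\n Q[T])$-non-equivalent elements. I would argue the contrapositive: if $\beta_i\sim\beta_j$ in $\n Q(T,\sqrt D)$, then $\bar\beta_i\sim\bar\beta_j$ in $\n F_l(T,\sqrt{\bar D})$. By the polynomial version of Serret's Theorem~\ref{1theo:serret}, $\beta_i\sim\beta_j$ forces the existence of indices $n,m$ and a constant $k\inn\n Q^*$ such that $(\beta_i)_n=k(\beta_j)_m$. For $n,m$ sufficiently large, the complete quotients themselves lie in $\C L_\nu$, and after rescaling $k$ by an appropriate power of $l$ so that $\nu(k)=0$, one may reduce the identity modulo $l$; re-indexing via the non-decreasing surjection $\rho$ produced by Theorem~\ref{5theo:red} on the convergents then yields $(\bar\beta_i)_{n'}=\bar k(\bar\beta_j)_{m'}$, so $\bar\beta_i\sim\bar\beta_j$, contradicting the initial choice. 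The delicate technical point is verifying that the complete quotients reduce compatibly under $\rho$, and in particular that the scaling of $k$ and the correspondence on partial quotients described in the proof of Lemma~\ref{5lem:reddegrees} propagate past any ``bad'' indices (those at which $\rho$ fails to be injective). This is where the full strength of Theorem~\ref{5theo:red} is used.

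The second assertion of the corollary is then immediate: by Corollary~\ref{5cor:ZarimpMM}, Zaremba's (strong) Conjecture over $\n F_l$ implies McMullen's (strong) Conjecture over $\n F_l$, which, by the argument just outlined, implies McMullen's (strong) Conjecture over $\n Q$.
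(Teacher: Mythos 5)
The corollary is stated in the paper without an explicit proof; it is meant to follow directly from the reduction theory just developed (Lemma~\ref{5lem:conditionsforgr}, Theorem~\ref{5theo:red}, Lemma~\ref{5lem:reddegrees}, Corollary~\ref{5cor:reddegrees}). Your outline --- choose a good prime $l$, invoke the conjecture over $\n F_l$, lift, and apply $K(\beta_i)\leq K(\ov{\beta_i})$ --- is exactly this intended route, and your contrapositive argument for pairwise non-equivalence is in fact more careful than the paper's own treatment of the analogous point (in the proof of Proposition~\ref{5prop:MMchq1} the paper dismisses it with ``Obviously'').

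Two remarks on the details of the non-equivalence step. First, the phrase ``after rescaling $k$ by an appropriate power of $l$'' does not make sense as written: the constant $k$ is determined by the equation $(\beta_i)_n = k(\beta_j)_m$ and cannot be altered. But the worry it is meant to address is illusory. If $(\beta_i)_n$ and $(\beta_j)_m$ both lie in $\C L_\nu$ with nonzero reduction (they have negative order, so their leading coefficients survive), then $(\beta_i)_n = k(\beta_j)_m$ already forces $\nu(k)=0$; no rescaling is needed or possible. Second, the ``delicate technical point'' you flag actually resolves cleanly in the strong case: when $\beta_i$ and $\ov{\beta_i}$ are both normal, the non-decreasing surjection $\rho$ of Theorem~\ref{5theo:red} must be strictly increasing (by the degree formula in the proof of Lemma~\ref{5lem:reddegrees}), hence the identity; then $i_0=0$ together with $i_n+i_{n+1}=0$ forces $i_n=0$ for all $n$, so every continuant $p_n,q_n$ of $\beta_i$ is reducible with nonzero reduction, and the complete quotients $(\beta_i)_n$ reduce term-by-term to $(\ov{\beta_i})_n$. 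For the non-strong version $\rho$ may collapse indices and one does need the re-indexing you gesture at, restricting to sufficiently large complete quotients; and one should also observe that the bound $m_{\n F_l}$ produced over $\n F_l$ could depend on $l$ (and hence on $D$), so the transfer yields a uniform constant over $\n Q$ only if the $m_{\n F_l}$ are uniformly bounded --- a caveat that disappears in the strong version, where $m_{\n F_l}=1$ throughout.
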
 

Using a different method, we will see in Theorem \ref{66theo:MMQ} that McMullen's strong Conjecture over $\n Q$ actually holds.

Analogously, and thanks to the fact that McMullen's and Zaremba's Conjectures hold over the algebraic closure of finite fields, we can use reduction methods to prove McMullen's Conjecture over $\ch Q$.

\subsection{\texorpdfstring{Proof of McMullen's strong Conjecture for $\n K=\ch Q$}{McMullen's strong Conjecture over the algebraic closure of Q}}\label{5sec:5.4} 

Let $D\inn\C S_{\ch Q}$; we will denote by $\n K_D$ the smallest finite extension of $\n Q$ over which $\sqrt D$ is well defined as a formal Laurent series, that is, $\n K_D$ will be the finite extension of $\n Q$ generated by the coefficients of $D$ and by the square root of its leading coefficient.

The continued fraction expansion of $\sqrt D$ does not depend on the chosen base field, as long as the formal Laurent series under consideration is well defined, so in fact its continuants and its partial quotients are polynomials defined over $\n K_D$.

\begin{lem}
	Let $D\inn\C S_{\ch Q}$ and let $\n K=\n K_D$. Then, there exist infinitely many primes of $\n K$ with respect to whom $\sqrt D$ is reducible and its reduction is an irrational Laurent series.
\end{lem}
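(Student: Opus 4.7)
The plan is to produce a cofinite set $S$ of primes $\mathfrak p$ of $\n K = \n K_D$ for which $\sqrt D$ is reducible modulo $\mathfrak p$ \emph{and} its reduction is irrational; this suffices because $\n K$, being a number field, has infinitely many prime ideals. The first condition is easy to arrange: by Lemma \ref{5lem:conditionsforgr} together with the remark that immediately follows --- which extends the proof to any finite extension of $\n Q$ --- the formal series $\sqrt D$ lies in $\C L_\nu$ for every prime $\mathfrak p$ of $\n K$ of residue characteristic different from $2$, dividing no denominator of a coefficient of $D$, and making the chosen square root $c$ of the leading coefficient of $D$ a unit of $\C O_\nu$ (so that the recursion \eqref{1eq:Laurentrad} makes sense modulo $\mathfrak p$). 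All but finitely many primes satisfy this; call the bad set $S_1$.

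For the second and main condition, I would factor $D = E^2 \widetilde D$ with $E, \widetilde D \inn \n K[T]$ and $\widetilde D$ squarefree, then show that $\widetilde D$ must be \emph{non-constant} --- the only place where the full hypothesis $D \inn \S K$ gets used. Indeed, the leading coefficient of $\widetilde D$ equals that of $D$ divided by that of $E^2$, hence is a square in $\n K$; if $\widetilde D$ were constant, it would then be a square in $\n K$ and $D$ itself a square in $\n K[T]$, contradicting $D \inn \S K$. Enlarge $S_1$ to a finite $S_2$ by also including the primes dividing the denominators of coefficients of $\widetilde D$ or $E$, those where the leading coefficient of $\widetilde D$ vanishes, and the (finitely many) divisors of the non-zero discriminant of $\widetilde D$. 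For every $\mathfrak p \notin S_2$, the reduction $\ov{\widetilde D}$ keeps its positive degree and remains squarefree in $k_\mathfrak p[T]$, so $\ov D = \ov E^2 \,\ov{\widetilde D}$ fails to be a square in $k_\mathfrak p[T]$.

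To conclude, observe that for each such $\mathfrak p$ the reduced Laurent series $\ov{\sqrt D} \inn \n L_\mathfrak p$ satisfies $(\ov{\sqrt D})^2 = \ov D$; a rational function whose square is a polynomial is itself a polynomial (by unique factorization in $k_\mathfrak p[T]$), and such a polynomial would then exhibit $\ov D$ as a square in $k_\mathfrak p[T]$, which has just been ruled out. Hence $\ov{\sqrt D}$ is irrational over $k_\mathfrak p$, and the infinite complement of $S_2$ supplies the desired family of primes. No real obstacle arises here: everything hinges on the non-triviality of the squarefree part of $D$, which follows immediately from $D \inn \S K$.
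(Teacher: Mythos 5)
Your proof is correct, and it takes a genuinely different route from the paper's. The paper writes $D = a_0^2 + \delta$ with $a_0 = \lfloor\sqrt D\rfloor$, observes that $\delta$ is a \emph{nonzero} polynomial (since $D$ is not a square) of degree strictly less than $d$, and then notes that for any prime where the leading coefficient of $D$ stays a unit and $\ov\delta \neq 0$, a degree count forces $\ov D$ to be a non-square: any square root of $\ov D$ in $k_\nu[T]$ would have to agree with $\pm\ov{a_0}$ up to something of degree $<d$, and multiplying $(f-\ov{a_0})(f+\ov{a_0})=\ov\delta$ out contradicts $\deg\ov\delta<d$ unless $\ov\delta=0$. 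This is completely elementary — it only requires avoiding the finitely many primes appearing in the denominators of $D$, $a_0$, and the coefficients of $\delta$. You instead extract the squarefree part $D=E^2\widetilde D$, observe $\widetilde D$ is nonconstant (again using $D\notin\n K[T]^2$), and preserve its squarefreeness in the reduction by avoiding divisors of $\mathrm{disc}(\widetilde D)$; then $\ov E^2\ov{\widetilde D}$ is manifestly a non-square. Both arguments work, and the excluded sets of primes differ in detail (your set is tied to the good-reduction locus of the hyperelliptic curve $U^2=\widetilde D(T)$, which fits naturally with the algebro-geometric viewpoint used elsewhere in the chapter, while the paper's condition $\ov\delta\neq0$ is a cheaper, purely coefficient-wise constraint). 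One small point worth flagging: your argument implicitly uses that $\ov E\neq 0$; this does follow from your conditions (if the leading coefficients of $D$ and of $\widetilde D$ are both units then so is that of $E$), but it deserves a word.
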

\begin{proof}
	Let $D=a_0^2+\delta$ with $a_0=\floor{\sqrt D}$. Then $D,a_0,\delta$ are reducible modulo infinitely many primes and the reduction of $D$ is a square if and only if the reduction of $\delta$ is zero, which can happen only for finitely many primes.
\end{proof}
	
\begin{prop}\label{5prop:MMchq1}
	McMullen's strong Conjecture\eqref{5eq:MMstrong} holds over $\ch Q$.
\end{prop}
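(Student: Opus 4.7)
The plan is to transfer McMullen's strong Conjecture from the algebraic closure $\overline{k_\nu}$ of a finite field, where it is already established by Corollary \ref{5cor:MMFpbar}, down to $\ch Q$ via the reduction-modulo-a-prime theory of the preceding subsection. Given $D\inn\C S_{\ch Q}$, I would first choose, via the preceding Lemma, a prime $\nu$ of $\n K_D$ such that $\sqrt D\inn\C L_\nu$ and $\ov{\sqrt D}$ is an irrational Laurent series over the finite residue field $k_\nu$. Fixing an extension $\tilde\nu$ of $\nu$ to $\ch Q$, whose residue field is the infinite algebraic extension $\overline{k_\nu}$ of $k_\nu$, Corollary \ref{5cor:MMFpbar} then provides an infinite family $(\ov\alpha_n)_{n\geq1}$ of pairwise non-equivalent quadratic irrationalities in $\overline{k_\nu}(T,\ov{\sqrt D})$ with $K(\ov\alpha_n)=1$.

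Next, I would lift each $\ov\alpha_n$ to $\ch Q$: writing $\ov\alpha_n=(\ov A_n+\ov B_n\ov{\sqrt D})/\ov C_n$ with $\ov A_n,\ov B_n,\ov C_n\inn\overline{k_\nu}[T]$ and $\ov B_n\neq 0$, pick arbitrary polynomial lifts $A_n,B_n,C_n$ whose coefficients lie in the valuation ring $\C O_{\tilde\nu}\sub\ch Q$, and set $\alpha_n=(A_n+B_n\sqrt D)/C_n\inn\ch Q(T,\sqrt D)$. By construction $\alpha_n\inn\C L_{\tilde\nu}$ and reduces to $\ov\alpha_n$, so Corollary \ref{5cor:reddegrees} forces $K(\alpha_n)\leq K(\ov\alpha_n)=1$, hence $K(\alpha_n)=1$ for every $n$.

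It remains to check that the $\alpha_n$ are pairwise non-equivalent over $\ch Q$. Suppose for contradiction that $\alpha_n=M\alpha_m$ for some $M\inn\GL_2(\ch Q[T])$ and some $n\neq m$. Scale $M$ by a constant $\lambda\inn\ch Q^*$ so that the entries of $\lambda M$ have minimum $\tilde\nu$-valuation equal to $0$: then the reduction $\ov{\lambda M}\inn M_2(\overline{k_\nu}[T])$ is nonzero. Since $\det(\lambda M)=\lambda^2\det M\inn\ch Q^*$, the ultrametric inequality yields $\tilde\nu(\det(\lambda M))\geq 0$, and two cases arise. If the valuation equals $0$, then $\ov{\lambda M}\inn\GL_2(\overline{k_\nu}[T])$ and the reduced identity $\ov{\alpha_n}=\ov{\lambda M}\,\ov{\alpha_m}$ contradicts the non-equivalence of $\ov\alpha_n$ and $\ov\alpha_m$; if instead it is strictly positive, then $\ov{\lambda M}$ has rank exactly one, so its M\"obius action sends $\ov\alpha_m$ to a constant in $\overline{k_\nu}$, contradicting the irrationality of $\ov\alpha_n$. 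The main obstacle is precisely this last dichotomy: the valuation-theoretic bookkeeping needed to pass a $\GL_2$-equivalence from characteristic zero down to the residue field while ensuring that the reduced matrix is either fully invertible or else yields a contradiction through rank-one degeneracy; everything else is a direct application of results already collected in the section.
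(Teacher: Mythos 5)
Your proof is correct and takes a genuinely different route from the paper's. The paper effectively re-derives a special case of Corollary \ref{5cor:MMFpbar} inline: over a (possibly enlarged) finite residue field it chooses a non-trivial Pell solution $(x,y)$ for $\ov{\sqrt D}$, a polynomial $z$ with $z/x$ normal via Friesen's Theorem \ref{4theo:Friesen}, and then a normal element of $k(T,\ov{\sqrt D})$ via Mercat's Theorem \ref{5theo:Mercat}; it lifts that \emph{single} normal element to $\ch Q$ in infinitely many ways, asserting pairwise non-equivalence of the lifts without detail. You instead extend the prime to all of $\ch Q$, so the residue field is the algebraically closed $\overline{k_\nu}$, and invoke Corollary \ref{5cor:MMFpbar} directly; this bypasses both the inline re-derivation and the ``enlarge the number field until $\#k\geq 2\deg x$'' step, at the price of having to check that non-equivalence survives the lift. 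Your valuation-scaling argument for that is sound in outline and is actually a useful addition (the paper's ``obviously'' hides exactly this point), but two details want tightening. First, ``arbitrary polynomial lifts'' is too permissive: you must take $C_n$ of the same degree as $\ov C_n$ (i.e.\ with unit leading coefficient) so that $C_n^{-1}\inn\C L_{\tilde\nu}$ and hence $\alpha_n\inn\C L_{\tilde\nu}$ genuinely reduces to $\ov\alpha_n$. Second, a rank-one $\ov{\lambda M}$ sends $\ov\alpha_m$ to an element of $\overline{k_\nu}(T)$, a rational function rather than a constant; moreover, passing the reduction inside the M\"obius action tacitly uses $\ov{C'}\,\ov{\alpha_m}+\ov{D'}\neq 0$, which fails precisely when the bottom row of $\ov{\lambda M}$ vanishes. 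In that subcase reduce instead the cleared identity $\alpha_n(C'\alpha_m+D')=A'\alpha_m+B'$ to obtain $\ov{A'}\,\ov{\alpha_m}+\ov{B'}=0$ with $(\ov{A'},\ov{B'})\neq(0,0)$, which again forces $\ov\alpha_m$ rational. With these repairs the argument is complete.
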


\begin{proof}
	As before, let $D\inn\C S_{\ch Q}$ and let $\n K=\n K_D$. Let $l$ be a prime of $\n K_D$ such that the reduction of $\sqrt D$ modulo $l$ exists and is irrational; let $\nu=\nu_l$ be the corresponding valuation and let $k=k_{\nu_l}$ be the residue field. If $\alpha\inn\ch Q((T^{-1}))$ is reducible modulo $l$, as before we will denote by $\ov\alpha$ its reduction in $k((T^{-1}))$. As $k$ is a finite field, there exists a non-trivial solution $x,y\inn k[T]$ of the Pell equation for $\ov{\sqrt D}$. Possibly replacing $\n K$ with a finite extension $\n K'$ such that $l$ is still a prime of $\n K'$, we can assume that $k$ is large enough so that it exists a polynomial $z\inn k[T]$, relatively prime to $x$, such that $z/x$ is normal (by Theorem \ref{4theo:Friesen} it is enough to assume $\#k\geq 2\deg x$). Then, by Theorem \ref{5theo:Mercat}, $\frac{x-bz+y\ov{\sqrt D}}{bx}\inn k((T^{-1}))$ is normal, where $b\inn k$ is a suitable constant. 
	
	Then, by Corollary \ref{5cor:reddegrees}, for every $X,Y,Z\inn\n K[T]$ reducible modulo $l$, with reductions $\ov X=x,$ $\ov Y=y,$ $\ov Z=z$ and with $\deg X=\deg x$ and for every constant $B\inn\n K$ with $\ov B=b$ we will have that $$\frac{X-BZ+Y\sqrt D}{BX}\inn\ch Q((T^{-1})) \text{ is normal }.$$
	Obviously, we can choose the lifts $X,Y,Z,B$ in infinitely many different ways, obtaining infinitely many pairwise non-equivalent normal elements of $\ch Q(T,\sqrt D)$.
\end{proof}

\begin{ex}\label{5ex:MMbarq}
	Let $D=T^8-T^7-\frac34T^6+\frac72T^5-\frac{21}4T^4+\frac72T^3-\frac34T^2-T+1\inn\n Q((T^{-1}))$. It can be proved (see \cite{contfracexamples}) that $\ov K(\sqrt D)=2$, so the question of the existence of normal elements in $\n K(T,\sqrt{D(T)})$ is not a trivial problem.  It can be seen, for example with the methods of Corollary \ref{6cor:yureductioncondition}, that $D$ is a  non-Pellian polynomial, so we cannot apply directly Mercat's Theorem \ref{5theo:Mercat}.
	
	Certainly $\sqrt D$ can be reduced modulo every prime different from $2$ and its reduction is never rational. 
	
	Let us apply Mercat's Theorem to the reduction $\ov D=T^8-T^7-T^5-T^3-T+\ov1$ of $D$ modulo 3. It is easy to compute the minimal solution of the Pell equation for $\ov D$ over $\n F_3$: $(x,y)=(-T^{10}-T^9+T^6-T^3-T^2+\ov1,-T^6+T^4-T^3-T^2-T)$. Now, $z=T^6(T^3+T^2+T-\ov1)$ is relatively prime to $x$ over $\n F_3$ and $z/x$ is normal. Then $\frac{-T^{10}-T^9+T^6-T^3-T^2+\ov1+T^6(T^3+T^2+T-\ov1)+(-T^6+T^4-T^3-T^2-T)\ov{\sqrt D}}{T^{10}+T^9-T^6+T^3+T^2-\ov1}$ is normal in $\n F_3((T^{-1}))$ (in the previous notations, $b=\ov2\,$), so $$\frac{-T^{10}+T^8+T^7-T^3-T^2+1+3A_1-(T^6-T^4+T^3+T^2+T+3A_2)\sqrt D}{T^{10}+T^9-T^6+T^3+T^2-1+3A_3}$$ is normal in $\n Q((T^{-1}))$ for every $A_1,A_2,A_3\inn\n Q[T]$, polynomials reducible modulo 3 and with $\deg A_3<10.$
\end{ex}

Using techniques of reduction of a formal Laurent series modulo a prime we can also prove that Conjecture \ref{5conj:mult} holds over $\ch Q$, which of course will imply again McMullen's strong Conjecture \eqref{5eq:MMstrong}. Let us then prove that the hypothesis of Lemma \ref{5lem:MMrootsqn} is verified for $\n K=\ch Q$.

\begin{theo}\label{5theo:MMbarQ}
	Let $D\inn\C S_{\ch Q}$ be a polynomial of degree $2d$, let $(\frac{p_n}{q_n})_n$ be the convergents of $\sqrt D$. Then, for every $\lambda\inn\ch Q$ of the form $$\lambda=\zeta_r\sqrt[r]{1/\pi},$$ where $\pi$ is a large enough prime of $\n K_D$, $r\geq d$ and $\zeta_r$ is an $r$-th root of unity, we have $$q_n(\lambda)\neq0 \text{ for every } n\geq0.$$
\end{theo}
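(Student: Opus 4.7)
The plan is to combine a reduction modulo $\pi$ with a Newton--polygon argument.

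First, I would choose $\pi$ large enough so that $\sqrt D$ reduces to an irrational Laurent series $\ov{\sqrt D}\inn k_\pi((T^{-1}))$ over the (finite) residue field, and so that the continued--fraction algorithm commutes with reduction modulo $\pi$ termwise: equivalently, that every partial quotient $a_n$ has $\pi$--integral coefficients with $\pi$--unit leading coefficient. Granted this, each continuant satisfies $q_n\inn\C O_\pi[T]$ with leading coefficient $\mathrm{lc}(q_n)=\prod_{i=1}^n\mathrm{lc}(a_i)$, a $\pi$--unit; in particular $\nu_\pi(\mathrm{lc}(q_n))=0$ for every $n$.

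Next, I would extend $\nu_\pi$ (normalised so that $\nu_\pi(\pi)=1$) to $\ov{\n Q}$. Since $\zeta_r$ is a root of unity and hence a $\pi$--unit, $\nu_\pi(\lambda)=-1/r<0$. A short Newton--polygon computation shows that $T^r-1/\pi$ is irreducible over $\n K_D$: its polygon is the single segment from $(0,-1)$ to $(r,0)$ of slope $1/r$, so every proper monic factor over $\n K_D$ would have degree a positive multiple of $r$, which is impossible. Therefore $T^r-1/\pi$ is the minimal polynomial of $\lambda$ over $\n K_D$.

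Suppose for contradiction that $q_n(\lambda)=0$. Then $T^r-1/\pi$ divides $q_n$ in $\n K_D[T]$, so in particular $\deg q_n\geq r$, and we may write $q_n=(T^r-1/\pi)g$ with $g=\sum_{i=0}^{k-r}g_iT^i\inn\n K_D[T]$ and $k=\deg q_n$. Denoting by $b_j$ the coefficient of $T^j$ in $q_n$, one gets $b_j=-g_j/\pi$ for $0\leq j\leq r-1$ and $b_j=g_{j-r}-g_j/\pi$ for $r\leq j\leq k-r$. Because every $b_j\inn\C O_\pi$, a straightforward induction on $j$ yields $g_j\inn\pi\C O_\pi$ for every $j\inn\{0,\dots,k-r\}$. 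In particular the leading coefficient $b_k=g_{k-r}\inn\pi\C O_\pi$ has $\nu_\pi(b_k)\geq 1$, contradicting $\nu_\pi(\mathrm{lc}(q_n))=0$.

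The hard part is the first step: arranging that reduction commutes termwise with the continued--fraction algorithm over its infinitely many iterations. For each individual step this excludes only finitely many primes (those that divide the appropriate Laurent coefficient of $\alpha_n$), but doing it uniformly in $n$ requires a careful analysis of how $\pi$--adic denominators can propagate through the iteration $\alpha_{n+1}=1/(\alpha_n-a_n)$. The hypothesis $r\geq d$ is not essential to the Newton--polygon step (any $r\geq 1$ works) and is there to accommodate the downstream application of Lemma \ref{5lem:MMrootsqn}, where $d-1$ pairwise distinct admissible values of $\lambda$ are needed---obtained here by varying $\zeta_r$ among the $r$--th roots of unity.
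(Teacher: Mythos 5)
The proposal breaks down at its first step, which you yourself flag as ``the hard part.'' The assumption that $\pi$ can be chosen so that \emph{every} partial quotient $a_n$ of $\sqrt D$ has $\pi$-integral coefficients with a $\pi$-unit leading coefficient is not merely difficult to arrange: for a non-Pellian $D$ it is impossible. If it held, reduction modulo $\pi$ would commute termwise with the continued-fraction algorithm, so $\deg\overline{a_n}=\deg a_n$ for all $n$. But $\overline D$ lives over a finite residue field, hence is Pellian (Corollary \ref{2cor:lagrange}), so $\overline{\sqrt D}$ has $\deg\overline{a_n}=d$ for infinitely many $n$ (Proposition \ref{2prop:eqper}); this would force $\deg a_n=d$ infinitely often, i.e.\ $D$ Pellian --- a contradiction. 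So the hypothesis you grant yourself fails for exactly the polynomials the theorem is most needed for, and paragraphs two and three (which are correct in themselves: the Eisenstein-type irreducibility of $T^r-1/\pi$ and the coefficient-by-coefficient induction showing that $(T^r-1/\pi)\mid q_n$ would force $\pi\mid\mathrm{lc}(q_n)$ are both fine, and the latter is a clean substitute for the paper's appeal to Gauss's lemma) are left with nothing to stand on.

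This obstruction is precisely what the paper's proof is built around. Rather than demand termwise reduction, it uses Van der Poorten's specialisation theory (Lemma \ref{5lem:ordpnqn}, Theorem \ref{5theo:red}): after rescaling $p_n,q_n$ by the unique power $\pi^{i_n}$ that makes them $\pi$-integral with nonzero reduction, the pair $\overline{\pi^{i_n}p_n},\overline{\pi^{i_n}q_n}$ yields a convergent of $\overline{\sqrt D}$ --- but possibly of strictly smaller degree, with several values of $n$ collapsing to the same reduced convergent. The key estimate is then that $\deg q_n-\deg\overline{\pi^{i_n}q_n}\leq d-1$ (inequality \eqref{5eq:r}), and it is this bound that makes $r\geq d$ essential. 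Your closing remark that ``$r\geq d$ is not essential to the Newton-polygon step'' is therefore a misdiagnosis: it is true for \emph{your} argument, but only because that argument has assumed away exactly the degree drop of up to $d-1$ which the hypothesis $r\geq d$ is designed to absorb.
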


We will give two proof of this result: a first algebraic proof, relying on the previously discussed theory of reduction modulo a prime, 
and a second proof using the connection with algebro-geometric properties of hyperelliptic curves seen in section \ref{6sec:hyper}.

\begin{proof}[First proof]	
	As before, let $\n K=\n K_D$ be the smallest extension of $\n Q$ over which the formal Laurent series of $\sqrt D$ is well defined.
	
	Let $l>2$ be a prime number, let $\nu$ be an extension to $\n K$ of the $l$-adic valuation of $\n Q$. As before, let $\C O=\{x\inn\n K^*,\ \nu(x)\geq0\}\cup\{0\}$ be the valuation ring of $\nu$ and let $\pi$ be a uniformizing parameter for $\nu$, that is, a generator of the unique maximal ideal $\C M=\{x\inn\n K^*,\ \nu(x)>0\}$ of $\C O$. We will denote by $k$ the residue field $k=\C O/\C M$ and, if $\alpha\inn\C L=(\C O[[T^{-1}]])[T]$, we will denote by $\ov\alpha\inn\ k((T^{-1}))$ its reduction modulo $\nu$.\par\medskip
	
	As we have seen in Lemma \ref{5lem:conditionsforgr}, by choosing $\pi$ large enough we can assume that the Laurent series representing $\sqrt D$ can be reduced modulo $\nu$, that is $\sqrt D\inn\C L$, and we can assume that the reduction of $D$ modulo $\nu$ is not a perfect square. Moreover, we can choose $\pi$ so that the leading coefficient of $\sqrt D$ has valuation 0. Then $\ord\bigg(\,\ov{\sqrt D}\,\bigg)=\ord(\sqrt D)=-d$.
	
	Let $p/q=p_n/q_n$ be a convergent of $\sqrt D$. By Lemma \ref{5lem:ordpnqn}, as $\ord\ov{\sqrt D}<0$, there exists a (unique) integer $i$ such that $x=\pi^ip$ and $y=\pi^iq$ can be reduced modulo $\nu$ and both $\ov{x},\ov{y}$ are different from zero ($-i$ will be minimum of the valuations of the coefficients of $q$). Let us denote by ${u_j}/{v_j}$ the convergents of $\ov{\sqrt D}$ and by $b_j$ its partial quotients. By Theorem \ref{5theo:red}, ${\ov x}/{\ov y}$ is a convergent of $\ov{\sqrt D}$, that is, there exists $j$ such that ${\ov x}/{\ov y}={u_j}/{v_j}$.\par\medskip 
	
	Now, $d\geq\deg b_{j+1}=\deg v_{j+1}-\deg v_j$. If $\deg q>\deg v_j$, we have seen in lemma \ref{5lem:reddegrees} that there exists $N>n$ such that $\deg q_N=\deg v_{j+1}$ and $\deg v_j\leq\deg\ov y=\deg\ov{\pi^iq}$. Then, \be\label{5eq:r} \deg q-\deg\ov{\pi^iq}\leq d-1,\ee and this inequality holds trivially even if $\deg q=\deg\ov y=\deg v_j$. Thus, the degree of (a normalization of) $q$ cannot decrease too much when reducing modulo $\nu$.\par\medskip
	
	Let $\lambda\inn\ch Q$ be a root of $\pi T^r-1$, irreducible over $\n K$. Now, $\lambda$ is a root of $q$ if and only if it is a root of $y$, if and only if $(\pi T^r-1)$ divides $y$. As $\pi T^r-1$ is primitive, by Gauss's Lemma this is equivalent to the existence of a polynomial $F\inn\C O[T]$ such that $y=(lT^r-1)F$. Then, reducing modulo $\nu$, $\ov y=-\ov F$ and $\deg\ov y\leq\deg q-r$. For $r> d-1$ this would contradict \eqref{5eq:r}, so $\lambda$ cannot be a root of $q$.\par\medskip
	
	The choices of $\pi,r$ do not depend on $p,q$ (that is, they do not depend on $n$): we only require that $D$ is reducible modulo $\pi$, a prime over $l>2$, that its reduction is not a perfect square and that $r\geq d$. Thus, we have actually proved that for every large enough prime $\pi$ and for every integer $r\geq d$ a root $\lambda$ of the polynomial $\pi T^r-1$ cannot be a zero of $q_n$ for every $n$.
\end{proof}

\begin{proof}[Second proof]
	Let $D=b^2\widetilde D$, with $\widetilde D\inn\n K[T]$ squarefree and let $\deg \widetilde D=2\widetilde d$; let $\C H$ be the hyperelliptic curve of affine model $U^2=\widetilde D(T)$.
	
	Let $\pphi=p-qbU$, where $\frac pq=\cv n$ is a convergent of $\sqrt D$; let $\lambda\inn\ch Q$ not a root of $D$ and let $P=(\lambda,U(\lambda))\inn\C H_{\widetilde D}$. Then $q(\lambda)=0$ if and only if $\pphi(P)=\pphi'(P)$. 
	
	Let $l$ be a prime and let $\nu$ be an extension to $\n K$ of the $l$-adic valuation; let $\pi\inn\n K$ be a uniformizing parameter for $\nu$.\par\medskip
	
	We have seen in Remark \ref{6rem:multdeltanonsf} that $\pphi,\pphi'$ have, respectively, a zero and a pole at $\infty_+$. If $P$ is a point near enough to $\infty_+$ and far enough from the zeros of $\pphi'$ (with respect to $\nu$), then $\pphi,\pphi'$ cannot assume the same value in $P$. As formal Laurent series, let $\pphi=f_{-i}T^{-i}+f_{-i-1}T^{-i-1}+\cdots,\ \pphi'=g_jT^j+g_{j-1}T^{j-1}+\cdots$, where $j=\deg p$ and $i=\deg q_{n+1}>\deg q$.\par\medskip
	
	Let us assume that $\C H_{\widetilde D}$ has good reduction modulo $\nu$, that is, let us assume that $\widetilde D$ can be reduced modulo $\pi$ and that its reduction is still a squarefree polynomial of degree $2\widetilde d$. Let us also assume that $D$ can be reduced modulo $\nu$, with $\deg\ov D=2d$. Then the Laurent series representing $\sqrt D$ can be reduced modulo $\nu$, so, up to multiplication of $p,q$ (that is, of $\pphi$) by a suitable power of $\pi$, we can assume that also $\pphi,\pphi'$ have good reduction modulo $\nu$ and that their reductions are not $0$ (that is, $\nu(f_m),\nu(g_n)\geq0$ for every $m,n$ and there exist $M,N$ such that $\nu(f_M)=\nu(g_N)=0$) .\par\medskip
	
	Let $\lambda=\pi^{-1/r}$; let us still denote by $\nu$ its extension to $\n K(\lambda)$ (with $\nu(\lambda)=-\frac1r$). By Lemma \ref{6lem:zerosphi}, if we assume $m\geq d$ then $P$ cannot be too near to the zeros of $\pphi'$, as they have degree at most $2d-2$ over $\n K$. More precisely, as before, reducing modulo $\nu$ the degree of $\pphi'$ can decrease at most of $d-1$, so there must exist an element of $\{g_j,\dots,g_{j-d+1}\}$ that has absolute valuation 0. As $\nu(g_n)\inn\n N$ for every $n$, for $r\geq d$ then there cannot be cancellations in the $g_n\lambda^n$ and $\nu(\pphi'(\lambda))<0$.\par\medskip
	
	On the other hand, $\nu(\pphi(\lambda))=\nu(f_{-i}\pi^{i/r}+f_{-i-1}\pi^{(i+1)/r}+\cdots)\geq\nu(\pi^{i/r})=\frac1r>0$.
	Then $\pphi(\lambda)\neq\pphi'(\lambda)$, so $\lambda$ is not a root of $q=q_n$.
	
	As the choice of $\lambda$, that is, the choices of $\pi,r$, do not depend on $n$, we have that $\lambda$ is not a root of any continuant $q_n$.	
\end{proof}

\begin{cor}\label{5cor:MMchQ}
	 Conjecture \ref{5conj:mult} and McMullen's strong Conjecture hold over $\ch Q$: for every polynomial $D\inn\C S_{\ch Q}$ there exist infinitely many $(d-1)$-uples $\lambda_1,\dots,\lambda_{d-1}\inn\ch Q$, where $2d=\deg D$, such that $$K\left((T-\lambda_1)\cdots(T-\lambda_{d-1})\sqrt D\right)=1.$$
\end{cor}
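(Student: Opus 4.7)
The plan is to combine Theorem \ref{5theo:MMbarQ} with the inductive strategy sketched in Lemma \ref{5lem:MMrootsqn}, applied at each step to a suitably updated polynomial under the square root. Since Theorem \ref{5theo:MMbarQ} produces at every stage infinitely many admissible constants, the resulting $(d-1)$-tuples will automatically form an infinite family.

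First I would observe that for any polynomial $P\inn\ch Q[T]$ one has $P\sqrt D=\pm\sqrt{P^2D}$ and that $P^2D\inn\C S_{\ch Q}$: it is not a square (since $D$ is not) and its leading coefficient is a square. Hence the continuants of $P\sqrt D$ coincide, up to sign, with those of $\sqrt{P^2D}$, and Theorem \ref{5theo:MMbarQ} applied to $P^2D$ provides, for every sufficiently large prime $\pi$ of the associated number field $\n K_{P^2D}$ and every integer $r\geq d+\deg P$, an infinite family of constants $\lambda=\zeta_r\sqrt[r]{1/\pi}$ none of which is a root of any denominator $q_n$ of a convergent of $P\sqrt D$.

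Then the induction runs as in the proof of Lemma \ref{5lem:MMrootsqn}. Set $\alpha_0=\sqrt D$ and $P_0=1$; having chosen $\lambda_1,\dots,\lambda_i$, put $P_i=(T-\lambda_1)\cdots(T-\lambda_i)$ and $\alpha_i=P_i\sqrt D$. By the observation above, pick $\lambda_{i+1}$ avoiding every zero of every denominator of a convergent of $\alpha_i$; Corollary \ref{1cor:decreasedegree} then gives
\[
K(\alpha_{i+1})=\max\{K(\alpha_i)-1,\,1\}.
\]
Starting from $K(\alpha_0)=K(\sqrt D)\leq d$ (Remark \ref{2rem:sqrtDred}) and carrying out $d-1$ steps yields $K(\alpha_{d-1})=1$. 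Since infinitely many choices of $\lambda_{i+1}$ are available at each stage, we obtain infinitely many pairwise distinct $(d-1)$-tuples $(\lambda_1,\dots,\lambda_{d-1})$ with
\[
K\bigl((T-\lambda_1)\cdots(T-\lambda_{d-1})\sqrt D\bigr)=1,
\]
which establishes both Conjecture \ref{5conj:mult} and McMullen's strong Conjecture \eqref{5eq:MMstrong} over $\ch Q$.

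There is no real obstacle once Theorem \ref{5theo:MMbarQ} is in hand; the only point requiring verification is its applicability to the auxiliary polynomial $P_i^2D$. Since $\deg(P_i^2D)=2(d+i)$, one need only take $r\geq d+i$ when selecting $\lambda_{i+1}$ and pick $\pi$ large enough so that the Laurent expansion of $\sqrt{P_i^2D}$ is integral at $\pi$ and reduces to an irrational series, conditions which exclude only finitely many primes of $\n K_{P_i^2D}$. Because infinitely many such primes $\pi$ are available and, for each fixed $\pi$, infinitely many $r$-th roots of unity $\zeta_r$, the pool of admissible $\lambda_{i+1}$ at every stage is infinite, and the inductive construction proceeds without interruption.
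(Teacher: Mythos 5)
Your proposal is correct and takes essentially the same route as the paper: the paper's proof of this corollary is a one-line appeal to Lemma~\ref{5lem:MMrootsqn}, whose hypothesis is supplied by Theorem~\ref{5theo:MMbarQ}, and your argument unfolds exactly that combination --- including the observation that $P_i\sqrt D=\pm\sqrt{P_i^2D}$ with $P_i^2D\in\C S_{\ch Q}$, which is what makes Theorem~\ref{5theo:MMbarQ} applicable at every inductive stage. Your explicit bookkeeping (taking $r\geq d+i$ at stage $i$, noting that $\pi$ ranges over primes of $\n K_{P_i^2D}$) is a slightly more careful spelling-out of what the paper leaves implicit, but it is the same argument.
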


Actually, we already have $K\left((T-\lambda_1)\cdots(T-\lambda_{k-1})\sqrt D\right)\!=\!1$, for $k\!=\!K(\sqrt D)$ and, analogously, $\ov K\left((T-\lambda_1)\cdots(T-\lambda_{\ov k-1})\sqrt D\right)=1$ for $\ov k=\ov K(\sqrt D)$.
\begin{proof}
	It follows directly from Lemma \ref{5lem:MMrootsqn}.
\end{proof}

\begin{lem} 
	For every $D\inn\C S_{\ch Q}$ there exist $\lambda_1,\dots,\lambda_{d-1}$ with $$[\n K_D(\lambda_1,\dots,\lambda_{d-1}):\n K_D]=\frac{(2d-2)!}{(d-1)!}.$$ such that $\n K_D(\lambda_1,\dots,\lambda_{d-1})(T,\sqrt{D(T)})$ has normal elements. 
\end{lem}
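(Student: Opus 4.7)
The plan is to refine the construction in the proof of Corollary \ref{5cor:MMchQ} by choosing each $\lambda_i$ from an algebraic extension of the previously built field whose degree is exactly prescribed. Write $\n K_1=\n K_D$ and, supposing inductively that $\lambda_1,\ldots,\lambda_{i-1}$ have been chosen, set $\n K_i=\n K_D(\lambda_1,\ldots,\lambda_{i-1})$ and $\alpha_{i-1}=(T-\lambda_1)\cdots(T-\lambda_{i-1})\sqrt D\inn\n L_{\n K_i}$, a Laurent series of order $-(d+i-1)$. By Lemma \ref{5lem:MMrootsqn} it suffices to arrange, for $i=1,\ldots,d-1$, that $\lambda_i$ is not a root of any continuant of $\alpha_{i-1}$ while simultaneously $[\n K_i(\lambda_i):\n K_i]=d+i-1$. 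The tower degrees then multiply to $\prod_{i=1}^{d-1}(d+i-1)=d(d+1)\cdots(2d-2)=\frac{(2d-2)!}{(d-1)!}$, as required.

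Fix a step $i$. As in the argument of Lemma \ref{5lem:conditionsforgr}, only finitely many non-Archimedean places of $\n K_i$ are excluded by the requirements that the place $\nu_i$ have residue characteristic different from $2$, that $\alpha_{i-1}$ lie in $\C L_{\nu_i}$, and that its reduction $\ov{\alpha_{i-1}}$ still be an irrational Laurent series of order $-(d+i-1)$; such a $\nu_i$, with uniformizer $\pi_i$, may therefore be chosen. Set $r_i=d+i-1$ and $\lambda_i=\pi_i^{-1/r_i}$: then $\lambda_i^{-1}$ is a root of $T^{r_i}-\pi_i\inn\n K_i[T]$, a polynomial which is Eisenstein at $\mathfrak p_i$ and hence irreducible, so $[\n K_i(\lambda_i):\n K_i]=r_i$ exactly.

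It remains to verify that $\lambda_i$ is not a zero of any denominator $q_n$ of a convergent of $\alpha_{i-1}$. This is precisely Theorem \ref{5theo:MMbarQ} applied to $\alpha_{i-1}$ rather than $\sqrt D$, and either of its proofs transfers with one numerical adjustment: one has $K(\ov{\alpha_{i-1}})\leq d+i-1$, obtained by applying Corollary \ref{1cor:degrees} iteratively to the reduced Laurent series $(T-\ov{\lambda_1})\cdots(T-\ov{\lambda_{i-1}})\ov{\sqrt D}$ in the residue field (starting from $K(\ov{\sqrt D})\leq d$), and consequently the estimate \eqref{5eq:r} is replaced by $\deg q-\deg\ov{\pi_i^j q}\leq d+i-2$ at each continuant $q=q_n$ of $\alpha_{i-1}$. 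A divisibility $(\pi_iT^{r_i}-1)\mid q$, forced via Gauss's lemma from $q(\lambda_i)=0$ together with primitivity of $\pi_iT^{r_i}-1$, would then yield $\deg\ov{\pi_i^j q}\leq\deg q-r_i$, contradicting the sharpened estimate whenever $r_i\geq d+i-1$. The choice $r_i=d+i-1$ is thus the smallest one that works.

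The main obstacle is to check carefully that the proof of Theorem \ref{5theo:MMbarQ} transfers verbatim from $\sqrt D$ to the shifted Laurent series $\alpha_{i-1}$: one must confirm that for a cofinite set of places of $\n K_i$ the reduction $\ov{\alpha_{i-1}}$ is irrational of the expected order, and that the bound $K(\ov{\alpha_{i-1}})\leq d+i-1$ is genuinely sharp enough so that the Eisenstein degree can be taken to be exactly $d+i-1$. Once this is in place, multiplying the degrees of the successive extensions yields the stated value $\frac{(2d-2)!}{(d-1)!}$.
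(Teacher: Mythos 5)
Your proof is correct and follows essentially the same route as the paper's (terse, one-sentence) proof: choose $\lambda_i$ as a root of $\pi_i T^{r_i}-1$ with $r_i=d+i-1$, using the Eisenstein criterion for the degree count and the argument of Theorem~\ref{5theo:MMbarQ}, with the key numerical observation that $K(\ov{\alpha_{i-1}})\leq d+i-1$ forces the Gauss-lemma contradiction only once $r_i\geq d+i-1$. One difference worth noting: the paper takes the $\pi_i$ to be primes of the fixed base field $\n K_D$, which means the product formula $[\n K_D(\lambda_1,\dots,\lambda_{d-1}):\n K_D]=\prod r_i$ tacitly requires the successive extensions to be linearly disjoint — this holds provided the $\pi_i$ are chosen pairwise distinct and coprime to the $r_j$, so that each $\pi_i$ stays unramified in $\n K_D(\lambda_1,\dots,\lambda_{i-1})$, but the paper leaves this unsaid. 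You instead take $\pi_i$ to be a prime of the field $\n K_i=\n K_D(\lambda_1,\dots,\lambda_{i-1})$ built so far, so that Eisenstein gives $[\n K_{i+1}:\n K_i]=r_i$ outright and the tower formula does the rest; this is a cleaner route to the same degree, at the small cost of having to re-verify reducibility of $\alpha_{i-1}$ over $\n K_i$ at each step (which you correctly note and handle via Lemma~\ref{5lem:conditionsforgr}).
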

\begin{proof}
We have seen that we can choose $\lambda_i$ as a root of $\pi_iT^{r_i}-1$, with $\pi_i$ a prime of $\n K_D$ large enough to guarantee that $\sqrt D$ is reducible modulo $\nu$, where $\nu$ is the $\pi_i$-adic valuation of $\n K_D$ and that the reduction of $\sqrt D$ has still order $-d$ and where $r_i= d+i-1$. 
\end{proof}

\begin{ex}
	As in Example \ref{5ex:MMbarq}, let us consider the non-Pellian polynomial over $\n Q$
	$D=T^8-T^7-\frac34T^6+\frac72T^5-\frac{21}4T^4+\frac72T^3-\frac34T^2-T+1$. As it is proved in \cite{contfracexamples}, $\ov K(\sqrt D)=2$. $D$ is reducible modulo every prime different from $2$ and its reduction is never a square. Then, by the previous Theorem, $$\left(T-\sqrt[4]{1/l}\right)\sqrt{D(T)}$$ is (eventually) normal for every prime $l>2$.
\end{ex}

We can also give an alternative, direct proof of McMullen's strong Conjecture over $\ch Q$, based on Lemma \ref{1lem:(f+a)/g}. 
\begin{proof}[Second proof of Proposition \ref{5prop:MMchq1}]	
	Let $D\inn\C S_{\ch Q}$ be a polynomial of degree $2d$ and let $(\cv n)_n$ be the convergents of $\sqrt D$. Certainly $K(\sqrt D)\leq d$; let us assume that $K(\sqrt D)\neq1$. We have seen that if $a_1\neq-\frac{p_n(\lambda_1)}{q_n(\lambda_1)}$ for every $n$, then $K\left(\frac{\sqrt D+a_1}{T-\lambda_1}\right)=K(\sqrt D)-1\leq d-1$. 
	
	As before, let $\n K=\n K_D$. Then for every $\lambda_1\inn\n K$ we have $\frac{p_n(\lambda_1)}{q_n(\lambda_1)}\inn\n K\cup\{\infty\}$ for every $n$, so it is enough to take $a_1\inn\ch Q\setminus\n K$. In particular, it is enough to take $a_1$ in a quadratic extension of $\n K$. 
	
	Going on in this way, we can find $a_1,\dots,a_{d-1},\lambda_1,\dots,\lambda_{d-1}$ such that, setting $\alpha_0\!=\!\sqrt D$ and, for $i=1,\dots,d-1$, $$\alpha_i=\frac{\alpha_{i-1}+a_i}{T-\lambda_i},$$ we have $$K(\alpha_i)=\max\left\{1,K(\sqrt D)-i\right\}.$$ In particular $\alpha_{d-1}$ is normal.
	
	We can choose the $\lambda_i$ and the $a_i$ in infinitely many different ways, obtaining infinitely many pairwise non-equivalent normal elements of $\ch Q(T,\sqrt{D(T)})$, so McMullen's strong Conjecture \eqref{5eq:MMstrong} holds over $\ch Q$.
\end{proof}	

\begin{lem}
	For any $D\inn\C S_{\ch Q}$ there exist $\alpha\inn\n K'(T,\sqrt D)$ normal, with $$[\n K':\n K_D]\leq2^{d-1}.$$
\end{lem}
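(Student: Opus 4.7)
The plan is to keep track of the fields generated at each step of the second proof of Proposition \ref{5prop:MMchq1}, showing that each iteration enlarges the base field by at most a quadratic extension. Set $\n K_0 = \n K_D$, $\alpha_0 = \sqrt D$, and let $k = K(\sqrt D) \le d$. For $i \ge 1$, assume inductively that $\alpha_{i-1} \inn \n K_{i-1}(T,\sqrt D)$ with $[\n K_{i-1}:\n K_D] \le 2^{i-1}$, and that $K(\alpha_{i-1}) = \max\{1, k-i+1\}$. If $\alpha_{i-1}$ is already normal, we stop; otherwise we will choose $\lambda_i$ and $a_i$ so that $\alpha_i = \frac{\alpha_{i-1} + a_i}{T - \lambda_i}$ lies in a field of the form $\n K_i(T, \sqrt D)$ with $[\n K_i : \n K_{i-1}] \le 2$, and satisfies $K(\alpha_i) = K(\alpha_{i-1}) - 1$ by Lemma \ref{1lem:(f+a)/g}.

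First I would fix $\lambda_i \inn \n K_D$ arbitrarily (for example, any element that is not a root of the finitely many leading/denominator obstructions encountered so far; since $\n K_D$ is infinite such a choice exists). The denominators $q_n^{(i-1)}$ and numerators $p_n^{(i-1)}$ of the convergents of $\alpha_{i-1}$ are polynomials with coefficients in $\n K_{i-1}$, so all the quantities $-p_n^{(i-1)}(\lambda_i)/q_n^{(i-1)}(\lambda_i)$ (when defined) lie in $\n K_{i-1}$. Hence it suffices to pick $a_i \inn \overline{\n Q} \setminus \n K_{i-1}$ to guarantee $p_n^{(i-1)}(\lambda_i) + a_i\, q_n^{(i-1)}(\lambda_i) \neq 0$ for every $n$, and the hypotheses of Lemma \ref{1lem:(f+a)/g} are met. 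Since $\n K_{i-1}$ is a finite extension of $\n Q$, hence a proper subfield of $\overline{\n Q}$, we may choose $s_i \inn \n K_{i-1}$ a non-square and set $a_i = \sqrt{s_i}$; then $\n K_i := \n K_{i-1}(a_i)$ satisfies $[\n K_i : \n K_{i-1}] = 2$, so $[\n K_i : \n K_D] \le 2^i$.

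After at most $k-1 \le d-1$ steps we reach $\alpha_{k-1}$ with $K(\alpha_{k-1}) = 1$, i.e.\ a normal element $\alpha \inn \n K'(T,\sqrt D)$ where $\n K' = \n K_{k-1}$ satisfies $[\n K' : \n K_D] \le 2^{k-1} \le 2^{d-1}$. The main (and essentially only) subtlety is the verification that the quadratic enlargement at each step is both necessary and sufficient: sufficient because any $a_i$ outside $\n K_{i-1}$ automatically avoids the countably many values in $\n K_{i-1}$ that could violate Lemma \ref{1lem:(f+a)/g}, and the doubling is unavoidable in general because we have no control a priori over which elements of $\n K_{i-1}$ appear as $-p_n^{(i-1)}(\lambda_i)/q_n^{(i-1)}(\lambda_i)$. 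No further obstacle arises, since the hypothesis $\char \n K \neq 2$ together with $\n K_{i-1}$ being a number field guarantees the existence of a non-square in $\n K_{i-1}$.
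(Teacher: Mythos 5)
Your proposal is correct and is essentially the same argument as the paper's: the paper's proof simply points to the inductive construction $\alpha_i = (\alpha_{i-1}+a_i)/(T-\lambda_i)$ of the second proof of Proposition \ref{5prop:MMchq1} and notes that each $a_i$ can be chosen to generate a quadratic extension. You have spelled out the details of exactly this choice (picking $a_i$ a square root of a non-square of $\n K_{i-1}$, and observing that the excluded values $-p_n^{(i-1)}(\lambda_i)/q_n^{(i-1)}(\lambda_i)$ all lie in $\n K_{i-1}$).
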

\begin{proof}
	In the notations of the previous proof $\alpha_{d-1}$ is defined over $\n K'=\n K_D(a_1,\dots,a_{d-1})$, where we can choose the $a_i$ so that $[\n K':\n K_D]\leq2^{d-1}.$
\end{proof} 
\begin{ex}
	Let $D\inn\S Q$ be a polynomial of degree $2d$ and let us denote by $l_n$ the $n$-th prime number. Then $$\left(\left(\left(\left(\sqrt D+\sqrt2\right)\frac1T+\sqrt3\right)\frac1T+\sqrt5\right)\frac1T+\cdots+\sqrt{l_{d-1}}\right)\frac1T$$ is normal (in the previous notations, $\lambda_i=0$ for every $i$ and $a_i=\sqrt{l_i}$).
	
	In particular, for $D$ as in Example \ref{5ex:MMbarq} we will have that $$\frac{\sqrt D+\sqrt l}{T}$$ is normal for every prime $l$.
\end{ex}

\begin{rem}\label{5rem:QimpchQ}
	We will see in Theorem \ref{66theo:MMQ} that Conjecture \ref{5conj:mult} holds also over every number field. Of course, this will imply again Conjecture \ref{5conj:mult} and McMullen's strong Conjecture over $\ch Q$. Indeed, we will see that for every $D\inn\C S_{\ch Q}$ there exist $\lambda_1,\dots,\lambda_{d-1}\inn\n K_D$ such that $$\ov K\left((T-\lambda_1)\cdots(T-\lambda_{d-1})\sqrt D\right)=1.$$
\end{rem}
\section{\texorpdfstring{$\n K=\n Q$: generalized Jacobians}{K=Q: generalized Jacobians}}\label{66Sec}
In order to study Conjecture \ref{5conj:mult} and McMullen's Conjecture over $\n Q$ we will consider generalized Jacobians of the hyperelliptic curves $\C H_D$. Indeed, as we have seen in Remark \ref{6rem:multdeltanonsf}, to treat the case of non-squarefree polynomials we have to consider a stricter equivalence relation on the group of divisors. We will show in section \ref{66subsec:pullpack} that this corresponds to working on a pullback of generalized Jacobians associated to moduli. Thus, we will firstly recall the construction of generalized Jacobians, and in particular of the generalized Jacobian associated to a modulus, to highlight the connection between the continued fraction expansion of $\sqrt D$, with $D$ non necessarily squarefree, and the minimal writing of the multiples of $\delta=[(\infty_-)-(\infty_+)]$ on a suitable generalized Jacobian.

Conjecture \ref{5conj:mult} will then follow from a Theorem of Zannier, proved applying to a generalized Jacobian a version for algebraic groups of Skolem-Mahler-Lech's Theorem.

\subsection{Construction of generalized Jacobians}\label{66sec:gen}
The classical construction of the Jacobian variety of a smooth algebraic curve $\C C$ can be generalized, essentially by modifying the equivalence relation on the group of degree-zero divisors $\Div^0(\C C)$ (or on one of its subgroups) and giving a structure of algebraic group to the quotient thus obtained.\par\medskip
 
Maxwell Rosenlicht discussed the construction and the properties of generalized Jacobians in the most general setting \cite{rosenlicht1952equivalence}, \cite{rosenlicht1954generalized}. Here we will follow the exposition of Serre (\cite{serre1988algebraic}, Chapter V), focused on generalized Jacobians associated to a modulus. We will then consider the case linked to the study of quadratic irrationalities of the form $b\sqrt{\widetilde D}$, which correspond to a pullback of generalized Jacobians associated to moduli.\par\medskip

Let $\n K$ be an algebraically closed field.
Let $\C C$ be a complete, projective, irreducible smooth curve defined over $\n K$. By ``glueing together'' some points of $\C C$  we can build a singular curve whose normalization is $\C C$.

\begin{notat}\label{66notat:sing}
	Let $S\sub\C C$ be a finite set of points and let $\mathscr R$ be an equivalence relation over $S$. Let $S'=S/\mathscr R$ and let us consider $$\C C'=(\C C\setminus S)\cup S', \text{ with } \pi:\C C\to\C C'$$ the canonical projection.

	For $Q\inn\C C'$, let $$O_Q=\bigcap_{\mathclap{\pi(P)=Q}}O_P,$$ where the $O_P$ are the local rings of $\C C$. Then $O_Q$ is a semi-local ring, let $r_Q$ be its Jacobson radical, that is, $\ds r_Q=\bigcap_{\mathclap{\pi(P)=Q}}M_P$. If $Q\inn\C C'\setminus S'$, let $O'_Q=O_Q$ and for $Q\inn S'$, let $O'_Q$ be a proper subring of $O_Q$ such that $$\n K+r_Q^{n_Q}\subseteq O'_Q\subseteq \n K+ r_Q$$ for some integer $n_Q$. For every point $Q$ of $\C C'$, let us set $$g_Q=\dim O_Q/O'_Q.$$ 
\end{notat}	

\begin{theo}\label{66theo:C'singular}
	$\C C'$ with the sheaf $O'$ formed by the rings $O'_Q$ is a non-smooth algebraic curve, its set of singular points is $S'$ and $\C C$ is its normalization.
\end{theo}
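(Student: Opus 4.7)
The statement packages three claims — that $(\C C',O')$ is an algebraic curve, that its singular locus equals $S'$, and that its normalization is $\C C$ — and each of them reduces to a local analysis at the glued points, made possible by the bound $\n K+r_Q^{n_Q}\subseteq O'_Q\subseteq\n K+r_Q$. My plan is to treat the local structure first, then assemble the global variety structure using that $\C C$ is already projective.

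First I would verify the local properties of the rings $O'_Q$. For $Q\notin S'$ there is nothing to do: $O'_Q=O_Q=O_{\pi^{-1}(Q)}$ is a discrete valuation ring, so $Q$ is a smooth point. For $Q\in S'$, the ring $O_Q=\bigcap_{\pi(P)=Q}O_P$ is semi-local with maximal ideals corresponding bijectively to the preimages of $Q$, and it is the integral closure of $O'_Q$ in the common function field $\n K(\C C)=\mathrm{Frac}(O'_Q)$. Because $\n K+r_Q^{n_Q}\subseteq O'_Q$, the ideal $r_Q^{n_Q}$ is a common ideal of $O_Q$ and $O'_Q$ (the \emph{conductor} lies between these), which immediately gives that $O_Q$ is a finitely generated $O'_Q$-module of finite colength $g_Q=\dim_{\n K}O_Q/O'_Q<\infty$. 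Consequently $O'_Q$ is a one-dimensional Noetherian local $\n K$-algebra with residue field $\n K$, and since $O'_Q\subsetneq O_Q$ it is not integrally closed, hence not a DVR; thus $Q$ is singular and, by construction, $O_Q$ is the local ring of the normalization above $Q$.

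Next I would globalize. Taking an affine open $U\subseteq\C C$ containing $S$ (and using that $\C C\setminus S$ already lies in $\C C'$ as a smooth open subset), the set $U':=\pi(U)$ inherits the ring of functions $\C O'(U')=\{f\in\C O(U):f_Q\in O'_Q\text{ for all }Q\in S'\}$. The conditions $f_Q\in O'_Q$ are $\n K$-linear constraints modulo the finite-dimensional quotient $O_Q/O'_Q$, so $\C O'(U')$ is obtained from the finitely generated $\n K$-algebra $\C O(U)$ by intersecting with finitely many subspaces of finite codimension; standard arguments (or Rosenlicht's lemma) then show that $\C O'(U')$ is itself a finitely generated $\n K$-algebra whose spectrum is an affine curve with coordinate sheaf agreeing with $O'$. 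Glueing $U'$ to the smooth open $\C C\setminus S\subseteq\C C'$ along their common open $U\setminus S$ produces an algebraic curve structure on $\C C'$; projectivity is inherited from $\C C$ because $\pi:\C C\to\C C'$ is a finite birational morphism and $\C C$ is projective, so $\C C'$ is complete and projective as well.

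Finally, the three claims fall out at once: the local computation above shows exactly that the singular points of $\C C'$ are the $Q\in S'$ (the only $Q$ with $O'_Q$ not a DVR), and that at each such $Q$ the integral closure of $O'_Q$ in $\n K(\C C')=\n K(\C C)$ is $O_Q$. Since these local integral closures glue to $\C C$ and $\pi:\C C\to\C C'$ is the corresponding finite birational morphism with smooth source, $\C C$ is the normalization of $\C C'$. The main obstacle in this program is the affine-piece step: checking that $\C O'(U')$ is a finitely generated $\n K$-algebra (and not merely a subring of one) whose associated scheme really has $O'$ as its structure sheaf at the singular points — this is the content of Rosenlicht's construction and is where one uses crucially the sandwich $\n K+r_Q^{n_Q}\subseteq O'_Q\subseteq\n K+r_Q$ to control the conductor.
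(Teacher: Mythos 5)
The paper states Theorem \ref{66theo:C'singular} without proof; it is a recalled classical result from Serre (\cite{serre1988algebraic}, Chapter IV--V) and Rosenlicht, which the text explicitly invokes. Your sketch is a faithful reproduction of that standard argument: local analysis at the glued points via the conductor sandwich $\n K+r_Q^{n_Q}\subseteq O'_Q\subseteq\n K+r_Q$ (showing $O'_Q$ is a one-dimensional Noetherian local $\n K$-algebra with $O_Q$ as its integral closure, hence $Q$ singular with $\C C$ giving the normalization above it), then globalization by cutting down the coordinate ring of an affine open containing $S$ and invoking finite generation (Artin--Tate, or Rosenlicht's lemma). One small point you gloss over but correctly take for granted: that $O'_Q$ is actually local requires the observation that for $f=k+g\in O'_Q$ with $k\in\n K^*$, $g\in r_Q\cap O'_Q$, the inverse $f^{-1}$ lies in $O'_Q$ and not merely in $\n K+r_Q$ — this follows by expanding $f^{-1}=k^{-1}\sum_{j<n_Q}(-g/k)^j+(-g/k)^{n_Q}f^{-1}$ and using $r_Q^{n_Q}\subseteq O'_Q$ together with $g\in O'_Q$. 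With that noted, the proposal is sound and matches the reference the paper relies on.
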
 
	
We will denote by $\Div_S(\C C)$ the group of the divisors of $\C C$ prime to $S$ and we will denote by $\Div_S^0(\C C)$ its subgroup of degree-zero divisors. 

\begin{notat}\label{66notat:notat L}
	Let $g$ be the genus of $\C C$ and let $$g'=g+\sum_{Q\in S'}g_Q.$$ 

	Let $A\inn\Div_S(\C C)$ be a divisor prime to $S$. We can then consider the subsheaf $L'(A)$ of $\n K(\C C')=\n K(\C C)$ defined by $L'(A)_Q=\left\{\begin{array}{ll}O'_Q&\text{if } Q\inn S'\\ L(A)_Q&\text{otherwise}\end{array}\right.$. 

	Let $\C L'(A)=H^0(\C C',L'(A))$ and $\C I'(A)=H^1(\C C',L'(A))$. It can be proved that $\C L'(A),\C I'(A)$ are finite-dimensional $\n K-$vector spaces; let $\ell'(A),i'(A)$ be, respectively, their dimensions. 
\end{notat}

\begin{theo}[Generalized Riemann-Roch Theorem]\label{66theo:genRR}
	In the previous hypothesis and notations, for every divisor $A$ of $\C C$ prime to $S$ we have \be\ell'(A)-i'(A)=\deg (A)-g'+1.\ee
\end{theo}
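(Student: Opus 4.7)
The plan is to deduce the generalized Riemann-Roch theorem from the classical one on the smooth curve $\C C$ by comparing the sheaves $L'(A)$ and $L(A)$ via a short exact sequence supported on $S'$.

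First I would observe that, since $A$ is prime to $S$, the inclusion $O'_Q \subseteq O_Q$ at each singular point gives a canonical inclusion of sheaves $L'(A) \hookrightarrow \pi_* L(A)$ on $\C C'$, where $\pi : \C C \to \C C'$ is the normalization. The cokernel is a skyscraper sheaf supported on $S'$ whose stalk at $Q \in S'$ is $O_Q/O'_Q$, a $\n K$-vector space of dimension $g_Q$ by the very definition of $g_Q$. Thus we obtain a short exact sequence of sheaves on $\C C'$:
\begin{equation*}
0 \longrightarrow L'(A) \longrightarrow \pi_* L(A) \longrightarrow \mathcal{S} \longrightarrow 0,
\end{equation*}
with $\dim_{\n K} H^0(\C C',\mathcal{S}) = \sum_{Q \in S'} g_Q = g' - g$ and $H^1(\C C',\mathcal{S}) = 0$ because $\mathcal{S}$ is a skyscraper.

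Next I would take the associated long exact sequence in cohomology. Since $\pi$ is finite, $H^i(\C C', \pi_* L(A)) = H^i(\C C, L(A))$, so these groups have dimensions $\ell(A)$ and $i(A)$ respectively. The long exact sequence becomes
\begin{equation*}
0 \to \C L'(A) \to \C L(A) \to H^0(\mathcal{S}) \to \C I'(A) \to \C I(A) \to 0,
\end{equation*}
and the alternating sum of dimensions vanishes, giving
\begin{equation*}
\ell'(A) - \ell(A) + (g' - g) - i'(A) + i(A) = 0.
\end{equation*}
Substituting the classical Riemann-Roch identity $\ell(A) - i(A) = \deg(A) - g + 1$ yields $\ell'(A) - i'(A) = \deg(A) - g' + 1$, which is the claim.

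The routine verifications — that $L'(A) \hookrightarrow \pi_* L(A)$ is indeed injective with the claimed skyscraper cokernel, that pushforward under the finite map $\pi$ preserves cohomology, and that the finite-dimensionality of $\C L'(A)$ and $\C I'(A)$ holds — are all standard. The main obstacle, conceptually, is identifying the correct short exact sequence and verifying that the stalk computation at each singular point $Q$ really gives $O_Q/O'_Q$ of dimension $g_Q$; this relies on the hypothesis $\n K + r_Q^{n_Q} \subseteq O'_Q \subseteq \n K + r_Q$ of Notation \ref{66notat:sing}, which ensures that $O_Q/O'_Q$ is finite-dimensional and hence that $g_Q$ is well defined. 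Once this is in place, the result is a formal consequence of classical Riemann-Roch.
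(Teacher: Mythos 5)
The paper states this theorem without proof, citing Serre's \emph{Algebraic Groups and Class Fields} (Chapter V) for the whole development of generalized Jacobians, so there is no in-paper argument to compare against. Your proof is the standard one (and essentially the one Serre gives): the key short exact sequence $0 \to L'(A) \to \pi_* L(A) \to \mathcal{S} \to 0$ with skyscraper cokernel is correct once you note, as you implicitly do, that $A$ being prime to $S$ forces $(\pi_* L(A))_Q = \bigcap_{\pi(P)=Q} L(A)_P = \bigcap_{\pi(P)=Q} O_P = O_Q$ for $Q \in S'$, so the stalk of the cokernel there is exactly $O_Q/O'_Q$ of dimension $g_Q$; the long exact sequence plus exactness of $\pi_*$ for the finite (hence affine) normalization map then reduces everything to classical Riemann--Roch on $\C C$. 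The argument is complete and correct.
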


\begin{cor}\label{66cor:RRgen}
	Let $P_\infty$ be a point of $\C C$ not in $S$. Then for every divisor $A\in\Div_S^0(\C C)$ there exists an effective divisor $B$ of degree $g'$ (not necessarily prime to $S$) such that $$A=B-g'(P_\infty)-\div(f)$$ with $\ds f\inn\bigcap_{\mathclap{Q\in S'}} O'_Q$.
\end{cor}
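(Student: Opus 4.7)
The plan is to mimic, in the singular setting provided by $\C C'$, the standard deduction of Corollary \ref{6cor:RRcor} from the ordinary Riemann--Roch theorem. Concretely, I would apply the generalized Riemann--Roch Theorem \ref{66theo:genRR} to the divisor $A+g'(P_\infty)$ and extract a suitable global section.

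First I would note that, since $A\in\Div_S^0(\C C)$ and $P_\infty\notin S$, the divisor $A+g'(P_\infty)$ is again prime to $S$ and has degree exactly $g'$. Applying Theorem \ref{66theo:genRR} gives
\[
\ell'\bigl(A+g'(P_\infty)\bigr)-i'\bigl(A+g'(P_\infty)\bigr)=g'-g'+1=1,
\]
and since $i'\geq 0$ we obtain $\ell'(A+g'(P_\infty))\geq 1$. Hence there exists a nonzero
\[
f\in\C L'\bigl(A+g'(P_\infty)\bigr)=H^0\bigl(\C C',L'(A+g'(P_\infty))\bigr).
\]

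Next I would unpack what membership in this space means stalk by stalk. At each $Q\in S'$, the sheaf $L'(A+g'(P_\infty))_Q$ equals $O'_Q$ (because $A+g'(P_\infty)$ is prime to $S$), so $f\in\bigcap_{Q\in S'}O'_Q$, which is precisely the integrality condition required on $f$ in the statement. Since $O'_Q\subseteq O_Q=\bigcap_{\pi(P)=Q}O_P$, this also forces $\ord_P(f)\geq 0$ at every point $P\in\C C$ lying over $S'$. At each point $Q\notin S'$ the condition is the usual $\ord_Q(f)+\ord_Q(A+g'(P_\infty))\geq 0$.

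Combining these local inequalities on the smooth curve $\C C$, the divisor
\[
B:=\div(f)+A+g'(P_\infty)
\]
is effective on all of $\C C$ and has degree $\deg A+g'=g'$. Rearranging yields $A=B-g'(P_\infty)-\div(f)$ with $f\in\bigcap_{Q\in S'}O'_Q$, as required. There is no real obstacle here: the only point that needs a little care, and which I would check explicitly, is that the singular-point condition $f\in O'_Q$ is genuinely equivalent to the sheaf-theoretic statement $f\in L'(A+g'(P_\infty))_Q$ when the divisor is prime to $S$, so that the effectivity of $B$ is automatic at the singular locus; everything else is a direct transcription of the classical Riemann--Roch argument.
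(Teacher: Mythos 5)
Your proof is correct and takes exactly the approach the paper has in mind: the corollary is placed immediately after the generalized Riemann--Roch theorem, and the argument is the direct transcription of the classical deduction of Corollary \ref{6cor:RRcor} to the singular setting. The key points --- that $A+g'(P_\infty)$ remains prime to $S$ and has degree $g'$, that $\ell'\geq 1$ forces a nonzero $f\inn\C L'(A+g'(P_\infty))$, that $f\inn O'_Q\subseteq O_Q$ at each $Q\inn S'$ yields effectivity of $B=\div(f)+A+g'(P_\infty)$ over $S$, and that membership $f\inn\bigcap_{Q\in S'}O'_Q$ is built into the definition of $L'$ --- are all present and correctly justified.
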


Instead of the classical equivalence relation $\sim$ between divisors, we can consider the strong equivalence relation $$A\approx B \text{ if and ony if } A=B+\div(f) \text{ with } f\inn\bigcap_{Q\in S'} (O'_Q)^*.$$ Rosenlicht has then studied in all generality the quotient group $\Div_S^0(\C C)/\approx$.\par\medskip

We will be particularly interested in the case where the equivalence relation $\C R$ over $\C C$, and by consequence the equivalence relation $\approx$ on $\Div^0(\C C)$, is defined by a modulus.
\subsubsection{Generalized Jacobian associated to a modulus}

\begin{defn}
	Let $S\sub \C C$ be a finite set of points. A \textit{modulus} $\M$ supported on $S$ is an effective divisor of the form $\sum\limits_{P\in S} n_P(P)$ with $n_P>0$ for every $P\inn S$. We can thus define the degree of $\M$ as $\deg(\M)=\sum_P n_P$.
\end{defn}

\begin{defn}
	Let $\M$ be a modulus on $\C C$ supported on $S$; for $f\inn\n K(\C C)$ and $k\inn\n K$ we will write $$f\equiv k\pmod{\M}$$ if $$\ord_P(f-k)\geq n_P \text{ for every } P\inn S.$$
\end{defn}

In particular if $f\equiv k\pmod\M$, then $f(P)=k$ for every $P\inn S$, so if $k\inn\n K^*$ we will have that $\div(f)$ is prime to $S$. 

\begin{notat}
	From now on, we will always consider a modulus $\M=\sum_{P\in S} n_P (P)$ on $\C C$ with support $S$. We will always assume $\M$ non trivial, that is $\deg\M\geq2$.
	
	The singular curve $\C C_\M$ associated to the modulus $\M$ is defined as in \ref{66notat:sing} by identifying all the points of $S$. Then $S'$ is reduced to a point, $S'=\{Q\}$ with $Q\notin\C C$, and $$\C C_\M=(\C C\setminus S)\cup\{Q\}.$$ 
	
	In the previous notations, we will have $O_Q\!=\!\{f\inn\n K(\C C),\ \ord_P(f)\geq0 \text{ for every } P\inn S\}$ and $r_Q=\{f\inn O_Q,\ \ord_P(f)>0 \text{ for every } P\inn S\}$.
	We can then take $$O'_Q=\{f\inn O_Q,\ f\equiv k\!\!\!\!\!\pmod\M \text{ with } k\inn\n K\}.$$ Indeed, as we assumed $\M$ non trivial, $O'_Q\neq O_Q$ and for $n\geq\max_{P\in S} n_P$ we have $\n K+r_Q^n\sub O'_Q\sub\n K+ r_Q$. Then $$g_Q=\deg\M-1, \text{ so } g'=g+\deg\M-1.$$
	
	By Theorem \ref{66theo:C'singular}, $\C C_\M$ with the sheaf $O'$ is singular and its only singular point is $Q$. 
\end{notat}

\begin{defn} 
	As $(O'_Q)^*=\{f\inn\n K(\C C),\ f\equiv k\pmod\M\text{ with } k\inn\n K^*\}$, two divisors $A,A'$ relatively prime to $S$ are equivalent with respect to the strong equivalence relation $\approx$, which will now be denoted by $\sim_\M$, if and only if $$A=A'+\div(f) \text{ with } f\equiv1\pmod\M.$$ In this case $A,A'$ are said to be $\M$-\textit{equivalent}.

	We will denote by $\Pic_\M(\C C)$ the quotient of $\Div_S(\C C)$ with respect to $\sim_\M$ and by $\Pic^0_\M(\C C)$ its subgroup formed by the classes of divisors of degree 0. We will denote by $[A]_\M$ the class in $\Pic_\M(\C C)$ of a divisor $A\inn\Div_S(\C C)$.
\end{defn}

$\Pic^0_\M(\C C)$ can be given a structure of algebraic group; it is then denoted by $\C J_\M$ and called the \textit{generalized Jacobian} of $\C C$ relative to the modulus $\M$.

\begin{lem}
	$\C J_\M$ is an extension of the usual Jacobian $\C J$ by $\Ker\Phi_\M$, where $\Phi_\M$ is the group morphism from the generalized Jacobian $\C J_\M$ to the classical Jacobian $\C J$ of $\C C$ defined by $$\Phi_\M([A]_\M)=[A].$$
\end{lem}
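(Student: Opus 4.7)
The plan is to verify that $\Phi_\M$ is a well-defined surjective group morphism and then identify its kernel explicitly; once this is done, the short exact sequence $0\to\Ker\Phi_\M\to\C J_\M\xrightarrow{\Phi_\M}\C J\to 0$ is the asserted extension structure.

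First, I would check that $\Phi_\M$ is well defined. If $A,A'\inn\Div_S^0(\C C)$ satisfy $A\sim_\M A'$, then by definition $A=A'+\div(f)$ for some $f\inn\n K(\C C)^*$ with $f\equiv1\!\!\pmod\M$. In particular $f$ is an honest rational function on $\C C$, so $A\sim A'$ in the usual sense, i.e. $[A]=[A']$ in $\C J$. That $\Phi_\M$ is a group morphism is then immediate from the fact that addition in both $\C J_\M$ and $\C J$ is induced by the addition of divisors.

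Next, for surjectivity, given any class in $\C J$, I would first exhibit a representative divisor $A_0$ of degree $0$ prime to $S$: by Riemann--Roch on the smooth curve $\C C$ (Corollary \ref{6cor:RRcor}), any degree-zero class on $\C C$ contains an effective-minus-effective representative, and using $\ell(D)\geq\deg D-g+1$ applied to divisors of the form $D+n(P_\infty)$ for $P_\infty\notin S$ one can always move the support of any divisor away from the finite set $S$. Then $[A_0]_\M\inn\C J_\M$ is mapped to the prescribed class, so $\Phi_\M$ is surjective.

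Finally, I would compute $\Ker\Phi_\M$. An element $[A]_\M\inn\C J_\M$ lies in the kernel precisely when $A\sim 0$ on $\C C$, i.e.\ $A=\div(f)$ for some $f\inn\n K(\C C)^*$; since $A$ is prime to $S$, $f$ has neither zero nor pole on $S$, so $f(P)\inn\n K^*$ for every $P\inn S$. Two such functions $f,f'$ yield the same class in $\C J_\M$ if and only if $\div(f/f')$ is $\M$-equivalent to $0$, i.e.\ $f/f'\equiv c\pmod\M$ for some $c\inn\n K^*$. Hence the natural map sending $f$ to the tuple of its truncated Taylor expansions at the points of $S$ induces an isomorphism
\[
\Ker\Phi_\M\;\simeq\;\Big(\prod_{P\inn S}\bigl(O_P/M_P^{n_P}\bigr)^{\!*}\Big)\Big/\n K^*,
\]
where $\n K^*$ is embedded diagonally via the constants. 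This quotient is a connected commutative algebraic group of dimension $\deg\M-1=\sum_{P\in S}n_P-1$, consistent with the equality $g'=g+\deg\M-1$ of Notation \ref{66notat:notat L}.

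The only step that requires genuine care is the last one: checking that the description of $\Ker\Phi_\M$ as an algebraic group really coincides with the quotient $(\prod_{P\in S}(O_P/M_P^{n_P})^*)/\n K^*$, and in particular that the scheme structure on $\C J_\M$ constructed by Rosenlicht matches this local description. I would handle this by exhibiting the map $f\mapsto(f\bmod M_P^{n_P})_{P\in S}$ as a morphism of algebraic groups and then applying the dimension count together with surjectivity to conclude it is an isomorphism; this is the standard Serre argument (\cite{serre1988algebraic}, Chapter V, §3) and is the only nontrivial ingredient in the proof.
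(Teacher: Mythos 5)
Your argument follows the paper's (very terse) proof in its essentials: well-definedness holds because $\M$-equivalence refines ordinary linear equivalence, surjectivity follows from a moving lemma producing a representative divisor prime to $S$, and the extension is then the tautological short exact sequence $0\to\Ker\Phi_\M\to\C J_\M\xrightarrow{\Phi_\M}\C J\to0$. Your concluding paragraph identifying $\Ker\Phi_\M$ with $\bigl(\prod_{P\in S}(O_P/M_P^{n_P})^{*}\bigr)/\n K^{*}$ is correct (and consistent with $g'=g+\deg\M-1$), but it is really the content of the subsequent Proposition \ref{66prop:ext} rather than of this lemma as stated.
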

\begin{proof}	 
	$\Phi_\M$ is clearly well defined; actually, it is also an algebraic morphism. It is easy to see that $\Phi_\M$ is surjective, and that we have the short exact sequence $$0\to \Ker \Phi_\M\hookrightarrow\C J_\M\xrightarrow{\Phi_\M}\C J\to0.$$ 
\end{proof}	

\begin{prop}\label{66prop:ext}
	If $\char\n K=0$, then as an algebraic group, $\C J_\M$ is an extension of $\C J$ by the linear group $$\n G_m^{\# S-1}\times\prod_{P\in S}\n G_a^{n_P-1}.$$
\end{prop}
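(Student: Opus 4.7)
The plan is to identify $\Ker\Phi_\M$ directly and show that it carries the stated algebraic group structure, with the characteristic zero hypothesis entering only at the last step. First I would observe that, by definition, an element of $\Ker\Phi_\M$ is a class $[A]_\M$ with $A \in \Div^0_S(\C C)$ and $A = \div(f)$ for some $f \in \n K(\C C)^*$; such a class vanishes in $\C J_\M$ iff $f$ can be chosen $\equiv 1 \pmod \M$, equivalently (since we are free to rescale $f$ by $\n K^*$) iff $f \equiv c \pmod \M$ for some $c \in \n K^*$. Because $\div(f)$ is prime to $S$ precisely when $f$ is a unit at every $P \in S$, i.e.\ $f \in O_Q^*$, and because $\div(f)=\div(g)$ iff $f/g \in \n K^*$, this yields a canonical identification
\[
\Ker\Phi_\M \;\simeq\; O_Q^* / (O'_Q)^*,
\]
which is exactly the stalk at the unique singular point $Q$ of the cokernel of $(O')^* \hookrightarrow \pi_*O^*$ on $\C C_\M$.

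Next I would compute this local quotient. Fixing a uniformizer $t_P$ at each $P \in S$, any $f \in O_Q^*$ admits an expansion $f = f(P)\bigl(1 + a_1(P)t_P + a_2(P)t_P^2 + \cdots\bigr)$ at $P$, and the condition $f \equiv c \pmod \M$ translates into $f(P) = c$ for every $P \in S$ together with $a_1(P) = \cdots = a_{n_P - 1}(P) = 0$ for every $P$. Sending $f$ to the tuple of leading values $(f(P))_{P \in S}$ and of truncated normalized expansions $(a_1(P),\dots,a_{n_P-1}(P))_{P \in S}$ therefore gives a surjection from $O_Q^*$ onto $\n K^{*\,\#S} \times \prod_{P \in S}\n K^{n_P - 1}$ whose kernel is exactly $(O'_Q)^*$. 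Modding out further by the diagonal copy of $\n K^*$ (which corresponds to the fact that $\div(f)$ only depends on $f$ up to scalars, or equivalently to choosing the constant $c$) reduces the $\n G_m$-part from $\n G_m^{\#S}$ to $\n G_m^{\#S - 1}$, giving the claimed set-theoretic description of $\Ker\Phi_\M$.

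The final and main point is to upgrade this bijection to an isomorphism of algebraic groups and to explain the role of the hypothesis $\char \n K = 0$. The $\n G_m^{\#S-1}$ factor, coming from the gluing of the points of $S$ at $Q$, is always algebraic. For each $P \in S$ the filtration
\[
1 + t_P O_P \;\supset\; 1 + t_P^2 O_P \;\supset\; \cdots \;\supset\; 1 + t_P^{n_P} O_P
\]
has successive quotients $(1 + t_P^k O_P)/(1 + t_P^{k+1} O_P) \simeq \n G_a$, so the local unit group modulo $\M$ is an iterated extension of $\n G_a$'s of total length $n_P - 1$. The hard part is to check that this iterated extension is actually trivial, i.e.\ isomorphic to $\n G_a^{n_P - 1}$ as an algebraic group: this is where $\char\n K = 0$ is used, via the formal logarithm $\log(1 + x) = x - x^2/2 + x^3/3 - \cdots$, which converges in the truncated local ring $O_P/t_P^{n_P}$ and provides a functorial (polynomial) splitting of the filtration onto $\prod_{k=1}^{n_P - 1}\n G_a$. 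Assembling the local contributions at the points of $S$ with the global $\n G_m^{\#S-1}$ factor then yields the required isomorphism of algebraic groups
\[
\Ker\Phi_\M \;\simeq\; \n G_m^{\#S-1} \times \prod_{P \in S}\n G_a^{n_P - 1},
\]
which, combined with the short exact sequence $0 \to \Ker\Phi_\M \to \C J_\M \to \C J \to 0$, gives the desired extension description of $\C J_\M$.
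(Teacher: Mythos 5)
Your argument follows essentially the same route as the paper's: identify $\Ker\Phi_\M$ with the local quotient $O_Q^*/(O'_Q)^*$, equivalently $\bigl(\prod_{P\in S} U_P/U_P^{(n_P)}\bigr)/\n G_m$, extract the $\n G_m^{\#S-1}$ factor from the leading values at the points of $S$, and use the truncated formal logarithm (which is where $\char\n K=0$ enters) to split the unipotent filtration into $\prod_{P\in S}\n G_a^{n_P-1}$; the paper leaves this last splitting as an assertion, whereas you spell it out. One small wording slip: the kernel of your surjection from $O_Q^*$ onto $\n K^{*\,\#S}\times\prod_P\n K^{n_P-1}$ is the subgroup $\{f\equiv 1\pmod\M\}$, not all of $(O'_Q)^*$; the latter maps onto exactly the diagonal $\n K^*$ you then quotient by, so the final identification is correct.
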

\begin{proof}[Sketch of proof]
Of course, $\Ker\Phi_\M\!=\!\Big\{[\div(f)]_\M,\ f\inn\n K(\C C), \ord_P(f)=0 \text{ for every } P\inn S\Big\}.$ 

Now, $[\div(f)]_\M=0\inn\C J_\M$ if and only if $f\inn(O'_Q)^*$, if and only if there exists $k\inn\n K^*$ such that $\ord_P(f-k)\geq n_P$ for every $P\inn S$. Let $U_P$ be the multiplicative group of the rational functions $f$ such that $\ord_P(f)=0$ and let $U_P^{(n_P)}$ be its subgroup formed by the functions $f$ such that $\ord_P(f-1)\geq n_P$. Thus, $[\div(f)]_\M=0$ if and only if there exists $k\inn\n K^*$ such that $kf\inn\bigcap\limits_{P\in S} U_P^{(n_P)}$ and $\Ker\Phi_\M\simeq(\prod_{P\in S}U_P/U_P^{(n_P)})/\n G_\M$ (where $\n G_m$ is the multiplicative group). If $\char\n K=0$, then it can be seen that as algebraic groups, $U/U^{(n_P)}\simeq\n G_m\times\n G_a^{n_P-1}$ (where $\n G_a$ is the additive group).
\end{proof}

\begin{ex}\label{66ex:hyperell1}
	As before, let $\C H_D$ denote the hyperelliptic curve of affine model $U^2=D(T)$, where $D$ is a squarefree polynomial of degree $2d$. Let $P=(t,u)$ be an affine point of $\C H_D$ and let $\M$ be the modulus $\M=(P)+(P')$, where $P'$ is the conjugate of $P$. Let us denote by $(\C H_D)_\M=\C H_{D,P}$ the singular curve obtained as before and by $Q$ its only singular point. Then we have $g_Q=1$.\par\medskip
	
	Let us firstly assume that $P\neq P'$, that is, $t$ is not a root of $D$. 
	
	Then $O_Q=O_P\cap O_{P'}=\{(a+b\sqrt D)/c\inn\n K(\C H_D),\ a,b,c\inn\n K[T] \text{ and }c(t)\neq0\}$ and, for $f\inn\n K(\C H_D)$ and $k\inn\n K$, we will have $f\equiv k\pmod\M$ if and only if $f(P)=f(P')=k$.\par\medskip
	
	Let now $P=P'=(t,0)$, that is, let $t$ be a zero of $D$. Then we have $\M=2(P)$. Thus, as before, $O_Q=O_P=\{(a+b\sqrt D)/c\inn\n K(\C H_D),\ a,b,c\inn\n K[T] \text{ and }c(t)\neq0\}$ and $f\!\equiv\! k\!\!\pmod\M$ if and only if $\ord_P(f-k)\geq2$.\par\medskip
	
	In both cases then $f\equiv k\pmod\M$ if and only if $f=\frac{a+b(T-t)\sqrt D}c$ with $c(t)\neq0$ and with $\frac{a(t)}{c(t)}\!=\!k$, so	$O'_Q=\{(a+b\sqrt D)/c\inn\n K(\C H_D),\ c(t)\!\neq0,\ b(t)=0\}$.\par\medskip

	Let $\C J_P$ be the corresponding generalized Jacobian. By Proposition \ref{66prop:ext}, if $P\neq P'$, then $\C J_\M$ is an extension of $\C J$ by $\n G_m$, while if $P=P'$, then $\C J_\M$ is an extension of $\C J$ by $\n G_a$.
\end{ex}

\subsubsection{Pullback of generalized Jacobians}\label{66subsec:pullpack}
\begin{rem}
	Let $\widetilde D\inn\n K[T]$ be a squarefree polynomial and let $D(T)=(T-\lambda)^2\widetilde D(T)$. As we have seen in Remark \ref{6rem:multdeltanonsf}, to link the convergents of $\sqrt D$ with the study of the multiples of $\delta=(\infty_-)-(\infty_+)$ we must restrict the usual linear equivalence to the functions of the form $\pphi=a+b(T-\lambda)\sqrt{\widetilde D}$, that is, to the elements of $O'_Q$ for $\C C=\C H_{\widetilde D}$, $\M=(P)+(P')$ and $P=\left(\lambda,\sqrt{\widetilde D(\lambda)}\right)$. This gives a first connection between generalized Jacobians and convergents of the square root of non-squarefree polynomials.\par\medskip
	
	However, if $D(T)=(T-\lambda_1)^2\dots(T-\lambda_n)^2\widetilde D(T)$, where the $\lambda_i$ are pairwise distinct constants, this connection becomes less straightforward. Indeed, to study as before the convergents of $\sqrt D$ we have to consider linear equivalence with respect to rational functions of the form $a+b(T-\lambda_1)\cdots(T-\lambda_n)\sqrt{\widetilde D}$, where $a$ is any polynomial. On the other hand, taking $\M=(P_1)+(P_1')+\cdots+(P_n)+(P_n')$ with $P_i=\left(\lambda_i,\sqrt{\widetilde D(\lambda_i)}\right)$, we have that $O'_Q$ is the set of functions of the form $a+b(T-\lambda_1)\cdots(T-\lambda_n)\sqrt{\widetilde D}$ where $a(\lambda_1)=\cdots=a(\lambda_n)\inn\n K$. 
	
	So we would be interested in restricting the usual linear equivalence to a set of functions bigger than $O'_Q$, this is why we will repeat the previous reasoning with a less strict equivalence relation.
\end{rem}

\begin{notat}\label{66lem:genJac2}
	As before, let $D\inn\n K[T]$ be a squarefree polynomial of even degree $2d$ and let $\C H=\C H_D$ be the hyperelliptic curve $U^2\!=\!D(T)$. Let $P_1\!=(t_1,u_1),\dots,P_n\!=(t_n,u_n)$ be distinct affine points of $\C H$ such that $P_i\neq P_j'$ for every $i,j$ (in particular, the $t_i$ are not roots of $D$). For $i=1,\dots,n$ let $\M_i$ be the modulus $\M_i=(P_i)+(P_i')$. Let $S=\{P_1,P_1',\dots,P_n,P_n'\}$; we can consider the equivalence relation on $S$ that identifies the $P_i$ with their conjugates: $$P\mathscr R \widetilde P \text{ if and only if } \widetilde P=P \text{ or } \widetilde P=P'.$$ Let $S'=\{Q_1,\dots,Q_n\}$, with $Q_i=[P_i]_{\mathscr R}$ and let $\C H_S=(\C H\setminus S)\cup S'$.
\end{notat}

\begin{rem}
	Let $f=\frac{a+b\sqrt D}c\inn\n K(\C H)$ with $a,b,c$ relatively prime polynomials. Then, as in Example \ref{66ex:hyperell1}, $f\inn O_{Q_i}=O_{P_i}\cap O_{P_i'}$ if and only if $c(t_i)\neq0$ and  $f\inn r_{Q_i}$ if and only if $c(t_i)\neq0$ and $a(t_i)=b(t_i)=0$, if and only if $f\equiv0\pmod{\M_i}$. We can then take $O'_{Q_i}=\n K+r_{Q_i}$, that is, $f\inn O'_{Q_i}$ if and only if $f\equiv k_i\pmod{\M_i}$ for some $k_i\inn\n K$, if and only if $c(t_i)\neq0$ and $b(t_i)=0$.
	
	Writing $O'_{Q_1}\cap\dots\cap O'_{Q_n}=O'_S$ and $(T-t_1)\cdots(T-t_n)=R_S$ we will have $$f\inn O'_S \text{ if and only if } f=\frac{a+bR_S\sqrt D}{c} \text{ with } c(t_i)\neq 0 \text{ for every }i,$$ if and only if for every $i$ there exists $k_i\inn\n K$ such that $f\equiv k_i\pmod{\M_i}$. 
	Moreover, $f\inn(O'_S)^*$ if and only if $k_1,\dots,k_n\inn\n K^*$. 
\end{rem}	

Certainly $g_{Q_i}=1$ for every $i$, so $$g'=g+n.$$
	
As in Notation \ref{66notat:notat L}, for every divisor $A$ prime to $S$ we can consider the $\n K$-vector space $\C L_S(A)=\{f\inn O'_S,\ A+\div(f)\geq0\}$. The generalized Riemann-Roch Theorem \ref{66theo:genRR} then becomes: $$\ell_S(A)-i_S(A)=\deg (A)-g-n+1.$$ In particular, by Corollary \ref{66cor:RRgen}, choosing $P_\infty=\infty_+$ we have that for every divisor $A$ prime to $S$ of degree 0 there exists a unique effective divisor $B$ of degree $l\leq g'$, with $l$ minimal, such that $A=B-l(\infty_+)-\div (f)$ with $f\inn O'_S$. It  is possible that $f\equiv0\pmod{\M_i}$ for some $i$, so it is possible that $B$ is no longer prime to $S$.

\begin{notat}	
	Let $A,A'$ be divisors of $\C C$ prime to $S$. We will write $A\approx A'$ if and only if $A=A'+\div(f)$ with $f\inn(O'_S)^*$; in this case we will say that $A,A'$ are equivalent with respect to $S$ and we will write $A\sim_S A'$. 

	We will denote by $\Pic_S(\C H)$ the group of the classes of divisors prime to $S$ modulo $\sim_S$ and by $\Pic^0_S(\C H)$ its subgroup formed by the classes of divisors of degree 0.
\end{notat}
	
	The results on generalized Jacobians associated to a modulus can be easily adapted to this case. In particular, $\Pic^0_S(\C H)$ has a structure of algebraic group, denoted by $\C J_S$; we will say that $\C J_S$ is the \textit{generalized Jacobian of $\C H$ associated to $S$}.

\begin{lem} 
	$\C J_S$ is an extension of $\C J$ by $\n G_m^n.$
\end{lem}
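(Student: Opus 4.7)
The plan is to mimic closely the proof of Proposition \ref{66prop:ext}, since the setting here is an entirely analogous one (identifying $\C J_S$ as a generalized Jacobian for a Rosenlicht-type equivalence coming from identifying the pairs $\{P_i,P_i'\}$ into nodes $Q_i$). First I would define the natural group morphism
\[
\Phi_S: \C J_S \to \C J,\qquad [A]_S\mapsto [A],
\]
which is well defined because $\sim_S$ is finer than $\sim$, and which is surjective because every class in $\C J$ admits a representative prime to $S$ (adjust by a principal divisor supported outside $S$ using Riemann-Roch on $\C H$). The whole Lemma then reduces to identifying $\Ker\Phi_S$ with $\n G_m^n$ as an algebraic group.

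Next I would compute $\Ker\Phi_S$ concretely. An element of the kernel is represented by a principal divisor $\div(f)$ prime to $S$, so $f\in\bigcap_i O_{Q_i}^*$; and two such $f,g$ give the same class in $\C J_S$ iff $f/g\in(O'_S)^*$. Hence
\[
\Ker\Phi_S\;\simeq\;\Bigl(\bigcap_{i=1}^n O_{Q_i}^*\Bigr)\Big/(O'_S)^*.
\]
To match this with $\n G_m^n$ I would introduce the evaluation map
\[
\psi:\bigcap_{i=1}^n O_{Q_i}^*\longrightarrow(\n K^*)^n,\qquad f\mapsto\bigl(f(P_1)/f(P_1'),\dots,f(P_n)/f(P_n')\bigr).
\]
Since $f\in(O'_S)^*$ means precisely that for each $i$ there exists $k_i\in\n K^*$ with $f\equiv k_i\pmod{\M_i}$, i.e. $f(P_i)=f(P_i')\in\n K^*$, the kernel of $\psi$ is exactly $(O'_S)^*$. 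So $\psi$ induces an injection $\Ker\Phi_S\hookrightarrow\n G_m^n(\n K)$.

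The remaining point, and the only nontrivial one, is the surjectivity of $\psi$: given $(a_1,\dots,a_n)\in(\n K^*)^n$, one has to produce $f\in\bigcap_i O_{Q_i}^*$ with $f(P_i)/f(P_i')=a_i$ for every $i$. Because the $2n$ points $P_1,P_1',\dots,P_n,P_n'$ are pairwise distinct (by the hypotheses $P_i\neq P_j$ and $P_i\neq P_j'$), this follows from Riemann-Roch on $\C H$: for any divisor $E$ of degree $\geq 2g+2n-1$ whose support is disjoint from $S$, the evaluation map $\C L(E)\to\n K^{2n}$ at the points of $S$ is surjective, so one may prescribe any nonzero values $(b_i,c_i)\in(\n K^*)^{2n}$ at the $P_i,P_i'$; in particular one can arrange $b_i/c_i=a_i$.

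Finally, I would remark that $\psi$ is an algebraic morphism and, since we are in characteristic $0$ and working over an algebraically closed field, the resulting bijection $\Ker\Phi_S\simeq\n G_m^n$ is in fact an isomorphism of algebraic groups. This gives the short exact sequence $0\to\n G_m^n\to\C J_S\xrightarrow{\Phi_S}\C J\to 0$ and proves that $\C J_S$ is an extension of $\C J$ by $\n G_m^n$. The only step that is not essentially bookkeeping is the surjectivity of $\psi$, and I expect this to be the main technical point; everything else is a direct transcription of Serre's argument, with the "global scaling" quotient (which gave the $-1$ in the exponent $\#S-1$ of Proposition \ref{66prop:ext}) now absent because there are $n$ independent singular points $Q_i$ rather than a single one.
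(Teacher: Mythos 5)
Your proof is correct and follows essentially the same route as the paper: identify $\C J_S$ as an extension of $\C J$ via the forgetful map $\Phi_S$ and compute its kernel, which the paper dispatches with "as before" (referring to Proposition \ref{66prop:ext}) and later re-derives via the pullback description. The explicit evaluation map $\psi(f)=\bigl(f(P_i)/f(P_i')\bigr)_i$ and the Riemann--Roch argument for its surjectivity are precisely the details the paper leaves implicit, and your observation that the global $\n G_m$-quotient of the single-modulus case disappears here is the right way to see why the answer is $\n G_m^n$ rather than $\n G_m^{2n-1}$.
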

\begin{proof}
	As before, we can consider the surjective map $\Phi_S: \C J_S\to\C J$; then $\C J_S$ is an extension of $\C J$ by $\Ker(\Phi_S)$, which is isomorphic to $\n G_m^n$ as algebraic groups. 
\end{proof}

\begin{rem}\label{66rem:genjac3}
	More generally, if $\C C$ is a complete, irreducible, smooth curve we can consider a finite family of pairwise disjoint moduli  $M=\{\M_1,\dots,\M_n\}$, that is, moduli $\M_i=\sum_{P\in S_i}n_P(P)$ such that their supports $S_i$ are pairwise disjoint.
	
	Let $S=\cup_i S_i$, we can consider on $S$ the equivalence relation $$P\mathscr RQ\iff \text{ there exists }i\text{ such that }P,Q\inn S_i.$$ Let $S'=S/\mathscr R$ and, for every $i$, let $Q_i\!=\![P]_{\mathscr R}$ for $P\inn S_i$; let $\C C'\!=\!(\C C\setminus S)\cup S'$. In the notations of \ref{66notat:sing} we will have $O_i\!=\!O_{Q_i}\!=\!\!\bigcap\limits_{P\in S_i}\!\!O_P$, $r_i\!=\!r_{Q_i}\!=\!\{f\inn O_{Q_i},\ \ord_P(f)\!>\!0\text{ for every } P\inn S_i\}$ and we can take $O'_i=O'_{Q_i}=\{f\inn O_i,\ f\equiv k_i\pmod{\M_i}\text{ for some } k_i\inn\n K\}$. Then $O'_i$ is a proper subring of $O_i$ and $\n K+r_i^{n_i}\sub O'_i\sub\n K+r_i$, where $n_i=\max_{P\in S_i}n_P$. Moreover, $f\inn(O'_i)^*$ if and only if $f\equiv k_i\pmod{\M_i}$ with $k_i\inn\n K^*$. 
	As before, $\C C'$ is an algebraic curve, $S'$ is the set of its singular points and $\C C$ is its normalization.\par\medskip
	
	We will say that two divisors $A,A'$ are equivalent with respect to $M$, and we will write $A\sim_M A'$, if $A=A'+\div (f)$ with $f\inn (O'_i)^*$ for every $i$. As before, we will consider $\Pic_M(\C C)=\Div_S(\C C)/\sim_M,\ \Pic_M^0(\C C)=\Div^0_S(\C C)/\sim_M$. Again, $\Pic_M^0$ has a structure of algebraic group, we will denote it by $\C J_M$ and we will call it the generalized Jacobian of $\C C$ relative to $M$\footnote{$\C J_M$ is, in Roselincht's notations \cite{rosenlicht1954generalized}, the generalized Jacobian relative to the semi-local ring $\mathfrak o=\cap_iO_i=\{f\inn\n K(\C C),\ \text{for every } i\text{ there exists } k_i\inn\n K \text{ such that } f\equiv k_i\pmod{\M_i}\}$}. Exactly as before, $\C J_M$ is an extension of $\C J$ by the Kernel of the surjective standard map $$\Phi_M:\C J_M\to\C J.$$ We could study $\Ker\Phi_M$ with the previous methods, finding it to be a product of copies of $\n G_m$ and $\n G_a$. However, we can also see the Jacobian $\C J_M$ as the pullback of the Jacobians $\C J_{\M_i}$.
\end{rem}

\begin{notat}
	Given $A,B,C$ objects in a category $\boldsymbol{C}$ and maps $\alpha\!:\!A\!\to C,\ \beta\!:\!B\!\to C$, their \textit{pullback} is a triple $(P,\pphi,\psi)$, where $P$ is an object and $\pphi,\psi$ are maps with $\pphi:P\to A$ and $\psi:P\to B$, such that $\alpha\circ\pphi=\beta\circ\psi$ and such that for every object $K$ and for every couple of maps $\pphi',\psi'$ such that $\alpha\circ\pphi'=\beta\circ\psi'$ there exists a unique map $h:K\to P$ that makes the following diagram commute  
	$$\begin{tikzcd}K\arrow[bend left]{rrd}{\pphi'}\arrow[dotted]{rd}{h}\arrow[bend right]{rdd}{\psi'}&&\\&P\arrow{r}{\pphi}\arrow{d}{\psi}&A\arrow{d}{\alpha}\\&B\arrow{r}{\beta}&C\end{tikzcd}$$
	When it exists, the pullback is unique (up to isomorphisms).
	
	In the category $\boldsymbol{C}$ of abelian groups, the pullback always exists: in the previous notations we can take $P=\{(a,b)\inn A\times B,\ \alpha(a)=\beta(b)\}$, $\pphi=\pi_A|_P,\psi=\pi_B|_P$ (with $\pi_A,\pi_B$ canonical projections). Let $\chi=\alpha\circ\pphi=\beta\circ\psi$. Then $\Ker(\chi)=\Ker(\alpha)\times\Ker(\beta)$ and $\Imm(\chi)=\Imm(\alpha)\cap\Imm(\beta)$.
	
	In particular, if $A$ is an extension of $C$ by $K_A$ and $B$ is an extension of $C$ by $K_B$, their pullback $P$ is an extension of $C$ by $K_A\times K_B$.
\end{notat}

\begin{lem}
	As before, let $\C C$ be a smooth curve over an algebraically closed field $\n K$, let $\M_1,\M_2$ be disjoint moduli over $\C C$. Then  the generalized Jacobian relative to $M=\{\M_1,\M_2\}$ defined in Remark \ref{66rem:genjac3} is, at least as an abelian group, the pullback of the generalized Jacobians relative to $\M_1,\M_2$.
\end{lem}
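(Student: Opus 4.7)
The plan is to exhibit a natural map from $\C J_M$ to the pullback of $\Phi_{\M_1}$ and $\Phi_{\M_2}$ and then show it is an isomorphism of abelian groups. Let $P$ denote the pullback, i.e.
\[
P=\Big\{\big([A_1]_{\M_1},[A_2]_{\M_2}\big)\inn\C J_{\M_1}\times\C J_{\M_2}\,:\,\Phi_{\M_1}[A_1]_{\M_1}=\Phi_{\M_2}[A_2]_{\M_2}\Big\}.
\]

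First I would define, for $i=1,2$, the canonical forgetful homomorphism $\pi_i:\C J_M\to\C J_{\M_i}$ by $[A]_M\mapsto [A]_{\M_i}$. This is well defined: if $A-A'=\div(f)$ with $f\inn(O'_1)^*\cap(O'_2)^*$, then in particular $f\equiv k_i\pmod{\M_i}$ with $k_i\inn\n K^*$, so $A\sim_{\M_i}A'$. By construction $\Phi_{\M_1}\!\circ\pi_1=\Phi_{\M_2}\!\circ\pi_2$ (both forget the modulus structure entirely), hence the universal property of the pullback produces a group homomorphism $\Psi:\C J_M\to P$, $[A]_M\mapsto\big([A]_{\M_1},[A]_{\M_2}\big)$.

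Next I would check injectivity of $\Psi$. Suppose $\Psi[A]_M=0$, so $A=\div(f_1)$ with $f_1\equiv k_1\!\pmod{\M_1}$, $k_1\inn\n K^*$, and $A=\div(f_2)$ with $f_2\equiv k_2\!\pmod{\M_2}$, $k_2\inn\n K^*$. Then $f_1/f_2$ is a regular function on $\C C$ without zeros or poles, hence $f_1=c\,f_2$ for some $c\inn\n K^*$. Consequently $f_1\equiv c\,k_2\pmod{\M_2}$ while still $f_1\equiv k_1\pmod{\M_1}$; as both $c\,k_2$ and $k_1$ are in $\n K^*$, $f_1\inn(O'_1)^*\cap(O'_2)^*$, so $[A]_M=0$.

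The main step is surjectivity, which is essentially a weak approximation argument and I expect it to be the one requiring the most care. Fix $\big([A_1]_{\M_1},[A_2]_{\M_2}\big)\inn P$; since the images in $\C J$ coincide, there exists $g\inn\n K(\C C)^*$ with $A_1-A_2=\div(g)$. Because $A_1$ and $A_2$ are prime to $S=S_1\cup S_2$, so is $A_1-A_2=\div(g)$; in particular $\ord_P g=0$ for every $P\inn S$. The goal is to find $h\inn\n K(\C C)^*$ and constants $k_1,k_2\inn\n K^*$ with
\[
h\equiv k_1\!\!\pmod{\M_1},\qquad h\,g^{-1}\equiv k_2\!\!\pmod{\M_2},
\]
since then $A=A_1+\div(h)$ would be prime to $S$ (as $h$ is a unit at every $P\inn S$), $A\sim_{\M_1}A_1$ by the first congruence, and $A-A_2=\div(h\,g^{-1})\sim_{\M_2}0$ by the second, giving $\Psi[A]_M=([A_1]_{\M_1},[A_2]_{\M_2})$. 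The existence of such an $h$ is a standard consequence of weak approximation for the finite set of valuations attached to points of $S$: the supports $S_1,S_2$ are disjoint, $g$ is a unit at each point of $S$, and we can prescribe $h$ modulo $\M_1$ at points of $S_1$ and modulo $\M_2$ at points of $S_2$ by choosing $k_1,k_2\inn\n K^*$ arbitrarily and applying the Chinese Remainder Theorem in $\prod_{P\in S}\hat O_P$, then lifting to a global rational function (Riemann-Roch gives enough freedom since we may accept extra poles outside $S$). This establishes that $\Psi$ is surjective, completing the proof that $\Psi$ is an isomorphism of abelian groups.
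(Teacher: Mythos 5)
Your proposal follows essentially the same route as the paper's proof: both reduce to a weak approximation statement to establish surjectivity, and both identify the kernel via functions that are simultaneously congruent to constants modulo $\M_1$ and $\M_2$. The presentational difference is that you build the map $\Psi:\C J_M\to P$ explicitly and check injectivity and surjectivity, whereas the paper directly identifies the pullback as the set $\{([A]_{\M_1},[A]_{\M_2}):A\in\Div^0_{S_1\cup S_2}(\C C)\}$ and then reads off that its identity class is exactly $\sim_M$--triviality. One small place where your version is actually more careful than the paper: in the injectivity step you observe that if $A=\div(f_1)=\div(f_2)$ with $f_i\equiv k_i\pmod{\M_i}$ then $f_1/f_2$ is a non--zero constant, so that a \emph{single} function $f_1$ realizes both congruences; the paper asserts the existence of a common $f$ without spelling this out.

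There is, however, a sign slip in your surjectivity argument. With $A_1-A_2=\div(g)$ and $A:=A_1+\div(h)$ you have $A-A_2=\div(g)+\div(h)=\div(gh)$, not $\div(h\,g^{-1})$; thus the second congruence you need is $gh\equiv k_2\pmod{\M_2}$, equivalently $h\equiv k_2\,g^{-1}\pmod{\M_2}$, rather than $h\,g^{-1}\equiv k_2\pmod{\M_2}$. (Alternatively, define $g$ by $A_2-A_1=\div(g)$ and keep your congruence.) This does not affect the structure of the argument: since $g$ is a unit at every point of $S$, either congruence prescribes $h$ modulo $\M_2$ at the points of $S_2$, and the weak approximation argument goes through unchanged. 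With this fix the proof is correct.
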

\begin{proof}	
	Let $\C J_1,\C J_2$ be the generalized Jacobians relative to the moduli $\M_1,\M_2$. Let $\Phi_i:\C J_i\to\C J$ be the canonical homomorphism, that is, let $\Phi_i([A]_{\M_i})=[A]$. Then $\C J_i$ is an extension of the usual Jacobian $\C J$ by a linear group $\Ker\Phi_i\simeq\n G_m^{m_i}\times\n G_a^{a_i}$. 
	
	We can consider the pullback $\C J'$ of $\Phi_1,\Phi_2$; as an abelian group then  \begin{tikzcd} \C J'\arrow{r}\arrow{d}&\C J_1\arrow{d}{\Phi_1}\\\C J_2\arrow{r}{\Phi_2}&\C J\end{tikzcd}
	$\C J'=\left\{([A_1]_{\M_1},[A_2]_{\M_2})\inn\C J_1\times\C J_2,\ [A_1]=[A_2]\right\}$, which will be an extension of $\C J$ by $\n G_m^{m_1+m_2}\times\n G_a^{a_1+a_2}$.\par\medskip
	
	Now, $[A_1]=[A_2]$, if and only if $A_1=A+\div(f_1),\ A_2=A+\div(f_2)$ for some divisor $A$, which we can assume to be prime to $S_1,S_2$ and for some rational functions $f_1,f_2\inn\n K(\C C)$. We can always construct a rational function $h$ such that $ h/f_1\equiv k_1\pmod{\M_1}$ and $h/f_2\equiv k_2\pmod{\M_2}$ for some $k_1,k_2\inn\n K^*$. Then $[A_1]_{\M_1}=[A+\div(h)]_{\M_1}$ and $[A_2]_{\M_2}=[A+\div(h)]_{\M_2}$, so $$\C J'=\{([A]_{\M_1},[A]_{\M_2}),\ A\inn\Div^0_{S_1\cup S_2}(\C C)\}.$$
	
	Now, $([A]_{\M_1},[A]_{\M_2})=0$ if and only if $A=\div (f)$ and $f$ is a rational function such that $f\equiv k_i\mod{\M_i}$, with $k_i\inn\n K^*$, for $i=1,2$, if and only if $A\sim_M 0$.
	Thus $$\C J'\simeq\Pic^0_M(\C C).$$
\end{proof}

\begin{rem}
	Iterating this reasoning we will have that the generalized Jacobian relative to a family of moduli $M=\{\M_1,\dots,\M_n\}$ is the pullback of the generalized Jacobians $\C J_{\M_1},\dots,\C J_{\M_n}$ and thus it is an extension of $\C J$ by $\prod_i \n G_m^{m_i}\times\n G_a^{a_i}$.\par\medskip
	
	In particular, choosing $\M_i=P_i+P_i'$ as in Notation \ref{66lem:genJac2}, so that $\Ker\Phi_{\M_i}\simeq\n G_m$ for every $i$, we will find again that $\C J_S$ is an extension of $\C J$ by $\n G_m^n$.
\end{rem}

\begin{rem}
	Let $D\inn\n K[T]$ be a squarefree polynomial and let $\C H=\C H_D: U^2=D(T)$; let $t_1,\dots,t_n\inn\n K$ be distinct constants, which are not zeros of $D$. 
	
	Let $\M_t$ be the modulus $(P_t)+(P_t')$, with $P_t=\left(t,\sqrt{D(t)}\right)$ and let $\C J_t$ be the corresponding generalized Jacobian. More generally, we will denote by $\C J_{t_1,\dots,t_n}$ the generalized Jacobian relative to the family of moduli $\{\M_{t_1},\dots,\M_{t_n}\}$.\par\medskip
	
	We have seen that for every $m$ there exists an effective divisor $A_m$ of degree at most $d-1+n$ such that $m\delta=A_m-\deg(A_m)(\infty_+)-\div(\pphi_m)$, with $\pphi_m\equiv k_{m,i}\inn\n K\pmod{\M_i}$ for every $m,i$. Moreover, if $\deg A_m$ is minimal, the divisor $A_m$ is unique and in this case, by Remark \ref{6rem:multdeltanonsf}, $\pphi_m=p-q(T-t_1)\cdots (T-t_n)\sqrt D$ with $p/q$ a convergent of $(T-t_1)\cdots(T-t_i)\sqrt D$. However, it is possible that $k_{m,i}=0$ for some $m,i$, so the previous equality cannot be read as an equality in the generalized Jacobian $\C J_{t_1,\dots,t_n}$.\par\medskip
	
	Let $\lambda\inn\n K$ be a fixed constant; let $\cv n$ be the convergents of $\sqrt D$ and let ${u_m}/{v_m}$ be the convergents of $(T-\lambda)\sqrt D$. As we have seen by algebraic methods in Proposition \ref{1prop:prod}, there exists $m$ such that $u_m(\lambda)=0$ if and only if there exists $n$ such that $q_n(\lambda)\neq0$ and $\deg a_{n+1}>1$, where the $a_i$ are the partial quotients of $\sqrt D$.\par\medskip

	More precisely, let $n\delta=(A_1)+\cdots+(A_{d-l})+r(\infty_-)-(d-l+r)(\infty_+)-\div(\pphi)$ with $d-l+r\leq d-1$ minimal and with the $A_i$ affine points of $\C H$. We have seen that this is equivalent to the existence of a convergent $p/q$ of $\sqrt D$ such that $n-r=\deg p$, with $l$ degree of the following partial quotient and $\pphi=p-q\sqrt D$. Now, by Proposition \ref{1prop:prod}, the following are the only possible cases:
	\begin{enumerate}
		\item if $q(\lambda)\neq0$ and $l>1$, then $\frac{(T-\lambda)p}q$ is a convergent of $(T-\lambda)\sqrt D$, $$(n+1)\delta=(A_1)+\cdots(A_{d-l})+(P_\lambda)+(P_\lambda')+r(\infty_-)-(d-l+r+2)(\infty_+)-\div((T-\lambda)\pphi)$$ and, for $d-l+r+2\leq d$ this is minimal. As $(T-\lambda)\pphi\equiv0\pmod{\M_\lambda}$, the previous equality cannot be read on the generalized Jacobian $\C J_\lambda$;
		\item if $q(\lambda)=0$ then $\frac p{q/(T-\lambda)}$ is a convergent of $(T-\lambda)\sqrt D$ and $$n\delta=(A_1)+\cdots+(A_{d-l})+r(\infty_-)-(d-l+r)(\infty_+)-\div(\pphi),$$ with $\pphi\equiv p(\lambda)\neq0\pmod{\M_\lambda}$;
		\item if $q(\lambda)\neq0$, then, denoting by $\cv{n+1}$ the following convergent of $\sqrt D$, we have that $\frac uv$, with $u=p(T) q_{n+1}(\lambda)- p_{n+1}(T)q(\lambda)$ and $v=\frac{q(T) q_{n+1}(\lambda)- q_{n+1}(T)q(\lambda)}{T-\lambda}$, is a convergent of $(T-\lambda)\sqrt D$. Moreover $$(\deg p+l)\delta=(R_1)+\cdots+(R_d)-(d)(\infty_+)-\div(u-v(T-\lambda)\sqrt D),$$ where the $R_i$ are affine points of $\C H$ and $u-v(T-\lambda)\sqrt D\equiv\pm1\pmod{\M_\lambda}$.
	\end{enumerate}
	
	As the first one is the case that most interests us, we must be cautious when applying the previous results on generalized Jacobians.\par\medskip
	
	Now, let $D$ be a squarefree polynomial of degree $2d$; if there exist $d$ different constants $\lambda_1,\dots,\lambda_d\inn\n K$ such that $\lambda_i$ is not a root of $D$ nor of any denominator of the convergents of $(T-\lambda_1)\cdots\widehat{(T-\lambda_i)}\cdots(T-\lambda_d)\sqrt D$, then all the partial quotients of $(T-\lambda_1)\cdots(T-\lambda_d)\sqrt D$ (except the first one) have degree 1 and all the multiples of $\delta$ (except the first $g'-1$) can be written minimally as the sum of exactly $g'=2d-1$ points in the image of the affine part of $\C H$ on the generalized Jacobian $\C J_{\lambda_1,\dots,\lambda_d}$.
\end{rem}

\begin{ex}
	Let $D(T)=T^6+1\inn\n C[T]$; it is easy to see that $D$ is a squarefree Pellian polynomial with $\sqrt D=\left[T^3,\ov{2T^3}\right]$.
	
	We will always denote by $\cv n$ the convergents of the quadratic irrationality $\alpha$ under consideration and we will write $\pphi_n=p_n-q_n\alpha$. 
	
	As in Lemma \ref{6lem:multdelta}, from the continued fraction expansion of $\sqrt D$ follows that
	$$\begin{array}{ll} 
		p_{-1}=1,\ q_{-1}=0&\ \delta=(\infty_-)-(\infty_+)\\
		&2\delta=2(\infty_-)-2(\infty_+)\\
		p_0=T^3,\ q_0=1&3\delta=-\div(\pphi_0)\\
		&4\delta=(\infty_-)-(\infty_+)-\div(\pphi_0)\\
		&5\delta=2(\infty_-)-2(\infty_+)-\div(\pphi_0)\\
		p_1=2T^6+1,\ q_1=2T^3&6\delta=-\div(\pphi_1)\\
		&7\delta=(\infty_-)-(\infty_+)-\div(\pphi_1)\\
		&8\delta=2(\infty_-)-2(\infty_+)-\div(\pphi_1)\\
		\cdots&\cdots
	\end{array}$$
	Obviously, all the previous equalities imply the corresponding equalities on the usual Jacobian $\C J$ of the curve $\C H_D: U^2=D(T)$.\par\medskip
	 
	Let us now consider $(T+1)\sqrt {D(T)}$. It can be seen that
	$$\begin{array}{ll} 
		p_{-1}=1,\ q_{-1}=0&\ \delta=(\infty_-)-(\infty_+)\\
		&2\delta=2(\infty_-)-2(\infty_+)\\
		&3\delta=3(\infty_-)-3(\infty_+)\\
		p_0=T^3(T+1),\ q_0=1&4\delta=2(P)-2(\infty_+)-\div(\pphi_0)\\
		&5\delta=2(P)+(\infty_-)-3(\infty_+)-\div(\pphi_0)\\
		p_1=2T^6+2T^3+1,\ q_1=2T^2-2T+2&6\delta=(P_1)+(P_2)+(P_3)-3(\infty_+)-\div(\pphi_1)\\
		p_2=(T+1)(-T^6-\frac12),\ q_2=-T^3&7\delta=2(P)-2(\infty_+)-\div(\pphi_2)\\
		&8\delta=2(P)+(\infty_-)-3(\infty_+)-\div(\pphi_2)\\
		\cdots&\cdots
	\end{array}$$
	where $P,P_1,P_2,P_3$ are suitable affine points of $\C H_D$.
	
	Now, $\pphi_1\equiv1\pmod{\M_{-1}}$ but $\pphi_0,\pphi_2\equiv0\pmod{\M_{-1}}$, so only the sixth equality (and the first three) can be read in the generalized Jacobian $\C J_{-1}$.\par\medskip
	
	Let us consider $(T-1)(T+1)\sqrt {D(T)}$. Then	
	$$\begin{array}{ll} 
		p_{-1}=1,\ q_{-1}=0&\ \delta\!=\!(\infty_-)-(\infty_+)\\
		&2\delta\!=\!2(\infty_-)-2(\infty_+)\\
		&3\delta\!=\!3(\infty_-)-3(\infty_+)\\
		&4\delta\!=\!4(\infty_-)-4(\infty_+)\\
		p_0\!=\!(T-1)(T+1)T^3,\ q_0=1&5\delta\!=\!2(P_1)+2(P_2)-4(\infty_+)-\div(\pphi_0)\\
		p_1\!=\!2T^6-2T^4+1,\ q_1=2T&6\delta\!=\!2(P_3)+2(P_4)-4(\infty_+)-\div(\pphi_1)\\
		p_2\!=\!(T^6-T^2-\frac12)T,\ q_2=T^2+1&7\delta\!=\!(Q_1)+\cdots+(Q_4)\!-\!4(\infty_+)\!-\!\div(\pphi_2)\\
		p_3\!=\!(T\!-\!1)(T\!+\!1)(-2T^6\!-\!1),q_3\!=\!-2T^3&8\delta\!=\!2(P_1)+2(P_2)-4(\infty_+)-\div(\pphi_3)\\
		\cdots&\cdots
	\end{array}$$
	where the $P_i,Q_i$ are affine points of $\C H_D$.
	
	It is easy to see that $\pphi_1\equiv\begin{cases}1&\pmod{\M_1}\\1&\pmod{\M_{-1}}\end{cases}$ and $\pphi_2\equiv\begin{cases}-\frac12&\pmod{\M_1}\\\frac12&\pmod{\M_{-1}}\end{cases}$, while $\pphi_0,\pphi_3\equiv0$ modulo $\M_{-1}$ and modulo $\M_1$. Thus the fifth and the eighth equalities cannot again be read as equalities in the generalized Jacobian $\C J_{1,-1}$.\par\medskip 
	 		
	Let us consider $(T-2)(T-1)(T+1)\sqrt{D(T)}$. Then we have
	\begin{multline*}p_0=T^6-2T^5-T^4+2T^3+\frac12,\ q_0=1, \text{ so}\\ 6\delta=(P_1)+(P_2)+(P_3)+(P_4)+(P_5)-5(\infty_+)-\div(\pphi_0)\end{multline*}
	\vspace{-1cm}\begin{multline*}p_1=-T^7+\frac52T^6-\frac52T^4+T^3-\frac12T+\frac54,\ q_1=-T+\frac12, \text{ so}\\ 7\delta=(Q_1)+(Q_2)+(Q_3)+(Q_4)+(Q_5)-5(\infty_+)-\div(\pphi_1)
	\end{multline*}
	\vspace{-1cm}\begin{multline*}p_2=\frac45T^8-\frac{32}{25}T^7-\frac45T^6+\frac{32}{25}T^3+\frac25T^2-\frac{16}{25}T-\frac25, q_2=\frac45 T^2+\frac8{25}T+\frac{16}{25}, \text{ so}\\ 8\delta=(R_1)+(R_1)+(R_3)+(R_4)+(R_5)-5(\infty_+)-\div(\pphi_2)\end{multline*}
	\indent $\cdots$
	
	where, as before, the $P_i,Q_i,R_i$ are suitable affine points of $\C H$.
	
	Now, $\pphi_0\equiv\begin{cases}\frac12 &\pmod{\M_{-1}}\\\frac12 &\pmod{\M_1}\\\frac12 &\pmod{\M_2}\end{cases}$, $\pphi_1\equiv\begin{cases}\frac74 &\pmod{\M_{-1}}\\\frac34 &\pmod{\M_1}\\\frac14 &\pmod{\M_2}\end{cases}$, $\pphi_2\equiv\begin{cases} \frac{16}{25}&\pmod{\M_{-1}}\\-\frac{16}{25} &\pmod{\M_1}\\-\frac2{25} &\pmod{\M_2}\end{cases}$, so the three previous equalities can be seen as equalities on the generalized Jacobian $\C J_{-1,1,2}$.	
\end{ex}

\begin{rem}
	We have assumed since the beginning of this section that the base field is algebraically closed. Actually, one could see that, by descent of the base field, generalized Jacobians can be defined over any field.
	
	However, as we have already remarked, the continued fraction of a formal Laurent series, as long as it is defined, does not depend on the choice of the base field. So, even reasoning on the algebraic closure $\ch K$ of $\n K$, we will have that the multiples of $\delta$ that give the convergents of $b\sqrt D$ are linked to rational functions $\pphi_n$ defined over $\n K$.
\end{rem}

\subsection{McMullen's strong Conjecture over number fields}
Using this connection between continued fractions of multiples of $\sqrt D$ and generalized Jacobians, Zannier in \cite{zannier2016hyperelliptic} proved that the sufficient condition for McMullen's Conjecture given in lemma \ref{5lem:MMrootsqn2} holds over any number field. More precisely, he proved the following:

\begin{theo}[Zannier, Theorem 1.7 in \cite{zannier2016hyperelliptic}]\label{66theo:Zrootsqn}
	Let $\n K$ be a number field and let $D\inn\S K$ be a non-Pellian polynomial; let $(\cv n)_n$ be the convergents of $\sqrt D$. Then, for every $l$, there are only finitely many elements of $\ch K$ of degree at most $l$ over $\n K$ which are common zeros of infinitely many $q_n(T)$.  
\end{theo}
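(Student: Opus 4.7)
The plan is to translate the hypothesis "$\lambda$ is a zero of infinitely many $q_n$" into a subvariety condition on multiples of $\delta$ in an appropriate \emph{generalized} Jacobian, then apply a Skolem--Mahler--Lech (equivalently, Mordell--Lang) statement for semi-abelian varieties, and finally combine with a Northcott/specialization argument to handle the bounded-degree condition on $\lambda$.

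Fix $\lambda\in\ov{\n K}$ with $[\n K(\lambda):\n K]\leq l$, not a root of $D$ (only finitely many of those exist anyway). Set $P_\lambda=(\lambda,\sqrt{D(\lambda)})$, $P_\lambda'=(\lambda,-\sqrt{D(\lambda)})$ on the smooth model of $\C H_D$, and form the generalized Jacobian $\C J_\lambda$ relative to the modulus $\M_\lambda=(P_\lambda)+(P_\lambda')$. By the results of Section \ref{66subsec:pullpack}, $\C J_\lambda$ is a semi-abelian variety, extension of $\C J=\C J_D$ by $\n G_m$, defined over a field of degree $\leq 2l$ over $\n K$. Let $\delta_\lambda$ be the class of $(\infty_-)-(\infty_+)$ in $\C J_\lambda$; its image under the natural map $\Phi_\lambda:\C J_\lambda\to\C J$ is $\delta$, which is non-torsion by Theorem \ref{6theo:equiv} since $D$ is non-Pellian. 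The decisive observation is that whenever $q_n(\lambda)=0$, coprimality of $p_n,q_n$ forces $p_n(\lambda)\neq 0$, so $\pphi_n(P_\lambda)=\pphi_n(P_\lambda')=p_n(\lambda)$, whence $\pphi_n/p_n(\lambda)\equiv 1\pmod{\M_\lambda}$. Combined with Remark \ref{6rem:multdeltanonsf} this gives the equality in $\C J_\lambda$
$$(\deg p_n)\,\delta_\lambda\;=\;\bigl[(P_1)+\cdots+(P_{d-l_{n+1}})-(d-l_{n+1})(\infty_+)\bigr]_{\M_\lambda},$$
so $(\deg p_n)\delta_\lambda$ lies in the closed subvariety $\C W^\lambda_{d-1}\subset\C J_\lambda$ of dimension $\leq d-1$, whereas $\dim\C J_\lambda=d$. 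Thus $\C W^\lambda_{d-1}$ is a \emph{proper} closed subvariety.

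If $\lambda$ is a common zero of infinitely many $q_n$, then since $n\mapsto\deg p_n$ is strictly increasing, the set $E_\lambda=\{m\in\n Z_{\geq 0}:m\delta_\lambda\in\C W^\lambda_{d-1}\}$ is infinite. By Skolem--Mahler--Lech for semi-abelian varieties over number fields (a consequence of Faltings--Vojta--McQuillan's Mordell--Lang), $E_\lambda$ is a finite union of arithmetic progressions and so contains a whole progression $a+b\n Z_{\geq 0}$. The Zariski closure of $\{a\delta_\lambda+mb\delta_\lambda:m\geq 0\}$ is a translate $a\delta_\lambda+H$ of the algebraic subgroup $H\subset\C J_\lambda$ generated by $b\delta_\lambda$, and it is contained in $\C W^\lambda_{d-1}$. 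Since the latter is proper in $\C J_\lambda$, $H$ is a proper algebraic subgroup. One now analyses $H$ via the short exact sequence $1\to\n G_m\to\C J_\lambda\xrightarrow{\Phi_\lambda}\C J\to 0$: either $\Phi_\lambda(H)\subsetneq\C J$, in which case $b\delta$ lies in a proper algebraic subgroup of $\C J$, or $\Phi_\lambda(H)=\C J$, which forces $H$ to split the extension --- an event controlled by the class of the extension in $\mathrm{Ext}^1(\C J,\n G_m)\cong\C J^\vee$, determined by the point $[(P_\lambda)-(P_\lambda')]$ of $\C J$. In both cases the condition translates into a nontrivial algebraic relation satisfied by the pair $\bigl(\delta,\,[(P_\lambda)-(P_\lambda')]\bigr)$ in $\C J\times\C J$.

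The last step is to show that only finitely many $\lambda$ of bounded degree can satisfy this. Regarding $\lambda\mapsto(P_\lambda)-(P_\lambda')$ as a section of the constant family $\C J$ over the base $\n A^1\setminus\{\text{roots of }D\}$, the algebraic relation cut out by the previous paragraph defines a proper closed subscheme of the base. Its $\ov{\n K}$-points of degree $\leq l$ over $\n K$ form a Northcott set of bounded height (via Silverman's specialization theorem on the height pairing attached to $\delta$), hence are finite; the finitely many $\lambda$ with $\lambda$ a root of $D$ or lying over the excluded locus are added separately. Thus only finitely many such $\lambda$ can be common zeros of infinitely many $q_n$.

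The hard part is the last paragraph: the Skolem--Mahler--Lech step produces, for \emph{each} bad $\lambda$, a subgroup relation in the varying semi-abelian variety $\C J_\lambda$, but to conclude finiteness one must assemble these into a single geometric condition on the base and invoke a height/specialization principle uniformly in $\lambda$. Controlling the dependence on $\lambda$ of the extension $\C J_\lambda$ and the point $\delta_\lambda$, and ruling out the split case (where $H$ sections the extension), is the genuinely delicate point, requiring the full strength of Mordell--Lang together with explicit information about the class $[(P_\lambda)-(P_\lambda')]\in\C J$ obtained from the Abel--Jacobi embedding.
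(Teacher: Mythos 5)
Your translation of ``$\lambda$ kills infinitely many $q_n$'' into ``$m\delta_\lambda\in\C W^\lambda_{d-1}$ for infinitely many $m$'', via the modulus $\M_\lambda=(P_\lambda)+(P_\lambda')$ and the $d$-dimensional generalized Jacobian $\C J_\lambda$, is correct, and a Skolem--Mahler--Lech input is the right next tool; this agrees with the sketch of Zannier's argument recorded in section~\ref{66Sec} (the thesis cites this statement from \cite{zannier2016hyperelliptic} rather than proving it, and Zannier's own SML-for-algebraic-groups, reproduced as Corollary~\ref{66cor:SML}, is the purpose-built tool here, rather than the Mordell--Lang theorem you invoke; also, for non-squarefree $D\in\S K$ one must combine $\M_\lambda$ with the moduli attached to the square part of $D$ as in Remark~\ref{6rem:multdeltanonsf} and section~\ref{66subsec:pullpack}, which your sketch silently omits by assuming $\C H_D$ smooth).

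The genuine gap is where you flag it, but it is more than ``delicate''. Your dichotomy on $H$ is not a dichotomy in $\lambda$ at all: $\Phi_\lambda(H)$ is always the connected Zariski closure $A_\delta$ of $\n Z\delta$ in $\C J$, a fixed abelian subvariety independent of $\lambda$. If $A_\delta\subsetneq\C J$ --- which non-Pellianity does not forbid --- your case ``$\Phi_\lambda(H)\subsetneq\C J$'' holds \emph{identically in $\lambda$} and yields no constraint, your case ``$\Phi_\lambda(H)=\C J$'' never occurs, and the only possible source of a constraint is the full inclusion $a_\lambda\delta_\lambda+H_\lambda\subseteq\C W^\lambda_{d-1}$, which you never analyse (note that here $H_\lambda$ may contain $\Ker\Phi_\lambda\simeq\n G_m$, so $H_\lambda=\Phi_\lambda^{-1}(A_\delta)$ carries no $\lambda$-information by itself). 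Even when $A_\delta=\C J$, so that $H_\lambda$ is an almost-section of $\C J_\lambda\to\C J$ and the extension class $[(P_\lambda)-(P_\lambda')]=2j(P_\lambda)-\delta$ must be torsion, this is a countable union of closed conditions, not the ``proper closed subscheme'' you assert; what is actually needed is Northcott applied to torsion points of bounded degree together with finiteness of the fibres of $\lambda\mapsto j(P_\lambda)$, \emph{and} control of the parameters $a_\lambda,b_\lambda$ of the progression produced by SML, which a priori vary unboundedly with $\lambda$. None of these steps is carried out, and they constitute precisely the content of Zannier's proof.
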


\begin{theo}\label{66theo:MMQ}
	Conjecture \ref{5conj:MMfin} holds over any number field.
	
	Actually, as we can choose the constants $\lambda_i$ in infinitely many different ways, also Conjecture \ref{5conj:mult}, and thus McMullen's Conjecture \ref{5conj:MMinf}, hold over any number field.
\end{theo}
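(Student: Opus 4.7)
The proof will split into the Pellian and non-Pellian cases for $D\inn\S K$, where $\n K$ is a fixed number field.

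First, suppose $D$ is non-Pellian. The plan is to iterate Theorem~\ref{66theo:Zrootsqn} following the strategy outlined in Lemma~\ref{5lem:MMrootsqn2}. Set $\alpha_0=\sqrt D$. Applied with $l=1$, Zannier's theorem guarantees that only finitely many elements of $\n K$ are common zeros of infinitely many denominators of convergents of $\alpha_0$; for any $\lambda_1\inn\n K$ outside this exceptional set, the corresponding $q_n(\lambda_1)$ are nonzero for all large $n$, so by Corollary~\ref{1cor:decreasedegree} we get $\ov K(\alpha_1)=\ov K(\alpha_0)-1$, where $\alpha_1=(T-\lambda_1)\sqrt D=\sqrt{D_1}$ with $D_1=(T-\lambda_1)^2D\inn\S K$. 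Crucially, $D_1$ is again non-Pellian: any non-trivial Pell solution $(x,y)$ for $D_1$ would yield the non-trivial Pell solution $(x,(T-\lambda_1)y)$ for $D$, so non-Pellianity is inherited by the iterates. Thus Zannier's theorem reapplies to $D_1$; iterating at most $\ov K(\sqrt D)-1\leq d-1$ times produces $\lambda_1,\dots,\lambda_k\inn\n K$ with $\ov K(P\sqrt D)=1$ for $P=(T-\lambda_1)\cdots(T-\lambda_k)$.

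At each of the (at most $d-1$) iterative steps only finitely many constants are excluded, so infinitely many distinct monic polynomials $P$ of this form arise. For distinct monic $P,P'$ of the same degree, $P\sqrt D$ and $P'\sqrt D$ are non-equivalent, since $\GL_2(\n K[T])$-equivalence would force $P'$ to be a scalar multiple of $P$ by Theorem~\ref{1theo:serret}. This yields infinitely many pairwise non-equivalent normal elements of $\n K(T,\sqrt D)$, proving Conjecture~\ref{5conj:mult} for non-Pellian $D$.

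For Pellian $D$, Conjecture~\ref{5conj:mult} itself cannot hold (periodicity forces $\ov K(P\sqrt D)=d+\deg P$), but McMullen's Conjecture~\ref{5conj:MMinf} still follows from Theorem~\ref{5theo:Mercat} combined with Corollary~\ref{4cor:Zarinf}: since number fields are infinite, Zaremba's strong Conjecture holds over $\n K$, so for each of the infinitely many essentially different non-trivial Pell solutions $(X_n,Y_n)$ one can choose $Z_n$ relatively prime to $X_n$ with $K(Z_n/X_n)=1$, and Mercat's construction yields infinitely many pairwise non-equivalent normal elements of $\n K(T,\sqrt D)$ of the form $(X_n-k_nZ_n+Y_n\sqrt D)/(k_nX_n)$. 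Combining the two cases then gives Conjectures~\ref{5conj:MMfin} and~\ref{5conj:MMinf} over any number field with $m_{\n K}=m'_{\n K}=1$, and Conjecture~\ref{5conj:mult} for all non-Pellian $D$.

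The main obstacle is the iterative step in the non-Pellian case: one must verify that the hypotheses of Zannier's theorem continue to hold after each multiplication, which reduces to two observations --- that the iterate $\alpha_i=\sqrt{D_i}$ has $D_i\inn\S K$ non-Pellian, and that Theorem~\ref{66theo:Zrootsqn} is stated uniformly for all non-Pellian $D_i\inn\S K$ over the same number field. Both observations are elementary, which is why Zannier's theorem itself is the real substance of the argument.
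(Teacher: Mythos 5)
Your proposal is correct and follows essentially the same strategy as the paper: the Pellian case is handled by Theorem \ref{5theo:Mercat} together with Zaremba's strong Conjecture over the (infinite) base field, and the non-Pellian case proceeds by iterating Zannier's Theorem \ref{66theo:Zrootsqn} with $l=1$ through the mechanism of Lemma \ref{5lem:MMrootsqn2} and Corollary \ref{1cor:decreasedegree}. You do make explicit one point the paper leaves implicit --- that non-Pellianity of $D_i=(T-\lambda_1)^2\cdots(T-\lambda_i)^2 D$ is inherited from that of $D$ (since a non-trivial Pell solution $(x,y)$ for $D_i$ would yield the non-trivial solution $(x,(T-\lambda_1)\cdots(T-\lambda_i)y)$ for $D$), which is indeed required for Zannier's theorem to re-apply at each step of the iteration.
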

\begin{proof}
	If $D$ is Pellian then, applying Theorem \ref{5theo:Mercat}, we can construct infinitely many purely periodic, pairwise non-equivalent, normal elements of $\n K(T,\sqrt D)$.\par\medskip
	
	Let then $D$ be a non-Pellian polynomial.
	
	Taking $l=1$ in the previous Theorem we will have that there exists $\lambda\inn\n K$ such that $q_n(\lambda)=0$ only for finitely many $n$. 
	
	Then, as in Lemma \ref{5lem:MMrootsqn2}, for every polynomial $D\inn\S K$ of degree $2d$ there exist $\lambda_1,\dots,\lambda_{d-1}\inn\n K$ such that $$\ov K((T-\lambda_1)\cdots(T-\lambda_{d-1})\sqrt D)=1,$$  that is, all the partial quotients of $\alpha=(T-\lambda_1)\cdots(T-\lambda_{d-1})\sqrt D$, except possibly for finitely many of them, have degree 1. Then, denoting by $\alpha_n$ the complete quotients of $\alpha$, for $n$ large enough all the partial quotients of $\alpha_n$ will have degree 1, that is, there exists $N$ such that $K(\alpha_n)=1$ for every $n\geq N$ (and $\alpha_n\inn\n K(T,\sqrt D)\setminus\n K(T)$ for every $n$).
\end{proof}

As it was already noticed in Remark \ref{5rem:QimpchQ}, this immediately implies McMullen's Conjecture over $\ch Q$:
\begin{cor}
	Conjecture \ref{5conj:mult} and Conjecture \ref{5conj:MMinf} hold over $\ch Q$.
\end{cor}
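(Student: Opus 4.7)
The plan is to deduce this directly from Theorem \ref{66theo:MMQ} by reducing every $D\in\C S_{\ch Q}$ to the situation of a suitable number field, exactly as anticipated in Remark \ref{5rem:QimpchQ}. Given such a $D$, let $\n K_D$ be the subfield of $\ch Q$ generated over $\n Q$ by the coefficients of $D$ together with a square root of its leading coefficient; since only finitely many algebraic numbers are involved, $\n K_D$ is a finite extension of $\n Q$, hence a number field, and by construction $D\in\C S_{\n K_D}$.

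The key observation is that the continued fraction expansion of $\sqrt D$, and more generally of any $\alpha\in\n K_D(T,\sqrt D)$ whose continued fraction is well defined, is insensitive to extending the base field: its partial quotients are uniquely determined polynomials that live in $\n K_D[T]$ regardless of whether we work inside $\n K_D((T^{-1}))$ or inside $\ch Q((T^{-1}))$. In particular, for $\lambda_1,\dots,\lambda_{d-1}\in\n K_D$, the quantity $\ov K\!\left((T-\lambda_1)\cdots(T-\lambda_{d-1})\sqrt D\right)$ computed over $\n K_D$ coincides with the same quantity computed over $\ch Q$, and equivalence classes under $\GL_2(\n K_D[T])$ remain distinct after passing to $\GL_2(\ch Q[T])$ (for elements of the form $P\sqrt D$, Serret's Theorem \ref{1theo:serret} in fact shows that $P\sqrt D\sim Q\sqrt D$ only when $P=kQ$ with $k$ constant).

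Now apply Theorem \ref{66theo:MMQ} to $D$ viewed as an element of $\C S_{\n K_D}$: it produces infinitely many distinct $(d-1)$-tuples $(\lambda_1,\dots,\lambda_{d-1})\in\n K_D^{\,d-1}$ such that
\[
\ov K\!\left((T-\lambda_1)\cdots(T-\lambda_{d-1})\sqrt D\right)=1,
\]
thereby verifying Conjecture \ref{5conj:mult} for $D$ over $\ch Q$. Since the resulting quadratic irrationalities $(T-\lambda_1)\cdots(T-\lambda_{d-1})\sqrt D$ are pairwise non-equivalent (by Serret's Theorem applied inside $\ch Q(T,\sqrt D)$), passing to sufficiently late complete quotients $\alpha_n$ gives infinitely many pairwise non-equivalent elements of $\ch Q(T,\sqrt D)\setminus\ch Q(T)$ with $K(\alpha_n)=1$, which is McMullen's strong Conjecture \eqref{5eq:MMstrong} for this $D$.

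There is essentially no obstacle here beyond the two bookkeeping points above: the finiteness of $[\n K_D:\n Q]$, and the base-field invariance of the regular continued fraction expansion together with the fact that non-equivalence over the smaller field implies non-equivalence over the larger one for the specific irrationalities produced. Both are already in the text, so the corollary follows with $m_{\ch Q}=1$.
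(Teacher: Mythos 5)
Your argument is exactly the one the paper has in mind: the paper simply cites Remark \ref{5rem:QimpchQ}, which already records the reduction of $D\in\C S_{\ch Q}$ to the number field $\n K_D$, the base-field invariance of the regular continued fraction, and the observation that Theorem \ref{66theo:MMQ} then produces the desired $(d-1)$-tuples in $\n K_D\subset\ch Q$. Your write-up just makes explicit the bookkeeping (that $D\in\C S_{\n K_D}$, that $\GL_2$-non-equivalence of the $P\sqrt D$ survives the extension to $\ch Q$) that the paper leaves implicit, so it is correct and essentially identical in approach.
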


Zannier proved the previous result applying to the generalized Jacobians a generalization of Skolem-Mahler-Lech's Theorem for algebraic groups:

\begin{lem}
	Let $\Gamma$ be an algebraic group over $\n C$ and let $\gamma\inn\Gamma$. Then:
	\begin{enumerate}
		\item for every integer $b$, the Zariski closure $Z(b)\sub\Gamma$ of the set $\{\gamma^{nb},\ n\inn\n N\}$ is a commutative algebraic subgroup of $\Gamma$;
		\item there exists an integer $b_0$ such that $Z(b_0)=Z_0$ is the connected component of the identity of $Z(1)$;
		\item for every $b\neq0$, $Z(b)$ is a finite union of cosets of $Z_0$.
	\end{enumerate}
\end{lem}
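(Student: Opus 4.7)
The plan is to establish the three parts in order, using only standard facts about algebraic groups and Zariski closures.

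For part 1, fix $b$ and set $S=\{\gamma^{nb}:n\inn\n N\}$, a commutative submonoid of $\Gamma$; I will show $Z(b)=\overline{S}=\overline{\langle\gamma^b\rangle}$ is a closed commutative subgroup. Commutativity is cheap: the commutator map $(x,y)\mapsto[x,y]$ is regular, so $\{(x,y):[x,y]=e\}$ is Zariski closed, and since it contains $S\times S$ it contains $\overline S\times\overline S$. The subgroup property is the crux. Left multiplication by $\gamma^b$ is a regular automorphism of $\Gamma$, and $\gamma^bS\subseteq S$ yields $\gamma^b\overline S\subseteq\overline S$. This produces a descending chain $\overline S\supseteq\gamma^b\overline S\supseteq\gamma^{2b}\overline S\supseteq\cdots$ of closed subsets, which stabilizes by Noetherianity of the Zariski topology. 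Hence $\gamma^b\overline S=\overline S$, so $\overline S=\gamma^{-b}\overline S$, and since $\gamma^b\inn\overline S$ we get $e=\gamma^{-b}\gamma^b\inn\overline S$ and then $\gamma^{-b}\inn\overline S$. Iterating, $\overline S\supseteq\langle\gamma^b\rangle$, and combined with $\overline S\subseteq\overline{\langle\gamma^b\rangle}$ this gives $\overline S=\overline{\langle\gamma^b\rangle}$, which is a subgroup because the closure of any subgroup in a topological group is a subgroup.

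For part 2, $Z(1)$ is an algebraic group, so its component group $Z(1)/Z_0$ is finite. Let $b_0$ be the order of the image of $\gamma$ in this finite quotient. Then $\gamma^{b_0}\inn Z_0$, and since $Z_0$ is a closed subgroup, all powers of $\gamma^{b_0}$ lie in $Z_0$, so $Z(b_0)\subseteq Z_0$. For the reverse inclusion, decompose $\langle\gamma\rangle=\bigcup_{j=0}^{b_0-1}\gamma^j\langle\gamma^{b_0}\rangle$ and take Zariski closures to get $Z(1)=\bigcup_{j=0}^{b_0-1}\gamma^jZ(b_0)$, a finite union of closed translates. Thus $Z(b_0)$ has index at most $b_0$ in $Z(1)$, and a fortiori finite index in $Z_0$. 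But a closed subgroup of finite index in a connected algebraic group must coincide with it (its cosets are simultaneously open and closed), so $Z(b_0)=Z_0$.

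For part 3, given $b\neq0$, look at $Z(bb_0)$. Since $\gamma^{bb_0}=(\gamma^{b_0})^b\inn Z_0$, the inclusion $Z(bb_0)\subseteq Z_0$ holds. The same coset-decomposition trick inside $Z_0$, namely $\langle\gamma^{b_0}\rangle=\bigcup_{r=0}^{|b|-1}(\gamma^{b_0})^r\langle\gamma^{bb_0}\rangle$, gives on taking closures $Z_0=\bigcup_r(\gamma^{b_0})^rZ(bb_0)$, whence $Z(bb_0)$ has finite index in the connected group $Z_0$ and thus equals it. Since $Z(bb_0)\subseteq Z(b)\subseteq Z(1)$, we deduce $Z_0\subseteq Z(b)$. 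So $Z(b)$ is a closed subgroup of $Z(1)$ containing $Z_0$, hence a union of $Z_0$-cosets, necessarily only finitely many because $Z(1)/Z_0$ is finite.

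The main technical obstacle is part 1, specifically verifying that the Zariski closure of the semigroup $\{\gamma^{nb}:n\inn\n N\}$ is closed under inversion; once this Noetherian descending-chain argument is in place, parts 2 and 3 reduce to elementary coset manipulations combined with the principle that a proper closed subgroup of a connected algebraic group cannot have finite index.
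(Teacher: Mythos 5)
The paper states this lemma without proof; it is quoted as a preliminary to Zannier's Theorem 3.2 (itself cited from \cite{zannier2016hyperelliptic}), so there is no in-paper argument to compare yours against.

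Your proof is correct, and it is essentially the standard one. The genuinely non-trivial point is the one you single out: that $\overline{S}$, the Zariski closure of the positive powers $\{\gamma^{nb}\}$, is stable under inversion. Your Noetherian descending-chain argument on the closed subsets $\gamma^{kb}\overline{S}$ handles this cleanly, and once $Z(b)=\overline{\langle\gamma^b\rangle}$ is established, commutativity (via the closedness of the centralizer locus, checked one variable at a time), the finite coset decompositions in parts 2 and 3, and the fact that a connected algebraic group has no proper closed finite-index subgroup are all routine.

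One small precision is worth flagging in the last sentence of part 1. You write that $\overline{\langle\gamma^b\rangle}$ is a subgroup \emph{because the closure of any subgroup in a topological group is a subgroup}. An algebraic group with the Zariski topology is \emph{not} a topological group: multiplication $\Gamma\times\Gamma\to\Gamma$ is continuous for the Zariski topology on the product variety, which is strictly finer than the product of the Zariski topologies, so the usual topological-group proof (push $\overline H\times\overline H$ through $m$) does not apply verbatim. The statement you need is the standard algebraic-group fact that the closure of an abstract subgroup is a closed subgroup, proved by one-variable continuity: for $h\in H$, $h\overline H=\overline{hH}=\overline H$, so $H\,\overline H\subseteq\overline H$; then for $g\in\overline H$, $\overline H g=\overline{Hg}\subseteq\overline{\overline H}=\overline H$, giving $\overline H\,\overline H\subseteq\overline H$; and $\overline H^{-1}=\overline{H^{-1}}=\overline H$. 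Since your Noetherian step is already exactly this kind of one-sided translation argument, it would be both cleaner and more self-contained to finish part 1 by running it once more (for an arbitrary $h\in\langle\gamma^b\rangle$ rather than just $\gamma^{\pm b}$) instead of invoking the topological-group slogan.
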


\begin{theo}[Skolem-Mahler-Lech for algebraic groups,  Theorem 3.2 in \cite{zannier2016hyperelliptic}]
	Let $\Gamma$ be an algebraic group over $\n C$, let $\gamma\inn\Gamma$ and let $(a_n)_n$ be a sequence of integers. Then the Zarisk closure of $\{\gamma^{a_n},\ n\inn\n K\}$ is a finite union of points and cosets of the connected component of the identity of the Zariski closure of $\{\gamma^n,\ n\inn\n Z\}$.
\end{theo}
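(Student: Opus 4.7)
The plan is to deduce the statement from the classical Skolem--Mahler--Lech theorem for algebraic groups (i.e.\ the fact that for any algebraic group $H$, element $\eta\inn H$, and subvariety $V\sub H$, the set $\{n\inn\n Z:\eta^n\inn V\}$ is a finite union of arithmetic progressions together with a finite set), combined with the structural information given in the preceding lemma. First I would reduce to working inside $Z=Z(1)$, the Zariski closure of $\{\gamma^n:n\inn\n Z\}$. By the lemma, $Z$ is a commutative algebraic subgroup of $\Gamma$ with connected component $Z_0$ of finite index $b_0$, and $Z$ decomposes as the disjoint union of the $b_0$ cosets $\gamma^iZ_0$ for $0\leq i<b_0$. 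Since each $\gamma^{a_n}$ lies in $Z$, the Zariski closure $W$ of $\{\gamma^{a_n}\}$ is contained in $Z$.

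Next I would decompose $W$ into its finitely many irreducible components $W=V_1\cup\cdots\cup V_k$ and analyse each $V_j$ separately. Invoking Skolem--Mahler--Lech for algebraic groups applied to $\gamma$ and the subvariety $V_j$, the set $S_j=\{n\inn\n Z:\gamma^n\inn V_j\}$ is a finite union of arithmetic progressions plus a finite set. Because $\{\gamma^{a_n}\}$ is Zariski dense in $W$, each $V_j$ must contain infinitely many of the $\gamma^{a_n}$ unless it reduces to a single point; in the infinite case, infinitely many indices $a_n$ belong to $S_j$, which forces some infinite arithmetic progression $a\n Z+b\sub S_j$. This in turn means $\gamma^b\cdot Z(a)\sub V_j$.

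Applying part $3$ of the preceding lemma to $Z(a)$, we know $Z(a)$ is a finite union of cosets of $Z_0$, and in particular $Z(a)\supseteq Z_0$, so $\dim Z(a)=\dim Z_0=\dim Z$. Thus $\gamma^bZ_0$ is an irreducible closed subvariety of $V_j$ of dimension equal to $\dim Z$, forcing $V_j=\gamma^bZ_0$, i.e.\ a coset of $Z_0$. Collecting all components, $W$ is a finite union of isolated points (coming from finite $S_j$) and cosets of $Z_0$ (coming from infinite $S_j$), which is exactly the claim. The residue class $b\!\!\pmod{b_0}$ determining the coset $\gamma^bZ_0$ is, of course, forced to coincide with the residue $a_n\!\!\pmod{b_0}$ on the relevant subsequence, so at most $b_0$ cosets appear.

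The main obstacle is the Skolem--Mahler--Lech theorem for algebraic groups itself, which is the substantial input. The standard approach is Skolem's $p$-adic method: by choosing a $p$-adic embedding of the field of definition and replacing $\gamma$ by a suitable power $\gamma^N$ sufficiently close to the identity in the $p$-adic topology, the map $n\mapsto\gamma^{Nn}$ extends to a $p$-adic analytic map $\n Z_p\to \Gamma(\n C_p)$ via the $p$-adic exponential on a formal neighbourhood of the identity. The condition $\gamma^{Nn}\inn V$ then becomes the vanishing of finitely many $p$-adic analytic functions of $n\inn\n Z_p$, each of which either vanishes identically or has a discrete zero set; splitting the original sequence into residues modulo $N$ and combining produces the arithmetic-progression-plus-finite-set description. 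Setting up this analytic framework cleanly for a general commutative algebraic group---an extension of an abelian variety by a linear group, which itself is an extension of a torus by a unipotent group---is the technically delicate part, while the algebraic reduction above that turns the arithmetic-progression structure into cosets of $Z_0$ is then essentially formal.
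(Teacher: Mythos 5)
The thesis does not actually prove this statement: it is quoted verbatim as Theorem 3.2 of Zannier's paper \cite{zannier2016hyperelliptic}, and both it and the preceding lemma appear without proof, so there is no in-paper argument to compare yours against. That said, your reduction is correct and is essentially the standard one (and the one underlying Zannier's proof). The deduction from the classical Skolem--Mahler--Lech statement is sound: each irreducible component $V_j$ of the closure $W$ of $\{\gamma^{a_n}\}$ is the closure of the sequence points it contains, hence is either a single point or meets $\{\gamma^m,\ m\inn\n Z\}$ in an infinite set; in the latter case the ``finite union of arithmetic progressions plus a finite set'' structure of $S_j$ yields $\gamma^bZ(a)\sub V_j$, and since $Z(a)$ is a subgroup that is a finite union of cosets of $Z_0$ it contains $Z_0$, so $\gamma^bZ_0$ is a closed irreducible subvariety of $V_j$ of dimension $\dim Z\geq\dim V_j$, forcing equality. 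Two small points worth making explicit: the case where $\gamma$ has finite order (so $Z_0$ is trivial and everything is a finite set of points) should be dispatched separately so that ``infinitely many points of the sequence in $V_j$'' really gives infinitely many distinct exponents; and the form of Skolem--Mahler--Lech you need is the one asserting that each residue class modulo a suitable $N$ is either almost entirely inside or almost entirely outside $S_j$, which is exactly what the $p$-adic analytic argument you sketch delivers. That $p$-adic input (embedding a finitely generated ring of definition into $\n Z_p$, passing to a power of $\gamma$ in the kernel of reduction, and using that a $p$-adic analytic function on $\n Z_p$ either vanishes identically or has finitely many zeros) is indeed the substantial content, and you are right to flag it as the part that cannot be made ``formal''.
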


\begin{cor}\label{66cor:SML}
	Let $U\sub\Gamma$ be a constructible set and let $K=\{k\inn\n Z,\ \gamma^k\inn U\}$. Then $K$ is a finite union of arithmetic progressions (modulo the integer $b_0$ that appears in $2.$ of the previous Lemma), plus or minus finite sets.
\end{cor}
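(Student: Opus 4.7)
The plan is to split $K$ according to residue classes modulo $b_0$ and to analyze each class separately by applying the preceding Skolem--Mahler--Lech theorem for algebraic groups. Fix $r \in \{0,\dots,b_0-1\}$ and set $K_r = \{n \in \n Z : \gamma^{nb_0+r} \in U\}$, so that $K$ is the disjoint union of the sets $\{nb_0+r : n \in K_r\}$. By construction of $b_0$ (part 2 of the preceding Lemma), the sequence $(\gamma^{nb_0})_{n \in \n N}$ is Zariski-dense in the irreducible subgroup $Z_0$; translating on the left by $\gamma^r$, the sequence $(\gamma^{nb_0+r})_{n\in\n N}$ is Zariski-dense in the irreducible coset $\gamma^r Z_0$. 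The set $V_r := U \cap \gamma^r Z_0$ is constructible in $\gamma^r Z_0$.

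The key dichotomy is that a constructible subset of an irreducible variety either contains a Zariski-dense open subset or is contained in a proper closed subvariety. Apply this to $V_r \subset \gamma^r Z_0$.

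If $V_r$ is contained in a proper closed subvariety of $\gamma^r Z_0$, apply the preceding theorem to the sequence $a_n$ enumerating $K_r$: the Zariski closure of $\{\gamma^{a_n}\}$ is a finite union of points and cosets of $Z_0$, and must sit inside the proper closed subvariety of $\gamma^r Z_0$. However, a coset $g Z_0$ contained in $\gamma^r Z_0$ satisfies $g^{-1}\gamma^r \in Z_0$ (since $e \in Z_0 \subseteq g^{-1}\gamma^r Z_0$), whence $g Z_0 = \gamma^r Z_0$; such a coset is therefore excluded. Thus the closure is a finite set of points, and $K_r$ itself is finite. In the opposite case, $V_r$ contains a dense open $W \subseteq \gamma^r Z_0$, so $\gamma^r Z_0 \setminus V_r$ lies in a proper closed subvariety; running the same argument on the complementary set $\n Z \setminus K_r$ shows that it is finite, hence $K_r$ differs from $\n Z$ by a finite set.

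Summing over the $b_0$ residue classes, $K$ is a finite union of arithmetic progressions of common difference $b_0$, each modified by a finite symmetric difference, which is exactly the claim. The only delicate point is the observation that a coset of $Z_0$ sitting inside the coset $\gamma^r Z_0$ must fill it, but this is immediate from the equality of dimensions and irreducibility; otherwise the argument is a routine bookkeeping exercise once the preceding theorem is in hand, so I do not expect a genuine obstacle.
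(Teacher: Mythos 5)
The paper states this corollary without proof, so there is no argument to compare against; your proof supplies exactly the natural argument and is essentially correct. The decomposition by residue classes modulo $b_0$, the constructible-set dichotomy in the irreducible coset $\gamma^r Z_0$ (irreducible because $Z_0$ is the connected component of a smooth group over $\n C$), and the observation that a coset of $Z_0$ inside $\gamma^r Z_0$ must coincide with it are precisely what the Skolem--Mahler--Lech statement feeds into.

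One step deserves to be made explicit: from ``the Zariski closure of $\{\gamma^{a_n}\}$ is a finite set of points'' you conclude ``$K_r$ is finite,'' but this inference uses that $n\mapsto\gamma^n$ is injective, which is automatic only when $\gamma$ has infinite order. When $\gamma$ has finite order, $Z_0=\{e\}$, the coset $\gamma^r Z_0$ is a single point, and the case-$1$ hypothesis forces $V_r=\emptyset$, hence $K_r=\emptyset$ directly (alternatively, one can dispose of the torsion case at the outset, since then $K$ is periodic). Also, ``the sequence $a_n$ enumerating $K_r$'' should read ``enumerating $\{nb_0+r : n\in K_r\}$,'' since it is the exponents $nb_0+r$, not the indices $n$, whose powers of $\gamma$ the theorem tracks. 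Neither point affects the substance.
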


Let $\n K$ be a number field and let $D\inn\S K$, that is, let $D\inn\n K[T]$ be a non-square polynomial of even degree whose leading coefficient is a square in $\n K$; let us assume that $D$ is non-Pellian. Let $\widetilde D$ be the squarefree part of $D$ and let $\C H=\C H_{\widetilde D}$ be the hyperelliptic curve $\C H:U^2=\widetilde D(T)$; let $\C J$ be the Jacobian of $\C H$. $D/\widetilde D$ is the square of a squarefree polynomial. 
As before, let $\infty_+,\infty_-$ be the points at infinity of $\C H$ and let $\delta=[(\infty_-)-(\infty_+)]\inn\C J$. As we have seen before, the convergents of $\sqrt D$ correspond to multiples of $\delta$ written as minimal sum of points in the image of the affine part of $\C H$ on an appropriate generalized Jacobian $\C J_M$; Zannier thus applied Corollary \ref{66cor:SML} in the case $\Gamma=\C J_\M$, $\gamma=\delta$. 

These tools also allowed Zannier to prove the results already quoted in Theorem \ref{2theo:Z2} and Remark \ref{2rem:Z1}.
\clearpage{\pagestyle{empty}\cleardoublepage}
\appendix
\chapter{Hyperbolic plane and continued fractions}\label{ch:hyper}
The continued fraction expansions of real numbers can be related to geodesics of the hyperbolic plane $\n H$ (and of the modular surface $\n M=\SL_2(\n Z)\backslash\n H$). Indeed, the Gauss map $T$, which encodes the continued fraction algorithm, that is the map from $(0, 1)$ to itself defined by $T:x\mapsto\{1/x\}$, where $\{\cdot\}$ denotes the fractional part, can be seen as a first-return map of the geodesic flow on the modular surface. This was already known to Artin \cite{artin1924mechanisches}, who used properties of the continued fractions to deduce the existence of a dense geodesic on $\n M$. Some variations of his method, more suitable for the study of continued fractions, can be found, for instance, in \cite{series1985modular} (whose approach we will follow). In particular, McMullen used this methods to discuss quadratic continued fractions with bounded partial quotients (see Theorem \ref{3theo:MM}), leading to Conjecture \ref{3conj:MM}.\par\medskip 

Similarly, A. Broise-Alamichel and F. Paulin  \cite{broise2007dynamique}, \cite{paulin2002groupe} studied the relation between continued fraction expansions of Laurent series and geodesic flow in Bruhat-Tits trees, extending Artin's work to function fields. They showed that the polynomial analogue of the Gauss map, $T:f\mapsto\{1/f\}$ for $f\inn\n L \setminus \{0\}, \deg f<0$ (where $\{\cdot\}$ is the polynomial analogue of the fractional part) is the first return map of the geodesic flow on the modular surface, which is a quotient of Bruhat-Tits tree by a lattice subgroup.

\section{Elements of hyperbolic geometry}
Let $\n H$ be the upper half complex plane, $$\n H=\{z\inn\n C,\ \Im(z)>0\}.$$

$\n H$ with the Poincar\'e metric $dt^2=\frac{dx^2+dy^2}{y^2}$ is a model for the hyperbolic plane.\par\medskip

Certainly, the boundary of $\n H$ is the projective real line, $\partial\n H=\n R\cup\{\infty\}$. Any point of $\n H$ is at infinite distance from any point of the boundary (while the distance between points on $\partial\n H$ is not defined); let $\ov{\n H}=\n H\cup\partial\n H$.

Let us denote by $s$ the imaginary semi-axis.

\begin{lem}
	$\SL_2(\n R)$ acts on the left on $\n H$ by Möbius transformations, that is, by $$\ds\mm abcd z=\frac{az+b}{cz+d}.$$
\end{lem}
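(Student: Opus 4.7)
The plan is to verify the two things one needs for an action on $\mathbb{H}$ by Möbius transformations: first, that the formula $Mz = \frac{az+b}{cz+d}$ actually lands in $\mathbb{H}$ (and is well-defined) for every $M \in \SL_2(\mathbb{R})$ and every $z \in \mathbb{H}$, and second, that the assignment $M \mapsto (z \mapsto Mz)$ respects matrix multiplication and sends the identity to the identity map.

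For well-definedness, I would first check that $cz + d \neq 0$ whenever $\Im(z) > 0$: if $c = 0$ then $d \neq 0$ (since $\det M = 1$), and if $c \neq 0$ then $cz + d$ has imaginary part $c\,\Im(z) \neq 0$. Then I would compute the imaginary part of $Mz$ by multiplying numerator and denominator of $\frac{az+b}{cz+d}$ by $\overline{cz+d}$, which gives
\[
Mz \;=\; \frac{(az+b)(c\bar z+d)}{|cz+d|^2} \;=\; \frac{ac|z|^2 + bd + adz + bc\bar z}{|cz+d|^2}.
\]
Taking imaginary parts in the numerator, the real terms $ac|z|^2$ and $bd$ drop out, and $\Im(adz + bc\bar z) = (ad-bc)\Im(z) = \Im(z)$ since $ad-bc = 1$. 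Thus
\[
\Im(Mz) \;=\; \frac{\Im(z)}{|cz+d|^2} \;>\; 0,
\]
so $Mz \in \mathbb{H}$.

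For the group action property, the identity matrix obviously induces $z \mapsto z$. For composition, a direct matrix computation shows that if $M = \bigl(\begin{smallmatrix}a&b\\c&d\end{smallmatrix}\bigr)$ and $N = \bigl(\begin{smallmatrix}a'&b'\\c'&d'\end{smallmatrix}\bigr)$, then substituting $Nz = \frac{a'z+b'}{c'z+d'}$ into $M$ and clearing the denominator $c'z+d'$ produces exactly the Möbius transformation associated to the matrix product $MN$; this is the same bookkeeping as in Notation~\ref{1notat:matrix} and Remark~\ref{1rem:matrixaction2}, adapted from $\GL_2(\mathbb{K}[T])$ acting on $\mathbb{L}$ to $\SL_2(\mathbb{R})$ acting on $\mathbb{H}$. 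There is no real obstacle here; the only point that requires any thought is the positivity of $\Im(Mz)$, which is where the determinant condition $ad-bc=1$ (or at least $>0$) is used, and this is the step I would write down most carefully.
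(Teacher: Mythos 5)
Your proof is correct and complete: you verify both that the formula lands in $\n H$ (using $\det M = 1$ to get $\Im(Mz) = \Im(z)/|cz+d|^2 > 0$) and that the assignment respects the group structure. The paper simply asserts this lemma without proof, treating it as a well-known fact of hyperbolic geometry, so there is no in-paper argument to compare against; your write-up is the standard one and supplies exactly what is needed.
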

As $-\id$ acts trivially on $\n H$, we have in fact an action of $\PSL R$. To simplify the notation, we will still write $\mm abcd$ for its class in $\PSL R$.

\begin{lem}
	The action of $\PSL R$ on $\n H$ is isometric, that is, for every $z_0,z_1\inn\n H$ and for every $A\inn\PSL R$, $d(Az_0,Az_1)=d(z_0,z_1)$.
\end{lem}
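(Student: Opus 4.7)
The plan is to show directly that every M\"obius transformation coming from $\PSL_2(\mathbb{R})$ pulls back the Poincar\'e line element $dt^2 = \frac{dx^2+dy^2}{y^2}$ to itself; once the infinitesimal metric is preserved, the induced distance function is preserved as well, since $d(z_0,z_1)$ is defined as the infimum of $\int dt$ over piecewise-smooth paths from $z_0$ to $z_1$ and this infimum is invariant under any diffeomorphism that preserves the line element.

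First I would fix $A = \mm abcd \in \SL_2(\mathbb{R})$ and write $w = Az = \frac{az+b}{cz+d}$. Using $ad-bc=1$, a direct computation gives the two identities that do all the work:
\[
\frac{dw}{dz} = \frac{a(cz+d) - c(az+b)}{(cz+d)^2} = \frac{1}{(cz+d)^2}, \qquad \Im(w) = \frac{\Im(z)}{|cz+d|^2}.
\]
The second identity follows by writing $w = \frac{(az+b)(c\bar z+d)}{|cz+d|^2}$ and taking the imaginary part, using $ad-bc=1$. From the first identity we get $|dw|^2 = \frac{|dz|^2}{|cz+d|^4}$, and combining with the second yields
\[
\frac{|dw|^2}{\Im(w)^2} = \frac{|dz|^2/|cz+d|^4}{\Im(z)^2/|cz+d|^4} = \frac{|dz|^2}{\Im(z)^2},
\]
which is exactly the statement that $A^*\,dt^2 = dt^2$.

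Then, for any piecewise-smooth path $\gamma\colon[0,1]\to\mathbb{H}$ from $z_0$ to $z_1$, the path $A\circ\gamma$ goes from $Az_0$ to $Az_1$ and has the same hyperbolic length, by the change-of-variables formula applied to the integral $\int_0^1 \frac{|\gamma'(s)|}{\Im\gamma(s)}\,ds$. Taking infima over all such paths on both sides gives $d(Az_0,Az_1)=d(z_0,z_1)$; since $A$ is a bijection of $\mathbb{H}$ (its inverse being the M\"obius transformation of $A^{-1}\in\SL_2(\mathbb{R})$), the two infima range over the same collection of paths, so the equality is immediate.

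There is essentially no obstacle: the only point requiring any care is the verification $\Im(Az) = \Im(z)/|cz+d|^2$, which is the one place where the hypothesis $A\in\SL_2(\mathbb{R})$ (as opposed to $\SL_2(\mathbb{C})$) and $ad-bc=1$ really enter. Everything else is formal. One could alternatively reduce to generators by noting that $\PSL_2(\mathbb{R})$ is generated by the translations $z\mapsto z+b$, the dilations $z\mapsto a^2 z$, and the inversion $z\mapsto -1/z$, and checking invariance of $dt^2$ under these three families separately is immediate; but the unified calculation above is shorter and avoids the generation lemma.
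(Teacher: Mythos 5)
Your proof is correct and complete; the computation of $dw/dz$ and $\Im(Az) = \Im(z)/|cz+d|^2$ is right, and the reduction from invariance of the line element to invariance of the induced distance (via bijectivity of $A$ on $\n H$) is handled properly. The paper states this lemma without proof, treating it as standard background, so there is nothing to compare against; your argument is the usual one and would be a reasonable addition if a proof were wanted.
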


This action extends naturally to $\partial\n H$, setting $\mm abcd(\infty)\!=\!\frac ac$ and $\mm abcd \left(-\frac dc\right)\!=\!\infty$; in particular, for every $A\inn\PSL R$ we have $A(\partial\n H)=\partial\n H$.

\begin{lem}
	The geodesics of $\n H$ are exactly the semicircles centred on $\n R$ and the vertical lines; moreover, any geodesic is the image of the imaginary axis $s$ under an isometry (that is, under a Möbius transformation or under the composition of a Möbius transformation with $z\mapsto-\ov z$).
	
	There exists a unique geodesic connecting any couple of distinct points of $\ch H$. However, for every geodesic $r$ and for every point $z$ outside of it, there exist infinitely many geodesics through $z$ that do not intersect $r$ in $\n H$.
\end{lem}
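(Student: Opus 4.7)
The argument naturally splits into four parts: verify that the imaginary axis $s$ is itself a geodesic, transport it by isometries to produce the whole family of candidate geodesics, rule out any others, and finally deduce uniqueness of connecting geodesics together with the existence of many parallels.

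First, I would establish directly that the vertical segment from $i$ to $ti$ (with $t>1$) minimizes hyperbolic length among all smooth paths joining these two points. Writing such a path as $\gamma(u)=(x(u),y(u))$, the chain of inequalities
\[\int\frac{\sqrt{x'(u)^2+y'(u)^2}}{y(u)}\,du\ \geq\ \int\frac{|y'(u)|}{y(u)}\,du\ \geq\ \left|\int\frac{y'(u)}{y(u)}\,du\right|\ =\ \log t\]
shows that the length of $\gamma$ is at least $\log t$, with equality exactly for the vertical segment. Hence $s$ (and each of its sub-arcs) is a geodesic.

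Next, I would use the previous Lemma on the isometric action of $\PSL_2(\n R)$ together with the observation that this action is transitive on the unit tangent bundle of $\n H$: this lets me transport $s$ to a geodesic through any prescribed point in any prescribed direction. Möbius transformations with real coefficients preserve $\partial\n H$, send the family of generalized circles (Euclidean circles and straight lines) to itself, and preserve orthogonality; since $s$ meets $\partial\n H$ perpendicularly, its $\PSL_2(\n R)$-images are precisely the Euclidean semicircles centred on $\n R$ together with the vertical lines. The classical uniqueness statement for geodesics with prescribed initial point and direction (a standard ODE fact) then forces these curves to be \emph{all} the geodesics of $\n H$.

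For the last two assertions I would conclude as follows. Given two distinct points of $\ov{\n H}$, an elementary Euclidean construction (perpendicular bisector of the Euclidean segment if both endpoints lie in $\n H$, vertical line in the degenerate cases involving $\infty$ or matching real parts) produces a unique semicircle centred on $\n R$ or vertical line through them, giving both existence and uniqueness of the connecting geodesic. For the parallels, let $r$ be a geodesic with endpoints $a,b\in\partial\n H$ and let $z\in\n H\setminus r$: for every boundary point $c\in\partial\n H\setminus\{a,b\}$ the unique geodesic $r_c$ through $z$ and $c$ is distinct from $r$, and one checks that as $c$ varies in a sufficiently small arc of $\partial\n H$ lying on the same side of $\{a,b\}$ as the second endpoint of $r_c$, the geodesic $r_c$ stays entirely on one fixed side of $r$ inside $\n H$ and therefore does not meet $r$ there. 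This produces uncountably many parallels through $z$.

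\textbf{Main obstacle.} There is no genuine difficulty: everything reduces to standard facts of hyperbolic geometry. The only points deserving a little care are the verification that $\PSL_2(\n R)$ acts transitively on the unit tangent bundle (which legitimises the transport argument) and the precise choice of the boundary point $c$ in the parallel construction, so as to guarantee that the resulting geodesic is provably disjoint from $r$ inside $\n H$ rather than meeting it at an interior point.
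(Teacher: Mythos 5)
The paper states this lemma without proof, treating it as a standard recalled fact of hyperbolic geometry (cf.\ the remark that the material in the appendix follows \cite{series1985modular}); there is therefore no ``paper proof'' to compare against. Your argument is the standard one and is correct: the direct length-minimisation computation for the imaginary axis, followed by transport under the transitive $\PSL R$-action on $T^1\n H$, and the observation that real Möbius maps send $s$ to Euclidean circles and lines orthogonal to $\n R$, gives exactly the claimed classification. The uniqueness of the connecting geodesic via the perpendicular-bisector construction and the existence of infinitely many non-intersecting geodesics through an external point are likewise correct.

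One small spot worth tightening is the parallel construction, where the phrase ``on the same side of $\{a,b\}$ as the second endpoint of $r_c$'' is circular as written. A cleaner formulation: let $r$ have endpoints $a,b$, let $I$ be the component of $\partial\n H\setminus\{a,b\}$ bounding the region of $\ov{\n H}\setminus r$ that contains $z$, and observe that the geodesic through $z$ asymptotic to $a$ lies entirely on the $z$-side of $r$, so its second endpoint $a'$ is in $I$. Then, by continuity of the endpoint map in the direction at $z$, every geodesic through $z$ with one endpoint close enough to $a$ inside $I$ has its other endpoint close to $a'$, hence also in $I$, and such a geodesic does not meet $r$. This gives the required uncountable family without any hand-waving about ``one checks.''
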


If $r$ is an oriented geodesic, we will denote by $r^\pm$ its endpoints on $\partial\n H$.

\begin{lem}\label{Alem:unittg} 
	The \textit{tangent bundle} of $\n H$ is $T\n H=\n H\times\n C$ and the \textit{unit tangent bundle} of $\n H$ is $T^1\n H=\left\{(z,v)\inn T\n H,\ |v|_z=1\right\}$.

	The map $\mm abcd (z,v)\mapsto\left(\frac{az+b}{cz+d},\frac v{(cz+d)^2}\right)$ gives a free and transitive action of $\PSL R$ on $T^1\n H$ so, fixing a base point $(z,v)\inn T^1\n H$, we can identify $\PSL R$ and $T^1\n H$. 
\end{lem}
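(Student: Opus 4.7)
The formula for the action comes from differentiating the Möbius transformation: for $g=\mm abcd\inn\SL_2(\n R)$ one has $g'(z)=\frac{ad-bc}{(cz+d)^2}=\frac1{(cz+d)^2}$, so the induced action on a tangent vector at $z$ is $v\mapsto g'(z)v=\frac v{(cz+d)^2}$. First I would check that this map sends $T^1\n H$ to itself. Using the standard identity $\Im(g(z))=\frac{\Im(z)}{|cz+d|^2}$ and the Poincar\'e norm $|v|_z=|v|/\Im(z)$, one computes
\[
\left|\tfrac{v}{(cz+d)^2}\right|_{g(z)}=\frac{|v|/|cz+d|^2}{\Im(z)/|cz+d|^2}=\frac{|v|}{\Im(z)}=|v|_z,
\]
so the action preserves the unit tangent bundle. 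The fact that this is a group action is then just the chain rule $(g_1g_2)'(z)=g_1'(g_2z)g_2'(z)$, which translates into the matrix identity $(c_{12}z+d_{12})=(c_1(g_2z)+d_1)(c_2z+d_2)$ for $g_1g_2=\mm{a_{12}}{b_{12}}{c_{12}}{d_{12}}$. Note that $-\id$ fixes every $(z,v)$, so the action factors through $\PSL_2(\n R)$.

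For transitivity, I would choose the basepoint $(i,i)\inn T^1\n H$ (which is of norm $1$ since $\Im(i)=1$), and show that any $(z_0,v_0)\inn T^1\n H$ is in its orbit. Given $z_0=x+iy$ with $y>0$, the matrix $g_1=\mm{\sqrt y}{x/\!\sqrt y}0{1/\!\sqrt y}$ satisfies $g_1(i)=z_0$, and sends the vertical unit tangent vector at $i$ to some unit tangent vector $v_1$ at $z_0$. It then suffices to find an element of $\PSL_2(\n R)$ fixing $i$ and sending $i$ (as a tangent vector at $i$) to $g_1^{-1}v_0$. A direct computation shows that for $R_\theta=\mm{\cos\theta}{\sin\theta}{-\sin\theta}{\cos\theta}\inn\SO(2)$ one has $R_\theta(i)=i$ and the derivative at $i$ is multiplication by $e^{2i\theta}$; hence as $\theta$ ranges over $[0,\pi)$, the stabilizer of $i$ in $\PSL_2(\n R)$ sweeps out all of $T^1_i\n H\simeq S^1$. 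Composing, we obtain $g=g_1R_\theta$ with $g(i,i)=(z_0,v_0)$.

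For freeness, suppose $g(z_0,v_0)=(z_0,v_0)$. Conjugating by the element constructed above, we may assume $(z_0,v_0)=(i,i)$. An element $g=\mm abcd$ fixing $i$ must satisfy $ai+b=i(ci+d)$, i.e.\ $b=-c$ and $a=d$, which together with $a^2+c^2=ad-bc=1$ forces $g=R_\theta$ for some $\theta$. The computation above then shows that $g$ fixes the tangent vector $i$ at $i$ iff $e^{2i\theta}=1$, that is $\theta\inn\{0,\pi\}$, which is exactly the class of the identity in $\PSL_2(\n R)$. This proves the action is free; combined with transitivity, the map $g\mapsto g(z,v)$ gives a bijection (in fact a diffeomorphism) between $\PSL_2(\n R)$ and $T^1\n H$ for any choice of basepoint. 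The main routine obstacle here is the verification of the metric formula $\Im(g(z))=\Im(z)/|cz+d|^2$ and the derivative identity, but these are standard and short.
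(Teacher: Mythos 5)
The paper states Lemma \ref{Alem:unittg} without proof, treating it as a standard fact of hyperbolic geometry, so there is no argument in the text to compare against. Your proof is the classical one: differentiate the M\"obius action to obtain the tangent action, verify $|g'(z)v|_{g(z)}=|v|_z$ via $\Im(gz)=\Im(z)/|cz+d|^2$, establish transitivity by composing an upper-triangular element (moving $i$ to any $z_0$) with a rotation (exhausting $T^1_i\n H$ since $R_\theta$ acts at $i$ by multiplication by $e^{2i\theta}$), and obtain freeness by showing the stabilizer of $(i,i)$ reduces to $\pm\id$. The argument is correct; the only minor informality is writing $g_1^{-1}v_0$ for the pullback tangent vector $(g_1^{-1})'(z_0)\,v_0$, but the meaning is clear.
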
 

	The \textit{tangent flow} on $T^1\n H$ is $g:\n R\times T^1\n H\to T^1\n H$, with $g(t,(z,v))=(z',v')$, where $(z',v')$ is the unit tangent vector to the geodesic identified by $(z,v)$ in the point $z'$ at distance $t$ from $z$ in the direction of $v$.

	Choosing as base point $(i,i)$, with the previous identification, we have $$g(t,A)=AG_t, \text{ with } G_t=\mm{e^{t/2}}00{e^{-t/2}},$$ thus $g(t,(z,v))=AG_tA^{-1}(z,v)$, where $A\inn\PSL R$ is the unique matrix such that $(z,v)=A(i,i)$.

	More generally, choosing a base point $(z_0,v_0)$, we have $g(t,(z,v))\!\!=\!B\widetilde{G_t}B^{-1}(z,v)$ with $\widetilde{G_t}=CG_tC^{-1}$, where $C$ is the unique matrix such that $(z_0,v_0)=C(i,i)$ and $B$ is the unique matrix such that $(z,v)=B(z_0,v_0)$.

\section{Cutting sequences and continued fractions}
Let $\Gamma=\PSL Z$ be the modular group; $\Gamma$ is a discrete subgroup of $\PSL R$ (with the topology induced by the euclidean topology on $\n R^4$).

The set $$D=\{z\inn\n H,\ 0\leq\Re(z)\leq1,\ |z|\geq1,\ |z-1|\geq1\}$$ is a fundamental domain for $\Gamma$, that is, $\n H=\bigcup_{A\in\PSL R}AD$ and, denoting by $\mathring D$ the interior of $D$, $A\mathring D\cap B\mathring D\neq\emptyset$ if and only if $A=B$.\par\medskip

We will call ideal triangle a triangle all of whose vertices lie on $\partial\n H$.

Let $\Delta=\left\{z\inn\n H,\ 0\leq\Re(z)\leq1,\ |z-\frac12|\geq\frac12\right\}$ that is, let $\Delta$ be the ideal triangle of vertices $0,1,\infty$. Then $\Delta=D\cup AD\cup A^2D$, where $A=\mm 1{-1}10$. $\Delta$ is then a fundamental domain for some index three subgroup of $\Gamma$; in particular, it is a fundamental domain for the Hecke theta group, that is, the group generated by $\mm 0{-1}10$ and $\mm1201$.

\begin{center}
	\begin{tikzpicture}
		\tkzDefPoint(-1.5,0){-inf}
		\tkzDefPoint(2.5,0){inf}
		\tkzDrawSegment(-inf,inf)
		\tkzDefPoint(0,1){i}
		\tkzDefPoint(0,0){O}
		\tkzDrawPoints(i,O)
		\tkzDefPoint(1,0){1}
		\tkzLabelPoints(O)
		\tkzLabelPoints[above left](i)
		\tkzDefPoint(1/2,1.8){A}
		\begin{scope}
			\tkzClipCircle(A,i)
			\tkzDrawCircle(1,O)
			\tkzDrawCircle(O,1)
			\tkzDrawLine[add=0 and 1](O,i)
			\tkzDefPoint(1,1){1+i}
			\tkzDrawLine[add=0 and 1](1,1+i)
			\tkzDrawPoints(1)
		\end{scope}
		\tkzDefPoint(-1.5,1){B}
		\tkzDefPoint(1.5,1){C}
		\tkzDefPoint(0.2,2){D}
		\tkzLabelPoints(D)
		\tkzClipPolygon(-inf,inf,C,B)
		\tkzDrawLine[add=0 and 1, style=dashed](O,i)
		\tkzDrawLine[add=0 and 1, style=dashed](1,1+i)
		\tkzDrawCircle[style=dashed](1,O)
		\tkzDrawCircle[style=dashed](O,1)
	\end{tikzpicture}
	\begin{tikzpicture}
		\tkzDefPoint(-1,0){-inf}
		\tkzDefPoint(2,0){inf}
		\tkzDefPoint(-1.5,2){B}
		\tkzDefPoint(1.5,2){C}
		\tkzLabelPoints[below](O)
		\tkzClipPolygon(-inf,inf,C,B)
		\tkzDrawSegment(-inf,inf)
		\tkzDefPoint(0,1){i}
		\tkzDefPoint(0,0){O}
		\tkzDefPoint(1,0){1}
		\tkzDefPoint(1,1){1+i}
		\tkzDefPoint(1/2,1.8){A}
		\begin{scope}
			\tkzClipCircle(A,i)
			\tkzDrawCircle(1,O)
			\tkzDrawCircle(O,1)
		\end{scope}
		\tkzDrawLine[add=0 and 1](O,i)
		\tkzDrawLine[add=0 and 1](1,1+i)
		\tkzDefPoint(1/2,0){1/2}
		\tkzDrawCircle(1/2,1)
		\tkzInterCC(1,O)(O,1)
		\tkzGetPoints{P}{Q}
		\tkzDefPoint(1/2,1/2){R}
		\tkzDrawSegment(Q,R)
		\tkzDefPoint(0.2,2){D}
		\tkzLabelPoints(D)
		\tkzDefPoint(0.4,0.9){AD}
		\tkzLabelPoint(AD){\tiny{$A\!D$}}
		\tkzDefPoint(-0.15,0.95){A2D}
		\tkzLabelPoint(A2D){\tiny{$A^2\!\!D$}}
		\tkzDefPoint(1.2,1.2){DE}
		\tkzLabelPoint(DE){$\Delta$}
	\end{tikzpicture}
\end{center}

The tessellation $\C F$ of $\n H$ by ideal triangles given by the images of $\Delta$ is called the \textit{Farey tessellation}. The Farey lines, that is, the sides of the triangles in $\C F$, are exactly the images of the imaginary axis $s$ under $\Gamma$ and the vertex set of $\C F$ is precisely $\n Q\cup\{\infty\}$. More precisely, a geodesic $r$ is a Farey line if and only if its endpoints are rationals $p/q,\ p'/q'$ such that $pq'-p'q=\pm1$. 

\begin{center}
  \begin{tikzpicture}[scale=3]
		\tkzDefPoint(-0.7,0){-inf}
		\tkzDefPoint(1.7,0){inf}
		\tkzDrawSegment(-inf,inf)
		\tkzDefPoint(-0.7,0.8){-iinf}
		\tkzDefPoint(1.7,0.8){iinf}
		\tkzDefPoint(-1,0.5){F}
		\tkzLabelPoint(F){$\C F$}
		\tkzDefPoint(0,0){0}
		\tkzDefPoint(0,1){i}
		\tkzDefPoint(1,1){i+1}
		\tkzDefPoint(1,0){1}
		\tkzLabelPoints[below](0,1)
		\tkzDrawLine[add=0 and -0.2](0,i)
		\tkzDrawLine[add=0 and -0.2](1,i+1)
		\tkzDefPoint(1/2,0){1/2}
		\tkzLabelPoint[below](1/2){$\frac12$}
		\tkzDefPoint(1/4,0){1/4}
		\tkzLabelPoint[below](1/4){$\frac14$}
		\tkzDefPoint(1/3,0){1/3}
		\tkzLabelPoint[below](1/3){$\frac13$}
		\tkzDefPoint(2/3,0){2/3}
		\tkzLabelPoint[below](2/3){$\frac23$}
		\tkzDefPoint(3/4,0){3/4}
		\tkzLabelPoint[below](3/4){$\frac34$}
		\tkzDrawArc(1/2,1)(0)
		\tkzDrawArc(1/4,1/2)(0)
		\tkzDrawArc(3/4,1)(1/2)
		\tkzDefPoint(1/6,0){1/6}
		\tkzDrawArc(1/6,1/3)(0)
		\tkzDefPoint(5/12,0){5/12}
		\tkzDrawArc(5/12,1/2)(1/3)
		\tkzDefPoint(7/12,0){7/12}
		\tkzDrawArc(7/12,2/3)(1/2)
		\tkzDefPoint(5/6,0){5/6}
		\tkzDrawArc(5/6,1)(2/3)
		\tkzDefPoint(1/8,0){1/8}
		\tkzDrawArc(1/8,1/4)(0)
		\tkzDefPoint(7/24,0){7/24}
		\tkzDrawArc(7/24,1/3)(1/4)
		\tkzDefPoint(17/24,0){17/24}
		\tkzDrawArc(17/24,3/4)(2/3)
		\tkzDefPoint(7/8,0){7/8}
		\tkzDrawArc(7/8,1)(3/4)
		
		\tkzClipPolygon(-inf,inf,iinf,-iinf)
		\tkzDefPoint(3/2,0){3/2}
		\tkzDefPoint(2,0){2}
		\tkzDrawArc(3/2,2)(1)
		\tkzDefPoint(5/4,0){5/4}
		\tkzDrawArc(5/4,3/2)(1)
		\tkzDefPoint(7/4,0){7/4}
		\tkzDrawArc(7/4,2)(3/2)
		\tkzDefPoint(7/6,0){7/6}
		\tkzDefPoint(4/3,0){4/3}
		\tkzDrawArc(7/6,4/3)(1)
		\tkzDefPoint(17/12,0){17/12}
		\tkzDrawArc(17/12,3/2)(4/3)
		\tkzDefPoint(19/12,0){19/12}
		\tkzDefPoint(5/3,0){5/3}
		\tkzDrawArc(19/12,5/3)(3/2)
		\tkzDefPoint(11/6,0){11/6}
		\tkzDrawArc(11/6,2)(5/3)
		\tkzDefPoint(9/8,0){9/8}
		\tkzDrawArc(9/8,5/4)(1)
		\tkzDefPoint(31/24,0){31/24}
		\tkzDrawArc(31/24,4/3)(5/4)
		\tkzDefPoint(41/24,0){41/24}
		\tkzDrawArc(41/24,7/4)(5/3)
		\tkzDefPoint(15/8,0){15/8}
		\tkzDrawArc(15/8,2)(7/4)
		
		\tkzDefPoint(-1/2,0){-1/2}
		\tkzDefPoint(-1,0){-1}
		\tkzDrawArc(-1/2,0)(-1)
		\tkzDefPoint(-3/4,0){-3/4}
		\tkzDrawArc(-3/4,-1/2)(-1)
		\tkzDefPoint(-1/4,0){-1/4}
		\tkzDrawArc(-1/4,0)(-1/2)
		\tkzDefPoint(-5/6,0){-5/6}
		\tkzDefPoint(-2/3,0){-2/3}
		\tkzDrawArc(-5/6,-2/3)(-1)
		\tkzDefPoint(-7/12,0){-7/12}
		\tkzDrawArc(-7/12,-1/2)(-2/3)
		\tkzDefPoint(-5/12,0){-5/12}
		\tkzDefPoint(-1/3,0){-1/3}
		\tkzDrawArc(-5/12,-1/3)(-1/2)
		\tkzDefPoint(-1/6,0){-1/6}
		\tkzDrawArc(-1/6,0)(-1/3)
		\tkzDefPoint(-7/8,0){-7/8}
		\tkzDrawArc(-7/8,-3/4)(-1)
		\tkzDefPoint(-17/24,0){-17/24}
		\tkzDrawArc(-17/24,-2/3)(-3/4)
		\tkzDefPoint(-7/24,0){-7/24}
		\tkzDrawArc(-7/24,-1/4)(-1/3)
		\tkzDefPoint(-1/8,0){-1/8}
		\tkzDrawArc(-1/8,0)(-1/4)
	\end{tikzpicture}
\end{center}

Let $r$ be an oriented geodesic that is not a side of $\C F$. Then $r$ is cut into segments by the triangles of $\C F$. If $r$ intersects a triangle $\Delta'$ of $\C F$, either it cuts two sides of $\Delta'$ (that meet each other at a vertex $v$) or it cuts only one of its sides (and one of the endpoints of $r$ is a vertex $v$ of $\Delta'$). In the first case, we will say that the segment cut by $\Delta'$ on $r$ is \textit{relative to} $v$ and we will label it with $R$ (respectively, $L$) if $v$ is on the right (respectively, on the left) of $r$ with respect to its orientation. In the second case, we will say that the segment is relative to $v$ and label it, indifferently, with $R$ or $L$; we will also say that it is a terminal segment.

\begin{center}
	\begin{tikzpicture}[scale=5]
		\tkzDefPoint(0,0){0}
		\tkzLabelPoint[below](0){$v$}
		\tkzDefPoint(1/2,0){1/2}
		\tkzDefPoint(1/3,0){1/3}
		\tkzDefPoint(1/4,0){1/4}
		\tkzDefPoint(0.7,0){t}
		\tkzDefPoint(0,0.4){v1}
		\tkzDefPoint(0.7,0.4){v2}
		\tkzClipPolygon(0,t,v2,v1)
		\tkzDrawArc(1/4,1/2)(0)
		\tkzDefPoint(5/12,0){5/12}
		\tkzDrawArc(5/12,1/2)(1/3)
		\tkzDefPoint(1/6,0){1/6}
		\tkzDrawArc(1/6,1/3)(0)
		\tkzDrawCircle(t,1/6)
		\tkzDefPoint(0.2,0.25){L}
		\tkzLabelPoint(L){\small{$L$}}
		\tkzDefPoint(0.21,0){P1}
		\tkzDefPoint(0.21,0.1){P2}
		\tkzInterLC(P1,P2)(t,1/6)
		\tkzGetPoints{I1}{I2}
		\tkzDrawArc[arrows=<-](t,I2)(1/6)
	\end{tikzpicture}
	\begin{tikzpicture}[scale=5]
		\tkzDefPoint(0,0){0}
		\tkzDefPoint(1/2,0){1/2}
		\tkzDefPoint(1/3,0){1/3}
		\tkzDefPoint(1/4,0){1/4}
		\tkzDefPoint(0.7,0){t}
		\tkzDrawArc(1/4,1/2)(0)
		\tkzDefPoint(5/12,0){5/12}
		\tkzDrawArc(5/12,1/2)(1/3)
		\tkzDefPoint(1/6,0){1/6}
		\tkzDrawArc(1/6,1/3)(0)
		\tkzLabelPoint[below](1/3){$v$}
		\tkzDefPoint(0,0.4){v1}
		\tkzDefPoint(0.7,0.4){v2}
		\tkzClipPolygon(0,t,v2,v1)
		\tkzDrawCircle(t,1/3)
		\tkzDefPoint(0.2,0.25){L}
		\tkzLabelPoint(L){\footnotesize{$L/R$}}
		\tkzDefPoint(0.36,0){P1}
		\tkzDefPoint(0.36,0.1){P2}
		\tkzInterLC(P1,P2)(t,1/3)
		\tkzGetPoints{I1}{I2}
		\tkzDrawArc[arrows=<-](t,I2)(1/3)
	\end{tikzpicture}
	\begin{tikzpicture}[scale=5]
		\tkzDefPoint(0,0){0}
		\tkzDefPoint(1/2,0){1/2}
		\tkzDefPoint(1/3,0){1/3}
		\tkzDefPoint(1/4,0){1/4}
		\tkzDrawArc(1/4,1/2)(0)
		\tkzDefPoint(5/12,0){5/12}
		\tkzDrawArc(5/12,1/2)(1/3)
		\tkzDefPoint(1/6,0){1/6}
		\tkzDrawArc(1/6,1/3)(0)
		\tkzLabelPoint[below](1/2){$v$}
		\tkzDefPoint(0,0.4){v1}
		\tkzDefPoint(0.7,0.4){v2}
		\tkzClipPolygon(0,t,v2,v1)
		\tkzDrawCircle(t,5/12)
		\tkzDefPoint(0.32,0.18){L}
		\tkzLabelPoint(L){\small{$R$}}
		\tkzDefPoint(0.44,0){P1}
		\tkzDefPoint(0.44,0.1){P2}
		\tkzInterLC(P1,P2)(t,5/12)
		\tkzGetPoints{I1}{I2}
		\tkzDrawArc[arrows=<-](t,I2)(5/12)
	\end{tikzpicture}
\end{center}

The sequence $\cdots L^{a_0}R^{a_1}L^{a_2}\cdots$ associated to $r$, with $a_n\inn\n N\setminus\{0\}$ for every $n$, is called the \textit{cutting sequence} of $r$. 

\begin{center}
	\begin{tikzpicture}[scale=3]
	\tkzDefPoint(-0.7,0){-inf}
	\tkzDefPoint(1.7,0){inf}
	\tkzDrawSegment(-inf,inf)
	\tkzDefPoint(-0.7,1){-iinf}
	\tkzDefPoint(1.7,1){iinf}
	\tkzDefPoint(0,0){0}
	\tkzDefPoint(0,1){i}
	\tkzDefPoint(1,1){i+1}
	\tkzDefPoint(1,0){1}
	\tkzDrawSegment(0,i)
	\tkzDrawSegment(1,i+1)
	\tkzDefPoint(1/2,0){1/2}
	\tkzDefPoint(1/4,0){1/4}
	\tkzDefPoint(1/3,0){1/3}
	\tkzDefPoint(2/3,0){2/3}
	\tkzDefPoint(3/4,0){3/4}
	\tkzDrawArc(1/2,1)(0)
	\tkzDrawArc(1/4,1/2)(0)
	\tkzDrawArc(3/4,1)(1/2)
	\tkzDefPoint(1/6,0){1/6}
	\tkzDrawArc(1/6,1/3)(0)
	\tkzDefPoint(5/12,0){5/12}
	\tkzDrawArc(5/12,1/2)(1/3)
	\tkzDefPoint(7/12,0){7/12}
	\tkzDrawArc(7/12,2/3)(1/2)
	\tkzDefPoint(5/6,0){5/6}
	\tkzDrawArc(5/6,1)(2/3)
	\tkzDefPoint(1/8,0){1/8}
	\tkzDrawArc(1/8,1/4)(0)
	\tkzDefPoint(7/24,0){7/24}
	\tkzDrawArc(7/24,1/3)(1/4)
	\tkzDefPoint(17/24,0){17/24}
	\tkzDrawArc(17/24,3/4)(2/3)
	\tkzDefPoint(7/8,0){7/8}
	\tkzDrawArc(7/8,1)(3/4)
	
	\tkzClipPolygon(-inf,inf,iinf,-iinf)
	\tkzDefPoint(3/2,0){3/2}
	\tkzDefPoint(2,0){2}
	\tkzDrawArc(3/2,2)(1)
	\tkzDefPoint(5/4,0){5/4}
	\tkzDrawArc(5/4,3/2)(1)
	\tkzDefPoint(7/4,0){7/4}
	\tkzDrawArc(7/4,2)(3/2)
	\tkzDefPoint(7/6,0){7/6}
	\tkzDefPoint(4/3,0){4/3}
	\tkzDrawArc(7/6,4/3)(1)
	\tkzDefPoint(17/12,0){17/12}
	\tkzDrawArc(17/12,3/2)(4/3)
	\tkzDefPoint(19/12,0){19/12}
	\tkzDefPoint(5/3,0){5/3}
	\tkzDrawArc(19/12,5/3)(3/2)
	\tkzDefPoint(11/6,0){11/6}
	\tkzDrawArc(11/6,2)(5/3)
	\tkzDefPoint(9/8,0){9/8}
	\tkzDrawArc(9/8,5/4)(1)
	\tkzDefPoint(31/24,0){31/24}
	\tkzDrawArc(31/24,4/3)(5/4)
	\tkzDefPoint(41/24,0){41/24}
	\tkzDrawArc(41/24,7/4)(5/3)
	\tkzDefPoint(15/8,0){15/8}
	\tkzDrawArc(15/8,2)(7/4)
	
	\tkzDefPoint(-1/2,0){-1/2}
	\tkzDefPoint(-1,0){-1}
	\tkzDrawArc(-1/2,0)(-1)
	\tkzDefPoint(-3/4,0){-3/4}
	\tkzDrawArc(-3/4,-1/2)(-1)
	\tkzDefPoint(-1/4,0){-1/4}
	\tkzDrawArc(-1/4,0)(-1/2)
	\tkzDefPoint(-5/6,0){-5/6}
	\tkzDefPoint(-2/3,0){-2/3}
	\tkzDrawArc(-5/6,-2/3)(-1)
	\tkzDefPoint(-7/12,0){-7/12}
	\tkzDrawArc(-7/12,-1/2)(-2/3)
	\tkzDefPoint(-5/12,0){-5/12}
	\tkzDefPoint(-1/3,0){-1/3}
	\tkzDrawArc(-5/12,-1/3)(-1/2)
	\tkzDefPoint(-1/6,0){-1/6}
	\tkzDrawArc(-1/6,0)(-1/3)
	\tkzDefPoint(-7/8,0){-7/8}
	\tkzDrawArc(-7/8,-3/4)(-1)
	\tkzDefPoint(-17/24,0){-17/24}
	\tkzDrawArc(-17/24,-2/3)(-3/4)
	\tkzDefPoint(-7/24,0){-7/24}
	\tkzDrawArc(-7/24,-1/4)(-1/3)
	\tkzDefPoint(-1/8,0){-1/8}
	\tkzDrawArc(-1/8,0)(-1/4)
	
	\tkzDrawCircle(2/3,-7/24)
	\tkzDefPoint(2/3,23/24){h}
	\tkzDrawArc[arrows=<-](2/3,h)(-7/24)
	
	\tkzDefPoint(-0.31,0.1){L1}
	\tkzLabelPoint[right](L1){\tiny{L}}
	
	\tkzDefPoint(-0.26,0.18){R1}
	\tkzLabelPoint[left](R1){\tiny{R}}
	
	\tkzDefPoint(-0.2,0.34){R2}
	\tkzLabelPoint[left](R2){\scriptsize{R}}
	
	\tkzDefPoint(-0.15,0.55){R3}
	\tkzLabelPoint[left](R3){\scriptsize{R}}
	
	\tkzDefPoint(1/2,0.9){L2}
	\tkzLabelPoint[right](L2){\footnotesize{L}}
	
	\tkzDefPoint(1.2,0.8){R4}
	\tkzLabelPoint[right](R4){\scriptsize{R}}
	
	\tkzDefPoint(1.6,0.3){L3}
	\tkzLabelPoint[left](L3){\scriptsize{L}}
	
	\tkzDefPoint(1.6,0.14){R5}
	\tkzLabelPoint[right](R5){\tiny{R}}
	\end{tikzpicture}
\end{center}

In general, the cutting sequence of $r$ is doubly infinite; it ends (respectively, starts) if and only if $r^+\inn\n Q$ (respectively, $r^-\inn\n Q$). 

The cutting sequence of $r$ is unique unless it is finite; by convention, we will assume that the first and the second (or the last and the second to last) segments of a terminating sequence are always of the same kind. Actually, this ambiguity corresponds to the two possible regular continued fraction expansions for rational numbers (and our choice corresponds to expansions of the form $[a_0,a_1,\dots,a_n]$ with $a_n>1$).

\begin{rem}
	As the orientation of $r$ is fixed, two consecutive segments are of the same kind if and only if they are relative to the same vertex. If $x\inn r$ is on a Farey line and if the two segments having $x$ as an endpoint are of different kinds, we will write $\cdots L^axR^b\cdots$ (or $\cdots R^axL^b\cdots$).

	Changing the orientation of $r$ is equivalent to reverse its cutting sequence and to change the kind of any segment: if $\cdots L^{a_0}R^{a_1}L^{a_2}\cdots$ is the cutting sequence of $r$, the cutting sequence of $-r$ will be $\cdots R^{a_2}L^{a_1}R^{a_0}\cdots$.
\end{rem}

\begin{lem}
	Let $\sigma$ be a segment cut by $\C F$ on a geodesic $r$ and let $A\inn\Gamma$. Then $A\sigma$ is a segment cut by $\C F$ on $Ar$ and $\sigma,\ A\sigma$ are of the same kind. 
\end{lem}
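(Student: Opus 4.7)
The plan is to verify three things: (i) $A$ sends the geodesic $r$ to a geodesic $Ar$, (ii) it sends the Farey triangle determining $\sigma$ to another Farey triangle, and (iii) it preserves the orientation-dependent left/right labelling.

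First I would recall from Lemma~(the isometry lemma) that every element of $\PSL R$, in particular $A\in\Gamma=\PSL Z$, acts on $\n H$ as an isometry, and moreover as an orientation-preserving homeomorphism of $\n H\cup\partial\n H$ (this is just the standard fact that M\"obius transformations with real coefficients and positive determinant are conformal and orientation-preserving on $\n H$). Therefore $A$ maps the oriented geodesic $r$ to another oriented geodesic $Ar$, whose endpoints on $\partial\n H$ are $A(r^-)$ and $A(r^+)$.

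Next I would observe that the Farey tessellation $\C F$ is $\Gamma$-invariant. On one hand, the Farey lines are precisely the $\Gamma$-translates of the imaginary axis $s$, so $A$ permutes them. On the other hand, the Farey triangles are precisely the $\Gamma$-translates of $\Delta$: since $\Delta=D\cup AD\cup A^2D$ with $A=\mm1{-1}10$ is a union of three copies of the fundamental domain $D$, the $\Gamma$-orbit of $\Delta$ covers $\n H$ by ideal triangles with pairwise disjoint interiors, which is exactly $\C F$. Hence if $\sigma=r\cap\Delta'$ for some $\Delta'\in\C F$, then $A\sigma=Ar\cap A\Delta'$ with $A\Delta'\in\C F$, so $A\sigma$ is indeed a segment cut by $\C F$ on $Ar$.

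Finally I would check the labelling. The kind of $\sigma$ is determined by looking at the vertex $v$ of $\Delta'$ opposite to $\sigma$ (or, in the terminal case, the vertex of $\Delta'$ which is an endpoint of $r$) and asking whether $v$ lies on the right or the left of the oriented geodesic $r$. Since $A$ is an orientation-preserving isometry of $\n H$, it carries the oriented geodesic $r$ to the oriented geodesic $Ar$ and maps the right half-plane of $r$ to the right half-plane of $Ar$ (and similarly for the left). The vertex of $A\Delta'$ relative to $A\sigma$ is $Av$, which therefore sits on the same side of $Ar$ as $v$ did with respect to $r$. Consequently $\sigma$ and $A\sigma$ carry the same label $L$ or $R$, and in the terminal case the convention on the last two letters is likewise preserved because $A$ permutes rational boundary points.

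The proof is essentially a collection of verifications, and there is no serious obstacle; the only point requiring care is the orientation issue in step (iii), since one must be sure that elements of $\PSL Z$ (which have \emph{positive} determinant) act as orientation-preserving maps—this is what fails for the anti-holomorphic isometry $z\mapsto-\bar z$ that was mentioned earlier, and is precisely why the lemma is stated for $\Gamma$ and not for the full isometry group of $\n H$.
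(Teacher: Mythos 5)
Your proof is correct and follows the same idea as the paper, which simply cites the fact that isometries in $\PSL R$ preserve orientation; you have usefully unpacked the implicit verifications (that $\Gamma$ preserves the Farey tessellation and hence maps segments to segments, and that orientation preservation carries the left/right labelling). The paper's one-line proof is the core of your step (iii), and your observations in (i) and (ii) are the routine facts the paper leaves unstated.
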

\begin{proof}
	This follows from the fact that the isometries of $\PSL R$ preserve the orientation.
\end{proof}	

\begin{theo}\label{Atheo:geocf}
	Let $\alpha\inn\n R,\ \alpha\geq1$, let $r\sub\n H$ be a geodesic of endpoints $r^+=\alpha$ and $-1<r^-<0$, let $x$ be the intersection of $r, s$. Then the cutting sequence of $r$ is of the form $$\cdots L^{a_{-2}}R^{a_{-1}}xL^{a_0}R^{a_1}\cdots$$ and the continued fraction expansion of $\alpha$ is $$\alpha=[a_0,a_1,\dots],$$ while $-r^-=[0,a_{-1},a_{-2},\dots]$.
\end{theo}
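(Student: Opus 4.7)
My plan is to combine a direct geometric analysis of the initial Farey triangles met by $r$ with an inductive reduction via the $\Gamma$-action on the Farey tessellation that implements the Gauss shift $\alpha \mapsto \alpha_1 = 1/\{\alpha\}$. First I would verify by direct inspection that, immediately past the crossing $x$ of $r$ with the imaginary axis $s$, the cutting sequence begins with exactly $a_0 = \lfloor\alpha\rfloor$ letters $L$. Indeed, since $r^- \in (-1,0)$ and $r^+ = \alpha \geq 1$, the geodesic $r$ enters the right half-plane at $x$ moving upward-to-the-right and successively traverses the Farey triangles $\langle k, k+1, \infty\rangle$ for $k = 0, 1, \ldots, a_0-1$ before exiting through the Farey semicircle joining $a_0$ to $a_0+1$. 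In each of these triangles the two sides cut by $r$ meet at $\infty$, which lies outside the region bounded by $r$ and the segment $[r^-, r^+]$ of the real axis, and hence on the \emph{left} of $r$; this produces the block $L^{a_0}$.

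For the inductive step, I would apply the M\"obius transformation $z \mapsto 1/(z - a_0)$, which realizes $\alpha \mapsto \alpha_1$ and sends $r^-$ to a point in $(-1,0)$; since this transformation has determinant $-1$ I compose it with the reflection $z \mapsto -\bar z$ to obtain an orientation-preserving self-isometry $\tilde\gamma$ of $\mathbb H$ preserving the Farey tessellation $\mathcal F$. The image geodesic $r' = \tilde\gamma(r)$ satisfies the hypotheses of the theorem with $\alpha_1 \geq 1$ in place of $\alpha$. Both factors of $\tilde\gamma$ individually swap the $L/R$ labels, so their composition preserves them; by Lemma on isometric transport of cutting sequences together with the inductive hypothesis, the sequence of $r'$ past $x' = r' \cap s$ begins $L^{a_1}R^{a_2}L^{a_3}\cdots$, whence the continuation of the cutting sequence of $r$ past the block $L^{a_0}$ reads $R^{a_1}L^{a_2}R^{a_3}\cdots$, yielding $\alpha = [a_0,a_1,a_2,\ldots]$. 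For the tail preceding $x$, I would apply the same reasoning to the reversed geodesic $-r$ (whose cutting sequence is that of $r$ reversed with $L$ and $R$ exchanged); after conjugating by $z \mapsto -1/z$ to bring $-1/r^- > 1$ into the role of the positive endpoint, the preceding argument yields $-1/r^- = [a_{-1}, a_{-2}, \ldots]$, equivalently $-r^- = [0, a_{-1}, a_{-2}, \ldots]$.

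The principal technical hurdle is the careful bookkeeping of orientations and $L/R$ labels. The natural matrix implementing the Gauss step $\alpha \mapsto \alpha_1$ has determinant $-1$, so it lies in $\mathrm{GL}_2(\mathbb Z) \setminus \mathrm{SL}_2(\mathbb Z)$; as such, it fails to preserve $\mathbb H$ and swaps the left/right classification of Farey vertices with respect to $r$. One must therefore either pair consecutive Gauss steps (their composition being in $\mathrm{SL}_2(\mathbb Z) = \Gamma$) or, as above, explicitly compose with a real reflection, so as to obtain a genuine orientation-preserving isometry of $\mathbb H$ whose action on the cutting sequence is a label-preserving shift, producing the clean alternation $L,R,L,R,\ldots$ asserted in the statement.
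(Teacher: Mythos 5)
Your overall strategy — establish the block $L^{a_0}$ by direct inspection of the Farey triangles $\langle k, k+1, \infty\rangle$, then reduce inductively via a transformation implementing the Gauss step — is the same as the paper's, and your first paragraph is correct. The inductive step, however, contains a genuine gap. The boundary map $x\mapsto 1/(x-a_0)$ extends to $\n H$ only as the \emph{orientation-reversing} isometry $z\mapsto 1/(\bar z-a_0)$. Composing with $z\mapsto-\bar z$ produces precisely $\lambda_{a_0}(z)=-1/(z-a_0)\inn\PSL{Z}$ (the transformation the paper uses), which is orientation-preserving — but the boundary action has changed: $\lambda_{a_0}(\alpha)=-\alpha_1<-1$ and $\lambda_{a_0}(r^-)\inn(0,1)$. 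So $r'=\tilde\gamma(r)$ does \emph{not} satisfy the theorem's hypothesis with $\alpha_1$ in place of $\alpha$; its endpoints sit in the reflected configuration. The inductive hypothesis therefore does not apply to $r'$ as you invoke it. There is also an internal inconsistency: you claim $\tilde\gamma$ preserves $L/R$ labels, yet in the very next line you pull back $r'$'s sequence $L^{a_1}R^{a_2}\cdots$ to the continuation $R^{a_1}L^{a_2}\cdots$ of $r$ — a label swap. Both cannot be true.

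There are two consistent ways to close the gap, and you hint at both but commit to neither correctly. (a) Drop the reflection and use the orientation-reversing isometry $z\mapsto 1/(\bar z-a_0)$ directly: then $r'^+=\alpha_1\geq1$, $r'^-\inn(-1,0)$, the induction applies, $\gamma^{-1}$ sends $s$ to the vertical line at $a_0$, and the pullback \emph{swaps} labels, giving the alternation $L^{a_0}R^{a_1}L^{a_2}\cdots$ for $r$. (b) Stay in $\PSL{Z}$ but pair two Gauss steps: after $\lambda_{a_0}$ the geodesic $r_1$ is in the reflected configuration with cutting sequence $\cdots L^{a_0}yR^{a_1}\cdots$, and one reads off $a_1$ as the number of vertical lines crossed by $r_1$ directly (not via the induction hypothesis, which $r_1$ does not satisfy); then $\rho_{a_1}=\mm{0}{-1}{1}{a_1}$ restores the standard configuration. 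This is the paper's route. Either choice is fine, but the current proposal applies the induction hypothesis to a geodesic that does not meet it, and compensates with a label swap it has just ruled out.
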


\begin{proof}[Sketch of Proof]
Let us assume $\alpha\notin\n Q$. Then, as $\alpha>1$ and $r^-\inn(-1,0)$, the first segment after $x$ will be labelled with $L$, while the segment immediately before $x$ will be labelled with $R$. Thus, the number $a_0$ of vertical lines (different from $s$) cut by $r$ is exactly $\floor\alpha$, that is, $\alpha=[a_0,\alpha_1]$.

Now, let $\lambda_{a_0}=\mm0{-1}1{-a_0}\inn\PSL Z$; $r_1=\lambda_{a_0}(r)$ is a geodesic of endpoints $r_1^-=-\frac1{r^--a_0},\ r_1^+=-\alpha_1$, so $0<r_1^-<1,\ r_1^+<-1$. The cutting sequence of $r_1$ is $\cdots L^{a_{-2}}R^{a_{-1}}L^{a_0}yR^{a_1}\cdots$, where $y$ is its intersection with $s$. As before, $a_1$ is the number of vertical lines cut by $r_1$, that is, $a_1=\floor{\alpha_1}$ and $\alpha=[a_0,a_1,\alpha_2]$. Applying the transformation $\rho_{a_1}=\mm0{-1}1{a_1}$ we obtain again a geodesic satisfying the hypotheses of the Theorem. 

Going on in this way, we get that $\alpha=[a_0,a_1,a_2,\dots]$; with a similar reasoning we can also obtain the continued fraction expansion of $r^-$. 
\end{proof}

\begin{rem}
	In particular, we have proved that any two geodesics cutting $s$ and with the same positive (respectively, negative) endpoint have the same positive (respectively, negative) cutting sequence.

	More generally, if $r_1,r_2$ are two geodesics with the same positive (or negative) endpoint $\alpha$, then their cutting sequences are eventually equal.
\end{rem}

We can prove in this setting some classical results for real continued fractions, whose polynomial analogues we have discussed in the previous Chapters (see Lemma \ref{1lem:pnqnprime}, Theorem \ref{1theo:serret}, Lemma \ref{2lem:galois}, Theorem \ref{2theo:lagrange}).  

\begin{rem}
	Let $\alpha>1$ be a real number and let $r$ be a geodesic of endpoints $r^+=\alpha,\ -1<r^-<0$; as in the previous Theorem, let $\cdots R^{a_{-1}}xL^{a_0}R^{a_1}\cdots$ be its cutting sequence. It can be proved by induction that the vertex associated to the segments in the $(n+1)$-th term ($R^{a_{n+1}}$ or $L^{a_{n+1}}$) is the $n$-th convergent of $\alpha$, $\cv n$. 

	As the vertices associated to two consecutive segments either coincide or are connected by a Farey line, we find again that $$p_nq_{n-1}-p_{n-1}q_n=\pm1.$$
\end{rem}

\begin{theo}[Serret]\label{Atheo:Serretgeod}
	Let $\alpha,\beta\inn\n R$. Then, there exists $A\inn\GL_2(\n Z)$ such that $A\alpha=\beta$ if and only if the continued fraction expansions of $\alpha,\beta$ eventually coincide.
\end{theo}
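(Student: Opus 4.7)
The argument splits into the two implications, and both exploit the dictionary between cutting sequences and continued fraction expansions established in Theorem \ref{Atheo:geocf}. The $(\Leftarrow)$ direction is purely algebraic: if $\alpha = [a_0,\dots,a_{n-1},c_0,c_1,\dots]$ and $\beta=[b_0,\dots,b_{m-1},c_0,c_1,\dots]$ share the common tail $\gamma = [c_0,c_1,\dots]$, then iterating the identity $[x_0,x_1,\dots] = M_{x_0}[x_1,x_2,\dots]$ (Notation \ref{1notat:matrix}) gives $\alpha = M_{(a_0,\dots,a_{n-1})}\,\gamma$ and $\beta = M_{(b_0,\dots,b_{m-1})}\,\gamma$ in the sense of M\"obius action. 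Each $M_a = \begin{pmatrix} a & 1 \\ 1 & 0\end{pmatrix}$ has determinant $-1$ and integer entries, so both matrices lie in $\GL_2(\n Z)$; setting $A = M_{(b_0,\dots,b_{m-1})}\,M_{(a_0,\dots,a_{n-1})}^{-1}\in\GL_2(\n Z)$ one finds $A\alpha = \beta$.

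For the $(\Rightarrow)$ direction, I would first reduce to the case $A\inn\SL_2(\n Z)$: writing $\GL_2(\n Z) = \SL_2(\n Z) \sqcup J\cdot\SL_2(\n Z)$ with $J=\begin{pmatrix}-1&0\\0&1\end{pmatrix}$ (so that $J$ acts as $x\mapsto -x$ on $\partial\n H$), it suffices to check separately that the CFEs of $\alpha$ and $-\alpha$ eventually coincide, which is a direct manipulation of the CFE algorithm (if $\alpha=[a_0,a_1,a_2,a_3,\dots]$ with $a_1>1$ then $-\alpha = [-a_0-1,1,a_1-1,a_2,a_3,\dots]$, and the tails match from the third partial quotient onwards). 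So assume $A\inn\SL_2(\n Z)\subset\PSL R$, acting as an orientation-preserving isometry of $\n H$. Pick a geodesic $r$ satisfying the hypotheses of Theorem \ref{Atheo:geocf} with $r^+ = \alpha$ (replacing $\alpha$ by some $A'\alpha$ with $A'\inn\SL_2(\n Z)$ to arrange $\alpha\geq 1$ and $-1<r^-<0$, which only modifies the CFE in finitely many initial terms). Then the forward cutting sequence of $r$ encodes the CFE of $\alpha$.

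Now $Ar$ is again a geodesic, with positive endpoint $A\alpha=\beta$. The crucial point is the lemma preceding Theorem \ref{Atheo:geocf}: since $A\inn\Gamma$ permutes the ideal triangles of the Farey tessellation and preserves orientation, it carries each labelled segment of $r$ to a segment of $Ar$ of the same label ($L$ or $R$). Hence the cutting sequence of $Ar$ is literally equal, term by term, to the cutting sequence of $r$. By the remark after Theorem \ref{Atheo:geocf}, any two geodesics sharing the same positive endpoint have eventually equal forward cutting sequences; comparing $Ar$ with a geodesic $r'$ chosen to read off the CFE of $\beta$, the forward cutting sequences of $r$ and $r'$ eventually coincide, so the CFEs of $\alpha$ and $\beta$ eventually coincide.

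\textbf{Main obstacle.} The only genuinely delicate point is the reduction from $\GL_2(\n Z)$ to $\SL_2(\n Z)$, i.e.\ checking by hand that $\alpha$ and $-\alpha$ have eventually equal CFEs; the rest is a transparent translation between the algebraic and geometric pictures. The normalisation $\alpha\geq1$, $-1<r^-<0$ required by Theorem \ref{Atheo:geocf} is a minor bookkeeping issue handled by absorbing an auxiliary element of $\SL_2(\n Z)$, and it costs at most finitely many initial partial quotients, which is exactly the slack allowed by the statement.
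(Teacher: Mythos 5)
Your proposal is correct and the geometric core of the $(\Rightarrow)$ direction matches the paper's: reduce to $\SL_2(\n Z)$, use the invariance of cutting sequence labels under $\Gamma$, and invoke the remark after Theorem \ref{Atheo:geocf} that geodesics with the same positive endpoint have eventually equal cutting sequences. There are two cosmetic divergences worth noting. For $(\Leftarrow)$ you compose the matrices $M_{a_i}$ explicitly, whereas the paper invokes Theorem \ref{Atheo:geocf} to say that $\alpha$ is $\SL_2(\n Z)$-equivalent to $(-1)^{n-1}\gamma$ and $\beta$ to $(-1)^{m-1}\gamma$; the two are really the same observation (your composed matrix has determinant $(-1)^{m-n}$, which is exactly the sign the paper keeps track of), but your formulation is more concrete. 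For the reduction from $\GL_2(\n Z)$ to $\SL_2(\n Z)$ you compute by hand that $-\alpha=[-a_0-1,1,a_1-1,a_2,\dots]$ when $a_1>1$, which is valid but requires a separate (unwritten) case for $a_1=1$, where one gets $-\alpha = [-a_0-1,a_2+1,a_3,\dots]$ instead. The paper sidesteps this entirely by noting that $\alpha_1=\mm 011{-a_0}\alpha$ with a determinant $-1$ matrix, so replacing $\alpha$ by $\alpha_1$ flips the determinant of $A$ at the cost of one partial quotient, with no case analysis; your route is more elementary and explicit, the paper's is slicker. Neither gap is serious, but you should either write out the $a_1=1$ case or note that it is handled similarly, since the statement as you wrote it assumes $a_1>1$.
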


\begin{proof}
	Possibly by translating them, we can assume $\alpha,\beta>0$. 

	Let $\alpha=[a_0,a_1,\dots, a_n,c_1,c_2,\dots]$, $\beta=[b_0,b_1,\dots, b_m,c_1,c_2,\dots]$. Now, by Theorem \ref{Atheo:geocf}, $\alpha$ is $\SL_2(\n Z)$-equivalent to $(-1)^{n-1}[c_1,c_2,\dots]$ and $\beta$ is $\SL_2(\n Z)$-equivalent to $(-1)^{m-1}[c_1,c_2,\dots]$, so $\alpha,\beta$ are $\GL_2(\n Z)$-equivalent.\par\medskip 

	Conversely, let $\alpha=A\beta$ with $A\inn\GL_2(\n Z)$. Certainly $\alpha_1=\mm 011{-a_0} \alpha$ so, possibly substituting $\alpha$ with $\alpha_1$, we can assume $A\inn\SL_2(\n Z)$. Let $r_\alpha,r_\beta$ be geodesics cutting $s$ and with positive endpoints respectively $\alpha,\beta$. Then the cutting sequences of $r_\beta,Ar_\alpha$ eventually coincide, so the continued fractions of $\alpha,\beta$ eventually coincide.
\end{proof}

\begin{theo}[Lagrange]\label{Atheo:Lagrangegeod}
	The continued fraction expansion of $\alpha\inn\n R$ is eventually periodic if and only if $\alpha$ is a real quadratic irrationality
\end{theo}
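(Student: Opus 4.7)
The plan is to transfer the problem to the cutting-sequence picture via Theorem~\ref{Atheo:geocf} and Serret's equivalence Theorem~\ref{Atheo:Serretgeod}, so that periodicity of the continued fraction becomes invariance of an oriented geodesic under a hyperbolic element of $\PSL Z$. The two directions then split cleanly, with all the arithmetic content concentrated in the harder implication.

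For the easy direction, eventually periodic $\Rightarrow$ quadratic, I would first use Theorem~\ref{Atheo:Serretgeod} to reduce to a purely periodic expansion $\beta=[\,\ov{c_1,\dots,c_k}\,]$ that is $\GL_2(\n Z)$-equivalent to $\alpha$. The nesting property $\beta=[c_1,\dots,c_k,\beta]$, read through the matrix formalism of Notation~\ref{1notat:matrix}, yields $\beta=M\beta$ with $M=M_{(c_1,\dots,c_k)}\in\GL_2(\n Z)$; since the $c_i$ are positive polynomials of degree $\geq 1$ in the ordinary sense, $M$ is not $\pm\id$. Viewing the fixed-point equation as a quadratic in $\beta$ with integer coefficients and nonzero leading coefficient exhibits $\beta$—hence the Möbius image $\alpha$—as a real quadratic irrationality.

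For the hard direction, quadratic $\Rightarrow$ eventually periodic, I would start from the classical fact (equivalent to the solvability of the Pell equation for the discriminant of $\alpha$) that the $\PSL R$-stabilizer of $\alpha$ meets $\PSL Z$ in an element $M\ne\pm\id$. Geometrically, since $M$ has integer entries it also fixes the Galois conjugate $\alpha'$ of $\alpha$, so $M$ preserves the oriented geodesic $r$ with $r^+=\alpha$, $r^-=\alpha'$ and acts on $r$ as a translation by some nonzero hyperbolic distance $\tau$. Because $M\in\PSL Z$ sends the Farey tessellation $\C F$ to itself while preserving the $L/R$ labelling, the cutting sequence of $r$ is invariant under the shift by $\tau$, hence genuinely periodic. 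To extract from this the continued fraction of $\alpha$, I would replace $r$ by any geodesic ${r'}$ meeting the hypotheses of Theorem~\ref{Atheo:geocf} and with ${r'}^+=\alpha$ (reducing first to $\alpha>1$ by an integer translation, which only alters $a_0$): the cutting sequences of $r$ and ${r'}$ eventually coincide because they share the positive endpoint $\alpha$, and so Theorem~\ref{Atheo:geocf} reads the common periodic tail as an eventually periodic continued fraction for $\alpha$.

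The main obstacle is producing the nontrivial integer matrix $M$ stabilizing $\alpha$; this is the genuinely arithmetic step, corresponding exactly to the non-triviality of the unit group of $\n Z[\alpha]$, i.e., to the solvability of Pell. A secondary point to verify is that the translation length $\tau$ of $M$ along $r$ is strictly positive, so that the induced shift moves the cutting sequence by a nonzero integer number of $L/R$-blocks rather than fixing it; but this follows from $M\ne\pm\id$ together with $\alpha\ne\alpha'$, so that $r$ is an honest axis of the hyperbolic isometry $M$ and not a fixed geodesic pointwise.
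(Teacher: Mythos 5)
Your proof is correct, and it is essentially the paper's cutting-sequence approach, but reorganised so as to avoid an intermediary that the paper's own one-line proof glosses over. The paper's chain is: $\alpha$ quadratic $\iff$ $\alpha$ is $\GL_2(\n Z)$-equivalent to a \emph{reduced} quadratic irrationality $\iff$ (by Galois' Theorem~\ref{Atheo:Galoisgeod} and Serret's Theorem~\ref{Atheo:Serretgeod}) the continued fraction of $\alpha$ is eventually periodic. The second equivalence is solid, but the first "only if" --- that every quadratic irrationality is equivalent to a reduced one --- is non-trivial and is not actually established anywhere in the paper; in the classical treatment it is precisely the bulk of Lagrange's theorem, so the paper's proof is at best very terse. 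Your version sidesteps this: instead of reducing to a reduced representative and invoking Galois, you construct the stabilising isometry $M$ from a Pell solution \emph{directly} for the given $\alpha$ (as the paper itself does inside its proof of Galois' Theorem), observe that $M$-invariance of the geodesic $(\alpha',\alpha)$ makes its cutting sequence genuinely periodic, and then use the shared positive endpoint to transfer eventual periodicity of the cutting sequence to a geodesic $r'$ satisfying the hypotheses of Theorem~\ref{Atheo:geocf}. That transfer step replaces the Galois-plus-reduced-representative detour and makes the argument self-contained; the arithmetic input (non-triviality of the unit group, i.e.\ Pell) is the same in both, and you correctly isolate it as the only place where arithmetic, rather than hyperbolic geometry, is used. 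Your handling of the easy direction via Serret and the matrix $M_{(c_1,\dots,c_k)}\ne\pm\id$ is also fine, and your check that $M$ is hyperbolic rather than fixing $r$ pointwise (so the shift is by a positive number of $L/R$-blocks) closes the one loose end. One small slip: in the easy direction the $c_i$ are positive \emph{integers}, not "positive polynomials of degree $\geq 1$" --- you have momentarily lapsed into the function-field vocabulary of the rest of the thesis, but the argument ($q_k>0$, so $M\ne\pm\id$) is unaffected.
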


\begin{theo}[Galois]\label{Atheo:Galoisgeod}
	The continued fraction expansion of $\alpha\inn\n R$ is purely periodic if and only if $\alpha$ is a reduced quadratic irrationality, that is, $\alpha>1$ and $-1<\alpha'<0$, where $\alpha'$ is the Galois conjugate of $\alpha$. 

	Moreover, if $\alpha=[\ov{a_1,\dots,a_{2n}}]$, then ${-1}/{\alpha'}=[\ov{a_{2n},\dots,a_1}]$. 
\end{theo}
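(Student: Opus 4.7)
The plan is to work entirely within the cutting-sequence framework established in the appendix, exploiting the correspondence in Theorem~\ref{Atheo:geocf} between the continued fraction expansion of $\alpha$ and the labelling of segments cut by $\mathcal F$ on a suitable geodesic.

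For the direction ``reduced $\Rightarrow$ purely periodic'', I would start from a reduced quadratic irrational $\alpha$ with conjugate $\alpha'\in(-1,0)$ and consider the oriented geodesic $r$ with $r^-=\alpha'$, $r^+=\alpha$. Since $-1<\alpha'<0<1<\alpha$, the geodesic $r$ crosses the imaginary axis $s$ at a point $x$, so Theorem~\ref{Atheo:geocf} applies and identifies the forward cutting sequence $L^{a_1}R^{a_2}\cdots$ from $x$ with the CF of $\alpha$, and the backward cutting sequence with the CF of $-\alpha'$. The key geometric input is that $\alpha$ is a quadratic irrational whose Galois conjugate is the other endpoint of $r$; by Lagrange's Theorem~\ref{Atheo:Lagrangegeod} the CF of $\alpha$ is eventually periodic, and the matrix associated to a period block (equivalently, a non-trivial unit of $\mathbb Z[\alpha]$, i.e.\ a solution of the Pell equation) provides a hyperbolic element $\gamma\in\mathrm{PSL}_2(\mathbb Z)$ whose fixed points are exactly $\alpha$ and $\alpha'$. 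Then $r$ is the axis of $\gamma$, and $\gamma$ acts on $r$ by translation; since $\gamma$ preserves the Farey tessellation $\mathcal F$, the full doubly-infinite cutting sequence of $r$ is invariant under this translation, hence periodic.

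Next, I would read off the two periods at $x$. Because $x$ lies on the Farey line $s$, it sits at the interface between an $L$-run and an $R$-run, so the periodic sequence splits cleanly at $x$: forward from $x$ it reads $\overline{L^{a_1}R^{a_2}\cdots L^{a_{2n-1}}R^{a_{2n}}}$ (choosing the period length even so the $L/R$-parity closes up), giving $\alpha=[\overline{a_1,\ldots,a_{2n}}]$, and backward from $x$ it reads $\cdots R^{a_{2n}}L^{a_{2n-1}}\cdots R^{a_2}L^{a_1}x$, which by Theorem~\ref{Atheo:geocf} identifies with $-\alpha'=[0,\overline{a_{2n},a_{2n-1},\ldots,a_1}]$, i.e.\ $-1/\alpha'=[\overline{a_{2n},\ldots,a_1}]$. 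This simultaneously proves pure periodicity and the ``moreover'' statement.

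For the converse, I would assume $\alpha=[\overline{a_1,\ldots,a_{2n}}]$; Lagrange's Theorem~\ref{Atheo:Lagrangegeod} gives that $\alpha$ is quadratic, and the matrix $M=M_{(a_1,\ldots,a_{2n})}\in\mathrm{SL}_2(\mathbb Z)$ satisfies $M\alpha=\alpha$ with second fixed point $\alpha'$. Letting $r$ be the axis of $M$, the cutting sequence of $r$ is $M$-invariant and the forward reading from any intersection of $r$ with the $\mathrm{PSL}_2(\mathbb Z)$-orbit of $s$ recovers $[\overline{a_1,\ldots,a_{2n}}]$. From this one deduces that $r$ crosses $s$ itself (else applying a Farey-neighbour translation to the unique vertical side closest to $r$ would contradict the $M$-invariance), and that the endpoints of $r$ lie respectively in $(1,\infty)$ and $(-1,0)$: the positive endpoint must exceed $1$ because the first forward run is $L^{a_1}$ with $a_1\ge 1$, and the negative endpoint must lie in $(-1,0)$ because the backward sequence immediately before $x$ is a full $R$-run.

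The main obstacle I anticipate is the bookkeeping with $L/R$-parities and with the location of $x$ relative to the periodic pattern: one must verify that the $\gamma$-translation matches $x$ to another point of the same ``type'' (i.e.\ at the boundary of two runs of opposite kinds, which is exactly the content of being on a vertical Farey line), and this requires choosing the period of even length $2n$ and checking that $\gamma$ maps $s$ to a vertical Farey line. Once this alignment is granted, periodicity of the cutting sequence propagates identically in the two directions from $x$, and the moreover part follows immediately from reversing the orientation of $r$ and re-applying Theorem~\ref{Atheo:geocf}.
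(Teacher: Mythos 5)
Your proposal works within the same framework as the paper's proof — cutting sequences, the correspondence of Theorem~\ref{Atheo:geocf}, and a hyperbolic element of $\PSL Z$ fixing a geodesic whose endpoints are $\alpha$ and $\alpha'$ — but as written there is a circular dependency you must remove. For the direction ``reduced $\Rightarrow$ purely periodic'' you invoke Lagrange's Theorem~\ref{Atheo:Lagrangegeod} to obtain eventual periodicity of the continued fraction and then extract a hyperbolic $\gamma$ from a period block. In the paper's logical order, however, Lagrange's Theorem is \emph{derived from} Galois's Theorem (together with Serret's Theorem~\ref{Atheo:Serretgeod}), so it is not available at this point. The repair is exactly the alternative you offer parenthetically: bypass Lagrange and build the hyperbolic matrix directly from a non-trivial solution $(x,y)$ of the Pell equation for the discriminant of $\alpha$ (whose existence is classical and logically independent of the continued-fraction theory). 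This is precisely what the paper does, taking $A=\mm{x+aby}{(D-a^2)y}{b^2y}{x-aby}\inn\SL_2(\n Z)$ for $\alpha=\frac{a+\sqrt D}{b}$; then $A$ fixes $\alpha$ and $\alpha'$, the geodesic through them has periodic cutting sequence, and since $\alpha$ is reduced it crosses $s$, so Theorem~\ref{Atheo:geocf} reads off pure periodicity at once.

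For the converse the paper is more economical than your version. Rather than taking $r$ abstractly as the axis of $M=M_{(a_1,\dots,a_{2n})}$ and then arguing a posteriori that $r$ crosses $s$ with endpoints in the right ranges (your ``unique vertical side closest to $r$'' step is not spelled out and would take real work to make rigorous), the paper defines $r$ \emph{directly} as the geodesic joining $\alpha=[\ov{a_1,\dots,a_{2n}}]>1$ to $\beta=-[0,\ov{a_{2n},\dots,a_1}]\inn(-1,0)$. Theorem~\ref{Atheo:geocf} then shows the cutting sequence of $r$ at $x=r\cap s$ is doubly periodic (your observation about the even period and the $L/R$-parity closing up at the Farey line $s$ is exactly what makes the forward and backward patterns splice correctly), so some $A\inn\PSL Z$ fixes $r$; hence $\alpha,\beta$ are the two fixed points of a hyperbolic integer matrix, i.e.\ Galois conjugates, and $\alpha$ is reduced by inspection of $\alpha$ and $\beta$. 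The ``moreover'' statement is then just the definition of $\beta$, with no further bookkeeping.
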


\begin{proof}[Proof of Theorem \ref{Atheo:Galoisgeod}]
	Let $r$ be the geodesic connecting $\alpha=[\ov{a_1,\dots,a_{2n}}]>1$ and $\beta=-[0,\ov{a_{2n},\dots,a_1}]$. Its cutting sequence is periodic, so there exists $A\inn\PSL Z$ such that $Ar=r$. In particular, $\alpha,\beta$ are the fixed points of $A$, so they are a couple of conjugated quadratic irrationals, which implies that $\alpha$ is reduced.

	Conversely, let $\alpha=\frac{a+\sqrt{D}}b$ be a reduced quadratic irrationality and let $(x,y)$ be a non-trivial solution of the Pell equation for $b^2D$, that is, let $x^2-Db^2y^2=1$ with $x,y\inn\n Z,\ y\neq0$. Let $A=\mm{x+aby}{(D-a^2)y}{b^2y}{x-aby}$. Then $A\inn\SL_2(\n Z)$ and $A\alpha=\alpha$. Now, $A$ fixes also $\alpha'$, so, if $r$ is the geodesic of endpoints $\alpha,\alpha'$, $Ar=r$. Let $\cdots R^{a_{-1}}xL^{a_0}\cdots$ be the cutting sequence of $r$, with $\{x\}=r\cap s$. As $r=Ar$, its cutting sequence must be periodic; as $\alpha$ is reduced, by Theorem \ref{Atheo:geocf} its continued fraction expansion is read immediately on the cutting sequence of $r$, so it is purely periodic. 
\end{proof}

\begin{proof}[Proof of Theorem \ref{Atheo:Lagrangegeod}]
	$\alpha$ is a real quadratic irrationality if and only if $\alpha$ is $\GL_2(\n Z)$-equivalent to a reduced quadratic irrationality, if and only if, by Theorems \ref{Atheo:Galoisgeod}, \ref{Atheo:Serretgeod}, the continued fraction of $\alpha$ is eventually periodic.
\end{proof}

\section{Modular surface}
Let $\n M$ be the modular surface, that is, the quotient of the hyperbolic plane with respect to the left action of $\SL_2(\n Z)$: $$\n M=\SL_2(\n Z)\backslash\n H;$$ we will denote by $\ov P$ the projection of a point $P\inn\n H$ on $\n M$.  Topologically, $\n M$ is a sphere minus one point ($\n M$ has a cusp which would correspond to the projection of $\infty\inn\ch H$) and with singular points at $\ov i,\ov{\zeta_3}=\ov{\frac{1+i\sqrt3}2}$. 

The geodesics of $\n M$ are exactly the projections of the geodesics of $\n H$. In particular, all the Farey lines project on the line $\ov s$, going from the cusp to $\ov i$ and back again. As cutting sequences are invariant for the action of $\SL_2(\n Z)$, the cutting sequence of a geodesic of $\n M$ is well defined, up to shifts, as the cutting sequence of any of its lifts. Moreover, two geodesics on $\n M$ with the same cutting sequence coincide. 

\begin{lem}[Artin \cite{artin1924mechanisches}]
	 There exist dense geodesics on $\n M$.
\end{lem}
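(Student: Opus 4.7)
The plan is to build a single geodesic on $\n H$ whose continued fraction expansion is combinatorially universal, and to convert that universality into topological density in $\n M$ by means of Theorem~\ref{Atheo:geocf}.

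First I would enumerate the countably many finite sequences $w_1,w_2,\dots$ of positive integers and construct a real number $\alpha>1$ whose regular continued fraction expansion $\alpha=[a_0,a_1,a_2,\dots]$ contains every $w_k$ as a consecutive block of partial quotients (for instance, taking $\alpha=[1,w_1,1,w_2,1,w_3,\dots]$ with separating $1$'s so the blocks are unambiguously located). Fix any irrational $\beta\inn(-1,0)$ and let $r\sub\n H$ be the oriented geodesic with endpoints $r^+=\alpha$ and $r^-=\beta$. By Theorem~\ref{Atheo:geocf}, the cutting sequence of $r$ is of the form $\cdots R^{c_{-1}}xL^{a_0}R^{a_1}L^{a_2}\cdots$, and by construction its forward tail contains every finite sequence of positive exponents as a consecutive factor.

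Next I would describe the action of $\SL_2(\n Z)$ in the same terms. As in the proof of Theorem~\ref{Atheo:geocf}, for each $n\geq0$ there is an explicit matrix $A_n\inn\SL_2(\n Z)$ (the composition of the transformations $\lambda_{a_0},\rho_{a_1},\lambda_{a_2},\dots$) such that $A_n r$ is a geodesic crossing the imaginary axis $s$, whose positive endpoint is $\alpha_n=[a_n,a_{n+1},\dots]$ and whose negative endpoint is $-[0,a_{n-1},\dots,a_0,c_{-1},c_{-2},\dots]$. Since $A_n r$ projects to the same curve as $r$ on $\n M$, the point set $\ov r$ contains the projection of every point of every $A_n r$.

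To conclude, let $V\sub\n M$ be a nonempty open set and let $\ov P_0\inn V$; lift $\ov P_0$ to a point $P_0$ on some geodesic $r_0\sub\n H$ that crosses $s$, is not a Farey line, and has $r_0^+>1$ and $r_0^-\inn(-1,0)$. Write $r_0^+=[b_0,b_1,b_2,\dots]$ and $-r_0^-=[0,b_{-1},b_{-2},\dots]$. By the universality of $\alpha$'s expansion, for every $N$ there is $n$ with $a_{n+k}=b_k$ for $-N\leq k\leq N$; Lemma~\ref{1lem:confrontocf} then forces $|\alpha_n-r_0^+|$ and the analogous quantity for the negative endpoint to tend to $0$ as $N\to\infty$. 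Since the map sending a pair of distinct boundary points to the corresponding oriented geodesic is continuous and $P_0$ lies at positive hyperbolic distance from $\partial\n H$, for $N$ large enough the geodesic $A_n r$ passes within any prescribed distance of $P_0$, so $\ov r$ meets $V$. The main obstacle is precisely this last continuity step: turning closeness of pairs of boundary endpoints into closeness of interior points of the geodesics. This is classical but is where all the analytic content sits; the construction of $\alpha$ and the combinatorial argument are otherwise entirely elementary given Theorem~\ref{Atheo:geocf}.
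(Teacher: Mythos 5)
Your proposal follows essentially the same approach as the paper: build a geodesic whose positive endpoint has a continued-fraction expansion containing every finite block of positive integers, use Theorem~\ref{Atheo:geocf} to translate this into combinatorial universality of the cutting sequence, and observe that the $\SL_2(\n Z)$-translates (which all project to the same curve on $\n M$) then come arbitrarily close to any prescribed geodesic because long central blocks of cutting sequences can be matched. The only small wrinkle is the citation of Lemma~\ref{1lem:confrontocf}, which is stated for Laurent series with a non-Archimedean valuation; the analogous real-number fact (agreement of the first $N$ partial quotients forces the numbers to be close) is what you actually need and is classical, so the argument stands.
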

\begin{proof}
	Let $r_1,r_2$ be two geodesics of $\n H$ with cutting sequences $\cdots R^{a_{-1}}xL^{a_0}R^{a_1}\cdots$ and $\cdots R^{b_{-1}}yL^{b_0}R^{b_1}\cdots$, where $x,y$ are their intersections with $s$ or, more generally, with a same Farey line. $r_1,r_2$ are ``close" if and only if the central parts of their cutting sequences coincide, that is, if and only if there exist $M>0,N<0$ such that $a_i=b_i$ for every $N\leq i\leq M$. By Theorem \ref{Atheo:geocf}, if $r_1^+,r_2^+>1,\ -1<r_1^-,r_2^-<0$, this is equivalent to the fact that the first terms of the continued fraction expansions of $r_1^+,r_2^+$ and of $-r_1^-,-r_2^-$ coincide.

	Let $\xi>1$ be a real number such that in its continued fraction expansion appears any finite sequence of natural numbers; let $r\sub\n H$ be a geodesic of positive endpoint $\xi$ (and with $-1<r^-<0$). Let $R$ be the set of all geodesics with the same cutting sequence as $r$, that is, $R=\{Ar,\ A\inn\PSL Z\}$. Then any geodesic in $\n H$ can be approximated arbitrarily well by geodesics in $R$, so the image of $r$ in $\n M$ is dense in $\n M$.
\end{proof}

McMullen formulated in this setting his results on quadratic irrational numbers with bounded partial quotients.

\begin{lem}
	A geodesic $r$ of $\n H$ is mapped to a closed geodesic of $\n M$ if and only if there exists an isometry $A\in\PSL Z$ such that $Ar=r$, if and only if there exists an isometry $A\inn\PSL Z$ that fixes the endpoints of $r$. In particular then the endpoints of $r$ are conjugated quadratic irrationalities in $\n Q(\sqrt d)$, where $d=\tr(A)^2-4$.
\end{lem}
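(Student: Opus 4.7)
The plan is to prove a chain of equivalences. Parametrize $r$ by arc length as $\gamma:\mathbb{R}\to\n H$, and lift this to a curve $\widetilde\gamma:\mathbb{R}\to T^1\n H\simeq\PSL R$ via the identification of Lemma \ref{Alem:unittg}. Under the projection $\pi:\n H\to\n M$, the image $\ov r=\pi\circ\gamma$ is a geodesic of $\n M$, and the question of closedness is naturally phrased on the unit tangent bundle $T^1\n M=\Gamma\backslash T^1\n H$, where $\Gamma=\PSL Z$.

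\textbf{Step 1 (closedness $\Rightarrow$ existence of $A$ with $Ar=r$).} I will show that $\ov r$ is closed if and only if the geodesic flow on $T^1\n M$ induced by $g_t$ is periodic on the orbit of $(\gamma(0),\gamma'(0))$. Suppose this period is $T>0$, so $g_T(\gamma(0),\gamma'(0))\equiv(\gamma(0),\gamma'(0))\pmod\Gamma$. Lifting this congruence, there exists $A\in\Gamma$ such that $A(\gamma(0),\gamma'(0))=(\gamma(T),\gamma'(T))$. Since $A$ is an isometry that maps a point and tangent direction of $r$ to another point and tangent direction of $r$, we have $Ar=r$ setwise, and $A$ preserves the orientation of $r$, hence fixes both endpoints $r^\pm$. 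Conversely, if some $A\in\Gamma\setminus\{\id\}$ fixes $r^\pm$ pointwise, then $A$ is a hyperbolic Möbius transformation with axis $r$; it acts on $r$ as a translation by some hyperbolic distance $\ell>0$, so $g_\ell$ returns $(\gamma(0),\gamma'(0))$ to itself modulo $\Gamma$, making $\ov r$ closed of length $\ell$.

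\textbf{Step 2 (equivalence of the two forms of the condition).} Any non-identity element $A\in\PSL R$ with $Ar=r$ either fixes both endpoints (hyperbolic case, axis $r$) or swaps them (elliptic of order $2$, rotation by $\pi$ around a point on $r$). The only delicate point is showing that the existence of any such $A\inn\Gamma$ forces the existence of one of the first type. If $A$ swaps $r^+,r^-$, I will deduce from Step 3 below that $r^\pm$ are real quadratic conjugates in some $\n Q(\sqrt d)$; then Pell's equation for $d$ has a non-trivial integer solution, from which (exactly as in Theorem \ref{Atheo:Galoisgeod}) one constructs explicitly a non-trivial hyperbolic matrix $B\inn\SL_2(\n Z)$ fixing $r^\pm$ pointwise, namely $B=\mm{x+aby}{(D-a^2)y}{b^2y}{x-aby}$ with appropriate $a,b,D$.

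\textbf{Step 3 (endpoints are quadratic conjugates and computation of $d$).} Let $A=\mm abcd\inn\SL_2(\n Z)$ with $Ar^\pm=r^\pm$. The fixed-point equation $\frac{az+b}{cz+d}=z$ reduces to $cz^2+(d-a)z-b=0$, a quadratic over $\n Z$ with discriminant
\[\Delta=(d-a)^2+4bc=(a+d)^2-4(ad-bc)=\tr(A)^2-4.\]
Since $A$ is hyperbolic (its two fixed points are distinct real numbers), $|\tr(A)|>2$, so $\Delta>0$ and the two roots $r^+,r^-$ are distinct real conjugates in $\n Q(\sqrt d)$ with $d=\tr(A)^2-4$, proving the final assertion.

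\textbf{Main obstacle.} The only non-routine point is Step 2, bridging between the orientation-reversing elliptic case and the orientation-preserving hyperbolic case; everything else is essentially a translation of the geodesic flow / covering space picture into statements about $\PSL Z$. I will handle this by invoking the explicit Pell-type construction already used in the proof of Galois's theorem above, which yields a non-trivial hyperbolic stabilizer of $r$ as soon as $r^\pm$ are conjugate quadratic irrationals.
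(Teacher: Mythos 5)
The paper does not actually provide a proof for this lemma; it is stated without argument in the appendix. So the comparison reduces to assessing your argument on its own terms.

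Steps 1 and 3 are correct, and the overall framework (geodesic flow on $T^1\n M$, lifting periodicity to a stabilizing $A\in\Gamma$, solving $cz^2+(d-a)z-b=0$ with discriminant $\tr(A)^2-4$) is the standard and appropriate route. However, Step 2 contains a genuine gap. You claim that if a non-identity $A\in\Gamma$ \emph{swaps} $r^+$ and $r^-$, then you can ``deduce from Step 3 that $r^\pm$ are real quadratic conjugates.'' But Step 3 only treats the fixed-point equation $Az=z$; when $A$ swaps the endpoints the relevant equation is $Ar^+=r^-$, which is a single linear relation between $r^+$ and $r^-$ and does not make either of them algebraic. In fact the claim is false: the elliptic involution $S=\mm0{-1}10$ stabilizes \emph{every} geodesic through $i$, the vast majority of which have transcendental endpoints, and none of these project to closed geodesics on $\n M$. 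So the orientation-reversing case cannot be absorbed into the quadratic-conjugates conclusion; it simply falls outside the equivalence. The correct repair is to observe that the lemma's hypothesis ``$Ar=r$'' must be read as $A$ preserving $r$ \emph{together with its orientation} (equivalently, $A$ is hyperbolic with axis $r$); this is exactly what the closedness of $\ov r$ forces in Step 1, and under that reading the two conditions in the lemma are trivially equivalent and Step 2 has nothing to bridge. An elliptic stabilizer by itself does not make the geodesic flow periodic, so it neither witnesses closedness nor needs to be converted into a hyperbolic one.
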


\begin{notat}	
	It can be seen that a matrix $A\inn\PSL R$ has exactly two fixed point on $\partial\n H$ (and no fixed points in $\n H$) if and only if $|\tr(A)|>2$; in this case $A$ is said to be \textit{hyperbolic} and the geodesic connecting its fixed points is called the \textit{axis} of $A$. We can then consider a surjective map from the set of hyperbolic isometries in $\PSL Z$ to the set of closed geodesics of $\n M$, given by $A\mapsto\ov r$, where $r$ is the axis of $A$. 
\end{notat}	

If $A,B\inn\PSL Z$ are conjugated in $\PSL Z$, their axes have the same projection in $\n M$. Moreover, two powers of a same hyperbolic matrix obviously have the same axis; we will say that an hyperbolic isometry is primitive if it is represented by a matrix that is not a non-trivial power of some other matrix in $\PSL Z$. Thus, we have a surjective map from the set of conjugacy classes of primitive hyperbolic isometries in $\PSL Z$ to the set of closed geodesics of $\n M$. It can be shown that this is in fact a bijection.
	
Of course, the lifts of a closed geodesic $\ov r$ will have different endpoints, but they will all be $\SL_2(\n Z)$-equivalent, so in particular they will be defined on a same quadratic extension $\n Q(\sqrt d)$ of $\n Q$. In this case, we will say that $\ov r$ is defined over $\n Q(\sqrt d)$.

\begin{lem}
If a closed geodesic $\ov r$ of $\n M$ is associated to a primitive hyperbolic matrix $A\inn\SL_2(\n Z)$, the length of $\ov r$ is the translation length of $A$, $$L(\ov r)=d(A)=\inf_{z\in\n H}d(z,Az).$$
\end{lem}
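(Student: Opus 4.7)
The plan is to reduce everything to the standard fact that a hyperbolic isometry $A \in \PSL_2(\n R)$ acts on its axis $r$ as a translation of hyperbolic length exactly $d(A) = \inf_{z \in \n H} d(z, Az)$, with the infimum attained precisely on points of $r$. First I would verify this by conjugating in $\PSL_2(\n R)$ so that the axis becomes the imaginary semiaxis $s$ (possible by transitivity of $\PSL_2(\n R)$ on oriented geodesics); in that normalization $A$ becomes a diagonal matrix $G_\lambda = \bigl(\begin{smallmatrix}e^{\lambda/2}&0\\0&e^{-\lambda/2}\end{smallmatrix}\bigr)$ which sends $iy$ to $e^\lambda\, iy$, and a direct computation with the Poincaré metric gives $d(iy, e^\lambda iy) = |\lambda|$ while $d(z, Az) > |\lambda|$ for any $z \notin s$. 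Hence $d(A) = |\lambda|$ equals the translation length of $A$ along $r$.

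Next I would compute $L(\ov r)$ by describing how $r$ wraps around $\ov r$. Since $\ov r$ is the image of $r$ under the quotient map $\pi : \n H \to \n M$, its length equals the length of any fundamental domain on $r$ for the action of the stabilizer
$$\Stab(r) = \{ B \in \PSL_2(\n Z) : B(r) = r \text{ as an oriented geodesic}\}.$$
Indeed, two points $z_1, z_2 \in r$ satisfy $\pi(z_1) = \pi(z_2)$ iff $z_2 = B z_1$ for some $B \in \PSL_2(\n Z)$; and any such $B$ must send $r$ to a geodesic through $z_2 \in r$ whose projection coincides with $\ov r$ near $\pi(z_2)$, forcing $B(r) = r$ (so $B \in \Stab(r)$, or $B$ reverses the orientation — which I would rule out by choosing a consistent orientation on $\ov r$ and on $r$).

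The key step is then to identify $\Stab(r)$ with $\langle A \rangle$ using the primitivity hypothesis. Any $B \in \Stab(r)$ fixes the two endpoints $r^\pm \in \partial\n H$ setwise, hence pointwise (after passing to orientation‐preserving elements), so $B$ is hyperbolic with the same axis as $A$. In the normalization above such elements are diagonal matrices $G_\mu$, and they commute with $A$. The map $\Stab(r) \to \n R$ sending $B \mapsto$ (its signed translation length along $r$) is an injective homomorphism onto a subgroup of $\n R$; this subgroup is discrete because $\PSL_2(\n Z)$ is a discrete subgroup of $\PSL_2(\n R)$, hence cyclic, so $\Stab(r) \simeq \n Z$. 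Primitivity of $A$ means precisely that $A$ is not a proper power in $\PSL_2(\n Z)$, so $A$ generates $\Stab(r)$.

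Combining these steps, a fundamental domain on $r$ for $\Stab(r) = \langle A \rangle$ is any arc $[z, Az]$ with $z \in r$, and this arc has hyperbolic length equal to the translation length of $A$, namely $d(A)$. Therefore $L(\ov r) = d(A)$. The main technical obstacle is the identification $\Stab(r) = \langle A \rangle$: one has to handle carefully the passage between $\SL_2(\n Z)$ and $\PSL_2(\n Z)$ (the $\pm I$ ambiguity), rule out orientation‐reversing stabilizers, and invoke the discreteness of $\PSL_2(\n Z)$ to conclude that $\Stab(r)$ is infinite cyclic so that primitivity gives the desired generator.
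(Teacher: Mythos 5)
The paper states this lemma without proof, so there is no argument of the paper's to compare with; your proof has to be judged on its own. Your overall approach is the standard one --- conjugate $A$ to a diagonal matrix to compute its translation length along the axis, identify $L(\ov r)$ with the length of a fundamental arc on $r$ for the orientation-preserving stabilizer of $r$ in $\PSL Z$, and use discreteness plus primitivity to show that stabilizer equals $\langle A\rangle$ --- and it reaches the correct conclusion.

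There is, however, a gap in your justification of the claim that $L(\ov r)$ equals the length of a fundamental arc for the stabilizer. You argue that if $z_1,z_2\inn r$ and $\pi(z_1)=\pi(z_2)$, then the deck transformation $B\inn\PSL Z$ with $z_2=Bz_1$ must satisfy $Br=r$. This is false whenever the closed geodesic $\ov r$ has a transverse self-intersection, which is the generic situation on $\n M$. At such a self-intersection $\pi(z_1)=\pi(z_2)$, the two arcs of $\ov r$ through that point have different tangent directions; if $Br$ were equal to $r$, then $B$ would carry the unit tangent to $r$ at $z_1$ to the unit tangent to $r$ at $z_2$, and projecting by $\pi$ would force those two tangent directions downstairs to coincide, a contradiction. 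So $Br\neq r$ there, and your ``Indeed...'' step does not establish what you want. The fix is to invoke the definition rather than try to rederive it: the length of a closed geodesic on $\n M$ is the primitive period of the corresponding immersed circle, i.e.\ the smallest $T>0$ for which the time-$T$ shift along a lift $r$ is realized by an element of $\PSL Z$ preserving $r$ as an oriented geodesic, and that is exactly the length of a fundamental arc for the orientation-preserving stabilizer of $r$. Once that definitional identification is in place, the rest of your argument --- discreteness of $\PSL Z$ forces this stabilizer to be infinite cyclic, and primitivity of $A$ forces it to equal $\langle A\rangle$ --- goes through as written.
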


A closed geodesic $\ov r$ is said to be \textit{fundamental} if there is not a shorter geodesic whose length divides $L(\ov r)$.
It can be proved that if $A$ fixes a closed geodesic $\ov r$ and $(\tr\, A)^2-4$ is squarefree, then $\ov r$ is fundamental.

\begin{rem}
	As in Lemma \ref{Alem:unittg}, we can identify the unit tangent bundle\footnote{As always, we will speak of the tangent bundle of $\n M$, without bothering about the fact that this is not well defined at the singularities $\ov i$ and $\ov{\zeta_3}$.}  $T^1\n M$ of $\n M$ with $\PSL R/\PSL Z$: $[(z_1,v_1)]\!=\![(z_2,v_2)]\!\inn T^1\n M$ if and only if $(z_1,v_1)\!=\!A(z_2,v_2)\!\inn T^1\n H$ with $A\inn\PSL Z$.
\end{rem}

\begin{defn}
	We will say that a closed geodesic $\ov r\sub\n M$ is \textit{low-lying} if it stays away from the cusp of $\n M$. More precisely, we will say that $\ov r$ is low-lying of height $c$ if its pre-image $r$ in the fundamental domain $D$ has imaginary part bounded by $c$.
	
	A real number $\alpha$ is said to be \textit{absolutely Diophantine} of height $m$ if $\alpha=[a_0,a_1,\dots]$ with $a_n\leq m$ for every $n$.
	
	If $r$ is a geodesic of endpoints $\alpha,\beta$, then $r$ is low-lying of height $c$ if and only if $\alpha,\beta$ are absolutely Diophantine of height $m=m(c)$.
\end{defn}

Thus, to study continued fractions with bounded partial quotients is equivalent to studying geodesics of $\n M$ contained in a compact set.

\begin{theo}[McMullen, Theorem 1.2 in \cite{mcmullen2009uniformly}]
	For any fundamental closed geodesic $\ov r\sub\n M$ there exists a compact subset $Z$ of $\n M$ that contains infinitely many closed geodesics whose lengths are integral multiples of $L(\ov r)$.
\end{theo}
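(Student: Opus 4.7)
The plan is to translate the statement into the language of matrices and continued fractions via the dictionary set up in this appendix, and then apply Theorem~\ref{3theo:MM} to the real quadratic field carrying $\ov r$.

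First I fix a primitive hyperbolic matrix $A_0\inn\SL_2(\n Z)$ whose axis projects to $\ov r$, and let $\epsilon_0>1$ be the unique real number with $\epsilon_0+\epsilon_0^{-1}=|\tr A_0|$, so $L(\ov r)=2\log\epsilon_0$. The endpoints of the axis lie in $\n Q(\sqrt d)$, where $d$ is the squarefree part of $(\tr A_0)^2-4$. Fundamentality of $\ov r$ translates into the statement that every integer solution of the Pell-type equation $x^2-dy^2=4$ is of the form $x=\pm\tr(A_0^n)=\pm(\epsilon_0^n+\epsilon_0^{-n})$ for some $n\geq 1$: any other solution would yield, via the associated matrix in $\SL_2(\n Z)$, a closed geodesic of strictly smaller length dividing $L(\ov r)$, contradicting the definition of fundamentality.

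Next I invoke Theorem~\ref{3theo:MM} (Wilson--McMullen): there exists $m_d\inn\n N$ such that $\n Q(\sqrt d)\cap\mathfrak C_{m_d}$ contains infinitely many purely periodic continued fractions $\alpha_1,\alpha_2,\dots$. Writing $\alpha_k=[\ov{a^{(k)}_1,\dots,a^{(k)}_{l_k}}]$, and doubling the period if $l_k$ is odd, I may assume that the matrix $M_k:=M_{a^{(k)}_1}\cdots M_{a^{(k)}_{l_k}}$ lies in $\SL_2(\n Z)$; then $M_k$ is hyperbolic and fixes $\alpha_k$. The discriminant $(\tr M_k)^2-4$ of its characteristic polynomial equals $d\,y_k^2$ for some $y_k\inn\n Z$, because both fixed points of $M_k$ lie in $\n Q(\sqrt d)$. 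Hence $(\tr M_k,y_k)$ is a solution of $x^2-dy^2=4$, so by the previous step $|\tr M_k|=\epsilon_0^{n_k}+\epsilon_0^{-n_k}$ for some $n_k\geq1$, and the translation length of $M_k$ equals $n_kL(\ov r)$.

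Finally I verify that the projected axes $\ov r_k$ give the desired family. Both endpoints of the axis of $M_k$ lie in $\mathfrak C_{m_d}$: $\alpha_k$ by construction, and its Galois conjugate via $\alpha\mapsto -1/\alpha'$ by Theorem~\ref{Atheo:Galoisgeod}, which shows that $-1/\alpha_k'$ is a reduced purely periodic number whose partial quotients are just the reversal of those of $\alpha_k$, and hence also bounded by $m_d$. Consequently every axis of $M_k$ stays at bounded height in the standard fundamental domain, and all $\ov r_k$ lie in a common compact set $Z=Z(m_d)\sub\n M$. Moreover the $\ov r_k$ are pairwise distinct: by Serret's Theorem~\ref{Atheo:Serretgeod} combined with the Galois symmetry, two reduced purely periodic numbers determine the same closed geodesic only if their periods coincide up to cyclic rotation or reversal, so within any given closed geodesic there are only finitely many purely periodic representatives in $\mathfrak C_{m_d}$; the infinite family $\{\alpha_k\}$ therefore produces infinitely many distinct $\ov r_k\sub Z$, each of length in $L(\ov r)\,\n N$. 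The principal obstacle is Theorem~\ref{3theo:MM} itself, whose proof is the nontrivial content and requires either the geodesic-flow approach of McMullen or the algebraic constructions recalled earlier in the chapter; once it is granted, the rest of the argument is essentially bookkeeping, using the Pell equation to convert the trace of $M_k$ into a length divisible by $L(\ov r)$ and the Galois/Serret dictionary to extract distinct closed geodesics from non-equivalent continued fractions.
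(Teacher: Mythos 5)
Your argument is correct but goes in the opposite logical direction from the proof given in the text and from McMullen's original paper. There, the present theorem is proved \emph{directly} by a dynamical construction on $T^1\n M$: one produces primitive matrices $(L_m)$ whose unit tangent vectors lie in a compact $A$-invariant set and whose $A$-orbits have finite lengths $k(m)\to\infty$, yielding closed geodesics of length $k(m)L(\ov r)$; Theorem \ref{3theo:MM} (McMullen's Theorem 1.1) is then \emph{deduced} from it in the very next proof of the appendix. You instead take Theorem \ref{3theo:MM} as input, convert each purely periodic $\alpha_k\inn\n Q(\sqrt d)\cap\mathfrak C_{m_d}$ into a hyperbolic $M_k$, and use fundamentality together with the Pell equation $x^2-dy^2=4$ to force $|\tr M_k|=\epsilon_0^{n_k}+\epsilon_0^{-n_k}$. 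This is a genuinely different route and it does prove the statement, but it is only non-circular if Theorem \ref{3theo:MM} is supplied by the \emph{algebraic} construction recalled in Chapter 3 (the explicit family $\left[\ov{(1,s)^n,1,s+1,s-1,(1,s)^n,1,s+1,s+3}\right]$), not by the geodesic-flow proof, since the latter is precisely the theorem you are proving. You flag this alternative only in passing; it needs to be stated as essential. What your route buys is concreteness and an escape from the unspecified construction of the $(L_m)$; what it costs is self-containedness, since the text's dynamical approach lets Theorem \ref{3theo:MM} fall out as a corollary rather than serve as a hypothesis. Two points could be made tighter: the identification of integer solutions of $x^2-dy^2=4$ with norm-one units of the ring of integers of $\n Q(\sqrt d)$ (a parity check modulo $4$) is what legitimizes the step ``fundamentality $\Rightarrow$ every solution is $\pm\tr(A_0^n)$'', since it puts $\epsilon_0$ inside the cyclic group $\langle\eta_1\rangle$ and the shorter geodesic attached to $\eta_1$ would otherwise violate fundamentality; and $\alpha_k'$ is not literally in $\mathfrak C_{m_d}$ (it is negative) --- the correct assertion, via Galois, is that $-1/\alpha_k'=\left[\ov{a_l,\dots,a_1}\right]$ governs the negative cutting sequence of the axis, which is what bounds its height.
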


\begin{proof}[Sketch of Proof]
	Let $\ov r\inn\n M$ be a closed fundamental geodesic, let $A\inn\PSL Z$ be a primitive hyperbolic isometry associated to $\ov r$. Possibly changing the sign of $A$, we can assume that the largest eigenvalue of $A$ is a quadratic unity $\epsilon>1$ of norm $1$. Now, it can be proved that $L(\ov r)=d(A)$ depends only on the trace of $A$. Thus, we can assume $A=\mm 0{-1}1t$, which corresponds to substitute $\ov r$ with a geodesic of equal length.

	Let $H\sub\PSL R$ be the centralizer of $A$. Then $H$ is conjugated to the subgroup of diagonal matrices of $\PSL R$.

	Let $r$ be a lift of $\ov r$ on  $\n H$ and let $(z,v)\inn T^1\n H$ be a unit tangent vector to $r$. We can identify $T^1\n H$ with $\PSL R$ with base point $(z_0,v_0)$, so $H$ represents the geodesic flow. Identifying in the same way $T^1\n M$ with $\PSL R/\SL_2(\n Z)$, the compact orbit $H[\id]\simeq H/\langle A\rangle$ projects to $\ov r$. Indeed, we consider the identification $\PSL R\ni B\longleftrightarrow (z,v)\inn T^1(\n H)$ given by $(z,v)=B(z_0,v_0)$. The geodesic flow is then $g(t,(z,v))=B\widetilde{G_t}B^{-1}(z,v)$, with $\widetilde{G_t}=CG_tC^{-1}$, where $(z_0,v_0)=C(i,i)$. Now, $C$ transforms $s$ in $r$, so in fact $H=\left\{\widetilde{G_t},\ t\inn\n R\right\}$. We will write $\widetilde{G_t}*(z,v)=g(t(z,v))$. 

	Certainly, $H*[\id]=\left\{\left[\widetilde{G_t}(z_0,v_0)\right]\inn T^1(\n M),\ t\inn\n R\right\}$ projects on $\ov r$. 
	
	Moreover, $\left[\widetilde{G_t}(z_0,v_0)\right]=[(z_0,v_0)]$ if and only if $\widetilde{G_t}\inn H\cap\SL_2(\n Z)$, if and only if $\widetilde{G_t}\inn\langle A\rangle$ (because $A$ is primitive).

	It is possible to construct a sequence of primitive matrices $(L_m)\sub\PSL R$ such that the corresponding unit tangent vectors $[(z_m,v_m)]\inn T^1\n M$ are in a compact, $A$-invariant subset $Z$ of $T^1\n M$. As $H/\langle A\rangle$ is compact, we can assume that $Z$ is also $H$-invariant. Moreover, we can choose the $L_m$ so that the orbit of $[(z_m,v_m)]$ under $A$ has finite length $k(m)$ for every $m$, but $k(m)\to\infty$ for $m\to\infty$.

	As $A$ is fundamental, the stabilizer of $[(z_m,v_m)]$ in $H$ must then be $\langle A^{k(m)}\rangle$, so $H*[(z_m,v_m)]$ projects on a closed geodesic $\ov r_m$ on $\n M$ of length $k(m)L(\ov r)$.

	Moreover, all these geodesics lie in the projection of $Z$ in $\n M$, compact.
\end{proof}

We can then prove McMullen's Theorem \ref{3theo:MM}:
\begin{theo}[McMullen, Theorem 1.1 in \cite{mcmullen2009uniformly}]
	$\!$For any real quadratic field $\n K\!\!=\!\!\n Q(\sqrt d)$ there exists a compact subset $Z_d$ of $\n M$ that contains infinitely many closed geodesics defined over $\n K$. 

	Equivalently, for every real quadratic field $\n K=\n Q(\sqrt d)$ there exists a constant $m_d\inn\n N$ such that $\n K$ contains infinitely many reduced irrationals which are absolutely Diophantine of height $m_d$. 
\end{theo}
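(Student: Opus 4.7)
The plan is to reduce the theorem to the preceding one (on fundamental closed geodesics) by choosing, for each quadratic field $\n K = \n Q(\sqrt d)$, a particular fundamental geodesic defined over $\n K$ and then verifying that the infinitely many closed geodesics it produces in a compact set $Z \subseteq \n M$ are automatically defined over the same $\n K$. The translation to the arithmetic statement (infinitely many absolutely Diophantine reduced irrationals in $\n K$ of bounded height) will then follow from the cutting-sequence dictionary of Theorem \ref{Atheo:geocf}.

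\textbf{Step 1: Produce a fundamental closed geodesic over $\n K$.} Pick a reduced quadratic irrationality $\alpha \in \n K$, for instance a suitable translate of $\sqrt d$, whose continued fraction expansion is purely periodic by Theorem \ref{Atheo:Galoisgeod}. Its period yields a primitive hyperbolic matrix $A \in \PSL Z$ fixing $\alpha$ and $\alpha'$, whose axis projects to a closed geodesic $\ov r \subset \n M$. Replacing $\alpha$ if necessary so that $(\tr A)^2 - 4$ is squarefree (equivalently, that $A$ corresponds to the fundamental unit of the order in $\n K$ given by the continued fraction), we may assume $\ov r$ is fundamental. In particular $L(\ov r) = 2\log \epsilon$ where $\epsilon > 1$ is a unit of $\n K$ lying in $\n K$.

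\textbf{Step 2: Invoke the preceding theorem.} Applying it to $\ov r$ yields a compact set $Z \subset \n M$ containing infinitely many closed geodesics $\ov r_m$ with $L(\ov r_m) = k(m) L(\ov r)$ and $k(m) \to \infty$. Set $Z_d = Z$.

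\textbf{Step 3: Check that each $\ov r_m$ is defined over $\n K$.} Each $\ov r_m$ corresponds to a primitive hyperbolic $B_m \in \PSL Z$ whose larger eigenvalue $\lambda_m$ satisfies $2 \log \lambda_m = L(\ov r_m) = 2 k(m) \log \epsilon$, i.e. $\lambda_m = \epsilon^{k(m)} \in \n K$. Writing $B_m = \left(\begin{smallmatrix} a & b \\ c & e \end{smallmatrix}\right)$, its fixed points on $\partial\n H$ generate $\n Q\bigl(\sqrt{(\tr B_m)^2 - 4}\bigr)$. But $(\tr B_m)^2 - 4 = (\lambda_m - \lambda_m^{-1})^2$, and $\lambda_m - \lambda_m^{-1} \in \n K \setminus \n Q$, so $(\tr B_m)^2 - 4$ is a non-square rational which is a square in $\n K$; hence the fixed points of $B_m$ lie in $\n K$ and $\ov r_m$ is defined over $\n K$.

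\textbf{Step 4: Translate compactness into a partial-quotient bound.} Any lift of $\ov r_m$ to $\n H$ can be placed, up to $\SL_2(\n Z)$-action, inside the fundamental domain $D$; compactness of $Z$ gives a uniform height bound $\Im(z) \leq c_d$ on these lifts. By the correspondence explained before Theorem \ref{Atheo:Galoisgeod} this forces the endpoints to be absolutely Diophantine of a height $m_d = m(c_d)$ depending only on $c_d$, hence only on $\n K$. Since the $\ov r_m$ are distinct closed geodesics they produce infinitely many $\PSL Z$-inequivalent endpoints, each of which (after a standard translation into the fundamental domain of Theorem \ref{Atheo:geocf}) is a reduced quadratic irrationality in $\n K$ with partial quotients bounded by $m_d$.

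The routine part of the argument is Steps 1 and 4, which are essentially bookkeeping via the cutting-sequence / continued fraction dictionary. The main obstacle is Step 3: the preceding theorem only asserts control of the \emph{lengths} of the new geodesics, and one has to observe that a multiplicative length constraint forces the largest eigenvalue of the associated hyperbolic matrix to be a specific power of the unit $\epsilon \in \n K$, so that the new geodesic inherits the field of definition of $\ov r$. Making sure this eigenvalue identification is valid (rather than merely an equality of absolute values of eigenvalues) and that $\ov r$ can indeed be chosen fundamental with $\epsilon$ a genuine unit of $\n K$ is where the care is required.
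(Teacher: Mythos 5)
Your proposal is correct and follows the same overall architecture as the paper's proof: choose a fundamental closed geodesic over $\n K$, invoke the preceding theorem, and translate the resulting compactness statement into a partial-quotient bound via cutting sequences. The genuine point of departure is Step~3, and it is worth flagging. The paper deduces that each new geodesic $\ov r_n$ is defined over $\n K$ by using an internal fact from the \emph{proof} of the preceding theorem, namely that $\ov r_n$ is fixed by a matrix of the form $L_n A^{k(n)} L_n^{-1}$ with $L_n\inn M_2(\n Z)$; the endpoints of a lift are then integral-matrix images of the fixed points of $A$ and hence lie in $\n K$. You instead argue from the \emph{statement} of the preceding theorem alone: the length constraint $L(\ov r_m)=k(m)L(\ov r)$ forces the larger eigenvalue of the primitive hyperbolic matrix $B_m$ to equal $\epsilon^{k(m)}$ exactly (not merely in absolute value, since both sides are the unique real eigenvalue greater than $1$), whence $(\tr B_m)^2-4=(\epsilon^{k(m)}-\epsilon^{-k(m)})^2$ is a positive non-square integer that is a square in $\n K$, so the fixed points of $B_m$ generate $\n K$. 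This makes your invocation of the preceding theorem genuinely black-box, a real gain in modularity over the paper's version, where the reader must re-open the previous proof. One small imprecision in your Step~1: the parenthetical claim that squarefreeness of $(\tr A)^2-4$ is \emph{equivalent} to $A$ corresponding to the fundamental unit overstates what the paper actually asserts (which is only the sufficient direction), and it is not obvious a priori that a reduced $\alpha\inn\n K$ can always be chosen so that $(\tr A)^2-4$ is squarefree; the cleanest route, as in the paper, is simply to take $A=\mm 0{-1}1t$ with $t$ the trace of a norm-one fundamental unit $\epsilon>1$ of $\n K$.
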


\begin{proof}
	Let $\epsilon>1$ be a unit of $\n K$ of norm $1$ and integral trace $t$; let $A=\mm 0{-1}1t$. By the previous Theorem, we can construct an infinite sequence $(\ov r_n)_n$ of closed geodesics in a compact subset $Z_d$ of $\n M$ such that for every $n$ there exists $L_n\inn M_2(\n Z), k(n)\inn\n Z$ with $L_nA^{k(n)}L_n^{-1}\ov r_n=\ov r_n$. Possibly changing the orientation of $\ov r_n$, we can always choose a lift $r_n$ of $\ov r_n$ such that its endpoints are $r_n^+>1,-1<r_n^-<0$ . Now, $r_n^+,r_n^-$ are fixed by a conjugate (in $M_2(\n Z)$) of a power of $A$, so $r_n^+,r_n^-$ are a pair of Galois conjugate points in $\n K$.

	As the $r_n^+$ are reduced, their continued fractions are purely periodic, and as the geodesics $r_n$ are in a compact set, their partial quotients are uniformly bounded.   
\end{proof}

This led McMullen to ask if the constants $m_d$ can be replaced by some absolute constant $m$:
\begin{conj}[McMullen's conjecture]
	There exists a compact subset $Z$ of $\n M$ such that, for every real quadratic field $\n K=\n Q(\sqrt d)$, $Z$ contains infinitely many closed geodesics defined over $\n K$.
	
	Equivalently, there exists an absolute constant $m$ such that any real quadratic field $\n K=\n Q(\sqrt d)$ contains infinitely many reduced quadratic irrationalities which are absolutely Diophantine of height $m$.
\end{conj}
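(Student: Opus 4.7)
The plan is to attack this via the equivalent continued-fraction formulation, exploiting the reduction to Zaremba's conjecture and the thin-orbit framework of Bourgain-Kontorovich. By Mercat's Theorem \ref{3theo:Mercat}, it is enough to establish Zaremba's Conjecture \ref{3conj:Zar2} for some absolute $z$, since this produces, for every $d$, infinitely many absolutely Diophantine purely periodic elements of $\n Q(\sqrt d)$ of height $z+1$, equivalently infinitely many closed geodesics of $\n M$ contained in a fixed compact $Z$. Thus a first strategy is simply: prove Zaremba.

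A more direct approach would be to work with the semigroup $\Gamma_{\C A}\sub\SL_2(\n Z)$ generated by the matrices $\mm a110\mm b110$ with $a,b\inn\C A$, for a finite alphabet $\C A$ with Hausdorff dimension $\delta_{\C A}>1/2$ (say $\C A=\{1,2\}$, which moreover has no congruence obstructions). For each squarefree $d\geq2$, pick a solution $(x,y)$ of the Pell equation $x^2-dy^2=4$; then any hyperbolic $M\inn\SL_2(\n Z)$ of trace $\pm x$ fixes a closed geodesic on $\n M$ defined over $\n Q(\sqrt d)$. If one can produce $M\inn\Gamma_{\C A}$ with $\tr\,M=x$, then the associated cutting sequence consists of letters in $\C A$, so the geodesic lies in a fixed compact set $Z=Z(\C A)$. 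Varying over the infinitely many Pell solutions $x_k\asymp\epsilon^k$ for a fundamental unit $\epsilon$, and checking that distinct solutions yield distinct primitive conjugacy classes (hence distinct geodesics) in $\Gamma_{\C A}$, would produce the required infinitude of closed geodesics in $Z$.

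The execution of this plan splits into two substeps. First, a local step: show that every Pell trace $x$ is admissible for $\C A$, i.e.\ $x\inn\tr(\Gamma_{\C A})\pmod{q(\C A)}$. This is usually accessible via strong approximation once $\C A$ is chosen to avoid congruence issues. Second, a global step: show that every sufficiently large admissible integer is actually represented as $\tr\,M$ for some $M\inn\Gamma_{\C A}\cap B_N$, with multiplicity of the order $N^{2\delta_{\C A}-1-o(1)}$ — this is the trace case of the Local-Global Conjecture \ref{3conj:local-global}. Combined with the exponential growth of Pell solutions, a positive-density or almost-all representation theorem for traces would suffice.

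The hard part — and the reason this has remained open despite the Bourgain-Kontorovich breakthrough on Zaremba — is precisely the global step for $F=\tr$. The function $F_z:\mm abcd\mapsto c$ that controls Zaremba is \emph{bilinear} in the two columns of $M$, and this bilinearity is exactly what powers the circle-method estimates on minor arcs in \cite{bourgain2014zaremba}, producing cancellation from an expansion/product structure in $\Gamma_{\C A}$. The trace, by contrast, mixes both columns additively in a non-bilinear way, and the known minor-arc bounds deteriorate badly; there is no analogue of the density-one theorem \ref{3theo:bourgain-kontorovich} for traces. Any realistic attempt at a full proof would, I expect, require either a genuinely new bilinear or multilinear decomposition adapted to $\tr$, or an altogether different input — for instance, a spectral/automorphic approach restoring a group (rather than semigroup) structure, or an arithmetic-chaos-style geometric argument producing, for each $d$, an explicit combinatorial family of periodic words in $\C A^*$ whose matrix products have the prescribed trace. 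Short of such a new idea, the most I would expect this plan to deliver unconditionally is Theorem \ref{3theo:MM} (with $m_d$ depending on $d$), which is already known.
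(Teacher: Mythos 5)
This statement is an open conjecture; the paper offers no proof of it, so there is nothing to compare your attempt against. The thesis states McMullen's Conjecture (in both the geodesic and continued-fraction formulations), proves the weaker Theorem \ref{3theo:MM} with a $d$-dependent constant $m_d$, and in Chapter 3 surveys what is known — exactly as you do. Your assessment of the landscape is accurate: Mercat's Theorem \ref{3theo:Mercat} reduces McMullen to Zaremba, so proving Zaremba's Conjecture \ref{3conj:Zar2} would suffice; the direct route passes through the Local-Global Conjecture \ref{3conj:local-global} specialized to $F=\tr$ on $\Gamma_{\C A}$ with $\delta_{\C A}>1/2$; and the central obstacle is precisely the one you name, namely that $F_z\colon M\mapsto\langle e_2,Me_1\rangle$ is bilinear in the columns of $M$ while $\tr$ is not, which is what drives the minor-arc cancellation in \cite{bourgain2014zaremba} and is why no analogue of the density-one Theorem \ref{3theo:bourgain-kontorovich} is known for traces. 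Your final conclusion — that without a genuinely new idea this plan only recovers the known $d$-dependent result — is correct and matches the paper's own discussion (see the paragraph following Theorem \ref{3theo:bourgain-kontorovich} and the remark preceding Conjecture \ref{3conj:arithmchaos}).

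One minor point worth flagging: in the local step you take for granted that every Pell trace $x$ is admissible for $\C A=\{1,2\}$ via strong approximation. For this particular alphabet the paper records that there are no congruence obstructions at all, so admissibility is automatic, but for a general alphabet with $\delta_{\C A}>1/2$ this requires a separate check (recall the Bourgain--Kontorovich counterexample $\C A=\{2,4,6,8,10\}$, where $d\equiv3\pmod4$ is excluded). You should also make explicit, when varying over Pell solutions $x_k\asymp\epsilon^k$, why the resulting matrices in $\Gamma_{\C A}$ give pairwise non-conjugate primitive classes; this is needed to get infinitely many distinct geodesics rather than the same one with multiplicity, and is the analogue of the pairwise-non-equivalence verification carried out in the polynomial setting in Proposition \ref{5prop:Mercat} and Theorem \ref{5theo:Mercat}.
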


We can also ask if, given a constant $c$ and a real quadratic field $\n K$, there exist longer and longer closed geodesics defined over $\n K$ which are low-lying of height $c$. Equivalently, we can ask if, given a constant $m$ and a real quadratic field $\n K$, there exist reduced quadratic irrationals in $\n K$ with longer and longer periods which are absolutely Diophantine of height $m$. McMullen thus proposed the following stronger conjecture:
\begin{conj}[McMullen's Arithmetic Chaos Conjecture \cite{mcmullendynamics}]
	There exists a compact subset $Y$ of $T^1(\n M)$ such that for every real quadratic field $\n K=\n Q(\sqrt d)$ the set of closed geodesics contained in $Y$ and defined over $\n K$ has positive entropy.
	
	Equivalently, there exists an absolute constant $m$ such that for every real quadratic field $\n K=\n Q(\sqrt d)$ the set $\{[\ov{a_0,\dots,a_l}]\inn\n K, a_i\leq m \text{ for every } i\}$ has exponential growth for $l\to\infty$.
\end{conj}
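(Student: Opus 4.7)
The plan is to follow, and attempt to complete, the blueprint sketched by Bourgain--Kontorovich for deducing the Arithmetic Chaos Conjecture from the Local--Global Conjecture \ref{3conj:local-global}. Fix the alphabet $\C A=\{1,2\}$; this has no congruence obstructions for the trace map $F_m:\SL_2(\n Z)\to\n Z$, and since $\delta_{\{1,2\}}>1/2$ one expects every sufficiently large integer to be realized as the trace of an element of the semigroup $\Gamma_{\C A}$. The target quantity is $C_\ell=\#\{[\ov{a_0,\dots,a_\ell}\,]\inn\n Q(\sqrt d)\cap\mathfrak C_m\}$, and I would show $C_\ell\gg c^\ell$ with $c>1$ independent of $d$, using $m=\max\C A=2$.

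The first step is the translation between words and matrices: by the bijection of \ref{1notat:matrix}, elements of $\Gamma_{\C A}$ of even length correspond to matrix products $M_{(a_0,\dots,a_{2n-1})}$ with each $a_i\inn\{1,2\}$, and by Lemma \ref{2lem:eigen} every hyperbolic $M\inn\Gamma_{\C A}$ canonically associated to a purely periodic continued fraction $\alpha=[\,\ov{a_0,\dots,a_{2n-1}}\,]$ has $\alpha\inn\n Q(\sqrt{\mathrm{discr}(M)})$ with $\mathrm{discr}(M)=(\tr M)^2-4$. Hence matrices in $\Gamma_{\C A}$ with trace $x$ yield purely periodic, uniformly badly approximable reduced quadratic irrationals in $\n Q(\sqrt{x^2-4})$. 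The second step is the choice of targets: given $d$, pick a fundamental solution $(x_d,y_d)$ to the Pell equation $x^2-dy^2=4$ and set $\epsilon=(x_d+y_d\sqrt d)/2$; the traces $\tr(\epsilon^k)$ form an arithmetic progression (modulo the Chebyshev-type recursion) of admissible integers $x\asymp N$ with $x^2-4\inn d\cdot(\n Q^*)^2$. For each such $x$, the Local--Global Conjecture would yield $\mathrm{mult}_N(x)\geq N^{2\delta_{\C A}-1-o(1)}$ distinct matrices $M\inn\Gamma_{\C A}$ with $\tr M=x$ and $\|M\|\leq N$, and distinct conjugacy classes of such $M$ correspond to pairwise $\GL_2(\n Z)$-inequivalent purely periodic quadratic irrationals in $\n Q(\sqrt d)$ by the real analogue of Serret's Theorem.

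Translating back via $\log\|M\|\asymp\ell$ (where $\ell$ is the length of the underlying word in $\C A$), one obtains at least $N^{2\delta_{\C A}-1-o(1)}\gg(\mu^{2\delta_{\C A}-1})^{\ell}$ distinct elements of $\n Q(\sqrt d)\cap\mathfrak C_2$ of period $\ell$, so $C_\ell$ grows exponentially at rate at least $(2\delta_{\C A}-1)\log\mu>0$, uniformly in $d$. This gives the exponential-growth formulation, and hence, after lifting to $T^1(\n M)=\PSL R/\PSL Z$ and using that low-lying geodesics correspond to elements of $\mathfrak C_m$, the positive-entropy formulation.

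The main obstacle is the input from the Local--Global Conjecture: the blueprint above is essentially Lemma 1.16 of \cite{bourgain2013beyond}, and it converts the Arithmetic Chaos Conjecture into a representation question for traces. The existing technology --- the Hardy--Littlewood circle method adapted to thin orbits, spectral theory of Ruelle transfer operators, and the major/minor arc decomposition that yields the density-one Theorem \ref{3theo:bourgain-kontorovich} for Zaremba's conjecture --- provides strong \emph{averaged} bounds on $\mathrm{mult}_N$, but producing even one matrix $M\inn\Gamma_{\{1,2\}}$ of prescribed trace $x$ for \emph{every} admissible $x$ is currently out of reach, because the trace map is quadratic rather than linear in the entries of $M$ (unlike the bottom-row map $F_z$ used for Zaremba). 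A realistic weakening of the plan would aim for a density-one or positive-proportion statement for the traces $\tr(\epsilon_d^k)$ (fixed $d$, varying $k$), using Mercat's construction \ref{5prop:Mercat} to seed the argument and a Duke-type equidistribution for closed geodesics to amplify; but even this intermediate step appears to require genuinely new harmonic-analytic input beyond what is currently available.
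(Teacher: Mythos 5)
This statement is McMullen's Arithmetic Chaos Conjecture, an open problem; the paper does not prove it, stating it only as a conjecture (in Chapter 3 and again, in geodesic form, in the Appendix, which closes by remarking that even the weaker $m_d$-dependent version is unknown) and recording that it would follow from the Local--Global Conjecture \ref{3conj:local-global} via Bourgain--Kontorovich's Lemma 1.16. Your proposal correctly reproduces exactly that conditional reduction --- alphabet $\C A=\{1,2\}$, the trace map $F_m$, the Pell solutions $x^2-dy^2=4$ as admissible targets, and the estimate $\log\|M\|\asymp\ell$ --- and honestly identifies the Local--Global Conjecture itself as the unproved input, so your treatment is the same as the paper's, which likewise offers no unconditional argument.
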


However, currently it is not even known if for every quadratic field $\n K=\n Q(\sqrt d)$ there exists a constant $m_d$ such that the previous set grows exponentially for $l\to\infty$.

\clearpage{\pagestyle{empty}\cleardoublepage}

\bibliographystyle{amsplain}
\bibliography{i3-bib}

\end{document}